\newcommand\R{{\mathbf{R}}}
\newcommand\C{{\mathbf{C}}}
\newcommand\Z{{\mathbf{Z}}}
\renewcommand\H{{\mathbf{H}}}
\renewcommand\S{{\mathcal{S}}}
\newcommand\bigO{{\mathcal{O}}}
\newcommand\Sch{{\operatorname{Schwartz}}}
\newcommand\T{{\mathbf{T}}}
\newcommand\E{{\mathrm{E}}}
\newcommand\ESD{{\mathrm{ESD}}}
\newcommand\Dil{{\operatorname{Dil}}}
\newcommand\Trans{{\operatorname{Trans}}}
\newcommand\Time{{\operatorname{Time}}}
\newcommand\Rev{{\operatorname{Rev}}}
\newcommand\Rot{{\operatorname{Rot}}}
\newcommand\Frame{{\operatorname{Frame}}}
\newcommand\Energy{{\dot{\mathcal{H}^1}}}
\newcommand\loc{{\operatorname{loc}}}
\newcommand\strong{{+}}
\newcommand\eps{{\varepsilon}}
\newcommand\dist{{\operatorname{dist}}}
\theoremstyle{plain}
  \newtheorem{theorem}[subsection]{Theorem}
  \newtheorem{conjecture}[subsection]{Conjecture}
  \newtheorem{proposition}[subsection]{Proposition}
  \newtheorem{lemma}[subsection]{Lemma}
  \newtheorem{corollary}[subsection]{Corollary}
\theoremstyle{remark}
  \newtheorem{remark}[subsection]{Remark}
  \newtheorem{example}[subsection]{Example}
\theoremstyle{definition}
  \newtheorem{definition}[subsection]{Definition}
\begin{document}

\title[Global regularity of wave maps VII]{Global regularity of wave maps VII.  Control of delocalised or dispersed solutions}
\author{Terence Tao}
\address{Department of Mathematics, UCLA, Los Angeles CA 90095-1555}
\email{tao@math.ucla.edu}
\subjclass{35L70}

\vspace{-0.3in}
\begin{abstract} This is the final paper in the series \cite{tao:heatwave}, \cite{tao:heatwave2}, \cite{tao:heatwave3}, \cite{tao:heatwave4} that establishes global regularity for two-dimensional wave maps into hyperbolic targets.  In this paper we establish the remaining claims required for this statement, namely a divisible perturbation theory, and a means of synthesising solutions for frequency-delocalised, spatially-dispersed, or spatially-delocalised data out of solutions of strictly smaller energy.  

As a consequence of the perturbation theory here and the results obtained earlier in the series, we also establish spacetime bounds and scattering properties of wave maps into hyperbolic space.
\end{abstract}

\maketitle

{\bf Note: This is a first draft only, and is NOT publication quality!  I hope to reorganise this series of papers in the future, hopefully with a somewhat more simplified and streamlined approach to the subject.}

\section{Introduction}

This paper is a technical component of a larger program \cite{tao:heatwave} to establish large data global regularity for the initial value problem for two-dimensional wave maps into hyperbolic spaces.   To explain this problem, let us set out some notation.

Let $\R^{1+2}$ be Minkowski space $\{ (t,x): t \in \R, x \in \R^2 \}$
with the usual metric $g_{\alpha \beta} x^\alpha x^\beta := -dt^2 + dx^2$.  We shall work primarily in $\R^{1+2}$, parameterised by Greek indices $\alpha, \beta = 0,1,2$, raised and lowered in the usual manner.  We also parameterise the spatial coordinates by Roman indices $i,j = 1,2$.  Fix $m \geq 1$, and let $\H = (\H^m,h)$ be the $m$-dimensional hyperbolic space of constant sectional curvature $-1$.  We observe that the Lorentz group $SO(m,1)$ acts isometrically on $\H$.

A \emph{classical wave map} $\phi = (\phi,I)$ from a slab $I \times \R^2$ in $1+2$-dimensional Minkowski space to $\H$ is defined to be any smooth map $\phi: I \times \R^2 \to \H$ which differs from a constant $\phi(\infty) \in \H$ by a function Schwartz in space (embedding $\H$ in $\R^{1+m}$ to define the Schwartz space), and is a (formal)  critical point of the Lagrangian
\begin{equation}\label{lagrangian}
 \int_{\R^{1+2}} \langle \partial^\alpha \phi(t,x), \partial_\alpha \phi(t,x) \rangle_{h(\phi(t,x))}\ dt dx
\end{equation}
in the sense that $\phi$ obeys the corresponding Euler-Lagrange equation
\begin{equation}\label{cov}
 (\phi^*\nabla)^\alpha \partial_\alpha \phi = 0,
\end{equation}
where $(\phi^*\nabla)^\alpha$ is covariant differentiation on the vector bundle $\phi^*(T\H)$ 
with respect to the pull-back $\phi^*\nabla$ via $\phi$ of the Levi-Civita connection $\nabla$ on the tangent bundle $T\H$ of $\H$.  If $I = \R$, we say that the wave map is \emph{global}.  We observe that wave maps have a conserved energy
\begin{equation}\label{energy-def}
 \E(\phi) = \E(\phi[t]) := \int_{\R^2} \T_{00}(t,x)\ dx = \int_{\R^2} \frac{1}{2} |\partial_t \phi|_{h(\phi)}^2
+ \frac{1}{2} |\nabla_x \phi|_{h(\phi)}^2\ dx
\end{equation}
for all $t \in I$, where $\phi[t] := (\phi(t), \partial_t \phi(t))$ is the data of $\phi$ at $t$.  

We record five important (and well known) symmetries of wave maps:

\begin{itemize}
\item For any $t_0 \in \R$, we have the time translation symmetry 
\begin{equation}\label{time-trans}
\Time_{t_0}: \phi(t,x) \mapsto \phi(t-t_0,x).
\end{equation}
\item For any $x_0 \in \R^2$, we have the space translation symmetry
\begin{equation}\label{space-trans}
\Trans_{x_0}: \phi(t,x) \mapsto \phi(t,x-x_0).
\end{equation}
\item Time reversal symmetry
\begin{equation}\label{time-reverse}
\Rev: \phi(t,x) \mapsto \phi(-t,x).
\end{equation}
\item For every $U \in SO(m,1)$, we have the target rotation symmetry
\begin{equation}\label{rotate}
\Rot_U: \phi(t,x) \mapsto U \circ \phi(t,x).
\end{equation}
\item For every $\lambda > 0$, we have the scaling symmetry
\begin{equation}\label{cov-scaling}
\Dil_\lambda: \phi(t,x) \mapsto \phi(\frac{t}{\lambda}, \frac{x}{\lambda})
\end{equation}
\end{itemize}
In each of these symmetries, the lifespan $I$ of the wave map transforms in the obvious manner (e.g. for \eqref{time-trans}, $I$ transforms to $I+t_0$).  Also observe that the energy \eqref{energy-def} is invariant with respect to all of these symmetries.

Define \emph{classical data} to be any pair $(\phi_0,\phi_1)$, where $\phi_0: \R^2 \to \H$ is map which is in the Schwartz space modulo constants (embedding $\H$ in $\R^{1+m}$ to define the Schwartz space), and $\phi_1: \R^2 \to T\H$ is a Schwartz map with $\phi_1(x) \in T_{\phi_0(x)} \H$ for all $x \in \R^2$; let $\S$ denote the space of all classical data, equipped with the Schwartz topology; one should view this space as a non-linear analogue of the linear Schwartz data space $\S(\R^2) \times \S(\R^2)$.  Note that if $(\phi,I)$ is a classical wave map, then $\phi[t]$ lies in $\S$, smoothly in $t$.

For future reference, we note that the symmetries \eqref{space-trans}, \eqref{time-reverse}, \eqref{rotate}, \eqref{cov-scaling} induce analogous symmetries on $\S$:
\begin{align}
\Trans_{x_0}: (\phi_0(x), \phi_1(x)) &\mapsto (\phi_0(x-x_0), \phi_1(x-x_0)) \label{space-trans-data}\\
\Rev: (\phi_0(x), \phi_1(x)) &\mapsto (\phi_0(x), -\phi_1(x)) \label{time-reverse-data}\\
\Rot_U: (\phi_0(x), \phi_1(x)) &\mapsto (U\phi_0(x), dU(\phi_0(x))(\phi_1(x))) \label{rotate-data}  \\
\Dil_\lambda: (\phi_0(x), \phi_1(x)) &\mapsto (\phi_0(\frac{x}{\lambda}), \frac{1}{\lambda} \phi_1(\frac{x}{\lambda})) \label{scaling-data}
\end{align}

The purpose of this paper and the three preceding papers \cite{tao:heatwave}, \cite{tao:heatwave2}, \cite{tao:heatwave3}, \cite{tao:heatwave4} is to prove the following conjecture, stated for instance in \cite{klainerman:unified}:

\begin{conjecture}[Global regularity for wave maps]\label{conj}  For every $(\phi_0,\phi_1) \in \S$ there exists a unique global classical wave map $\phi: \R \times \R^2 \to \H$ with $\phi[0] = (\phi_0,\phi_1)$.
\end{conjecture}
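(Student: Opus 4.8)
The plan is to close an induction on energy, following the concentration--compactness / minimal-blowup-solution paradigm, using as black boxes the local well-posedness, persistence of regularity, and small-energy global theory from the earlier papers \cite{tao:heatwave,tao:heatwave2,tao:heatwave3,tao:heatwave4}, together with the divisible perturbation theory and the three synthesis results (for frequency-delocalised, spatially-dispersed, and spatially-delocalised data) established in the present paper. The first step is the standard reduction: local well-posedness and persistence of regularity already furnish a unique maximal classical wave map $\phi$ with $\phi[0]=(\phi_0,\phi_1)$, and a continuation criterion reduces Conjecture \ref{conj} to an \emph{a priori} global spacetime bound depending only on $\E(\phi_0,\phi_1)$. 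Introduce
\[
E_\crit := \sup\{\, E \geq 0 : \text{every } (\phi_0,\phi_1)\in\S \text{ with } \E(\phi_0,\phi_1) < E \text{ gives a global wave map obeying a priori spacetime bounds}\,\};
\]
the small-energy theory gives $E_\crit > 0$, and it suffices to prove $E_\crit = \infty$.

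Suppose, for contradiction, that $E_\crit < \infty$. The second step extracts a \emph{minimal-energy blowup solution}. By definition of $E_\crit$ there is a sequence of classical data $(\phi_0^{(n)},\phi_1^{(n)})$ with $\E \to E_\crit$ whose wave maps have spacetime norm tending to $\infty$; applying the symmetries \eqref{space-trans}, \eqref{time-reverse}, \eqref{rotate}, \eqref{cov-scaling} (which preserve both the energy and the spacetime bounds) and passing to a limit in the appropriate caloric-gauge topology, one aims to produce a classical wave map $\phi$ of energy exactly $E_\crit$, with infinite spacetime norm, whose trajectory $\{\phi[t]\}$ is precompact modulo the scaling, translation, and rotation symmetries — an \emph{almost periodic} solution. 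The point where the synthesis results of this paper are decisive is the verification that no loss of compactness can occur along the minimising sequence: if the data were asymptotically frequency-delocalised, spatially-dispersed, or spatially-delocalised, the corresponding synthesis theorem would build the solution out of pieces of energy strictly below $E_\crit$ — each therefore globally bounded — and the divisible perturbation theory would glue these pieces back into a global bound for $\phi^{(n)}$ itself, contradicting the blowup. Hence the minimiser exists and is almost periodic in this sense.

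The third step is rigidity: such a $\phi$ cannot exist, because the target $\H$ has negative curvature. An almost periodic wave map carries a Lipschitz frequency-scale function $N(t)$ and a position function $x(t)$; exploiting the conservation and monotonicity properties of the stress-energy tensor $\T_{\alpha\beta}$ — concretely, a Morawetz/virial identity for $\int \T_{00}$ against a suitably truncated vector field, combined with finite speed of propagation and the control on $N(t)$ and $x(t)$ — one forces $\phi$ to be essentially stationary or self-similar and energy-concentrated. But a nonconstant finite-energy harmonic map $\R^2 \to \H$ is excluded by a Bochner/Eells--Sampson computation (nonpositive curvature of $\H$), and likewise there is no nontrivial finite-energy self-similar wave map into $\H$; hence $\phi$ is constant, so $E_\crit = 0$, contradicting $E_\crit>0$. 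Therefore $E_\crit = \infty$, and together with the first step this establishes Conjecture \ref{conj}.

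I expect the second step to be the main obstacle. Making the minimal-blowup extraction rigorous requires the perturbation theory to be genuinely \emph{divisible} — stable under decomposition into a large, energy-dependent number of small pieces, not merely under a single small perturbation — and requires the three synthesis theorems to exhaust between them every way a minimising sequence can fail to concentrate; the delicacy of the caloric-gauge function spaces, and the need to check that their norms transform well under the symmetry reductions and survive the passage to the limit, is where most of the technical weight lies. The rigidity step, though conceptually transparent thanks to the negative curvature, also demands care to justify the relevant monotonicity formulae for solutions that are only of finite energy.
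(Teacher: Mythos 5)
Your plan is essentially the paper's own argument: the paper proves Theorems \ref{symscat}--\ref{spacdeloc} (divisible perturbation theory plus the frequency-delocalised, spatially-dispersed, and spatially-delocalised synthesis results) and then closes the induction on energy by invoking the minimal-energy-blowup machinery of \cite{tao:heatwave}--\cite{tao:heatwave4}, exactly the scheme you describe. The only cosmetic difference is in the rigidity step, which in the series is delegated to \cite{tao:heatwave2} (absence of finite-energy stationary, travelling, or self-similar wave maps into $\H$, proved via the harmonic map heat flow and the negative curvature of the target) rather than carried out afresh by a Morawetz/virial argument, but this is the same route in substance.
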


There has been extensive prior progress on this conjecture, see for instance \cite[Chapter 6]{tao:cbms} or \cite{krieger:survey} for a survey, or the first paper \cite{tao:heatwave} in this series for further discussion and references.  Recently, an alternate proof of Conjecture \ref{conj} (and more generally, global regularity for any smooth target manifold which is either compact, or has compact quotients) was given in \cite{sterbenz}, \cite{sterbenz2}.  A third proof, in the case when the target is the hyperbolic plane $\H^2$, has also been announced by Krieger and Schlag (private communication).

The preceding papers \cite{tao:heatwave}, \cite{tao:heatwave2}, \cite{tao:heatwave3}, \cite{tao:heatwave4} reduced matters to being able to define the concept of a \emph{$(A,\mu)$-wave map} for parameters $A, \mu > 0$, which are a class of classical wave maps $(\phi,I)$ on a compact time interval $I$ of energy at most $A$ which obey some additional estimates depending on $A, \mu$ which we will specify in Section \ref{aesec}.  Given a classical wave map $(\phi,I)$ and parameters $A,\mu >0$, define the \emph{$(A,\mu)$-entropy} of $(\phi,I)$ to be the minimum number of compact intervals $J$ needed to cover $I$, such that the restriction $(\phi\downharpoonright_J,J)$ of $(\phi,I)$ to $J$ is a $(A,\mu)$-wave map for each such $J$.

In Section \ref{aesec} we will prove

\begin{theorem}[Properties of $(A,\mu)$-wave maps]\label{symscat}  For every $A, \mu > 0$ there exists a class of classical wave maps $(\phi,I)$ on compact time intervals with the following properties:
\begin{itemize}
\item[(i)] (Monotonicity) If $(\phi,I)$ is an $(A,\mu)$-wave map for some $A > 0$ and $0 < \mu \leq 1$, then it is also a $(A',\mu')$-wave map for every $A' \geq A$ and $\mu' \geq \mu$.  Also, $(\phi\downharpoonright_J,J)$ is an $(A,\mu)$-wave map for every $J \subset I$.
\item[(ii)] (Symmetries) If $(\phi,I)$ is an $(A,\mu)$-wave map for some $A > 0$ and $0 < \mu \leq 1$, then any application of the symmetries \eqref{time-trans}-\eqref{cov-scaling} to $\phi$ (and $I$) is also a $(A,\mu)$-wave map.
\item[(iii)] (Divisibility)  If $(\phi,I)$ is an $(A,\mu)$-wave map, with $\mu$ sufficiently small depending on $A$, and $0 < \mu' \leq 1$, then $(\phi,I)$ has an $(O(A),\mu')$-entropy of $O_{A,\mu,\mu'}(1)$.
\item[(iv)] (Continuity)  Let $A > 0$, and let $0 < \mu < 1$ be sufficiently small depending on $A$.  If $I$ is a compact interval, $I_n$ is an increasing sequence of compact sub-intervals which exhaust the interior of $I$, and $(\phi^{(n)},I_n)$ is a sequence of $(A,\mu)$-wave maps which converge uniformly in $\Energy$ on every compact interval of $I$, then the limit extends continuously in $\Energy$ to all of $I$.  (For the definition of the energy space $\Energy$, see Section \ref{esd-sec} below.)
\item[(v)] (Small data scattering) For every $0 < \mu \leq 1$ there exists an $E > 0$ such that whenever $I$ is a interval and $(\phi,I)$ is a classical wave map with energy at most $E$, then $\phi$ is a $(\mu,\mu)$-wave map.
\item[(vi)] (Local well-posedness in the scattering size) For every $E > 0$, $0 < \mu \leq 1$, $\Phi \in \Energy$, $t_0\in \R$ with $\E(\Phi) \leq E$, there exists a compact interval $I$ with $t_0$ in the interior, such that for any $(\phi_0,\phi_1) \in \S$ with $(\phi_0,\phi_1)$ sufficiently close to $\Phi$ in $\Energy$, there is a $(O_E(1),\mu)$-wave map $(\phi,I)$ with $\phi[t_0] = (\phi_0,\phi_1)$.
\end{itemize}
\end{theorem}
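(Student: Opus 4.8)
The overall strategy is to define the class of $(A,\mu)$-wave maps by a single quantitative estimate — the smallness of a suitable "$S$-norm" or scattering functional on the interval $I$, thresholded against $\mu$ (after one normalizes out the energy scale $A$) — and then verify the six properties by examining that definition. Concretely, I would define $(\phi,I)$ to be an $(A,\mu)$-wave map iff $\E(\phi)\le A$ and the spacetime/scattering norm $\Scatter_I(\phi)$ (the norm used throughout \cite{tao:heatwave2}, \cite{tao:heatwave3} to measure dispersion, built out of the frequency-localized $L^3_{t,x}$ or $\dot{X}^{s,b}$-type pieces together with the heat-flow renormalized energy density from \cite{tao:heatwave}) is at most $\mu$, perhaps after allowing an additional additive error $O_A(1)$-worth of "bad" pieces whose cumulative contribution is controlled. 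The point of folding in a smallness parameter separate from the energy is precisely to make both the divisibility statement (iii) and the small-data scattering statement (v) fall out cleanly.

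\emph{Order of the argument.}  First I would establish (i): monotonicity in $A'\ge A$ is immediate since the energy bound only weakens, monotonicity in $\mu'\ge\mu$ is immediate since the scattering bound only weakens, and the restriction statement $(\phi\downharpoonright_J,J)$ follows because $\Scatter_J\le\Scatter_I$ (the norm is monotone under restriction of the time interval) and $\E$ is conserved, hence $\le A$ on $J$ as well. Next (ii): each of the five symmetries \eqref{time-trans}--\eqref{cov-scaling} preserves $\E(\phi)$ (stated in the introduction) and is an isometry (or exact rescaling) for the scattering norm, which must be checked symmetry-by-symmetry; for $\Dil_\lambda$ one uses that $\Scatter$ is scale-invariant by construction, for $\Rot_U$ one uses that the norm is defined through the derivative fields / heat flow in a target-equivariant way, and the translation and reversal cases are trivial. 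Then (v): this is really a restatement of the small-data global theory already developed earlier in the series — for energy below a threshold $E=E(\mu)$, the iteration in \cite{tao:heatwave}/\cite{tao:heatwave2} gives $\Scatter_I(\phi)\lesssim \E(\phi)^{1/2}\le\mu$, so $\phi$ is a $(\mu,\mu)$-wave map. Property (vi) is the local well-posedness / stability statement: one runs the perturbation theory (the "divisible perturbation theory" advertised in the abstract) around the linear evolution of $\Phi$ on a short enough interval $I$ so that the free evolution has small scattering norm there, then closes a contraction showing that data $(\phi_0,\phi_1)$ near $\Phi$ in $\Energy$ launches a classical wave map on $I$ with $\Scatter_I = O_E(1)$ and in fact with the localized pieces small enough to be a $\mu$-wave map; continuous dependence on the data in $\Energy$ comes from the same contraction. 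Property (iv) then follows from (vi) together with a limiting argument: the uniform-on-compacta convergence in $\Energy$ plus the a priori bound $\Scatter\le O_A(1)$ lets one apply (vi) at the endpoint to extend the limit continuously.

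\emph{The main obstacle} is property (iii), divisibility: showing that a single $(A,\mu)$-wave map with $\mu$ small (depending on $A$) can be chopped into $O_{A,\mu,\mu'}(1)$ pieces each of which is a genuine $(O(A),\mu')$-wave map. The subtlety is that the scattering norm is not literally additive across a partition of $I$ — there are cross terms between frequency- or space-separated pieces, and the nonlinear interaction means $\Scatter_{I_1\cup I_2}$ is not simply $\Scatter_{I_1}+\Scatter_{I_2}$. The resolution is to prove that the relevant norm is \emph{divisible} in the technical sense: for any $\eta>0$ one can partition $I$ into $O_{\Scatter_I(\phi),\eta}(1)$ subintervals on each of which the norm is at most $\eta$, using that the global norm is finite and that the contribution of any single "atom" (a sharp-cutoff time interval, or a single dyadic frequency block) can be made small by refining the partition — this is a pigeonhole/continuity argument exploiting that $\Scatter$, while not additive, is "almost additive up to a controlled number of overlaps," a property one extracts from the multilinear structure of the norm established in \cite{tao:heatwave3}. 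One also has to keep the energy of each piece at $O(A)$, which is automatic since energy is conserved, and verify that the auxiliary heat-flow and gauge constructions localize to each subinterval without blowing up the constants — this is where "$\mu$ sufficiently small depending on $A$" is used, to guarantee the a priori regularity needed to run the local theory of \cite{tao:heatwave4} on each piece.
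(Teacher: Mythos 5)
There is a genuine gap, and it sits exactly where you located the main difficulty: the definition you propose and the divisibility argument built on it do not fit together. In the paper the two parameters play different roles from the ones you assign them: an $(A,\mu)$-wave map is defined (Definition \ref{aes}) as a classical wave map admitting a caloric-gauge heat-flow resolution $\psi_s$ with $\|\psi_s\|_{S^1_\mu(I)} \leq A$ — the \emph{size} bound is $A$, while $\mu$ is a structural parameter wired into the norm itself through the atomic spaces $S_{\mu,k}$ (small atoms versus null-dispersed atoms, Definition \ref{smuk-def}), whose purpose is to produce gains of powers of $\mu$ in the trilinear estimates \eqref{trilinear-improv}--\eqref{trilinear-improv2}. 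Your definition instead thresholds a scattering functional against $\mu$. Any functional strong enough to run the stability and local theory (your properties (iv) and (vi)) must control the energy — in the paper this is \eqref{outgo} and Theorem \ref{apriori-thm2}(iv) — and therefore cannot be made small on subintervals of a large-energy solution: no partition whatsoever makes an energy-controlling norm $\leq \eta$ on each piece. Consequently your plan for (iii), ``partition $I$ into $O(1)$ subintervals on each of which the norm is at most $\eta$,'' is unachievable with the kind of norm you need elsewhere in the argument; note also that the conclusion of (iii) in the theorem is that each piece is an $(O(A),\mu')$-wave map, i.e.\ the norm on each piece is still of size $O(A)$, not small.

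The paper's resolution of (iii) (Section \ref{symscat-sec}) is precisely to exploit this two-tier structure: the non-divisible components of the norm (energy and $S_k^\strong$-type bounds, controlled via frequency envelopes and Lemma \ref{freqstable}) are simply carried along at size $O(A)$, and only the genuinely divisible dispersive components — the tube-localized $L^2_tL^\infty_x$ quantity $S'_{\mu',k}$ of \eqref{spunk} and the $L^5_tL^\infty_x$ Strichartz piece \eqref{mondo} — are made small on each of $O_{A,\mu,\mu'}(1)$ subintervals, so that on each piece the solution qualifies as built from null-dispersed atoms for the \emph{new} parameter $\mu'$. Making those components effectively divisible is not a soft pigeonhole/``almost additivity'' fact: it is Lemma \ref{boxf}, whose proof (Appendix \ref{fungi-sec}) runs through the strengthened spaces $N_k^\strong$ and the inverse theorem for the bilinear $L^2$ Strichartz estimate from \cite{tao:inverse} — the main new harmonic-analysis input of this part of the paper, which your sketch does not supply. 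Your treatments of (i), (ii), and the broad strategy for (v), (vi) (bootstrap/vanishing plus the a priori stability estimate, as in Theorem \ref{apriori-thm2}(ii),(iii),(v)) are in the right spirit, but they would have to be re-founded on the $\|\psi_s\|_{S^1_\mu(I)}\leq A$ definition for the divisibility statement, and hence the rest of the series' induction, to go through.
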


Call an energy $E > 0$ \emph{good} if there exists $A,M > 0$, $0 < \mu \leq 1$ such that every classical wave map $(\phi,I)$ with $I$ compact and $\E(\phi) \leq E$ has an $(A,\mu)$-entropy of at most $M$.  Call $E$ \emph{bad} if it is not good. In \cite[Lemma 3.1]{tao:heatwave4}, the following consequence of the above theorem was shown:

\begin{lemma}[Scattering bound implies global well-posedness]\label{quant} Suppose $E$ is good.  Then for any classical initial data $(\phi_0,\phi_1) \in \S$ with energy at most $E$ and any $t_0 \in \R$, there is a global classical wave map $\phi$ with $\phi[t_0] = (\phi_0,\phi_1)$.
\end{lemma}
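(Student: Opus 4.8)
The plan is a standard continuation (no-blowup) argument: produce the solution on its maximal interval of existence and then use Theorem~\ref{symscat} together with the goodness of $E$ to exclude finite-time blowup. I would start from the local existence and uniqueness theory for classical wave maps established in the earlier papers of this series, which provides a maximal open interval $I_{\max}\ni t_0$ carrying a unique classical wave map $\phi$ with $\phi[t_0]=(\phi_0,\phi_1)$; it then suffices to prove $I_{\max}=\R$. Arguing by contradiction, and applying the time reversal symmetry \eqref{time-reverse} if necessary, I may assume $T_+:=\sup I_{\max}<\infty$, so that $[t_0,T_+)\subseteq I_{\max}$, and conservation of energy gives $\E(\phi[t])=\E(\phi_0,\phi_1)\le E$ for all $t\in[t_0,T_+)$.

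The core step is to show that $\phi[t]$ converges in $\Energy$ as $t\to T_+^-$. Since $E$ is good, fix witnessing parameters $A,M>0$ and $0<\mu\le1$; by the monotonicity and divisibility properties (i), (iii) of Theorem~\ref{symscat}, after enlarging $A$ and shrinking $\mu$ I may assume $\mu$ is small enough in terms of $A$ for the continuity property (iv) to apply. For each $n$ the restriction of $\phi$ to $[t_0,T_+-1/n]$ has $(A,\mu)$-entropy at most $M$; refining the corresponding cover to a partition using the restriction clause of (i) yields at most $M$ adjacent subintervals of $[t_0,T_+-1/n]$ on each of which $\phi$ restricts to an $(A,\mu)$-wave map. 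Passing to a subsequence, the number of these subintervals stabilises and their endpoints converge; discarding those that collapse to a point, I obtain a point $a\in[t_0,T_+)$ such that $(\phi\downharpoonright_{[a+\eps,\,T_+-\eps]},[a+\eps,\,T_+-\eps])$ is an $(A,\mu)$-wave map for every small $\eps>0$ (again by the restriction clause of (i)). Applying the continuity property (iv) with $I:=[a+\eps_0,T_+]$ for a fixed small $\eps_0>0$, $I_n:=[a+\eps_0,T_+-1/n]$, and $\phi^{(n)}:=\phi\downharpoonright_{I_n}$ (which are $(A,\mu)$-wave maps converging trivially, uniformly in $\Energy$, on every compact subinterval of $I$), the limit extends continuously in $\Energy$ to all of $I$. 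In particular $\phi[t]\to\Phi$ in $\Energy$ as $t\to T_+^-$ for some $\Phi\in\Energy$, and lower semicontinuity of the energy on $\Energy$ gives $\E(\Phi)\le E$.

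It remains to continue past $T_+$. I would invoke the local well-posedness in the scattering size, property (vi) of Theorem~\ref{symscat}, with the energy $E$, the datum $\Phi$, parameter $\mu:=1$, and base time $T_+$: this yields a compact interval $I^{*}$ containing $T_+$ in its interior and an $\eps_{*}>0$ such that any $(\psi_0,\psi_1)\in\S$ with $\|(\psi_0,\psi_1)-\Phi\|_{\Energy}<\eps_{*}$ is the data at time $T_+$ of a classical wave map on $I^{*}$. Choosing $t_1\in[t_0,T_+)$ close enough to $T_+$ that $\|\phi[t_1]-\Phi\|_{\Energy}<\eps_{*}$ and $\sup(I^{*}+(t_1-T_+))>T_+$, taking $(\psi_0,\psi_1):=\phi[t_1]\in\S$, and transporting the resulting classical wave map by the time translation \eqref{time-trans} (legitimate on the class of $(A,\mu)$-wave maps by property (ii)), I obtain a classical wave map $\tilde\phi$ on $I^{*}+(t_1-T_+)$ with $\tilde\phi[t_1]=\phi[t_1]$. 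By uniqueness of classical wave maps, $\tilde\phi$ agrees with $\phi$ on the overlap of their domains, which contains $[t_1,T_+)$; hence $\phi$ and $\tilde\phi$ glue to a classical wave map on an interval strictly larger than $I_{\max}$ (it contains a neighbourhood of $T_+$), contradicting maximality. Thus $T_+=\infty$; the same argument after a time reversal gives $\inf I_{\max}=-\infty$, so $I_{\max}=\R$ and $\phi$ is the required global classical wave map.

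The hard part will be the middle paragraph: upgrading the qualitative, compact-type control supplied by goodness to strong $\Energy$-convergence of $\phi[t]$ at the candidate blowup time. This is exactly what the continuity property (iv) of Theorem~\ref{symscat} is designed to furnish, so the essential content is already isolated there; the remaining work --- the compactness argument producing a single final subinterval $[a,T_+)$ on whose compact subintervals $\phi$ is uniformly an $(A,\mu)$-wave map, together with the routine gluing in the last paragraph --- is soft, relying only on properties (i), (ii), (iv), (vi) of Theorem~\ref{symscat} and the basic local theory of classical wave maps.
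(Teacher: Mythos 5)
Your overall strategy is exactly the one the paper relies on: the paper does not prove Lemma \ref{quant} here but cites \cite{tao:heatwave4} (Lemma 3.1), and the argument there is the continuation argument you give --- maximal interval of existence, the entropy bound from goodness on the compact intervals $[t_0,T_+-1/n]$, a pigeonhole/compactness step producing a terminal interval $(a,T_+)$ all of whose compact subintervals carry $(A,\mu)$-wave maps, the continuity property (iv) of Theorem \ref{symscat} to obtain an $\Energy$-limit $\Phi$ of $\phi[t]$ as $t\to T_+^-$, and then property (vi) together with time translation and uniqueness of classical wave maps to continue past $T_+$ and contradict maximality. These steps are sound as you present them (the fact that your $I_n$ all share the left endpoint $a+\eps_0$, so they exhaust $[a+\eps_0,T_+)$ rather than the interior of $I$, is cosmetic: take $I_n=[a+\eps_0+1/n,\,T_+-1/n]$).

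The one point that does not hold up as justified is the parameter adjustment at the start of your middle paragraph: you claim that by properties (i) and (iii) you may, ``after enlarging $A$ and shrinking $\mu$'', assume $\mu$ is small enough depending on $A$ for (iv). But monotonicity (i) only allows you to \emph{increase} $\mu$, and divisibility (iii) already has ``$\mu$ sufficiently small depending on $A$'' as a hypothesis, so invoking (iii) to arrange that smallness is circular; enlarging $A$ only makes the required threshold smaller. Thus, if the witnesses $(A,M,\mu)$ furnished by the bare statement ``$E$ is good'' (which is existential in $\mu$) happen to have $\mu$ not small relative to $A$, neither (iii) nor (iv) is available from the stated axioms and your middle paragraph stalls. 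This is a quantifier rough edge shared with the axiom package itself rather than a flaw in your main chain of ideas, and it is harmless in the way the lemma is used in the series (goodness is always produced with $\mu$ as small as one wishes, precisely via divisibility once $\mu$ is below the threshold); but to make your write-up self-contained you should either assume from the outset that the goodness witness has $\mu$ sufficiently small depending on $A$ (equivalently, build that into the definition of good, which is how it is exploited downstream), or justify the reduction by some means other than (i) and (iii).
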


In Section \ref{esd-sec} we will recall the notion of the \emph{energy spectral distribution} $\ESD(\phi_0,\phi_1)$ of some classical data $(\phi_0,\phi_1) \in \S$.  We will then prove the following theorems:

\begin{theorem}[Frequency delocalisation implies spacetime bound]\label{freqbound}  Let $0 < E_0 < \infty$ be such that every energy strictly less than $E_0$ is good. Let $\phi^{(n)}[0] \in \S$ be a sequence of classical initial data with the energy bound\footnote{See Section \ref{notation-sec} below for the asymptotic notation used in this paper.}
\begin{equation}\label{enbound}
 \E( \phi^{(n)}[0] ) \leq E_0 + o_{n \to \infty}(1)
\end{equation}
and suppose that there exists $\eps > 0$ (independent of $n$) and $K_n \to \infty$ such that we have the frequency delocalisation property
\begin{equation}\label{fdl-1}
 \int_{1/K_n < s < K_n} \ESD(\phi^{(n)}[0])(s)\ ds = o_{n \to \infty}(1)
\end{equation}
and
\begin{equation}\label{fdl-2}
 \int_{s \geq K_n} \ESD(\phi^{(n)}[0])(s)\ ds, \int_{s \leq 1/K_n} \ESD(\phi^{(n)}[0])(s)\ ds \geq \eps
\end{equation}
for all $n$.  Then there exists $A, M > 0$ independent of $n$ such that for each sufficiently large $n$ and every compact time interval $I \ni 0$, one can extend $\phi^{(n)}[0]$ to a classical wave map $(\phi^{(n)}, I)$ with $(A,1)$-entropy at most $M$.
\end{theorem}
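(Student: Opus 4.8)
The plan is to run the standard frequency-delocalised induction argument: split $\phi^{(n)}[0]$ into a low-frequency and a high-frequency piece, each of strictly subcritical energy; evolve each separately using the hypothesis that energies below $E_0$ are good; and recombine the two solutions by superposition plus the divisible perturbation theory of this paper. Concretely, work in the heat-flow (caloric) gauge of \cite{tao:heatwave2}, \cite{tao:heatwave3}, in which $\ESD(\phi^{(n)}[0])(s)\,ds$ records the energy carried at heat-time scale $s$, and split at $s=1$, which by \eqref{fdl-1} sits deep inside a spectral gap of width tending to $\infty$. The data-level heat-flow truncation then yields classical data $\phi^{(n)}_\lo[0],\phi^{(n)}_\hi[0]\in\S$ (e.g.\ $\phi^{(n)}_\lo[0]$ the time-$1$ harmonic-map-heat-flow regularisation of $\phi^{(n)}[0]$, and $\phi^{(n)}_\hi[0]$ the complementary $\{s<1\}$ part reconstructed from the differentiated fields) with $\E(\phi^{(n)}_\lo[0]) = \int_{s>1}\ESD(\phi^{(n)}[0])(s)\,ds + o_{n\to\infty}(1)$ and $\E(\phi^{(n)}_\hi[0]) = \int_{s<1}\ESD(\phi^{(n)}[0])(s)\,ds + o_{n\to\infty}(1)$. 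By \eqref{enbound}, \eqref{fdl-2} and $K_n>1$ for $n$ large, both energies are $\leq E_0-\eps+o_{n\to\infty}(1)$, hence $\leq E_0-\eps/2$ for all large $n$; since $E_0-\eps/2$ is good, Lemma \ref{quant} and the definition of ``good'' extend $\phi^{(n)}_\lo[0],\phi^{(n)}_\hi[0]$ to global classical wave maps $\phi^{(n)}_\lo,\phi^{(n)}_\hi$ whose $(A_0,\mu_0)$-entropy on every compact interval is $\leq M_0$, with $A_0,M_0,\mu_0$ depending only on $E_0,\eps$; by Theorem \ref{symscat}(i) we take $\mu_0=1$.

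Next I would synthesise, on the given compact interval $I$, an approximate solution $\tilde\phi^{(n)}$ by superimposing $\phi^{(n)}_\lo$ and $\phi^{(n)}_\hi$ at the level of the caloric-gauge differentiated fields $\psi_i$, which take values in a fixed vector bundle and can therefore be added. Two facts are needed: \emph{(a)} $\tilde\phi^{(n)}[0]\to\phi^{(n)}[0]$ in $\Energy$ as $n\to\infty$, immediate from the construction of the pieces and the vanishing \eqref{fdl-1} of the $\ESD$ in the gap; and \emph{(b)} $\tilde\phi^{(n)}$ solves \eqref{cov} up to an error consisting entirely of interactions between the $\{s<1\}$-field $\psi^\hi_i$ and the $\{s>1\}$-field $\psi^\lo_i$, together with heat-flow truncation tails, each carrying a gain which is a negative power of the heat-time separation $K_n$. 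Since $\psi^\lo_i,\psi^\hi_i$ have globally bounded, and in fact divisible, spacetime norms coming from the entropy bounds above, bilinear/trilinear estimates bound this error by $O(K_n^{-\theta})$ in the divisible norm demanded by the perturbation theory, hence $o_{n\to\infty}(1)$ uniformly in $I$. Refining the two entropy coverings into a common partition of $I$ into $\leq 2M_0$ subintervals, on each of which $\phi^{(n)}_\lo$ and $\phi^{(n)}_\hi$ are single $(A_0,1)$-wave maps, one also sees that $\tilde\phi^{(n)}$ is, up to the small error, a single $(O(A_0),1)$-wave map on each subinterval.

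To conclude, apply the divisible perturbation theory of this paper with approximate solution $\tilde\phi^{(n)}$ and exact data $\phi^{(n)}[0]$: for all $n$ large the smallness in (b) and the closeness in (a) fall below the perturbative threshold, so $\phi^{(n)}[0]$ extends to a genuine classical wave map $(\phi^{(n)},I)$ lying within $O(1)$ of $\tilde\phi^{(n)}$ in the relevant norms, and hence, using the subinterval structure, with $(A,1)$-entropy at most some $M$ depending only on $E_0,\eps$ (through $A_0,M_0$), uniformly in $n$ and in $I$. This is the stated conclusion.

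I expect the main obstacle to be step (b): exhibiting the caloric-gauge superposition as a good approximate wave map. This forces the splitting to be performed on the heat-flow resolution of the map rather than on the map itself (so that the pieces are genuinely frequency-separated uniformly in time), requires controlling the discrepancy between the caloric gauge of the superposition and those of its two components, and — crucially — requires the interaction terms to decay in $K_n$ in a \emph{divisible} norm, not merely on a fixed interval. This last requirement is exactly why a divisible (rather than ordinary) perturbation theory is needed here, and is what keeps the final bound $M$ independent of the interval $I$.
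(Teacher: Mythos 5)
Your overall skeleton (split into low/high frequency pieces of energy $\leq E_0-\eps/2$, evolve each by the goodness hypothesis, superpose at the level of the caloric-gauge fields, and close with parabolic/hyperbolic repair plus stability) is the right family of ideas, and your construction of the low-frequency piece as a heat-flow regularisation matches the paper. But there is a genuine gap at your step \emph{(b)}: you assert that the interaction between the $\{s<1\}$ and $\{s>1\}$ components in the wave-tension field carries a gain $O(K_n^{-\theta})$, i.e.\ is $o_{n\to\infty}(1)$ in a divisible norm uniformly on $I$, so that a \emph{single} superposition over all of $I$ and one application of perturbation theory suffices. This is false: the $\operatorname{low}\times\operatorname{high}\to\operatorname{high}$ component of the wave map nonlinearity gains nothing from the frequency separation, even in the caloric gauge and even after exploiting the null structure (the trilinear estimates such as \eqref{trilinear-improv} give exponential gains only when the output frequency is far from the highest input frequency, which is exactly not the case here). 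In the paper's Proposition controlling the wave-tension field of the superposition, the $(\hi,\lo)$ interaction terms are bounded only by $O(c(1))$ times a factor $(\mu')^{1/2}$ coming from the \emph{divisibility} of the low-frequency evolution — i.e.\ the error is $o_{\mu'\to 0}(1)$, not $o_{n\to\infty}(1)$ — and the remark after that proposition states this explicitly.

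Because of this, the high-frequency component cannot be constructed once from $\phi^{(n)}[0]$ and superposed for the whole interval: the persistent $\operatorname{low}\times\operatorname{high}\to\operatorname{high}$ cascade continually drags the true high-frequency part of $\phi^{(n)}$ away from the separately evolved $\phi^{(n)}_{\hi}$. The paper's proof therefore has an iterative structure that your proposal is missing: only $\phi^{\lo}$ is evolved globally; one partitions $I$ into $O_{\mu'}(1)$ intervals $J$ on which $\phi^{\lo}$ is an $(O(1),\mu')$-wave map, and at the initial time of \emph{each} $J$ one re-extracts the high-frequency data from the actual solution by a heat-time cutoff, repairs it to an instantaneous heat flow, re-applies the goodness hypothesis, superposes, and runs the hyperbolic repair/stability theorem on that interval alone. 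This in turn requires proving that the frequency gap itself propagates in time (the analogues of \eqref{fdl-1}--\eqref{fdl-2} at the new initial time, together with closeness of $\phi^{\lo}$ to the heat-flow regularisation of $\phi$ there), which is the content of the induction in Proposition \ref{sweden} and has no counterpart in your argument. Without this re-splitting and gap-propagation step, the smallness you need to close the perturbation theory is simply not available from $K_n\to\infty$ alone.
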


\begin{theorem}[Spatial dispersion implies spacetime bound]\label{spacbound} Let $0 < E_0 < \infty$ be such that every energy strictly less than $E_0$ is good. Let $\phi^{(n)}[0] \in \S$ be a sequence of initial data with the energy bound \eqref{enbound}, which is uniformly localised in frequency in the sense that for every $\eps > 0$ there exists a $C > 0$ (independent of $n$) such that
\begin{equation}\label{spacloc}
  \int_{s \geq C} \ESD(\phi^{(n)}[0])(s)\ ds, \int_{s \leq 1/C} \ESD(\phi^{(n)}[0])(s)\ ds \leq \eps
\end{equation}
for all $n$.  Suppose also that the data is asymptotically spatially dispersed in the sense that
\begin{equation}\label{disp}
 \sup_{x_0 \in \R^2} \int_{|x-x_0| \leq 1} \T_{00}(\phi^{(n)})(0,x)\ dx = o_{n \to \infty}(1),
\end{equation}
where the energy density $\T_{00}$ was defined in \eqref{energy-def}.
Then there exists $A, M > 0$ independent of $n$ such that for each sufficiently large $n$ and every compact time interval $I \ni 0$, one can extend $\phi^{(n)}[0]$ to a classical wave map $(\phi^{(n)}, I_\pm)$ to either the forward time interval $I_+ := I \cap [0,+\infty)$ or the backward time interval $I_- := I \cap (-\infty,0]$ with $(A,1)$-entropy at most $M$.
\end{theorem}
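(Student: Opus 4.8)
The plan is to run a concentration-compactness / profile-decomposition argument, using the induction hypothesis (every energy $<E_0$ is good), the divisible perturbation theory of this paper, and Theorem \ref{symscat}, with the dispersion hypothesis \eqref{disp} fed in only at the very end through finite speed of propagation. First I would reduce: using the uniform frequency localisation \eqref{spacloc}, the scaling symmetry \eqref{cov-scaling}, Theorem \ref{symscat}(i),(ii),(v) (the last to absorb an energy tail of size $\le\eps$ into the small-data theory), and the local well-posedness Theorem \ref{symscat}(vi), one may assume that the energy spectral distribution of $\phi^{(n)}[0]$ is, up to negligible error, supported in a fixed window $s \asymp 1$. In particular every nonlinear profile extracted below will live at spatial frequency $\asymp 1$, hence at scale $\asymp 1$. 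It also suffices, by a routine subsequence argument (our eventual entropy bound being uniform in the interval $I$), to produce constants $A,M$ that work along a subsequence.

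Next I would apply a Bahouri--Gérard-type linear and nonlinear profile decomposition, carried out at the level of the heat-flow-renormalised (caloric-gauge) fields, to the sequence $\phi^{(n)}[0]$: this expresses the data as a superposition of asymptotically orthogonal nonlinear profiles $\Phi^{1},\dots,\Phi^{J}$, with scales $\lambda_n^j \asymp 1$, space/time centres $x_n^j,t_n^j$, and energies $E_1 \ge \cdots \ge E_J$ satisfying $\sum_j E_j \le E_0 + o_{n\to\infty}(1)$, plus a remainder $w_n$ whose free evolution has arbitrarily small scattering size. If $E_1 \le E_0 - \delta$ for some fixed $\delta>0$, then each $E_j < E_0$ is good, so by Theorem \ref{symscat}(i) every nonlinear profile $\Phi^j$ extends globally with $(A,1)$-entropy $O_{\delta,E_0}(1)$, while $w_n$ is controlled by the divisibility of its small scattering size together with Theorem \ref{symscat}(v); since the $\Phi^j$ are asymptotically decoupled (separated in $x_n^j$ or $t_n^j$) and $w_n$ is small, the divisible perturbation theory of this paper yields that for large $n$ the genuine wave map $\phi^{(n)}$ extends to all of $I$ with $(A,1)$-entropy $O_{\delta,E_0}(1)$ --- stronger than the claim. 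So we may assume $E_1 = E_0 - o_{n\to\infty}(1)$; then $\sum_{j\ge2}E_j = o_{n\to\infty}(1)$, and absorbing the lower profiles into the remainder leaves a single threshold profile $\Phi:=\Phi^1$ of energy $E_0 - o_{n\to\infty}(1)$ with parameters $(\lambda_n,x_n,t_n)$, $\lambda_n \asymp 1$, plus a small remainder. (One never needs $\Phi$ to solve the equation globally, which is essential since $E_0$ itself is not assumed good.)

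Set $\tau_n := t_n/\lambda_n$. If $\tau_n$ remains bounded, then, $\Phi$ being a fixed nonzero element of the energy space at scale $\asymp 1$, at the time $t_n=O(1)$ the solution $\phi^{(n)}$ carries energy $\ge E_0 - o_{n\to\infty}(1)$ in a ball $B(x_n,O(1))$; by finite speed of propagation and the monotonicity of the energy through backward light cones, energy $\ge E_0 - o_{n\to\infty}(1)$ then also sits in $B(x_n,O(1))$ at time $0$, so --- as $E_0 > 0$ and this ball meets only $O(1)$ unit balls --- some unit ball carries $\gtrsim E_0$ of the energy at time $0$, contradicting \eqref{disp}. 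Hence $\tau_n \to +\infty$ or $\tau_n \to -\infty$. Suppose $\tau_n \to +\infty$ (the other case is symmetric, swapping $I_+$ and $I_-$). Then at time $0$, $\phi^{(n)}$ is, modulo the small remainder, a rescaled copy of $\Phi$ at a rescaled time tending to $-\infty$; since $\Phi$ scatters to a free wave there, at all such (rescaled) times it is a small perturbation of a free wave of small scattering size, and this regime persists under backward propagation. Thus the divisible perturbation theory gives that $\phi^{(n)}[0]$ extends to the backward interval $I_- = I\cap(-\infty,0]$ with $(A,1)$-entropy $O_{E_0}(1)$, uniformly in $I$; when $\tau_n \to -\infty$ one gets instead the extension to $I_+$. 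This exhausts the cases, so --- upgrading from the subsequence --- the theorem follows.

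The main obstacle is not this skeleton but its two structural inputs, both of which are genuinely new here: (a) building the profile decomposition at the level of the caloric-gauge fields, and proving that asymptotically orthogonal data stay decoupled after the nonlinear, nonlocal heat-flow construction, so that the superposition of the nonlinear profiles is a bona fide approximate wave map; and (b) making the perturbation theory quantitative and stable enough to be iterated over the $O_{E_0}(1)$ time intervals on which the sum of the profile scattering sizes is small (the ``divisible'' perturbation theory), including the degenerate situation of a threshold profile sitting at rescaled time $\pm\infty$, where one only controls a half-line --- which is exactly why the statement yields $I_+$ or $I_-$ rather than all of $I$. Once (a) and (b) are in place, the dispersion hypothesis \eqref{disp} is used only through the elementary light-cone energy estimate ruling out a threshold profile with bounded time centre.
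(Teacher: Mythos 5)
There is a genuine gap: your skeleton outsources the entire difficulty to a Bahouri--G\'erard-type \emph{nonlinear} profile decomposition in the caloric gauge, together with the claim that a superposition of asymptotically orthogonal nonlinear profiles plus a scattering-norm-small remainder is an approximate wave map to which the divisible perturbation theory applies uniformly over arbitrarily long time intervals. Neither of these is available in this paper or its prerequisites, and you yourself flag them as ``genuinely new''; but they are not technical footnotes --- they \emph{are} the theorem. For wave maps the remainder is only small in scattering-type norms, not in energy, and the equation has a non-perturbative $\operatorname{low}\times\operatorname{high}\to\operatorname{high}$ interaction (this is exactly the feature emphasised in the footnote of Section \ref{freqbound-sec}), so one cannot treat the reassembly of profiles as a routine semilinear perturbation argument; the paper only ever proves such superposition/decoupling statements in the three special geometries (frequency separation, spatial separation, and the dispersed/linear regime of Lemma \ref{ids}), each with bespoke estimates such as \eqref{jollyo}, Lemma \ref{sapporo} and Proposition \ref{psiton}. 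Your case analysis on the time centre $\tau_n$ (bounded centre contradicts \eqref{disp}; $\tau_n\to\pm\infty$ gives control on a half-line, explaining the $I_\pm$ asymmetry) is sound in spirit, but as written it rests on unproven infrastructure, and your preliminary reduction ``absorb an energy tail of size $\le\eps$ into the small-data theory'' is also not a legitimate operation: the tail is not a separate solution, and \eqref{spacloc} is used in the paper only to obtain tightness of frequency envelopes, not to truncate the data.

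For contrast, the paper's proof avoids profile decompositions altogether. One assumes the backward extension fails, so there is a maximal $T_*$ with $\|\psi_s\|_{S^1_\mu([-T_*,0])}=A$; by Lemma \ref{freqstable} and \eqref{energy-est} the linear evolution $\psi_{s,lin}$ inherits this largeness (Lemma \ref{sodo}); the atomic structure of $S_{\mu,k}$ then forces concentration of $\psi_{s,lin}$ along a light ray in the past (Lemma \ref{snork}); the dispersion hypothesis \eqref{disp}, propagated along the heat flow, pushes that concentration to times $\le -T_n$ with $T_n\to\infty$ (Lemma \ref{concdist}); and a duality construction built from this distant concentration yields the splitting $\psi_s[0]=\tilde\psi'_s[0]+\tilde\psi''_s[0]$ of Lemma \ref{ids}, in which $\psi'$ has energy $\le E_0-c$ (so the induction hypothesis evolves it on $I_+$ with bounded entropy) and $\psi''$ is simply propagated by the \emph{free} evolution, which disperses forward in time. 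Parabolic and hyperbolic repair (Theorems \ref{parab-thm-abstract}, \ref{hyp-repair}) then control $\phi$ on $I_+$. Thus \eqref{disp} enters structurally, to manufacture the energy-decreasing decomposition, not merely as an endgame exclusion of a bounded time centre; if you want to salvage your approach you would need to actually construct the caloric-gauge profile decomposition and prove the long-time decoupling estimates, which is a project well beyond the tools assembled here.
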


\begin{theorem}[Spatial delocalisation implies spacetime bound]\label{spacdeloc} Let $0 < E_0 < \infty$ be such that every energy strictly less than $E_0$ is good. Let $\phi^{(n)}[0] \in \S$ be a sequence of initial data with the energy bound \eqref{enbound}, which is uniformly localised in frequency in the sense of \eqref{spacloc}.  Suppose also that there exists $\eps > 0$ (independent of $n$) and $R_n \to \infty$ such that
$$ \int_{|x| \leq 1} \T_{00}(\phi^{(n)})(0,x)\ dx \geq \eps$$
and
$$ \int_{|x| \geq R_n} \T_{00}(\phi^{(n)})(0,x)\ dx \geq \eps.$$
Then there exists $A, M > 0$ independent of $n$ such that for each sufficiently large $n$ and every compact time interval $I \ni 0$, one can extend $\phi^{(n)}[0]$ to a classical wave map $(\phi^{(n)}, I)$ with $(A,1)$-entropy at most $M$.
\end{theorem}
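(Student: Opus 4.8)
The plan is to run an induction-on-energy (concentration-compactness) decomposition: split the spatially-delocalised data $\phi^{(n)}[0]$ into a spatially-concentrated ``inner'' piece and a faraway ``outer'' piece, each of energy bounded strictly below $E_0$, evolve each of them using the hypothesis that every energy below $E_0$ is good, recombine the two evolutions using finite speed of propagation, and transfer the resulting spacetime control back to $\phi^{(n)}[0]$ via the divisible perturbation theory established earlier in this paper. The uniform frequency localisation \eqref{spacloc} is used only to keep the surgery step below well-behaved.

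First I would locate a clean transition scale. Since $\E(\phi^{(n)}[0]) \le E_0 + o_{n\to\infty}(1)$ while $R_n \to \infty$, pigeonholing the energy over the $\sim \log R_n$ disjoint dyadic annuli $\{2^j \le |x| \le 2^{j+1}\}$ with $2^{j+1} \le R_n$ produces a radius $\rho_n$ with $1 \le \rho_n$, $2\rho_n \le R_n$, and $\int_{\rho_n \le |x| \le 2\rho_n} \T_{00}(\phi^{(n)})(0,x)\,dx = o_{n\to\infty}(1)$. On this low-energy annulus I would apply the standard energy-excision (``surgery'') construction to produce classical data $\phi^{(n)}_{\mathrm{in}}[0], \phi^{(n)}_{\mathrm{out}}[0] \in \S$ such that $\phi^{(n)}_{\mathrm{in}}[0]$ equals $\phi^{(n)}[0]$ on $\{|x| \le \rho_n\}$ and equals the constant $\phi(\infty)$ on $\{|x| \ge \tfrac54\rho_n\}$, $\phi^{(n)}_{\mathrm{out}}[0]$ equals $\phi^{(n)}[0]$ on $\{|x| \ge 2\rho_n\}$ and equals $\phi(\infty)$ on $\{|x| \le \tfrac74\rho_n\}$, each modification is supported in the annulus $\{\rho_n \le |x| \le 2\rho_n\}$, and $\phi^{(n)}_{\mathrm{in}}[0], \phi^{(n)}_{\mathrm{out}}[0]$ are each within $o_{n\to\infty}(1)$ of $\phi^{(n)}[0]$ in $\Energy$. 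Accounting for energy, $\E(\phi^{(n)}_{\mathrm{in}}[0]) \le \int_{|x|\le\rho_n}\T_{00}(\phi^{(n)})(0,x)\,dx + o(1) \le E_0 - \eps + o(1)$, because the energy at least $\eps$ that $\phi^{(n)}[0]$ carries on $\{|x| \ge R_n\} \subseteq \{|x| > \rho_n\}$ has been discarded; symmetrically $\E(\phi^{(n)}_{\mathrm{out}}[0]) \le E_0 - \eps + o(1)$, the energy at least $\eps$ on $\{|x| \le 1\} \subseteq \{|x| < 2\rho_n\}$ having been discarded. Hence for all large $n$ both pieces have energy $\le E_0 - \tfrac12\eps < E_0$.

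Since $E_0 - \tfrac12\eps$ is good, fix $A_0, M_0 > 0$ and $\mu_0 \in (0,1]$ (independent of $n$) such that every classical wave map of energy $\le E_0 - \tfrac12\eps$ on a compact interval has $(A_0,\mu_0)$-entropy $\le M_0$; by Lemma \ref{quant} the data $\phi^{(n)}_{\mathrm{in}}[0], \phi^{(n)}_{\mathrm{out}}[0]$ extend to global classical wave maps $\phi^{(n)}_{\mathrm{in}}, \phi^{(n)}_{\mathrm{out}}$, whose restrictions to any compact $I \ni 0$ therefore have $(A_0,\mu_0)$-entropy $\le M_0$. Fix such an $I = [-T,T]$ and take $n$ large enough that $T < \tfrac14\rho_n$. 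By finite speed of propagation, throughout $I$ the map $\phi^{(n)}_{\mathrm{in}}$ is constant on $\{|x| \ge \tfrac54\rho_n + T\}$ and $\phi^{(n)}_{\mathrm{out}}$ is constant on $\{|x| \le \tfrac74\rho_n - T\}$; since $\tfrac54\rho_n + T < \tfrac32\rho_n < \tfrac74\rho_n - T$, both equal $\phi(\infty)$ on a neighbourhood of the sphere $\{|x| = \tfrac32\rho_n\}$, so splicing $\phi^{(n)}_{\mathrm{in}}$ on $\{|x| \le \tfrac32\rho_n\}$ to $\phi^{(n)}_{\mathrm{out}}$ on $\{|x| \ge \tfrac32\rho_n\}$ produces a genuine classical wave map $\Psi^{(n)}$ on $I$ (near every point it coincides with $\phi^{(n)}_{\mathrm{in}}$, with $\phi^{(n)}_{\mathrm{out}}$, or with the constant wave map). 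Its data obeys $d_{\Energy}(\Psi^{(n)}[0], \phi^{(n)}[0]) = o_{n\to\infty}(1)$ (the two agree off the annulus, on which each has energy $o(1)$), and the common refinement of the covers of $I$ adapted to $\phi^{(n)}_{\mathrm{in}}$ and to $\phi^{(n)}_{\mathrm{out}}$ writes $\Psi^{(n)}\downharpoonright_I$ as a union of at most $2M_0$ subintervals, on each of which the finite-speed-of-propagation/gluing properties of the $(A,\mu)$-wave map class exhibit the spatially-separated splice $\Psi^{(n)}$ as an $(O(A_0),\mu_0)$-wave map; thus $\Psi^{(n)}\downharpoonright_I$ has $(O(A_0),\mu_0)$-entropy at most $2M_0$.

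Finally, I would feed $\Psi^{(n)}$ into the divisible perturbation theory of this paper, with $\phi^{(n)}[0]$ as a nearby datum: for all sufficiently large $n$ this yields the required classical wave map $(\phi^{(n)}, I)$ with $\phi^{(n)}[0]$ the given data and with $(A,\mu_0)$-entropy at most $M$, where $A$ and $M$ depend only on $E_0$ and $\eps$ (via $A_0, M_0, \mu_0$ and absolute constants) and so are independent of $n$ and of $I$; since $\mu_0 \le 1$, the monotonicity property Theorem \ref{symscat}(i) upgrades this to an $(A,1)$-entropy bound of $M$. The main obstacle is this perturbation step together with the uniformity of $A, M$ over all compact $I$: although finite speed of propagation confines the interaction of $\phi^{(n)}_{\mathrm{in}}$ and $\phi^{(n)}_{\mathrm{out}}$ to the low-energy annulus for the times under consideration, over a long interval the relevant error can only be made small after subdividing into $O_{A_0,\mu_0}(1)$ pieces, and one must also pass from the bound for a fixed $I$ (with $n$ large depending on $I$) to a bound valid on every compact $I$ for all large $n$ by a continuation argument; this is exactly where the divisibility property Theorem \ref{symscat}(iii) and the divisible perturbation theory do the work. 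The remaining work, carried out above, is the pigeonhole-plus-surgery setup and the energy bookkeeping that places the two pieces strictly below $E_0$.
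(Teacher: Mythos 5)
Your overall skeleton (pigeonhole a low-energy annulus, split into an inner and an outer piece each of energy $\leq E_0-\eps/2$, evolve both by goodness of smaller energies, recombine, perturb) is the same as the paper's, but two of your steps assert exactly the things that constitute the real work, and as stated they do not go through. First, the surgery is performed on the map itself, cutting toward the constant $\phi(\infty)$. Since $\phi$ takes values in $\H$ this is not a linear operation, and the energy density only controls derivatives of $\phi$, not $\dist_{\H}(\phi(x),\phi(\infty))$, which can be arbitrarily large on the annulus $\{\rho_n\leq |x|\leq 2\rho_n\}$ even though the annulus carries $o(1)$ energy; a cutoff toward $\phi(\infty)$ therefore costs an uncontrolled amount of energy (one would at least have to cut toward a local average, and also transport $\phi_1$). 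This is precisely why the paper (Section \ref{delocal-sec}) refuses to decompose at the level of the map and instead cuts the dynamic field $\psi_s[0]$, with cutoffs in both $x$ and the heat-time $s$, and then repairs the pieces into exact heat flows via Theorem \ref{parab-thm-instant}; all subsequent norms ($S^1_\mu$, heat-tension, wave-tension) are built on $\psi_s$, not on $\phi$.

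Second, and more seriously, the recombination step relies on two unavailable facts. (i) You invoke ``finite-speed-of-propagation/gluing properties of the $(A,\mu)$-wave map class'' to conclude that the spatial splice $\Psi^{(n)}$ is an $(O(A_0),\mu_0)$-wave map, but no such gluing property exists: by Definition \ref{aes} the class is defined through the caloric-gauge resolution and the $S_{\mu,k}$ norms, which are nonlocal in $x$, and the heat-flow resolution of a spliced map is not the splice of the resolutions. Showing that the recombined object has bounded $S^1_\mu$ norm, small heat-tension and small wave-tension is exactly the content of Proposition \ref{sapporo}, Proposition \ref{psiton}, the superposition lemma and the iterated application of Theorem \ref{hyp-repair} in the paper, i.e.\ the bulk of the argument. (ii) Your splice is a genuine wave map only for $|t|\lesssim \rho_n$, so you obtain the conclusion only for $n$ large \emph{depending on} $I$, whereas the theorem requires $A,M$ and the largeness threshold for $n$ to be uniform over every compact $I$; removing this is not a routine continuation argument, because for $t\gtrsim \rho_n$ the two components interact. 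The paper handles the far future by showing the inner piece lies in a compact subset of the energy space depending only on $r$ (Lemma \ref{cpt}, which is where the frequency localisation \eqref{spacloc} is genuinely used, via tightness of the frequency envelope and of the energy density) and hence has dispersed, uniformly, by large times (Proposition \ref{psiton}). Your remark that \eqref{spacloc} is needed ``only to keep the surgery well-behaved'' misses this; without an analogue of this compactness/decay input your scheme cannot yield the required uniformity in $I$.
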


Theorems \ref{freqbound}, \ref{spacbound}, \ref{spacdeloc} will be proven in Sections \ref{freqbound-sec}, \ref{spacbound-sec}, and \ref{spacdeloc-sec} respectively.  Theorems \ref{symscat}-\ref{spacdeloc}, combined with the arguments in \cite{tao:heatwave}, \cite{tao:heatwave2}, \cite{tao:heatwave3}, \cite{tao:heatwave4}, imply Conjecture \ref{conj}; see \cite{tao:heatwave4} (and \cite{tao:heatwave}) for a detailed discussion.  In Section \ref{scattering-sec} below we will also explain how these results also imply scattering and spacetime bounds for wave maps.

\subsection{Acknowledgements}

This project was started in 2001, while the author was a Clay Prize Fellow.  The author thanks Andrew Hassell and the Australian National University for their hospitality when a substantial portion of this work was initially conducted, and to Ben Andrews and Andrew Hassell for a crash course in Riemannian geometry and manifold embedding, and in particular to Ben Andrews for explaining the harmonic map heat flow.  The author also thanks Mark Keel for background material on wave maps, Daniel Tataru for sharing some valuable insights on multilinear estimates and function spaces, and to Sergiu Klainerman, Igor Rodnianski and Jacob Sterbenz for valuable discussions.  The author is supported by NSF grant DMS-0649473 and a grant from the Macarthur Foundation.

\section{Notation}\label{notation-sec}

In order to efficiently manage the rather large amount of technical computations, it is convenient to introduce a substantial amount of notation, as well as some basic estimates that we will use throughout the paper.

\subsection{Small exponents}

We will need to fix four small exponents
$$ 0 < \delta_{-1} \ll \delta_0 \ll \delta_1 \ll \delta_2 \ll 1$$
which are absolute constants, with each $\delta_i$ being sufficiently small depending on all higher $\delta_i$.  (The strange choice of subscripts here is needed for compatibility with \cite{tao:heatwave3} and \cite{tao:wavemap2}). The exact choice of constants is not important, but for sake of concreteness one could take $\delta_i := 10^{-10^{3-i}}$, for instance.  All the implied constants in the asymptotic notation below can depend on these exponents.  

\begin{remark}
The interpretation of these constants in our argument will be as follows. The largest constant $\delta_2$ is the exponent that quantifies certain useful exponential decays in frequency-localised linear, bilinear, and trilinear estimates when the ratio of two frequencies becomes favorable.  The intermediate constant $\delta_1$ is used to design the weakly frequency localised space $S_k$ (and its variant $S_{\mu,k}$, adapted to the large data theory) that we will encounter later in this paper.  The smaller constant $\delta_0$ is used to control the fluctuation of the frequency envelopes $c(s)$ that we will use to control solutions.  The smallest constant $\delta_{-1}$ is needed to define a certain technical refinement $N_k^\strong$ of the nonlinearity space $N_k$ in Appendix \ref{nk-refine}.
\end{remark}

It will be useful to have some standard cutoff functions that compare two frequency parameters $k, k'$:

\begin{definition}[Cutoff functions]  Given any integers $k, k'$, we define $\chi_{k \geq k'} = \chi_{k' \leq k} := \min( 1, 2^{-(k-k')} )$ and $\chi_{k = k'} := 2^{-|k-k'|}$.
\end{definition}

Thus $\chi_{k' \leq k}$ is weakly localised to the region $k' \leq k$, and similarly for the other cutoffs. In practice we shall usually raise these cutoffs to an exponent such as $\pm \delta_0$, $\pm \delta_1$, or $\pm \delta_2$.

\subsection{Asymptotic notation}

The dimension $m$ of the target hyperbolic space $\H^m$ is fixed throughout the paper, and all implied constants can depend on $m$.

We use $X = O(Y)$ or $X \lesssim Y$ to denote the estimate $|X| \leq CY$ for some absolute constant $C > 0$, that can depend on the $\delta_i$ and the dimension $m$ of the target hyperbolic space.  If we wish to permit $C$ to depend on some further parameters, we shall denote this by subscripts, e.g. $X = O_k(Y)$ or $X \lesssim_k Y$ denotes the estimate $|X| \leq C_k Y$ where $C_k > 0$ depends on $k$.  

Note that parameters can be other mathematical objects than numbers.  For instance, the statement that a function $u: \R^2 \to \R$ is Schwartz is equivalent to the assertion that one has a bound of the form $|\nabla_x^k u(x)| \lesssim_{j,k,u} \langle x \rangle^{-j}$ for all $j,k \geq 0$ and $x \in \R^2$, where $\langle x \rangle := (1+|x|^2)^{1/2}$.

\subsection{Schematic notation}

We use $\nabla_x$ as an abbreviation for $(\partial_1, \partial_2)$, thus for instance 
\begin{align*}
|\nabla_x \phi|_{\phi^* h}^2 &= \langle \partial_i \phi, \partial_i \phi \rangle_{\phi^* h}^2 \\
|(\phi^* \nabla)_x \nabla_x \phi| &= \langle (\phi^* \nabla)_i \partial_j \phi, (\phi^* \nabla)_i \partial_j \phi \rangle_{\phi^* h}^2
\end{align*}
with the usual summation conventions.

We use juxtaposition to denote tensor product; thus for instance if $\psi_x := (\psi_1, \psi_2)$, then $\psi_x^2 = \psi_x \psi_x$ denotes the rank $2$ tensor with the four components $\psi_i \psi_j$ for $i,j = 1,2$; similarly, $\nabla_x^2$ is the rank $2$ tensor operator with four components $\partial_i \partial_j$ for $i,j=1,2$, and so forth.

If $X$ is a tensor-valued quantity, we use $\bigO( X )$ to denote an expression which is \emph{schematically} of the form $X$, which means that it is a tensor whose components are linear combinations of those in $X$, with coefficients being absolute constants (depending only on $m$).  Thus for instance, if $v, w \in \R^m$, then the anti-symmetric matrix $v \wedge w := v w^\dagger - w v^\dagger$ has the schematic form $v \wedge w = \bigO( v w )$.  If the coefficients in the schematic representation depend on a parameter, we will denote this by subscripts.  Thus for instance we have the \emph{Leibniz rule}
\begin{equation}\label{leibnitz}
\nabla_x^j \bigO( \phi \psi ) = \sum_{j_1,j_2 \geq 0: j_1+j_2=j} \bigO_j( \nabla_x^{j_1} \phi \nabla_x^{j_2} \psi )
\end{equation}
and similarly for products of three or more functions.

\subsection{Difference notation}

Throughout this paper, we adopt the notational conventions $\delta f := f' - f$ and $f^* := (f,f')$ for any field $f$ for which the right-hand side makes sense.  For future reference we observe the discretised Leibniz rule
\begin{equation}\label{disc-leib-eq}
\delta(fg) = \bigO( (\delta f) g^* ) + \bigO( f^* \delta g )
\end{equation}
and similarly for products of three or more functions.

\subsection{Function spaces}  

We use the usual Lebesgue spaces $L^p_x(\R^2)$ and Sobolev spaces $H^s_x(\R^2)$, and create spacetime norms such as $L^q_t L^p_x(I \times \R^2)$ in the usual manner.  Later on we shall also use the more complicated spaces adapted to the wave maps problem from \cite{tao:wavemap2} (see also \cite{tataru:wave2}).

If $X$ is a normed space for scalar-valued functions, we also extend $X$ to functions $\phi := (\phi_1,\ldots,\phi_m)$ taking values in a standard finite-dimensional vector space such as $\R^m$ with the convention
$$ \|\phi\|_X := (\sum_{j=1}^m \|\phi_j\|_X^2)^{1/2}.$$
Note that if $X$ was already a Hilbert space on scalar functions, it continues to be a Hilbert space on vector-valued functions, and the orthogonal group $O(m)$ on that space acts isometrically on this space.  If $X$ is merely a normed vector space, then the orthogonal group is no longer isometric, but the action of an element of this group has operator norm bounded above and below by constants depending only on $m$.

\begin{definition}[Littlewood-Paley projections] 
Let $\varphi(\xi)$ be a radial bump function supported in the ball $\{ \xi \in \R^2: |\xi| \leq \tfrac {11}{10} \}$ and equal to $1$ on the ball $\{ \xi \in \R^2: |\xi| \leq 1 \}$.  For each real number $k$, we define the Fourier multipliers
\begin{align*}
\widehat{P_{\leq k} f}(\xi) &:= \varphi(\xi/2^k) \hat f(\xi)\\
\widehat{P_{>k} f}(\xi) &:= (1 - \varphi(\xi/2^k)) \hat f(\xi)\\
\widehat{P_k f}(\xi) &:= (\varphi(\xi/2^k) - \varphi(2\xi/2^k)) \hat f(\xi).
\end{align*}
We similarly define $P_{<k}$ and $P_{\geq k}$.
\end{definition}

We remark that while the Littlewood-Paley projections are traditionally defined for integer $k$, it will be convenient for various minor technical reasons to generalise these projections to real $k$.

It is useful to introduce the following frequency-weighted Sobolev norms.


\begin{definition}[Sobolev-type norms]\label{sobspace}  For every $m \geq 0$ and $s>0$, and any Schwartz function on $\R^2$, we define
$$ \| u \|_{A^m(s)} := \sum_{j=0}^m s^{j/2} \|\nabla_x^j u\|_{L^\infty_x(\R^2)} + s^{(j+1)/2} \| \nabla_x^{j+1} u \|_{L^2_x(\R^2)}.$$
If instead $u$ is a function on $\R^+ \times \R^2$ which is Schwartz in space, we define
$$ \| u \|_{A^m(\R^+ \times \R^2)} := \sup_{s>0} \|u(s)\|_{A^k(s)} + \int_0^\infty \|u(s)\|_{A^k(s)}^2 \frac{ds}{s}.$$
\end{definition}

We observe the useful algebra property
\begin{equation}\label{algebra-hks}
\| uv \|_{A^m(s)} \lesssim_m \|u\|_{A^m(s)} \|v\|_{A^m(s)}
\end{equation}
for any $m \geq 0$ and $s>0$, and any Schwartz functions $u,v$ on $\R^2$, which follow from many applications of the Leibniz rule, the triangle inequality, and H\"older's inequality.  This algebra property implies a similar one on $\R^+ \times \R^2$, namely
\begin{equation}\label{algebra-hks-2}
\| uv \|_{A^m(\R^+ \times \R^2)} \lesssim_m \|u\|_{A^m(\R^+ \times\R^2)} \|v\|_{A^m(\R^+ \times \R^2)}
\end{equation}

In a similar spirit we define the frequency-weighted Sobolev norms
\begin{equation}\label{ek-def}
\|u\|_{\dot H^m_k(\R^2)} := \sup_{k'} \chi_{k=k'}^{-\delta_1} 2^{mk'} \| P_{k'} u \|_{L^2_x(\R^2)}
\end{equation}
and
\begin{equation}\label{ek-def2}
\|u\|_{A^m_k(\R^2)} := \sum_{m'=0}^m 2^{-(m'-1)k} \| u \|_{\dot H^{m'}_k(\R^2)}
\end{equation}

\begin{lemma}[Product estimates]\label{ek-prod}  For any Schwartz $u,v$ and $k_1,k_2 \in \R$, one has
$$ \| uv\|_{\dot H^1_{\max(k_1,k_2)}(\R^2)} \lesssim \|u\|_{\dot H^1_{k_1}(\R^2)} \|v\|_{\dot H^1_{k_2}(\R^2)};$$
and
$$ \| uv\|_{\dot H^0_{\max(k_1,k_2)}(\R^2)} \lesssim \chi_{k_1 \geq k_2}^{\delta_1} \|u\|_{\dot H^0_{k_1}(\R^2)} \|v\|_{\dot H^1_{k_2}(\R^2)}.$$
If $m \geq 1$, one similarly has
$$ \| uv\|_{A^m_{\max(k_1,k_2)}(\R^2)} \lesssim_m \|u\|_{A^m_{k_1}(\R^2)} \|v\|_{A^m_{k_2}(\R^2)}.$$
\end{lemma}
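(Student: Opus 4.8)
The plan is to prove all three inequalities by the Littlewood--Paley trichotomy (paraproduct decomposition) together with Bernstein's inequality. First I record that, directly from the definitions, the norms in play are equivalent to frequency-envelope conditions: $\|u\|_{\dot H^m_k} \approx \sup_\ell 2^{\delta_1|k-\ell|} 2^{m\ell} \|P_\ell u\|_{L^2_x(\R^2)}$ and $\|u\|_{A^m_k} \approx 2^k \sup_\ell 2^{\delta_1|k-\ell|} \max(1,2^{m(\ell-k)}) \|P_\ell u\|_{L^2_x(\R^2)}$; in particular $\|P_\ell u\|_{L^2_x}$ is dominated by $2^{-m\ell} 2^{-\delta_1|k-\ell|}\|u\|_{\dot H^m_k}$ in the first case and by $2^{-k} 2^{-m(\ell-k)^+} 2^{-\delta_1|k-\ell|}\|u\|_{A^m_k}$ in the second. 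It therefore suffices, for each output frequency $k'$, to bound $\|P_{k'}(uv)\|_{L^2_x}$ after writing $uv = \sum_{\ell_1,\ell_2}(P_{\ell_1}u)(P_{\ell_2}v)$ and splitting into the high--high regime $\ell_1 = \ell_2 + O(1) \geq k' - O(1)$, the high--low regime $\ell_1 = k' + O(1) > \ell_2 + 5$, and the symmetric low--high regime.

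In each regime I estimate the elementary product by H\"older: in the high--low and low--high regimes I place the high-frequency factor in $L^2_x$ and the low-frequency factor in $L^\infty_x$, while in the high--high regime I use $\|P_{k'}F\|_{L^2_x}\lesssim 2^{k'}\|F\|_{L^1_x}\leq 2^{k'}\|P_{\ell_1}u\|_{L^2_x}\|P_{\ell_2}v\|_{L^2_x}$. The $L^\infty_x$ factor is handled by Bernstein, $\|P_\ell f\|_{L^\infty_x(\R^2)}\lesssim 2^\ell\|P_\ell f\|_{L^2_x(\R^2)}$. Inserting the frequency-envelope bounds and summing over $\ell_1,\ell_2$ reduces each contribution to a geometric series; the structural point is that these series converge because the power of $2$ produced by the scaling/Bernstein factors is at least $1$, which dominates the weight exponent $\delta_1\ll 1$ (and in the third inequality the hypothesis $m\geq 1$ supplies $m$ extra powers of decay above the base frequency, giving still more room). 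The output weight $2^{\delta_1|k-k'|}$, $k := \max(k_1,k_2)$, is redistributed against the input weights using $|k-k'|\leq |k-\ell|+|\ell-k'|$.

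For the first inequality, since both factors lie in $\dot H^1$ each Bernstein factor $2^\ell$ is exactly cancelled by the $2^{-\ell}$ of the $\dot H^1$-envelope, so no loss survives and one obtains the clean bound with no $\chi$-factor; this estimate is essentially sharp. For the second inequality the two factors are asymmetric: $u$ lies only in $\dot H^0$, so when $u$ must be placed in $L^\infty_x$ (i.e.\ when it is the low-frequency factor) Bernstein contributes $2^{\ell_1}\|P_{\ell_1}u\|_{L^2_x}$, which the $\dot H^0$-envelope cannot fully absorb; summing $\ell_1$ up to the output frequency then leaves a residual power of the form $\chi^{\delta_1}$, which is kept on the right-hand side rather than discarded (whereas when $v$ is the low-frequency factor, its $\dot H^1$-envelope absorbs the Bernstein factor completely, as in the first inequality). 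Tracking which regime carries this surplus and matching it to the factor $\chi_{k_1\geq k_2}^{\delta_1}$ is the one point that requires care.

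For the third inequality one re-runs the same trichotomy with the $A^m_k$-envelopes: the decay rate $m+\delta_1$ above the base frequency (coming from the $\max(1,2^{m(\ell-k)})$ factor) makes every internal sum converge comfortably, and the normalizing powers $2^{-(m'-1)k}$ built into the definition of $A^m$ are precisely those needed to balance $\max(k_1,k_2)$ against $\min(k_1,k_2)$ when the frequencies are far apart. (One could instead expand $\nabla_x^{m'}(uv)=\sum_{m_1+m_2=m'}\bigO_m(\nabla_x^{m_1}u\,\nabla_x^{m_2}v)$ via the Leibniz rule and reduce to suitably sharpened forms of the first two inequalities, but the direct approach is cleaner.) Either way the content is routine modulo bookkeeping, and the real obstacle is organizational: the argument fans out into a sizeable number of sub-cases (which factor is higher frequency; where $k'$ sits relative to $k_1,k_2,\ell_1,\ell_2$; and, for $A^m$, whether the relevant frequency lies above or below the base), and in each one must verify that the $2^{\delta_1|\cdot|}$-losses are genuinely dominated by a geometric gain and that the asymmetric surplus in the second inequality is neither lost nor overcounted. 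I would organize this by isolating one model bound $\|P_{k'}((P_{\ell_1}u)(P_{\ell_2}v))\|_{L^2_x}\lesssim(\text{explicit weight})\,\|u\|_{X_{k_1}}\|v\|_{Y_{k_2}}$ and then summing, rather than redoing each inequality from scratch.
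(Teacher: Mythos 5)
Your proposal is correct and follows essentially the same route as the paper: the "model bound" you isolate is exactly the paper's dyadic estimate $\| P_k( P_{k_1} u \, P_{k_2} v ) \|_{L^2_x} \lesssim 2^{\min(k,k_1,k_2)} \|P_{k_1} u \|_{L^2_x} \|P_{k_2} v \|_{L^2_x}$, proved the same way (Bernstein on the low-frequency factor in $L^\infty_x$, and $L^1_x \to L^2_x$ Bernstein in the high--high case), followed by the low--high/high--low/high--high decomposition and summation against the $2^{-\delta_1|\cdot|}$ envelope weights. The paper's write-up is in fact terser than yours about the bookkeeping of the $\chi_{k_1\geq k_2}^{\delta_1}$ factor and the $A^m_k$ case, so your level of detail matches or exceeds its own.
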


\begin{proof}  For any given $k,k_1,k_2 \in \R$, consider the problem of obtaining the best constant $C_{k,k_1,k_2}$ in the dyadic estimate
$$ \| P_k( P_{k_1} u P_{k_2} v ) \|_{L^2_x(\R^2)} \leq C_{k,k_1,k_2} \|P_{k_1} u \|_{L^2_x(\R^2)} \|P_{k_2} v \|_{L^2_x(\R^2)}.$$
Discarding the $P_k$ projection and using Bernstein's inequality to estimate $P_{k_2}v$ in $L^\infty_x$, one obtains the bound
$$ C_{k,k_1,k_2} \lesssim 2^{k_2};$$
a similar argument gives $C_{k,k_1,k_2} \lesssim 2^{k_1}$.  Finally, using Bernstein to control the $L^2_x$ norm of $P_k(P_{k_1} u P_{k_2} v)$ by the $L^1$ norm gives $C_{k,k_1,k_2} \lesssim 2^{k}$.  In other words, we have shown
$$ \| P_k( P_{k_1} u P_{k_2} v ) \|_{L^2_x(\R^2)} \lesssim 2^{\min(k,k_1,k_2)} \|P_{k_1} u \|_{L^2_x(\R^2)} \|P_{k_2} v \|_{L^2_x(\R^2)}.$$
The claims now follow by decomposing $uv = \sum_{k,k'_1,k'_2} P_k( P_{k'_1} u P_{k'_2} v )$ and using the triangle inequality (noting from Fourier analysis that we can restrict the summation to the low-high cases $k'_1 \leq k'_2 + O(1)$, $k=k'_2+O(1)$, the high-low cases $k'_2 \leq k'_1 + O(1), k = k'_1+O(1)$, and the high-high cases $k'_1=k'_2+O(1)$, $k \leq k'_1+O(1)$).
\end{proof}

\subsection{Frequency envelopes}

We first recall a useful definition from \cite{tao:wavemap}.

\begin{definition}[Frequency envelope]\label{freqenv}\cite{tao:wavemap}  Let $E > 0$.  A \emph{frequency envelope} of energy $E$ is a map $c: \R^+ \to \R^+$ with
\begin{equation}\label{cse}
\int_0^\infty c(s)^2 \frac{ds}{s} = E
\end{equation}
such that
\begin{equation}\label{sm}
c(s') \leq \max( (s'/s)^{\delta_0}, (s/s')^{\delta_0} ) c(s)
\end{equation}
for all $s, s' > 0$.
\end{definition}

\begin{remark} The estimate \eqref{sm} is asserting that $c(s)$ can grow at most as fast as $s^{\delta_0}$, and decay at most as rapidly as $s^{-\delta_0}$.  In \cite{tao:wavemap} a discretised version of this concept was used, with the continuous variable $s$ being replaced by the discrete variable $2^{-2k}$ for integer $k$.  However, as the heat flow uses a continuous time variable $s$, it is more natural to use the continuous version of a frequency envelope.  In \cite{tataru:wave3} it was observed that one could take asymmetric envelopes, in which $c(s)$ is allowed to decay faster as $s$ decreases than when $s$ increases.  However, due to our use of heat flow (which does not have as good frequency damping properties near the frequency origin as Littlewood-Paley operators) it is not convenient to use these asymmetric envelopes in our arguments.
\end{remark}

\begin{remark} Observe from \eqref{cse}, \eqref{sm} that if $c$ is a frequency envelope of energy $E$, then we have the pointwise bounds
\begin{equation}\label{cse-sup}
c(s) \lesssim \sqrt{E}
\end{equation}
for all $s > 0$.  Another useful inequality, arising from Cauchy-Schwarz, is
$$
\int_{s_1}^{s_2} \frac{c(s)}{s}\ ds \leq \sqrt{E} \log(s_2/s_1)^{1/2}
$$
which implies in particular that
$$
\int_{s_- \leq s_1 \leq \ldots \leq s_j \leq s_+} \frac{c(s_1)}{s_1} \ldots \frac{c(s_j)}{s_j}\ ds_1 \ldots ds_j \leq \frac{E^{j/2}}{j!} \log(s_+/s_-)^{j/2}.$$
Using Stirling's formula $j! = O(1)^j j^j$ and the elementary calculus inequality $\log(x)^{j/2} \lesssim_\eps O(1)^j j^{j/2} x^{\delta_0}$ for any $x \geq 1$ (which comes from applying the estimate $e^t \geq t$ to $t := \frac{2\delta_0}{j} \log x$) we conclude that
\begin{equation}\label{sss}
\int_{s_- \leq s_1 \leq \ldots \leq s_j \leq s_+} \frac{c(s_1)}{s_1} \ldots \frac{c(s_j)}{s_j}\ ds_1 \ldots ds_j \lesssim_\eps O_E(1)^j j^{-j/2} (s_+/s_-)^{\delta_0}.
\end{equation}
\end{remark}

We also define the frequency $k(s) \in \Z$ of a heat-temporal variable $s>0$ by the formula
\begin{equation}\label{ks-def}
k(s) := \lfloor \log_2 s^{-1/2} \rfloor,
\end{equation}
thus $2^{-2k(s)} \sim s$.  

We recall some Gronwall-type inequalities from \cite{tao:heatwave3}:

\begin{lemma}[Gronwall-type inequality from $s=+\infty$]\label{gron-lem}\cite[Lemma 2.15]{tao:heatwave3} Let $c$ be a frequency envelope of energy at most $E$.  Let $f, g: \R^+ \to \R^+$ be locally integrable functions with $\lim_{s \to \infty} f(s) = 0$ such that $g$ obeys \eqref{sm}, and
$$ f(s) \leq g(s) + \int_s^\infty f(s') c(s') ((s'-s)/s')^{-\theta} (s'/s)^{-3\delta_0}\ \frac{ds'}{s'}$$
for all $s > 0$ and some $0 \leq \theta < 1$.  Then we have
$$ f(s) \lesssim_{E,\theta} g(s)$$
for all $s > 0$.
\end{lemma}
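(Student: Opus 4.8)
The plan is to run a continuous Gronwall iteration. I would write the hypothesis as $f \le g + Tf$, where $T$ is the positive linear integral operator
$$ Tf(s) := \int_s^\infty f(s')\, c(s')\,\Bigl(\tfrac{s'-s}{s'}\Bigr)^{-\theta}\Bigl(\tfrac{s'}{s}\Bigr)^{-3\delta_0}\,\frac{ds'}{s'} . $$
(I take the hypothesised inequality to be non-vacuous, i.e. its right-hand side finite for each $s$; in the applications $f$ is in addition locally bounded, which makes this automatic and justifies the manipulations below.) Since $T$ preserves positivity, iterating gives $f \le \sum_{j=0}^{n-1} T^j g + T^n f$ for every $n \ge 1$, so it suffices to establish (a) $\sum_{j \ge 0} T^j g(s) \lesssim_{E,\theta} g(s)$ for all $s$, and (b) $T^n f(s) \to 0$ as $n \to \infty$ for each fixed $s > 0$; (a) and (b) together give $f(s) \le O_{E,\theta}(1)\, g(s)$.

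\textbf{Estimating the iterated kernel.} For (a) I would unfold $T^j g(s) = \int_{s \le s_1 \le \dots \le s_j} g(s_j) \prod_{i=1}^j c(s_i)\,\bigl(\tfrac{s_i - s_{i-1}}{s_i}\bigr)^{-\theta}\bigl(\tfrac{s_i}{s_{i-1}}\bigr)^{-3\delta_0}\,\tfrac{ds_i}{s_i}$ with $s_0 := s$. The factors $(s_i/s_{i-1})^{-3\delta_0}$ telescope to $(s_j/s)^{-3\delta_0}$, and \eqref{sm} gives $g(s_j) \le (s_j/s)^{\delta_0} g(s)$ since $s_j \ge s$, so $g(s_j)(s_j/s)^{-3\delta_0} \le g(s)(s_j/s)^{-2\delta_0}$. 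Passing to logarithmic variables $u_i := \log s_i$ — so that $(s_j/s)^{-2\delta_0} = \prod_i e^{-2\delta_0(u_i - u_{i-1})}$ also telescopes and $\bigl(\tfrac{s_i-s_{i-1}}{s_i}\bigr)^{-\theta} = (1 - e^{-(u_i - u_{i-1})})^{-\theta}$ — one is reduced to
$$ T^j g(s) \le g(s) \int_{u_0 \le u_1 \le \dots \le u_j} \ \prod_{i=1}^j c(e^{u_i})\,(1 - e^{-(u_i - u_{i-1})})^{-\theta}\, e^{-2\delta_0(u_i - u_{i-1})}\ du_i . $$
The crucial step is to apply H\"older on this ordered simplex (with Lebesgue measure $\prod du_i$) and exponents $p, q$, $\tfrac1p + \tfrac1q = 1$, choosing $q \in (1, \min(2, 1/\theta))$ — nonempty since $\theta < 1$, with then $p \ge 2$ — putting $\prod_i c(e^{u_i})$ in $L^p$ and the remaining product in $L^q$. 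After the substitution $w_i := u_i - u_{i-1} \ge 0$ the $L^q$ factor factorises, contributing $\bigl(\int_0^\infty (1 - e^{-w})^{-q\theta} e^{-2q\delta_0 w}\, dw\bigr)^{j/q} = O_\theta(1)^j$, the $w$-integral converging since $q\theta < 1$ and $\delta_0 > 0$. For the $L^p$ factor, by symmetry of the integrand the ordered-simplex integral satisfies $\int_{u_0 \le u_1 \le \dots \le u_j} \prod_i c(e^{u_i})^p\, du_i \le \tfrac1{j!}\bigl(\int_0^\infty c(\sigma)^p \tfrac{d\sigma}{\sigma}\bigr)^j$, and the frequency-envelope bound $\int_0^\infty c(\sigma)^p \tfrac{d\sigma}{\sigma} \le (\sup c)^{p-2} \int_0^\infty c^2 \tfrac{d\sigma}{\sigma} \lesssim E^{p/2}$ (valid since $p \ge 2$, using \eqref{cse} and \eqref{cse-sup}) bounds its $p$-th root by $(j!)^{-1/p} O_E(1)^j$. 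Hence $T^j g(s) \le (j!)^{-1/p}\beta^j\, g(s)$ with $\beta = O_{E,\theta}(1)$; as $(j!)^{-1/p}$ decays super-exponentially, $\sum_{j \ge 0} T^j g(s) \le \bigl(\sum_{j \ge 0} \beta^j/(j!)^{1/p}\bigr) g(s) = O_{E,\theta}(1)\, g(s)$, which is (a).

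\textbf{The remainder term.} For (b), fix $s$ and $S > s$ and split $T^n f(s)$ according to whether the outermost variable $s_n$ exceeds $S$. On $\{s_n > S\}$ I would bound $f(s_n) \le \eps_S := \sup_{\sigma \ge S} f(\sigma)$, which by $\lim_{\sigma \to \infty} f(\sigma) = 0$ is finite for $S$ large and tends to $0$ as $S \to \infty$; the remaining integral is precisely the estimate above with $g \equiv 1$ (which obeys \eqref{sm}), so this piece is $\le (n!)^{-1/p}\beta^n \eps_S \le O_{E,\theta}(1)\,\eps_S$. On $\{s_n \le S\}$ all the $s_i$ lie in $[s,S]$; bounding $c(s_i) \lesssim \sqrt E$ and $(s_i/s_{i-1})^{-3\delta_0} \le 1$ and integrating out $s_1, \dots, s_{n-1}$ against the singular factors (integrable since $\theta < 1$) yields a Dirichlet-type simplex integral decaying super-exponentially in $n$, times $\int_s^S f(s_n)\,\tfrac{ds_n}{s_n} < \infty$ (local integrability of $f$), so this piece $\to 0$ as $n \to \infty$. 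Given $\eta > 0$, choosing first $S$ so the first piece is $< \eta/2$ and then $n$ large so the second is $< \eta/2$, and using $T^n f(s) \ge 0$, gives $T^n f(s) \to 0$, i.e. (b).

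\textbf{Main obstacle.} The one genuinely delicate point is the singularity of the kernel as $s' \to s^+$: the weight $((s'-s)/s')^{-\theta}$ fails to be square-integrable once $\theta \ge \tfrac12$, so the Cauchy--Schwarz argument (the $(p,q) = (2,2)$ case) underlying estimates such as \eqref{sss} is unavailable. Running H\"older with $q \in (1, 1/\theta)$ instead costs only replacing the decisive factorial gain $(j!)^{-1}$ by $(j!)^{-1/p}$ — still super-exponential, hence still summable — at the harmless price of the higher-integrability bound $\int c^p \tfrac{ds}{s} \lesssim E^{p/2}$, which is immediate from $c \lesssim \sqrt E$ and $\int c^2 \tfrac{ds}{s} = E$.
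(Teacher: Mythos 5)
Your proof is correct. Note that the paper itself contains no argument for this lemma: it is quoted from the earlier paper in the series (\cite[Lemma 2.15]{tao:heatwave3}), so there is no in-paper proof to compare against; your argument is a legitimate, self-contained proof and follows the natural route (iterate $f \le g + Tf$, pass to logarithmic variables, and bound the iterated kernels by ordered-simplex integrals in the spirit of \eqref{sss}). The genuinely important point — that the Cauchy--Schwarz argument behind \eqref{sss} breaks down once $\theta \ge 1/2$ because $((s'-s)/s')^{-\theta}$ is no longer square-integrable near $s'=s$, and must be replaced by H\"older with $q<\min(2,1/\theta)$ on the simplex, trading the factorial gain $(j!)^{-1}$ for $(j!)^{-1/p}$ at the cost of the elementary bound $\int_0^\infty c^p\,\frac{ds}{s}\lesssim E^{p/2}$ — is identified and handled correctly, and the telescoping of the $(s'/s)^{-3\delta_0}$ weights combined with \eqref{sm} for $g$ (and for the constant function $1$ in your step (b)) is exactly what makes the angular factors summable. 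Two minor remarks. First, your hedge about the hypothesis being ``non-vacuous'' is unnecessary: since the kernel is nonnegative, the iterated inequality $f \le \sum_{j<n} T^j g + T^n f$ holds in $[0,+\infty]$ by Tonelli, and your step (b) shows $T^n f(s)\to 0$ for each fixed $s$ using only local integrability of $f$ and its decay at infinity, so no finiteness of $Tf$ needs to be assumed. Second, in step (b) the uniform-in-$s_n$ bound on the Dirichlet simplex integral (which after the substitution $1-e^{-w}\gtrsim_L w$ is of the form $(\log(s_n/s))^{n(1-\theta)-1}\,\Gamma(1-\theta)^n/\Gamma(n(1-\theta))$) requires $n(1-\theta)\ge 1$; this holds only for $n$ large, which is all you need for the limit, but it deserves an explicit word since for small $n$ the inner integration leaves an integrable singularity at $s_n=s$ rather than a bounded factor.
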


\begin{lemma}[Gronwall-type inequality from $s=0$]\label{gron-lem-2}\cite[Lemma 2.16]{tao:heatwave3} Let $c$ be a frequency envelope of energy at most $E$.  Let $f, g: \R^+ \to \R^+$ be locally integrable functions with $\lim_{s \to 0} f(s) = 0$ such that $g$ obeys \eqref{sm}, and
$$ f(s) \leq g(s) + \int_s^\infty f(s') c(s') ((s-s')/s)^{-\theta} (s'/s)^{3\delta_0}\ \frac{ds'}{s'}$$
for all $s > 0$ and some $0 \leq \theta < 1$.  Then we have
$$ f(s) \lesssim_{E,\theta} g(s)$$
for all $s > 0$.
\end{lemma}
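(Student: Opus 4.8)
The plan is to obtain Lemma~\ref{gron-lem-2} from its companion Lemma~\ref{gron-lem} by exploiting the fact that the two statements are mirror images of one another under the inversion $s \mapsto 1/s$, which interchanges the two ends $s=0$ and $s=+\infty$ of the heat-temporal axis. Concretely, I would set $\sigma := 1/s$ and introduce the reflected functions $\tilde f(\sigma) := f(1/\sigma)$, $\tilde g(\sigma) := g(1/\sigma)$, and $\tilde c(\sigma) := c(1/\sigma)$ on $\R^+$. A change of variables in the normalisation \eqref{cse} shows that $\tilde c$ again has energy $E$; since the regularity condition \eqref{sm} refers only to the ratio of its two arguments, it is invariant under inversion, so $\tilde c$ is still a frequency envelope of energy $E$, and likewise $\tilde g$ still obeys \eqref{sm}. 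The decay hypothesis $\lim_{s \to 0} f(s) = 0$ becomes $\lim_{\sigma \to \infty} \tilde f(\sigma) = 0$, and local integrability passes to the reflected functions (for $\tilde g$ this also follows from \eqref{sm}).

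Next I would rewrite the integral hypothesis on $f$ in the variable $\sigma$. With $s = 1/\sigma$ and $s' = 1/\sigma'$, the range $0 < s' < s$ corresponds to $\sigma < \sigma' < \infty$, one has $\frac{ds'}{s'} = \frac{d\sigma'}{\sigma'}$, and the two weight factors transform as
$$ \frac{s - s'}{s} = 1 - \frac{s'}{s} = 1 - \frac{\sigma}{\sigma'} = \frac{\sigma' - \sigma}{\sigma'}, \qquad \left(\frac{s'}{s}\right)^{3\delta_0} = \left(\frac{\sigma'}{\sigma}\right)^{-3\delta_0}. $$
Hence the assumed inequality for $f$ becomes precisely the hypothesis of Lemma~\ref{gron-lem} for the triple $\tilde f, \tilde g, \tilde c$ (with $\sigma, \sigma'$ in the roles of $s, s'$, and the same $E, \theta$). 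Applying that lemma gives $\tilde f(\sigma) \lesssim_{E,\theta} \tilde g(\sigma)$ for all $\sigma > 0$, and undoing the substitution yields the claimed bound $f(s) \lesssim_{E,\theta} g(s)$.

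I do not expect a genuine obstacle here: the only thing needing care is the bookkeeping of the transformed range of integration and of the two weights, since the analytic content is entirely contained in Lemma~\ref{gron-lem}. For completeness I note that a self-contained proof would instead proceed by Gronwall iteration: writing the inequality as $f \leq g + Tf$ for the positivity-preserving operator $T$ given by the integral, one iterates to get $f \leq \sum_{j=0}^{n-1} T^j g + T^n f$, bounds $g(s') \lesssim (s/s')^{\delta_0} g(s)$ inside each $T^j g$ using \eqref{sm}, and sums the resulting series via the factorial-type gain \eqref{sss} for iterated integrals of frequency envelopes; the remainder $T^n f(s)$ is then shown to vanish as $n \to \infty$ using $\lim_{s \to 0} f(s) = 0$ together with the same rapidly-decaying bound on the iterated kernel. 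In that approach the one subtlety is the singular weight $((s - s')/s)^{-\theta}$, which spoils the symmetry needed for the $\tfrac{1}{n!}$-type gain; I would handle it by splitting the $s'$-integral into the regime $s/2 \leq s' < s$, where $c(s')$ is comparable to $c(s)$ by \eqref{sm} and the weight is harmlessly integrated out at the cost of an $O_\theta(1)$ factor, and the regime $0 < s' < s/2$, where the weight is $O_\theta(1)$ and \eqref{sss} applies directly.
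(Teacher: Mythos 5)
Your proof is correct. One remark before the substance: the displayed hypothesis in the statement reads $\int_s^\infty$, but (as the weight $((s-s')/s)^{-\theta}$ and the boundary condition at $s=0$ make clear, and as the later applications in this paper confirm, e.g.\ the inequality $f_j(s) \lesssim \mu^{2-\eps} + \int_0^s ((s-s')/s)^{-1/2}(s'/s)^{\delta_2/2} c(s')^2 f_j(s')\,\frac{ds'}{s'}$ fed into Lemma \ref{gron-lem-2} in the wave-tension estimate) the integral is meant to run over $0 < s' < s$; you silently read it this way, which is the correct interpretation of the typo.

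As to the argument itself: the paper gives no proof of this lemma at all, it simply cites Lemma 2.16 of the fifth paper in the series, so there is nothing internal to compare against. Your reduction to Lemma \ref{gron-lem} via the inversion $s \mapsto 1/s$ is complete and clean: the energy normalisation \eqref{cse} is invariant under $\int_0^\infty c(1/\sigma)^2\,\frac{d\sigma}{\sigma} = \int_0^\infty c(s)^2\,\frac{ds}{s}$, the regularity condition \eqref{sm} is symmetric in its two arguments and hence inversion-invariant, the boundary condition $\lim_{s\to 0} f(s)=0$ becomes $\lim_{\sigma\to\infty}\tilde f(\sigma)=0$, and your computation of the two weights, $(s-s')/s = (\sigma'-\sigma)/\sigma'$ and $(s'/s)^{3\delta_0} = (\sigma'/\sigma)^{-3\delta_0}$, together with $\frac{ds'}{s'} = \frac{d\sigma'}{\sigma'}$ on the reversed range, reproduces exactly the hypothesis of Lemma \ref{gron-lem}. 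Of course this makes your proof conditional on Lemma \ref{gron-lem}, which is likewise only quoted here; your fallback sketch (Neumann-series iteration, using \eqref{sm} to compare $g(s')$ with $g(s)$, the factorial gain \eqref{sss} for the iterated kernels, and a splitting at $s' \sim s/2$ to tame the singular factor $((s-s')/s)^{-\theta}$) is the standard self-contained route and is what the cited source does in substance, so either way the lemma is adequately justified.
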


\section{Resolutions and reconstruction of maps}

In several of the preceding papers in this series \cite{tao:heatwave2}, \cite{tao:heatwave3}, \cite{tao:heatwave4}, a classical wave map $\phi: I \times \R^2 \to \H$ was analysed by extending it to a map $\phi: \R^+ \times I \times \R^2 \to \H$ along the harmonic map heat flow, and then viewing that extended map in an orthonormal frame in the \emph{caloric gauge} to obtain differentiated fields $\Psi_{s,t,x} = (\psi_{s,t,x}, A_{s,t,x})$ (with $A_s=0$), which took values in vector spaces such as $\R^m$ rather than in the hyperbolic space $\H^m$.  In particular, the field $\psi_s: \R^+ \times I \times \R^2 \to \R^m$ was generated; one can view $\psi_s$ as a sort of non-linear Littlewood-Paley resolution of $\phi$, analogous to the classical linear Littlewood-Paley resolution $\psi_s = \partial_s e^{s\Delta} \phi$ of maps $\phi: I \times \R^2 \to \R^m$ into a Euclidean space rather than hyperbolic space.  Furthermore, by integrating the gauge condition $A_s=0$, one could reconstruct the original map $\phi$ from the resolution $\psi_s$ (and some additional boundary data $\phi(\infty)$, $e(\infty)$ of little importance).

In this paper we will need to generalise this type of \emph{resolution} of a map $\phi$ to a field $\psi_s$, and \emph{reconstruction} of $\phi$ from $\psi_s$, to more general types of maps $\phi$ (not necessarily wave maps), and more general fields $\psi_s$ (not necessarily obeying the harmonic map heat flow).  The additional freedom afforded by this generalisation is necessary because we will be performing various decompositions of $\psi_s$ (e.g. into high and low frequency components) in order to establish Theorems \ref{freqbound}, \ref{spacbound}, \ref{spacdeloc}, and such decompositions will only preserve the wave map and heat flow equations \emph{approximately} rather than exactly.  The $\psi_s$ thus plays the role of the \emph{dynamic field} (cf. the ``dynamic separation'' technique in \cite{krieger:2d}).

In this section we set up the framework for this generalisation.

\begin{definition}[Resolution and reconstruction]  Let $I$ be a compact time interval.  A \emph{map} on $I$ is a smooth map $\phi: I \times \R^2 \to \H$.  A \emph{dynamic field} on $I$ is a smooth map $\psi_s: \R^+ \times I \times \R^2 \to \R^m$ which obeys the qualitative estimates
\begin{equation}\label{quali}
|\partial_t^i \partial_s^j \nabla_x^k \psi_s(s,t,x)| \lesssim_{i,j,k,\psi_s} \langle s \rangle^{-(3+k+2j)/2}
\end{equation}
for all $i,j,k \ge 0$, $s \in \R^+$, $t \in I$, and $x \in \R^2$.

Let $\phi(\infty) \in \H$, and let $e(\infty) \in \Frame(T_{\phi(\infty)} \H )$ be an orthonormal frame at $\phi(\infty)$.
A map $\phi: I \times \R^2 \to \H$ is said to be \emph{reconstructed} from a dynamic field $\psi_s: \R^+ \times I \times \R^2 \to \R^m$ with data $(\phi(\infty),e(\infty))$ at infinity if $\phi$ can be extended to a smooth map $\phi: \R^+ \times I \times \R^2 \to \H$, with $\phi(s) \to \phi(\infty)$ as $s \to \infty$ uniformly in $I \times \R^2$, and there exists a smooth frame $e \in \Gamma(\phi^* T\H)$ with $e(s) \to e(\infty)$ as $s \to \infty$ uniformly as $s \to \infty$, such that $e$ is parallel transported by $\partial_s$ in the sense that
\begin{equation}\label{nasal}
 (\phi^* \nabla)_s e = 0,
\end{equation}
and $\psi_s$ controls the $s$ derivative of $\phi$ in the sense that
\begin{equation}\label{nasa}
\psi_s = e^* \partial_s \phi.
\end{equation}
(See \cite{tao:heatwave2} for the differential geometry conventions used here.)
If $\phi$ can be reconstructed from $\psi_s$ and $(\phi(\infty),e(\infty))$, we say that $\psi_s$ is a \emph{resolution} of $\phi$.
\end{definition}

An easy application of the Picard existence theorem shows that given a dynamic field $\psi_s$ and data $(\phi(\infty),e(\infty))$, there exists a unique field $\phi$ that can be reconstructed from $\psi_s$ and $(\phi(\infty),e(\infty))$.  Due to the symmetric nature of hyperbolic space $\H$; changing the data $(\phi(\infty),e(\infty))$ amounts to a rotation \eqref{rotate}; see \cite{tao:heatwave2} for further discussion.  On the other hand, a single map $\phi$ can have multiple resolutions $\psi_s$, even after holding $(\phi(\infty),e(\infty))$ fixed; roughly speaking, any sufficiently nice retraction of $\phi$ to $\phi(\infty)$ will generate a resolution.  (However, there is a canonical choice of resolution, the \emph{caloric gauge}; see below.)

Let $\psi_s$ be a dynamic field.  We select an arbitrary pair of data $(\phi(\infty),e(\infty))$ at infinity and reconstruct the map $\phi: \R^+ \times I \times \R^2 \to \H$, as well as the associated frame $e \in \Gamma(\phi^* T\H)$.  We can then define the \emph{derivative fields} $\psi_\alpha: I \times \R^2 \to \R^m$ for $\alpha=0,1,2$ by the formula
$$ \psi_\alpha := e^* \partial_\alpha \phi$$
and the \emph{connection fields} $A_\alpha: I \times \R^2 \to \mathfrak{so}(m)$ by the formula
$$ (A_\alpha)_{ab} := \langle (\phi^* \nabla)_\alpha e_a, e_b \rangle_{\phi^* h}.$$
Note that the notation here is compatible with the existing field $\psi_s$; one could also define $A_s$, but it automatically vanishes,
\begin{equation}\label{ass}
A_s = 0,
\end{equation}
thanks to \eqref{nasal}.  Note also that the exact choice of $(\phi(\infty),e(\infty))$ do not affect the value of either $\psi_\alpha$ or $A_\alpha$.
It is convenient to concatenate $\psi_x := (\psi_1, \psi_2)$, $A_x := (A_1, A_2)$, $\psi_{t,x} := (\psi_0,\psi_1,\psi_2)$, $A_{t,x} := (A_0,A_1,A_2)$, $\Psi_x := (\psi_x, A_x)$, $\Psi_{t,x} := (\psi_{t,x}, A_{t,x})$, and $\Psi_{s,t,x} := (\Psi_{t,x}, \psi_s)$.   We refer to $\Psi_{s,t,x}$ as the \emph{differentiated fields}.

The connection fields $A_\alpha$ define a covariant derivative $D_\alpha := \partial_\alpha + A_\alpha$ on vector fields such as $\psi_s$ or $\psi_\alpha$; we record the \emph{zero-torsion identity}
$$ D_\alpha \psi_\beta = D_\beta \psi_\alpha$$
or equivalently
\begin{equation}\label{zerotor}
\partial_\alpha \psi_\beta - \partial_\beta \psi_\alpha = A_\alpha \psi_\beta - A_\beta \psi_\alpha
\end{equation}
and the \emph{constant curvature identity}
$$ [D_\alpha,D_\beta] = - \psi_\alpha \wedge \psi_\beta$$
or equivalently
\begin{equation}\label{constcurv}
\partial_\alpha A_\beta - \partial_\beta A_\alpha = - \psi_\alpha \wedge \psi_\beta - [A_\alpha,A_\beta].
\end{equation}
Similar identities hold in the $s$ direction; in particular we have the schematic identities
\begin{equation}\label{psa}
 \partial_s \psi_\alpha = \nabla_{t,x} \psi_s + \bigO( A_\alpha \psi_s )
\end{equation}
and
\begin{equation}\label{asa}
 \partial_s A_\alpha = \bigO( \psi_\alpha \psi_s ).
\end{equation}
We can unify these estimates schematically as
\begin{equation}\label{PSA}
 \partial_s \Psi_\alpha = \bigO(\nabla_{t,x} \psi_s) + \bigO( \Psi_\alpha \psi_s ).
\end{equation}
In order to retain the null structure in later arguments, it is important to note that the coefficients in the $\bigO()$ notation here do not depend on $\alpha$. 

\begin{lemma}[Decay estimate]\label{dec} $\Psi_{t,x} = O_{\psi_s}( \langle s \rangle^{-1/2} )$.
\end{lemma}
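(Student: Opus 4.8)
The plan is to integrate the evolution equation \eqref{PSA} for $\Psi_{t,x}$ from $s = +\infty$, where $\Psi_{t,x}$ vanishes, down to a given value of $s$, and then run a Gronwall-type argument. First I would record the boundary behaviour: since $\phi(s) \to \phi(\infty)$ and $e(s) \to e(\infty)$ as $s \to \infty$ uniformly, and since the qualitative bounds \eqref{quali} on $\psi_s$ together with \eqref{psa}, \eqref{asa} show that $\partial_s \Psi_{t,x}$ is absolutely integrable in $s$ near infinity, the fields $\Psi_{t,x}(s)$ converge to $0$ as $s \to \infty$ (indeed $A_\alpha(s) \to 0$ because $e$ is asymptotically parallel, and $\psi_\alpha(s) = e^* \partial_\alpha \phi(s) \to 0$ because $\phi(s) \to \phi(\infty)$). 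Hence we may write
$$ \Psi_{t,x}(s) = - \int_s^\infty \partial_{s'} \Psi_{t,x}(s')\ ds' = \int_s^\infty \bigO(\nabla_{t,x}\psi_s(s')) + \bigO(\Psi_\alpha(s') \psi_s(s'))\ ds'. $$

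Next I would set up the iteration. Let $f(s) := \|\Psi_{t,x}(s)\|_{L^\infty_{t,x}}$ (or better, a weighted quantity $\sup_{s' \le s} \langle s' \rangle^{1/2} \|\Psi_{t,x}(s')\|_{L^\infty_{t,x}}$, to directly produce the claimed decay). From \eqref{quali} with $j=0$, we have $|\nabla_{t,x}\psi_s(s')| \lesssim_{\psi_s} \langle s' \rangle^{-2}$ and $|\psi_s(s')| \lesssim_{\psi_s} \langle s' \rangle^{-3/2}$; note $\partial_t \psi_s$ is covered by the $i=1$ case of \eqref{quali}, again with the right power. So the linear (inhomogeneous) term contributes $\int_s^\infty \langle s' \rangle^{-2}\ ds' \lesssim_{\psi_s} \langle s \rangle^{-1}$, which is better than $\langle s \rangle^{-1/2}$, while the quadratic term contributes $\int_s^\infty f(s') \langle s' \rangle^{-3/2}\ ds'$. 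This gives a closed inequality of the form
$$ f(s) \lesssim_{\psi_s} \langle s \rangle^{-1/2} + \int_s^\infty f(s')\, \langle s' \rangle^{-3/2}\ ds', $$
and since $\int_s^\infty \langle s' \rangle^{-3/2}\ ds' \lesssim \langle s \rangle^{-1/2} \to 0$ as $s \to \infty$, a standard Gronwall / continuity-in-$s$ argument (or a direct Picard iteration, summing the geometric-type series $\sum_j \prod \langle s_i\rangle^{-3/2}$) closes the bound and yields $f(s) \lesssim_{\psi_s} \langle s \rangle^{-1/2}$, which is exactly $\Psi_{t,x} = O_{\psi_s}(\langle s \rangle^{-1/2})$. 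Alternatively one can invoke Lemma \ref{gron-lem} directly, taking the ``frequency envelope'' to be a constant multiple of $s \mapsto \langle s\rangle^{-1/2} \cdot (\text{cutoff})$ and $\theta = 0$ — but since the decay here is purely qualitative and $\psi_s$-dependent, the bare-hands iteration is cleanest.

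The main obstacle is not any single estimate but the bookkeeping at $s = \infty$: one must be careful that \eqref{nasal} and the convergence $\phi(s),e(s) \to (\phi(\infty),e(\infty))$ genuinely force $\Psi_{t,x}(s) \to 0$ (rather than merely being bounded), so that integrating \eqref{PSA} from $+\infty$ is legitimate and no constant of integration is lost. This requires unpacking the definitions of $\psi_\alpha = e^*\partial_\alpha\phi$ and $(A_\alpha)_{ab} = \langle (\phi^*\nabla)_\alpha e_a, e_b\rangle$ and using that $\partial_s$-parallel transport of $e$ together with the decay \eqref{quali} of $\psi_s = e^*\partial_s\phi$ controls how fast $e(s)$ and $\phi(s)$ settle down. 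Once that is in place, the remaining steps are routine: plug in \eqref{quali}, separate the linear and quadratic contributions, and Gronwall.
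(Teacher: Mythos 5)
Your overall scheme (integrate \eqref{PSA} from $s=\infty$ and close with a Gronwall-type iteration using \eqref{quali}) is the same as the paper's, but your Step 1 --- the assertion that $\Psi_{t,x}(s)\to 0$ (or is even bounded) as $s\to\infty$ --- is where the real content of the lemma lies, and neither justification you offer for it is valid as stated. Uniform convergence of $\phi(s)$ to the constant $\phi(\infty)$ and of $e(s)$ to $e(\infty)$ says nothing about the derivatives $\partial_\alpha\phi$ and $(\phi^*\nabla)_\alpha e$ that constitute $\psi_\alpha$ and $A_\alpha$: a family of maps can converge uniformly to a constant while its spatial derivatives remain large. Your alternative justification, that $\partial_s\Psi_{t,x}$ is absolutely integrable near $s=\infty$ by \eqref{psa}, \eqref{asa} and \eqref{quali}, is circular: the right-hand side of \eqref{PSA} contains the term $\bigO(\Psi_{t,x}\,\psi_s)$, so its integrability requires exactly the boundedness of $\Psi_{t,x}$ for large $s$ that you are trying to establish. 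The same issue resurfaces at the end: Lemma \ref{gron-lem}, or any Picard iteration from $s=+\infty$, needs the a priori input $\lim_{s\to\infty}f(s)=0$, which is again Step 1.

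The paper resolves this by running the argument in the opposite order. From \eqref{PSA} and \eqref{quali} one first gets the pointwise differential inequality $|\partial_s\Psi_{t,x}|\lesssim_{\psi_s}\langle s\rangle^{-3/2}\bigl(1+|\Psi_{t,x}|\bigr)$, and Gronwall applied to this (for fixed $(t,x)$, forward in $s$) yields the increment bound
$$ |\Psi_{t,x}(s,t,x)-\Psi_{t,x}(s',t,x)| \lesssim_{\psi_s} \langle \min(s,s')\rangle^{-1/2}, $$
so that $\Psi_{t,x}(s)$ is uniformly Cauchy as $s\to\infty$ and converges to some limit $\Psi_{t,x}(\infty)$ --- no vanishing or boundedness at infinity is presupposed. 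Only then is the convergence $\phi(s)\to\phi(\infty)$ of the map itself (not of its derivatives) used to identify $\Psi_{t,x}(\infty)=0$, after which the claimed decay is immediate from the increment bound. If you reorganize your proof in this order --- Gronwall first to get the Cauchy property, identification of the limit second --- it goes through; as written, the behaviour at $s=\infty$ is assumed rather than proved, and your subsequent integration from infinity and weighted Gronwall become redundant once the increment bound is in hand.
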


\begin{proof}  From \eqref{PSA}, \eqref{quali} one has
$$ |\partial_s \Psi_{t,x}| \lesssim_{\psi_s}  \langle s \rangle^{-3/2} + \langle s \rangle^{-3/2} |\Psi_{t,x}|$$
From Gronwall's inequality, one concludes that
\begin{equation}\label{tsh}
 |\Psi_{t,x}(s,t,x) - \Psi_{t,x}(s',t,x)| \lesssim_{\psi_s} \langle \min(s,s') \rangle^{-1/2} 
\end{equation}
for all $s,s' > 0$ and $(t,x) \in I \times \R^2$.  Thus $\Psi_{t,x}(s)$ is a uniform Cauchy sequence in $s$ and therefore converges uniformly to a limit $\Psi_{t,x}(\infty)$ as $s \to \infty$.  Because $\phi(s)$ converges uniformly to $\phi(\infty)$, we see that $\Psi_{t,x}(\infty)$ must be zero, and the claim follows from \eqref{tsh}.
\end{proof}

From this lemma and \eqref{psa}, \eqref{asa} we conclude that
$$
\psi_\alpha(s_0) = - \int_{s_0}^\infty \partial_\alpha \psi_s(s_1)\ ds_1 + \int_{s_0}^\infty \bigO( A_\alpha(s_1) \psi_s(s_1) )\ ds_1
$$
and
$$
A_\alpha(s_0) = \int_{s_0}^\infty \bigO( \psi_\alpha(s_1) \psi_s(s_1) )\ ds_1.
$$
We can iterate these identities to obtain
$$ \psi_\alpha(s_0) = \sum_{j=0}^J X_{2j+1,\alpha}(s_0) + E_{2J+1,\alpha}(s_0)$$
and
$$ A_\alpha(s_0) = \sum_{j=0}^J X_{2j+2,\alpha}(s_0) + E_{2J+2,\alpha}(s_0)$$
for any integer $J \geq 0$, where $X_{j,\alpha}(s_0)$ for $j \geq 1$ takes the form
\begin{equation}\label{xjdef}
 X_{j,\alpha}(s_0) = \bigO_j( \int_{s_0<s_1<\ldots<s_j} \psi_s(s_1) \ldots \psi_s(s_{j-1}) \partial_\alpha \psi_s(s_j)\ ds_1 \ldots ds_j )
 \end{equation}
and the error term $E_{j,\alpha}(s_0)$ takes the form
$$ E_{j,\alpha}(s_0) = \bigO_j( \int_{s_0<s_1<\ldots<s_j} \psi_s(s_1) \ldots \psi_s(s_{j-1}) \psi_s(s_j) \Psi_{t,x}(s_j) \ ds_1 \ldots ds_j ).$$
The coefficients in the $\bigO_j$ notation grow at most exponentially in $j$.  Using \eqref{quali} and Lemma \ref{dec}, one has the pointwise bound
$$ |E_{j,\alpha}(s_0)| \leq O_{\psi_s}(\int_{s_0}^\infty \langle s \rangle^{-3/2}\ ds)^j / j!$$
and thus (by the absolute integrability of $\langle s \rangle^{-3/2}$) $E_{j,\alpha}(s_0)$ converges uniformly to zero as $j \to \infty$.  Thus we have the expansions
\begin{align}
\psi_\alpha &= \sum_{j \geq 1, \hbox{ odd}} X_{j,\alpha} \label{psi-odd} \\
A_\alpha &= \sum_{j \geq 2, \hbox{ even}} X_{j,\alpha}. \label{psi-even}
\end{align}
which describe the differentiated fields $\Psi_{s,t,x}$ explicitly in terms of the dynamic field $\psi_s$.  

Let $\psi_s$ be a dynamic field.  We introduce the \emph{wave-tension field} $w: \R^+ \times I \times \R^2$ and the \emph{heat-tension field} $h: \R^+ \times I \times \R^2$ by the formulae
\begin{equation}\label{wave-tension}
 w := D^\alpha \psi_\alpha
 \end{equation}
and
\begin{equation}\label{heat-tension}
 h := \psi_s - D_i \psi_i
\end{equation}
with the usual summation conventions. Observe that $\phi$ obeys the wave map equation \eqref{cov} if and only if the resolution $\psi_s$ obeys the equation 
\begin{equation}\label{cov-wave}
w(0)=0
\end{equation}
(i.e. $w$ vanishes at the boundary $s=0$).  By abuse of notation, we therefore say that $\psi_s$ is a \emph{wave map} if \eqref{cov-wave} holds.

We say that a dynamic field $\psi_s$ is a \emph{heat flow} if the heat-tension field $h$ vanishes identically on the domain $\R^+ \times I \times \R^2$, thus
\begin{equation}\label{heat-eq}
\psi_s = D_i \psi_i.
\end{equation}
The frame $e(0)$ at $s=0$ associated to that field is the \emph{caloric gauge} for the reconstruction $\phi(0)$.  We have the following theorem:

\begin{theorem}[Existence and uniqueness of caloric gauge]\label{exist}  Let $\phi: I \times \R^2 \to \S$ be a map which equals $\phi(\infty)$ at infinity, and let $e(\infty) \in T_{\phi(\infty)}\H$.  Then there exists a unique resolution $\psi_s$ of $\phi$ with data $(\phi(\infty),e(\infty))$ at infinity which is a heat flow.
\end{theorem}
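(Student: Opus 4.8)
The plan is to observe that, once the definitions are unwound, the heat-flow condition \eqref{heat-eq} is exactly the harmonic map heat flow for the reconstructed extension of $\phi$, so that the theorem follows from the (by now standard) global theory of that flow into the Hadamard target $\H$, together with a fibrewise ODE construction of the frame.

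\textbf{Reformulation.} First I would check that a heat-flow resolution of $\phi$ is the same datum as: (i) a smooth extension $\phi:\R^+\times I\times\R^2\to\H$ of the given map (so the $s=0$ slice is the given $\phi$) with $\phi(s)\to\phi(\infty)$ as $s\to\infty$ and obeying $\partial_s\phi=(\phi^*\nabla)_i\partial_i\phi$; together with (ii) a section $e\in\Gamma(\phi^*T\H)$ that is $\partial_s$-parallel in the sense of \eqref{nasal}, with $e(s)\to e(\infty)$ as $s\to\infty$. Indeed, in the conventions of \cite{tao:heatwave2} the frame $e^*$ intertwines $(\phi^*\nabla)_i$ with the gauge-covariant derivative $D_i=\partial_i+A_i$, so that $e^*(\phi^*\nabla)_i\partial_i\phi=D_i\psi_i$; hence $\psi_s:=e^*\partial_s\phi$ satisfies $\psi_s=D_i\psi_i$ precisely when $\partial_s\phi=(\phi^*\nabla)_i\partial_i\phi$. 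Conversely any heat-flow resolution produces data of this form via reconstruction.

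\textbf{Existence.} Next I would run the harmonic map heat flow in the variable $s\ge 0$ from $\phi(0,\cdot)=\phi$, treating $t\in I$ as a smooth parameter carried along for the ride. Since $\H$ is a Hadamard manifold with sectional curvature pinched in $[-1,0]$, this flow exists for all $s\in[0,\infty)$, and because the data is Schwartz modulo the constant $\phi(\infty)$ (uniformly for $t$ in the compact set $I$, and smoothly in $t$ since $\partial_t^i\phi$ solves the linearised flow) it stays Schwartz modulo $\phi(\infty)$, with the parabolic smoothing and decay estimates \eqref{quali} for $\psi_s=e^*\partial_s\phi$; moreover, since the only finite-energy harmonic map $\R^2\to\H$ is constant (no harmonic bubbles into a negatively curved target), $\phi(s)\to\phi(\infty)$ as $s\to\infty$. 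All of this is exactly the heat-flow analysis already carried out in \cite{tao:heatwave2}; the point to verify is that none of those arguments used that $\phi(0)$ was a wave map — they do not, only the spatial regularity and decay of $\phi(0)$ enter. With $\phi$ now defined on $\R^+\times I\times\R^2$, I construct $e$ fibrewise: for each $(t,x)$ let $e(\cdot,t,x)$ be the parallel transport of $e(\infty)$ \emph{down from $s=\infty$} along the curve $s\mapsto\phi(s,t,x)$, which is well-defined and smooth because $\phi(s,t,x)\to\phi(\infty)$ and $\int^\infty|\partial_s\phi|\,ds<\infty$ (from the $\langle s\rangle^{-3/2}$ decay in \eqref{quali}). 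Then \eqref{nasal} holds by construction, $e(s)\to e(\infty)$, and $\psi_s:=e^*\partial_s\phi$ obeys \eqref{nasa}; the bounds \eqref{quali} make $\psi_s$ a dynamic field, so $\psi_s$ is a resolution of $\phi$ with data $(\phi(\infty),e(\infty))$, and by the reformulation it is a heat flow.

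\textbf{Uniqueness and main obstacle.} For uniqueness, given two heat-flow resolutions of $\phi$ with the same data at infinity, I reconstruct the two extensions $(\phi,e)$ and $(\phi',e')$; by the reformulation both $\phi,\phi'$ solve the same harmonic map heat flow with the same $s=0$ datum and the same limit $\phi(\infty)$, so forward-in-$s$ uniqueness for this smooth parabolic flow forces $\phi=\phi'$ on $\R^+\times I\times\R^2$; the $\partial_s$-parallel frame is then pinned down uniquely, fibrewise, by \eqref{nasal} and the boundary value $e(\infty)$, so $e=e'$ and the two resolutions coincide. (Changing $e(\infty)$ while fixing $\phi(\infty)$ merely rotates the resolution via \eqref{rotate}, as noted after the definition of reconstruction.) The only non-routine ingredient is the one in the Existence step — global well-posedness, persistence of the Schwartz-modulo-constant class, convergence to $\phi(\infty)$, and the quantitative bounds \eqref{quali} for the harmonic map heat flow into $\H$ from Schwartz-modulo-constant data on $\R^2$. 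This is the main obstacle, and it is handled by importing the corresponding results from \cite{tao:heatwave2} (which rest on the non-positive curvature of the target and standard parabolic theory), after checking, as indicated, that the wave-map hypothesis plays no role there.
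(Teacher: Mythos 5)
Your proposal is correct and is essentially the paper's own argument: the paper proves Theorem \ref{exist} simply by citing \cite[Theorem 3.16]{tao:heatwave2}, and the proof there is exactly your construction — identify the heat-flow condition $\psi_s = D_i\psi_i$ with the harmonic map heat flow for the reconstructed extension, use the global theory of that flow into the negatively curved target $\H$ (with Schwartz-modulo-constant data and the decay needed for \eqref{quali}), define the frame by $\partial_s$-parallel transport from the data $(\phi(\infty),e(\infty))$ at $s=\infty$, and get uniqueness from forward-in-$s$ parabolic uniqueness plus the rigidity of the parallel frame with prescribed limit. No substantive gap; the only caveat is that the heavy lifting (global existence, persistence of the Schwartz class, convergence to $\phi(\infty)$) is imported from \cite{tao:heatwave2}, exactly as the paper itself does.
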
 

\begin{proof} See \cite[Theorem 3.16]{tao:heatwave2}.
\end{proof}

Heat flows obey the following equations of motion (see \cite[Proposition 5.4]{tao:heatwave3}):
\begin{align}
\partial_s \psi_s &= D_i D_i \psi_s - (\psi_s \wedge \psi_i) \psi_i \label{psis-eq}\\
\partial_s \psi_{t,x} &= D_i D_i \psi_{t,x} - (\psi_{t,x} \wedge \psi_i) \psi_i \label{psit-eq}\\
\partial_s w &= D_i D_i w - (w \wedge \psi_i) \psi_i + 2 (\psi_\alpha \wedge \psi_i) D_i \psi^\alpha	 \label{w-eq} \\
D^\alpha D_\alpha \psi_s &= \partial_s w - (\psi_\alpha \wedge \psi_s) \psi^\alpha.   \label{psis-box}
\end{align}
In particular, we see that the fields $\psi_{s,t,x}$ and $w$ evolve in $s$ by a covariant heat equation (with a forcing term of shape $\bigO( \psi_\alpha \psi_x D_x \psi^\alpha )$), while $\psi_s$ evolves in spacetime by a covariant wave equation (with $\partial_s w$ as the forcing term).

\subsection{Instantaneous analogues}

The above discussion was for maps and fields on a time interval $I$.  One can also consider instantaneous versions of these concepts:

\begin{definition}[Instantaneous resolution and reconstruction]  Let $t_0 \in \R$ be a time.  An \emph{instantaneous map} $(\phi(t_0),\partial_t \phi(t_0))$ at time $t_0$ is smooth map $(\phi(t_0),\partial_t \phi(t_0)): \R^2 \to T\H$.  If $\phi: I \times \R^2 \to \H$ is a map with $t_0 \in I$, we write $\phi[t_0]$ for the instantaneous map $\phi[t_0] := (\phi(t_0), \partial_t \phi(t_0))$, and refer to $\phi[t_0]$ as the \emph{data} of $\phi$ at time $t_0$ and $\phi$ as an \emph{extension} of $\phi[t_0]$.  Clearly, every instantaneous map can be viewed as the data of some map in a neighbourhood of $t_0$.

An \emph{instantaneous dynamic field} $(\psi_s(t_0), \partial_t \psi_s(t_0))$ at $t_0$ is a pair of smooth maps $\psi_s(t_0), \partial_t \psi_s(t_0): \R^+ \times \R^2 \to \R^m$ which obeys the qualitative estimates
\begin{equation}\label{quali-2}
|\partial_t^i \partial_s^j \nabla_x^k \psi_s(s,t,x)| \lesssim_{j,k,\psi_s} \langle s \rangle^{-(3+k+2j)/2}
\end{equation}
for all $i=0,1$, $j,k \ge 0$, $s \in \R^+$.  If $\psi_s: \R^+ \times I \times \R^2 \to \R^m$ is a dynamic field and $t_0 \in I$, we write $\psi_s[t_0]$ for the instantaneous dynamic field $\psi_s[t_0] := (\psi_s(t_0), \partial_t \psi_s(t_0))$; we refer to $\psi_s[t_0]$ as the \emph{data} of $\psi_s$ at time $t_0$ and $\psi_s$ as an \emph{extension} of $\psi_s[t_0]$.  Again, every instantaneous dynamic field can be viewed as the data of some dynamic field in a neighbourhood of $t_0$.

Let $\phi(\infty) \in \H$, and let $e(\infty) \in \Frame(T_{\phi(\infty)} \H )$ be an orthonormal frame at $\phi(\infty)$.
An instantaneous map $\phi[t_0]$ is said to be \emph{reconstructed} from an instantaneous dynamic field $\psi_s[t_0]$ with data $(\phi(\infty),e(\infty))$ at infinity if there exists an interval $I$ containing $t_0$ in its interior and extensions $\phi, \psi_s$ of $\phi[t_0]$ of $\psi_s[t_0]$ to $I$ such that the map $\phi$ is reconstructed from the dynamic field $\psi_s$.  In this case we say that $\psi_s[t_0]$ is a \emph{resolution} of $\phi[t_0]$.
\end{definition}

It is not hard to see that the reconstruction $\phi[t_0]$ of $\psi_s[t_0]$ with the data $(\phi(\infty),e(\infty))$ does not depend on the particular choice of extension used for $\psi_s$.

Given an instantaneous dynamic field $\psi_s[t_0]$, we can define the differentiated fields $\psi_{x,t}, A_{x,t}$ on $\R^+ \times \{t_0\} \times \R^2$ 
by \eqref{psi-odd}, \eqref{psi-even}.  We can also define the first time derivative of $\psi_x, A_x$ by the formulae
\begin{align*}
\partial_t \psi_x &= \nabla_x \psi_t + A_t \psi_x - A_x \psi_t\\
\partial_t A_x &= \nabla_x A_t - \psi_t \wedge \psi_x - [A_t, A_x]
\end{align*}
from \eqref{zerotor}, \eqref{constcurv}.

We cannot define the wave-tension field $w$ for instantaneous dynamic fields (it would require two derivatives of $\psi_s$ in time, but we only have one), but we can define the heat-tension field $h$, as well as its first time derivative
\begin{align*}
 \partial_t h &= D_t h - A_t h \\
 &= D_t \psi_s - D_t D_i \psi_i - A_t h \\
 &= \partial_s \psi_t + (\psi_t \wedge \psi_i) \psi_i - D_i D_i \psi_t - A_t h
\end{align*}
As before, we say that an instantaneous dynamic field is a \emph{heat flow} if $h$ and $\partial_t h$ vanishes identically, or equivalently if one has the equations
\begin{align}
\psi_s &= D_i \psi_i \label{heat-eq-instant}\\
\partial_s \psi_t &= D_i D_i \psi_t - (\psi_t \wedge \psi_i) \psi_i\label{psit-eq-instant}
\end{align}
(cf. \eqref{heat-eq}, \eqref{psit-eq}).  Clearly a dynamic field $\psi_s$ on $I$ is a heat flow if and only if $\psi_s[t_0]$ is an (instantaneous) heat flow for each $t_0 \in I$.

There is an analogue of Theorem \ref{exist}:

\begin{theorem}[Existence and uniqueness of instantaneous caloric gauge]\label{exist2}  
Let $\phi[t_0] \in \S$ which equals $\phi(\infty)$ at infinity, and let $e(\infty) \in T_{\phi(\infty)}\H$.  Then there exists a unique resolution $\psi_s[t_0]$ of $\phi[t_0]$ with data $(\phi(\infty),e(\infty))$ at infinity which is a heat flow.
\end{theorem}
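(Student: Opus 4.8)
The plan is to deduce Theorem~\ref{exist2} from the time-dependent version, Theorem~\ref{exist}, by viewing an instantaneous object as the data at $t_0$ of an object on a short interval, and checking that the heat-flow property and the reconstruction are determined purely by the instantaneous data. Concretely: given $\phi[t_0]\in\S$, first choose any extension of $\phi[t_0]$ to a smooth map $\tilde\phi:I\times\R^2\to\H$ on some compact interval $I$ with $t_0$ in the interior, with $\tilde\phi$ equal to $\phi(\infty)$ at spatial infinity (e.g.\ solve the wave map equation locally, or simply take a smooth extension --- nothing here needs $\tilde\phi$ to be a wave map). Apply Theorem~\ref{exist} to $\tilde\phi$ with the data $(\phi(\infty),e(\infty))$ at infinity to obtain a heat-flow resolution $\tilde\psi_s$ of $\tilde\phi$ on $I$, and set $\psi_s[t_0]:=\tilde\psi_s[t_0]=(\tilde\psi_s(t_0),\partial_t\tilde\psi_s(t_0))$. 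Since $\tilde\psi_s$ is a heat flow on $I$ in the sense of \eqref{heat-eq}, its restriction to the time slice $t_0$ satisfies \eqref{heat-eq-instant}, and differentiating \eqref{heat-eq} in $t$ and evaluating at $t_0$ gives \eqref{psit-eq-instant} (equivalently, $h$ and $\partial_t h$ vanish at $t_0$); the qualitative bounds \eqref{quali-2} follow from \eqref{quali} for $i=0,1$. By construction $\tilde\psi_s$ reconstructs $\tilde\phi$, so $\psi_s[t_0]$ reconstructs $\phi[t_0]$, establishing existence.

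For uniqueness, suppose $\psi_s[t_0]$ and $\psi_s'[t_0]$ are two heat-flow resolutions of $\phi[t_0]$ with the same data at infinity. The key point is that an instantaneous heat flow has a \emph{canonical} extension to a short interval, namely the unique heat-flow extension: take any smooth extension $\psi_s$ of $\psi_s[t_0]$ to an interval $I$, let $\phi$ be its reconstruction (a smooth map on $I\times\R^2$), and then apply Theorem~\ref{exist} to $\phi$ on $I$ to obtain the heat-flow resolution of $\phi$; by construction this heat flow has the same data at $t_0$ as $\psi_s[t_0]$ (since the heat-flow equations \eqref{heat-eq-instant}, \eqref{psit-eq-instant} together with the original instantaneous data determine $\psi_s(t_0)$, $\partial_t\psi_s(t_0)$ --- indeed $\psi_s(t_0)$ is determined as the caloric-gauge resolution of $\phi(t_0)$ via Theorem~\ref{exist}, and $\partial_t\psi_s(t_0)$ is then determined by the linear parabolic equation \eqref{psit-eq-instant} with the appropriate data at $s=\infty$). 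Thus both $\psi_s[t_0]$ and $\psi_s'[t_0]$ arise by restricting heat-flow resolutions of a common reconstructed map $\phi$ on a common interval, and Theorem~\ref{exist} gives that these coincide on $I$, hence agree at $t_0$.

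An alternative, cleaner route to uniqueness avoids interval extensions altogether: the spatial part $\psi_s(t_0)$ of any instantaneous heat flow resolving $\phi[t_0]$ is, by definition, the caloric-gauge resolution of the single map $\phi(t_0):\R^2\to\H$ with data $(\phi(\infty),e(\infty))$, so it is uniquely determined by the static analogue of Theorem~\ref{exist} (which is the $I=\{t_0\}$ case, or is subsumed in \cite[Theorem 3.16]{tao:heatwave2}). Once $\psi_s(t_0)$ is fixed, the connection fields $A_\alpha(t_0)$ are determined by \eqref{psi-odd}, \eqref{psi-even}, and $\psi_t(t_0)$ is determined by $\phi[t_0]$ through the reconstruction formula relating $\partial_t\phi(t_0)$ to $\psi_t(t_0)$ in the frame $e(t_0)$; finally $\partial_t\psi_s(t_0)$ is the unique solution of the linear covariant heat equation obtained by differentiating \eqref{heat-eq-instant} in $t$, namely $\partial_t\psi_s = D_i D_i(\text{stuff}) + \ldots$, subject to decay as $s\to\infty$, which is exactly \eqref{psit-eq-instant} rearranged --- uniqueness for this linear parabolic problem follows from a Gronwall argument of the type in Lemma~\ref{gron-lem}.

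The main obstacle is the uniqueness half: one must be careful that ``heat flow'' for an instantaneous field genuinely pins down \emph{both} $\psi_s(t_0)$ and $\partial_t\psi_s(t_0)$, and that no spurious freedom enters through the choice of extension. The cleanest way to handle this is to observe that the reconstruction of $\psi_s[t_0]$ does not depend on the extension (as already noted in the excerpt just before the statement), so it suffices to show that, for a \emph{fixed} reconstructed $\phi[t_0]$, the heat-flow conditions \eqref{heat-eq-instant}--\eqref{psit-eq-instant} together with the boundary data at $s=\infty$ have a unique solution; this is a static problem (for $\psi_s(t_0)$) plus a linear parabolic problem (for $\partial_t\psi_s(t_0)$), both of which are covered by the machinery of \cite{tao:heatwave2} and the Gronwall lemmas recalled above. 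I would spend most of the write-up making the reduction to \cite[Theorem 3.16]{tao:heatwave2} precise and verifying that differentiating the interval-version heat flow equations in $t$ and restricting to $t_0$ produces exactly \eqref{heat-eq-instant} and \eqref{psit-eq-instant}.
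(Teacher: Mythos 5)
Your existence argument is exactly the paper's: extend $\phi[t_0]$ in $\S$ to a small time neighbourhood of $t_0$, apply Theorem \ref{exist}, and restrict back. The uniqueness half also follows the paper's general plan (static caloric-gauge uniqueness pins down $\psi_s(t_0)$, and then a linear covariant parabolic equation pins down the time-derivative component), but the one nontrivial step is mis-anchored. You assert that $\partial_t\psi_s(t_0)$ (equivalently $\psi_t(\cdot,t_0)$) is ``the unique solution of the linear covariant heat equation \dots subject to decay as $s\to\infty$,'' to be proven by ``a Gronwall argument of the type in Lemma \ref{gron-lem}.'' But \eqref{psit-eq-instant} is forward parabolic in $s$: decay at $s=\infty$ does not single out a solution (in the flat model both $v=0$ and $v(s)=e^{s\Delta}w$ with $w\in L^2_x$ nonzero solve the equation and decay), and Lemma \ref{gron-lem} is an integral inequality for quantities built by integrating from $s=\infty$ (suited to the $A_t$-type identities), not a backward-uniqueness statement for a heat equation. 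The correct anchor is at $s=0$: once $\psi_s(t_0)=\psi'_s(t_0)$ is known from the static caloric-gauge uniqueness, \eqref{psi-odd}, \eqref{psi-even} give $\psi_x=\psi'_x$, $A_x=A'_x$, hence $e(t_0)=e'(t_0)$, hence $\psi_t(0,t_0)=e(t_0)^*\partial_t\phi(t_0)=\psi'_t(0,t_0)$; the difference $\psi_t-\psi'_t$ then solves the linear covariant heat equation \eqref{psit-eq-instant} with zero data at $s=0$ and vanishes for all $s$ by forward-in-$s$ uniqueness (\cite[Lemma 4.8]{tao:heatwave2}), after which $\partial_t\psi_s$ is recovered from $\psi_t,\psi_s,A_t$ via the torsion identity. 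This is exactly how the paper closes the argument.

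Your first uniqueness route has a related circularity: the claim that ``the heat-flow equations together with the instantaneous data determine $\psi_s(t_0),\partial_t\psi_s(t_0)$'' is precisely the uniqueness statement being proven, and the parenthetical justification again falls back on the ill-posed ``data at $s=\infty$'' step. With the $s=0$ anchoring above in place, both of your routes collapse to the paper's proof, so the gap is real but entirely fixable.
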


\begin{proof} 
To show existence, we extend $\phi$ in $\S$ to a small neighbourhood of $t_0$, apply Theorem \ref{exist}, and then restrict back to $t_0$.  To show uniqueness, suppose one has two different resolutions $\psi_s[t_0], \psi'_s[t_0]$ of $\phi[t_0]$ with the same data.  Applying \cite[Theorem 3.12]{tao:heatwave2} one has $\psi_s(t_0) = \psi'_s(t_0)$, which by \eqref{psi-odd}, \eqref{psi-even} implies that $\psi_x(t_0) = \psi'_x(t_0)$ and $A_x(t_0) = A'_x(t_0)$, and so $e(t_0) = e'(t_0)$ also.  By \eqref{psit-eq-instant} $\psi_t(t_0), \psi'_t(t_0)$ obey the same (linear) covariant heat equation; 
also, since $e(t_0) = e'(t_0)$, $\psi_t(0,t_0) = \psi'_t(0,t_0)$.  The difference $\psi_t(t_0)-\psi'(t_0)$ thus also obeys \eqref{psit-eq-instant} and vanishes at $s=0$, and thus vanishes for all $s$ by \cite[Lemma 4.8]{tao:heatwave2}.
\end{proof}

We recall some useful estimates for instantaneous heat flows:

\begin{proposition}[Parabolic regularity]\label{corbound}  Let $\psi_s[t_0]$ be an instantaneous heat flow at time $t_0$ whose associated instantaneous map $\phi[t_0]$ has energy at most $E$ for some $E>0$.  Then one has
\begin{align}
\int_0^\infty s^{k-1} \| \nabla_x^k \psi_{t,x}(t_0) \|_{L^2_x(\R^2)}^2 ds &\lesssim_{E,k} 1 \label{l2-integ-ord} \\
\sup_{s > 0} s^{(k-1)/2} \| \nabla_x^{k-1} \psi_{t,x}(s,t_0) \|_{L^2_x(\R^2)} &\lesssim_{E,k} 1 
\label{l2-const-ord} \\
\int_0^\infty s^{k-1} \| \nabla_x^{k-1} \psi_{t,x}(t_0) \|_{L^\infty_x(\R^2)}^2\ ds &\lesssim_{E,k} 1\label{linfty-integ-ord} \\
\sup_{s > 0} s^{k/2} \| \nabla_x^{k-1} \psi_{t,x}(s,t_0) \|_{L^\infty_x(\R^2)} &\lesssim_{E,k} 1 
\label{linfty-const-ord}
\end{align}
for all $k \geq 1$.  Similar estimates hold if one replaces $\psi_{t,x}$ with $A_x$, $\nabla_x \psi_{t,x}$ with $\psi_s$, $\nabla_x^2$ with $\partial_s$, and/or $\nabla_x$ with $D_x$.  Finally, one has
\begin{equation}\label{corheat}
\int_0^\infty s^{-2/p} \| \Psi_{t,x}(s,t_0) \|_{L^p_x(\R^2)}^2\ ds \lesssim_{E,p} 1
\end{equation}
for all $2 < p \leq \infty$.
\end{proposition}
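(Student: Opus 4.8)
We recall that the estimates \eqref{l2-integ-ord}--\eqref{corheat} are the instantaneous analogues of the parabolic regularity bounds established (for heat flows on a time interval) in \cite{tao:heatwave2}, \cite{tao:heatwave3}, and the plan is to reprove them on the single time slice $\{t_0\}$ by the energy method. The point is that the heat-flow equations of motion we shall use — the defining equation $\psi_s = D_i\psi_i$ of \eqref{heat-eq-instant}, and the covariant heat equations $\partial_s\psi_\alpha = D_iD_i\psi_\alpha - (\psi_\alpha\wedge\psi_i)\psi_i$ for $\alpha = 0,1,2$ coming from \eqref{psit-eq-instant} and \eqref{psit-eq}, together with \eqref{psis-eq} for $\psi_s$ and \eqref{w-eq} for $w$ — each involve only the heat variable $s$ and the spatial variables $x$, and hence make sense instantaneously; all of the listed estimates are spatial-norm estimates at fixed $t_0$, so no time regularity is needed beyond what \eqref{quali-2} already supplies.

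For the base case $k=1$, note first that \eqref{nasa} and \eqref{energy-def} give $\|\psi_{t,x}(0,t_0)\|_{L^2_x}^2 = 2\E(\phi[t_0]) \le 2E$. Pairing each component of the covariant heat equation against $\psi_\alpha$ and integrating in $x$ — using that $A_i\in\mathfrak{so}(m)$, so that $\partial_i\langle u,v\rangle = \langle D_iu,v\rangle + \langle u,D_iv\rangle$, and the pointwise identity $(v\wedge w)w = |w|^2 v - \langle v,w\rangle w$ — yields
\begin{equation*}
\tfrac12\partial_s\|\psi_{t,x}(s)\|_{L^2_x}^2 = -\|D_i\psi_{t,x}(s)\|_{L^2_x}^2 - \sum_{\alpha,i}\int_{\R^2}\bigl(|\psi_\alpha|^2|\psi_i|^2 - \langle\psi_\alpha,\psi_i\rangle^2\bigr),
\end{equation*}
where the last term is nonnegative by Cauchy--Schwarz. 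Hence $s\mapsto\|\psi_{t,x}(s)\|_{L^2_x}$ is non-increasing, which is \eqref{l2-const-ord} for $k=1$, and integrating from $s=0$ to $s=\infty$ gives $\int_0^\infty\|D_i\psi_{t,x}(s)\|_{L^2_x}^2\,ds\lesssim_E 1$. Replacing $D_i$ by $\nabla_i$ requires control of $\|A_x(s)\|_{L^\infty_x}$, which is obtained by integrating \eqref{asa} in $s$ from $\infty$ (using Lemma \ref{dec} and \eqref{quali}) and absorbing the term $\int_0^\infty\|A_i\psi_{t,x}(s)\|_{L^2_x}^2\,ds$ back into the left-hand side; this produces \eqref{l2-integ-ord} for $k=1$. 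The same argument run for $\psi_s$ via \eqref{psis-eq} (the integrated bound $\int_0^\infty\|\psi_s(s)\|_{L^2_x}^2\,ds\lesssim_E 1$ now following from $\psi_s = D_i\psi_i$ and the bound just proved) and for $w$ via \eqref{w-eq} gives the corresponding variants, and the $A_x$-bounds follow by integrating \eqref{asa} and inserting the $\psi$-bounds.

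For higher $k$ we induct on $k$. Applying $\nabla_x^{k-1}$ (or $D_x^{k-1}$) to the covariant heat equations and commuting derivatives past $D_i$ gives $\partial_s(\nabla_x^{k-1}\psi_{t,x}) = D_iD_i(\nabla_x^{k-1}\psi_{t,x}) + F_k$, where the highest-order part of $F_k$ is reabsorbed into the dissipation term $-\|D_i\nabla_x^{k-1}\psi_{t,x}\|_{L^2_x}^2$ after integration by parts, and the remainder is a sum of terms carrying at most $k-1$ derivatives distributed among $\psi_{t,x}$, $\psi_x$ and $A_x$ (the last expanded through \eqref{psi-even}), each controlled in $L^2_x$ (or $L^1_x$, after H\"older) by the lower-order estimates and the $A_x$-bounds of the previous step. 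Multiplying the resulting energy identity for $\|\nabla_x^{k-1}\psi_{t,x}(s)\|_{L^2_x}^2$ by $s^{k-1}$, integrating, and using $\partial_s(s^{k-1}f) = s^{k-1}\partial_s f + (k-1)s^{k-2}f$ with the term $(k-1)s^{k-2}\|\nabla_x^{k-1}\psi_{t,x}\|_{L^2_x}^2$ estimated by the level-$(k-1)$ bound \eqref{l2-integ-ord}, then yields \eqref{l2-integ-ord} and \eqref{l2-const-ord} at level $k$; the stated substitutions ($A_x$, $\psi_s$, $\partial_s\psi_{t,x}$, $D_x$) follow in the same way, either directly from \eqref{asa} or by using the equation to trade $\partial_s\psi_{t,x}$ for $\bigO(\nabla_x^2\psi_{t,x}) + \bigO(\Psi_x^2\psi_{t,x})$. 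The bounds \eqref{linfty-integ-ord}, \eqref{linfty-const-ord} then follow from \eqref{l2-integ-ord}, \eqref{l2-const-ord} by Littlewood--Paley decomposition and two-dimensional Bernstein ($\|P_{k'}f\|_{L^\infty_x}\lesssim 2^{k'}\|P_{k'}f\|_{L^2_x}$), splitting the frequency sum at $2^{k'}\sim s^{-1/2}$ and using the level-$k$ $L^2_x$ bound for higher frequencies and the level-$(k-1)$ bound for lower ones; and \eqref{corheat} follows by interpolating $\|\Psi_{t,x}(s)\|_{L^2_x}\lesssim_E 1$ against the $L^\infty_x$ bound and summing the dyadic-in-$s$ pieces against the weight $s^{-2/p}$ by a routine H\"older/Littlewood--Paley argument.

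The one genuine difficulty is that the heat equations are \emph{covariant}: at each order, passing from $D_x$ to $\nabla_x$ and estimating the commutator and curvature terms in $F_k$ requires control of $A_x$ and its derivatives of the \emph{same} order as the $\psi$-estimate being proved, yet $A_x$ is itself reconstructed from the $\psi$'s via \eqref{psi-even}, \eqref{asa}, and a priori is controlled only with constants depending on the particular dynamic field (through \eqref{quali-2} and Lemma \ref{dec}), not on $E$ alone. This is handled by treating the whole family \eqref{l2-integ-ord}--\eqref{corheat} together with the matching $A_x$-bounds as a single continuity argument in the heat-time $s$: Lemma \ref{dec} supplies $O_{\psi_s}(1)$ a priori bounds that open the bootstrap, the favourable sign of the curvature nonlinearity in the $k=1$ identity removes any need for smallness, and the parabolic smoothing encoded in the $s^{k-1}$ weights then improves all constants to depend only on $E$, closing the argument.
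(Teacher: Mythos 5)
The paper does not reprove these bounds: its ``proof'' is a citation to \cite[Corollary 4.6]{tao:heatwave2}, \cite[Lemma 4.8]{tao:heatwave2} and \cite[Proposition 4.3]{tao:heatwave2}, with only one item argued on the spot, namely the $A_{t,x}$ component of \eqref{corheat}, which is obtained by integrating \eqref{asa} down from $s=\infty$ (using \eqref{linfty-const-ord} to bound $\|\psi_s(s')\|_{L^\infty_x}$) and then applying Hardy's inequality in the heat-time variable. Your proposal instead rebuilds the estimates from scratch by weighted covariant energy estimates, and for \eqref{l2-integ-ord}--\eqref{linfty-const-ord} your outline (monotonicity of $\|\psi_{t,x}(s)\|_{L^2_x}$ from the favourable sign of the curvature term for hyperbolic targets, the dissipation integral $\int_0^\infty\|D_x\psi_{t,x}\|_{L^2_x}^2\,ds\lesssim_E1$, induction on $k$ with $s^{k-1}$ weights, recovery of $A_x$ by integrating \eqref{asa} from infinity inside a bootstrap, and Bernstein for the $L^\infty_x$ bounds) is essentially the argument of the cited results, so that part is a legitimate, if redundant, self-contained route.

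There is, however, a genuine gap at \eqref{corheat}. Interpolating $\|\Psi_{t,x}(s)\|_{L^2_x}\lesssim_E1$ against $\|\Psi_{t,x}(s)\|_{L^\infty_x}\lesssim_E s^{-1/2}$ only gives $\|\Psi_{t,x}(s)\|_{L^p_x}\lesssim s^{\frac1p-\frac12}$, i.e. $s^{-2/p}\|\Psi_{t,x}(s)\|_{L^p_x}^2\lesssim s^{-1}$, which is not integrable; and trying instead to combine the integrated bounds \eqref{linfty-integ-ord}, \eqref{l2-integ-ord} with H\"older over dyadic blocks in $s$ produces sums of the form $\sum_k a_k^{1-2/p}$ with only $\sum_k a_k\lesssim1$, which need not converge when $p$ is close to $2$. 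Indeed, one can saturate all of \eqref{l2-integ-ord}--\eqref{linfty-const-ord} (take bumps of height $\sim a(s)^{1/2}$ with $a(s)=s^{-1}(1+\log^2 s)^{-1}$) while the left-hand side of \eqref{corheat} diverges for $2<p\leq4$; so ``routine H\"older/Littlewood--Paley'' on the stated bounds cannot close, and one genuinely needs the finer structure of the heat flow --- e.g.\ the frequency-envelope bound \eqref{al6} of Proposition \ref{corbound-freq}, equivalently the Duhamel/heat-semigroup smoothing argument behind \cite[Proposition 4.3]{tao:heatwave2}, which records that the energy alive at heat-time $s$ is square-summable over dyadic $s$. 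Separately, your sketch never treats the $A_{t,x}$ component of \eqref{corheat} at all, and this is precisely the piece the paper proves here: since $A_{t,x}$ satisfies no heat equation, its $L^p_x$ bound must be deduced from the $\psi_{t,x}$ case of \eqref{corheat} via \eqref{asa}, \eqref{linfty-const-ord} and Hardy's inequality, a step your argument is missing.
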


\begin{proof} See \cite[Corollary 4.6]{tao:heatwave2}, \cite[Lemma 4.8]{tao:heatwave2}, and \cite[Proposition 4.3]{tao:heatwave2}.  To establish \eqref{corheat} for the $A_{t,x}$ component of $\Psi_{t,x}$, we observe from \eqref{asa}, \eqref{linfty-const-ord} that
$$ \|A_{t,x}(s,t_0) \|_{L^p_x(\R^2)} \lesssim \int_s^\infty \| \psi_{t,x}(s',t_0) \|_{L^p_x(\R^2)}\ ds'$$
and so the $A_{t,x}$ component of \eqref{corheat} follows from the $\psi_{t,x}$ component and Hardy's inequality.
\end{proof}


We can rephrase the above estimates in terms of a frequency envelope:

\begin{proposition}[Parabolic regularity, frequency envelope version]\label{corbound-freq}  Let $\psi_s[t_0]$ be an instantaneous heat flow at time $t_0$ whose associated instantaneous map $\phi[t_0]$ has energy at most $E$ for some $E>0$.  Then there exists a frequency envelope $c$ of energy $O_E(1)$ such that
\begin{align}
\| \nabla_s^j \nabla_x^{k+1} \Psi_{t,x}(s,t_0) \|_{\dot H^0_{k(s)}(\R^2)} &\lesssim_{E,j,k} s^{-(2j+k+1)/2} c(s)\label{al1} \\
\| \nabla_s^j \nabla_x^{k+1} A_{t,x}(s,t_0) \|_{\dot H^0_{k(s)}(\R^2)} &\lesssim_{E,j,k} s^{-(2j+k+1)/2} c^2(s)\label{al2} \\
\| \nabla_s^j \nabla_x^k \psi_s(s,t_0) \|_{\dot H^0_{k(s)}} &\lesssim_{E,j,k} s^{-(2j+k+1)/2} c(s) \label{al3}\\
\| \nabla_s^j \nabla_x^{k} \nabla_{t,x} \Psi_{x}(s,t_0) \|_{\dot H^0_{k(s)}(\R^2)} &\lesssim_{E,j,k} s^{-(2j+k+1)/2} c(s)\label{al4} \\
\| \nabla_s^j \nabla_x^{k} \nabla_{t,x} A_{x}(s,t_0) \|_{\dot H^0_{k(s)}(\R^2)} &\lesssim_{E,j,k} s^{-(2j+k+1)/2} c(s) \label{al5}\\
\| \Psi_{t,x}(s,t_0) \|_{L^p_x(\R^2)} &\lesssim_{E,p} s^{\frac{1}{p}-\frac{1}{2}} c(s)\label{al6}\\
\| \Psi_{t,x}(s,t_0) \|_{L^2_x(\R^2)} &\lesssim_{E} (\int_s^\infty c(s')^2 \frac{ds}{s'})^{1/2}\label{al7}
\end{align}
for all $j,k \geq 0$, $s>0$, and $2+\delta_0<p\leq \infty$.
\end{proposition}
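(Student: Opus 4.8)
The plan is to build a single frequency envelope $c$ out of the dynamic field $\psi_s$ alone, and then to deduce every estimate \eqref{al1}--\eqref{al7} from quantitative control of $\psi_s$ by means of the reconstruction formulae \eqref{psi-odd}, \eqref{psi-even}, the structure equations \eqref{psa}, \eqref{asa}, \eqref{PSA}, and the covariant heat equations \eqref{psis-eq}--\eqref{psit-eq-instant}. The first step is to upgrade Proposition \ref{corbound} to a frequency-localised statement for $\psi_s$: that $\psi_s(s,t_0)$ is essentially concentrated at spatial frequencies $\sim 2^{k(s)}$, so that $\|\psi_s(s,t_0)\|_{\dot H^0_{k(s)}}$ is comparable, up to summable tails, with the plain norm $\|\psi_s(s,t_0)\|_{L^2_x}$. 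The high-frequency tails decay rapidly because Proposition \ref{corbound} controls $\nabla_x^N \psi_s$ for every $N$ (apply Bernstein), while the low-frequency tails decay like $2^{k'-k(s)}$ because $\psi_s = D_i \psi_i$ and $\psi_i(s,t_0)$ is bounded in $L^2_x$; since $\delta_1 < 1$, both tails are summable against the weight $\chi_{k=k'}^{-\delta_1}$ defining $\dot H^0_{k(s)}$. Combining this with the integrated bound $\int_0^\infty \|\psi_s(s,t_0)\|_{L^2_x}^2\,ds \lesssim_E 1$ (the ``$\nabla_x\psi_{t,x}\mapsto\psi_s$'' case of \eqref{l2-integ-ord}) and the corresponding pointwise and higher-derivative bounds of Proposition \ref{corbound} yields $\int_0^\infty \|\psi_s(s,t_0)\|_{\dot H^0_{k(s)}}^2\,ds \lesssim_E 1$ together with $\|\nabla_s^j \nabla_x^k \psi_s(s,t_0)\|_{\dot H^0_{k(s)}} \lesssim_{E,j,k} s^{-(2j+k+1)/2}$.

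Next, set $a(s) := s^{1/2}\|\psi_s(s,t_0)\|_{\dot H^0_{k(s)}}$, so that $\int_0^\infty a(s)^2\frac{ds}{s}\lesssim_E 1$, and define
$$ c(s)^2 := C \int_0^\infty \min\big( (s/s')^{2\delta_0}, (s'/s)^{2\delta_0} \big)\, a(s')^2\, \frac{ds'}{s'}$$
for a suitable absolute constant $C$. Fubini's theorem and the identity $\int_0^\infty \min((s/s')^{2\delta_0},(s'/s)^{2\delta_0})\frac{ds}{s} = 1/\delta_0$ give $\int_0^\infty c(s)^2\frac{ds}{s}\lesssim_E 1$, so $c$ has energy $O_E(1)$; the elementary inequality $\min((s/s')^{2\delta_0},(s'/s)^{2\delta_0}) \leq \max((s/s''),(s''/s))^{2\delta_0}\min((s''/s')^{2\delta_0},(s'/s'')^{2\delta_0})$ gives the fluctuation bound \eqref{sm}; and $c(s)^2 \gtrsim \int_{s/2}^{2s} a(s')^2\frac{ds'}{s'}$, which is precisely the quantity that parabolic estimates naturally produce. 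The case $j=k=0$ of \eqref{al3} is then immediate, since parabolic smoothing already bounds $\|\psi_s(s,t_0)\|_{\dot H^0_{k(s)}}$ by an average of $a$ over a dyadic neighbourhood of $s$; the higher-derivative cases follow from a routine parabolic bootstrap on \eqref{psis-eq}, in which each $\nabla_x$ or $\nabla_s$ costs a factor $s^{-1/2}$, the gain is local in the heat-time variable and hence harmless by \eqref{sm}, and the nonlinear forcing $\bigO(\psi_s \psi_x^2)$ is absorbed using the $L^\infty_x$ bounds on $\psi_x, A_x$ from Proposition \ref{corbound} and the algebra properties of $\dot H^0_{k(s)}$, $\dot H^1_{k(s)}$ recorded in Lemma \ref{ek-prod}.

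It remains to transfer these $\psi_s$-bounds to the other fields. For \eqref{al1}, \eqref{al2}, \eqref{al4}, \eqref{al5} I substitute the $\psi_s$-bounds into the multilinear expansions \eqref{psi-odd}, \eqref{psi-even} (and, for the $\nabla_{t,x}$-quantities, into the structure equations \eqref{psa}, \eqref{asa} and the instantaneous identities for $\partial_t\psi_x$, $\partial_t A_x$), estimate each term $X_{j,\alpha}$ by repeated use of the product estimates in Lemma \ref{ek-prod}, and sum the resulting multilinear series in $j$ using the frequency-envelope integral bound \eqref{sss}; since the leading term is linear in $\psi_s$ for $\psi_\alpha$ but quadratic for $A_\alpha$, this produces the single power $c(s)$ in \eqref{al1}, \eqref{al4}, \eqref{al5} and the sharper $c^2(s)$ in \eqref{al2}. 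Finally \eqref{al6} follows from the $k=0$ case of \eqref{al1} and Bernstein's inequality, the hypothesis $p > 2+\delta_0$ providing exactly the room needed to sum the $\chi_{k=k'}^{\pm\delta_1}$-weighted frequency tails; and \eqref{al7} follows by writing $\Psi_{t,x}(s,t_0) = -\int_s^\infty \big(\nabla_{t,x}\psi_s + \bigO(A_\alpha \psi_s)\big)(s',t_0)\,ds'$ (using Lemma \ref{dec} and \eqref{psa}), grouping the integral into dyadic heat-time blocks, noting that distinct blocks live in essentially disjoint frequency annuli and are therefore almost orthogonal in $L^2_x$, and summing the squares of the block norms, each of which is $\lesssim c$ of the corresponding scale.

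The main obstacle is the reconstruction step of the last paragraph: one must carry the frequency bookkeeping through the expansions \eqref{psi-odd}, \eqref{psi-even} precisely enough that the powers $s^{\pm\delta_0}$ (from the fluctuation of $c$) and $s^{\pm\delta_1}$ (from measuring a factor localised near $2^{k(s_i)}$ in the ambient space $\dot H^0_{k(s_0)}$ with $s_i\geq s_0$) cancel inside the heat-time integrals, while the multilinear series in $j$ still converges --- which is exactly what the hierarchy $\delta_0 \ll \delta_1 \ll 1$ and the factorial gain $j^{-j/2}$ in \eqref{sss} are designed to guarantee, but the verification is delicate. A secondary, more technical point is the sharp frequency-localisation of $\psi_s$ used in the first step, which is what makes the $\dot H^0_{k(s)}$ norm, with its $\chi_{k=k'}^{-\delta_1}$ weight, comparable to the plain $L^2_x$ norm on the heat-flow fields.
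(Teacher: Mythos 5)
There is a genuine gap, and it lies in the very first step: you build your envelope out of $a(s) = s^{1/2}\|\psi_s(s,t_0)\|_{\dot H^0_{k(s)}}$ alone, i.e.\ out of the field $\psi_s$ with no time-derivative information. But an instantaneous dynamic field consists of the pair $(\psi_s(t_0),\partial_t\psi_s(t_0))$, and the quantities to be bounded in \eqref{al1}, \eqref{al2}, \eqref{al4}--\eqref{al7} include the temporal components $\psi_t$, $A_t$ and the time derivatives $\partial_t\psi_x$, $\partial_t A_x$. These are reconstructed from the expansions \eqref{psi-odd}, \eqref{psi-even} with $\alpha=0$, whose integrands contain $\partial_t\psi_s(s_j)$; likewise your ``instantaneous identities for $\partial_t\psi_x,\partial_t A_x$'' involve $\psi_t,A_t$, which again require control of $\partial_t\psi_s$. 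None of this is visible to your envelope: since $\psi_s=D_i\psi_i$, your $c$ records only the frequency profile of the spatial derivative of the map, while the energy hypothesis constrains merely $\|\psi_t(0)\|_{L^2}$, not where $\psi_t$ lives in frequency. Concretely, take data with $\psi_x$ concentrated at frequency $\sim 1$ and small energy, and $\psi_t$ carrying energy $\sim E$ at frequency $2^K$ with $K$ huge; your $c(s)$ is tiny near $s\sim 2^{-2K}$ (up to the $s^{\pm\delta_0}$ fluctuation allowance), yet $s^{1/2}\|\nabla_x\psi_t(s)\|_{\dot H^0_{k(s)}}\sim E^{1/2}$ there, so \eqref{al1} fails for your $c$. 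The parabolic bootstrap on \eqref{psis-eq} cannot repair this, as that equation contains no $t$-derivatives. This is exactly why the paper defines
$$ c(s) := \sup_{s'} \min\bigl( (s/s')^{\delta_0}, (s'/s)^{\delta_0} \bigr) \sum_{j=0}^{10} (s')^{(j+2)/2} \| \nabla_x^j \nabla_{t,x} \psi_s(s',t_0) \|_{\dot H^0_{k(s')}(\R^2)},$$
with the full spacetime gradient $\nabla_{t,x}\psi_s$ (and then uses Proposition \ref{corbound}, writing $\psi_s=\bigO(\nabla_x\psi_x+\Psi_x\Psi_x)$, only to certify that this $c$ has energy $O_E(1)$). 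Your construction would be fixed by including $\partial_t\psi_s$ (equivalently, enough of $\psi_t$ via \eqref{psit-eq-instant} and zero torsion) in the definition of $a$.

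Two secondary remarks. First, your mollified-average definition of $c$ plus a parabolic-smoothing argument to convert $L^2(ds/s)$ control into pointwise-in-$s$ control is a workable alternative to the paper's direct supremum definition, but it is extra work the sup definition avoids. Second, for the transfer from $\psi_s$-bounds to \eqref{al1}--\eqref{al5} you propose to redo the multilinear bookkeeping through \eqref{psi-odd}, \eqref{psi-even} with Lemma \ref{ek-prod} and \eqref{sss}; the paper instead simply cites \cite[Theorem 6.1]{tao:heatwave3} for this step, and only argues by hand for \eqref{al6}, \eqref{al7} (there your outline does match the paper's). Reproving that cited theorem is legitimate in principle, but as you yourself note the $s^{\pm\delta_0}$ and $\chi^{\pm\delta_1}$ accounting is delicate, and it is not carried out here.
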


\begin{proof}
We fix $E$ and allow all implied constants to depend on $E$.  Set
$$ c(s) := \sup_{s'} \min( (s/s')^{\delta_0}, (s'/s)^{\delta_0} ) \sum_{j=0}^{10} s^{(j+2)/2} \| \nabla_x^j \nabla_{t,x} \psi_s(s',t_0) \|_{\dot H^0_{k(s')}(\R^2)}.$$
From Proposition \ref{corbound} (expressing $\psi_s = \bigO( \nabla_x \psi_x + \Psi_x \Psi_x )$) we see that $c$ is a frequency envelope of energy $O(1)$.  By construction, we have
$$ \| \nabla_x^j \nabla_{t,x} \psi_s(s,t_0) \|_{\dot H^0_{k(s)}(\R^2)} \lesssim c(s) s^{-(j+2)/2}$$
for all $s>0$ and $0 \leq j \leq 10$.  Applying \cite[Theorem 6.1]{tao:heatwave3} we obtain the estimates \eqref{al1}-\eqref{al5}.

To prove \eqref{al6}, we use \eqref{psi-odd}, \eqref{psi-even} to bound the left-hand side by
$$ \sum_{j=1}^\infty O(1)^j \int_{s<s_1<\ldots<s_j} \| \psi_s(s_1) \ldots \psi_s(s_{j-1}) \nabla_{t,x} \psi_s(s_j) \|_{L^p_x(\R^2)}\ ds_1 \ldots ds_j.$$
Using H\"older's inequality, Bernstein's inequality, and \eqref{al3} one can bound this by
$$ \sum_{j=1}^\infty O(1)^j \int_{s<s_1<\ldots<s_j} \frac{c(s_1)}{s_1} \ldots \frac{c(s_{j-1})}{s_{j-1}} \frac{c(s_j)}{s_j^{3/2 - 1/p}}\ ds_1 \ldots ds_j;$$
using \eqref{sss} we obtain the claim.  

To establish \eqref{al7}, we first observe from \eqref{al3} and Fourier analysis that
$$ \| \int_{s < s_1} \nabla_{t,x} \psi_s(s_1)\ ds_1 \|_{L^2_x(\R^2)} \lesssim (\int_s^\infty c(s')^2 \frac{ds}{s'})^{1/2}$$
so it suffices by \eqref{psi-odd}, \eqref{psi-even} to show that
$$ \int_{s<s_1<\ldots<s_j} \| \psi_s(s_1) \ldots \psi_s(s_{j-1}) \nabla_{t,x} \psi_s(s_j) \|_{L^2_x(\R^2)}\ ds_1 \ldots ds_j \lesssim O(1)^j j^{-j/2} c(s) $$
for all $j>1$.  But if one uses H\"older's inequality, placing $\psi_s(s_1)$ in $L^2$ and the other factors in $L^\infty$, then uses \eqref{al3} and \eqref{sss} as before, one obtains the claim.
\end{proof}

\subsection{The energy space}\label{esd-sec}

We can define the energy $\E(\psi_s[t_0])$ of an instantaneous dynamic field to be $\E(\phi[t_0])$, where $\phi[t_0]$ is any instantaneous map reconstructed from $\psi_s[t_0]$; it is easy to see that this quantity is well defined, and is independent of $t_0$ if $\psi_s$ solves the wave map equation.  

In \cite{tao:heatwave2}, \cite{tao:heatwave3}, the energy metric $\dist_\Energy(\psi_s[t_0], \psi'_s[t_0])$ between two instantaneous dynamic field heat flows was defined by the formula
\begin{equation}\label{energy-dist-def-old}
\dist_\Energy(\psi_s[t_0], \psi'_s[t_0]) := (\| \psi_s(t_0) - \psi'_s(t_0) \|_{L^2_{s,x}(\R^+ \times \R^2)}^2 + \frac{1}{2} \| \psi_t(0,t_0) - \psi'_t(0,t_0) \|_{L^2_x(\R^2)}^2.
\end{equation}
Its properties are studied in \cite{tao:heatwave2}.  The relationship between this metric and the energy for heat flows is given by the identity
$$ \E(\psi_s[t_0]) = \dist_\Energy(\psi_t[t_0],0)^2.$$
We can also define the energy metric $\dist_\Energy(\phi[t_0], \phi'[t_0])$ between two instantaneous maps by
$$ \dist_\Energy(\phi[t_0], \phi'[t_0]) = \inf \dist_\Energy(\psi_s[t_0], \psi'_s[t_0]),$$
where $\psi_s, \psi'_s$ range over heat flow resolutions of $\phi, \phi'$; the rotation invariance of the energy metric on heat flows shows that this is indeed a (semi-)metric; if we quotient the space $\S$ of instantaneous maps by rotations \eqref{rotate-data}, then we recover a genuine metric (see \cite{tao:heatwave2} for details).  We define the energy space $\Energy$ to be the metric completion of $\S$.

In this paper, we need to extend the energy space to other instantaneous dynamic fields that are not necessarily heat flows.  It turns out that the metric as defined in \eqref{energy-dist-def-old} is no longer suitable, because without the parabolic regularising effect of the heat flow, it is insufficient to control certain higher derivatives of the field.  Instead, we will use the following more technical (and artificial) metric, which (implicitly) appeared in \cite{tao:heatwave3}:

\begin{definition}[Modified energy metric]  Let $\psi_s[t_0],\psi'_s[t_0]$ be two instantaneous dynamic fields.  The (modified) energy metric $\tilde \dist_\Energy(\psi_s[t_0], \psi'_s[t_0])$ between these flows is given by the formula
\begin{equation}\label{energy-dist-def}
\tilde \dist_\Energy(\psi_s[t_0], \psi'_s[t_0]) := \sum_{j=0}^{10} \sum_k ( \sup_{2^{-2k-2} \leq s \leq 2^{-2k}} s^{1+j/2} \| \nabla_x^j \nabla_{t,x} (\psi_s - \psi'_s)(s,t_0) \|_{\dot H^0_k(\R^2)}^2 )^{1/2}.
\end{equation}
\end{definition}

It is easy to see that this is a metric on the space of instantaneous dynamic fields.  When restricted to heat flows, the modified energy metric and the ordinary metric are compatible:

\begin{proposition}[Compatibility of metrics]\label{compati}  Let $\psi_s[t_0], \psi'_s[t_0]$ be instantaneous heat flows.
\begin{itemize}
\item[(i)] If $\E(\psi_s[t_0]) \leq E$, then $\tilde \dist_\Energy(0,\psi_s[t_0]) \lesssim_E 1$.
\item[(ii)] Conversely, if $\tilde \dist_\Energy(0,\psi_s[t_0]) \leq M$, then $\E(\psi_s[t_0]) \lesssim_M 1$.  
\item[(iii)] If $\E(\psi_s[t_0]), \E(\psi'_s[t_0]) \leq E$, then $\tilde \dist_\Energy(\psi_s[t_0], \psi'_s[t_0]) \sim_E \dist_\Energy(\psi_s[t_0], \psi'_s[t_0])$.
\end{itemize}
\end{proposition}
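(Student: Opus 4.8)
The plan is to derive all three parts from the parabolic regularity theory of Propositions \ref{corbound}--\ref{corbound-freq} together with the explicit reconstruction expansions \eqref{psi-odd}, \eqref{psi-even}. The structural fact underlying everything is that for a heat flow the dynamic field $\psi_s(s)$ is, up to rapidly decaying tails, frequency-localised to the scale $2^{k(s)}\sim s^{-1/2}$; hence a weighted $\dot H^0_{k(s)}$-bound on a derivative $\nabla_x^j\nabla_{t,x}\psi_s$ is, for heat flows, essentially equivalent to the same bound in $L^2_x$, and the weighted sum defining $\tilde\dist_\Energy$ in \eqref{energy-dist-def} is morally $(\sum_{j,k} c(2^{-2k})^2)^{1/2}\sim\sqrt{\E}$ for a frequency envelope $c$ attached to the flow.

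For \textbf{(i)}, I would invoke Proposition \ref{corbound-freq} to produce a frequency envelope $c$ of energy $O_E(1)$ controlling $\psi_s$ and its derivatives; since $\psi_s=D_i\psi_i$ for a heat flow, each quantity $\|\nabla_x^j\nabla_{t,x}\psi_s(s,t_0)\|_{\dot H^0_{k(s)}}$ occurring in \eqref{energy-dist-def} is bounded, via \eqref{al3} (and \eqref{al4} applied to $\nabla_x^j\nabla_{t,x}\Psi_x$), by $s^{-(j+2)/2}c(s)$. Substituting this into \eqref{energy-dist-def}, the powers of $s$ cancel and one is left with a sum over the finitely many $j$ and over the dyadic scales $k$ of $c(2^{-2k})$-type terms, which is $O_E(1)$ by the frequency-envelope normalisation \eqref{cse}. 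This gives $\tilde\dist_\Energy(0,\psi_s[t_0])\lesssim_E 1$.

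For \textbf{(ii)}, the starting point is the energy identity for heat flows recalled in Section \ref{esd-sec}, which unpacks as $\E(\psi_s[t_0])=\|\psi_s(t_0)\|_{L^2_{s,x}(\R^+\times\R^2)}^2+\tfrac12\|\psi_t(0,t_0)\|_{L^2_x(\R^2)}^2$ (the first term being the energy dissipated along the harmonic map heat flow); it thus suffices to bound both right-hand terms by $O_M(1)$ assuming $\tilde\dist_\Energy(0,\psi_s[t_0])\le M$. The $L^2_{s,x}$-term is recovered by using the frequency localisation of heat flows to pass from $\nabla_x\psi_s(s)$ back to $\psi_s(s)$, then squaring and integrating the $j=0,1$ data in \eqref{energy-dist-def}. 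For the $\psi_t(0)$-term I would use \eqref{psi-odd}: the linear term $X_{1,t}(0)=\bigO(\int_0^\infty\partial_t\psi_s(s)\,ds)$ is estimated on the Fourier side by Cauchy--Schwarz in $s$ over the range $s\sim|\xi|^{-2}$ forced by the localisation, giving $\|X_{1,t}(0)\|_{L^2_x}^2\lesssim\int_0^\infty s\,\|\partial_t\psi_s(s)\|_{L^2_x}^2\,ds$, which is controlled by the $j=0$ part of $\tilde\dist_\Energy(0,\psi_s)$; the higher terms $X_{j,t}(0)$, $j\ge 3$, are multilinear in $\psi_s$ and $\partial_t\psi_s$ and are summed using the multilinear bound \eqref{sss}, with the (scale-by-scale) $\tilde\dist_\Energy$-control playing the role of the envelope. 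Only the derivatives $j\le 10$ present in \eqref{energy-dist-def} are needed, since each $X_j$ carries at most one derivative of $\psi_s$.

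For \textbf{(iii)}, the inequality $\dist_\Energy(\psi_s[t_0],\psi'_s[t_0])\lesssim_E\tilde\dist_\Energy(\psi_s[t_0],\psi'_s[t_0])$ is proved exactly as in (ii), applied to the difference field $\delta\psi_s$, using the discretised Leibniz rule \eqref{disc-leib-eq} and the energy bound $E$ to absorb the factors of $\psi_s,\psi'_s$ produced when one differences the multilinear expansions \eqref{psi-odd}, \eqref{psi-even}. The reverse inequality $\tilde\dist_\Energy\lesssim_E\dist_\Energy$ is the substantive point: $\delta\psi_s$ solves the linearised covariant heat equation obtained by differencing \eqref{psis-eq} for the two flows, with forcing and connection terms built from $\psi_s,\psi'_s$ and hence controlled by $E$, so that the parabolic smoothing estimates of \cite{tao:heatwave3} (a difference version of Proposition \ref{corbound}) upgrade the $L^2_{s,x}$- and $L^2_x$-control encoded in $\dist_\Energy$ to control of all the weighted higher derivatives appearing in \eqref{energy-dist-def}. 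I expect the main obstacle, in all three parts, to be the low-frequency / large-$s$ regime $k(s)\to-\infty$, where the heat flow provides essentially no smoothing and the frequency-localisation heuristic is weakest; there the weighted sums in \eqref{energy-dist-def} must be closed using instead the a priori decay of $\psi_s$ at large $s$ coming from the energy bound (for (i) and (iii)) or from the convergence of the sum in the hypothesis $\tilde\dist_\Energy(0,\psi_s)\le M$ together with the qualitative bounds \eqref{quali} (for (ii)).
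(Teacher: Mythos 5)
Your part (i) and your treatment of the $\psi_t(0)$-term in (ii) via the expansion \eqref{psi-odd} together with \eqref{sss} are essentially the paper's argument (the paper cites the parabolic regularity of \cite{tao:heatwave3} where you cite Proposition \ref{corbound-freq}, which is fine), and for the direction $\tilde\dist_\Energy\lesssim_E\dist_\Energy$ in (iii) your appeal to difference parabolic smoothing is exactly what the paper does. The genuine gap is in (ii), at the step where you recover the dissipation term $\int_0^\infty\|\psi_s(s)\|_{L^2_x}^2\,ds$ of the energy by ``using the frequency localisation of heat flows to pass from $\nabla_x\psi_s(s)$ back to $\psi_s(s)$.'' The hypothesis $\tilde\dist_\Energy(0,\psi_s[t_0])\le M$ only controls norms of $\nabla_x^j\nabla_{t,x}\psi_s$ in $\dot H^0_{k(s)}$, and the $\dot H^0_k$ norm \eqref{ek-def} is only \emph{weakly} localised: inverting a derivative at a low frequency $2^{k'}\ll 2^{k(s)}$ costs a factor $2^{-k'}$ while the norm only penalises that frequency by $\chi_{k(s)=k'}^{\delta_1}=2^{-\delta_1|k(s)-k'|}$, so the sum over $k'<k(s)$ diverges and $\|\psi_s(s)\|_{L^2_x}$ is simply not controlled this way. (By contrast, $\|\psi_s(s)\|_{L^\infty_x}$ \emph{is} recoverable, since Bernstein there multiplies by $2^{k'}$ rather than dividing.) Moreover the heuristic that a heat flow has ``rapidly decaying'' low-frequency tails is circular in (ii): those tails are small only in proportion to the low-frequency energy of the map, which is precisely the unknown being bounded; and your fallback of closing the large-$s$/low-frequency regime with the qualitative bounds \eqref{quali} cannot work, because the constants there depend on the field $\psi_s$ itself, whereas (ii) requires a bound depending only on $M$.

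The paper's proof of (ii) avoids any inversion of a derivative: it first reconstructs $\Psi_{t,x}$ from $\psi_s$ through the $s$-integrated expansions \eqref{psi-odd}, \eqref{psi-even} (controlling the multilinear terms with Lemma \ref{ek-prod}, Gagliardo--Nirenberg and \eqref{sss}), obtaining in particular $\|\nabla_x\Psi_{t,x}(s)\|_{L^2_x}\lesssim c(s)s^{-1/2}$, and only then uses the heat-flow hypothesis \eqref{heat-eq}, writing $\psi_s=D_i\psi_i=\bigO(\nabla_x\Psi_x+\Psi_x\Psi_x)$, so that the spatial derivative is \emph{applied} to the reconstructed field rather than inverted on $\psi_s$; this is where the assumption that $\psi_s[t_0]$ is a heat flow enters quantitatively. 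You have all the ingredients (you already use \eqref{psi-odd} for $\psi_t(0)$), but you must run them for $\Psi_x$ as well and route the $L^2_{s,x}$-term through \eqref{heat-eq}. The same repair is needed in the direction $\dist_\Energy\lesssim_E\tilde\dist_\Energy$ of (iii), where the bound $\|\delta\psi_s(s)\|_{L^2_x}\lesssim\delta c(s)s^{-1/2}$ is obtained by differencing exactly this reconstruction-plus-\eqref{heat-eq} argument, not by inverting frequencies on $\delta\psi_s$.
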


\begin{proof}  We begin with (i).  If we allow implied constants to depend on $E$, and omit the $t_0$ parameter, we see from \eqref{energy-dist-def} that it will suffice to show that
\begin{equation}\label{shoa}
 \| \nabla_x^j \nabla_{t,x} \psi_s(s) \|_{\dot H^0_{k(s)}(\R^2)} \lesssim c(s) s^{-(j+2)/2}
\end{equation}
for all $0 \leq j \leq 10$ and $s \geq 0$, and some frequency envelope $c$ of energy $O(1)$.  But this follows from \cite[Lemma 7.6]{tao:heatwave3}.

Now we show (ii).  We allow implied constants to depend on $M$, omit the $t_0$ parameter.  By \eqref{energy-dist-def}, we can find a frequency envelope $c$ of energy $O(1)$ such that \eqref{shoa} holds for all $0 \leq j \leq 10$ and $s \geq 10$.  
We now claim that
$$
\| \Psi_{t,x}(s) \|_{L^p_x(\R^2)} \lesssim c(s) s^{\frac{1}{p}-\frac{1}{2}}$$
for all $s>0$ and $4 \leq p<\infty$ (say).  To see this, we may rescale $s=1$.  By \eqref{psi-odd}, \eqref{psi-even}, it suffices to show that
\begin{equation}\label{ssoj}
\| \int_{1 < s_1 < s_2 < \ldots < s_j} \psi_s(s_1) \ldots \psi_s(s_{j-1}) \nabla_{t,x} \psi_s(s_j)\ ds_1 \ldots ds_j \|_{L^p_x(\R^2)}
\lesssim_p O(1)^j j^{-j/2} c(1)
\end{equation}
for all $j \geq 0$.  But from \eqref{shoa} and the Gagliardo-Nirenberg inequalities one has
$$ \|\psi_s(s)\|_{L^\infty_x(\R^2)} \lesssim c(s) s^{-1}$$
and
$$ \|\nabla_{t,x} \psi_s(s)\|_{L^p_x(\R^2)} \lesssim c(s) s^{\frac{1}{p}-\frac{3}{2}}$$
and so by the H\"older and Minkowski inequalities and \eqref{sss}, one can bound the left-hand side of \eqref{ssoj} by
$$ \lesssim_\eps O(1)^j j^{-j/2} \int_1^\infty c(s) s^{\frac{1}{p}-\frac{3}{2}+\delta_0}\ ds$$
and the claim follows from the envelope property \eqref{sm}.

A similar argument also yields that
$$
\| \nabla_x \Psi_{t,x}(s) \|_{L^2_x(\R^2)} \lesssim c(s) s^{-1/2}$$
for all $s > 0$.  From this and \eqref{heat-eq} we see that
$$
\| \psi_s(s) \|_{L^2_x(\R^2)} \lesssim c(s) s^{-1/2}$$
and thus
$$ \int_0^\infty \|\psi_s(s)\|_{L^2_x(\R^2)}^2\ ds \lesssim 1.$$
To conclude, we need to show that $\| \psi_t(0)\|_{L^2_x(\R^2)} \lesssim 1$.  Using \eqref{psi-odd}, it suffices to show that
$$ \| \int_1^\infty X_j(s)\ ds \|_{L^2_x(\R^2)} \lesssim O(1)^j j^{-j/2} $$
for each $j \geq 1$, where
$$ 
X_j(s_1) := \int_{s_1 < s_2 < \ldots < s_j} \psi_s(s_1) \ldots \psi_s(s_{j-1}) \nabla_{t,x} \psi_s(s_j)\ ds_2 \ldots ds_j.$$
By Fourier analysis it suffices to show that
$$ \| X_j(s_1) \|_{\dot H^0_{k(s_1)}(\R^2)} \lesssim O(1)^j j^{-j/2} c(s_1) s_1^{-1}$$
for all $s_1>0$.  From \eqref{shoa} one has 
$$\|\psi_s(s) \|_{\dot H^1_{k(s)}(\R^2)}, \|\nabla_{t,x} \psi_s(s) \|_{\dot H^0_{k(s)}(\R^2)} \lesssim c(s) s^{-1}$$
so by Minkowski's inequality and Lemma \ref{ek-prod}, one can bound
$$ \| X_j(s_1) \|_{\dot H^0_{k(s_1)}(\R^2)} \lesssim O(1)^j
\int_{s_1 < s_2 < \ldots < s_j} \frac{c(s_1)}{s_1} \ldots \frac{c(s_{j-1})}{s_{j-1}} \frac{c(s_j)}{s_j} (s_j/s_1)^{-\delta_1}\ ds_2 \ldots ds_j;$$
using \eqref{sss} we can bound the right-hand side by
$$ \lesssim O(1)^j j^{-j/2} \int_{s_1}^\infty \frac{c(s)}{s} (s/s_1)^{-\delta_1+\delta_0}\ ds$$
and the claim follows.

Now we show (iii).  The upper bound $\tilde \dist_\Energy(\psi_s[t_0], \psi'_s[t_0]) \lesssim_E \dist_\Energy(\psi_s[t_0], \psi'_s[t_0])$ follows from \cite[Lemma 7.6]{tao:heatwave3}, so we turn to the lower bound.  We again allow implied constants to depend on $E$ and omit the $t_0$ parameter.
By (i) and \eqref{energy-dist-def}, there exists frequency envelopes $c(s)$, $\delta c(s)$ of energy $O(1)$, $O( \dist_\Energy(\psi_s[t_0], \psi'_s[t_0])^2 )$ respectively such that 
$$
 \| \nabla_x^j \nabla_{t,x} \psi^*_s(s) \|_{\dot H^0_{k(s)}(\R^2)} \lesssim c(s) s^{-(j+2)/2}$$
and
$$
 \| \nabla_x^j \nabla_{t,x} \delta \psi_s(s) \|_{\dot H^0_{k(s)}(\R^2)} \lesssim \delta c(s) s^{-(j+2)/2}$$
for all $0 \leq j \leq 10$ and $s > 0$, where $\psi^*_s := (\psi_s,\psi'_s)$ and $\delta \psi_s := \psi_s - \psi'_s$.  It will suffice to show that
\begin{equation}\label{simo}
\|\delta \psi_s(s) \|_{L^2_x(\R^2)} \lesssim \delta c(s) s^{-1/2}
\end{equation}
for $s>0$, and
\begin{equation}\label{simo-2}
 \|\delta \psi_t(0) \|_{L^2_x(\R^2)} \lesssim \dist_\Energy(\psi_s[t_0], \psi'_s[t_0]).
 \end{equation}
But by adapting the arguments used to prove (ii) to differences one sees that
\begin{align*}
\| \Psi^*_{t,x}(s) \|_{L^p_x(\R^2)} &\lesssim c(s) s^{\frac{1}{p}-\frac{1}{2}}\\
\| \delta \Psi_{t,x}(s) \|_{L^p_x(\R^2)} &\lesssim \delta c(s) s^{\frac{1}{p}-\frac{1}{2}}\\
\| \nabla_x \Psi^*_{t,x}(s) \|_{L^2_x(\R^2)} &\lesssim c(s) s^{-1/2}\\
\| \nabla_x \delta \Psi_{t,x}(s) \|_{L^2_x(\R^2)} &\lesssim \delta c(s) s^{-1/2}
\end{align*}
for all $s>0$ and $4 \leq p < \infty$, which gives \eqref{simo}; in a similar vein, the arguments used to prove (ii) yield
$$
\| \delta X_j(s_1) \|_{\dot H^0_{k(s_1)}(\R^2)} \lesssim O(1)^j j^{-j/2} \delta c(s_1) s_1^{-1}$$
for all $s_1 > 0$, which then gives \eqref{simo-2} as before.
\end{proof}

One corollary of this is

\begin{corollary}[Heat flow contracts distances]\label{heato}  Let $\psi_s[t_0], \psi'_s[t_0]$ be instantaneous dynamic fields.  Then
Then for any $s_0 > 0$, one has
$$ \tilde \dist_\Energy(\psi_s(s_0+\cdot)[t], \psi'_s(s_0+\cdot)[t]) \lesssim \tilde \dist_\Energy(\psi_s[t_0], \psi'_s[t_0]).$$
If furthermore $\psi_s[t_0], \psi'_s[t_0]$ are heat flows of energy at most $E$, we have
$$ \dist_\Energy(\psi_s(s_0+\cdot)[t], \psi'_s(s_0+\cdot)[t]) \lesssim_E \dist_\Energy(\psi_s[t_0], \psi'_s[t_0])$$
or equivalently
$$ \dist_\Energy(\phi(s_0)[t], \phi(s_0)[t]) \lesssim_E \dist_\Energy(\phi(0)[t_0], \phi(0)[t_0]).$$
\end{corollary}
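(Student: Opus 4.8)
The plan is to establish the first (general) inequality directly from the definition \eqref{energy-dist-def} of the modified energy metric, and then to deduce the heat-flow inequality from it, Proposition \ref{compati}(iii), and the fact that harmonic map heat flow does not increase energy.

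\emph{The general inequality.} Put $\tilde\psi_s(s):=\psi_s(s_0+s)$ and $\tilde\psi'_s(s):=\psi'_s(s_0+s)$; since $\langle s_0+s\rangle\geq\langle s\rangle$, these are again instantaneous dynamic fields (they still obey \eqref{quali-2}), so it suffices to control the $(j,k)$ window contribution to $\tilde\dist_\Energy(\tilde\psi_s[t_0],\tilde\psi'_s[t_0])$, i.e. the quantity $\sup_{s\in I_k}s^{1+j/2}\|\nabla_x^j\nabla_{t,x}\delta\psi_s(s_0+s)\|_{\dot H^0_k(\R^2)}^2$, for $j\in\{0,\dots,10\}$ and $k\in\Z$, where $\delta\psi_s:=\psi_s-\psi'_s$ and $I_k:=[2^{-2k-2},2^{-2k}]$. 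As $s$ runs over $I_k$, the point $\sigma:=s_0+s$ runs over an interval of ratio at most $4$ and of scale $\sim\max(s_0,2^{-2k})$; writing $k^*:=\min(k(s_0),k)\leq k$, so that $2^{-2k^*}\sim\max(s_0,2^{-2k})$, we see that $\sigma$ lies in one of $O(1)$ dyadic windows of index $k^*+O(1)$. Using $s\leq 2^{-2k}$, the inequality $\|f\|_{\dot H^0_k(\R^2)}\leq 2^{\delta_1(k-k^*)}\|f\|_{\dot H^0_{k^*}(\R^2)}$ (immediate from \eqref{ek-def} and $|k-k'|\leq(k-k^*)+|k^*-k'|$), and $\sigma^{1+j/2}\sim 2^{-2k^*(1+j/2)}$, one bounds the above quantity by $\lesssim 2^{-(k-k^*)(2+j-2\delta_1)}$ times the sum, over the $O(1)$ indices $k'$ with $|k'-k^*|=O(1)$, of the corresponding $(j,k')$ window contributions to $\tilde\dist_\Energy(\psi_s[t_0],\psi'_s[t_0])$. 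Since $2+j-2\delta_1>1$, and since for each fixed $k'$ the indices $k$ with $\min(k(s_0),k)$ within $O(1)$ of $k'$ form a set on which $\sum_k 2^{-(k-k^*)(2+j-2\delta_1)/2}=O(1)$ (the only accumulation being $k\geq k(s_0)$, in the case $k'=k(s_0)$), taking square roots, summing over $k$, and then summing over $j$ yields $\tilde\dist_\Energy(\tilde\psi_s[t_0],\tilde\psi'_s[t_0])\lesssim\tilde\dist_\Energy(\psi_s[t_0],\psi'_s[t_0])$.

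\emph{The heat-flow inequality.} Suppose now that $\psi_s[t_0],\psi'_s[t_0]$ are instantaneous heat flows of energy at most $E$; extend them to dynamic field heat flows near $t_0$ via Theorems \ref{exist2} and \ref{exist}. The shifts $\psi_s(s_0+\cdot),\psi'_s(s_0+\cdot)$ are again heat flows (the heat-tension field \eqref{heat-tension} simply shifts in $s$) and reconstruct $\phi(s_0),\phi'(s_0)$. Moreover the harmonic map heat flow does not increase energy: using \eqref{psit-eq}, the skew-adjointness of $D_i$ (so $\int\langle u,D_iD_iu\rangle=-\int|D_iu|^2$), and $\langle\psi_{t,x},(\psi_{t,x}\wedge\psi_i)\psi_i\rangle=|\psi_{t,x}|^2|\psi_i|^2-\langle\psi_{t,x},\psi_i\rangle^2\geq 0$, one computes
$$\partial_s\Big(\tfrac12\int_{\R^2}|\psi_{t,x}(s,t_0)|^2\ dx\Big)=-\int_{\R^2}\Big(|D_i\psi_{t,x}|^2+|\psi_{t,x}|^2|\psi_i|^2-\langle\psi_{t,x},\psi_i\rangle^2\Big)(s,t_0)\ dx\leq 0,$$
so $\E(\phi(s_0)[t_0])\leq\E(\phi(0)[t_0])\leq E$ and likewise for $\phi'$. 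Proposition \ref{compati}(iii), applied to both pairs, together with the general inequality, now gives
$$\dist_\Energy(\psi_s(s_0+\cdot)[t_0],\psi'_s(s_0+\cdot)[t_0])\sim_E\tilde\dist_\Energy(\psi_s(s_0+\cdot)[t_0],\psi'_s(s_0+\cdot)[t_0])\lesssim\tilde\dist_\Energy(\psi_s[t_0],\psi'_s[t_0])\sim_E\dist_\Energy(\psi_s[t_0],\psi'_s[t_0]).$$
Since $\psi_s(s_0+\cdot)$ is the unique heat flow resolution of $\phi(s_0)$ (Theorem \ref{exist2}) and $\dist_\Energy$ is invariant under the rotations \eqref{rotate}, the definition of $\dist_\Energy$ on instantaneous maps turns this into the equivalent form $\dist_\Energy(\phi(s_0)[t_0],\phi'(s_0)[t_0])\lesssim_E\dist_\Energy(\phi(0)[t_0],\phi'(0)[t_0])$.

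\emph{Main obstacle.} The only genuine difficulty is the general inequality: the norms $\dot H^0_k$ are not monotone in $k$, so one must control the way the heat-temporal shift $s\mapsto s_0+s$ compresses the infinitely many windows $I_k$ with $2^{-2k}\ll s_0$ into a bounded band of scales near $s_0$. This is affordable precisely because the gain $2^{-2(k-k^*)(1+j/2)}$ coming from the weight $s^{1+j/2}$ dominates the loss $2^{2\delta_1(k-k^*)}$ incurred in replacing $\dot H^0_k$ by $\dot H^0_{k^*}$, the margin being provided by the smallness of $\delta_1$.
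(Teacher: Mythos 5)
Your proposal is correct and follows essentially the same route as the paper: the first inequality is exactly the ``inspection of \eqref{energy-dist-def}'' the paper invokes (your dyadic bookkeeping, trading the weight gain $2^{-(k-k^*)(2+j)}$ against the $\dot H^0_k$-comparability loss $2^{2\delta_1(k-k^*)}$, is the content of that inspection), and the second inequality is deduced from the first via Proposition \ref{compati}(iii), as in the paper. Your extra observation that the heat flow does not increase energy (needed so that \ref{compati}(iii) applies to the shifted data $\phi(s_0)[t_0]$) is a legitimate and correctly executed detail that the paper leaves implicit; one could equally cite Proposition \ref{corbound} for the $O_E(1)$ energy bound at time $s_0$.
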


\begin{proof} The first claim follows from an inspection of \eqref{energy-dist-def}.  The second claim then follows from the first by Lemma \ref{compati}.
\end{proof}

We can now recall the \emph{energy spectral distribution} from \cite{tao:heatwave4}.  Given any instantaneous dynamic field $\psi_s[t_0]$, we define the \emph{energy spectral distribution} $\ESD(\psi_s[t_0]): \R^+ \to \R^+$ by the formula
\begin{equation}\label{esd-def}
\ESD(\psi_s[t_0])(s) := \| \psi_s(s) \|_{L^2_x(\R^2)}^2 + \| D_x \psi_t(s) \|_{L^2_x(\R^2)}^2 + \frac{1}{2} \| \psi_t \wedge \psi_x(s) \|_{L^2_x(\R^2)}^2.
\end{equation}
Note that this quantity is invariant under rotations of the field $\psi_s[t_0]$.  Thus we may abuse notation and also write $\ESD(\phi[t_0]) = \ESD(\psi_s[t_0])$ for any instantaneous map $\phi[t_0]$ reconstructed from $\psi_s[t_0]$.

The properties of the energy spectral distribution were discussed in \cite{tao:heatwave4}.  For now, we simply record the energy identity
\begin{equation}\label{energy-ident}
\E(\psi_s[t_0]) = \int_0^\infty \ESD(\psi_s[t_0])(s)\ ds;
\end{equation}
see \cite{tao:heatwave4}.

For future reference we record the following convenient lemma.

\begin{lemma}[Energy gap implies energy stability]\label{gapstable}  Let $\phi[t_0] \in \S$ have energy at most $E$, let $\phi(s)[t_0]$ be the heat flow generated by this data, and let $0 < s_1 < s_2$ be such that
\begin{equation}\label{psisep}
 \int_{\eps s_1}^{s_2/\eps} \ESD(\phi[t_0])(s)\ ds \leq \eps
 \end{equation}
for some $E,\eps > 0$.
Then
$$ \dist_\Energy( \phi(s_1)[t_0], \phi(s_2)[t_0] ) \leq o_{\eps \to 0;E}(1).$$
In particular, by Lemma \ref{compati}, we have
$$ \tilde \dist_\Energy( \phi(s_1)[t_0], \phi(s_2)[t_0] ) \leq o_{\eps \to 0;E}(1).$$
\end{lemma}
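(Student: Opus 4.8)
The plan is to run the heat flow from $s_1$ to $s_2$ and estimate the energy-distance directly via the defining formula \eqref{energy-dist-def-old} (or its compatible variant), using the hypothesis \eqref{psisep} that the $\ESD$ is small on the dilated window $[\eps s_1, s_2/\eps]$. Recall that $\dist_\Energy(\phi(s_1)[t_0],\phi(s_2)[t_0])$ is the infimum over heat-flow resolutions of the distance between the corresponding instantaneous dynamic fields. A natural choice of competitor is to take the caloric resolution $\psi_s[t_0]$ of $\phi[t_0]$ and compare the resolution $\psi_s(s_1+\cdot)[t_0]$ of $\phi(s_1)[t_0]$ against $\psi_s(s_2+\cdot)[t_0]$ of $\phi(s_2)[t_0]$. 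First I would set up the bookkeeping so that the quantity to bound becomes, schematically,
$$
\int_0^\infty \| \psi_s(s_1+s') - \psi_s(s_2+s') \|_{L^2_x}^2\ ds' + \| \psi_t(s_1) - \psi_t(s_2) \|_{L^2_x}^2,
$$
interpreting $\psi_s(s_i+\cdot)$ and $\psi_t(s_i)$ as the differentiated fields of the shifted heat flow; here one uses that shifting the heat flow in $s$ is again a heat flow (as in Corollary \ref{heato}).

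Next I would split the $s$-range into the region near $s_1$ and $s_2$ (inside the window $[\eps s_1, s_2/\eps]$, after absorbing the shift) and the complementary region far from both. In the far region, the two heat flows $\phi(s_1)$ and $\phi(s_2)$ have essentially stabilised — both are close to $\phi(\infty)$-type behaviour or, more usefully, the parabolic smoothing estimates of Proposition \ref{corbound} and \ref{corbound-freq} show that the contribution of large $s'$ decays, uniformly in $E$, so this piece is $o_{\eps\to 0;E}(1)$ once $\eps$ is small. In the window region, the point is that $\int_{\eps s_1}^{s_2/\eps} \ESD(\phi[t_0])(s)\ ds \leq \eps$ forces $\|\psi_s(s)\|_{L^2_x}$ and $\|D_x\psi_t(s)\|_{L^2_x}$ to be small (in an $L^2 \frac{ds}{s}$ or pointwise-via-frequency-envelope sense) throughout that window, by the definition \eqref{esd-def}; combining this with the monotonicity/energy-gap heuristics and the fundamental theorem of calculus in $s$ (writing $\psi_t(s_1)-\psi_t(s_2) = -\int_{s_1}^{s_2}\partial_s\psi_t$, controlled by \eqref{psit-eq} and the $\ESD$ smallness) gives the desired smallness. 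Finally, invoking Proposition \ref{compati}(iii) converts the bound on $\dist_\Energy$ into the same bound on $\tilde\dist_\Energy$, giving the last sentence of the lemma.

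The main obstacle I anticipate is the far region: one must show that $\dist_\Energy(\phi(s_1)[t_0],\phi(s_2)[t_0])$ genuinely goes to zero as the window widens, not merely that it is bounded. This requires a quantitative version of the statement that the heat flow's tail energy past scale $\sim s_2/\eps$ (resp. below scale $\sim \eps s_1$) is small — which is exactly where one needs the integral $\int_0^{\eps s_1}\ESD + \int_{s_2/\eps}^\infty \ESD$ to be $\leq E$ and then to exploit that the \emph{difference} of the two flows, rather than each flow separately, only sees the overlap region. Concretely, one wants to say that outside $[\eps s_1, s_2/\eps]$ the two resolutions $\psi_s(s_1+\cdot)$ and $\psi_s(s_2+\cdot)$ are each close to the common object $\psi_s$, so their difference is small there; making this precise uniformly in $E$ is the technical heart, and is where the frequency-envelope formulation of parabolic regularity (Proposition \ref{corbound-freq}) together with the Gronwall-type Lemmas \ref{gron-lem}, \ref{gron-lem-2} will be used to propagate the smallness of the $\ESD$ on the window into smallness of the field differences on the full half-line.
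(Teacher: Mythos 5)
Your overall strategy coincides with the paper's: compare the two shifted caloric resolutions $\psi_s(s_1+\cdot)[t_0]$ and $\psi_s(s_2+\cdot)[t_0]$, reduce via \eqref{energy-dist-def-old} to bounding $\int_0^\infty \|\psi_s(s+s_1)-\psi_s(s+s_2)\|_{L^2_x}^2\,ds$ and $\|\psi_t(s_1)-\psi_t(s_2)\|_{L^2_x}$, treat the window region via the ESD hypothesis and the far region via parabolic decay, and handle the $\psi_t$ term by the fundamental theorem of calculus together with \eqref{psit-eq}. However, there is a genuine gap at what is in fact the heart of the proof: you assert that writing $\psi_t(s_1)-\psi_t(s_2)=-\int_{s_1}^{s_2}\partial_s\psi_t\,ds$ and invoking \eqref{psit-eq} plus ESD smallness "gives the desired smallness", but the naive estimate fails. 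The dominant term is $\int_{s_1}^{s_2} D_iD_i\psi_t(s)\,ds$, and parabolic regularity only gives $\|D_iD_i\psi_t(s)\|_{L^2_x}\lesssim s^{-1}$, which is not integrable in $ds$ over a window with $s_2/s_1$ large; meanwhile the ESD hypothesis \eqref{psisep} only controls \emph{one} covariant derivative, $\|D_x\psi_t(s)\|_{L^2_x}$, in an $L^2(ds)$ sense. So Minkowski's inequality alone cannot close this step. The paper resolves it by squaring the integral, integrating by parts in one of the two $D_i$'s in the cross terms, and then combining the $L^2(ds)$ smallness of $\|D_x\psi_t\|_{L^2_x}$ from \eqref{jam} with the weighted bounds of Proposition \ref{corbound} and Hardy's inequality; the cubic term $\int_{s_1}^{s_2}|\psi_t||\psi_x|^2$ likewise needs Gagliardo--Nirenberg and the diamagnetic inequality $|D_x\psi_t|\le|\nabla_x|\psi_t||$ to convert the covariant ESD control into control of $\nabla_x|\psi_t|$. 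Without some such $TT^*$-type or duality argument your proof stalls exactly here.

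A secondary remark: you identified the far region as the main anticipated obstacle, but in the paper's proof it is the easy part. Since $\|\partial_s\psi_s(s)\|_{L^2_x}\lesssim_E s^{-3/2}$ by Proposition \ref{corbound}, the fundamental theorem of calculus gives $\|\psi_s(s+s_1)-\psi_s(s+s_2)\|_{L^2_x}\lesssim s^{-3/2}$ (after normalising $s_2=1$), which is square-integrable on $s\ge\eps^{-1/2}$ and contributes $o_{\eps\to0;E}(1)$ directly; no Gronwall or frequency-envelope machinery is needed there, and no comparison of each shifted flow to a "common object" is required. The genuine difficulty is the $\psi_t$ estimate described above, which your proposal does not engage with.
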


\begin{proof}  We fix $E$ and allow constants to depend on $E$, and abbreviate $o_{\eps \to 0;E}(1)$ as $o(1)$.  We may normalise $s_2=1$, and omit the explicit mention of $t_0$. We may assume that $\eps$ is small (e.g. $\eps < 1/100$) since the claim follows from the triangle inequality otherwise. By \eqref{energy-dist-def}, we need to show that
\begin{equation}\label{psiss}
\int_0^\infty \| \psi_s(s+s_1) - \psi_s(s+1)\|_{L^2_x(\R^2)}^2\ ds = o(1)
\end{equation}
and
\begin{equation}\label{psits}
\| \psi_t(s_1) - \psi_t(1) \| = o(1).
\end{equation}
We begin with \eqref{psiss}.  From \eqref{psisep}, \eqref{esd-def} we have
\begin{equation}\label{jam}
\int_{\eps s_1}^{1/\eps} \| \psi_s(s) \|_{L^2_x(\R^2)}^2 + \| D_x \psi_t(s) \|_{L^2_x(\R^2)}^2\ ds = O(\eps).
\end{equation}
From this and the triangle inequality we see that the contribution of the interval $0 \leq s \leq \eps^{-1/2}$ to \eqref{psiss} is acceptable, so it suffices to control the region where $s>\eps^{-1/2}$.  But from Proposition \ref{corbound} one has
$$ \| \partial_s \psi_s(s) \|_{L^2_x(\R^2)} \lesssim s^{-3/2}$$
and so by the fundamental theorem of calculus and Minkowski's inequality
$$ \| \psi_s(s+s_1) - \psi_s(s+1)\|_{L^2_x(\R^2)} \lesssim s^{-3/2}$$
and the claim follows.

Now we establish \eqref{psits}.  By \eqref{psit-eq}, the fundamental theorem of calculus, and the triangle inequality it suffices to show that
\begin{equation}\label{ddpsi}
\| \int_{s_1}^1 D_i D_i \psi_t(s)\ ds \|_{L^2_x(\R^2)} = o(1)
\end{equation}
and
\begin{equation}\label{psitxx}
\| \int_{s_1}^1 |\psi_t(s)| |\psi_x(s)|^2\ ds \|_{L^2_x(\R^2)} = o(1).
\end{equation}
We begin with \eqref{ddpsi}.  Squaring and using symmetry, it suffices to show that
$$ \int\int_{s_1 \leq s < s' \leq 1} |\langle D_i(s) D_i(s) \psi_t(s), D_j(s') D_j(s') \psi_t(s') \rangle_{L^2_x(\R^2)}|\ ds ds' = o(1),$$
where $D_i(s) = \partial_i + A_i(s)$.  We integrate by parts and use H\"older's inequality, and reduce to showing that
\begin{equation}\label{ddpsi-1}
 \int\int_{s_1 \leq s < s' \leq 1} \|A_i(s)\|_{L^2_x(\R^2)} \|D_i(s) \psi_t(s)\|_{L^2_x(\R^2)} \|D_j(s') D_j(s') \psi_t(s')\|_{L^\infty_x(\R^2)}\ ds ds' = o(1)
 \end{equation}
and
\begin{equation}\label{ddpsi-2}
 \int\int_{s_1 \leq s < s' \leq 1} \| D_i(s) \psi_t(s)\|_{L^2_x(\R^2)} \| \nabla_x D_j(s') D_j(s') \psi_t(s') \|_{L^2_x(\R^2)}\ ds ds' = o(1).
\end{equation}
We first check \eqref{ddpsi-1}.  From Proposition \ref{corbound}, $\|A_i(s)\|_{L^2_x(\R^2)}=O(1)$, so by \eqref{jam} and Cauchy-Schwarz it suffices to show that
$$ \int_{s_1}^1 (\int_s^1 \|D_j(s') D_j(s') \psi_t(s')\|_{L^\infty_x(\R^2)}\ ds')^2\ ds = O(1).$$
But from Proposition \ref{corbound} one has
$$ \int_0^\infty s^2 \|D_j(s) D_j(s) \psi_t(s)\|_{L^\infty_x(\R^2)}^2\ ds' = O(1)$$
and the claim now follows from Hardy's inequality.

The proof of \eqref{ddpsi-2} proceeds exactly as in \eqref{ddpsi-1}, but with $\|\nabla_x D_j D_j \psi_t\|_{L^2_x(\R^2)}$ plyaing the role of $\|D_j D_j \psi_t \|_{L^\infty_x(\R^2)}$ (and with no $A_i$ term to discard).  Finally we show \eqref{psitxx}.  From Proposition \ref{corbound} one has
$$ 	
\int_0^\infty s^{-1/2} \| \psi_x(s) \|_{L^4_x(\R^2)}^2\ ds = O(1)$$
and $\| \psi_x(s)\|_{L^\infty_x(\R^2)} \lesssim s^{-1/2}$, so by H\"older's inequality it suffices to show that
$$ \int_{s_1}^1 s^{-1/2} \| \psi_t(s) \|_{L^4_x(\R^2)}^2\ ds = o(1).$$
But from Proposition \ref{corbound} we have
$$ \int_{s_1}^1 s^{-2/3} \| \psi_t(s) \|_{L^3_x(\R^2)}^2\ ds = O(1)$$
so from the Gagliardo-Nirenberg inequality
$$ \| f \|_{L^4_x(\R^2)} \lesssim \| f\|_{L^3_x(\R^2)}^{3/4} \| \nabla_x |f| \|_{L^2_x(\R^2)}^{1/4}$$
and H\"older's inequality, it suffices to show that
$$ \int_{s_1}^1 \| \nabla_x |\psi_t|(s) \|_{L^2_x(\R^2)}^2\ ds = o(1).$$
But this follows from \eqref{jam} and the diamagnetic inequality $|D_x \psi_t| \leq \nabla_x |\psi_t|$.
\end{proof}

\subsection{How to exploit delocalisation}\label{delocal-sec}

Three of the main results in this paper, namely Theorems \ref{freqbound}, \ref{spacbound}, \ref{spacdeloc}, are instances of the following general situation.  We have a classical wave map $(\phi,I)$ (which, in practice, will depend on an asymptotic parameter $n$) which we wish to control accurately (in particular, we wish to control the $(A,\mu)$-entropy of this map).  At some initial time $t_0 \in I$ (usually we will take $t_0=0$), we have data $\phi[t_0]$, which we have resolved (via a caloric gauge) into a heat flow $\psi_s[t_0]$; we similarly resolve the rest of $\phi$ into a heat flow $\psi_{s}$.  We suppose that this data $\phi[t_0]$ is somehow \emph{delocalised}, in that it is spread out among two widely separated regions in\footnote{Not coincidentally, each of these scenarios corresponds to one of the non-compact symmetries  of the wave maps equation, namely \eqref{cov-scaling}, \eqref{space-trans}, and \ref{time-trans} respectively.  The remaining non-compact symmetry \eqref{rotate} is completely decoupled from the analysis by the passage to the dynamic field.} frequency (in the case of Theorem \ref{freqbound}), space (in the case of Theorem \ref{spacdeloc}), or time (in the case of Theorem \ref{spacbound}). As the instantaneous map $\phi[t_0]$ is not scalar, one cannot isolate these distinct regions on the level of the map; however, by passing to the instantaneous dynamic field $\psi_s[t_0]$, it now becomes possible to use linear decomposition tools (e.g. cutoff functions and Littlewood-Paley projections) to decompose
$$ \psi_s[t_0] = \tilde \psi_s^1[t_0] + \tilde \psi_s^2[t_0] + \operatorname{error}$$
where $\tilde \psi_s^1[t_0]$, $\tilde \psi_s^2[t_0]$ are two widely separated components of $\psi_s[t_0]$ (e.g. low and high frequency components, or components supported near the spatial origin and far away from that origin), and $\operatorname{error}$ is a small error which we will neglect. While $\psi_s[t_0]$ is a heat flow, the components $\tilde \psi_s^{1,2}[t_0] = (\tilde \psi_s^1[t_0], \tilde \psi_s^2[t_0])$ need not be; however, in practice they will be \emph{approximate} heat flows, and because of this it will be possible to perturb the instantaneous dynamic fields $\tilde \psi_s^{1,2}[t_0]$ into dynamic fields $\psi_s^{1,2}[t_0]$ which are genuine heat flows.

The hypotheses in our theorems will allow us to ensure that the maps reconstructed from these instantaneous dynamic fields $\psi_s^{1,2}[t_0]$ have energy strictly less than the initial energy $E_0$, and so by hypothesis they can be extended to dynamic fields $\psi_s^{1,2}$ on all of $I$ that obey both the heat flow equation and the wave map equation (and will obey good entropy bounds).  Because $\psi_s^1[t_0], \psi_s^2[t_0]$ were already widely separated, we will eventually be able to show that $\psi_s^1$ and $\psi_s^2$ remain essentially separated on the remainder of the time interval $I$.

Now we reconstitute the two components by defining
$$ \tilde \psi_s := \psi_s^1 + \psi_s^2,$$
thus $\tilde \psi_s$ is a dynamic field.  Because of the nonlinear nature of the heat flow and wave map equations, the field $\tilde \psi_s$ need not obey these equations exactly; however, in practice the separation properties of $\psi_s^1$ and $\psi_s^2$ will allow us to show that $\tilde \psi_s$ solves these equations \emph{approximately}.  For similar reasons, the initial data $\tilde \psi_s[t_0]$ of $\tilde \psi_s$ will be close to $\psi_s[t_0]$.  Because of this, we will be able to show that $\tilde \psi_s$ and $\psi_s$ are close to each other, which allows us to control $\phi$ as desired.

The above strategy is depicted schematically by the following diagram:

\newarrow{Corresponds}<--->

\begin{equation}\label{tstruct}
\begin{diagram}
\phi          & \rCorresponds & \psi_s         & \lTo      & \tilde \psi_s                & \lTo & \psi^{1,2}_s          & \rCorresponds & \phi^{1,2}\\
\uCorresponds &               & \uCorresponds  & \ldDotsto &                              &      & \uCorresponds         &               & \uCorresponds\\
\phi[t_0]     & \rCorresponds & \psi_s[t_0]& \rTo      & \tilde \psi^{1,2}_s[t_0] & \rTo & \psi^{1,2}_s[t_0] & \rCorresponds & \phi^{1,2}[t_0]
\end{diagram}
\end{equation}

To summarise, to control the evolution $\phi$ of initial data $\phi[t_0]$ under the wave map equation, one first converts the data $\phi[t_0]$ to an instantaneous dynamic field heat flow $\psi_s[t_0]$ (thus similarly converting $\phi$ to a dynamic field heat flow $\psi_s$, which continues to solve the wave map equation at $s=0$). Then, one linearly decomposes $\psi_s[t_0]$ into well-separated components $\tilde \psi^{1,2}_s[t_0]$.  This breaks the heat flow property, but we repair it to create instantaneous heat flows $\psi^{1,2}_s[t_0]$, which can be used to reconstruct data $\phi^{1,2}[t_0]$ of energy strictly smaller than $E_0$, and which are well-separated from each other.  We then use the induction hypothesis to evolve $\phi^{1,2}[t_0]$ to $\phi^{1,2}$ (and thus $\psi^{1,2}_s[t_0]$ to $\psi^{1,2}_s$) by the wave map equation (hopefully preserving the well-separation property), then linearly superimpose the dynamic fields $\psi^{1,2}_s$ to create $\tilde \psi_s$.  This superposition only obeys the heat flow and wave map equations approximately, but has initial data close to the data $\psi_s[t_0]$, and so is close to $\psi_s$, and can thus be used to control $\phi$ as required.

In order to make this scheme work, we will (broadly speaking) need five key tools (in addition to the more standard theory of linear cutoffs in space and frequency):

\begin{enumerate}
\item An \emph{instantaneous parabolic repair theory}, which takes an instantaneous dynamic field, such as $\tilde \psi^i_s[t_0]$, which \emph{approximately} obeys the heat flow equation, and perturbs it to another instantaneous dynamic field $\psi^i_s[t_0]$ which obeys the heat flow equation \emph{exactly}.
\item A \emph{non-instantaneous parabolic repair theory}, which takes a dynamic field such as $\tilde \psi_s$, which \emph{approximately} obeys the heat flow equation, and perturbs it to another dynamic field $\psi'_s$ which obeys the heat flow equation \emph{exactly}.  
\item A \emph{hyperbolic repair theory}, which takes a dynamic field heat flow $\psi'_s$ which obeys the wave maps equation \emph{approximately}, and perturbs it to obtain another dynamic field heat flow $\psi''_s$ which obeys the wave map equation \emph{exactly}, and which has the same initial data: $\psi'_s[t_0] = \psi''_s[t_0]$.
\item A \emph{stability theory}, which takes a dynamic field heat flow $\psi''_s$ that obeys the wave maps equation \emph{exactly}, and which matches a specified set of initial data $\psi_s[t_0]$ \emph{approximately}, and perturbs it to obtain another dynamic field heat flow $\psi_s$ that obeys the wave maps equation \emph{exactly}, and also matches the initial data $\psi_s[t_0]$ \emph{exactly}.
\item A theory to ensure that well-separated initial data lead to solutions which are also well-separated.
\end{enumerate}

The purpose of the next few sections is to establish these sorts of tools, in preparation for establishing Theorems \ref{freqbound}, \ref{spacbound}, \ref{spacdeloc}.  The first four tools are general and apply equally to all three theorems; the fifth tool has to be adapted to each of these theorems separately.  

In the course of setting up these tools, we will also introduce a number of key function spaces which will be used in these theorems, and also to establish Theorem \ref{symscat}.

\section{Abstract parabolic repair theory}

As discussed in Ingredients 1 and 2 in Section \ref{delocal-sec}, we will need to establish a parabolic repair theory both for instantaneous and non-instantaneous heat flows.  It will be convenient to unify these two theories together as an abstract parabolic repair theory involving an unspecified family of function spaces obeying certain axioms; the two specific parabolic repair theories we will then will be special cases of this abstract theory using specific spaces.

We first recall a useful definition from \cite{tao:heatwave3}.

\begin{definition}[Algebra family]\label{algebra-fam}  Let $I$ be a compact interval.  An \emph{algebra family} is a collection of continuous seminorms $S_k(I \times \R^2)$ on $\Sch(I \times \R^2)$ that obeys the product estimate that obeys the following estimates:
\begin{itemize}
\item The product estimate
\begin{equation}\label{prod1-abstract}
 \| \phi^{(1)} \phi^{(2)} \|_{S_{\max(k_1,k_2)}(I \times \R^2)} \lesssim \| \phi^{(1)} \|_{S_{k_1}(I \times \R^2)} \| \psi^{(2)} \|_{S_{k_2}(I \times \R^2)}
\end{equation}
for all $k_1,k_2 \in \Z$ and $\phi^{(1)}, \phi^{(2)} \in \Sch(I \times \R^2)$;
\item The parabolic regularity estimate
\begin{equation}\label{parreg-abstract}
 \| \nabla_x^2 \phi(s) \|_{S_k(I \times \R^2)} \lesssim s^{-1} \| \phi(s/2) \|_{S_k(I \times \R^2)} + \sup_{s/2 \leq s' \leq s} \| (\partial_s - \Delta) \phi(s') \|_{S_k(I \times \R^2)}
\end{equation}
for all smooth $\phi: \R^+ \to \Sch(I \times \R^2)$ and $s > 0$, as well as the variant
\begin{equation}\label{parreg-abstract-alt}
 \| \nabla_x^2 \phi(s) \|_{S_k(I \times \R^2)} \lesssim \| \nabla_x^2 \phi(s'') \|_{S_k(I \times \R^2)} + \sup_{s'' \leq s' \leq s} \| (\partial_s - \Delta) \phi(s') \|_{S_k(I \times \R^2)}
\end{equation}
for all smooth $\phi: \R^+ \to \Sch(I \times \R^2)$ and $s/2 \leq s'' \leq s$;
\item The additional parabolic regularity estimate
\begin{equation}\label{parreg-abstract2}
 \| \nabla_x^j e^{(s-s')\Delta} \phi \|_{S_{k(s)}(I \times \R^2)} \lesssim_j (s'/s)^{\delta_1/10} (s-s')^{-j/2} \| \phi \|_{S_{k(s')}(I \times \R^2)}
\end{equation}
for all smooth $\phi \in \Sch(I \times \R^2)$, $s > s' \geq 0$, and $j \geq 0$;
\item The comparability estimate
\begin{equation}\label{physical-abstract}  \| \phi\|_{S_{k_1}(I \times \R^2)} \lesssim \chi_{k_1=k_2}^{-\delta_1} \|\phi\|_{S_{k_2}(I \times \R^2)}
\end{equation}
for all $k_1,k_2 \in \Z$ and $\phi \in \Sch(I \times \R^2)$. 
\item The uniform bound
\begin{equation}\label{uniform} \|\phi\|_{L^\infty_{t,x}(I \times \R^2)} \lesssim \| \phi \|_{S_k(I \times \R^2)}
\end{equation}
for all $k\in \Z$ and $\phi \in \Sch(I \times \R^2)$.
\end{itemize}
\end{definition}

\begin{example} For any $t_0 \in I$ the seminorms 
$$ \|\phi \|_{S_k(I \times \R^2)} := \| \nabla_{t,x} \phi(t_0) \|_{\dot H^0_{k}(\R^2)}$$
form an algebra family, where the $\dot H^m_k$ norms were defined in \eqref{ek-def}; see Theorem \ref{parab-thm-abstract} below.  This is the example required for the instantaneous theory.  The non-instantaneous theory will instead need the much more complicated $S$-type spaces from earlier wave maps papers, in particular \cite{tao:wavemap2}.
\end{example}

We can now state the main result of this section.

\begin{theorem}[Abstract parabolic repair]\label{parab-thm-abstract}  Let $I$ be a compact interval, and let $S_k(I \times \R^2)$ be an algebra family. 
Let $\tilde \psi_s$ be a dynamic field on $I$, with the associated differentiated fields $\tilde \Psi_{s,t,x} = (\tilde \psi_{s,t,x}, \tilde A_{s,t,x})$.   

Let $c$ be a frequency envelope of at most $E$ for some $E>0$, and assume that $0 < \kappa < 1$ is sufficiently small depending on $E$.  We assume the bounds
\begin{equation}\label{psisjk-abstract}
\| \nabla_x^j \tilde \psi_s(s) \|_{S_{k(s)}(I \times \R^2)} \lesssim c(s) s^{-(j+2)/2}
\end{equation}
and
\begin{equation}\label{psisjk-star-abstract}
\| \nabla_x^j (\tilde \psi_s - \tilde D_i \tilde \psi_i) (s) \|_{S_{k(s)}(I \times \R^2)} \lesssim \kappa c(s) s^{-(j+2)/2}
\end{equation}
for all $0 \leq j \leq 20$ and $s > 0$.  

Let $\phi$ be a map reconstructed from $\tilde \psi_s$.  Then there exists a dynamic field heat flow $\psi_s$, which also has $\phi$ as a reconstruction, such that
\begin{align}
\| \nabla_x^j \Psi_x(s) \|_{S_{k(s)}(I \times \R^2)} &\lesssim_{E} c(s) s^{-(j+1)/2} \label{parab-1}\\
\| \nabla_x^j A_x(s) \|_{S_{k(s)}(I \times \R^2)} &\lesssim_{E} c(s)^2 s^{-(j+1)/2}\label{parab-2}\\
\| \nabla_x^j \psi_s(s) \|_{S_{k(s)}(I \times \R^2)} &\lesssim_{E} c(s) s^{-(j+2)/2}\label{parab-3}\\
\| \nabla_x^j (\tilde \Psi_x - \Psi_x)(s) \|_{S_{k(s)}(I \times \R^2)} &\lesssim_{E} \kappa c(s) s^{-(j+1)/2}\label{parab-4}\\
\| \nabla_x^j (\tilde A_x - A_x)(s) \|_{S_{k(s)}(I \times \R^2)} &\lesssim_{E} \kappa c(s)^2 s^{-(j+1)/2}\label{parab-5}\\
\| \nabla_x^j (\tilde \psi_s - \psi_s)(s) \|_{S_{k(s)}(I \times \R^2)} &\lesssim_{E} \kappa c(s) s^{-(j+2)/2}\label{parab-6}
\end{align}
for all $0 \leq j \leq 15$ and $s > 0$.  One can also obtain similar bounds with $\nabla_x^2$ replaced by $\partial_s$ as one wishes (e.g. one could replace $\nabla_x^j$ by $\nabla_x^{j-4} \partial_s^2$ if $j \geq 4$).
\end{theorem}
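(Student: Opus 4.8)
The heat flow $\psi_s$ is essentially forced upon us: a reconstruction is determined by (and determines) its $s=0$ restriction together with the data at infinity, so the only dynamic field heat flow reconstructing $\phi$ with a fixed choice of $(\phi(\infty),e(\infty))$ is the caloric gauge of $\phi$ produced by Theorem \ref{exist} --- one runs the harmonic map heat flow with $\phi$ as initial data and reads off the caloric frame. (The hypotheses of Theorem \ref{exist} hold: \eqref{psisjk-abstract} with $j=0$, \eqref{uniform} and the reconstruction formulas \eqref{psi-odd}, \eqref{psi-even} exhibit $\phi$ as a classical map of energy $O_E(1)$; in all of our applications $\phi$ is classical by construction.) Thus the whole content of the theorem is the six families of estimates. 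The absolute bounds \eqref{parab-1}--\eqref{parab-3} are the abstract parabolic regularity estimates for the heat flow $\psi_s$ (energy $O_E(1)$): one transcribes the parabolic regularity theory of \cite{tao:heatwave3} to the algebra family by iterating the covariant heat equations \eqref{psis-eq}, \eqref{psit-eq} (whose forcing has schematic shape $\bigO(\psi_\alpha \psi_x D_x \psi^\alpha)$) through the product axiom \eqref{prod1-abstract} and the parabolic regularity axioms \eqref{parreg-abstract}, \eqref{parreg-abstract2}, together with a Gronwall argument from $s=+\infty$ (Lemma \ref{gron-lem}, with smallness at $s=\infty$ supplied by Lemma \ref{dec}); the frequency-envelope summation inequality \eqref{sss} is used to sum the resulting multilinear heat-time integrals, exactly as in the proof of Proposition \ref{corbound-freq}. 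So everything reduces to the \emph{difference} bounds \eqref{parab-4}--\eqref{parab-6}, and of these the key one is \eqref{parab-6}: once $\|\nabla_x^j(\tilde\psi_s-\psi_s)(s)\|_{S_{k(s)}}\lesssim_E \kappa c(s)s^{-(j+2)/2}$ is known, \eqref{parab-3} also follows for $\psi_s$ by the triangle inequality with $\kappa$ small, and \eqref{parab-4}, \eqref{parab-5} follow by \emph{differencing} the reconstruction formulas \eqref{psi-odd}, \eqref{psi-even} for $\tilde\psi_s$ and for $\psi_s$: by multilinearity each term acquires exactly one factor of $\tilde\psi_s-\psi_s$ (controlled by \eqref{parab-6}) and otherwise factors of $\tilde\psi_s$ or $\psi_s$ (controlled by \eqref{psisjk-abstract}, \eqref{parab-3}), the extra power of $c$ in \eqref{parab-5} arising because $A_x$ is at least quadratic in $\psi_s$; the variants with $\nabla_x^2$ traded for $\partial_s$ then follow from \eqref{psis-eq}, \eqref{psit-eq} and the algebra axioms.

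To prove \eqref{parab-6} we compare the two extensions of $\phi$ to $\R^+\times I\times\R^2$: the given extension $\tilde\phi$ attached to $\tilde\psi_s$, and the harmonic map heat flow extension $\phi^\flat$ whose caloric frame generates $\psi_s$. From $\tilde\psi_s = \tilde e^*\partial_s\tilde\phi$ and $\tilde\psi_s = \tilde D_i \tilde\psi_i + \tilde h$ with $\tilde h := \tilde\psi_s - \tilde D_i\tilde\psi_i$, translating $\tilde D_i\tilde\psi_i$ back to the covariant spatial tension field shows that $\tilde\phi$ solves the \emph{forced} harmonic map heat flow $\partial_s\tilde\phi = \tau(\tilde\phi) + \tilde e(\tilde h)$, whereas $\partial_s\phi^\flat = \tau(\phi^\flat)$, with $\tilde\phi(0)=\phi^\flat(0)=\phi$; hypothesis \eqref{psisjk-star-abstract} gives $\|\nabla_x^j\tilde h(s)\|_{S_{k(s)}}\lesssim \kappa c(s)s^{-(j+2)/2}$. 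Embedding $\H\hookrightarrow\R^{1+m}$, the difference $\delta\phi := \tilde\phi-\phi^\flat$ solves a forced linear heat equation of schematic shape $(\partial_s-\Delta)\delta\phi = \bigO(\nabla_x\phi^*\,\nabla_x\delta\phi) + \bigO((\nabla_x\phi^*)^2\,\delta\phi) + \bigO(\tilde h)$ with \emph{zero} data at $s=0$, the second fundamental form of the embedding being bounded. A Duhamel expansion --- with the heat propagator estimated by the smoothing axiom \eqref{parreg-abstract2} (whose gain $(s'/s)^{\delta_1/10}$ is precisely what makes the heat-time integrals converge), the nonlinear and forcing terms estimated by \eqref{prod1-abstract} together with the already-established bounds on $\nabla_x\phi$, $\nabla_x\tilde\phi$ (from \eqref{parab-1} and \eqref{psisjk-abstract}), and a Gronwall argument from $s=0$ (Lemma \ref{gron-lem-2}) to absorb the terms linear in $\delta\phi$ --- yields $\|\nabla_x^j\delta\phi(s)\|_{S_{k(s)}}\lesssim_E \kappa c(s)s^{-j/2}$ for $0\le j\le 18$ and all $s>0$.

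It remains to pass from $\delta\phi$ to $\delta\psi_s := \tilde\psi_s-\psi_s$. The frames $\tilde e$ (parallel along $\partial_s$ over $\tilde\phi$) and $e$ (parallel along $\partial_s$ over $\phi^\flat$) both converge to the common $e(\infty)$ as $s\to\infty$. Transporting $\tilde e$ into the bundle $(\phi^\flat)^*T\H$ along the short geodesic of $\H$ joining $\tilde\phi(s,t,x)$ to $\phi^\flat(s,t,x)$ --- legitimate since $\delta\phi$ is small --- produces a frame over $\phi^\flat$ differing from $e$ by an $SO(m)$-valued gauge $U$ with $U(\infty)=1$, and $U$ obeys an ordinary differential equation in $s$ with forcing of shape $\bigO(\delta\phi\,\nabla_x\phi^*) + \bigO(\partial_s\delta\phi)$; integrating from $s=\infty$ and applying Lemma \ref{gron-lem} gives $\|\nabla_x^j(U-1)(s)\|_{S_{k(s)}}\lesssim_E \kappa c(s)s^{-j/2}$. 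Unwinding definitions, $\delta\psi_s = (1-U^{-1})\tilde\psi_s + U^{-1}\bigO(\partial_s\delta\phi) + U^{-1}\bigO(\delta\phi\,\partial_s\phi^\flat)$, and combining the bounds on $U-1$, $\delta\phi$, \eqref{psisjk-abstract}, \eqref{parab-3} via \eqref{prod1-abstract} establishes \eqref{parab-6}. The main obstacle is exactly this last passage: because $\tilde\psi_s$ and $\psi_s$ are read off in frames living over two \emph{different} extensions of $\phi$, there is no direct subtraction, so one must introduce the geodesic transport and the auxiliary gauge $U$, control the numerous resulting lower-order error terms, and close \emph{two} Gronwall arguments running in opposite $s$-directions --- the $\delta\phi$ estimate integrating up from $s=0$, the $U$ estimate integrating down from $s=\infty$ --- all while keeping the $\delta_0$-fluctuation of the envelope $c$ and the $\delta_1$-losses in \eqref{physical-abstract}, \eqref{parreg-abstract2} within budget. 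A secondary, pervasive difficulty is that the abstract norms $S_k$ only see $L^\infty_{t,x}$ directly (axiom \eqref{uniform}), so every heat-kernel estimate must be routed through the parabolic regularity axioms rather than through Littlewood--Paley/Bernstein tools, and the multilinear heat-time integrals coming from \eqref{psi-odd}, \eqref{psi-even} must be summed using \eqref{sss}.
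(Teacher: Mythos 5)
Your proposal has a genuine structural gap: it works at the level of the maps and frames, which the abstract framework cannot estimate. To run the Duhamel/Gronwall argument for $\delta\phi=\tilde\phi-\phi^\flat$ you must place the forcing $\tilde e(\tilde h)$, the coefficients $\nabla_x\phi^*=\bigO(e^*\,\psi_x^*)$, the differenced second fundamental form $\Pi(\tilde\phi)-\Pi(\phi^\flat)$, and later the geodesic-transport gauge $U$ and $\partial_s\phi^\flat=\bigO(e\,\psi_s)$, in the $S_{k(s)}(I\times\R^2)$ norms.  But Definition \ref{algebra-fam} supplies only a product estimate \eqref{prod1-abstract} for fields together with parabolic smoothing; it gives no composition (Moser-type) estimate for smooth functions of the map, no bound on the frame components $\tilde e_a$ in $S_k$ (the hypotheses \eqref{psisjk-abstract}--\eqref{psisjk-star-abstract} control only $\tilde\psi_s$, and \eqref{uniform} goes the wrong way: $L^\infty$ is controlled by $S_k$, not conversely), and no axiom allowing spacetime $S_k(I\times\R^2)$ norms to be recovered from instantaneous energy (so your opening claim that \eqref{parab-1}--\eqref{parab-3} follow from ``abstract parabolic regularity at energy $O_E(1)$'' is also unfounded, though your triangle-inequality fallback repairs that part).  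In the crucial non-instantaneous application, where $S_k$ is the wave-maps space $S_k^\strong$, composition estimates for $\Pi(\phi)$, $e$, and parallel transport are exactly what is unavailable --- avoiding them is the raison d'\^etre of the differentiated-field/caloric-gauge formalism --- so the extrinsic comparison of $\tilde\phi$ with $\phi^\flat$ cannot be closed with the stated tools.  Relatedly, the paper explicitly warns that coupling the heat flow (forward in $s$) with the caloric gauge condition (backward in $s$) produces a bootstrap that does not close at large energy; your two opposite-direction Gronwall arguments are a variant of that strategy, and the extrinsic decoupling you propose to evade the coupling is precisely the step that the abstract norms cannot support.

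For contrast, the paper's proof never estimates maps or frames: it extends the data by a second heat-flow parameter $\sigma$ on $\Omega=\{0\le\sigma\le s\}$ with boundary condition $\phi(s,s)=\tilde\phi(s)$, imposes the secondary gauge $A_\sigma=0$ transported from the diagonal (where $\psi_\sigma(s,s)$ is exactly the heat-tension defect, hence of size $\kappa$ by \eqref{psisjk-star-abstract}), propagates smallness of $\psi_\sigma$ and closeness of $\Psi_x,A_x$ to $\tilde\Psi_x,\tilde A_x$ into the interior by a continuity/bootstrap argument using only \eqref{prod1-abstract}, \eqref{parreg-abstract}--\eqref{parreg-abstract2} and Lemma \ref{gron-lem-2}, deduces that $A_s(0,s)$ is of size $\kappa c(s)^2 s^{-1}$, and then repairs the gauge by an $SO(m)$-valued rotation $U$ solving $\partial_s U=-A_sU$ from $s=\infty$, with $U-I$ small so that all products stay inside the algebra family.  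All quantities appearing are differentiated fields or $U-I$, i.e.\ objects the axioms can handle; this is the ingredient your argument is missing.
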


In the remainder of this section we establish Theorem \ref{parab-thm-abstract}.  We allow all implied constants to depend on $E$, and we abbreviate $S_{k(s)}(I \times \R^2)$ as $S_{k(s)}$.  The methods draw heavily from similar results in \cite[Section 6]{tao:heatwave3}.

From \eqref{psisjk-abstract} and (the proof of) \cite[Lemma 6.4]{tao:heatwave3} we have
\begin{equation}\label{bongo-1}
\| \nabla_x^j \tilde \psi_x(s) \|_{S_{k(s)}} \lesssim c(s) s^{-(j+1)/2}
\end{equation}
for all $0 \leq j \leq 19$; by (the proof of) \cite[Lemma 6.7]{tao:heatwave3} we then have
\begin{equation}\label{bongo-2}
\| \nabla_x^j \tilde A_x(s) \|_{S_{k(s)}} \lesssim c(s)^2 s^{-(j+1)/2}
\end{equation}
for all $0 \leq j \leq 19$.

We now introduce yet another time variable $\sigma$, and work on the domain
$$ \Omega := \{ (\sigma,s,t,x): 0 \leq \sigma \leq s; (t,x) \in I \times \R^2 \}.$$
We define the map $\phi: \Omega \to \H$ by imposing the boundary condition
$$ \phi(s,s,t,x) := \tilde \phi(s,t,x)$$
for all $(s,t,x) \in \R^+ \times I \times \R^2$, and then solving the heat flow
$$ \partial_s \phi = (\phi^* \nabla)_i \partial_i \phi$$
in the interior of $\Omega$.  The existence (and uniqueness) of a smooth solution $\phi$ to this system is guaranteed by of \cite[Theorem 3.16]{tao:heatwave2}.  Observe that $\phi(0,s,t,x)$ is nothing more than the evolution of the initial data $\phi(0,0,t,x) = \tilde \phi(0,t,x)$ under the harmonic map heat flow, so the main task will be to place a caloric gauge on $\phi(0,s,t,x)$ so that the associated differentiated fields $\Psi_{s,t,x}$ are close to $\tilde \Psi_{s,t,x}$ in the required sense.

We begin by placing a preliminary orthonormal frame $e \in \Gamma( \Frame(\phi^* T\H ) )$ on $\Omega$, defined by imposing the boundary condition
$$
e(s,s,t,x) = \tilde e(s,t,x)$$
and the secondary caloric gauge condition
$$ (\phi^* \nabla)_\sigma e = 0$$
or equivalently (in terms of the differentiated fields $\Psi_{\sigma,s,t,x} = (\psi_{\sigma,s,t,x}, A_{\sigma,s,t,x})$)
\begin{equation}\label{asig}
A_\sigma = 0.
\end{equation}
(We will need to use this secondary condition, rather than the more natural condition $A_s = 0$, because the latter would require one to solve one equation forwards in $s$ (the heat flow equation) and one equation backwards in $s$ (the caloric gauge condition), and one will not be able to close a bootstrap in the large energy case.)  The existence and smoothness of $e$ can be easily guaranteed by the Picard existence theorem for ODE.

On the diagonal $s=\sigma$, we observe from the boundary condition that 
$$ (\psi_s + \psi_\sigma)(s,s) = \tilde \psi_s(s)$$
while from the heat flow equation and the boundary condition we have
$$ \psi_s(s,s) = \tilde D_x \tilde \psi_x(s)$$
and hence by \eqref{psisjk-star-abstract} we have
\begin{equation}\label{sigma-0}
 \| \nabla_x^j \psi_\sigma(s,s) \|_{S_{k(s)}} \lesssim \kappa c(s) s^{-(j+2)/2}
\end{equation}
for all $0 \leq j \leq 20$ and $s > 0$.

We now extend this bound to the interior of $\Omega$.

\begin{proposition}\label{rego} If $C_0$ is a sufficiently large constant (depending only on $E$), and $C_1$ is sufficiently large depending on $C_0,E$, then one has the bounds
\begin{equation}\label{sigma-ok}
\| \nabla_x^j \psi_\sigma(\sigma,s) \|_{S_{k(s)}} \leq C_1 \kappa c(s) s^{-(j+2)/2}
\end{equation}
for all $0 \leq j \leq 18$ and $0 \leq \sigma \leq s$, and
\begin{equation}\label{splat-ok}
\| \nabla_x^j \psi_x(\sigma,s) \|_{S_{k(s)}} \leq C_0 c(s) s^{-(j+1)/2}
\end{equation}
and
\begin{equation}\label{asplat-ok}
\| \nabla_x^j A_x(\sigma,s) \|_{S_{k(s)}} \leq C_0 c(s)^2 s^{-(j+1)/2}
\end{equation}
for all $0 \leq j \leq 17$ and $0 \leq \sigma \leq s$.
\end{proposition}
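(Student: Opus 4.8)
The plan is to prove Proposition~\ref{rego} by a continuity argument that transports the diagonal bounds \eqref{sigma-0}, \eqref{bongo-1}, \eqref{bongo-2} (valid on $\{\sigma=s\}$) into the interior $\{\sigma<s\}$ of $\Omega$, organised as a bootstrap in $\sigma$. The relevant evolution equations come from the secondary gauge condition $A_\sigma=0$: the zero-torsion identity \eqref{zerotor} and the constant-curvature identity \eqref{constcurv}, applied with one index equal to $\sigma$, give the first-order $\sigma$-flow equations
$$ \partial_\sigma \psi_x = \nabla_x \psi_\sigma + \bigO(A_x \psi_\sigma), \qquad \partial_\sigma A_x = \bigO(\psi_\sigma \psi_x), \qquad \partial_\sigma A_s = \bigO(\psi_\sigma \psi_s); $$
along the diagonal the reconstruction frame has $\tilde A_s=0$ by \eqref{ass}, which together with $A_\sigma=0$ forces $A_s=0$ on $\{\sigma=s\}$, so $A_s(\sigma,s)=\int_\sigma^s \bigO(\psi_\sigma\psi_s)(\sigma',s)\,d\sigma'$ is slaved to $\psi_\sigma$. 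Moreover, for each fixed $\sigma$ the map $\phi(\sigma,\cdot)$ is a genuine harmonic map heat flow in $s$, so $\psi_s = D_i\psi_i$, and differentiating this heat-flow equation in $\sigma$ and using the constant negative curvature of $\H$ shows that $\psi_\sigma$ obeys a forced covariant heat equation in $s$,
$$ (\partial_s - \Delta)\psi_\sigma = \bigO(A_x \nabla_x \psi_\sigma) + \bigO\bigl((\nabla_x A_x + A_x^2 + \psi_x^2 + A_s)\,\psi_\sigma\bigr), $$
with boundary value $\psi_\sigma(\sigma,\sigma) = (\tilde\psi_s - \tilde D_x \tilde\psi_x)(\sigma)$ at $s=\sigma$, which by \eqref{sigma-0} is $\bigO(\kappa c(\sigma)\sigma^{-(j+2)/2})$ in $\nabla_x^j S_{k(\sigma)}$.

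\textbf{The bootstrap and the easy part.} I would assume that \eqref{sigma-ok}, \eqref{splat-ok}, \eqref{asplat-ok} hold with $C_1,C_0$ replaced by $2C_1,2C_0$ on $\{\sigma_0\le\sigma\le s\}$ (for all $s\ge\sigma$), and recover them with the stated constants; a standard connectedness argument in $\sigma_0\downarrow 0$ — using smoothness of the fields and the strong decay \eqref{quali} at large $s$ to start the induction near $\sigma=s$ — then gives the bounds on all of $\Omega$. Recovering \eqref{splat-ok}, \eqref{asplat-ok} (and the slaved bound on $A_s$) is routine: integrate the $\sigma$-flow equations downward from the diagonal, where $\tilde\psi_x(s),\tilde A_x(s)$ obey \eqref{bongo-1}, \eqref{bongo-2} with an $E$-dependent constant, and control the right-hand sides by the algebra-family product estimate \eqref{prod1-abstract} and the bootstrap hypothesis; since each of those right-hand sides carries a factor $\psi_\sigma$ (or $\nabla_x\psi_\sigma$) and is therefore $O_E(\kappa C_0 C_1)$ times the corresponding weight $c(s)s^{-(j+1)/2}$, choosing $C_0$ larger than twice the implied constant of \eqref{bongo-1}, \eqref{bongo-2} and then $\kappa$ small (relative to $C_0,C_1$, which are functions of $E$) closes these two families.

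\textbf{The main obstacle: controlling $\psi_\sigma$.} The heart of the proof is recovering \eqref{sigma-ok}. I would not propagate $\psi_\sigma$ naively forward from the diagonal, since that loses too much: the boundary datum sits at heat-time $\sigma\ll s$ and carries the large weight $c(\sigma)\sigma^{-1}$, whereas the target weight is only $c(s)s^{-1}$. Instead, following the template of \eqref{psi-odd}, \eqref{psi-even} and \cite[Section~6]{tao:heatwave3}, I would use that $\psi_\sigma(\sigma,s)\to 0$ as $s\to\infty$ together with the $s$-evolution $\partial_s\psi_\sigma = \partial_\sigma\psi_s - A_s\psi_\sigma = D_iD_i\psi_\sigma + \bigO(\psi_\sigma\psi_x^2) - A_s\psi_\sigma$ to write $\psi_\sigma(\sigma,s) = -\int_s^\infty(\cdots)(\sigma,s')\,ds'$. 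The genuinely second-order term $\int_s^\infty \nabla_x^2\psi_\sigma\,ds'$ is controlled by the one-step parabolic-regularity estimate \eqref{parreg-abstract} (which bounds $\nabla_x^2\psi_\sigma(s')$ by $(s')^{-1}\|\psi_\sigma(s'/2)\|$ plus the already-$\kappa$-small right-hand side of the covariant heat equation), the comparability estimate \eqref{physical-abstract}, and the convergent integral $\int_s^\infty (s'/s)^{\delta_1/2+\delta_0}(s')^{-2}\,ds'\lesssim s^{-1}$, so that it reproduces the profile $c(s)s^{-(j+2)/2}$, while all the remaining terms carry an extra factor of $\kappa$. The delicate point — where I expect the real work to lie — is the bookkeeping of this feedback: $\nabla_x^2\psi_\sigma$ at heat-time $s'$ is fed by $\psi_\sigma$ at heat-time $s'/2$, so one is led to a Gronwall-type integral inequality for the normalised quantity $\|\psi_\sigma(\sigma,s)\|_{S_{k(s)}}/(c(s)s^{-1})$ of exactly the shape handled by Lemmas~\ref{gron-lem}, \ref{gron-lem-2}, and one must arrange the decomposition so that the feedback kernel has total mass strictly below one (exploiting the smallness of $\delta_0,\delta_1$ and letting $\kappa$ absorb the residual error) before those lemmas can be invoked. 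The higher-derivative versions follow by commuting $\nabla_x^j$ through and using \eqref{prod1-abstract} and \eqref{parreg-abstract}.

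\textbf{Conclusion.} Once all three families of bounds are recovered with the stated constants the bootstrap closes and the bounds hold throughout $\Omega$; the variants with $\partial_s$ in place of $\nabla_x^2$ follow by using the heat-flow equation (and its $\sigma$-differentiated form) to trade each $\partial_s$ for two spatial derivatives plus already-controlled lower-order terms.
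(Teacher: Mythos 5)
Your scaffolding matches the paper's: a continuity/bootstrap argument in the lower endpoint of the $\sigma$-range, with \eqref{splat-ok} and \eqref{asplat-ok} recovered by integrating \eqref{pops}, \eqref{simmer} down from the diagonal, using the bootstrap hypotheses together with \eqref{prod1-abstract}, and closing against \eqref{bongo-1}, \eqref{bongo-2} by taking $C_0$ large and $\kappa$ small depending on $C_0,C_1$. The divergence, and the gap, is in the core estimate \eqref{sigma-ok} for $\psi_\sigma$, which you propose to prove by writing $\psi_\sigma(\sigma,s) = -\int_s^\infty \bigl(D_iD_i\psi_\sigma - (\psi_\sigma\wedge\psi_i)\psi_i\bigr)(\sigma,s')\,ds'$ and estimating the Laplacian term via \eqref{parreg-abstract} and \eqref{physical-abstract}.

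That step does not close. The term $\int_s^\infty \nabla_x^2\psi_\sigma\,ds'$ is of exactly the same order as the quantity being estimated: the inequality you obtain has the schematic form $f(s) \le g(s) + C\int_s^\infty f(s'/2)(s'/s)^{O(\delta_1)+O(\delta_0)}\,\frac{ds'}{s'}$, where $C$ is an absolute constant that cannot be made less than $1$ (already for the free flow one has $\int_s^\infty \Delta e^{(s'-\sigma)\Delta}v\,ds' = -e^{(s-\sigma)\Delta}v$, so the feedback constant is exactly $1$ before the losses from \eqref{parreg-abstract} and \eqref{physical-abstract}), and where the kernel carries no envelope factor $c(s')$. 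Neither of your proposed sources of smallness rescues this: shrinking $\delta_0,\delta_1$ only adjusts exponents, not the $O(1)$ mass of the kernel or the unspecified implied constants; and $\kappa$ cannot "absorb the residual error" because every term in the inequality, including the quantity being bounded, carries exactly one factor of $\kappa$ through $\psi_\sigma$ itself, so $\kappa$ cancels from both sides. For the same reason Lemmas \ref{gron-lem} and \ref{gron-lem-2} do not apply: their hypotheses (and their proofs, which rest on the factorial gain \eqref{sss} from iterated products of $c$) require a factor of $c(s')$ in the kernel, which your feedback term lacks. The paper avoids this entirely by never estimating $\int \Delta\psi_\sigma$ by absolute values: it runs Duhamel forward in $s$ from the boundary value \eqref{sigma-0} at $s=\sigma$, so the Laplacian is absorbed into the heat semigroup through \eqref{parreg-abstract2}, and the only terms entering the integral inequality for $f(s)$ are the potential terms of \eqref{joke}, each weighted by $c(s')^2$; this is precisely the shape handled by the forward Gronwall lemma (\cite[Lemma 2.16]{tao:heatwave3}, i.e.\ Lemma \ref{gron-lem-2}), and the top derivatives ($j\le 18$) are then recovered from \eqref{parreg-abstract} and \eqref{parreg-abstract-alt}. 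Your stated reason for rejecting forward propagation (the datum at $s=\sigma$ carries weight $c(\sigma)\sigma^{-1}$ rather than $c(s)s^{-1}$) points at a step the paper charges to the smoothing factor $(\sigma/s)^{\delta_1/10}$ in \eqref{parreg-abstract2} together with \eqref{sm}; but if you distrust that step, the remedy would have to come from sharper frequency-localisation information on the boundary datum $\psi_\sigma(s,s)$, not from the backward integration, which removes the only mechanism by which the argument contracts.
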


\begin{proof}  It suffices to establish these bounds for $S' \leq \sigma \leq s \leq S$ and $0 \leq S' \leq S < \infty$, so long as $C_0$ is independent of $S, S'$.  

Fix $S$.  We shall use a continuity argument. The set of $S'$ for which the desired estimates hold is clearly closed (recall that the $S$ norms are smooth in the Schwartz topology), and contains an open neighbourhood of $S$ in $[0,S]$ by \eqref{sigma-0}, \eqref{bongo-1}, \eqref{bongo-2} (and continuity), so it suffices that it is also open in $(0,S)$.  We will do this by a showing (for $C_0$ large enough) that the hypothesised bounds \eqref{sigma-ok}, \eqref{splat-ok}, \eqref{asplat-ok} can be bootstrapped to the better bounds
\begin{equation}\label{sigma-ok-2}
\| \nabla_x^j \psi_\sigma(\sigma,s) \|_{S_{k(s)}} \leq \frac{1}{2} C_1 \kappa c(s) s^{-(j+2)/2}.
\end{equation}
for all $0 \leq j \leq 18$ and $S' \leq \sigma \leq s \leq S$, and
\begin{equation}\label{splat-ok-2}
\| \nabla_x^j \psi_x(\sigma,s) \|_{S_{k(s)}} \leq \frac{1}{2} C_0 c(s) s^{-(j+1)/2},
\end{equation}
and
\begin{equation}\label{asplat-ok-2}
\| \nabla_x^j A_x(\sigma,s) \|_{S_{k(s)}} \leq \frac{1}{2} C_0 c(s)^2 s^{-(j+1)/2},
\end{equation}
for all $0 \leq j \leq 17$ and $S' \leq \sigma \leq s \leq S$.

To prove this, we first observe from \eqref{constcurv}, \eqref{zerotor} that we have the equations
\begin{equation}\label{pops}
\partial_\sigma \psi_x = \nabla_x \psi_\sigma + \bigO( A_x \psi_\sigma )
\end{equation}
and
\begin{equation}\label{simmer}
 \partial_\sigma A_x = \bigO( \psi_x \psi_\sigma ).
\end{equation}
From \eqref{simmer}, the fundamental theorem of calculus, Minkowski's inequality, and the Leibniz rule, we have
$$
\|\nabla_x^j A_x(\sigma,s) - \nabla_x^j A_x(s,s) \|_{S_{k(s)}} \lesssim \sum_{j=j_1+j_2} \int_\sigma^s \| (\nabla_x^{j_1} \psi_x) (\nabla_x^{j_2} \psi_\sigma)(\sigma',s)\|_{S_{k(s)}}\ d\sigma';
$$
applying \eqref{splat-ok}, \eqref{sigma-ok}, \eqref{prod1-abstract} we conclude
$$ \|\nabla_x^j A_x(\sigma,s) - \nabla_x^j A_x(s,s) \|_{S_{k(s)}} \lesssim_{C_1} \kappa c(s)^2 s^{-(j+1)/2}$$
for all $0 \leq j \leq 17$ and $S' \leq \sigma \leq s \leq S$.  Applying \eqref{bongo-2} and the triangle inequality (and assuming $C_0$ sufficiently large, and $\kappa$ sufficiently small depending on $C_0,C_1$) we conclude \eqref{asplat-ok-2} as required.  A similar argument using \eqref{pops} gives 
$$ \|\nabla_x^j \psi_x(\sigma,s) - \nabla_x^j \psi_x(s,s) \|_{S_{k(s)}} \lesssim_{C_1} \kappa c(s) s^{-(j+1)/2}$$
for all $0 \leq j \leq 17$ and $S' \leq \sigma \leq s \leq S$, which gives \eqref{splat-ok-2}.

To get back to \eqref{sigma-ok-2}, we use the heat equation
$$
\partial_s \psi_\sigma = D_i D_i \psi_\sigma - (\psi_\sigma \wedge \psi_i) \psi_i$$ 
(cf. \eqref{psis-eq}, \eqref{psit-eq}).  We write this schematically as
\begin{equation}\label{joke}
 (\partial_s - \Delta) \psi_\sigma = \bigO( A_x \nabla_x \psi_\sigma ) + \bigO( (\nabla_x A_x) \psi_\sigma ) + \bigO( \Psi_x^2 \psi_\sigma ).
 \end{equation}
Fix $\sigma$, and write
$$ f(s) := \sum_{j=0}^{17} s^{(j+2)/2} \| \nabla_x^j \psi_\sigma(\sigma,s) \|_{S_{k(s)}}.$$
From \eqref{sigma-0} we have $f(\sigma) \lesssim \delta c(\sigma)$.  More generally, using \eqref{joke}, \eqref{splat-ok}, \eqref{asplat-ok} (or \eqref{splat-ok-2}, \eqref{asplat-ok-2}), \eqref{parreg-abstract2} (with $j=1,0$), \eqref{prod1-abstract}, Minkowski's inequality, and the Leibniz rule, we see that
\begin{align*}
f(s) &\lesssim_{C_0} (\sigma/s)^{\delta_1/10} f(\sigma) + \int_\sigma^s (s-s')^{-1/2} (s'/s)^{\delta_1/10} s^{-1/2} c(s')^2 f(s')\ ds\\
&\quad + \int_\sigma^s (s'/s)^{\delta_1/10}  s^{-1} c(s')^2 f(s') ds
\end{align*}
which we can simplify as
$$ f(s) \lesssim_{C_0} \delta c(s) + \int_\sigma^s (s-s')^{-1/2} (s'/s)^{\delta_1/10} c(s')^2 f(s')\ \frac{ds}{s}.$$
Applying \cite[Lemma 2.16]{tao:heatwave3} we conclude that
$$ f(s) \lesssim_{C_0} \delta c(s).$$
Inserting this back into the right-hand side of \eqref{joke} and using \eqref{splat-ok}, \eqref{asplat-ok}, \eqref{prod1-abstract} we see that
$$ \| \nabla_x^j (\partial_s - \Delta) \psi_\sigma(s) \|_{S_{k(s)}} \lesssim_{C_0} \kappa c(s) s^{-(j+4)/2} $$
for $0 \leq j \leq 16$ and $\sigma \leq s \leq S$; applying \eqref{parreg-abstract} we conclude that
$$ \| \nabla_x^j \psi_\sigma(s) \|_{S_{k(s)}} \lesssim_{C_0} \kappa c(s) s^{-(j+2)/2}$$
for $0 \leq j \leq 18$ and $2\sigma \leq s \leq S$, which gives \eqref{sigma-ok-2} when $s \geq 2 \sigma$; the corresponding claim for $s<2\sigma$ follows from using \eqref{parreg-abstract-alt} (and \eqref{sigma-0}) instead of \eqref{parreg-abstract}.  This closes the continuity argument and establishes the claim.
\end{proof}

We now allow all constants to depend on $C_0,C_1$.
We observe from the above proof that we have established the estimates
\begin{equation}\label{ax1}
 \|\nabla_x^j A_x(0,s) - \nabla_x^j \tilde A_x(s) \|_{S_{k(s)}} \lesssim \kappa c(s)^2 s^{-(j+1)/2}
 \end{equation}
and
\begin{equation}\label{ax2}
\|\nabla_x^j \psi_x(0,s) - \nabla_x^j \tilde \psi_x(s) \|_{S_{k(s)}} \lesssim \kappa c(s) s^{-(j+1)/2}
\end{equation}
for all $0 \leq j \leq 17$ and $s > 0$.

Meanwhile, from \eqref{constcurv} and \eqref{asig} we have
$$ \partial_\sigma A_s = \bigO( \psi_\sigma \psi_s );$$
from the fundamental theorem of calculus we conclude that
$$ \| \nabla_x^j A_s(0,s) \|_{S_{k(s)}} \lesssim \int_0^s \| \nabla_x^j (\psi_\sigma \psi_s)(\sigma,s) \|_{S_{k(s)}}\ d\sigma.$$
for any $j \geq 0$ and $s \geq 0$.  Applying \eqref{rego} we conclude the approximate caloric gauge condition
\begin{equation}\label{joyful}
 \| \nabla_x^j A_s(0,s) \|_{S_{k(s)}} \lesssim \kappa c(s)^2 s^{-(j+2)/2}
\end{equation}
for all $0 \leq j \leq 17$ and $s > 0$.  

The next step is to apply a gauge transform 
\begin{equation}\label{gauge}
\begin{split}
\phi \mapsto \phi; &\quad e \mapsto e U; \quad \psi_i \mapsto U^{-1} \psi_i; \\
D_i \mapsto U^{-1} D_i U; \quad &A_i \mapsto U^{-1} \partial_i U + U^{-1} A_i U
\end{split}
\end{equation}
for some $U: \{0\} \times \R^+ \times I \times \R^2 \to SO(m)$ that will transform $A_s(0,s)$ to zero, or in other words $U$ needs to solve the equation
\begin{equation}\label{use}
\partial_s U = - A_s U.
\end{equation}
We also need to let $U$ decay to the identity matrix in the limit $s \to \infty$.  To attain this boundary condition at infinity, we will do the usual trick of solving instead at a finite value of $s$, and then taking limits at the end.  More precisely, for any $0 < S < \infty$, let $U_S$ be the solution to \eqref{use} with $U_S(0,S) = I$.  From the bounds \eqref{joyful} and the product bound \eqref{prod1-abstract} we see that $U_S$ is defined in the completion of the Schwartz space under any of the $S_k$ norms, and obeys the bounds
\begin{equation}\label{usos}
 \| U_S(0,s) - I\|_{S_{k(s)}} \lesssim \kappa c(s)^2
\end{equation}
whenever $S/2 \leq s \leq 2S$, and we also have the semigroup law
\begin{equation}\label{semi}
U_S(0,s) = U_{s'}(0,s) U_S(0,s')
\end{equation}
for any $s, s', S > 0$.  Differentiating \eqref{use} in space and using Duhamel's formula, \eqref{joyful}, and Gronwall's inequality, we also see that
\begin{equation}\label{splatter}
 \| \nabla_x^j(U_S(0,s) - I)\|_{S_{k(s)}} \lesssim \kappa c(s)^2 s^{-j/2}
\end{equation}
for all $0 \leq j \leq 17$ and $S/2 \leq s \leq 2S$.

We can write \eqref{usos} schematically as
\begin{equation}\label{usos-2}
 U_S(0,s) = I + O_{S_k}( \kappa c(2^{-2k})^2 )
\end{equation}
whenever $S, s \sim 2^{-2k}$, where $O_{S_k}(X)$ denotes a quantity of $S_k$ norm $O(X)$.  If instead $S$ and $s$ are not comparable, but are widely separated with $s < S$, then we can use the semigroup law \eqref{semi} and \eqref{usos-2} to obtain
$$ U_S(0,s) = \prod_{s \lesssim 2^{-2k} \lesssim S} ( I + O_{S_k}( \kappa c(2^{-2k})^2 ) );$$
multiplying through using \eqref{prod1-abstract} and the envelope properties of $c$, we see that
$$ U_S(0,s) = I + \sum_{s \lesssim 2^{-2k} \lesssim S} O_{S_k}( \kappa c(2^{-2k})^2 ).$$
Differentiating using \eqref{splatter}, we see that
$$ \nabla_x^j U_S(0,s) = \sum_{s \lesssim 2^{-2k} \lesssim S} 2^{kj} O_{S_k}( \kappa c(2^{-2k})^2 )$$
for $1 \leq j \leq 17$; applying \eqref{physical-abstract}, we conclude that \eqref{splatter} in fact holds for all $1 \leq j \leq 17$ and $S \geq s$.

From \eqref{uniform}, \eqref{usos}, \eqref{semi}, we see that $U_S$ converges uniformly as $S \to \infty$ to a limit $U$, and we can write
\begin{equation}\label{us-1}
 U(0,s) = I + \sum_{2^{-2k} \gtrsim s} O_{S_k}( \kappa c(2^{-2k})^2 )
 \end{equation}
where the sum is uniformly convergent.  In particular $U(0,s)$ converges uniformly to the identity as $s \to \infty$.  From the derivative bounds we also have
\begin{equation}\label{us-2}
  \| \nabla_x^j U(0,s) \|_{S_{k(s)}} \lesssim \kappa c(s)^2 s^{-j/2}
\end{equation}
for all $1 \leq j \leq 17$ and $s > 0$.

We now apply \eqref{gauge} for $\sigma=0$ to obtain fields in the caloric gauge, which we shall denote temporarily by $\phi', e', \Psi'_{s,t,x} = (\psi'_{s,t,x}, A'_{s,t,x})$ to distinguish them from the fields on $\Omega$.  
From \eqref{gauge}, \eqref{us-1}, \eqref{us-2}, \eqref{prod1-abstract}, and Proposition \ref{rego} we see that
$$
\| \nabla_x^j (\Psi'_x(s) - \Psi_x(0,s)) \|_{S_{k(s)}(I \times \R^2)} \lesssim \kappa c(s) s^{-(j+1)/2}$$
and
$$
\| \nabla_x^j (A'_x(s) - A_x(0,s)) \|_{S_{k(s)}(I \times \R^2)} \lesssim \kappa c(s)^2 s^{-(j+1)/2}
$$
for $0 \leq j \leq 17$ and $s>0$; from \eqref{ax1}, \eqref{ax2} and the triangle inequality, we thus have
$$
\| \nabla_x^j (\Psi'_x(s) - \tilde \Psi_x(s)) \|_{S_{k(s)}(I \times \R^2)} \lesssim \kappa c(s) s^{-(j+1)/2}$$
and
$$
\| \nabla_x^j (A'_x(s) - \tilde A_x(s)) \|_{S_{k(s)}(I \times \R^2)} \lesssim \kappa c(s)^2 s^{-(j+1)/2}
$$
for the same range of $j$ and $s$.  In particular, by \eqref{bongo-1}, \eqref{bongo-2} one has
$$
\| \nabla_x^j \Psi'_x(s) \|_{S_{k(s)}(I \times \R^2)} \lesssim c(s) s^{-(j+1)/2}$$
and
$$
\| \nabla_x^j A'_x(s) \|_{S_{k(s)}(I \times \R^2)} \lesssim c(s)^2 s^{-(j+1)/2}
$$
for $0 \leq j \leq 17$ and $s>0$. As $\psi'_s = D'_i \psi'_i$, we then see from \eqref{prod1-abstract} that
$$
\| \nabla_x^j \psi'_s(s) \|_{S_{k(s)}(I \times \R^2)} \lesssim c(s) s^{-(j+2)/2}$$
for $0 \leq j\leq 16$ and $s>0$; a similar argument using \eqref{psisjk-star-abstract} gives
$$
\| \nabla_x^j (\psi'_s - \tilde \psi_s)(s) \|_{S_{k(s)}(I \times \R^2)} \lesssim \kappa c(s) s^{-(j+2)/2}.$$
We now obtain all the required claims of Theorem \ref{parab-thm-abstract} by dropping the primes from $\phi', e', \Psi'_{s,t,x}$.

\subsection{An instantaneous version}

We now specialise the above abstract theorem to an instantaneous version.

\begin{theorem}[Instantaneous parabolic repair]\label{parab-thm-instant}  Let $t_0$ be  atime, let $\tilde \psi_s[t_0]$ be an instantaneous dynamic field, with the associated fields $\tilde \Psi_{s,t,x} = (\tilde \psi_{s,t,x}, \tilde A_{s,t,x})$ be the associated differentiated fields.   

Let $c$ be a frequency envelope of at most $E$ for some $E>0$, and assume that $0 < \kappa < 1$ is sufficiently small depending on $E$.  We assume the bounds
\begin{equation}\label{psisjk-abstract-instant}
\| \nabla_{t,x} \nabla_x^j \tilde \psi_s(s,t_0) \|_{\dot H^0_{k(s)}(\R^2)} \lesssim c(s) s^{-(j+2)/2}
\end{equation}
and
\begin{equation}\label{psisjk-star-abstract-instant}
\| \nabla_{t,x} \nabla_x^j (\tilde \psi_s - \tilde D_i \tilde \psi_i) (s,t_0) \|_{\dot H^0_{k(s)}(\R^2)} \lesssim \kappa c(s) s^{-(j+2)/2}
\end{equation}
for all $0 \leq j \leq 20$ and $s > 0$.  

Let $\phi[t_0]$ be an instantaneous map reconstructed from $\tilde \psi_s[t_0]$.  Then there exists a instantaneous dynamic field heat flow $\psi_s[t_0]$, which also has $\phi[t_0]$ as a reconstruction, such that
\begin{align}
\| \nabla_{t,x} \nabla_x^j \Psi_x(s,t_0) \|_{\dot H^0_{k(s)}(\R^2)} &\lesssim_{E} c(s) s^{-(j+1)/2} \label{parab-1-instant}\\
\| \nabla_{t,x} \nabla_x^j A_x(s,t_0) \|_{\dot H^0_{k(s)}(\R^2)} &\lesssim_{E} c(s)^2 s^{-(j+1)/2}\label{parab-2-instant}\\
\| \nabla_{t,x} \nabla_x^j \psi_s(s,t_0) \|_{\dot H^0_{k(s)}(\R^2)} &\lesssim_{E} c(s) s^{-(j+2)/2}\label{parab-3-instant}\\
\| \nabla_{t,x} \nabla_x^j (\tilde \Psi_x - \Psi_x)(s,t_0) \|_{\dot H^0_{k(s)}(\R^2)} &\lesssim_{E} \kappa c(s) s^{-(j+1)/2}\label{parab-4-instant}\\
\| \nabla_{t,x} \nabla_x^j (\tilde A_x - A_x)(s,t_0) \|_{\dot H^0_{k(s)}(\R^2)} &\lesssim_{E} \kappa c(s)^2 s^{-(j+1)/2}\label{parab-5-instant}\\
\| \nabla_{t,x} \nabla_x^j (\tilde \psi_s - \psi_s)(s,t_0) \|_{\dot H^0_{k(s)}(\R^2)} &\lesssim_{E} \kappa c(s) s^{-(j+2)/2}\label{parab-6-instant}
\end{align}
for all $0 \leq j \leq 15$ and $s > 0$.  One can also obtain similar bounds with $\nabla_x^2$ replaced by $\partial_s$ as one wishes.
\end{theorem}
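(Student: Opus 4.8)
The plan is to deduce Theorem \ref{parab-thm-instant} directly from the abstract result Theorem \ref{parab-thm-abstract}, by checking that the seminorms from the Example following Definition \ref{algebra-fam} form an algebra family and then restricting the conclusion from a slab $I\times\R^2$ back to the single time $t=t_0$. Concretely, I would fix a compact interval $I$ with $t_0$ in its interior, extend the instantaneous dynamic field $\tilde\psi_s[t_0]$ to a dynamic field $\tilde\psi_s$ on $I$ (taking the extension affine in $t$, so that \eqref{quali} holds trivially from the instantaneous bounds), extend $\phi[t_0]$ to its reconstruction $\phi$ on $I$ with the prescribed data $(\phi(\infty),e(\infty))$ at infinity, and set $\|\phi\|_{S_k(I\times\R^2)}:=\|\nabla_{t,x}\phi(t_0)\|_{\dot H^0_k(\R^2)}$ on $\Sch(I\times\R^2)$.

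The point of this reduction is that the seminorm $S_k$ only sees $\phi(t_0)$ and $\partial_t\phi(t_0)$, and the instantaneous differentiated fields $\tilde\Psi_{t,x}(t_0)$ together with their first time derivatives are completely determined by $\tilde\psi_s[t_0]$ via the expansions \eqref{psi-odd}, \eqref{psi-even} and the time-derivative formulae of the ``instantaneous analogues'' subsection. Hence hypotheses \eqref{psisjk-abstract}, \eqref{psisjk-star-abstract} of Theorem \ref{parab-thm-abstract} for this family are literally \eqref{psisjk-abstract-instant}, \eqref{psisjk-star-abstract-instant}, independently of the choice of $I$-extension; likewise conclusions \eqref{parab-1}--\eqref{parab-6} are literally \eqref{parab-1-instant}--\eqref{parab-6-instant}, and the heat flow $\psi_s$ produced by Theorem \ref{parab-thm-abstract} restricts to an instantaneous heat flow $\psi_s[t_0]=(\psi_s(t_0),\partial_t\psi_s(t_0))$ (restricting a heat flow to a time is an instantaneous heat flow, as observed after \eqref{psit-eq-instant}) reconstructing $\phi[t_0]$. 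So the entire task is to verify the five axioms of Definition \ref{algebra-fam} for $S_k(I\times\R^2)$.

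Three of the five axioms are quick. The comparability estimate \eqref{physical-abstract} is immediate from the definition \eqref{ek-def} of $\dot H^0_k$ and the triangle inequality $|k_1-k'|\le|k_1-k_2|+|k_2-k'|$ applied to the weights $\chi_{k=k'}=2^{-|k-k'|}$. The uniform bound \eqref{uniform} reduces (reading $L^\infty_{t,x}(I\times\R^2)$ as $\|\cdot(t_0)\|_{L^\infty_x(\R^2)}$, which is the only form in which it is used in the proof of Theorem \ref{parab-thm-abstract}, namely to get uniform convergence of the gauge transforms $U_S$) to $\|f\|_{L^\infty_x(\R^2)}\lesssim\|\nabla_x f\|_{\dot H^0_k(\R^2)}$, which follows from the Fourier localisation of $P_{k'}$ (giving $\|\nabla_x f\|_{\dot H^0_k}\sim\|f\|_{\dot H^1_k}$) together with Bernstein. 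The product estimate \eqref{prod1-abstract}, after the Leibniz expansion $\nabla_{t,x}(\phi^{(1)}\phi^{(2)})=(\nabla_{t,x}\phi^{(1)})\phi^{(2)}+\phi^{(1)}(\nabla_{t,x}\phi^{(2)})$, follows from the second inequality of Lemma \ref{ek-prod}, used with the differentiated factor in $\dot H^0$ and the undifferentiated factor $\phi^{(i)}(t_0)$ in $\dot H^1$ (controlled via $\|\phi^{(i)}(t_0)\|_{\dot H^1_k}\lesssim\|\phi^{(i)}\|_{S_k}$).

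The main work, and where I expect the main obstacle to lie, is the three parabolic regularity axioms \eqref{parreg-abstract}, \eqref{parreg-abstract-alt}, \eqref{parreg-abstract2}. Writing $g(s):=\nabla_{t,x}\phi(s,t_0)$ — a function of $s$ and $x$ alone, since $\partial_t$ at $t_0$ commutes with $\partial_s$ and with the spatial Laplacian — these become heat-regularity estimates for $g$ in the $\dot H^0_k$ norms, which I would prove by Duhamel's formula together with the elementary bound $2^{2jk'}e^{-c(s-s')2^{2k'}}\lesssim_j\min((s-s')^{-j},2^{2jk'})$ on each Littlewood--Paley piece. The two delicate points are: (a) for \eqref{parreg-abstract2}, extracting the gain $(s'/s)^{\delta_1/10}$ when passing from the $\dot H^0_{k(s')}$ norm to the $\dot H^0_{k(s)}$ norm, which uses $2^{k(s)}\sim s^{-1/2}$, the heat damping of frequencies above $2^{k(s)}$, and the merely polynomial growth of the weight $\chi_{k=k'}^{-\delta_1}$ in the frequency ratio; and (b) keeping the Duhamel $s'$-integral in \eqref{parreg-abstract} convergent, which is arranged by distributing one of the two derivatives of $\nabla_x^2$ onto the heat kernel (producing an integrable $(s-s')^{-1/2}$) and the other onto the frequency localisation. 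These are entirely parallel to the parabolic estimates of \cite[Section 6]{tao:heatwave3}, which I would cite where convenient rather than reprove. Once the algebra family axioms are in place, Theorem \ref{parab-thm-abstract} applied on $I$ and then restricted to $t_0$ yields all of \eqref{parab-1-instant}--\eqref{parab-6-instant} (including the variants with $\nabla_x^2$ replaced by $\partial_s$, inherited from the abstract statement), completing the proof.
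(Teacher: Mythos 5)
Your proposal is correct and follows essentially the same route as the paper: the paper's proof likewise extends $\tilde\psi_s$ in time to a small neighbourhood of $t_0$, specialises Theorem \ref{parab-thm-abstract} to the seminorms $\|\phi\|_{S_k}:=\|\nabla_{t,x}\phi(t_0)\|_{\dot H^0_k(\R^2)}$, and verifies the algebra family axioms exactly as you do (comparability from \eqref{ek-def}, product from Lemma \ref{ek-prod}, the uniform bound from Bernstein, and the parabolic regularity estimates by Duhamel's formula and Fourier analysis/Plancherel). Your extra care with the $L^\infty_{t,x}$ axiom and the explicit heat-kernel bookkeeping only fills in details the paper leaves implicit.
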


\begin{proof} By Theorem \ref{parab-thm-abstract} (and extending $\tilde \psi_s$ in time to a small neighbourhood of $t_0$), it suffices to verify that the seminorms
$$ \| \phi \|_{S_{k}} := \| \nabla_{t,x} \phi(t_0) \|_{\dot H^0_{k}(\R^2)}$$
form an algebra family.

The comparability estimate \eqref{physical-abstract} follows from \eqref{ek-def}, while the product estimate \eqref{prod1-abstract} follows from Lemma \ref{ek-prod}.  The uniform bound \eqref{uniform} follows from Bernstein's inequality (it also follows from \eqref{prod1-abstract} by the spectral theory of Banach algebras).  The parabolic regularity estimate \eqref{parreg-abstract2} is easily verified by Fourier analysis.

It remains to establish the estimates \eqref{parreg-abstract}, \eqref{parreg-abstract-alt}.  From \eqref{parreg-abstract2} and Duhamel's formula, it suffices to establish the estimate
$$ \| \nabla_x^2 \int_\infty^0 e^{s\Delta} F(s)\ ds\|_{\dot H^0_k(\R^2)} \lesssim \sup_{s>0} \|F(s)\|_{\dot H^0_k(\R^2)}$$
for any $k$ and any Schwartz function $F: \R^+ \times \R^2 \to \R$.  Taking Littlewood-Paley projections, it suffices to show that
$$ \| \nabla_x^2 \int_\infty^0 e^{s\Delta} F(s)\ ds\|_{L^2(\R^2)} \lesssim \sup_{s>0} \|F(s)\|_{L^2(\R^2)}$$
for all such $F$.  But this follows from Plancherel's theorem.
\end{proof}

\section{Function spaces and estimates}

In order to proceed further we need to recall some function spaces from earlier papers \cite{tao:wavemap2}, \cite{tao:heatwave3}, and then introduce some refinements of these spaces.

\subsection{Function spaces}

In \cite{tao:wavemap2} two function spaces $S_k(\R \times \R^2), N_k(\R \times \R^2)$ for $k \in \Z$ were constructed, in terms of some variants $S[k], N[k]$ of these spaces.  The precise definition of these spaces is extremely lengthy and will not be reviewed here.  These spaces were then restricted to slabs $I \times \R^2$, where $I$ is a compact interval, in the usual manner, thus
\begin{equation}\label{usk}
\|u\|_{S_k(I \times \R^2)} := \inf \{ \| \tilde u\|_{S_k(\R \times \R^2)}: \tilde u|_{I \times \R^2} = u \}
\end{equation}
where $\tilde u$ ranges over all extensions of $u$ to $S_k(\R \times \R^2)$.  The $S_k( I \times \R^2)$ norm was also very slightly modified in \cite{tao:heatwave3} to an equivalent norm $\overline{S_k(I \times \R^2)}$ which is continuous with respect to dilations (the $S_k$ norm is merely quasicontinuous).  For technical reasons that will be important later, it is also necessary to replace the $N_k$ norm with a stronger norm $N_k^\strong$; in Appendix \ref{nk-refine} we define this stronger norm, and show that all the properties established for the $N_k$ in \cite{tao:wavemap2} (and then inherited in \cite{tao:heatwave3}) continue to hold for this stronger norm\footnote{In the recent preprint \cite{sterbenz} it was shown, using the machinery of $U^p$ and $V^p$ spaces, that the key Strichartz and Wolff estimates that we need hold even without replacing $N_k$ with $N_k^\strong$; this provides an alternate approach to the selection of function spaces and verification of estimates.} $N_k^\strong$.

These $S_k$ and $N_k$ spaces were adequate for dealing with the small energy theory.  However, when dealing with large energy wave maps $\phi$, the $S_k(I \times \R^2)$ norms of various quantities related to $\phi$ also became quite large, even after localising $I$ to be small.  Thus, to obtain a suitable perturbation theory for large energy, one must modify the spaces slightly.

To deal with this problem, a variant $\| \|_{S_{\mu,k}(I \times \R^2)}$ of the $S_k(I \times \R^2)$ norm was introduced in \cite[Theorem 8.1]{tao:heatwave3}, where $\mu > 0$ was a small parameter.  The $S_{\mu,k}$ norm is larger than the $S_k$ (or $\overline{S_k}$) norm, but for sufficiently small time intervals $I$, the two norms are comparable.  Furthermore, in some key bilinear, and trilinear estimates enjoyed by the $S_k$ norms, a gain of a power of $\mu$ appears when $S_k$ is replaced by $S_{\mu,k}$.  Because of these facts, it was possible in \cite{tao:heatwave3} to transport the small energy theory\footnote{The theory in \cite{tao:wavemap2} was for spherical targets rather than hyperbolic ones, and used a microlocal gauge rather than the caloric gauge, but we will ignore these technical distinctions for this informal discussion.} of \cite{tao:wavemap2} to the large energy setting.

However, the norms $S_{\mu,k}(I \times \R^2)$ used in \cite{tao:heatwave3} are not suitable for this paper.  The reason is that in order to decompose a large time interval into smaller ones on which the $S_{\mu,k}$ norms of (say) an energy-class solution to the free wave equation are adequately controlled, the number of intervals required can become unbounded\footnote{This is ultimately because the $S_{\mu,k}$ norms defined in \cite{tao:heatwave3} involve the $L^1_t L^\infty_x$ norm, which is under control for frequency-localised solutions on extremely short time intervals, but becomes unbounded on large time intervals such as $\R$.}.  We shall refer to the property of being able to subdivide a large interval into a \emph{bounded} number of smaller intervals on which a norm is controlled as \emph{divisibility}.  As we will rely heavily on divisibility in this paper, it is thus necessary to replace the $S_{\mu,k}$ norm defined in \cite{tao:heatwave3} with a more divisible norm.  The downside to this is that it becomes more difficult to establish the required bilinear and trilinear estimates that gain a factor of the parameter $\mu$; indeed, we will need to rely on an inverse theorem for a basic bilinear estimate established recently by the author in \cite{tao:inverse}.

Nevertheless, the spaces $S_{\mu,k}$ can at least be \emph{defined} with relatively little effort.  Given a direction $\omega \in S^1$ and an initial position $x_0 \in \R^2$, as well as a frequency parameter $k$, define the infinite tube $T_{x_0,\omega,k} \subset \R \times \R^2$ to be the set
$$ T_{x_0,\omega,k} := \{ (t,x) \in \R \times \R^2: |x-x_0-\omega t| \leq 2^{-k} \}.$$
For any interval $I$, we say that a function $\phi: I \times \R^2 \to \R$ is \emph{Schwartz} if it is smooth, and $\partial_t^j \nabla_x^k \phi$ is rapidly decreasing in space for all $j,k \geq 0$, uniformly in time.  We let $\Sch(I)$ denote the space of all Schwartz functions, with the usual Frechet space topology.  

\begin{definition}[$S_{\mu,k}$ norm]\label{smuk-def}  Let $0 < \mu < 1$.  We let $C_0$ be a large absolute constant (e.g. $C_0 := 10^{10}$ will do; the exact value depends to some extent on the implied constants in \cite{tao:inverse}).  For any $k$ and $I$, we define the enhanced $S_k(I \times \R^2)^+$ space to be the subspace of $S_k(I \times \R^2)$ consisting of functions $\phi$ whose norm
\begin{equation}\label{phis-plus}
 \| \phi \|_{S_k^\strong(I \times \R^2)} := \|\phi\|_{S_k(I \times \R^2)} + \sup_{k'} \chi_{k=k'}^{-\delta_1} \|P_{k'} \Box \phi\|_{N_{k'}^\strong(I \times \R^2)}
 \end{equation}
is finite.  We then define $\tilde S_{\mu,k}(I \times \R^2)$ to be the atomic Banach space whose atoms $\phi \in S_k^\strong(I \times \R^2)$ are one of the following two types:
\begin{itemize}
\item (Small atoms) $\| \phi \|_{S_k^\strong(I \times \R^2)} \leq \mu$;
\item (Null-dispersed atoms) $\| \phi \|_{S_k^\strong(I \times \R^2)} \leq 1$ and $\| \phi \|_{S'_{\mu,k}(I \times \R^2)} \leq \mu^{C_0}$ and 
\begin{equation}\label{mondo}
\| P_{k'} \nabla_{t,x} \phi\|_{L^5_t L^\infty_x(I \times \R^2)} \lesssim \chi_{k=k'}^{-\delta_1} 2^{4k'/5} \mu
\end{equation}
for all $k'$, where
\begin{equation}\label{spunk}
 \| \phi \|_{S'_{\mu,k}(I \times \R^2)} := \sup_{k',\kappa,x_0,\omega} 2^{|k-k'|/C_0} 
2^{-k'/2} \| P_{k',\kappa} \nabla_{t,x} \phi \|_{L^2_t L^\infty_x(T_{x_0,\omega,k'} \cap (I \times \R^2))} 
\end{equation}
where $k'$ ranges over $\Z$, $\kappa$ ranges over all caps $\kappa \in K_l$ of length $2^{-l}$ at least $\mu^{C_0}$, $x_0$ ranges over $\R^2$, and $\omega$ ranges over all directions in $S^1$ with $\dist(\omega, \kappa), \dist(\omega, -\kappa) \geq \mu^{C_0}$.  (For a definition of $K_l$ and $P_{k,\kappa}$, see \cite[p. 481]{tao:wavemap2}.)
\end{itemize}
We then define the dilation-smoothed variant $S_{\mu,k}(I \times \R^2)$ of these norms by the formula
\begin{align*}
\| \phi \|_{S_{\mu,k}(I \times \R^2)} &:= \sup_{j\in \R} 2^{-100|j|} \| \phi^{(j)} \|_{\tilde S_{\mu,k}(2^{-j} I \times \R^2)}.
\end{align*}
\end{definition}

By repeating the arguments in \cite{tao:heatwave3} we see that the two norms $S_{\mu,k}$ and $\tilde S_{\mu,k}$ are comparable:
\begin{equation}\label{smuk}
\| \phi \|_{S_{\mu,k}(I \times \R^2)} \sim \| \phi \|_{\tilde S_{\mu,k}(I \times \R^2)}.
\end{equation}

\subsection{Multilinear estimates}

In this section we show the following variant of \cite[Theorem 8.1]{tao:heatwave3}:

\begin{theorem}[Function space norms]\label{func}  For every interval $I$, every integer $k$, and every $\mu > 0$, we have the following properties for all integers $k,k_1,k_2,k_3$ and $\phi, \phi^{(1)}, \phi^{(2)}, \phi^{(3)}, F \in \Sch(I \times \R^2)$:
\begin{itemize}
\item (Continuity and monotonicity) If $I = [t_-,t_+]$, then $\|\phi\|_{S_{\mu,k}([a,b])}$ is a continuous function of $a,b$ for $t_- \leq a < b \leq t_+$, and is decreasing in $a$ and increasing in $b$.
\item ($S^\strong_k$ and $S_{\mu,k}$ are comparable) We have
\begin{equation}\label{sksk-star}
 \| \phi \|_{S^\strong_{k}(I \times \R^2)} \lesssim \| \phi \|_{S_{\mu,k}(I \times \R^2)} \lesssim \mu^{-1} \| \phi \|_{S^\strong_k(I \times \R^2)}.
\end{equation}
\item (Vanishing)  If $\phi \in \Sch(I \times \R^2)$ and $t_0 \in I$, then there exists an interval $J \subset I$ containing $t_0$ such that 
\begin{equation}\label{shrinko}
\| \phi \|_{S_{\mu,k}(J \times \R^2)} \lesssim \sum_{k'} \chi_{k=k'}^{-\delta_1} \| \nabla_{t,x} P_{k'} \phi(t_0) \|_{L^2_x(\R^2)}.
\end{equation}
\item (First product estimate) We have
\begin{equation}\label{prod1}
 \| \phi^{(1)} \phi^{(2)} \|_{S^\strong_{\max(k_1,k_2)}(I \times \R^2)} \lesssim \| \phi^{(1)} \|_{S^\strong_{k_1}(I \times \R^2)} \| \phi^{(2)} \|_{S^\strong_{k_2}(I \times \R^2)}.
 \end{equation} 
\item ($L^\infty$ estimates) We have
\begin{equation}\label{algebra}
\| \phi \|_{L^\infty_t L^\infty_x(I \times \R^2)} \lesssim \| \phi \|_{S^\strong_k(I \times \R^2)}
\end{equation}
as well as the refinement
\begin{equation}\label{algebra-2}
\| \nabla_{t,x} \phi \|_{L^\infty_t L^\infty_x(I \times \R^2)} \lesssim \| \nabla_x \phi \|_{S^\strong_k(I \times \R^2)}.
\end{equation}
\item ($N$ contains $L^1_t L^2_x$)  If $F \in \Sch(I \times \R^2)$ has Fourier support in the region $\{ \xi: |\xi| \sim 2^k \}$, then
\begin{equation}\label{fl1l2}
 \| F \|_{N_k(I \times \R^2)^\strong} \lesssim \|F\|_{L^1_t L^2_x(I \times \R^2)}.
 \end{equation}
\item (Adjacent $N_k$ or $S_k$ are equivalent) If $\phi \in \Sch(I \times \R^2)$, then
\begin{equation}\label{physical}
 \| \phi\|_{S^\strong_{k_1}(I \times \R^2)} \lesssim \chi_{k_1=k_2}^{-\delta_1} \|\phi\|_{S^\strong_{k_2}(I \times \R^2)}
\end{equation}
and
\begin{equation}\label{physical-star}
 \| \phi\|_{S_{\mu,k_1}(I \times \R^2)} \lesssim \chi_{k_1=k_2}^{-\delta_1} \|\phi\|_{S_{\mu,k_2}(I \times \R^2)}.
\end{equation}
Similarly, if $F \in \Sch(I \times \R^2)$ and $k_1=k_2+O(1)$, then
\begin{equation}\label{physicaln-eq} \| F\|_{N_{k_1}(I \times \R^2)^\strong} \sim \|F\|_{N_{k_2}(I \times \R^2)^\strong}.
\end{equation}
\item (Energy estimate) If $\phi \in \Sch(I \times \R^2)$ has Fourier transform supported in the region $\{ |\xi| \sim 2^k\}$, and $t_0 \in I$, then
\begin{equation}\label{energy-est}
\| \phi \|_{S^\strong_k(I \times \R^2)} \lesssim 
\| \phi[t_0] \|_{\dot H^1(\R^2) \times L^2(\R^2)} + \| \Box \phi \|_{N_k(I \times \R^2)^\strong}.
\end{equation}
\item (Parabolic regularity estimate)  If $\phi: \R^+ \to \Sch(I \times \R^2)$ is smooth and $s > 0$, then
\begin{equation}\label{parreg}
 \| \nabla_x^2 \phi(s) \|_{S^\strong_k(I \times \R^2)} \lesssim s^{-1} \| \phi(s/2) \|_{S^\strong_k(I \times \R^2)} + \sup_{s/2 \leq s' \leq s} \| (\partial_s - \Delta) \phi(s') \|_{S^\strong_k(I \times \R^2)}
\end{equation}
and similarly for $S_{\mu,k}(I \times \R^2)$ or $N_k(I \times \R^2)$.
\item (Second product estimate) We have
\begin{equation}\label{second-prod} \| P_k( \phi F ) \|_{N_k(I \times \R^2)^\strong} \lesssim \chi_{k \geq k_2}^{\delta_2} 
\chi_{k=\max(k_1,k_2)}^{\delta_2} \| \phi \|_{S^\strong_{k_1}(I \times \R^2)} \|F\|_{N_{k_2}(I \times \R^2)^\strong}.
\end{equation}
\item (Improved trilinear estimates) We have
\begin{equation}\label{trilinear-improv}
\begin{split}
 \| P_k( \phi^{(1)} \partial_\alpha \phi^{(2)} \partial^\alpha \phi^{(3)}) \|_{N_k(I \times \R^2)} &\lesssim \mu^{1-\eps} \chi_{k=\max(k_1,k_2,k_3)}^{\eps \delta_1} \chi_{k_1 \leq \min(k_2,k_3)}^{\eps \delta_1} \\
&\quad \times  \|\phi^{(1)} \|_{S_{\mu_1, k_1}(I \times \R^2)}
 \|\phi^{(2)}\|_{S_{\mu_2,k_2}(I \times \R^2)}
 \|\phi^{(3)} \|_{S_{\mu_3,k_3}(I \times \R^2)}
\end{split}
\end{equation}
and
\begin{equation}\label{trilinear-improv2}
\begin{split}
 \| P_k( \phi^{(1)} \partial_\alpha \phi^{(2)} \partial^\alpha \phi^{(3)}) \|_{N_k(I \times \R^2)} &\lesssim \mu^{2-2\eps} \chi_{k=\max(k_1,k_2,k_3)}^{\eps \delta_1} \chi_{k_1 \leq \min(k_2,k_3)}^{\eps \delta_1} \\
&\quad  \|\phi^{(1)} \|_{S_{\mu,k_1}(I \times \R^2)}
 \|\phi^{(2)}\|_{S_{\mu,k_2}(I \times \R^2)}
 \|\phi^{(3)} \|_{S_{\mu,k_3}(I \times \R^2)}
\end{split}
\end{equation}
and
\begin{equation}\label{trilinear-improv3}
\begin{split}
 \| P_k( \phi^{(1)} \phi^{(2)} \Box \phi^{(3)}) \|_{N_k(I \times \R^2)} &\lesssim \chi_{k=\max(k_1,k_2,k_3)}^{\delta_1} 
\chi_{\min(k_1,k_2) \leq k_3}^{\delta_1} \\
&\quad  \|\phi^{(1)} \|_{S_{k_1}^\strong(I \times \R^2)}
 \|\phi^{(2)}\|_{S_{k_2}^\strong(I \times \R^2)}
 \|\phi^{(3)} \|_{S_{k_3}^\strong(I \times \R^2)}
\end{split}
\end{equation}
for every $0 \leq \eps \leq 1$ and $k_1 \geq \min(k_2,k_3)-O(1)$, whenever two of the $\mu_1,\mu_2,\mu_3$ are equal to $\mu$ and the third is equal to $1$ (with the convention that $S_{1,k} = S_k^\strong$).
\item (Strichartz estimates)  If $\phi$ has Fourier support in the region $\{ \xi: |\xi| \lesssim 2^k\}$, then we have
\begin{equation}\label{outgo}
 \sup_{t \in I} \| \phi[t] \|_{H^1(\R^2) \times L^2_x(\R^2)} \lesssim \|\phi\|_{S_k^\strong(I \times \R^2)}
 \end{equation}
and
\begin{equation}\label{lstrich-4}
\| \nabla_{t,x} \phi\|_{L^5_t L^\infty_x(I \times \R^2)} \lesssim 2^{4k/5} \mu \|\phi\|_{S_{\mu,k}(I \times \R^2)}
\end{equation}
and
\begin{equation}\label{lstrich}
\| \nabla_{t,x} \phi\|_{L^q_t L^\infty_x(I \times \R^2)} \lesssim 2^{(1-1/q) k} \|\phi\|_{S_{k}^\strong(I \times \R^2)}.
\end{equation}
for $5 \leq q \leq \infty$.
\item (Hyperbolic estimate)  We have
\begin{equation}\label{hypo}
\| P_{k_1} \Box \phi \|_{N_{k_1}^\strong(I \times \R^2)} \lesssim \chi_{k=k_1}^{\delta_1} \| \phi \|_{S_{k}^\strong(I \times \R^2)}
\end{equation}
\end{itemize}
\end{theorem}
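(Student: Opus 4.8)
The plan is to follow the proof of \cite[Theorem 8.1]{tao:heatwave3} essentially verbatim, working with the new divisible norm $S_{\mu,k}$ of Definition \ref{smuk-def} in place of the one used there. The overwhelming majority of the listed properties are either immediate from the atomic definition of $S_{\mu,k}$, or are inherited with only cosmetic changes from the function space machinery of \cite{tao:wavemap2}, as transported to the caloric-gauge, hyperbolic-target, $N_k^\strong$-refined setting in \cite{tao:heatwave3} and Appendix \ref{nk-refine}; the only genuinely new work lies in the improved trilinear estimates \eqref{trilinear-improv}, \eqref{trilinear-improv2}, where the weaker definition of $S_{\mu,k}$ forces us to replace the $L^1_t L^\infty_x$-based argument of \cite{tao:heatwave3} by one built on the bilinear inverse theorem of \cite{tao:inverse}.

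First I would dispose of the routine items. The estimates \eqref{prod1}, \eqref{algebra}, \eqref{algebra-2}, \eqref{fl1l2}, \eqref{physical}, \eqref{physicaln-eq}, \eqref{energy-est}, \eqref{parreg}, \eqref{second-prod}, \eqref{trilinear-improv3}, \eqref{outgo}, \eqref{lstrich}, \eqref{hypo} involve only the $S_k^\strong$ and $N_k^\strong$ norms, and are precisely the statements of \cite[Theorem 8.1]{tao:heatwave3} (ultimately going back to \cite{tao:wavemap2}) with $N_k$ replaced by $N_k^\strong$; each such replacement is legitimate because the estimates for $N_k^\strong$ needed to run those proofs are established in Appendix \ref{nk-refine}. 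The comparability \eqref{sksk-star} is immediate from the atomic definition: the lower bound because $S_k^\strong$ controls every atom, the upper bound because $\phi$ is $\mu^{-1}\|\phi\|_{S_k^\strong}$ times a small atom. Continuity and monotonicity in the endpoints follow from the atomic structure together with the dilation smoothing (monotonicity because enlarging $I$ only enlarges the set of admissible extensions in \eqref{usk} and only enlarges the tube-localised norms). The inequality \eqref{physical-star} follows by applying \eqref{physical} atom-by-atom, and \eqref{lstrich-4} follows similarly from \eqref{lstrich} together with the defining bound \eqref{mondo} for null-dispersed atoms and the smallness of small atoms. Finally the vanishing estimate \eqref{shrinko} is obtained by choosing $|J|$ so small that $\phi|_J$, divided by the right-hand side of \eqref{shrinko}, is a single null-dispersed atom: the $S_k^\strong$ norm of $\phi|_J$ is controlled by the energy estimate \eqref{energy-est} (whose $\|\Box\phi\|_{N_k}$ contribution tends to zero with $|J|$ via \eqref{fl1l2}), while the $L^5_t L^\infty_x$ and tube $L^2_t L^\infty_x$ norms appearing in \eqref{mondo}, \eqref{spunk} are genuine $L^p_t$ norms with $p<\infty$ and hence also tend to zero as $|J|\to 0$; since null-dispersed atoms lose no power of $\mu$, this produces the $\mu$-free bound claimed in \eqref{shrinko}.

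The heart of the argument is the improved trilinear estimate. By the bilinear and linear decompositions already used for the corresponding estimate in \cite{tao:wavemap2}, it suffices to treat a single frequency interaction with each input a single atom. If any relevant input is a small atom it already carries a spare factor $\mu$, and the estimate follows from the (unweighted) trilinear estimate of \cite{tao:wavemap2}; so we may assume the inputs in question are null-dispersed atoms. We then isolate the null form $\partial_\alpha \phi^{(2)}\partial^\alpha\phi^{(3)}$ and apply the inverse theorem of \cite{tao:inverse}: were this bilinear expression too large in $N_k$ relative to $\mu^{1-\eps}$ times the product of the norms, the inverse theorem would force $\nabla_{t,x}\phi^{(2)}$ (or $\nabla_{t,x}\phi^{(3)}$) to concentrate on a single null tube $T_{x_0,\omega,k'}$ in the sense measured by the norm $\|\cdot\|_{S'_{\mu,k}}$ of \eqref{spunk}, contradicting the bound $\|\phi\|_{S'_{\mu,k}}\leq\mu^{C_0}$ built into the definition of a null-dispersed atom (this is where $C_0$ must be chosen large enough relative to the implied constants in \cite{tao:inverse}). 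This yields \eqref{trilinear-improv}; when two of the three inputs are null-dispersed one runs the dichotomy on both, using that the tube concentrations forced on $\phi^{(2)}$ and on $\phi^{(3)}$ cannot hold simultaneously without also violating \eqref{mondo}, thereby gaining the second power of $\mu$ in \eqref{trilinear-improv2}. The frequency-separation factors $\chi$ are preserved exactly as in \cite{tao:wavemap2}, \cite{tao:heatwave3}, and the low-frequency input $\phi^{(1)}$ is absorbed by the standard high-low paraproduct bounds.

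The main obstacle is precisely this trilinear step: one must verify that the $S_{\mu,k}$ atoms meet the hypotheses of the inverse theorem of \cite{tao:inverse} and that its conclusion can be read off as concentration in the tube norm \eqref{spunk} — which is exactly what dictated the somewhat elaborate design of the null-dispersed atoms in Definition \ref{smuk-def}. The bookkeeping of the many frequency cases (high-high, high-low, and the various orderings of $k_1,k_2,k_3$), inherited from \cite{tao:wavemap2} but now needing to be compatible with the atomic decomposition and with the weaker null-dispersion information available, is the most laborious remaining part of the verification.
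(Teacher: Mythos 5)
Your treatment of the routine items, the vanishing estimate, \eqref{lstrich-4}, and the general strategy (reduce to atoms, dispose of small atoms via the unweighted estimates of \cite{tao:wavemap2}, and invoke the bilinear inverse theory for the null-dispersed atoms) matches the paper. However, the central step --- the contradiction mechanism you propose for \eqref{trilinear-improv} --- does not work as stated. The inverse theorem (Theorem \ref{tao-thm3}) only asserts that $\partial_\alpha\phi^{(2)}\partial^\alpha\phi^{(3)}$ is small in $L^2_{t,x}$ \emph{outside} a bounded family of unit tubes; if the trilinear estimate failed, one would therefore conclude that the null form is large on some single tube $T_{x_0,\omega,k'}$. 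This is \emph{not} in direct contradiction with $\|\phi^{(2)}\|_{S'_{\mu,k}}\leq\mu^{C_0}$: the norm \eqref{spunk} only controls $P_{k',\kappa}\nabla_{t,x}\phi^{(2)}$ on tubes whose direction $\omega$ is \emph{transverse} to the cap $\kappa$ (the constraint $\dist(\omega,\kappa),\dist(\omega,-\kappa)\geq\mu^{C_0}$), so the components of $\phi^{(2)}$ whose spatial frequencies point along $\pm\omega$ can concentrate on $T$ while the atom remains perfectly null-dispersed. The dispersion hypothesis only lets you discard the angularly transverse part of $\phi^{(2)}$ along $T$ (via H\"older against the $L^\infty_t L^2_x$ bound on $\nabla_{t,x}\phi^{(3)}$); what survives is a parallel--parallel interaction along the tube, and ruling that out is the real work: one must localise $\phi^{(3)}$ to a parallel sector (controlling the transverse part by the plane-wave components of the $S[k]$ norms), remove the high modulations $Q_{>j_2}$, $Q_{>j_3}$, and then use that the remaining product has spacetime Fourier support at distance $\gtrsim \mu^{c C_0}$ from the light cone, where the arguments of \cite[Step 17.2(b).3]{tao:wavemap2} give the required smallness. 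None of this appears in your sketch, and without it the claimed contradiction with the $S'_{\mu,k}$ bound is simply false.

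Two related points. Your mechanism for the second power of $\mu$ in \eqref{trilinear-improv2} (``the tube concentrations forced on $\phi^{(2)}$ and $\phi^{(3)}$ cannot hold simultaneously without violating \eqref{mondo}'') is unsupported: \eqref{mondo} is an $L^5_t L^\infty_x$ bound and is not contradicted by simultaneous tube concentration; the paper instead reruns the single-atom argument, each null-dispersed factor contributing its own powers of $\mu$. Likewise, in the case of \eqref{trilinear-improv} with $\mu_2=1$ (so $\phi^{(1)}$ and $\phi^{(3)}$ are the null-dispersed atoms), the gain must come partly from the dispersion of the \emph{low-frequency} factor $\phi^{(1)}$, which is not ``absorbed by standard high-low paraproduct bounds'': the paper handles this by a separate argument, dualising to a tube-supported $L^2_{t,x}$ function $F_T$, decomposing $\phi^{(1)}$ into angular sectors relative to the tube direction, and using the null-dispersion for the transverse sectors and the $S[O(1),\kappa]$ plane-wave/modulation components for the parallel, low-modulation piece. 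Finally, a smaller remark: estimates such as \eqref{prod1} are not literally statements of \cite[Theorem 8.1]{tao:heatwave3}, since $S_k^\strong$ carries the extra $\Box$-component \eqref{phis-plus}; a short Leibniz argument using the null-form and second product estimates (upgraded to $N_k^\strong$ in Appendix \ref{nk-refine}) is required, though that part is routine.
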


\begin{proof}  We first dispose of some estimates which are either trivial or easily deduced from the existing literature.
The continuity and monotonicity of $S_{\mu,k}$ is established just as in \cite{tao:heatwave3}.  The comparability \eqref{sksk-star} follows from Definition \ref{smuk-def} and Lemma \ref{boxf}.  The comparability properties \eqref{physical}, \eqref{physical-star}, \eqref{physicaln-eq} are easily verified from the definition, while the hyperbolic estimate \eqref{hypo} follows from \eqref{phis-plus}.  The estimate \eqref{fl1l2} is immediate from Definition \ref{atom-def2}. The energy estimate \eqref{energy-est} follows from \eqref{phis-plus} and \cite[Equation (27)]{tao:wavemap2}, while the second product estimate \eqref{second-prod} follows from \cite[Equation (29)]{tao:wavemap2}.  The estimate \eqref{outgo} follows from \cite[Equation (32)]{tao:wavemap2}, the estimate \eqref{lstrich} follows from \cite[Lemma 3.1]{krieger:2d} (or \cite[Lemma 6.7]{krieger:2d}; see also \cite{sterbenz}), and the estimate \eqref{hypo} follows from \eqref{phis-plus}.  The parabolic regularity estimate \eqref{parreg} is proven exactly as \cite[Equation (122)]{tao:heatwave3} or \cite[Equation (61)]{tao:heatwave3} is proven.  The estimate \eqref{algebra} follows from \cite[Equation (82)]{tao:wavemap2}, while the refinement \eqref{algebra-2} comes from Bernstein's inequality and the fact that the frequency building block $S[k]$ of the $S_k$ norm controls the energy norm $\| \nabla_{t,x} \phi \|_{L^\infty_t L^2_x}$.

Now we turn to \eqref{prod1}.  We normalise 
$$ \| \phi^{(1)} \|_{S^\strong_{k_1}(I \times \R^2)} = \| \phi^{(2)} \|_{S^\strong_{k_2}(I \times \R^2)} = 1$$
and $k_1 \leq k_2 = 0$.
If the $S_k^\strong$ norm on the left-hand side was replaced with $S_k$, then the claim follows from \cite[Equation (20)]{tao:wavemap2}, so by \eqref{phis-plus} it suffices to show that
$$ \|P_{k'} \Box (\phi^{(1)} \phi^{(2)})\|_{N_{k'}^\strong(I \times \R^2)} \lesssim
\chi_{k' = 0}^{\delta_1}$$
for all $k'$.

Fix $k'$.  By the Leibniz rule, we may split $\Box (\phi^{(1)} \phi^{(2)})$ as a combination of $(\Box \phi^{(1)}) \phi^{(2)}$, $\Box \phi^{(1)} (\Box \phi^{(2)})$, and $\partial^\alpha \phi^{(1)} \partial_\alpha \phi^{(2)}$.  The third term is acceptable from the null form estimate in \cite[Equation (30)]{tao:wavemap2} (and Theorem \ref{enw}).  The other two terms are acceptable from \eqref{phis-plus} and \eqref{second-prod} (and Littlewood-Paley decomposition).

Now we look at the vanishing estimate \eqref{shrinko}.  We may set $t_0=0$.  From Definition \ref{smuk-def} we can replace $S_{\mu,k}$ by $\tilde S_{\mu,k}$, and it will then suffice to show that
\begin{equation}\label{phipl}
  \limsup_{\eps \to 0} \| \phi \|_{S_k^\strong([-\eps,\eps] \times \R^2)} \lesssim \sum_{k'} \chi_{k=k'}^{-\delta_1} \| \nabla_{t,x} P_{k'} \phi(0) \|_{L^2_x(\R^2)}
\end{equation}
and
$$  \limsup_{\eps \to 0} \| \phi \|_{S'_{\mu,k}([-\eps,\eps] \times \R^2)} = 0.$$
The second claim is easily established from the Schwartz nature of $\phi$, so we turn to \eqref{phipl}.  From the energy estimates in \cite[Section 11]{tao:wavemap2} (and the definition of $S_k$ in \cite[Section 10]{tao:wavemap2}) we have
$$ \| \phi \|_{S_k^\strong([-\eps,\eps] \times \R^2)} \lesssim \sum_{k'} \chi_{k=k'}^{-\delta_1} \| \nabla_{t,x} P_{k'} \phi(0) \|_{L^2_x(\R^2)}
+ \sum_{k'} \chi_{k=k'}^{-\delta_1} \| P_{k'} \Box \phi \|_{L^1_t L^2_x([-\eps,\eps] \times \R^2)},$$
and the claim follows from the monotone convergence theorem and the Schwartz nature of $\phi$.

Next, we consider the Strichartz estimate \eqref{lstrich-4}.  It suffices to verify this for atoms.  For small atoms, the claim follows from \eqref{lstrich}, while for dispersed atoms the claim follows from \eqref{mondo}.

Now, we look at the trilinear estimates \eqref{trilinear-improv}, \eqref{trilinear-improv2}.  The arguments here will rely heavily on those in \cite{tao:wavemap2}, as well as the inverse theory in Section \ref{inverse-sec}.  The basic strategy is to first use the inverse theory show that the only counterexamples to these estimates could be generated along light rays, which in turn can only arise from parallel interactions; but parallel interactions in the $S$ and $N$ type spaces are known to be negligible, especially when null structure is present, as is for instance demonstrated at various points in \cite{tao:wavemap2}.

The $\eps=1$ cases of these inequalities follow from \cite[Equation (87)]{tao:wavemap2}, so it suffices to verify the $\eps=0$ case; a similar argument allows us to assume $\mu$ to be small.  By allowing $k,k_1,k_2,k_3$ to be real rather than integer, we may replace the $S_{\mu_i,k}$ norms with $\tilde S_{\mu_i,k}$ for $i=1,2,3$, and then reduce to atoms.  Normalising the norms on the right-hand side to be one, our task is now to show that
\begin{equation}\label{trilinear-improv-1}
 \| P_k( \phi^{(1)} \partial_\alpha \phi^{(2)} \partial^\alpha \phi^{(3)}) \|_{N_k(I \times \R^2)^\strong} \lesssim \mu 
\end{equation}
when $\phi^{(i)}$ is a $S_{\mu_i,k_i}(I \times \R^2)$ atom for $i=1,2,3$, and
\begin{equation}\label{trilinear-improv2-1}
 \| P_k( \phi^{(1)} \partial_\alpha \phi^{(2)} \partial^\alpha \phi^{(3)}) \|_{N_k(I \times \R^2)^\strong} \lesssim \mu^{2} 
\end{equation}
when $\phi^{(i)}$ is a $S_{\mu,k_i}(I \times \R^2)$ atom for $i=1,2,3$.

We first prove \eqref{trilinear-improv-1}.  When $\phi^{(i)}$ is a small atom and $\mu_i=\mu$, the claim again follows from \cite[Equation (87)]{tao:wavemap2}, so we may assume that $\phi^{(i)}$ is a null-dispersed atom whenever $\mu_i=\mu$.  By symmetry we may also take $k_2 \leq k_3$.
  
We assume that \eqref{trilinear-improv2-1} fails (so that the left-hand side is at least $\mu$) and derive a contradiction (for $C_0$ large enough).
We recall that implied constants are allowed to depend on exponents such as $\delta_1$.

By \cite[Equation (87)]{tao:wavemap2}, this forces $\chi_{k=\max(k_1,k_2,k_3)}^{\delta_1} \chi_{k_1 \leq \min(k_2,k_3)}^{\delta_1} \gtrsim \mu$, and thus $k = k_3 + O(\log \frac{1}{\mu})$ and $k_1 = k_2 + O(\log \frac{1}{\mu})$ by the hypotheses on $k_1,k_2$. By rescaling, we may thus normalise
$$  k_1, k_2 = O(\log \frac{1}{\mu}); \quad k = k_3 +O(\log \frac{1}{\mu}); k_3 \geq O(\log \frac{1}{\mu}).$$
From \cite[Equations (29), (30)]{tao:wavemap2}, we see that
$$
 \| P_k( \phi^{(1)} P_{k'}(\partial_\alpha \phi^{(2)} \partial^\alpha \phi^{(3)}) ) \|_{N_k(I \times \R^2)^\strong}$$
decays exponentially away from $k' = k_3$, so by the pigeonhole principle we may thus find
$$ k' = k_3+O(\log \frac{1}{\mu})$$
such that
$$
 \| P_k( \phi^{(1)} P_{k'}(\partial_\alpha \phi^{(2)} \partial^\alpha \phi^{(3)}) ) \|_{N_k(I \times \R^2)^\strong} \gtrsim \mu^{O(1)}.$$
Fix this $k'$.  We extend the $\phi^{(i)}$ to all of $\R^{1+2}$ so that they remain bounded in $S_{k_i}(\R \times \R^2)$.  From \cite[Equation (30)]{tao:wavemap2}, one has
$$
 \| P_{k'}(\partial_\alpha \phi^{(2)} \partial^\alpha \phi^{(3)}) \|_{\dot X_{k'}^{0,-1/2,\infty}(I \times \R^2)} \lesssim \mu^{O(1)},$$
so by Lemma \ref{core-improve}, the contribution of $Q_{<j'} P_{k'}(\partial_\alpha \phi^{(2)} \partial^\alpha \phi^{(3)})$ is negligible for some $j' = O(\log \frac{1}{\mu})$, and thus
$$
 \| P_k( \phi^{(1)} P_{k'} Q_{\geq j'} (\partial_\alpha \phi^{(2)} \partial^\alpha \phi^{(3)}) ) \|_{N_k(I \times \R^2)^\strong} \gtrsim \mu^{O(1)}$$
 for this $j'$.  
 
Let us now consider the case when $\mu_2=\mu$, so $\phi^{(2)}$ is a null-dispersed atom.  Applying \cite[Equation (29)]{tao:wavemap2}, we conclude that
$$ \| P_{k'} Q_{\geq j'} (\partial_\alpha \phi^{(2)} \partial^\alpha \phi^{(3)}) ) \|_{N_k(I \times \R^2)^\strong} \gtrsim \mu^{O(1)};$$
controlling the $N_k$ norm by the $\dot X^{0,-1/2,1}$ norm and discarding the $P_{k'} Q_{\geq j'}$ projections, we conclude
$$ \| \partial_\alpha \phi^{(2)} \partial^\alpha \phi^{(3)} \|_{L^2_{t,x}(I \times \R^2)} \gtrsim \mu^{O(1)}.$$
Applying (a slightly rescaled version of) Theorem \ref{tao-thm3} and the triangle inequality, we can find a collection $(T_\beta)_{\beta \in B}$ of tubes of frequency $1$ with cardinality $O( \mu^{-O(1)} )$ such that
$$ \| \partial_\alpha \phi^{(2)} \partial^\alpha \phi^{(3)} \|_{L^2_{t,x}((I \times \R^2) \cap \bigcup_{\beta \in B} T_\beta)} \gtrsim \mu^{O(1)},$$
and thus by the triangle inequality and pigeonhole principle we can find one of these tubes $T = T_{x_0,\omega,0}$ for which
$$ \| \partial_\alpha \phi^{(2)} \partial^\alpha \phi^{(3)} \|_{L^2_{t,x}((I \times \R^2) \cap T)} \gtrsim \mu^{O(1)}.$$
From \eqref{energy-est}, $\nabla_{t,x} \phi^{(3)}$ has an $L^\infty_t L^2_x(I \times \R^2)$ norm of $O(1)$.  Using the null-dispersed nature of $\phi^{(2)}$ and H\"older's inequality to control all components of $\phi^{(2)}$ not oriented near the direction $\omega$, we may thus conclude that
$$ \| \partial_\alpha P \phi^{(2)} \partial^\alpha \phi^{(3)} \|_{L^2_{t,x}((I \times \R^2) \cap T)} \gtrsim \mu^{O(1)},$$
where $P$ is a smooth Fourier projection to the region $\{ |\xi| \sim 1; \angle \xi, \omega \lesssim \mu^{cC_0} \}$ for some small absolute constant $c>0$, assuming $C_0$ is large enough.

Now let $P'$ be a smooth Fourier projection to the region $\{ |\xi| \sim 1; \angle \xi, \omega \lesssim \mu^{c'C_0} \}$ for some small absolute constant $c'$ (which will be much smaller than $c$).  We consider first the contribution of $(1-P') \phi^{(3)}$.  The transversality to $T$ implies that $\nabla_{t,x} (1-P') \phi^{(3)}$ has an $L^2_{t,x}((I \times \R^2) \cap T)$ norm of $O( \mu^{-O(c' C_0) + O(1)} )$ (here we use the plane wave component of the $S$ norms and dyadic decomposition, see \cite[Proposition 2]{tao:wavemap2}).  On the other hand, from Bernstein's inequality we see that $\nabla_{t,x} P \phi^{(2)}$ has an $L^\infty_{t,x}((I \times \R^2) \cap T)$ norm of $O(\mu^{cC_0/2 - O(1)})$.  For $C_0$ large enough (and $c'$ small enough) and H\"older's inequality\footnote{Again, one has to deal with the $P_{k'} Q_{\geq j'}$ projection, but the localisation of the associated kernel allows one to do this easily.}, this contribution is negligible, thus we have
$$ \| \partial_\alpha P \phi^{(2)} \partial^\alpha P' \phi^{(3)} \|_{L^2_{t,x}((I \times \R^2) \cap T)} \gtrsim \mu^{O(1)}.$$
Consider the contribution of $Q_{>j_2} \phi^{(2)}$ for some $j_2$.  The quantity $\nabla_{t,x} P Q_{>j_2} \phi^{(2)}$ has an $L^2_t L^\infty_x$ norm of $O( \mu^{\tilde c C_0-O(1)} 2^{-j_2/2} )$ for some $\tilde c>0$ (by $\dot X^{0,1/2,\infty}$ estimates and Bernstein's inequality), while $\nabla_{t,x} P' \phi^{(3)}$ has an $L^\infty_t L^2_x$ norm of $O(1)$, so this contribution is negligible for $j_2 := -c'' C_0 \log \frac{1}{\mu}$ and some absolute constant $c'' > 0$.  Similarly, to control the contribution of $Q_{>j_3} \phi^{(3)}$ for some $j_3$, one observes that $\nabla_{t,x} P \phi^{(2)}$ has an $L^\infty_{t,x}$ norm of $O( \mu^{-O(1)})$ and $\nabla_{t,x} P' Q_{>j_3} \phi^{(3)}$ has a $L^2_t L^\infty_x$ norm of $O( \mu^{\tilde c C_0-O(1)} 2^{-j_2/2} )$ (by $\dot X^{0,1/2,\infty}$ estimates and Bernstein's inequality), again giving a negligible contribution for $j_3 = - c'' C_0 \log \frac{1}{\mu}$.  So we have
$$ \| \partial_\alpha P Q_{<j_2} \phi^{(2)} \partial^\alpha P' Q_{<j_3} \phi^{(3)} \|_{L^2_{t,x}((I \times \R^2) \cap T)} \gtrsim \mu^{O(1)}.$$
for these choices of $j_2, j_3$.

There are two components to $P Q_{<j_2} \phi^{(2)}$, one which is transverse to $T$ and one which is essentially parallel to $T$.  The transverse component can be dealt with by earlier arguments, so we may restrict attention to the component which is parallel to $T$; similarly for $P' Q_{<j_3} \phi^{(3)}$.  But now the expression $\partial_\alpha P Q_{<j_2} \phi^{(2)} \partial^\alpha P' Q_{<j_3} \phi^{(3)}$ lies at a distance $\mu^{c''' C_0}$ from the light cone, for some $c''' > 0$, and the arguments in \cite[Step 17.2(b).3]{tao:wavemap2} then give an upper bound 
$$ \| \partial_\alpha P Q_{<j_2} \phi^{(2)} \partial^\alpha P' Q_{<j_3} \phi^{(3)} \|_{L^2_{t,x}(I \times \R^2)} \lesssim \mu^{c'''' C_0 - O(1)}$$
for some $c'''' > 0$, giving a contradiction.

Now we turn to the case when $\mu_2=1$, thus $\mu_1=\mu_3=\mu$ and $\phi^{(1)}, \phi^{(3)}$ are both null-dispersed.   If $k_3<k_2+10$ then by swapping the roles of $\phi^{(2)}$ and $\phi^{(3)}$ we can use the previous arguments, so suppose instead that $k_3 \geq k_2+10$, which forces $k = k_3+O(1)$.

Using Theorem \ref{tao-thm3} as before, one can find a collection $(T_\beta)_{\beta \in B}$ of tubes of frequency $1$ with cardinality $O( \mu^{-O(1)})$ such that
$$
 \| P_k( \phi^{(1)} P_{k'} Q_{\geq j'} ( 1_{\bigcup_{\beta \in B} T_\beta} \partial_\alpha \phi^{(2)} \partial^\alpha \phi^{(3)}) ) \|_{N_k(I \times \R^2)^\strong} \gtrsim \mu^{O(1)}.$$
On the other hand, from the preceding arguments one has
$$ \| 1_{\bigcup_{\beta \in B} T_\beta} \partial_\alpha \phi^{(2)} \partial^\alpha \phi^{(3)}) \|_{L^2_{t,x}(I \times \R^2)} \lesssim \mu^{-O(1)}.$$
Thus, there exists a tube $T = T_{x_0,\omega,0}$ and a function $F_T$ supported on that tube with $L^2_{t,x}$ norm $O(1)$ such that
\begin{equation}\label{ppk}
 \| P_k( \phi^{(1)} P_{k'} Q_{\geq j'} F_T ) \|_{N_k(I \times \R^2)^\strong} \gtrsim \mu^{O(1)}.
\end{equation}
Consider a portion of $\phi^{(1)}$ whose spatial Fourier transform supported on a cap at an angle of at least $\mu^{cC_0}$ to $\omega_0$ for some small absolute constant $c>0$.  Then applying the null-dispersion hypothesis, we see that this portion has an $L^2_t L^\infty_x$ norm of $O(\mu^{c' C_0})$ for some other constant $c'>0$ on $T$ and all of its translates; this easily leads to a $L^1_t L^2_x$ norm of $O(\mu^{c' C_0-O(1)})$ for this contribution to \eqref{ppk}, which is negligible.  Because of this, we may replace $\phi^{(1)}$ with $P \phi^{(1)}$ where $P$ is as before.  Now improved Bernstein estimates become available, and one sees that the contribution of $Q_{>j} P \phi^{(1)}$ for some $j = -c' C_0 \log \frac{1}{\mu}$ becomes negligible because it is small in $L^2_t L^\infty_x$.  Thus we only need consider the contribution of $Q_{\leq j} P \phi^{(1)}$.  But by construction of the $S$ norm, one can estimate this term in the space $S[O(1),\kappa]$ for some cap $\kappa$ of size $O( \mu^{c'''C_0 - O(1)})$, with a norm of $O(\mu^{-O(1)})$.
Multiplying this against the $L^2_{t,x}$-bounded quantity $P_{k'} Q_{\geq j'} F_T$ (using \cite[Equation (72)]{tao:wavemap2}) we see that this contribution is also negligible for $C_0$ large enough, leading to the desired contribution.

Next, we prove \eqref{trilinear-improv2-1}.  When any of the $\phi^{(i)}$ are small atoms, then the claim follows from \eqref{trilinear-improv} (setting $\mu_i=1$ and the other two $\mu_j$ equal to $\mu$), so we may assume that all of the $\phi^{(i)}$ are null-dispersed atoms.  The claim now follows by repeating the previous arguments (losing a few more factors of $\mu$).

Next, we establish \eqref{trilinear-improv3}.  We may normalise
$\|\phi^{(i)} \|_{S_{k_i}^\strong(I \times \R^2)}=1$ for $i=1,2,3$, and also $k_1 \geq k_2$ and $k_3=0$.  Write $F := \Box \phi^{(3)}$, thus $\|F\|_{N_{0}(I \times \R^3)} \lesssim 1$ by \eqref{hypo}.  Our task is to show that
$$
 \| P_k( \phi^{(1)} \phi^{(2)} F) \|_{N_k(I \times \R^2)} \lesssim \chi_{k=\max(k_1,0)}^{\delta_1} 
\chi_{k_2 \leq 0}^{\delta_1}.$$
From \eqref{second-prod} we already obtain the bound of
$$ 
\chi_{k \geq 0}^{\delta_2} 
\chi_{k=\max(k_1,0)}^{\delta_2}.$$
So we are done unless $k_2 \geq 10$, at which point it will suffice to show that
$$
 \| P_k( \phi^{(1)} \phi^{(2)} F) \|_{N_k(I \times \R^2)} \lesssim \chi_{k=k_1}^{-C} 
\chi_{k_2 \leq 0}^{\delta_1}$$
for some large absolute constant $C$. For simplicity of notation we will suppose that $k=k_1$, but the general case is similar.

From \cite[Lemma 12]{tao:wavemap2} we know that $\phi^{(2)} F$ (and hence $\phi^{(1)} \phi^{(2)} F$ decays if $F$ has modulation much less than $1$, so we may assume that $F$ has modulation at least $2^{-\delta_2^2 k_2}$ (say).  Dually, by \cite[Lemma 13]{tao:wavemap2}, we see that $\phi^{(1)} \phi^{(2)}$ decays when the modulation is much less than $2^{k_2}$, so we may assume that the modulation here is at least $2^{-\delta_2^2 k_2} 2^{k_2}$ (say).

By the arguments in \cite[Section 15.2(ac)]{tao:wavemap2} we may assume that $F$ is a $\dot X^{0,-1/2,1}$ atom with some modulation $2^j$.  
Let $j'$ denote the modulation of $\phi^{(1)} \phi^{(2)}$ (one can formalise this by Littlewood-Paley projections), then $\phi^{(1)} \phi^{(2)}$ has an $L^2_{t,x}$ norm of $O( 2^{-k} 2^{-j'/2} )$, while $F$ has an $L^2_t L^\infty_x$ norm of $2^{j'/2}$.  This gives an acceptable contribution unless $2^j \gg 2^{1.9 k} 2^{j'}$ (say), in which case $\phi^{(1)} \phi^{(2)}$ has a modulation of $\sim 2^j$.  Since $\phi^{(1)} \phi^{(2)}$ has an $L^\infty_t L^2_x$ norm of $O( 2^{-k})$, $\phi^{(1)} \phi^{(2)} F$ has an $L^2_{t,x}$ norm of $O( 2^{-k} 2^{j'/2} )$, which is acceptable.


\end{proof}

\begin{proposition}\label{algae}  The $S_k^\strong$ norms form an algebra family in the sense of Definition \ref{algebra-fam}.
\end{proposition}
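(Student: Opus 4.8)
The plan is to verify, one at a time, the five families of estimates demanded in Definition \ref{algebra-fam} for the seminorms $S_k^\strong(I \times \R^2)$, quoting Theorem \ref{func} for all but one of them. Indeed the product estimate \eqref{prod1-abstract} is precisely \eqref{prod1}, the comparability estimate \eqref{physical-abstract} is precisely \eqref{physical}, the uniform bound \eqref{uniform} is precisely \eqref{algebra}, and the parabolic regularity estimate \eqref{parreg-abstract} is precisely \eqref{parreg}. The variant \eqref{parreg-abstract-alt} I would obtain by the same Duhamel argument that gives \eqref{parreg}: write $\nabla_x^2 \phi(s) = e^{(s-s'')\Delta}\nabla_x^2\phi(s'') + \int_{s''}^s e^{(s-\sigma)\Delta}\nabla_x^2(\partial_s-\Delta)\phi(\sigma)\, d\sigma$, estimate the first term using that the heat semigroup is spatial convolution with a probability measure and hence bounded on $S_k^\strong$, and estimate the second term by a Littlewood--Paley decomposition together with the elementary bounds $\|\nabla_x^2 e^{(s-\sigma)\Delta}P_{k'}g\|_{S_k^\strong} \lesssim \chi_{k=k'}^{-\delta_1}2^{2k'}e^{-c(s-\sigma)2^{2k'}}\|g\|_{S_k^\strong}$ and $\int_{s''}^s 2^{2k'}e^{-c(s-\sigma)2^{2k'}}\, d\sigma \lesssim 1$. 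The only genuinely new point is therefore the additional parabolic regularity estimate \eqref{parreg-abstract2}.

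For \eqref{parreg-abstract2} the mechanism mirrors the easy Fourier-analytic argument used for the instantaneous envelope norms $\dot H^0_k$ in the proof of Theorem \ref{parab-thm-instant}, now carried out inside the $S_k^\strong$ machinery. The first input is that $S_k^\strong(I \times \R^2)$ is a translation-invariant spacetime norm built from mixed Lebesgue, $\dot X^{b,q}$ and null-frame pieces (after angular and modulation localisation) and that $\Box$ commutes with spatial convolution; hence spatial convolution with a kernel $K \in L^1_x(\R^2)$ is bounded on $S_k^\strong$ with operator norm $\lesssim \|K\|_{L^1_x(\R^2)}$, uniformly in $k$ (the $N_k^\strong$ component in \eqref{phis-plus} being handled the same way). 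The second input is that $\nabla_x^j e^{(s-s')\Delta}$, restricted to a dyadic frequency block $2^{k'}$, is spatial convolution with a kernel of $L^1_x$ norm $\lesssim_{j,N} 2^{jk'}(1+(s-s')2^{2k'})^{-N}$ for every $N \geq 0$, by rescaling and the rapid decay of the inverse Fourier transform of $(i\xi)^j e^{-(s-s')|\xi|^2}$ against a Littlewood--Paley cutoff. Decomposing $\phi = \sum_{k'} P_{k'}\phi$ and using the $\chi^{-\delta_1}$-weighted way the blocks $P_{k'}\phi$ enter the $S_k^\strong$ norm — one gains a factor $\chi_{k'=k(s')}^{\delta_1}$ on extracting the block $P_{k'}\phi$ from $\|\phi\|_{S_{k(s')}^\strong}$ and loses a factor $\chi_{k(s)=k'}^{-\delta_1}$ on reassembling into $\|\cdot\|_{S_{k(s)}^\strong}$ — would reduce \eqref{parreg-abstract2} to the scalar inequality
\[ \sup_{k'}\ 2^{\delta_1|k(s)-k'|}\,2^{-\delta_1|k'-k(s')|}\,2^{jk'}\,(1+(s-s')2^{2k'})^{-N}\ \lesssim_{j,N}\ (s'/s)^{\delta_1/10}\,(s-s')^{-j/2}. \]

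I expect the main obstacle to be verifying this scalar inequality, via a short case analysis on whether $s-s'$ is comparable to $s$ or much smaller. The factor $2^{jk'}(1+(s-s')2^{2k'})^{-N}$ always contributes at most $O_{j,N}((s-s')^{-j/2})$ and peaks at $2^{k'}\sim(s-s')^{-1/2}$, so everything comes down to the $\delta_1$-bookkeeping. When $s-s' \ll s$ one has $s'\sim s$, $k(s)=k(s')+O(1)$, $(s'/s)^{\delta_1/10}\sim 1$, and the net $\delta_1$-factor is $O(1)$, so the bound is trivial. When $s-s'\sim s$ one has $k(s-s')=k(s)+O(1)$, and for $k'\le k(s)$ the net $\delta_1$-factor equals $2^{-\delta_1(k(s')-k(s))} = (s'/s)^{\delta_1/2}\le (s'/s)^{\delta_1/10}$ since $\delta_1>\delta_1/5$; for $k'>k(s)$ the $\delta_1$-factor can try to grow as large as $2^{\delta_1(k(s')-k(s))}$, but then $(s-s')2^{2k'}\gtrsim 2^{2(k'-k(s))}\gg 1$, so the super-polynomial factor $(1+(s-s')2^{2k'})^{-N}$ defeats the growth (and the $2^{jk'}$ gain) once $N$ is large. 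Once the scalar inequality is established, \eqref{parreg-abstract2} follows, all five axioms of Definition \ref{algebra-fam} hold, and the proposition is proved.
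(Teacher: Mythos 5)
Your proposal is correct and its skeleton coincides with the paper's: everything except \eqref{parreg-abstract-alt} and \eqref{parreg-abstract2} is read off from Theorem \ref{func}, and \eqref{parreg-abstract-alt} is handled by Duhamel together with frequency-localised $L^1$ heat-kernel bounds and the translation invariance of the norms. The genuine divergence is at \eqref{parreg-abstract2}. The paper disposes of it in two lines: by \eqref{physical} it suffices to bound the operator norm of $\nabla_x^j e^{(s-s')\Delta}$ on the single space $S^\strong_{k(s)}$ by $O_j((s-s')^{-j/2})$, which follows from the $L^1$ kernel bound. But that reduction converts $\|\phi\|_{S_{k(s)}}$ into $\|\phi\|_{S_{k(s')}}$ only at the cost of $\chi_{k(s)=k(s')}^{-\delta_1}\sim (s/s')^{\delta_1/2}$, so as written it yields \eqref{parreg-abstract2} with a loss rather than the stated gain $(s'/s)^{\delta_1/10}$ --- and that gain is what is actually exploited later (e.g.\ in Proposition \ref{rego}, to run the Gronwall-type lemma). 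Your frequency-localised argument --- extract $P_{k'}\phi$ from $S^\strong_{k(s')}$ with a factor $\chi_{k'=k(s')}^{\delta_1}$, apply the per-block kernel bound $2^{jk'}(1+(s-s')2^{2k'})^{-N}$, reassemble with weight $\chi_{k(s)=k'}^{-\delta_1}$, and verify the resulting scalar inequality --- is exactly what produces the gain: the heat flow over time $s-s'\sim s$ pushes the output to frequencies $\lesssim k(s)$, where the $S_{k(s)}$ weight is smaller than the $S_{k(s')}$ weight by $(s'/s)^{\delta_1/2}$, while large $N$ kills the range $k'>k(s)$; this mirrors the paper's treatment of the instantaneous norms in Theorem \ref{parab-thm-instant}. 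So at the one delicate point your route supplies a step that the paper's draft elides.

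One bookkeeping remark on your \eqref{parreg-abstract-alt} step: the displayed per-block bound $\|\nabla_x^2 e^{(s-\sigma)\Delta}P_{k'}g\|_{S_k^\strong}\lesssim \chi_{k=k'}^{-\delta_1}2^{2k'}e^{-c(s-\sigma)2^{2k'}}\|g\|_{S_k^\strong}$ carries a spurious factor $\chi_{k=k'}^{-\delta_1}$, which would spoil a naive summation over $k'$; after using the extraction gain $\|P_{k'}g\|_{S[k']^\strong}\lesssim \chi_{k=k'}^{\delta_1}\|g\|_{S_k^\strong}$ the net factor is $O(1)$ (this is how the paper's \eqref{soups}--\eqref{coconut} step is organised), and since only diagonal blocks contribute to each output block in the $\sup_{k'}$ part of the norm, no divergent sum arises. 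This is a cosmetic fix; the argument goes through.
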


\begin{proof}  Comparing Theorem \ref{func} with Definition \ref{algebra-fam}, we see that we already have all of the properties required except for
\eqref{parreg-abstract-alt} and \eqref{parreg-abstract2}.

Now we prove \eqref{parreg-abstract2}.  From \eqref{physical} it suffices to show that the operator $\nabla_x^j e^{(s-s')\Delta}$ has an operator norm of $O_j((s-s')^{-j/2})$ on $S^\strong_{k(s)}(I\times \R^2)$.  However, from the fundamental solution of the heat equation we see that this operator is a convolution operator whose kernel has an $L^1$ norm of $O_j((s-s')^{-j/2})$, and the claim follows from Minkowski's inequality (and the translation-invariance of $S^\strong_{k(s)}(I\times \R^2)$).

Finally, we prove \eqref{parreg-abstract-alt}.  We apply Duhamel's formula to write
$$ \phi(s) = e^{(s-s'')\Delta} \phi(s'') + \int_{s''}^s e^{(s-s')\Delta} F(s')\ ds'$$
where $F := (\partial_s - \Delta) \phi$.
The contribution of the first term is acceptable by \eqref{parreg-abstract2}, so it suffices to show that
$$ \| \nabla_x^2 \int_{s''}^s e^{(s-s')\Delta} F(s') \ ds' \|_{S_k^\strong(I \times \R^2)} \lesssim \sup_{s'' \leq s' \leq s} \| F(s') \|_{S_k^\strong(I \times \R^2)}.$$
We may normalise $k=0$ and
\begin{equation}\label{soups}
\sup_{s'' \leq s' \leq s} \| F(s') \|_{S_k^\strong(I \times \R^2)} = 1.
\end{equation}
By \eqref{phis-plus} and the definition of the $S_k$ norms in \cite[Section 10]{tao:wavemap2}, it suffices to show that
\begin{equation}\label{coconut}
 \| \int_{s''}^s \nabla_x^2 P_{k'} e^{(s-s')\Delta} F(s') \ ds' \|_{S[k']^\strong(I \times \R^2)} \lesssim \chi_{k'=0}^{\delta_1}
 \end{equation}
where
$$ \| f \|_{S[k']^\strong(I \times \R^2)} := \| f\|_{S[k'](I \times \R^2)} + \|\Box f \|_{N[k'](I \times \R^2)}$$
and the $S[k']$ norm was defined in \cite[Section 10]{tao:wavemap2}.  From \eqref{soups} one has
$$ \| P_{k'} F(s') \|_{S[k']^\strong(I \times \R^2)} \lesssim \chi_{k'=0}^{\delta_1}.$$
Meanwhile, one can write 
$$\nabla_x^2 P_{k'} e^{(s-s')\Delta} F(s') = \nabla_x^2 \tilde P_{k'} e^{(s-s')\Delta} P_{k'} F(s')$$
where $\tilde P_{k'}$ is a slight enlargement of $P_{k'}$.  From the fundamental solution of the heat equation we see that $\nabla_x^2 P_{k'} e^{(s-s')\Delta}$ is a convolution operator whose kernel has an $L^1$ norm of $O( 2^{2k'} (1 + 2^{2k'} (s-s'))^{-10} )$, and so by Minkowski's inequality (and the translation-invariance of $S[k']^\strong(I \times \R^2)$) one can bound the left-hand side of \eqref{coconut} by
$$ \int_{s''}^s O( 2^{2k'} (1 + 2^{2k'} (s-s'))^{-10} \chi_{k'=0}^{\delta_1} )\ ds'$$
and the claim follows.
\end{proof}

\subsection{Norms}

For each compact interval $I$ and each $\mu>0$, we define a metric $S^1_\mu(I)$ on the space of dynamic fields $\psi_s$ by the formula
\begin{equation}\label{ds1-eq}
\dist_{S^1_\mu(I)}(\psi_s,\psi'_s) := = \| \sum_{j=0}^{10} \sup_{2^{-2k-2} \leq s \leq 2^{-2k}} s^{1+\frac{j}{2}} \| \nabla_x^j (\psi_s - \psi'_s)(s) \|_{S_{\mu,k}(I \times \R^2)} \|_{\ell^2_k(\Z)} 
\end{equation}
and then define $\| \psi_s \|_{S^1_\mu(I)}:= \dist_{S^1_\mu(I)}(0,\psi_s)$, thus
\begin{equation}\label{ds2-eq}
\| \psi_s \|_{S^1_\mu(I)}= \| \sum_{j=0}^{10} \sup_{2^{-2k-2} \leq s \leq 2^{-2k}} s^{1+\frac{j}{2}} \| \nabla_x^j \psi_s(s) \|_{S_{\mu,k}(I \times \R^2)} \|_{\ell^2_k(\Z)} 
\end{equation}

We now have

\begin{theorem}[A priori estimates in the caloric gauge]\label{apriori-thm2} For any interval $I$, and $0 < \mu < 1$, we have the following properties:
\begin{itemize}
\item[(i)] (Monotonicity)  If $I \subset J$, $0 < \mu \leq 1$, and $\psi_s, \psi'_s$ are dynamic fields on $J$ then
$\| \psi_s\downharpoonright_I \|_{S^1_\mu(I)} \leq \|\psi_s\|_{S^1_\mu(J)}$.
\item[(ii)] (Continuity)  If $\psi_s$ is a dynamic field on $[t_-,t_+]$ and $0 < \mu \leq 1$, then $\|\psi_s\downharpoonright_{[a,b]} \|_{S^1_\mu([a,b])}$ is a continuous function of $a,b$ in the region $t_- \leq a < b \leq t_+$.
\item[(iii)] (Vanishing) If $I_n$ is a decreasing sequence of compact intervals with $\bigcap_n I_n = \{t_0\}$, $0 < \mu \leq 1$, and $\psi_s, \psi'_s$ are dynamic fields on $I_1$, then $\lim_{n \to \infty} \| (\psi_s-\psi'_s)\downharpoonright_{I_n} \|_{S^1_\mu(I_n)} \lesssim_E \tilde d_{\Energy}(\psi_s[t_0], \psi'_s[t_0])$, where the modified energy metric $\dist_\Energy$ is defined in \eqref{energy-dist-def}, and $\E(\psi_s[t_0]), \E(\psi'_s[t_0]) \leq E$.
\item[(iv)] ($S^1$ controls energy)  For any $I$, any $0 < \mu \leq 1$ and $M>0$, any dynamic fields $\psi_s, \psi'_s$
 with $\|\psi_s\|_{S^1_\mu(I)}, \|\psi'_s\|_{S^1_\mu(I)} \leq M$, and any $t_0 \in I$ and $s_0 \geq 0$, we have $\tilde d_{\Energy}(\psi_s(s_0)[t_0],\psi'_s(s_0)[t_0]) \lesssim_{M} \| \psi_s - \psi'_s \|_{S^1_\mu(I)}$.  In particular, $\E(\psi_s[t_0]) = O_M(1)$.
\item[(v)] (Stability estimate) If $M > 0$, $0 < \mu \leq 1$ is sufficiently small depending on $M$, $I$ is an interval, $t_0 \in I$, and $\psi_s, \psi'_s$ are dynamic fields on $I$ obeying the wave map and heat flow equations, with $\|\psi_s\|_{S^1_\mu(I)}, \|\psi'_s\|_{S^1_\mu(I)} \leq M$, then
\begin{equation}\label{apriori2a}
\dist_{S^1_\mu(I)}(\psi_s, \psi'_s) \lesssim_{M,\mu} d_{\Energy}(\psi_s[t_0], \psi'_s[t_0]).
\end{equation}
\end{itemize}
\end{theorem}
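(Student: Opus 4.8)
The plan is to dispatch (i)--(iv) quickly from the properties of the $S_{\mu,k}$ spaces collected in Theorem \ref{func}, and to spend the real effort on the stability estimate (v), which is the only genuinely nonlinear assertion. For (i), monotonicity of $\|\cdot\|_{S_{\mu,k}(I\times\R^2)}$ under shrinking $I$ is immediate from the infimum-over-extensions definition \eqref{usk} (equivalently from the monotonicity clause of Theorem \ref{func}), and it passes termwise through \eqref{ds2-eq} and the $\ell^2_k$ aggregation. For (ii), the continuity clause of Theorem \ref{func} gives continuity in $a,b$ of each summand $s^{1+j/2}\|\nabla_x^j(\psi_s-\psi'_s)(s)\|_{S_{\mu,k}([a,b]\times\R^2)}$; since for $[a,b]$ inside a fixed compact interval these summands are, by (i), uniformly dominated by an $\ell^1_j\ell^2_k$-summable, $s$-integrable majorant coming from the qualitative decay \eqref{quali}, dominated convergence upgrades this to continuity of \eqref{ds1-eq}. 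For (iii), apply the vanishing estimate \eqref{shrinko} to $\nabla_x^j(\psi_s-\psi'_s)(s)$ on a small interval $J=J(s)\ni t_0$ and run it over the nested $I_n$ using (i); this bounds $\limsup_n\|\nabla_x^j(\psi_s-\psi'_s)\downharpoonright_{I_n}(s)\|_{S_{\mu,k(s)}}$ by $\sum_{k'}\chi_{k(s)=k'}^{-\delta_1}\|\nabla_{t,x}P_{k'}\nabla_x^j(\psi_s-\psi'_s)(s,t_0)\|_{L^2_x}$, and a Schur estimate in $(k(s),k')$ (valid since $\delta_1$ is small) collapses this to $\|\nabla_x^j\nabla_{t,x}(\psi_s-\psi'_s)(s,t_0)\|_{\dot H^0_{k(s)}}$; summing the $s$-weights over dyadic blocks and the $j$'s in $\ell^2_k$ reproduces precisely the modified energy metric \eqref{energy-dist-def}, the energy hypothesis entering only to legitimise the interchange of limit and $\ell^2_k$ sum via the tail bounds of Proposition \ref{corbound-freq}.

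For (iv), by Corollary \ref{heato} it suffices to treat $s_0=0$. Combining \eqref{sksk-star} with the energy estimate \eqref{energy-est} (or \eqref{outgo}) yields, for each heat-time $s$ and each $t_0\in I$, the slicewise bound $\|\nabla_x^j\nabla_{t,x}(\psi_s-\psi'_s)(s,t_0)\|_{\dot H^0_{k(s)}}\lesssim\|\nabla_x^j(\psi_s-\psi'_s)(s)\|_{S_{\mu,k(s)}(I\times\R^2)}$ (here the extra $\nabla_{t,x}$ on the left is absorbed by the one-derivative gain built into the energy estimate); inserting the $s^{1+j/2}$ weights, summing over dyadic $s$-blocks and over $j$, and aggregating in $\ell^2_k$ gives $\tilde d_\Energy(\psi_s(0)[t_0],\psi'_s(0)[t_0])\lesssim\|\psi_s-\psi'_s\|_{S^1_\mu(I)}$, and in particular $\E(\psi_s[t_0])=\tilde d_\Energy(0,\psi_s[t_0])^2=O_M(1)$ by Proposition \ref{compati}(ii) since $\psi_s$ is a heat flow.

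The heart of the theorem is (v), and I would run a continuity argument in the time interval, using (ii) and (iii) to initialise it and to certify that the set of subintervals on which the claimed bound holds is open and closed; since $\|\psi_s\|_{S^1_\mu(I)},\|\psi'_s\|_{S^1_\mu(I)}\le M$ is assumed on all of $I$ already, no subdivision of $I$ is required. The mechanism is the one from the stability theory of \cite{tao:heatwave3}: write $\delta\psi_s:=\psi_s-\psi'_s$, $\psi_s^*:=(\psi_s,\psi'_s)$, produce via Proposition \ref{corbound-freq} a frequency envelope $c$ of energy $O_M(1)$ controlling $\psi_s^*$ and a frequency envelope $\delta c$ of energy $O_M(d_\Energy(\psi_s[t_0],\psi'_s[t_0])^2)$ controlling $\delta\psi_s[t_0]$, and then propagate $\delta c$ forward. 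Differencing the heat-flow equations \eqref{psis-eq}--\eqref{psit-eq}, together with the discretised Leibniz rule \eqref{disc-leib-eq}, the parabolic regularity estimate \eqref{parreg}, the product estimates \eqref{prod1}, \eqref{second-prod} and the Gronwall-type Lemmas \ref{gron-lem}, \ref{gron-lem-2} in the heat-time variable, propagates control of $\delta\psi_{t,x}$, $\delta A_x$ along the heat flow in terms of $c$, $\delta c$ and $\|\delta\psi_s\|_{S^1_\mu(I)}$; differencing the wave equation \eqref{psis-box} for $\psi_s$ (equivalently \eqref{w-eq} for $w$, using $\delta w(0)=0$ since both fields are wave maps, cf.\ \eqref{cov-wave}), the energy estimate \eqref{energy-est} combined with the improved trilinear estimates \eqref{trilinear-improv}--\eqref{trilinear-improv3} — whose $\mu^{1-\eps}$, $\mu^{2-2\eps}$ gains are the whole point — bounds the $N_k^\strong$ norm of the forcing of $\delta\psi_s$ (and of $\partial_s\delta w$) by a genuinely inhomogeneous term of size $O_M(d_\Energy(\psi_s[t_0],\psi'_s[t_0]))$ coming from the initial data plus a term $\mu^{c}\,C(M)\,\|\delta\psi_s\|_{S^1_\mu(I)}$. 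Feeding this through the continuity argument closes the estimate in the form
\[
\|\delta\psi_s\|_{S^1_\mu(I)}\;\lesssim_M\; d_\Energy(\psi_s[t_0],\psi'_s[t_0])\;+\;\mu^{c}\,C(M)\,\|\delta\psi_s\|_{S^1_\mu(I)},
\]
and choosing $\mu$ small depending on $M$ absorbs the last term, giving \eqref{apriori2a}. The main obstacle is exactly this last step: organising the coupled web of difference equations (heat flow for $\delta\psi_{t,x}$, $\delta A_x$; wave equation for $\delta\psi_s$, $\delta w$) so that \emph{every} error term is either genuinely inhomogeneous of size $O_M(d_\Energy)$ or carries a positive power of $\mu$ over the a priori bound $M$ — this forces essential use of the null structure in \eqref{PSA} (the $\alpha$-independence of the $\bigO$ coefficients) and of the $\mu$-gaining trilinear estimates, and is where the technical bulk lies. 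A secondary, bookkeeping difficulty, already present in (iv), is matching the $\nabla_x^j\psi_s$ norms of $S^1_\mu$ against the $\nabla_x^j\nabla_{t,x}\psi_s$ norms of $\tilde d_\Energy$, handled throughout by the one-derivative gain in \eqref{energy-est}.
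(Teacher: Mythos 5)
Your proposal is correct and follows essentially the same route as the paper: the paper disposes of (i)--(iii) and (v) by observing that the arguments of \cite[Section 9]{tao:heatwave3} (the stability theory via frequency envelopes, differenced heat-flow/wave equations, the $\mu$-gaining trilinear estimates from Theorem \ref{func}, and a continuity argument) carry over verbatim with the modified energy metric, and obtains (iv) directly from \eqref{ds1-eq}, \eqref{energy-dist-def} and \eqref{outgo}, which is precisely the mechanism you reconstruct in detail. The only cosmetic caveat is that in (iv) your appeal to Proposition \ref{compati}(ii) presumes a heat flow, whereas the energy bound for a general dynamic field follows just as well from the expansions \eqref{psi-odd}--\eqref{psi-even} used in that proposition's proof.
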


\begin{proof} Claims (i), (ii), (iii), (v) are deduced from Theorem \ref{func} in exactly the same way that \cite[Theorem 5.1]{tao:heatwave3} is deduced from \cite[Theorem 8.1]{tao:heatwave3} (i.e. by repeating the arguments in \cite[Section 9]{tao:heatwave3}).  (An inspection of the arguments in \cite{tao:heatwave3} reveals that it is the modified energy metric which is really being used in these arguments, rather than the original energy metric, though from Lemma \ref{compati} we know that these metrics are equivalent in the heat flow case.)  Finally, (iv) follows from \eqref{ds1-eq}, \eqref{energy-dist-def}, and \eqref{outgo}.
\end{proof}

Theorem \ref{apriori-thm2}(v) is the stability estimate we will need as Ingredient 4 in the plan outlined in Section \ref{delocal-sec}.

Specialising Theorem \ref{apriori-thm2} to the case $\psi'_s=0$, we see that if $\psi_s$ is a dynamic field on $I$ obeying the wave map and heat flow equations with $\|_{S^1_\mu(I)} \leq M$, with $\mu$ sufficiently small depending on $M$, then
$$
\| \psi_s\|_{S^1_\mu(I)} \lesssim_{M,\mu} \E(\psi_s[t_0])^{1/2}.$$
In fact one has a more precise statement, in which the frequency envelope of $\psi_s$ on $I$ is controlled by the frequency envelope of $\psi_s[t_0]$.  Indeed, an inspection of the arguments in \cite[Section 9.5]{tao:heatwave3} reveals the following list of useful estimates:

\begin{lemma}[Local stability of frequency envelope]\label{freqstable}  Let $I$ be an interval, $E, M>0$, and suppose $0 < \mu < 1$ is sufficiently small depending on $E, M$.  Let $\psi_s$ be a dynamic field on $I$ obeying the wave map and heat flow equations with $\|\psi_s\|_{S^1_\mu(I)} \leq M$, and let $t_0 \in I$.  Let $c_0$ be a frequency envelope of energy at most $E$, such that
$$ \| \nabla_{t,x} \nabla_x^j \psi_s(s,t_0) \|_{\dot H^0_{k(s)}(\R^2)} \lesssim s^{-(j+2)/2} c_0(s)$$
for all $0 \leq j \leq 10$ and $s > 0$.  Then one has
$$ \| \nabla_x^j \psi_s(s) \|_{S_{k(s)}(I \times \R^2)} \lesssim_{M,E,j} s^{-(j+2)/2} c_0(s)$$
and
$$ \| \nabla_x^j \psi_x(s) \|_{S_{k(s)}(I \times \R^2)} \lesssim_{M,E,j} s^{-(j+1)/2} c_0(s)$$
for all $j \geq 0$ and $s>0$.  In particular, one has
$$ \| \nabla_{t,x} \nabla_x^j \psi_s(s,t) \|_{\dot H^0_{k(s)}(\R^2)} \lesssim_{M,E,j} s^{-(j+2)/2} c_0(s)$$
for all $t \in I$, $j \geq 0$, and $s>0$.  In addition, one has the fixed estimates
\begin{equation}\label{l2s-fixed-special-psitn-freq}
\begin{split}
\| \nabla_x^{j+1} \Psi_{t,x}(s,t) \|_{L^2_x(\R^2)} 
+ \| \nabla_x^{j} \Psi_{t,x}(s,t) \|_{L^\infty_x(\R^2)} \quad &\\
+ \| \nabla_x^j \nabla_{t,x} \Psi_x(s,t) \|_{L^2_x(\R^2)} +
\| \nabla_x^j \psi_s(s,t_n) \|_{L^2_x(\R^2)} \quad &\\
+ \| \nabla_x^{j-1} \nabla_{t,x} \psi_s(s,t) \|_{L^2_x(\R^2)}
&\lesssim_{M,E,j} c(s) s^{-(j+1)/2} 
\end{split}
\end{equation}
and
\begin{equation}\label{l2s-fixed-special-psitn-freq2}
\begin{split}
\| \nabla_x^{j+1} A_{t,x}(s,t) \|_{L^2_x(\R^2)} 
+ \| \nabla_x^{j} A_{t,x}(s,t) \|_{L^\infty_x(\R^2)} \quad &\\
+ \| \nabla_x^j \nabla_{t,x} A_x(s,t) \|_{L^2_x(\R^2)} &\lesssim_{M,E,j} c(s)^2 s^{-(j+1)/2} 
\end{split}
\end{equation}
for all $s > 0$ and $j \geq 0$, and $t \in I$, with the convention that we drop all terms involving $\nabla_x^{-1}$; we also have the estimates
\begin{equation}\label{wave-ten}
\| \partial_s^i \partial_x^j P_k w(s)\|_{N_{k(s)}^\strong(I \times \R^2)} \lesssim_{M,E,j,\eps} \mu^{2-\eps} c(s) \chi_{k(s)=k}^{\delta_1/10} s^{-(j+2i)/2}
\end{equation}
and
\begin{equation}\label{wave-ten2}
\| \partial_x^j P_k \Box \psi_s(s)\|_{N_{k(s)}^\strong(I \times \R^2)} \lesssim_{M,E,j,\eps} \mu^{2-\eps} c(s) \chi_{k(s)=k}^{\delta_1/10} s^{-(j+2)/2}
\end{equation}
for all $i=0,1$, $0 \leq j \leq 10$, and $s>0$.
\end{lemma}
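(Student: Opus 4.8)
The proof is a frequency-envelope refinement of the a priori bound Theorem \ref{apriori-thm2}(v): one repeats the continuity/Gronwall argument of \cite[Section 9.5]{tao:heatwave3}, with the function space estimates of \cite[Theorem 8.1]{tao:heatwave3} replaced throughout by their counterparts in Theorem \ref{func} and with the norm $S^1_\mu(I)$ of \eqref{ds2-eq} in place of the corresponding norm there. We only indicate the structure. Implied constants may depend on $E,M$; we abbreviate $S_{k(s)}(I \times \R^2)$ by $S_{k(s)}$; and we use freely that, as $\psi_s$ is a dynamic field heat flow on $I$ which also solves the wave map equation, the parabolic equations \eqref{psis-eq}--\eqref{w-eq} (with $w(0)=0$), the wave equation \eqref{psis-box}, and the reconstruction formulae \eqref{psi-odd}, \eqref{psi-even}, \eqref{xjdef} are all available simultaneously.

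\emph{Setting up the envelope.}  From $\|\psi_s\|_{S^1_\mu(I)} \leq M$, \eqref{ds2-eq}, \eqref{sksk-star}, \eqref{prod1}, and the reconstruction formulae summed via \eqref{sss}, one obtains the crude bounds $\|\nabla_x^j \psi_s(s)\|_{S_{k(s)}} \lesssim_{M,j} s^{-(j+2)/2}$ and $\|\nabla_x^j \psi_x(s)\|_{S_{k(s)}} \lesssim_{M,j} s^{-(j+1)/2}$ for $0 \leq j \leq 10$, and hence that
\[
c(s) := \sup_{s'>0} \min\!\big((s/s')^{\delta_0},(s'/s)^{\delta_0}\big)\Big( c_0(s') + \sum_{j=0}^{10} (s')^{(j+2)/2}\, \|\nabla_x^j \psi_s(s')\|_{S_{k(s')}} \Big)
\]
is a frequency envelope of energy $O_{M,E}(1)$ with $c_0 \lesssim c$ and $\|\nabla_x^j \psi_s(s)\|_{S_{k(s)}} \leq c(s)\, s^{-(j+2)/2}$ for $0 \leq j \leq 10$. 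The entire content of the lemma is the bound $c \lesssim c_0$: granting it, the first two displayed estimates are immediate; applying \eqref{outgo} to the Littlewood--Paley pieces of $\nabla_x^j\psi_s(s)$ and reassembling gives the estimate at general $t\in I$; Bernstein's inequality together with \eqref{psi-odd}, \eqref{psi-even}, \eqref{xjdef} and \eqref{sss} yields \eqref{l2s-fixed-special-psitn-freq}, \eqref{l2s-fixed-special-psitn-freq2}; and \eqref{wave-ten}, \eqref{wave-ten2} are precisely the bounds on $w$ and $\Box\psi_s$ produced in the argument below.

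\emph{Closing $c \lesssim c_0$.}  Fix $s$ and $0 \le j \le 10$. Localising the wave equation \eqref{psis-box} in frequency, applying the energy estimate \eqref{energy-est} to the Littlewood--Paley pieces of $\nabla_x^j\psi_s(s)$, and reassembling with the frequency-envelope weights (as in \cite[Section 9]{tao:heatwave3}), one gets
\[
\|\nabla_x^j \psi_s(s)\|_{S_{k(s)}} \lesssim \|\nabla_{t,x}\nabla_x^j \psi_s(s,t_0)\|_{\dot H^0_{k(s)}(\R^2)} + \sum_{k}\chi_{k(s)=k}^{-\delta_1}\,\|\nabla_x^j P_{k}\Box\psi_s(s)\|_{N_{k}^\strong(I\times\R^2)},
\]
and the first term is $\lesssim c_0(s)\, s^{-(j+2)/2}$ by hypothesis. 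Using \eqref{psis-box} to write $\Box\psi_s = \partial_s w + \bigO(\psi_\alpha\psi_s\psi^\alpha)$ (the $\bigO$ coefficients being independent of $\alpha$, cf.\ the remark after \eqref{PSA}, so that the trilinear term and the forcing $2(\psi_\alpha\wedge\psi_i)D_i\psi^\alpha$ of \eqref{w-eq} both carry null structure), expanding $\psi_\alpha$ and $A_\alpha$ through \eqref{psi-odd}, \eqref{psi-even}, \eqref{xjdef}, and summing the iterated $s$-integrals via \eqref{sss}, the trilinear estimates \eqref{trilinear-improv}, \eqref{trilinear-improv2} (applied with two of the three factors built from $\psi_s \in S_{\mu,k}$) control the nonlinear term; handling $\partial_s w$ in addition by Duhamel in $s$ from $w(0)=0$, parabolic regularity \eqref{parreg}, \eqref{parreg-abstract2}, and the Gronwall-type Lemmas \ref{gron-lem}, \ref{gron-lem-2}, one obtains exactly \eqref{wave-ten}, \eqref{wave-ten2}, whence the second term above is $\lesssim_M \mu^{2-\eps}\, c(s)\, s^{-(j+2)/2}$ for $\eps$ a fixed small exponent (the implied constant absorbing the two ``safety-net'' factors bounded crudely by $c \lesssim_M 1$). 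Feeding this back consistently for all $0 \le j \le 10$ and reconciling it with the heat flow is done by the parabolic propagation in $s$ of the proof of Theorem \ref{parab-thm-abstract} (with $\tilde\psi_s = \psi_s$, $\kappa = 0$): \eqref{psis-eq}, written as $(\partial_s-\Delta)\psi_s = \bigO(A_x\nabla_x\psi_s)+\bigO((\nabla_x A_x)\psi_s)+\bigO(\Psi_x^2\psi_s)$, combined with \eqref{parreg}, \eqref{parreg-abstract2}, \eqref{prod1}, \eqref{physical}, \eqref{psi-odd}, \eqref{psi-even} and Lemma \ref{gron-lem}. Collecting terms gives $\|\nabla_x^j\psi_s(s)\|_{S_{k(s)}} \lesssim_M c_0(s)\, s^{-(j+2)/2} + \mu^{2-\eps}\, c(s)\, s^{-(j+2)/2}$; inserting this into the definition of $c$ yields $c(s) \lesssim_M c_0(s) + \mu^{2-\eps}\, c(s)$, and taking $\mu$ small depending on $E,M$ absorbs the last term, giving $c \lesssim c_0$.

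\emph{Main obstacle.}  The crux is that the nonlinearities of both the heat and wave equations are assembled, via \eqref{psi-odd}, \eqref{psi-even}, from $\psi_s$ at \emph{all} heat-times, so a priori the $S_{k(s)}$-profile at scale $k(s)$ couples to the profile at every other scale, and the blunt bound $\|\psi_s\|_{S^1_\mu(I)} \le M$ alone cannot prevent the frequency envelope from spreading. Controlling this leakage requires three quantitative inputs acting together: the exponential off-diagonal decay built into the frequency-localised estimates (the $\chi$-factors in \eqref{trilinear-improv}--\eqref{trilinear-improv3}, \eqref{second-prod}, \eqref{physical}, \eqref{physicaln-eq}), which confines the interactions to a neighbourhood of the diagonal; the combinatorial bound \eqref{sss}, which tames the iterated heat-time integrals coming from \eqref{psi-odd}, \eqref{psi-even} with a rapidly decaying factorial; and, above all, the $\mu$-gains in the trilinear null-form estimates, which turn the self-interaction into a genuine contraction once $\mu$ is small -- this is precisely why $\mu$ is required to be small depending on $E,M$. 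The most delicate bookkeeping is keeping the null structure intact through the schematic manipulations, in particular ensuring that the $\bigO$ coefficients in \eqref{psa}, \eqref{asa}, \eqref{psis-box} do not depend on $\alpha$, so that \eqref{trilinear-improv}, \eqref{trilinear-improv2} -- rather than only the lossy \eqref{second-prod} -- can be applied to $\bigO(\psi_\alpha\psi_s\psi^\alpha)$ and to the forcing term of \eqref{w-eq}.
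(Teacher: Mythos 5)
Your proposal follows essentially the same route as the paper: the paper gives no self-contained proof here, asserting the lemma by "an inspection of the arguments in \cite[Section 9.5]{tao:heatwave3}", and your sketch is exactly that argument transplanted to the present spaces — energy estimate \eqref{energy-est} applied to the wave equation \eqref{psis-box} for $\psi_s$, control of the wave-tension field through \eqref{w-eq} with $w(0)=0$ yielding the $\mu^{2-\eps}$ gains of \eqref{wave-ten}--\eqref{wave-ten2} via the null-structure-preserving trilinear estimates, the reconstruction formulae \eqref{psi-odd}--\eqref{xjdef} summed with \eqref{sss}, and absorption for $\mu$ small (cf.\ the analogous use of \cite[Lemmas 9.9, 9.10]{tao:heatwave3} in Sections \ref{symscat-sec} and \ref{repairconc}). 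No substantive divergence from the paper's intended argument.
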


\section{$(A,\eps)$-wave maps}\label{aesec}

In this section we define a $(A,\eps)$-wave map, and then establish Theorem \ref{symscat}.

\begin{definition}[$(A,\eps)$-wave map]\label{aes}  Let $A$ and $0 < \mu \leq 1$.  An \emph{$(A,\mu)$-wave map} is a pair $(\phi,I)$ with $I$ a compact time interval and  $\phi$ is a classical wave map on $I$ of energy, which has a resolution $\psi_s$ which is a heat flow with $\| \psi_s \|_{S^1_\mu(I)} \leq A$.
\end{definition}

With this definitions, the monotonicity and symmetry properties (i), (ii) of Theorem \ref{symscat} are routinely verified.   Now we turn to the other properties:

\subsection{Verification of the continuity property}

We now show the continuity property.  Let $A,\mu,I,I_n,\phi^{(n)}$ be as in Theorem \ref{symscat}(iv).  
By time reversal symmetry, it suffices to show continuity at the upper endpoint $t_+ := \sup(I)$ of $I$.  Write $I_n = [t_-^n, t_+ - \eps_n]$ where $\eps_n \to 0$.  We can shift each $\phi^{(n)}$ forward in time by $2\eps_n$ to create a new $(A,\mu)$-wave map $(\tilde \phi^{(n)}, I_n+2\eps_n)$.  These wave maps still converge in $\Energy$ on any compact interval in the interior of $I$ (by uniform continuity and uniform convergence on such intervals), to the same limit as the original $\phi^{(n)}$.  From Theorem \ref{apriori-thm2}(iv),(v), we see that $\tilde \phi^{(n)}$ is a uniformly Cauchy sequence in $\Energy$ on $[t_0,t_+]$ for any $t_0$ in the interior of $I$, and the continuity of the limit at $t_+$ as required.

\subsection{Verification of the divisibility property}\label{symscat-sec}

Let us now show the divisibility property (iii).  Fix $A, \mu, \mu'$, and let $(\phi,I)$ be an $(A,\mu)$ wave map.  We allow all implied constants to depend on $A,\mu$.

Let $t_0 \in I$ be fixed.  From \cite[Lemma 7.7]{tao:heatwave3} we have a frequency envelope $c$ of energy $O(1)$ such that
$$ \| P_k \nabla_{t,x} \psi_s(s,t_0) \|_{L^2_x(\R^2)} \lesssim_j c(s) s^{-1} \chi_{k \leq k(s)}^j \chi_{k=k(s)}^{0.1}$$
for all $k \in \Z$, $j \geq 0$, and $s > 0$.  Meanwhile, from (the proof of) \cite[Lemma 9.10]{tao:heatwave3}, and by enlarging the frequency envelope $c$ if necessary, we have
$$ \| P_k \Box \psi_s(s) \|_{N_k^\strong(I \times \R^2)} \lesssim_{A,\eps,j} \mu^{2-\eps} c(s) s^{-1} \chi_{k \geq k(s)}^{\delta_2/10} \chi_{k \leq k(s)}^j$$
for all $\eps > 0$, $j \geq 0$. Combining these estimates with \eqref{energy-est}, we conclude that
$$ \| P_k \psi_s(s) \|_{S^\strong_k(I \times \R^2)}
 \lesssim_j c(s) s^{-1} \chi_{k \leq k(s)}^j \chi_{k \geq k(s)}^{\delta_2/10}$$
for all $k \in \Z$ and $s>0$.  

We now invoke the following harmonic analysis lemma:

\begin{lemma}[Effective divisbility]\label{boxf}  Let $I$ be an interval, let $k \in \Z$, and let $\phi^\strong \in S_k(I \times \R^2)$.  Then we have
$$ \| \phi \|_{S'_{\mu,k}(I \times \R^2)} \lesssim \mu^{-O(1)}  
\|\phi\|_{S^\strong_k(I \times \R^2)}.$$
Furthermore, given any $\eps > 0$, one can partition $I$ into at most $O_{\eps,\mu,C_0}(1)$ intervals $J$ such that
$$ \| \phi \|_{S'_{\mu,k}(J \times \R^2)} 
\lesssim \varepsilon  
\|\phi\|_{S^\strong_k(I \times \R^2)}$$
for each $J$.
\end{lemma}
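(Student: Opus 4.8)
The plan is to treat the two assertions separately: the first is a soft consequence of the structure of the $S_k$ norm together with the transversality hypothesis, while the second reduces, after cutting the supremum down to a bounded family of parameters, to the elementary divisibility of an $L^1$ (or $L^5$) time-density.

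For the first bound I would fix admissible $k',\kappa,x_0,\omega$ and note that the tube $T_{x_0,\omega,k'}$ has width $2^{-k'}$, matching the frequency $\sim 2^{k'}$ carried by $P_{k',\kappa}\nabla_{t,x}\phi$; so Bernstein's inequality lets one replace the $L^\infty_x$ norm on each cross-section by $\lesssim 2^{k'}$ times the $L^2_x$ norm on a slightly dilated cross-section, reducing matters to a transversal local energy estimate
\begin{equation*}
2^{k'/2}\,\|P_{k',\kappa}\nabla_{t,x}\phi\|_{L^2_{t,x}(2T_{x_0,\omega,k'}\cap(I\times\R^2))}\lesssim \mu^{-O(1)}\|\phi\|_{S_{k'}^\strong(I\times\R^2)}.
\end{equation*}
Since $\omega$ makes an angle $\geq\mu^{C_0}$ with $\pm\kappa$, the wave $P_{k',\kappa}\phi$ propagates transversally to the tube, and this estimate follows from the plane-wave and null-frame building blocks of the $S[k]$ norm in \cite{tao:wavemap2}, with the loss $\mu^{-O(1)}$ simply recording the size of the transversality angle. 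The mild weight $2^{|k-k'|/C_0}$ is then absorbed by the adjacent-frequency comparability \eqref{physical}, using that $1/C_0$ is small compared with the frequency-leakage decay rate built into the $S_k$ norm; this argument also shows that only $k'=k+O(\log\tfrac1\mu)$ contribute non-negligibly.

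For the divisibility statement I would first normalise $\|\phi\|_{S_k^\strong(I\times\R^2)}=1$ and cut the supremum defining $S'_{\mu,k}$ down to a finite parameter set: only $k'=k+O(\log\tfrac1\mu)$ matter (as above), the caps $\kappa$ of length $\geq\mu^{C_0}$ number $O(\mu^{-C_0})$, and the admissible directions $\omega$ may be replaced by a $\mu^{C_0}$-net of cardinality $O(\mu^{-C_0})$ since nearby tube directions give comparable norms on a fixed interval. For each fixed admissible $(k',\kappa,\omega)$, covering $\R^2$ by the finitely-overlapping family of parallel tubes $\{T_{x_0,\omega,k'}\}$ with $x_0$ in a $2^{-k'}$-separated net, Bernstein together with the first part of the lemma bounds $\sum_{x_0}2^{-k'}\|P_{k',\kappa}\nabla_{t,x}\phi\|_{L^2_tL^\infty_x(T_{x_0,\omega,k'}\cap(I\times\R^2))}^2$, so all but $O_{\eps,\mu,C_0}(1)$ of these tubes already have weighted norm below $\eps$ on all of $I$. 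For each of the remaining, finitely many, relevant tubes, the quantity $\|P_{k',\kappa}\nabla_{t,x}\phi\|_{L^2_tL^\infty_x(T\cap(J\times\R^2))}^2$ is just the integral over $J$ of the density $t\mapsto\|P_{k',\kappa}\nabla_{t,x}\phi(t)\|_{L^\infty_x(\mathrm{cross\text{-}section\ at\ }t)}^2$, whose total mass over $I$ is controlled by the first part; a greedy subdivision of this density yields $O_{\eps,\mu,C_0}(1)$ intervals on which this one norm is $\leq\eps$, and taking the common refinement over the finitely many relevant $(k',\kappa,\omega,x_0)$ completes the proof. A cruder alternative for this step is to use H\"older in time to pass from the $L^2_t$ norm to the $L^5_t$ Strichartz norm \eqref{lstrich}, which is $L^5_t$-additive and hence manifestly divisible.

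The hard part will be the reduction of the continuum of tube positions $x_0$ (and, secondarily, of directions $\omega$) to a list whose length depends only on $\eps,\mu,C_0$; this is exactly where the transversality hypothesis must be used in an essential, global-in-time way, since it is what forces the total null-frame mass spread over the parallel tubes in a fixed direction to be finite, so that only boundedly many tubes can be ``heavy''. Everything downstream of that reduction is the routine divisibility of an $L^1$ (or $L^5$) time-density.
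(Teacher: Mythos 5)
Your first inequality and the final ``divide each fixed tube's $L^2_t$ density greedily, then take a common refinement'' step are fine, but the pivot of your argument --- that all but $O_{\eps,\mu,C_0}(1)$ of the parallel tubes are already light on \emph{all} of $I$, because the square sum $\sum_{x_0}2^{-k'}\|P_{k',\kappa}\nabla_{t,x}\phi\|_{L^2_tL^\infty_x(T_{x_0,\omega,k'}\cap(I\times\R^2))}^2$ is under control --- does not follow from the first part of the lemma and is in fact false uniformly in $I$. Part one (and the norm \eqref{spunk}) controls only the \emph{supremum} over $x_0,\omega$, not an $\ell^2$ sum; and since $\sum_{x_0}\sup_{B(x_0+\omega t,2^{-k'})}|f|^2\gtrsim\|f(t)\|_{L^2_x}^2$, the square sum you invoke is bounded below by $\sim 2^{k'}\int_I\|\nabla_{t,x}\phi(t)\|_{L^2_x}^2\,dt$, which grows linearly in $|I|$ for a free wave of fixed energy. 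So there is no bounded ``heavy tube'' list coming from this mechanism, and the reduction of the continuum of $(x_0,\omega)$ to a finite family --- which you correctly identify as the crux --- is left without a proof. Your fallback via H\"older to the $L^5_t$ Strichartz norm \eqref{lstrich} also fails: H\"older on an interval $J$ gives $\|g\|_{L^2_t(J)}\leq|J|^{3/10}\|g\|_{L^5_t(J)}$, and $|J|$ is unbounded, so smallness of the $L^5_t L^\infty_x$ norm does not make the tube-localised $L^2_tL^\infty_x$ norm small.

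The paper's proof never bounds the number of heavy tubes; instead it proves divisibility with the supremum over tubes \emph{inside} the per-interval estimate. The key step (Lemma \ref{lemfree}) argues by contradiction: if there were $M$ disjoint bad intervals $J_1,\dots,J_M$, each with its own (possibly different) bad tube, one dualises and runs a $TT^*$/stationary-phase argument in which the transversality of each tube to the cap $\kappa$ yields off-diagonal decay $(1+\dist(J_i,J_j))^{-1/2}$ between distinct intervals and rapid decay on the diagonal, so Schur's test caps the total at $O_\eps(M^{3/2})$, contradicting the lower bound $\gtrsim_\eps M^2$. The general (inhomogeneous) case is then reduced to free waves via Duhamel, the atomic structure of $N_k^\strong$ (Definition \ref{atom-def2}) and the averaging/covering lemmas (Corollary \ref{bcl-2}, Lemma \ref{minkex}); in particular the null-frame-atom case is where the strengthened $\ell^{2-\delta_{-1}}$ summability of $N_k^\strong$ is used, via a per-cap $TT^*$ bound of $O(2^{-l/2})$ and H\"older over caps, to gain a power $2^{-cl}$ of the angle --- a part of the statement your proposal does not address at all (you treat $\phi$ as if the homogeneous estimate plus the $S$-norm building blocks sufficed). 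To repair your approach you would essentially have to reprove the paper's Lemma \ref{lemfree} and its atom-by-atom reduction; the per-tube time-division you describe is the easy final step, not the substance of the lemma.
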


\begin{proof} See Appendix \ref{fungi-sec}.
\end{proof}

Applying this lemma, we see that for each $k \in \Z$ and $s>0$, one can find a partition ${\mathcal I}_{k,j,s}$ of $I$ into at most $O_{\mu',C_0}(1)$ intervals $J$ such that
\begin{equation}\label{smukk} \| P_k \psi_s(s)  \|_{S'_{\mu',k}(J \times \R^2)} \lesssim (\mu')^{100 C_0} c(s) s^{-1} \chi_{k \leq k(s)}^{100} \chi_{k \geq k(s)}^{\delta_2/10}.
\end{equation}
Also, using \eqref{lstrich} and partitioning the intervals $J$ further, we may also assume that
\begin{equation}\label{box3} \| P_k \nabla_{t,x} \psi_s(s)\|_{L^5_t L^\infty_x(J \times \R^2)} \lesssim 2^{4k/5} (\mu')^{100 C_0} c(s) s^{-1} \chi_{k \leq k(s)}^{100} \chi_{k \geq k(s)}^{\delta_2/10}.
\end{equation}
We return to \eqref{smukk}, converting $S'_{\mu',k}$ to $S'_{\mu',k(s)}$, we conclude that
$$ \| P_k \psi_s(s)  \|_{S'_{\mu',k(s)}(J \times \R^2)} \lesssim (\mu')^{100 C_0} c(s) s^{-1} \chi_{k \leq k(s)}^{99} \chi_{k \geq k(s)}^{\delta_2/20}$$
which implies
$$ \| \nabla_x^j P_k \psi_s(s)  \|_{S'_{\mu',k(s)}(J \times \R^2)} \lesssim (\mu')^{100 C_0} c(s) s^{-(j+2)/2} \chi_{k \leq k(s)}^{50} \chi_{k \geq k(s)}^{\delta_2/20}$$
for all $0 \leq j \leq 20$ (say).

Applying Corollary \ref{bcl-2}, we can find a partition ${\mathcal I}_k$ of $I$ into $O_{\mu',C_0}(1)$ intervals $J$ for each $k \in \Z$ such that
$$
s^{\frac{j}{2}} \int_{2^{-2k-2}}^{2^{-2k}} \| \nabla_x^j \psi_s(s) \|_{S'_{\mu',k}(J \times \R^2)}\ ds \lesssim (\mu')^{100 C_0} c(2^{-2k}) 
$$
for all $0 \leq j \leq 20$; by another application of Corollary \ref{bcl-2}, we can thus find a partition ${\mathcal I}$ into $O_{\mu',C_0}(1)$ intervals $J$ such that
\begin{equation}\label{box1}
(\sum_k [s^{\frac{j}{2}} \int_{2^{-2k-2}}^{2^{-2k}} \| \nabla_x^j \psi_s(s) \|_{S'_{\mu',k}(J \times \R^2)}\ ds]^2)^{1/2} \lesssim (\mu')^{100 C_0}. \end{equation}
On the other hand, from \cite[Theorem 6.1]{tao:heatwave3} we have
\begin{equation}\label{box2}
(\sum_k [s^{\frac{j}{2}+1} \sup_{2^{-2k-2}}^{2^{-2k}} \| \nabla_x^j \psi_s(s) \|_{S_k(J \times \R^2)}\ ds]^2)^{1/2} \lesssim 1
\end{equation}
and hence by Lemma \ref{boxf} we have
$$
(\sum_k [s^{\frac{j}{2}+3} \sup_{2^{-2k-2} \leq s \leq 2^{-2k}} \| \partial^2_s \nabla_x^j \psi_s(s) \|_{S'_{\mu',k}(J \times \R^2)}]^2)^{1/2} \lesssim (\mu')^{-O(1)}$$
and thus by \eqref{box1} and the Gagliardo-Nirenberg inequality
$$
(\sum_k [s^{\frac{j}{2}+1} \sup_{2^{-2k-2}}^{2^{-2k}} \| \nabla_x^j \psi_s(s) \|_{S'_{\mu',k}(J \times \R^2)}\ ds]^2)^{1/2} \lesssim (\mu')^{C_0}.$$
Combining this with \eqref{box2}, \eqref{box3} we conclude that
$$
(\sum_k [s^{\frac{j}{2}+1} \sup_{2^{-2k-2}}^{2^{-2k}} \| \nabla_x^j \psi_s(s) \|_{S_{\mu,k}(J \times \R^2)}\ ds]^2)^{1/2} \lesssim 1.
$$
Meanwhile, from Definition \ref{aes} and \eqref{ds2-eq} one has
$$ \sup_{t \in J} \sup_{s > 0} \E( \psi_s(s)[t] ) \lesssim 1$$
and thus by \eqref{ds2-eq} again
$$\| \psi_s \|_{S^1_{\mu'}(J)} \lesssim 1$$
and the claim follows.

\subsection{Proof of small data scattering}

We now show (v). Let $\mu$ be fixed, and suppose $E$ is sufficiently small depending on $\mu$.  Let $I$ be a compact time interval, let $\phi$ be a classical wave map on $I$ with energy at most $E$, and let $\psi_s$ be a heat flow resolution of $\phi$.  It will suffice to show that $\| \psi_s \|_{S^1_\mu(I)} \leq \mu$.

Suppose for contradiction that $\| \psi_s \|_{S^1_\mu(I)} > \mu$.
From Theorem \ref{apriori-thm2}(ii), (iii), we can then find a subinterval $J$ of $I$ such that $\| \psi_s \|_{S^1_\mu(J)} \sim \mu$.  Applying Theorem \ref{apriori-thm2}(v) (with $A, E$ chosen to be $O(1)$, and $\psi'_s$ chosen to be trivial) we conclude that $\| \psi_s \|_{S^1_\mu(J)} \lesssim E^{1/2}$, giving the desired a contradiction if $E$ is small enough.  The proof of Theorem \ref{symscat} is now complete.

\subsection{Proof of local well-posedness}

Now we show (vi).  This will be a simplified version of the arguments in \cite[Section 4]{tao:heatwave3}.

Let $E, \mu, \Phi, t_0$ be as in Theorem \ref{symscat}.  We may assume $\mu$ is sufficiently small depending on $E$.  Let $\eps > 0$ be chosen later, and let $\Phi^{(\eps)}$ be classical data within $\eps$ of $\Phi$ in the energy metric, with energy at most $E$.  By classical well-posedness, we can find a compact interval $I$ with $t_0$ in the interior and a classical wave map $\phi^{(\eps)}$ with initial data $\Phi^{(\eps)}$.  By Theorem \ref{apriori-thm2}(iii), we may shrink $I$ and assume that $\| \psi^{(\eps)}_s \|_{S^1_\mu(I)} \lesssim_E 1$ for some resolution heat flow $\psi^{(\eps)}_{s,t,x}$ of $\phi^{(\eps)}$.  Since all resolution heat flows of the same map differ from each other by a rotation, the same bound is true for all resolution heat flows.

Now if $(\phi_0,\phi_1)$ is classical data which is sufficiently close to $\Phi$ in the energy metric, then it will be within $O(\eps)$ of $\Phi^{(\eps)}$ in that metric, and have energy at most $E$.  Applying Theorem \ref{apriori-thm2}(ii), (iii), (v) and a continuity argument, we see (if $\eps$ is small enough) that there exists a classical wave map $\phi$ on $I$ with initial data $(\phi_0,\phi_1)$ and $\| \Psi_{s,t,x} \|_{S^1_\mu(I)} \lesssim_E 1$ for all resolution heat flows $\psi_s$ of $\phi$, and the claim follows.

The proof of Theorem \ref{symscat} is now complete.

\subsection{Frequency envelope stability and entropy}

By iterating Lemma \ref{freqstable} we obtain a variant for wave maps of bounded entropy:

\begin{lemma}[Local stability of frequency envelope, II]\label{freqstable2}  Let $(\phi,I)$ be an wave map with $(A,\mu)$-entropy $K$, thus let $I = I_1 \cup \ldots \cup I_K$ be such that each $(\phi,I_i)$ is an $(A,\mu)$-wave map, let $t_0 \in I$ and $E>0$, and suppose that $\mu$ is sufficiently small depending on $A,K,E$.  Let $\psi_s$ be a heat flow resolution of $\phi$, and suppose that $c_0$ be a frequency envelope of energy at most $E$, such that
$$ \| \nabla_{t,x} \nabla_x^j \psi_s(s,t_0) \|_{\dot H^0_{k(s)}(\R^2)} \lesssim s^{-(j+2)/2} c_0(s)$$
for all $0 \leq j \leq 10$ and $s > 0$.  Then one has
$$ \| \nabla_x^j \psi_s(s) \|_{S_{k(s)}(I_i \times \R^2)} \lesssim_{A,K,E,j} s^{-(j+2)/2} c_0(s)$$
and
$$ \| \nabla_x^j \psi_x(s) \|_{S_{k(s)}(I_i \times \R^2)} \lesssim_{A,K,E,j} s^{-(j+1)/2} c_0(s)$$
for all $j \geq 0$, $1 \leq i \leq k$, and $s>0$.  In particular, one has
$$ \| \nabla_{t,x} \nabla_x^j \psi_s(s,t) \|_{\dot H^0_{k(s)}(\R^2)} \lesssim_{A,K,E,j} s^{-(j+2)/2} c_0(s)$$
for all $t \in I$, $j \geq 0$, and $s>0$.
\end{lemma}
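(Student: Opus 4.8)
The plan is to iterate Lemma~\ref{freqstable} across the $K$ intervals $I_1,\dots,I_K$, keeping track of how the frequency-envelope constant degrades at each step.

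First, I would reorganise the cover.  Since $I$ is an interval covered by the $I_i$, a greedy relabelling lets us assume that $t_0 \in I_1$, that $J_i := I_1 \cup \dots \cup I_i$ is a sub-interval of $I$ for every $1 \le i \le K$, and that for each $1 \le i \le K-1$ there is a point $t_i \in I_{i+1} \cap J_i$ (if at some stage $J_i$ already equals $I$ we stop, the remaining $I_j$ being sub-intervals of $J_i$).  By Theorem~\ref{symscat}(i) every sub-interval of an $(A,\mu)$-wave map is again an $(A,\mu)$-wave map; and since any two heat flow resolutions of a given map differ by a target rotation, which distorts the $S^1_\mu$ norm by a factor $O(1)$, the restriction of the given heat flow resolution $\psi_s$ to each $I_i$ solves the wave map and heat flow equations and obeys $\| \psi_s \|_{S^1_\mu(I_i)} \lesssim A$.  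Thus Lemma~\ref{freqstable} applies on every $I_i$.

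Next I would run an induction on $i$.  The statement to propagate is that there is a constant $C_i = C_i(A,i,E) \ge 1$ (independent of $\mu$, once $\mu$ is small enough) such that the conclusions of Lemma~\ref{freqstable2}, restricted to the subintervals $I_1,\dots,I_i$ and to times $t \in J_i$, hold with $c_0$ replaced by $C_i c_0$.  For $i=0$ this is the hypothesis.  Assuming it for $i$, one has in particular
\[ \| \nabla_{t,x} \nabla_x^j \psi_s(s,t_i) \|_{\dot H^0_{k(s)}(\R^2)} \le C_i\, s^{-(j+2)/2} c_0(s) \qquad (0 \le j \le 10), \]
and $c_i := C_i c_0$ is, by \eqref{cse} and \eqref{sm}, a frequency envelope of energy at most $C_i^2 E$.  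Applying Lemma~\ref{freqstable} on the interval $I_{i+1}$ with initial time $t_i$, frequency envelope $c_i$, and energy parameter $C_i^2 E$ --- which is legitimate because $\mu$ is small depending on $A$ and $C_i^2 E$, hence on $A, K, E$ --- and noting that the implied constant in Lemma~\ref{freqstable} depends, once $\mu$ is small enough, only on the energy parameter, the $S^1_\mu$-bound, and the order of differentiation, one obtains all conclusions of Lemma~\ref{freqstable} on $I_{i+1}$ with envelope $c_i$ and implied constant $O_{A,K,E,j}(1)$.  Combined with the inductive hypothesis on $J_i$, this gives the statement for $i+1$, with $C_{i+1} = C_{i+1}(A,i+1,E)$.

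Taking $i=K$, so that $J_K = I$ and $C_K$ depends only on $A,K,E$, all the asserted estimates follow after absorbing $C_K$ (and the constants from the passage $\nabla_x^j\psi_s \to \nabla_x^j\psi_x$ inside Lemma~\ref{freqstable}) into $\lesssim_{A,K,E,j}$.  The only genuine point is the bookkeeping of the degrading energy scale: the frequency-envelope energy is multiplied by a bounded factor at each of the $K$ steps, so the smallness required of $\mu$, though it depends on $K$, remains finite --- which is exactly why the entropy $K$ is fixed in advance.  I expect this degradation bookkeeping, rather than any new analytic ingredient, to be the only real obstacle.
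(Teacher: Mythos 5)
Your proof is correct and follows exactly the route the paper intends: the paper's entire "proof" is the remark that the lemma follows "by iterating Lemma \ref{freqstable}", and your induction across the intervals $I_i$ — restricting the given resolution to each $I_i$ (using Theorem \ref{symscat}(i) and the rotation-invariance of the $S^1_\mu$ norm up to $O(1)$), propagating the instantaneous bounds through the overlap times $t_i$, and tracking the $O_{A,i,E}(1)$ degradation of the envelope so that the smallness of $\mu$ only needs to depend on $A,K,E$ — is precisely the bookkeeping the paper leaves implicit.
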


\section{Hyperbolic repair}

We now establish our main hyperbolic repair result, which incorporates Ingredients 2-4 from Section \ref{delocal-sec}.  More precisely, the purpose of this section is to establish prove

\begin{theorem}[Hyperbolic repair]\label{hyp-repair}
Let $I$ be an interval, and let $\tilde \psi_s$ be a dynamic field on $I$.
Let $c$ be a frequency envelope of at most $E$, and assume that $0 < \mu,\kappa < 1$ are sufficiently small depending on $E$.  We assume the bounds
\begin{equation}\label{psisjk-hyp}
\| \nabla_x^j \tilde \psi_s(s) \|_{S_{\mu,k(s)}(I \times \R^2)} \lesssim_j c(s) s^{-(j+2)/2}
\end{equation}
and
\begin{equation}\label{psisjk-star-hyp}
\| \nabla_x^j \tilde h(s) \|_{S_{\mu,k(s)}(I \times \R^2)} \lesssim_j \mu^2 \kappa c(s) s^{-(j+2)/2}
\end{equation}
for all $j \geq 0$ and $s > 0$ (where $\tilde h$ is the heat-tension field \eqref{heat-tension} of $\tilde \psi_s$), as well as the bound 
\begin{equation}\label{wave-off}
\| P_k \tilde w(0) \|_{N_{k}(I \times \R^2)} \lesssim \mu^2 \kappa c(2^{-2k})
\end{equation}
for every $k \in \R$ (where $\tilde w$ is the wave-tension \eqref{wave-tension} field of $\tilde \psi_s$).  Let $t_0 \in I$ and $\psi_s$ be a dynamic field on $I$ such that
\begin{equation}\label{energy-close}
\tilde \dist_\Energy( \tilde \psi_s[t_0], \psi_s[t_0] ) \lesssim \mu^2 \kappa.
\end{equation}
Then, one has
\begin{equation}\label{darcy}
\dist_{S^1_\mu(I)}(\psi_s, \tilde \psi_s) \lesssim_{E,\mu} \kappa.
\end{equation}
Furthermore, one has
\begin{equation}\label{toast}
\tilde \dist_\Energy( \tilde \psi_s(s)[t], \psi_s(s)[t] ) \lesssim_E \kappa
\end{equation}
for every $s \geq 0$ and $t \in I$, and 
\begin{align}
\| \psi_s(s) - \tilde \psi_s(s) \|_{A^{5}(s)} &\lesssim_E \kappa c(s) s^{-1} \label{fslice-1}\\
\| \Psi_{t,x}(s) - \tilde \Psi_{t,x}(s)  \|_{A^{5}(s)} &\lesssim_E \kappa c(s) s^{-1/2} \label{fslice-2}
\end{align}
for all $s>0$, where the $A^m$ spaces are defined in Definition \ref{sobspace}.
\end{theorem}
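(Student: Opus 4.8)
The plan is to obtain $\psi_s$ as the output of the three-step ``repair chain'' of Ingredients 2--4 in Section \ref{delocal-sec}, applied to $\tilde\psi_s$, and then to read the pointwise bounds \eqref{toast}--\eqref{fslice-2} off from the $S^1_\mu(I)$ closeness \eqref{darcy}. The factors $\mu^2$ in \eqref{psisjk-star-hyp}, \eqref{wave-off}, \eqref{energy-close} are present precisely to absorb the $\mu^{-O(1)}$ losses that appear below; here $\psi_s$ is the wave map heat flow of interest, solving the heat flow and wave map equations exactly.

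\emph{Parabolic repair.} Hypothesis \eqref{psisjk-hyp} is the good bound \eqref{psisjk-abstract}, while \eqref{psisjk-star-hyp} asserts that the heat-tension field $\tilde h=\tilde\psi_s-\tilde D_i\tilde\psi_i$ is small, which is \eqref{psisjk-star-abstract} with smallness parameter $\mu^2\kappa$ (the $S_{\mu,k}$ bounds imply the $S_k^\strong$ ones by \eqref{sksk-star}). First I would apply Theorem \ref{parab-thm-abstract} --- with the algebra family $S_k^\strong$ of Proposition \ref{algae}, and with its $S_{\mu,k}$-adapted version obtained by running the same argument for the $S_{\mu,k}$ norms, whose product- and heat-smoothing inputs are contained in Theorem \ref{func} --- to produce a genuine dynamic field heat flow $\psi'_s$ with the \emph{same} reconstruction $\phi$ as $\tilde\psi_s$, obeying \eqref{parab-1}--\eqref{parab-3} in $S_{\mu,k(s)}$ and with $\nabla_x^j(\tilde\Psi_x-\Psi'_x)(s)$, $\nabla_x^j(\tilde\psi_s-\psi'_s)(s)$ of $S_{\mu,k(s)}(I\times\R^2)$-norm $O_E(\mu^2\kappa\,c(s)\,s^{-(j+1)/2})$ and $O_E(\mu^2\kappa\,c(s)\,s^{-(j+2)/2})$ respectively, for $0\le j\le15$. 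In particular $\dist_{S^1_\mu(I)}(\psi'_s,\tilde\psi_s)\lesssim_E\mu^2\kappa$ and $\tilde\dist_\Energy(\psi'_s[t_0],\tilde\psi_s[t_0])\lesssim\mu^2\kappa$, so by \eqref{energy-close} the data of $\psi'_s$ at $t_0$ lies within $O(\mu^2\kappa)$ of that of $\psi_s$ in $\tilde\dist_\Energy$. Moreover, expanding the wave-tension fields of $\tilde\psi_s$ and $\psi'_s$ via \eqref{psi-odd}, \eqref{psi-even} and using the product estimates \eqref{prod1}, \eqref{second-prod} together with the closeness just stated, one obtains $\|P_k(\tilde w(0)-w'(0))\|_{N_k(I\times\R^2)}\lesssim_E\mu^2\kappa\,c(2^{-2k})$; combined with \eqref{wave-off}, the heat flow $\psi'_s$ therefore obeys the wave map equation up to a forcing $\|P_k w'(0)\|_{N_k(I\times\R^2)}\lesssim_E\mu^2\kappa\,c(2^{-2k})$.

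\emph{Hyperbolic repair and stability.} From \eqref{psisjk-hyp} and \eqref{ds2-eq} one has $\|\tilde\psi_s\|_{S^1_\mu(I)}\lesssim\sqrt E$, hence $\|\psi'_s\|_{S^1_\mu(I)}\lesssim_E1$; the wave map heat flow $\psi_s$ likewise satisfies $\|\psi_s\|_{S^1_\mu(I)}\lesssim_E1$ (Theorem \ref{apriori-thm2}(v) with trivial comparison field, or Lemma \ref{freqstable}). Next I would compare $\psi_s$, which solves the heat flow and wave map equations exactly, with $\psi'_s$, which solves the heat flow equation exactly and the wave map equation up to the forcing $w'(0)$, and whose data at $t_0$ agrees with that of $\psi_s$ to $O(\mu^2\kappa)$ in $\tilde\dist_\Energy$. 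This is the situation of the stability estimate Theorem \ref{apriori-thm2}(v), except that one of the two fields is only an approximate wave map; repeating the proof of that estimate --- the continuity/bootstrap argument of \cite[Section 9]{tao:heatwave3} --- with the additional $N_k$ contribution of $w'(0)$ carried along (the energy estimate \eqref{energy-est} turns it into an $S_k^\strong$ contribution, and the improved trilinear estimates \eqref{trilinear-improv}--\eqref{trilinear-improv3}, each of which gains a power of $\mu$, absorb the $\mu^{-O(1)}$ losses from passing between $S_{\mu,k}$ and $S_k^\strong$ and from the divisibility Lemma \ref{boxf}) yields $\dist_{S^1_\mu(I)}(\psi_s,\psi'_s)\lesssim_{E,\mu}\kappa$. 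Combined with $\dist_{S^1_\mu(I)}(\psi'_s,\tilde\psi_s)\lesssim_E\mu^2\kappa$ this is \eqref{darcy}. I expect this to be the main obstacle: one must re-run the long bootstrap of \cite[Section 9]{tao:heatwave3} for the difference $\psi_s-\psi'_s$ in $S^1_\mu(I)$ at \emph{large} energy $E$ while tracking the inhomogeneous $w'(0)$, and the delicate point is the bookkeeping of $\mu$-powers (and the matching role of the $\mu^2$ factors in the hypotheses), even though no estimate beyond those assembled in Theorem \ref{func} is required.

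\emph{Pointwise bounds.} Finally, \eqref{toast} follows from \eqref{darcy}, the definition \eqref{ds1-eq} of $\dist_{S^1_\mu(I)}$, the shape of the modified energy metric \eqref{energy-dist-def}, the uniform-in-$t$ energy estimate \eqref{outgo}, and Corollary \ref{heato} (to handle the heat-flow push-forward) --- it is Theorem \ref{apriori-thm2}(iv) applied at each $t\in I$ rather than only at $t_0$. For \eqref{fslice-1}, \eqref{fslice-2} I would run the difference version of the frequency-envelope argument, exactly as $\delta\psi_s$ is treated in the proof of Proposition \ref{compati}(iii) and in Lemma \ref{freqstable}: \eqref{darcy} produces a frequency envelope $\delta c$ of energy $O_E(\kappa^2)$ with $\|\nabla_x^j(\Psi_{t,x}-\tilde\Psi_{t,x})(s,t)\|_{\dot H^0_{k(s)}(\R^2)}\lesssim_E\delta c(s)\,s^{-(j+1)/2}$ (and the analogue for $\psi_s-\tilde\psi_s$), uniformly in $t\in I$; since $\delta c(s)\lesssim_E\kappa$ by \eqref{cse-sup}, and $\tilde\psi_s$ already carries the matching control of its first several derivatives via \cite[Lemma 6.4]{tao:heatwave3} (cf.\ \eqref{bongo-1}, \eqref{bongo-2}), Bernstein's inequality and the definition of $A^5(s)$ in Definition \ref{sobspace} give \eqref{fslice-1}, \eqref{fslice-2}.
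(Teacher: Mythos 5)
Your proposal follows essentially the same route as the paper's proof: a parabolic repair step (Theorem \ref{parab-thm-abstract} with the $S_k^\strong$ algebra family) producing a heat flow $\psi'_s$ with the same reconstruction, a verification via the expansions \eqref{psi-odd}, \eqref{psi-even} and the product/trilinear estimates that $\psi'_s$ is an approximate wave map with forcing $O_E(\mu^2\kappa\,c(2^{-2k}))$, then a re-run of the large-energy bootstrap of \cite[Section 9]{tao:heatwave3} carrying the wave-tension forcing (together with the stability estimate of Theorem \ref{apriori-thm2}(v) to absorb the $O(\mu^2\kappa)$ data mismatch — the paper organizes this by first reducing to exactly matching data via a continuity argument, which is only a cosmetic difference), and finally \eqref{toast}--\eqref{fslice-2} deduced from \eqref{darcy} exactly as you indicate. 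The plan and the key inputs coincide with the paper's argument, so I have nothing substantive to add.
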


The rest of this section is devoted to the proof of Theorem \ref{hyp-repair}.

Let $I, \tilde \psi_s, \psi_s, t_0, \mu, \kappa, c, E$ be as in that theorem.  We allow implied constants to depend on $E$.  We may set $t_0=0$.

\subsection{Reduction to heat flows}

The first step in the argument is to reduce to the case where $\tilde \psi_s$ obeys the heat flow equation; this corresponds to Ingredient 2 from Section \ref{delocal-sec}.

To do this, we perform parabolic repair (Theorem \ref{parab-thm-abstract}) to obtain an intermediate set of dynamic field heat flows $\psi'_s$.  Applying that theorem (and Proposition \ref{algae}) to $\tilde \psi_s$, we can find a dynamic field heat flow $\psi'_s$ which can reconstruct the same map as $\tilde \psi_s$, and obeys the bounds
\begin{align}
\| \nabla_x^j \Psi^*_x(s) \|_{S_{k(s)}^\strong(I \times \R^2)} &\lesssim c(s) s^{-(j+1)/2} \label{parab-1-w}\\
\| \nabla_x^j A^*_x(s) \|_{S_{k(s)}^\strong(I \times \R^2)} &\lesssim c(s)^2 s^{-(j+1)/2}\label{parab-2-w}\\
\| \nabla_x^j \psi^*_s(s) \|_{S_{k(s)}^\strong(I \times \R^2)} &\lesssim c(s) s^{-(j+2)/2}\label{parab-3-w}\\
\| \nabla_x^j \delta \Psi_x(s) \|_{S_{k(s)}^\strong(I \times \R^2)} &\lesssim \mu^2 \kappa c(s) s^{-(j+1)/2}\label{parab-4-w}\\
\| \nabla_x^j \delta A_x(s) \|_{S_{k(s)}^\strong(I \times \R^2)} &\lesssim \mu^2 \kappa c(s)^2 s^{-(j+1)/2}\label{parab-5-w}\\
\| \nabla_x^j \delta \psi_s(s) \|_{S_{k(s)}^\strong(I \times \R^2)} &\lesssim \mu^2 \kappa c(s) s^{-(j+2)/2}\label{parab-6-w}
\end{align}
for all $0 \leq j \leq 15$ and $s > 0$, and similarly with $\nabla_x^2$ replaced by $\partial_s$, where we write $f^*$ for $(\tilde f, f')$ and $\delta f$ for $\tilde f - f'$.

From these estimates and \eqref{algebra}, \eqref{algebra-2}, \eqref{outgo} we conclude

\begin{lemma}[Fixed-time estimates]\label{bread}  We have
\begin{align}
\| \psi^*_{x,t}(s,t) \|_{A^{10}(s)} \lesssim c(s) s^{-1/2}\label{time-1}\\
\| A^*_{x,t}(s,t) \|_{A^{10}(s)} \lesssim c(s)^2 s^{-1/2}\label{time-2}\\
\| \delta \psi_{x,t}(s,t) \|_{A^{10}(s)} \lesssim \mu^2 \kappa c(s) s^{-1/2}\label{time-3}\\
\| \delta A_{x,t}(s,t) \|_{A^{10}(s)} \lesssim \mu^2 \kappa c(s)^2 s^{-1/2}\label{time-4}
\end{align}
for all $t \in I$ and $s>0$.  (The $A^k(s)$ norms were defined in Definition \ref{sobspace}.)  In a similar spirit, one has
\begin{align}
\| \psi^*_s(s,t) \|_{\dot H^1_{k(s)}} \lesssim c(s) s^{-1}\label{stime-1}\\
\| \nabla_{t,x} \psi^*_s(s,t) \|_{\dot H^0_{k(s)}} \lesssim c(s) s^{-1}\label{stime-2}\\
\| \delta \psi^*_s(s,t) \|_{\dot H^1_{k(s)}} \lesssim \mu^2 \kappa c(s) s^{-1}\label{stime-3}\\
\| \delta \nabla_{t,x} \psi^*_s(s,t) \|_{\dot H^0_{k(s)}} \lesssim \mu^2 \kappa c(s) s^{-1}\label{stime-4}
\end{align}
where the $\dot H^m_k$ norms were defined in \eqref{ek-def}.
\end{lemma}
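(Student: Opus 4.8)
The plan is to read off Lemma~\ref{bread} from the weakly frequency-localised spacetime bounds \eqref{parab-1-w}--\eqref{parab-6-w} (and their $\partial_s$-variants) by applying the algebra-family embeddings \eqref{algebra}, \eqref{algebra-2}, the energy estimate \eqref{outgo}, and the comparability \eqref{physical}; no genuinely new input is needed. Throughout one fixes $s>0$, writes $k=k(s)$ so that $2^{-2k}\sim s$, and works at an arbitrary but fixed time $t\in I$.

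For the $L^\infty_x$ contributions to the $A^{10}(s)$ norms in \eqref{time-1}--\eqref{time-4} I would use \eqref{algebra} for the undifferentiated term and \eqref{algebra-2} for the derivative terms: for any of the relevant fields $f$ and $0\le j\le 10$ one has $\|\nabla_x^j f(s,t)\|_{L^\infty_x(\R^2)}\le \|\nabla_x^j f(s)\|_{L^\infty_{t,x}(I\times\R^2)}\lesssim \|\nabla_x^j f(s)\|_{S^\strong_{k(s)}(I\times\R^2)}$, uniformly in $t$, and inserting \eqref{parab-1-w}, \eqref{parab-2-w}, \eqref{parab-4-w}, \eqref{parab-5-w} and multiplying by $s^{j/2}$ reproduces exactly the claimed $s$-weights and sizes. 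For the $L^2_x$ tails of the $A^{10}(s)$ norms and for \eqref{stime-1}--\eqref{stime-4} I would instead invoke \eqref{outgo}: applied in each frequency shell and combined with \eqref{physical}, it shows that $\|\nabla_x^j f(s)\|_{S^\strong_{k(s)}}$ dominates $\sup_{t\in I}\|\nabla_{t,x}\nabla_x^j f(s,t)\|_{\dot H^0_{k(s)}(\R^2)}$, hence also $\|\nabla_x^{j+1} f(s,t)\|_{L^2_x(\R^2)}$ after Bernstein. Summing these over $0\le j\le 10$ against the weights from \eqref{parab-1-w}--\eqref{parab-6-w} gives \eqref{time-1}--\eqref{time-4} and \eqref{stime-1}--\eqref{stime-4}; the difference fields inherit the extra factor $\mu^2\kappa$ directly from \eqref{parab-4-w}--\eqref{parab-6-w}.

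The one point needing a little care is the \emph{time} components $\psi_0=\psi_t$ and $A_0=A_t$, which do not appear among the fields $\Psi_x,A_x,\psi_s$ for which \eqref{parab-1-w}--\eqref{parab-6-w} are stated. These I would recover from $\psi_s$ through the expansions \eqref{psi-odd}, \eqref{psi-even} with $\alpha=0$: each $X_{j,0}$ is an absolutely convergent $s$-integral of products $\psi_s\cdots\psi_s\,\partial_t\psi_s$, and $\partial_t\psi_s$ is controlled pointwise in $t$ by \eqref{algebra-2} together with \eqref{parab-3-w}. Running the integral-in-$s$ estimates exactly as in the proof of Proposition~\ref{corbound-freq} (using the envelope bound \eqref{sss} to sum the multilinear terms) produces the bounds $c(s)s^{-1/2}$ for $\psi_t^*$, $c(s)^2 s^{-1/2}$ for $A_t^*$, and $\mu^2\kappa$ times these for $\delta\psi_t,\delta A_t$, which is precisely what is missing. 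Thus the main (and only mild) obstacle is this bookkeeping of the time components, together with checking that the weak frequency localisation in \eqref{outgo} and \eqref{physical} costs no more than the harmless $\chi_{k=k'}^{-\delta_1}$ factors already present in the $\dot H^0_k$ and $S^\strong_k$ norms; everything else is a mechanical translation of norms.
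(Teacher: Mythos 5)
Your proposal is correct and follows essentially the same route as the paper, whose entire proof of Lemma \ref{bread} is a one-line appeal to the bounds \eqref{parab-1-w}--\eqref{parab-6-w} together with \eqref{algebra}, \eqref{algebra-2} and \eqref{outgo} -- exactly the tools you use (with \eqref{physical} for the frequency bookkeeping). Your extra paragraph recovering the time components $\psi_t^*, A_t^*$ and their differences from the expansions \eqref{psi-odd}, \eqref{psi-even} with $\alpha=0$, estimated via \eqref{sss} as in Propositions \ref{compati} and \ref{corbound-freq}, is a legitimate filling-in of a detail the paper leaves implicit (the parabolic repair estimates only control the spatial fields), and is consistent with how the paper handles $\psi_t$ elsewhere.
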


As a consequence of these estimates and \eqref{ds2-eq}, \eqref{energy-dist-def}, one has

\begin{lemma}\label{jamm} We have
$$\dist_{S^1_\mu(I)}(\psi'_s, \tilde \psi_s) \lesssim \mu^2 \kappa$$
and
$$ \tilde \dist_\Energy( \tilde \phi(s)[t], \phi'(s)[t] ) \lesssim \mu^2 \kappa$$
for all $t \in I$ and $s \geq 0$.
\end{lemma}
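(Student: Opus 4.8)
The statement to prove is Lemma \ref{jamm}, which asserts two estimates: $\dist_{S^1_\mu(I)}(\psi'_s, \tilde \psi_s) \lesssim \mu^2 \kappa$ and $\tilde \dist_\Energy( \tilde \phi(s)[t], \phi'(s)[t] ) \lesssim \mu^2 \kappa$.

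The plan is to simply unpack the definitions of the two metrics and feed in the differenced estimates \eqref{parab-4-w}, \eqref{parab-5-w}, \eqref{parab-6-w} from Theorem \ref{parab-thm-abstract} (equivalently, the packaged fixed-time versions \eqref{time-3}, \eqref{time-4}, \eqref{stime-3}, \eqref{stime-4} collected in Lemma \ref{bread}).

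\begin{proof}[Proof of Lemma \ref{jamm}]
We first prove the $S^1_\mu(I)$ bound.  By the definition \eqref{ds1-eq} of the metric $\dist_{S^1_\mu(I)}$, and setting $\delta \psi_s := \tilde \psi_s - \psi'_s$, we need to control
$$ \| \sum_{j=0}^{10} \sup_{2^{-2k-2} \leq s \leq 2^{-2k}} s^{1+\frac{j}{2}} \| \nabla_x^j \delta\psi_s(s) \|_{S_{\mu,k}(I \times \R^2)} \|_{\ell^2_k(\Z)}. $$
By \eqref{sksk-star} we may replace $S_{\mu,k}$ by $\mu^{-1} S_k^\strong$; then applying \eqref{parab-6-w} (and its analogue with some of the $\nabla_x^2$ replaced by $\partial_s$, which is available from Theorem \ref{parab-thm-abstract}), for $s \sim 2^{-2k}$ we have $s^{1+j/2} \| \nabla_x^j \delta\psi_s(s)\|_{S_k^\strong(I\times\R^2)} \lesssim s^{1+j/2} \mu^2 \kappa c(s) s^{-(j+2)/2} = \mu^2 \kappa c(s)$.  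Since $s \sim 2^{-2k}$ we have $c(s) \sim c(2^{-2k})$, and by Definition \ref{freqenv} the sequence $(c(2^{-2k}))_k$ is in $\ell^2_k(\Z)$ with norm $O(\sqrt E) = O(1)$ (this is \eqref{cse}).  Thus the $\mu^{-1}$ loss combines with the $\mu^2$ gain to produce the bound $\lesssim \mu \kappa \lesssim \mu^2 \kappa$... more precisely, retaining $\mu^{2-1} = \mu$ we obtain $\dist_{S^1_\mu(I)}(\psi'_s,\tilde\psi_s) \lesssim \mu\kappa$, which in particular is $\lesssim \mu^2\kappa$ after noting $\mu < 1$; if the sharper statement as written is desired one keeps track of the fact that \eqref{parab-6-w} actually supplies the factor $\mu^2\kappa$ and the $S_{\mu,k}$-to-$S_k^\strong$ passage here can be done without the full $\mu^{-1}$ loss, e.g.\ by observing the individual summands already have $S_k^\strong$ norm $\lesssim \mu^2 \kappa c(s)$ which is $\leq \mu$ for $\kappa$ small, so they are admissible small atoms up to a factor $\mu\kappa$ and contribute $\lesssim \mu^2\kappa$ to the $S_{\mu,k}$ norm directly.

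For the second estimate, we use the definition \eqref{energy-dist-def} of $\tilde\dist_\Energy$, which after the time-shift bookkeeping (so that $\phi(s)[t]$ corresponds to $\psi_s(s+\cdot)[t]$, and correspondingly $\tilde\phi(s)[t]$ to $\tilde\psi_s(s+\cdot)[t]$) reads
$$ \tilde\dist_\Energy(\tilde\phi(s)[t],\phi'(s)[t]) = \sum_{j=0}^{10} \sum_{k'} \Big( \sup_{2^{-2k'-2}\leq s' \leq 2^{-2k'}} (s')^{1+j/2} \| \nabla_x^j \nabla_{t,x} \delta\psi_s(s+s',t)\|_{\dot H^0_{k'}(\R^2)}^2 \Big)^{1/2}. $$
By Corollary \ref{heato} (heat flow contracts distances), it suffices to bound this at $s=0$, i.e.\ to bound $\sum_{j,k'} (\sup_{s'\sim 2^{-2k'}} (s')^{1+j/2}\|\nabla_x^j\nabla_{t,x}\delta\psi_s(s',t)\|_{\dot H^0_{k'}}^2)^{1/2}$.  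But \eqref{parab-4-w} (the $\delta\Psi_x$ bound includes the $\nabla_{t,x}$ derivative components; more directly the fixed-time bound \eqref{stime-4} from Lemma \ref{bread}, together with its higher-derivative analogue) gives $\|\nabla_x^j\nabla_{t,x}\delta\psi_s(s',t)\|_{\dot H^0_{k(s')}} \lesssim \mu^2\kappa c(s') (s')^{-(j+2)/2}$, so for $s'\sim 2^{-2k'}$ the summand is $\lesssim \mu^2\kappa\, c(2^{-2k'})$; summing in $j$ (finitely many terms) and in $k'$ via the $\ell^2$ bound on $(c(2^{-2k'}))_{k'}$ from \eqref{cse} gives $\lesssim \mu^2\kappa$, as claimed.  (Here we also use \eqref{physical} or \eqref{ek-def} to pass freely between the frequency index $k'$ of the dyadic block and the index $k(s')$ attached to the envelope-controlled estimate, at the cost of harmless $\chi_{k'=k(s')}^{-\delta_1}$ factors which are absorbed into the $\ell^2_{k'}$ sum.)
\end{proof}

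The only mildly delicate point—and the one I would flag as the main obstacle—is the bookkeeping in the first estimate: passing from the $S_{\mu,k}$ norm appearing in $\dist_{S^1_\mu}$ to the $S_k^\strong$ norm in which Theorem \ref{parab-thm-abstract} delivers its bounds, without spuriously losing the full power $\mu^{-1}$ from \eqref{sksk-star} that would degrade $\mu^2\kappa$ to $\mu\kappa$. This is handled by the atomic structure of $S_{\mu,k}$: since the individual dyadic pieces of $\delta\psi_s$ already have $S_k^\strong$ norm bounded by $\mu^2\kappa c(s) \ll \mu$ (for $\kappa$ small depending on $E$), they qualify directly as (rescaled) small atoms, so their $S_{\mu,k}$ norm is comparable to their $S_k^\strong$ norm up to a factor $\mu\kappa$, keeping the final bound at $\mu^2\kappa$; everything else is a routine dyadic summation using the $\ell^2$ summability \eqref{cse} of the frequency envelope and Corollary \ref{heato} for the energy-metric half.
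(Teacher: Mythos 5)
Your overall route is the same as the paper's: the paper deduces this lemma in one line from the differenced parabolic-repair estimates \eqref{parab-4-w}--\eqref{parab-6-w} and Lemma \ref{bread}, together with the definitions \eqref{ds2-eq} and \eqref{energy-dist-def}, and your treatment of the energy-metric half (fixed-time differenced bounds in $\dot H^0_{k(s)}$, reduction to $s=0$ via Corollary \ref{heato}, finite sum in $j$ and dyadic $\ell^2$ summation in $k$ using \eqref{cse}, \eqref{sm}) is correct and is exactly what is intended there. Note also that the $\mu^2\kappa$ precision is genuinely needed only in that half (it feeds the hypothesis \eqref{energy-close} later), and no $S_{\mu,k}$ norms appear in it, so the delicate point you worry about does not arise where it matters.

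However, your handling of the $S^1_\mu$ half contains a genuine error at precisely the point you flag. First, the parenthetical claim that $\mu\kappa \lesssim \mu^2\kappa$ ``since $\mu<1$'' has the inequality backwards. Second, the small-atom argument cannot beat the factor $\mu^{-1}$ in \eqref{sksk-star}: if $\|f\|_{S^\strong_k} \lesssim \mu^2\kappa\, c(s)$, the best the atomic structure of Definition \ref{smuk-def} yields is to write $f$ as $\mu\kappa\, c(s)$ times a function of $S^\strong_k$ norm $\lesssim \mu$, i.e.\ a small atom, whence $\|f\|_{S_{\mu,k}} \lesssim \mu\kappa\, c(s)$ --- one lands at $\mu\kappa$, not $\mu^2\kappa$; the fact that $f$ is much smaller than $\mu$ in $S^\strong_k$ gives no further gain, since the small-atom route is exactly the proof of the upper bound in \eqref{sksk-star}. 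To obtain the literal $\mu^2\kappa$ one would instead have to verify the null-dispersed atom conditions (the $S'_{\mu,k}$ smallness \eqref{spunk} and the $L^5_t L^\infty_x$ bound \eqref{mondo}) for $\delta\psi_s$, which is not automatic and which you do not attempt. That said, the honest output of your computation, $\dist_{S^1_\mu(I)}(\psi'_s,\tilde\psi_s) \lesssim \mu\kappa$, suffices for every use of this half of the lemma in the paper (it only enters through the triangle inequality towards \eqref{darcy}, which tolerates $O_{E,\mu}(\kappa)$), and the paper's own one-line justification does not supply more than the $\mu^{-1}$-loss bound either; so the defect is in matching the stated constant, not in the architecture of your argument, but as a proof of the statement as written the upgrade step fails and should be repaired or the claimed power of $\mu$ weakened.
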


Next, we show that $\psi'_s$ inherits the property \eqref{wave-off}:

\begin{lemma}[$\psi'_s$ is an approximate wave map]  Let $w'$ be the wave-tension field \eqref{wave-tension} of $\psi'_s$.  Then
\begin{equation}\label{wave-off-2}
\| P_k w'(0) \|_{N_{k}^\strong(I \times \R^2)} \lesssim \mu^2 \kappa c(2^{-2k})
\end{equation}
for all $k \in \Z$.
\end{lemma}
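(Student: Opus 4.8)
The plan is to deduce \eqref{wave-off-2} from the hypothesis \eqref{wave-off} by controlling the difference $\delta w := w' - \tilde w$ of wave-tension fields. Since $\psi'_s$ is a genuine heat flow enjoying the good bounds \eqref{parab-1-w}--\eqref{parab-6-w}, and $\tilde\psi_s$ is close to $\psi'_s$ in these norms (Lemmas \ref{bread}, \ref{jamm}), one expects $w'(0)$ and $\tilde w(0)$ to agree up to an error of size $O(\mu^2\kappa c(2^{-2k}))$; granting this, $\|P_k w'(0)\|_{N_k^\strong}$ is bounded by $\|P_k\tilde w(0)\|_{N_k} \lesssim \mu^2\kappa c(2^{-2k})$ from \eqref{wave-off} plus this error, provided we also know that the stronger norm $N_k^\strong$ of $w'(0)$ is comparable to its $N_k$ norm. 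The latter comparability is where the heat-flow structure of $\psi'_s$ is essential: the extra null-frame/dispersive component of $N_k^\strong$ over $N_k$ is governed by parabolically small quantities, and will be supplied by re-expressing $w'(0)$ through the heat-flow evolution as below, exactly as the $N_k^\strong$ bounds \eqref{wave-ten}, \eqref{wave-ten2} are obtained for honest $S^1_\mu$-bounded wave maps.

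First I would record the pointwise-in-$s$ identity, valid for any dynamic field, coming from $w = D^\alpha\psi_\alpha = -D_t\psi_t + D_i\psi_i$ and $D_i\psi_i = \psi_s - h$ (the heat-tension field \eqref{heat-tension}):
\begin{equation}
w(0) = \psi_s(0) - h(0) - D_t\psi_t(0).
\end{equation}
Applied to $\psi'_s$ (for which $h'=0$ since it is a heat flow) and to $\tilde\psi_s$, and subtracting, the discretised Leibniz rule \eqref{disc-leib-eq} for $D_t\psi_t = \partial_t\psi_t + \bigO(A_t\psi_t)$ gives
\begin{equation}
\delta w(0) = \delta\psi_s(0) + \tilde h(0) - \partial_t\delta\psi_t(0) + \bigO\big(\delta A_{t,x}(0)\,\Psi^*_{t,x}(0)\big),
\end{equation}
where $\delta f = f' - \tilde f$ and $f^* = (\tilde f, f')$. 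The $\tilde h(0)$ term is acceptable from \eqref{psisjk-star-hyp} (and \eqref{fl1l2}), and the bilinear term from \eqref{prod1}, \eqref{second-prod} with Lemmas \ref{bread}, \ref{jamm}. For the remaining two fixed-time quantities I would not estimate $\partial_t\delta\psi_t(0)$ directly — a bare $\partial_t$ derivative only lives in $L^\infty_t$-type norms and would cost a power of $|I|$ — but instead use the heat-flow evolution equations \eqref{psis-eq}, \eqref{psit-eq}, \eqref{psa} for $\psi'_s$ and their approximate analogues for $\tilde\psi_s$ (which carry source errors proportional to $\tilde h$, $\partial_t\tilde h$, controlled by \eqref{psisjk-star-hyp} and Lemma \ref{bread}), together with $\psi_s(\infty)=\psi_t(\infty)=0$ from Lemma \ref{dec}, writing
\begin{equation}
\delta\psi_s(0) = -\int_0^\infty \partial_s\,\delta\psi_s(s)\,ds,\qquad \partial_t\delta\psi_t(0) = -\int_0^\infty \partial_s\,\partial_t\delta\psi_t(s)\,ds,
\end{equation}
and then re-expressing each integrand via Duhamel so that the non-integrable parabolic part is absorbed into the heat semigroup $e^{s\Delta}$, leaving a genuinely multilinear integrand in $\delta\Psi_{s,t,x}$, $\Psi^*_{s,t,x}$ and their first spatial/time derivatives (plus a $\tilde h$ contribution).

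Each such integrand is then estimated in $N_k^\strong(I\times\R^2)$: the trilinear pieces by the null-form trilinear estimates \eqref{trilinear-improv}--\eqref{trilinear-improv3}, the bilinear pieces by \eqref{prod1}, \eqref{second-prod}, the free-heat-propagator factors by the parabolic smoothing bound \eqref{parreg-abstract2} for the $S^\strong_k$ algebra family (Proposition \ref{algae}), always inserting the smallness factor $\mu^2\kappa$ coming from $\delta\Psi$ via \eqref{parab-4-w}--\eqref{parab-6-w} (or from $\tilde h$ via \eqref{psisjk-star-hyp}). The $s$-integral converges absolutely by the frequency-envelope estimate \eqref{sss}, and summing the dyadic blocks in $\ell^2_k$ produces $\|P_k\delta w(0)\|_{N_k^\strong}\lesssim\mu^2\kappa c(2^{-2k})$; combined with \eqref{wave-off} and the $N_k$-to-$N_k^\strong$ comparability noted above, this yields \eqref{wave-off-2}. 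The main obstacle is precisely that comparability and, relatedly, the bookkeeping needed to keep all implied constants independent of the interval $I$: one cannot afford to lose powers of $|I|$ when handling the $\partial_t$- and $\Box$-type contributions to $w$, which is what forces the detour through the $s$-integrated, heat-semigroup-smoothed representation rather than a direct fixed-time estimate.
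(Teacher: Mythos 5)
Your opening reduction is the same as the paper's: normalise $k$, use \eqref{wave-off} and the triangle inequality to reduce to bounding the difference of wave-tension fields, and extract the factor $\mu^2\kappa$ by letting the $\delta$ fall on one factor via \eqref{disc-leib-eq} and \eqref{parab-4-w}--\eqref{parab-6-w}. The genuine gap is in how you propose to land in $N_k^\strong$. By writing $w = \psi_s - h - D_t\psi_t$ you separate the temporal part of $w$ from the spatial part, and this destroys the null structure that the whole estimate hinges on. The paper instead keeps $w(0) = \bigO(\partial^\alpha\psi_\alpha(0)) + \bigO(A^\alpha(0)\psi_\alpha(0))$ and expands $\psi_\alpha, A_\alpha$ through \eqref{psi-odd}, \eqref{psi-even}, so that after distributing $\partial^\alpha$ every term is either a contracted null form $\cdots\partial_\alpha\psi_s\,\partial^\alpha\psi_s$, handled by \eqref{trilinear-improv}, or carries a $\Box\psi_s$ factor, handled by \eqref{trilinear-improv3} (this is exactly why the remark after \eqref{PSA} insists the schematic coefficients be independent of $\alpha$). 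In your decomposition, expanding $\partial_t\delta\psi_t(0)$ via \eqref{psi-odd} produces terms of the schematic shape $\psi_s\cdots\partial_t\psi_s\,\partial_t\psi_s$ and $\psi_s\cdots\Delta\psi_s$ (or $\partial_t^2\psi_s$, of which only the $\Box$ part is controlled), with no null pairing and no factor already placed in an $N$-space. None of \eqref{trilinear-improv}, \eqref{trilinear-improv3}, \eqref{second-prod} applies to such products, and the only generic route into $N_k$ is \eqref{fl1l2}, which costs a power of $|I|$ --- precisely the loss you correctly say must be avoided. The Duhamel/heat-semigroup step acts only in the parabolic variable $s$; it does not recreate the $t$-null structure (parallel interactions $\partial_t u\,\partial_t v$ are exactly what the contraction $\partial_\alpha u\,\partial^\alpha v$ cancels), so this part of the argument would fail.

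A second, smaller gap: the claim that the $\tilde h(0)$ term is acceptable from \eqref{psisjk-star-hyp} is unjustified, since that hypothesis controls $\tilde h(s)$ only for $s>0$ with a weight $\mu^2\kappa\,c(s)s^{-1}$ that blows up as $s\to 0$, and no relation between $P_k\tilde h(0)$ and $\tilde h(s)$ at $s\sim 2^{-2k}$ is available; the paper's proof never introduces the heat-tension field \eqref{heat-tension} in this lemma, so the issue does not arise there. Your concern about comparability of the $N_k$ and $N_k^\strong$ norms of $w'(0)$ is likewise not how the paper proceeds --- each multilinear block in the expansion is estimated directly in the strong norm --- but that is presentational; the loss of null structure is the substantive obstruction.
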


\begin{proof} By scale invariance we may normalise $k=0$.   By \eqref{wave-off} and the triangle inequality, it suffices to show that
\begin{equation}\label{pod}
 \| P_0 \delta w(0) \|_{N_{0}^\strong(I \times \R^2)} \lesssim \mu^2 \kappa c(1).
\end{equation}
This estimate is similar to those in \cite[Lemma 9.9]{tao:heatwave3}, but in that setting, the analogues of $\psi'_{s}, \tilde \psi_{s}$ were assumed to obey both the heat flow and the wave map equation.  Here, $\Psi'_{s,t,x}$ obeys the heat flow equation exactly, and is not yet known to obeys the wave map equation approximately (that is the point of this lemma!), whilst $\tilde \psi_{s}$ obeys the heat flow and wave map equation approximately.  So we cannot repeat the proof of \cite[Lemma 9.9]{tao:heatwave3} directly (as it assumed the wave map equations $w'(0)=\tilde w(0)=0$), and so shall proceed slightly differently.

We will first show the easier estimate
\begin{equation}\label{pop}
\| P_0 w'(0) \|_{N_{0}^\strong(I \times \R^2)} \lesssim c(1)
\end{equation}
and then indicate what changes need to be made to the argument to yield \eqref{pod}.

For brevity we will omit the primes in the analysis that follows, thus writing $\psi_\alpha$ for $\psi'_\alpha$, etc; we also abbreviate $N_0^\strong(I \times \R^2)$ as $N$.  We expand
$$ w(0) = \bigO( \partial^\alpha \psi_\alpha(0) ) + \bigO( A^\alpha(0) \psi_\alpha(0) ).$$
Applying \eqref{psi-odd}, \eqref{psi-even} we see that it suffices to show that\footnote{To justify weak convergence of the infinite series in the $N$ topology one can use the fact that the closed unit ball in $N$ is closed with respect to the uniform topology, which can be shown from the definition of $N$ using such tools as Fatou's lemma in a tedious but straightforward manner; we omit the details.}
\begin{equation}\label{cs1}
\| P_0(\partial^\alpha X_{j,\alpha}) \|_N \lesssim_C 2^{-Cj} c(1)
\end{equation}
and
\begin{equation}\label{cs2}
 \| P_0(X_{j,\alpha} X_{k}^\alpha) \|_N \lesssim_C 2^{-C(j+k)} c(1)
\end{equation}
for all odd $j \geq 1$ and even $k \geq 2$, and all $C>0$.

We begin with \eqref{cs1}.  Applying the product rule and \eqref{xjdef}, we can distribute the derivative $\partial^\alpha$ amongst the various factors in \eqref{xjdef}.  If $j \geq 3$ and the derivative falls on the $\psi_s(s_1)$ factor (which is the highest in frequency), one can use \eqref{parab-1-w}, \eqref{prod1}, \eqref{trilinear-improv}, and Minkowski's inequality to bound this contribution by
$$ \leq O(1)^j \int_{0 < s_1 < \ldots < s_j}  \chi_{0=k(s_1)}^{\delta_1} \chi_{k(s_2) \geq k(s_j)}^{\delta_1}
\frac{c(s_1)}{s_1} \ldots \frac{c(s_j)}{s_j}\ ds_1 \ldots ds_j $$
which by \eqref{sss} is bounded by
$$ \leq O(1)^j j^{-j/2} \int_{0 < s_1 < s_2 \leq s_j}  \chi_{0=k(s_1)}^{\eps \delta_1} \chi_{k(s_2) \geq k(s_j)}^{\delta_1-\delta_0} \frac{c(s_1)}{s_1} \frac{c(s_2)}{s_2} \frac{c(s_j)}{s_j}\ ds_1 ds_2 ds_j $$
which is acceptable (thanks to the envelope properties of $c$).

Similarly if the derivative falls on other factors except for the final factor $\psi_\alpha(s_j)$.  For this, we use \eqref{parab-1-w}, \eqref{prod1}, \eqref{second-prod}, and Minkowski's inequality to bound this contribution by
$$ \leq O(1)^j \int_{0 < s_1 < \ldots < s_j}  \chi_{0 \leq k(s_j)}^{\delta_2} \chi_{0 = k(s_1)}^{\delta_2}
\frac{c(s_1)}{s_1} \ldots \frac{c(s_j)}{s_j}\ ds_1 \ldots ds_j $$
which is acceptable by a similar argument to the preceding.  Note also that this argument also works in the $j=1$ case.

Now we turn to \eqref{cs2}.  By Minkowski's inequality, the left-hand side is bounded by
\begin{align*}
& \int_{0 < s_1 < \ldots < s_j} \int_{0 < s'_1 < \ldots < s'_k} 
&\quad \| P_0(\psi_s(s_1) \ldots \psi_s(s_{j-1}) \partial_\alpha \psi_s(s_j) \psi_s(s'_1) \ldots \psi_(s'_{k-1}) \partial^\alpha \psi_s(s'_k)) \|_N \\
&\quad ds_1 \ldots ds_j ds'_1 \ldots ds'_k.
\end{align*}
Applying \eqref{parab-1-w}, \eqref{prod1}, \eqref{trilinear-improv}, we can bound the integrand by
$$ O(1)^{j+k} \chi_{0=\max(k(s_1),k(s'_1))}^{\eps \delta_1} \chi_{\max(k(s_1),k(s'_1)) \leq \min(k(s_j),k(s'_k))}^{\eps \delta_1}
 \frac{c(s_1)}{s_1} \ldots \frac{c(s_j)}{s_j} \frac{c(s'_1)}{s'_1} \ldots \frac{c(s'_k)}{s'_k}.$$
 We can bound
$$ \chi_{0=\max(k(s_1),k(s'_1))}^{\eps \delta_1} \chi_{\max(k(s_1),k(s'_1)) \leq \min(k(s_j),k(s'_k))}^{\eps \delta_1}
\leq \chi_{0=\max(k(s_1)}^{\eps \delta_1/2} \chi_{k(s_1) = k(s_j)}^{\eps \delta_1/2}
\chi_{0=\max(k(s'_1)}^{\eps \delta_1/2} \chi_{k(s'_1) = k(s'_k)}^{\eps \delta_1/2}$$
and then the integral splits into two integrals of the type previously considered, and the claim follows.

This establishes \eqref{pop}.  The claim \eqref{pod} is similar but uses the Leibniz rule \eqref{disc-leib-eq}
repeatedly to distribute the $\delta$ operator.  One then obtains terms similar to those previously considered, but with one of the $\psi_s$ factors endowed with a $\delta$ rather than an asterisk.  Using \eqref{parab-4-w} for that term instead of \eqref{parab-1-w} one gains the required factor of $\mu^2 \kappa$.  (One also loses a factor of $O(j)$ or $O(j+k)$ from the Leibnitz rule, but this can of course be absorbed in to the exponentially decaying factors in those parameters.)
\end{proof}

From this lemma, we see that $\psi'_s$ obeys essentially the same hypotheses as $\tilde \Psi_{s,t,x}$, but also has the additional advantage of being a heat flow.  If one assumes that Theorem \ref{hyp-repair} holds for the heat flow $\Psi'_{s,t,x}$, then the claim for $\tilde \Psi_{s,t,x}$ then follows from Lemmas \ref{bread}, \ref{jamm}, and the triangle inequality.

Thus we may assume henceforth that $\tilde \Psi_{s,t,x}$ obeys the heat flow equation.

\subsection{Conclusion of the argument}\label{repairconc}

Having concluded the ``parabolic'' portion of the proof of Theorem \ref{hyp-repair}, in which the heat flow equation of $\tilde \Psi_{s,t,x}$ is repaired, we now turn to the ``hyperbolic'' portion, in which we repair the wave maps equation.  This portion of the proof follows the arguments used to establish Theorem \ref{apriori-thm2}(vi) in \cite[Section 9]{tao:heatwave3}, with the added complication that $\tilde \Psi_{s,t,x}$ is now an approximate wave map rather than a true wave map.

We turn to the details.  We first remark that the estimate \eqref{toast} now follows from \eqref{darcy} and Theorem \ref{apriori-thm2}(iv), so we can drop that estimate from the conclusions.  Similarly, \eqref{fslice-1}, \eqref{fslice-2} follows from \eqref{darcy} by repeating the proof of \cite[Lemma 9.7]{tao:heatwave3}.  Thus the only conclusion we need to verify is \eqref{darcy}.

Secondly, we observe that it suffices to prove the claim under the additional hypothesis
\begin{equation}\label{tpsipsi}
 \tilde \Psi_{s,t,x}(t_0) = \Psi_{s,t,x}(t_0).
\end{equation}
Indeed, once one establishes Theorem \ref{hyp-repair} in this special case, we conclude in the general case by a continuity argument (using Theorem \ref{apriori-thm2}(ii)) that there exists a wave map $\Psi'_{s,t,x}$ with $\Psi'_{s,t,x}(t_0) = \tilde \Psi_{s,t,x}(t_0)$ and
$$
\dist_{S^1_\mu(I)}(\Psi'_{s,t,x}, \tilde \Psi_{s,t,x}) \lesssim \kappa
$$
Next, by Theorem \ref{apriori-thm2}(vi) and \eqref{energy-close} we conclude that
$$
\dist_{S^1_\mu(I)}( \Psi'_{s,t,x}, \Psi_{s,t,x} ) \lesssim \kappa$$
and the claim follows from the triangle inequality.

Henceforth we assume \eqref{tpsipsi}.  For this section, we write $\delta f := f - \tilde f$ and $f^* := (f,\tilde f)$ (note that this is a slightly different convention from the preceding section).  Let $C_0 > 1$ be a sufficiently large number.  It will suffice to show that
\begin{equation}\label{darcy-2}
\dist_{S^1(I)}(\Psi'_{s,t,x}, \tilde \Psi_{s,t,x}) \leq C_0 \kappa,
\end{equation}
where the $S^1(I)$ metric is defined just as $S^1_\mu(I)$, but with the $S_{\mu,k}(I \times \R^2)$ norms replaced by the $S_k^\strong(I \times \R^2)$ norms (which are comparable up to factors of $\mu$, by \eqref{sksk-star}).

By a standard continuity argument (using the analogue of Theorem \ref{apriori-thm2}(ii) for the $S^1(I)$ metric), it suffices to establish the claim \eqref{darcy-2} under the additional assumption
\begin{equation}\label{darcy-3}
\dist_{S^1(I)}(\Psi'_{s,t,x}, \tilde \Psi_{s,t,x}) \leq 2C_0 \kappa
\end{equation}
Henceforth $C_0$ is fixed, and the implied constants are allowed to depend on $C_0$.

By \eqref{psisjk-hyp}, \eqref{ds2-eq} we have
$$ \| \tilde \psi_s \|_{S^1_\mu(I)} \lesssim 1$$
and thus by the triangle inequality and \eqref{sksk-star}
$$ \| \psi_s \|_{S^1_\mu(I)} \lesssim 1$$
also (if $\kappa$ is small enough depending on $\mu$).  From \cite[Proposition 7.1]{tao:heatwave3} (which did not use the wave map equation) we also have the following qualitative estimates, which we need to obtain sufficient decay at infinity:

\begin{proposition}[Qualitative estimates]\label{qual-prop} We have
\begin{equation}\label{l1-eq}
 \| \nabla_x^j \psi^*_{t,x}(s,t) \|_{L^1_x(\R^2)} \lesssim_{\Psi^*_{s,t,x},j} \langle s \rangle^{-j/2}
\end{equation}
and
\begin{equation}\label{qual-eq}
 \| \nabla_x^j \nabla_{t,x} \psi^*_s(s,t) \|_{L^1_x(\R^2)} \lesssim_{\Psi^*_{s,t,x},j} \langle s \rangle^{-(j+2)/2}
\end{equation}
for all $t \in I$, $j \geq 0$, and $s \geq 0$.
\end{proposition}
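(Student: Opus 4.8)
This is essentially a restatement of \cite[Proposition 7.1]{tao:heatwave3}, and the plan is to recall why that argument applies in the present setting: its proof uses only the heat flow structure \eqref{heat-eq} of the resolution, the defining pointwise bounds \eqref{quali}, and the parabolic regularity estimates of Proposition \ref{corbound}, and never invokes the wave map equation \eqref{cov-wave}. Since in the present context both dynamic fields comprising $\psi^*_s$ are heat flows resolving maps of bounded energy (the tilded field by the reduction to heat flows carried out above, the other by construction), and since $\|f^*\|_{L^1_x} = (\|f\|_{L^1_x}^2 + \|\tilde f\|_{L^1_x}^2)^{1/2}$, it suffices to prove \eqref{l1-eq}, \eqref{qual-eq} for a single dynamic field heat flow $\psi_s$ whose reconstructed map is Schwartz in space modulo constants (as is every resolution occurring in this paper) and has energy at most $E$.

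The first step is to observe that such a $\psi_s$ is Schwartz in $x$ at each fixed $(s,t)$, so all the $L^1_x$ norms in question are finite, and over the compact range $0 < s \le 1$, $t \in I$ they are bounded by a constant depending on $\psi_s$; thus only the decay for $s \ge 1$ remains. There I would use the pointwise bounds $|\nabla_x^a \Psi_{t,x}(s)|, |\nabla_x^a \psi_s(s)| \lesssim_{\psi_s,a} \langle s\rangle^{-(a+1)/2}$ (with an additional factor $\langle s\rangle^{-1}$ for $\psi_s$) from \eqref{quali} and Lemma \ref{dec}, together with the crucial integrated bound $\int_0^\infty \|\Psi_{t,x}(s)\|_{L^\infty_x(\R^2)}^2\ ds \lesssim_E 1$, which is the $p=\infty$ case of \eqref{corheat} (applied at each $t \in I$).

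Next I would establish \eqref{qual-eq}. Differentiating the covariant heat equation \eqref{psis-eq} shows that $\nabla_{t,x}\psi_s$ obeys a covariant heat equation of the schematic shape $(\partial_s - \Delta) u = \bigO(A_x \nabla_x u) + \bigO((\nabla_x A_x) u) + \bigO(\Psi_x^2 u)$ plus harmless lower-order terms. Applying Duhamel's formula from the reference scale $s/2$, bounding $\nabla_x^a e^{(s-s')\Delta}$ by its $L^1_x \to L^1_x$ operator norm $O_a((s-s')^{-a/2})$, placing the coefficient factors $A_x$, $\nabla_x A_x$, $\Psi_x^2$ in $L^\infty_x$ via the pointwise bounds above, and closing a Gronwall inequality in $s$ in which the borderline coefficient $\|\Psi_x(s')\|_{L^\infty_x}^2$ is controlled by its finite $s'$-integral, one propagates $\|\nabla_x^j \nabla_{t,x}\psi_s(s)\|_{L^1_x} \lesssim \langle s\rangle^{-(j+2)/2}$; the powers of $\langle s\rangle$ are exactly those produced by the linear operator $\nabla_x^j \Delta e^{s\Delta}$ acting on a fixed Schwartz function, and higher $j$ follow from lower $j$ by the same smoothing. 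The estimate \eqref{l1-eq} then follows by inserting \eqref{qual-eq} and its analogue for $A_x$ (obtained identically from \eqref{asa}) into the series expansions \eqref{psi-odd}, \eqref{psi-even}: for each $j$ one distributes $\nabla_x^j$ over the factors of $X_{j,\alpha}$, keeps one factor in $L^1_x$ and the rest in $L^\infty_x$, and sums the resulting iterated $s$-integrals using \eqref{sss}.

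The step I expect to be the main obstacle is the borderline case $j=0$ of \eqref{l1-eq}: the term $\int_s^\infty \partial_\alpha\psi_s(s_1)\ ds_1$ appearing in \eqref{psi-odd}, estimated crudely, loses a logarithm in $s$, and likewise any attempt to bound $\|\psi_{t,x}(s)\|_{L^1_x}$ by integrating $\partial_s\psi_{t,x}$ runs into the divergence of $\int^\infty \langle s'\rangle^{-1}\ ds'$. The resolution is to propagate the $L^1_x$ norm forward through the covariant heat equation \eqref{psit-eq} for $\psi_{t,x}$ itself, using the diamagnetic inequality $|u|(s) \le e^{(s-s_0)\Delta} |u|(s_0)$ so that the covariant heat flow contracts $L^1_x$ with no growth, and then absorbing the forcing term $(\psi_{t,x}\wedge\psi_i)\psi_i$ by Gronwall using precisely the finiteness of $\int_0^\infty \|\Psi_{t,x}(s')\|_{L^\infty_x(\R^2)}^2\ ds'$. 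All remaining steps are routine and follow \cite[Section 7]{tao:heatwave3} line by line.
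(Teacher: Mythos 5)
Your proposal matches the paper's treatment: the paper proves Proposition \ref{qual-prop} simply by citing \cite[Proposition 7.1]{tao:heatwave3} with exactly your observation that the argument there never uses the wave map equation, so it applies to the (heat flow) fields $\psi^*_s$ at hand. Your reconstruction of that cited argument (qualitative finiteness plus propagation in $s$ via the covariant heat equations, the diamagnetic inequality, and Gronwall with $\int_0^\infty \|\Psi_{t,x}(s)\|_{L^\infty_x}^2\,ds \lesssim_E 1$ from \eqref{corheat}) is consistent with the stated ingredients, so there is nothing further to add.
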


Applying \cite[Corollary 9.6]{tao:heatwave3} and \cite[Lemma 9.7]{tao:heatwave3} (neither of which required $\tilde \Psi_{s,t,x}$ to be a wave map), one now has the estimates
$$ \| \nabla_x^j \psi_s^*(s) \|_{S_{\mu,k(s)}(I \times \R^2)} \lesssim_{j,\eps} \mu^{-\eps} c(s) s^{-(j+2)/2}$$
for all $j \geq 0$, $\eps > 0$, and $s>0$, as well as the Strichartz type estimates
\begin{align}
\| \nabla_x^j \nabla_{t,x} \Psi^*_x(s) \|_{L^q_t L^\infty_x(I \times \R^2)} &\lesssim_{j,\eps} \mu^{(5-\eps)/q} c(s) s^{-(j+2)/2+1/(2q)}  \label{strich1}\\
\| \nabla_x^j \psi^*_s(s) \|_{L^q_t L^\infty_x(I \times \R^2)} &\lesssim_{j,\eps} \mu^{(5-\eps)/q} c(s) s^{-(j+2)/2+1/(2q)} \label{strich2}\\
\| \nabla_x^j \nabla_{t,x} \psi^*_s(s) \|_{L^q_t L^\infty_x(I \times \R^2)} &\lesssim_{j,\eps} \mu^{(5-\eps)/q} c(s) s^{-(j+4)/2+1/(2q)}  \label{strich3}\\
\| \nabla_x^j \Psi^*_{t,x}(s) \|_{L^q_t L^\infty_x(I \times \R^2)} &\lesssim_{j,\eps} \mu^{(5-\eps)/q} c(s) s^{-(j+1)/2+1/(2q)}  \label{strich4}\\
\| \nabla_x^j A^*_{t,x}(s) \|_{L^r_t L^\infty_x(I \times \R^2)} &\lesssim_{j,\eps} \mu^{(5-\eps)/2r} c(s) s^{-(j+1)/2+1/(2r)}  \label{strich5}\\
\| \nabla_x^j \nabla_{t,x} \Psi^*_x(s) \|_{L^\infty_t L^2_x(I \times \R^2)} &\lesssim_{j} c(s) s^{-(j+1)/2}\label{strich6} \\
\| \nabla_x^j \psi^*_s(s) \|_{L^\infty_t L^2_x(I \times \R^2)} &\lesssim_{j} c(s) s^{-(j+1)/2}\label{strich7} \\
\| \nabla_x^j \nabla_{t,x} \psi^*_s(s) \|_{L^\infty_t L^2_x(I \times \R^2)} &\lesssim_{j} c(s) s^{-(j+2)/2}\label{strich8} 
\end{align}
for all $j \geq 0$, $s > 0$, $5 \leq q \leq \infty$, $5/2 \leq r \leq \infty$, and $\eps > 0$.  One also has analogues of all the above estimates in which $\Psi^*$ is replaced by $\delta \Psi$, $c$ is replaced by $\kappa c$, etc., and all powers of $\mu$ are discarded.  

Set $F := (\psi_\alpha \wedge \psi_i) D_i \psi^\alpha$, and define $\tilde F$, $F^*$, $\delta F$ accordingly.  Repeating the proof of \cite[Lemma 9.8]{tao:heatwave3} (which, again, did not require $\tilde \Psi_{s,t,x}$ to be a wave map) one concludes that
\begin{equation}\label{force-eq} \| P_k F^*(s) \|_{N_k(I \times \R^2)} \lesssim_{j,\eps} \mu^{2-\eps} c(s) s^{-1} \chi_{k \geq k(s)}^{\delta_2} \chi_{k \leq k(s)}^j
\end{equation}
and
\begin{equation}\label{force-diff-eq} \| P_k \delta F(s) \|_{N_k(I \times \R^2)} \lesssim_{j,\eps} \mu^{1-\eps} \kappa c(s) s^{-1} \chi_{k \geq k(s)}^{\delta_2} \chi_{k \leq k(s)}^j
\end{equation}
for all $j \geq 0$, $s > 0$, and integers $k$, and $\eps > 0$.

Next, we introduce the wave tension field $w = D^\alpha \psi_\alpha$, and similarly define $\tilde w$, $w^*$, $\delta w$.  We would like to estimate these expressions using an analogue of \cite[Lemma 9.9]{tao:heatwave3}.  Unfortunately, and in contrast to the previous material from \cite[Section 9]{tao:heatwave3} that we have been using, the proof of \cite[Lemma 9.9]{tao:heatwave3} uses the wave map equation $w^*(0) = 0$ and so cannot be directly repeated here.  Instead, from \eqref{wave-off} (and the wave maps equation for $\Psi_{s,t,x}$) we have that
\begin{equation}\label{poke}
\| P_k w^*(0) \|_{N_{k}(I \times \R^2)} \lesssim \mu^2 \kappa c(2^{-2k})
\end{equation}
for all $k \in \Z$.  This is enough to obtain a slightly weaker version of \cite[Lemma 9.9]{tao:heatwave3}:

\begin{lemma}[Control of wave-tension field] We have
\begin{equation}\label{pkw-1}
\| P_k w^* \|_{N_k(I \times \R^2)} \lesssim_{j,\eps} \mu^{2-\eps} c(2^{-2k}) \chi_{k \leq k(s)}^j
\end{equation}
and
\begin{equation}\label{pkw-2}
 \| P_k \partial_s w^* \|_{N_k(I \times \R^2)} \lesssim_{j,\eps} \mu^{2-\eps} c(s) s^{-1} \chi_{k \geq k(s)}^{\delta_2/10} \chi_{k \leq k(s)}^j
 \end{equation}
for all $j \geq 0$, $s > 0$, $\eps > 0$, and integers $k$.  Similarly we have
\begin{equation}\label{pkw-1-diff}
\| P_k \delta w \|_{N_k(I \times \R^2)} \lesssim_{j,\eps} \mu^{1-\eps} \kappa c(2^{-2k}) \chi_{k \leq k(s)}^j
\end{equation}
and
\begin{equation}\label{pkw-2-diff}
 \| P_k \partial_s \delta w \|_{N_k(I \times \R^2)} \lesssim_{j,\eps} \mu^{1-\eps} \kappa c(s) s^{-1} \chi_{k \geq k(s)}^{\delta_2/10} \chi_{k \leq k(s)}^j
 \end{equation}
for the same range of $j,s,\eps,k$.
\end{lemma}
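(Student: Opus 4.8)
The estimates (\ref{pkw-1})--(\ref{pkw-2-diff}) describe the wave-tension field $w^*$ and its difference $\delta w$. The starting points are the evolution equation (\ref{w-eq}) for $\partial_s w$ on a heat flow, which schematically reads $\partial_s w = \Delta w + \bigO(A_x \nabla_x w) + \bigO(\Psi_x^2 w) + 2F$ with $F = (\psi_\alpha \wedge \psi_i) D_i \psi^\alpha$, together with the boundary data (\ref{poke}) at $s=0$, and the already-established forcing estimates (\ref{force-eq}), (\ref{force-diff-eq}). The argument is a near-verbatim adaptation of the proof of \cite[Lemma 9.9]{tao:heatwave3}: that proof solved the covariant heat equation for $w$ forwards from $s=0$ using the parabolic regularity estimate (\ref{parreg}) (in the $N_k$ form), Duhamel's formula, the product estimates (\ref{prod1}), (\ref{second-prod}), and the bounds (\ref{parab-1-w})--(\ref{parab-3-w}) on $\Psi^*_x$, $A^*_x$, $\psi^*_s$, closing via the Gronwall-type Lemma \ref{gron-lem-2}. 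The only structural change here is that in \cite{tao:heatwave3} the initial data $w^*(0)$ vanished, whereas now we only know it is $O(\mu^2 \kappa c(2^{-2k}))$ in $N_k$; but this is precisely the size of the error terms one is already tracking, so it is harmless.

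First I would set up the integral equation. Writing $w^*(s) = e^{s\Delta} w^*(0) + \int_0^s e^{(s-s')\Delta}[\bigO(A^*_x \nabla_x w^*) + \bigO((\Psi^*_x)^2 w^*) + 2F^*](s')\,ds'$ and projecting to frequency $2^k$, I estimate the first term by (\ref{poke}) and the smoothing estimate (\ref{parreg-abstract2})-type bound on $e^{s\Delta}$ (which transfers the $\chi_{k \le k(s)}^j$ gain), the forcing term by (\ref{force-eq}), and the bilinear terms by (\ref{prod1}), (\ref{second-prod}), (\ref{parab-1-w}), (\ref{parab-2-w}). Introducing $f(s) := \sum_k \chi_{k=k(s)}^{-\delta_1}\,s^{?}\,\|P_k w^*(s)\|_{N_k(I\times\R^2)}$ (normalised so that the target bound reads $f(s)\lesssim \mu^{2-\eps} c(s)$), the bilinear terms contribute $\int_0^s (s-s')^{-1/2}(s'/s)^{\delta_1/10} c(s')^2 f(s')\,\frac{ds'}{s}$ to $f(s)$, while the forcing and initial-data terms contribute $O(\mu^{2-\eps} c(s))$; Lemma \ref{gron-lem-2} (or its $N_k$-adapted variant, as used in \cite{tao:heatwave3}) then yields $f(s)\lesssim \mu^{2-\eps}c(s)$, which is (\ref{pkw-1}). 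The $\partial_s w^*$ bound (\ref{pkw-2}) follows by differentiating the equation, or equivalently by reading off $\partial_s w^* = \Delta w^* + \ldots$ and substituting (\ref{pkw-1}), (\ref{force-eq}); the frequency-localisation factors $\chi_{k\ge k(s)}^{\delta_2/10}$, $\chi_{k\le k(s)}^j$ propagate exactly as in \cite[Lemma 9.9]{tao:heatwave3}. For the difference estimates (\ref{pkw-1-diff}), (\ref{pkw-2-diff}) one runs the same argument on $\delta w$, using the discretised Leibniz rule (\ref{disc-leib-eq}) to distribute the $\delta$, invoking (\ref{force-diff-eq}) for the forcing, (\ref{parab-4-w})--(\ref{parab-6-w}) and (\ref{darcy-3}) for the coefficient differences, and (\ref{poke}) for $\delta w(0) = w^*(0)$ (recall the $\Psi_{s,t,x}$ flow is a genuine wave map under (\ref{tpsipsi}), so $w(0)=0$ and $\delta w(0) = \tilde w(0) = w^*(0)$); here one power of $\mu$ is traded for the factor $\kappa$.

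The main obstacle is bookkeeping rather than conceptual: one must check that replacing the vanishing initial data of \cite[Lemma 9.9]{tao:heatwave3} by the nonzero but small data (\ref{poke}) does not destroy the frequency-envelope structure — in particular that $e^{s\Delta} P_k w^*(0)$, summed against the envelope $c$, still obeys the $\chi_{k\le k(s)}^j$ decay for every $j$; this is a routine consequence of the rapid decay of the heat kernel away from its frequency support, but it is the one place the argument genuinely departs from \cite{tao:heatwave3}. A secondary point requiring care is the $\mu$-bookkeeping: the improved trilinear estimate (\ref{trilinear-improv2}) supplies the two powers of $\mu$ in $F^*$, and one must verify that no power of $\mu$ is lost when passing from the $L^1_tL^2_x$-type control provided by (\ref{force-eq}) through the Gronwall iteration — but since the bilinear feedback terms carry $c(s')^2 = O(1)$ coefficients and no negative power of $\mu$, this is automatic. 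I would then remark that all other conclusions of this lemma, and of Theorem \ref{hyp-repair}, follow as in \cite[Section 9]{tao:heatwave3}.
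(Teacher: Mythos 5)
Your proposal is correct and follows essentially the same route as the paper: Duhamel from $s=0$ with the schematic heat equation \eqref{wsax}, the observation that the kernel of $P_k e^{(s-s')\Delta}$ supplies the $\chi_{k \leq k(s)}^j$ low-frequency gain for the nonzero initial data \eqref{poke}, the forcing bound \eqref{force-eq} for the $\mu^{2-\eps}$ factor, a scalar quantity $f_j(s)$ closed by the Gronwall-type Lemma \ref{gron-lem-2}, then \eqref{pkw-2} by reading off $\partial_s w^*$ from the equation, and the differenced version with the $\mu \mapsto \mu\kappa$ trade via \eqref{force-diff-eq}. The only detail you gloss over is the qualitative hypothesis $\lim_{s \to 0} f_j(s)=0$ needed to invoke Lemma \ref{gron-lem-2}, which the paper checks using \eqref{fl1l2}, Proposition \ref{qual-prop}, Bernstein and H\"older.
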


\begin{proof} We repeat the proof of \cite[Lemma 9.9]{tao:heatwave3} with some technical modifications, basically having to do with the slightly worse control on $w^*$ and $\delta w$ at low frequencies.

Fix $j$.  Without loss of generality we may take $j \geq 10$ (say).

From \eqref{w-eq} we have the schematic heat equation
\begin{equation}\label{wsax}
\partial_s w^* = \Delta w^* + \bigO( A^*_x \nabla_x w^* ) + \bigO( (\nabla_x A^*_x) w^* ) + \bigO( (\Psi^*_x)^2 w^* ) + \bigO( F^* ).
\end{equation}
From Duhamel's formula and \eqref{cov-wave} we conclude that
\begin{equation}\label{w-duh}
w^*(s) = e^{s\Delta} w^*(0) +
\int_0^s e^{(s-s')\Delta} (\bigO( A^*_x \nabla_x w^* ) + \bigO( (\nabla_x A^*_x) w^* ) + \bigO( (\Psi^*_x)^2 w^* ) + \bigO( F^* ))(s')\ ds'.
\end{equation}
Let
$$f_j(s) := \sup_k c(2^{-2k})^{-1} \chi_{k \leq k(s)}^{-j} \| P_k w^* \|_{N_k(I \times \R^2)},$$
thus
\begin{equation}\label{pkw}
\| P_k w^* \|_{N_k(I \times \R^2)} \lesssim f_j(s) c(2^{-2k}) \chi_{k \leq k(s)}^{j}
\end{equation}
for all $k$.  The claim \eqref{pkw-1} is then equivalent to showing that $f_j(s) \lesssim_{j,\eps} \mu^{2-\eps}$ for all $s>0$, $j \geq 0$, and $\eps > 0$.

Observe that the convolution kernel of $P_k e^{(s-s')\Delta}$ has total mass $O_j( \chi_{k \leq k(s-s')}^{100j} )$ for any $j$. From \eqref{w-duh}, \eqref{poke}, and Minkowski's inequality, we can thus bound $f_j(s)$ by
\begin{align*}
 \lesssim_j \mu^2 \kappa +
 &\sup_k c(2^{-2k})^{-1} \chi_{k \leq k(s)}^{-j} 
\int_0^s \chi_{k \leq k(s-s')}^{100j} \times \\
&\quad \| P_k (\bigO( A^*_x \nabla_x w^* ) + \bigO( (\nabla_x A^*_x) w^* ) + \bigO( (\Psi^*_x)^2 w^* ) + \bigO( F^* ))(s') \|_{N_k(I \times \R^2)}\ ds'.
\end{align*}
For any $s' > 0$, we see from \eqref{parab-1}, \eqref{parab-2}, \eqref{prod1} that
$$ \| \nabla_x A^*_x(s') \|_{S_{k(s')}(I \times \R^2)}, \| (\Psi^*_x)^2(s') \|_{S_{k(s')}(I \times \R^2)} \lesssim c(s')^2 (s')^{-1}$$
and thus by \eqref{pkw}, \eqref{second-prod} and dyadic decomposition we have
$$ \| P_k (\bigO( (\nabla_x \Psi^*_x) w^* ) + \bigO( (\Psi^*_x)^2 w^* ))(s') \|_{N_k(I \times \R^2)}
\lesssim_{j} (s'/s)^{\delta_2/2} c(s')^2 c(2^{-2k}) f_j(s') (s')^{-1} \chi_{k \geq k(s')}^{\delta_2/10} \chi_{k \leq k(s')}^{j}.$$
A similar argument also gives
$$\| P_k (\bigO( \Psi^*_x \nabla_x w^* ) \|_{N_k(I \times \R^2)}
\lesssim_{j} c(s')^2 c(2^{-2k}) (s'/s)^{\delta_2/2} f_j(s') (s')^{-1} \chi_{k \geq k(s')}^{\delta_2/10} \chi_{k \leq k(s')}^{j-1}.$$ 
Combining these estimates with \eqref{force-eq}, we conclude that
\begin{align*}
f_j(s) &\lesssim_{j,\eps} \mu^2 \kappa + \sup_k c(2^{-2k})^{-1} \chi_{k \leq k(s)}^{-j} \\
&\quad \int_0^s \chi_{k \leq k(s-s')}^{100j} [(s'/s)^{\delta_2/2} c(s')^2 c(2^{-2k}) f_j(s') \chi_{k \geq k(s')}^{\delta_2/10} \chi_{k \leq k(s')}^{j-1} \\
&\quad\quad + \mu^{2-\eps} c(s') \chi_{k \geq k(s')}^{\delta_2} \chi_{k \leq k(s')}^{100j} ]\ \frac{ds'}{s'}
\end{align*}
for all $\eps > 0$.

From \eqref{sm} we have
$$ c(s') \chi_{k \geq k(s')}^{\delta_2} \chi_{k \leq k(s')}^{100j} \lesssim_j c(2^{-2k}) \chi_{k \geq k(s')}^{\delta_2/2} \chi_{k \leq k(s')}^{99j}$$
and
$$ \chi_{k \leq k(s)}^{-j} \lesssim_j \chi_{k \leq k(s-s')}^{-j} \chi_{k \leq k(s')}^{-j}$$
while a direct computation shows that
$$ \int_0^s \chi_{k \leq k(s-s')}^{99j} \chi_{k \geq k(s')}^{\delta_2/2} \chi_{k \leq k(s')}^{98j}\ \frac{ds'}{s'}
\lesssim_j 1$$
so we can simplify slightly to
\begin{align*}
f_j(s) &\lesssim_{j,\eps} \mu^{2-\eps} + \sup_k \\
&\quad \int_0^s \chi_{k \leq k(s-s')}^{99j} (s'/s)^{\delta_2/2} c(s')^2 f_j(s') \chi_{k \geq k(s')}^{\delta_2/10} \chi_{k \leq k(s')}^{-1}\ \frac{ds'}{s'}.
\end{align*}
Bounding 
$$ \chi_{k \leq k(s-s')} \chi_{k \leq k(s')}^{-1} \lesssim ((s-s')/s)^{-1/2}$$
we conclude the integral inequality
$$ f_j(s) \lesssim_{j,\eps} \mu^{2-\eps} + \int_0^s ((s-s')/s)^{-1/2} (s'/s)^{\delta_2/2} c(s')^2 f_j(s') \frac{ds'}{s'}$$
for all $\eps > 0$. Also, from \eqref{fl1l2}, Proposition \ref{qual-prop}, Bernstein's inequality, and H\"older's inequality we have $f_j(s) \to 0$ as $s \to 0$.  Applying Lemma \ref{gron-lem-2} we have $f_j(s) \lesssim_{j,\eps} \mu^{2-\eps}$, and the claim \eqref{pkw-1} follows.

The claim \eqref{pkw-2} then follows by using \eqref{wsax} to write $\partial_s w^*$ in terms of $\nabla_x^2 w^*$ (which can be controlled by \eqref{pkw-1}), together with several additional terms which were already estimated in the required manner in the first part of the proof.

The proof of \eqref{pkw-1-diff} follows from \eqref{pkw-1} as in previous propositions.  A little more specifically, we define
$$\delta f_j(s) := \sup_k \kappa^{-1} c(s)^{-1} \chi_{k \leq k(s)}^{-j} \| P_k \delta w \|_{N_k(I \times \R^2)}$$
and repeat the above arguments (and using the bound \eqref{pkw-1} just established) to eventually obtain the integral inequality
$$ \delta f_j(s) \lesssim_{j,\eps} \mu^{1-\eps} + \int_0^s ((s-s')/s)^{-1/2} (s'/s)^{\delta_2/2} c(s')^2 \delta f_j(s') \frac{ds'}{s'}$$
Using the continuity method as before we obtain $\delta f_j(s) \lesssim_{j,\eps} \mu^{1-\eps}$ for all $s>0$, $j \geq 0$, and $\eps > 0$, giving \eqref{pkw-1-diff}.  The proof of \eqref{pkw-2-diff} then follows by the differenced version of \eqref{wsax}.  We leave the details to the reader.
\end{proof}

The bounds on $\partial_s w^*$ and $\partial_s \delta w$ in the above lemma are identical to those in \cite[Lemma 9.9]{tao:heatwave3}.  We may then repeat the proof of \cite[Lemma 9.10]{tao:heatwave3} to conclude that
\begin{equation}\label{psis-iter-eq} \| P_k \psi^*_s(s) \|_{S_k(I \times \R^2)} \lesssim_{j,\eps}  \| \nabla_{t,x} P_k \psi^*_s(s,t_0) \|_{L^2_x(\R^2)} + \mu^{2-\eps} c(s) s^{-1} \chi_{k \geq k(s)}^{\delta_2/10} \chi_{k \leq k(s)}^j 
\end{equation}
and
\begin{equation}\label{psis-iter-diff} \| P_k \delta \psi_s(s) \|_{S_{k}(I \times \R^2)} \lesssim_{j,\eps} \| \nabla_{t,x} P_k \delta \psi_s(s,t_0) \|_{L^2_x(\R^2)} + \mu^{1-\eps} \kappa c(s) s^{-1} \chi_{k \geq k(s)}^{\delta_2/10} \chi_{k \leq k(s)}^j 
\end{equation}
for all $j \geq 0$, $\eps > 0$, $s > 0$, and integers $k$.  In particular, by \eqref{tpsipsi}, one has
$$ \| P_k \delta \psi_s(s) \|_{S_{k}(I \times \R^2)} \lesssim_{\eps} \mu^{1-\eps} \kappa c(s) s^{-1} \chi_{k \geq k(s)}^{\delta_2/10} \chi_{k \leq k(s)}^{100},$$
which by the analogue of \eqref{ds1-eq} for $S_k$ gives \eqref{darcy} as required.

\section{Frequency delocalisation implies spacetime bound}\label{freqbound-sec}

We now begin the proof of Theorem \ref{freqbound}.  We omit the superscript $(n)$ from the solution $\phi$, and allow all implied constants to depend on $E_0$ and $\eps$.  

\subsection{Construction of the low frequency component}

The low frequency component can be constructed globally without difficulty.  We let $\phi^{lo}$ be the evolution of $\phi$ under the heat flow by $K_n^{1/2}$, thus
$$ \phi^{lo} = \phi(K_n^{1/2}),$$
and the associated heat flow resolution $\psi^{lo}_s$ is given by
$$
\psi^{lo}_s(s) := \psi_s(s+K_n^{1/2})
$$
for all $s \geq 0$, where $\psi_s$ is a heat flow resolution of $\phi$.  In particular, the energy spectral distribution of $\phi^{lo}[0]$ is simply the translation of that of $\phi[0]$ by $K_n^{1/2}$:
$$ \ESD(\phi^{lo}[0])(s) = \ESD(\phi[0])(s + K_n^{1/2}).$$
From \eqref{enbound}, \eqref{energy-ident} one has
\begin{equation}\label{eo-1}
 \int_0^\infty \ESD(\phi[0])(s)\ ds = E_0 + o_{n \to \infty}(1)
\end{equation}
and thus by \eqref{fdl-1}, \eqref{fdl-2}
$$ \eps/2 - o_{n \to \infty}(1) \leq \int_0^\infty \ESD(\phi^{lo}[0])(s)\ ds \leq E_0 - \eps/2 + o_{n \to \infty}(1).$$
Thus, by \eqref{energy-ident}
\begin{equation}\label{energy-lo}
\eps/4 \leq \E( \phi^{lo}[0] ) \leq E_0 - \eps/4
\end{equation}
if $n$ is large enough.  But by hypothesis, $E_0 - \eps/4$ is good.  Then by Lemma \ref{quant}, we can extend $\phi^{lo}$ to a global classical wave map, which will then have a $(O(1), 1)$-entropy of $O(1)$.  

Let $\mu > 0$ be a small quantity (depending only on $\eps$) to be chosen later, and let $\mu' > 0$ be an even smaller quantity (depending on $\mu, \eps$) to be chosen later.  We allow the decay rates in the $o_{n \to \infty}()$ notation to depend on $\mu,\mu'$. 
By Proposition \ref{symscat}(iii), we may find a partition ${\mathcal I}_n$ of $I_n$ into $O_{\mu'}(1)$ intervals $J$, such that $\phi^{lo}$ is a $(O(1),\mu')$-wave map on each such $J$.

By Lemma \ref{gapstable}, we have
$$ \dist_\Energy( \phi(K_n^{1/4})[0], \phi^{lo}[0] ) = o_{n \to \infty}(1).$$

To finish the proof of Theorem \ref{freqbound}, it now suffices to show

\begin{proposition}[$\phi$ stays close to $\phi^{lo}$]\label{sweden} Let $n$ be a sequence going to infinity, let $L_n \to \infty$ be such that $L_n \leq K_n^{1/4}$, let $J \in {\mathcal I}_n$, and let $t_n \in J$ be such that
\begin{equation}\label{phocus}
 \int_{1/L_n \leq s \leq L_n} \ESD(\phi[t_n])(s)\ ds = o_{n \to \infty}(1)
\end{equation}
and
\begin{equation}\label{phocus-2}
\dist_\Energy( \phi^{lo}[t_n], \phi(L_n^{1/4})[t_n] ) = o_{n \to \infty}(1)
\end{equation}
Then there exists $L'_n \leq L_n$ with $L'_n \to \infty$ such that
\begin{equation}\label{summer}
\int_{1/L'_n \leq s \leq L'_n} \ESD(\phi[t])(s)\ ds = o_{n \to \infty}(1)
\end{equation}
and
\begin{equation}\label{simmer2}
\dist_\Energy( \phi^{lo}[t], \phi((L'_n)^{1/4})[t] ) = o_{n \to \infty}(1)
\end{equation}
for all $t \in J$.  Furthermore, $(\phi\downharpoonright_J,J)$ has a $(O(1),1)$-entropy of $O_{\mu}(1)$.
\end{proposition}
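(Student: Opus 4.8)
The plan is to implement the scheme laid out schematically in \eqref{tstruct}. Fix a sequence $n \to \infty$ and data as in the hypotheses, and recall that we are working on a single interval $J \in {\mathcal I}_n$ on which $\phi^{lo}$ is an $(O(1),\mu')$-wave map of energy $O(1)$. At the initial time $t_n$ we resolve $\phi[t_n]$ into a heat-flow instantaneous dynamic field $\psi_s[t_n]$, and use \eqref{phocus} together with a Littlewood-Paley cutoff at the heat-temporal scale $s \sim 1$ (or, more carefully, a smooth cutoff on the heat variable separating $s \leq 1/\sqrt{L_n}$ from $s \geq \sqrt{L_n}$ with the gap region having $\ESD$ of size $o_{n\to\infty}(1)$) to split
$$
\psi_s[t_n] = \tilde\psi^{lo}_s[t_n] + \tilde\psi^{hi}_s[t_n] + \operatorname{error},
$$
where $\tilde\psi^{lo}_s$ is the component at low frequencies (large $s$) and $\tilde\psi^{hi}_s$ at high frequencies (small $s$), and the error is controlled in $\tilde\dist_\Energy$ by the $\ESD$ in the gap, hence $o_{n\to\infty}(1)$ by \eqref{phocus}. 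The key point is that this linear decomposition breaks the heat-flow equation only by a small amount: one verifies that $\tilde\psi^{lo}_s[t_n]$ and $\tilde\psi^{hi}_s[t_n]$ satisfy the hypotheses \eqref{psisjk-abstract-instant}, \eqref{psisjk-star-abstract-instant} of Theorem \ref{parab-thm-instant} with $\kappa = o_{n\to\infty}(1)$, using Proposition \ref{corbound-freq} for the frequency-envelope bounds on $\psi_s[t_n]$ and the fact that a sharp frequency cutoff commutes with the Laplacian up to lower-order connection terms which are quadratic and supported away from the cutoff scale.

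Next I would apply Theorem \ref{parab-thm-instant} (instantaneous parabolic repair) to obtain genuine instantaneous heat flows $\psi^{lo}_s[t_n]$ (which by construction is essentially $\psi^{lo}_s[t_n]$ up to $o(1)$ — indeed for the low component one can simply take the actual heat evolution $\psi_s(s + \sqrt{L_n})[t_n]$, cf.\ the construction of $\phi^{lo}$, and invoke \eqref{phocus-2}) and $\psi^{hi}_s[t_n]$. The energy identity \eqref{energy-ident} together with \eqref{phocus}, \eqref{phocus-2} shows that the reconstructed data $\phi^{lo}[t_n]$ has energy in $[\eps/4, E_0 - \eps/4]$ (as in \eqref{energy-lo}) and $\phi^{hi}[t_n]$ has energy at most $E_0 - \eps/8 < E_0$, both strictly less than $E_0$; hence by hypothesis both energies are good, so Lemma \ref{quant} extends them to global classical wave maps $\phi^{lo}, \phi^{hi}$ on $J$ with $(O(1),1)$-entropy $O(1)$, and by Proposition \ref{symscat}(iii) with $(O(1),\mu)$-entropy $O_\mu(1)$. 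Resolving these into heat flows $\psi^{lo}_s, \psi^{hi}_s$ on $J$ and using the $S^1_\mu(J)$ bounds from Definition \ref{aes}, I would then form the superposition $\tilde\psi_s := \psi^{lo}_s + \psi^{hi}_s$ on $J$. The separation in frequency of $\psi^{lo}_s$ and $\psi^{hi}_s$ (propagated from $t_n$ by Lemma \ref{freqstable2}, applied to each of $\phi^{lo},\phi^{hi}$ — this is Ingredient 5 for the frequency case) forces the cross terms in the heat-tension field $\tilde h$ and wave-tension field $\tilde w$ of $\tilde\psi_s$ to be small: the bilinear/trilinear estimates \eqref{trilinear-improv}, \eqref{second-prod} gain a favorable power $\chi^{\delta_1}$ of the frequency ratio, and since that ratio is $\leq 1/\sqrt{L_n} \to 0$ one gets \eqref{psisjk-star-hyp} and \eqref{wave-off} with $\kappa = o_{n\to\infty}(1)$; meanwhile \eqref{psisjk-hyp} holds with a frequency envelope of energy $O(E_0)$ by superposition. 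Finally \eqref{energy-close} holds because $\tilde\psi_s[t_n]$ differs from $\psi_s[t_n]$ only by the repair errors and the gap error, all $o_{n\to\infty}(1)$.

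With these hypotheses verified I would invoke Theorem \ref{hyp-repair} to conclude that the true solution $\psi_s$ (the heat-flow resolution of $\phi$ on $J$ with data $\psi_s[t_n]$) satisfies $\dist_{S^1_\mu(J)}(\psi_s, \tilde\psi_s) = o_{n\to\infty}(1)$, and in particular by \eqref{fslice-2} and Theorem \ref{apriori-thm2}(iv) that $\phi$ stays within $o_{n\to\infty}(1)$ of $\tilde\psi_s$ at every $(s,t) \in \R^+ \times J$; combined with the separation this gives \eqref{summer} and \eqref{simmer2} for all $t \in J$ (taking $L'_n \to \infty$ sufficiently slowly), and the $(O(1),\mu)$-entropy bound $O_\mu(1)$ for $(\phi\downharpoonright_J, J)$ follows from $\|\psi_s\|_{S^1_\mu(J)} \leq \|\tilde\psi_s\|_{S^1_\mu(J)} + o(1) \leq \|\psi^{lo}_s\|_{S^1_\mu(J)} + \|\psi^{hi}_s\|_{S^1_\mu(J)} + o(1) = O_\mu(1)$. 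The main obstacle I anticipate is the careful bookkeeping in the separation-propagation step: one must show that the $(A,\mu)$-entropy bounds on $\phi^{lo}, \phi^{hi}$ (which a priori only control $S^1_\mu$ norms, not pointwise frequency localization) can be upgraded via Lemma \ref{freqstable2} to genuine frequency-envelope separation that persists on all of $J$, and that the $\mu$-dependent losses in \eqref{trilinear-improv} do not overwhelm the $\chi^{\delta_1}$ gains — this requires choosing $\mu$ small depending on $\eps$ and then $\mu'$ (hence the partition ${\mathcal I}_n$) small depending on $\mu$, exactly the order of quantifiers set up before the proposition, and then sending $n \to \infty$ last so that $1/\sqrt{L_n}$ beats any fixed power of $\mu$.
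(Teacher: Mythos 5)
Your architecture is essentially the paper's: cut $\psi_s[t_n]$ in the heat variable to form the high-frequency piece, use the globally pre-constructed $\phi^{lo}$ as the low piece, repair with Theorem \ref{parab-thm-instant}, verify the energy drop below $E_0$, extend by Lemma \ref{quant}, superpose, and close with Theorem \ref{hyp-repair}. The genuine gap is your claim that the wave-tension field of the superposition satisfies \eqref{wave-off} with $\kappa = o_{n\to\infty}(1)$ because the trilinear estimates ``gain a favorable power $\chi^{\delta_1}$ of the frequency ratio'' and the ratio tends to zero. This fails for the low $\times$ high $\to$ high interaction: in the expansion of $\tilde D^\alpha \tilde\psi_\alpha(0)$, the term consisting of a single high-frequency factor $\psi^{hi}_s(s_1)$ multiplying a null form of two low-frequency factors produces output at the high frequency, and in that configuration the gain factors $\chi_{k=\max(k_1,k_2,k_3)}^{\eps\delta_1}\chi_{k_1\le\min(k_2,k_3)}^{\eps\delta_1}$ in \eqref{trilinear-improv} are both $O(1)$ --- the frequency separation buys nothing. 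This is exactly the point of the remark after Proposition \ref{twavemap} (and the footnote in Section \ref{freqbound-sec}): the bound obtainable is only $o_{\mu'\to 0}(c(2^{-2k}))$, not $o_{n\to\infty}$, and the smallness comes from the extra power of $\mu'$ available in \eqref{trilinear-improv} precisely because $\phi^{lo}$ was pre-partitioned into $(O(1),\mu')$-wave maps. You gesture at the quantifier order $\mu'\ll\mu\ll\eps$ at the end, but the mechanism you assign to \eqref{wave-off} is wrong, and with it the claim that a single application of Theorem \ref{hyp-repair} yields $o_{n\to\infty}(1)$ closeness of $\psi_s$ to $\tilde\psi_s$.

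Two related points. First, after Theorem \ref{symscat}(iii) you only control $\phi^{hi}$, and hence the superposition, in $S^1_\mu$ on each of the $O_\mu(1)$ sub-intervals $K\subset J$, not on $J$ itself; so Theorem \ref{hyp-repair} must be applied iteratively over that partition, propagating the initial-data closeness from one $K$ to the next as in Proposition \ref{kslice}, with errors $o_{\mu'\to 0;\mu}(1)$ --- still sufficient for the entropy bound since $\mu'$ is chosen after $\mu$, but not what you wrote. Second, the conclusions \eqref{summer} and \eqref{simmer2}, which are what allow the proposition to be iterated across the intervals of ${\mathcal I}_n$, require their own argument: persistence of the frequency gap at all $t\in J$ (Lemma \ref{freqstable2} and Proposition \ref{slice-dice-freq}), Lemma \ref{gapstable}, and a further application of Theorem \ref{hyp-repair} comparing $\psi^{lo}$ with the shifted heat flow $\psi_s(\cdot+(L'_n)^{1/4})$. ``Taking $L'_n\to\infty$ sufficiently slowly'' does not by itself produce these statements.
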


Indeed, by iterating this proposition once for each interval $J$ in ${\mathcal I_n}$ (starting with $L_n = K_n^{1/4}$ and decreasing $L_n$ each time), we conclude that $(\phi,I_n)$ has a $(O(1),1)$-entropy of $O(1)$, as required.

\subsection{Control of initial data}

We begin the proof of Proposition \ref{sweden}.  The first step is to control the differentiated fields $\Psi_{s,t,x}$ at time $t_n$:

\begin{lemma}\label{psitn-lem}  There exists a frequency envelope $c = c^{(n)}$ of energy $O(1)$ (depending on $n$ and $t_n$) such that
\begin{equation}\label{css}
\int_{L_n^{-0.9} \leq s \leq L_n^{0.9}} c(s)^2 \frac{ds}{s} = o_{n \to \infty}(1)
\end{equation}
and such that 
\begin{equation}\label{l2s-fixed-special-psitn}
\begin{split}
\| \nabla_x^{j+1} \Psi_{t,x}(s,t_n) \|_{L^2_x(\R^2)} 
+ \| \nabla_x^{j} \Psi_{t,x}(s,t_n) \|_{L^\infty_x(\R^2)} \quad &\\
+ \| \nabla_x^j \nabla_{t,x} \Psi_x(s,t_n) \|_{L^2_x(\R^2)} +
\| \nabla_x^j \psi_s(s,t_n) \|_{L^2_x(\R^2)} \quad &\\
+ \| \nabla_x^{j-1} \nabla_{t,x} \psi_s(s,t_n) \|_{L^2_x(\R^2)}
&\lesssim_{j} c(s) s^{-(j+1)/2} 
\end{split}
\end{equation}
for all $s > 0$ and $j \geq 0$, with the convention that we drop all terms involving $\nabla_x^{-1}$. In particular, from Definition \ref{sobspace} we have
\begin{equation}\label{spak}
\| s^{1/2} \Psi_{t,x}(s,t_n) \|_{A^k(s)} 
+  \| s \Psi_s(s,t_n) \|_{A^k(s)} 
\lesssim_k c(s)
\end{equation}
for all $s>0$ and $k \geq 0$.  Also, we have
\begin{equation}\label{jojoe}
\| \partial_s^i \nabla_x^j \nabla_{t,x} \psi_s(s,t_n) \|_{\dot H^0_{k(s)}} \lesssim_{i,j} c(s) s^{-(2i+j+2)/2}
\end{equation}
for all $s>0$ and $j,i \geq 0$.
\end{lemma}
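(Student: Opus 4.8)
The plan is as follows. First, \eqref{enbound} together with conservation of energy for classical wave maps gives $\E(\phi[t_n]) = E_0 + o_{n\to\infty}(1)$, so in particular $\E(\phi[t_n]) = O(1)$. Applying Proposition \ref{corbound-freq} at the time $t_0 = t_n$ therefore produces a frequency envelope $c = c^{(n)}$ of energy $O(1)$ for which \eqref{al1}--\eqref{al7} hold. Translating the $\dot H^0_{k(s)}$ bounds \eqref{al1}--\eqref{al5} into $L^2_x$ and $L^\infty_x$ bounds by Bernstein's inequality (the weight $\chi_{k(s)=k'}^{-\delta_1}$ in the $\dot H^0_{k(s)}$ norm already encodes the localization to frequencies $\lesssim s^{-1/2}$ coming from parabolic smoothing) yields \eqref{l2s-fixed-special-psitn}; then \eqref{spak} follows from Definition \ref{sobspace}, and \eqref{jojoe} from the definition \eqref{ek-def} of $\dot H^0_{k(s)}$ together with \eqref{al3}--\eqref{al5}. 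This part is a restatement of Proposition \ref{corbound-freq}, parallel to the derivation of Lemma \ref{freqstable2}, and involves no new ideas.

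The substance of the lemma is the gap estimate \eqref{css}, which must be extracted from the delocalization hypothesis \eqref{phocus}. For this one revisits the construction of $c$ in the proof of Proposition \ref{corbound-freq}: up to constants $c$ may be taken to be the standard envelope-smoothing
$$ c(s)^2 \;=\; \sup_{s'>0}\, \min\!\big((s/s')^{2\delta_0},\,(s'/s)^{2\delta_0}\big)\, g(s')^2, \qquad g(s)^2 := \sum_{j=0}^{10} s^{\,j+2}\, \|\nabla_x^j \nabla_{t,x}\psi_s(s,t_n)\|_{\dot H^0_{k(s)}(\R^2)}^2 $$
of the raw derivative density $g^2$. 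The first step is to transfer the gap: for a suitable absolute constant $C_0 = O(1)$, writing $L_n' := L_n/C_0$ (so $L_n'\to\infty$), I claim
$$ \int_{1/L_n'}^{L_n'} g(s)^2\, \frac{ds}{s} \;=\; o_{n\to\infty}(1). $$
Indeed, by the expansions \eqref{psi-odd}, \eqref{psi-even} of the differentiated fields in terms of $\psi_s$, the heat-flow equations \eqref{psis-eq}--\eqref{psit-eq}, and the fixed-time parabolic regularity of Proposition \ref{corbound} — which controls the nonlinear forcing terms by $\psi_s$ on comparable scales times factors that are $O(1)$ — the quantity $g(s)^2$ is dominated by a weighted $L^2$-in-scale average of the energy-spectral density over scales $s' \in [s/C_0, C_0 s]$; since $\ESD(\phi[t_n])(s) = \|\psi_s(s)\|_{L^2_x}^2 + \|D_x\psi_t(s)\|_{L^2_x}^2 + \tfrac12\|\psi_t\wedge\psi_x(s)\|_{L^2_x}^2$ by \eqref{esd-def}, integrating this bound over $[1/L_n', L_n']$ (whose $C_0$-enlargement lies inside $[1/L_n, L_n]$) and invoking \eqref{phocus} gives the claim. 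This step is a localized, quantitative version of the reasoning in the proof of Lemma \ref{gapstable}, and is where the Gronwall inequalities Lemmas \ref{gron-lem}, \ref{gron-lem-2} enter.

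Granting the displayed bound, \eqref{css} follows by passing through the smoothing kernel. By Fubini,
$$ \int_{L_n^{-0.9}}^{L_n^{0.9}} c(s)^2\, \frac{ds}{s} \;\lesssim\; \int_0^\infty g(s')^2 \left( \int_{L_n^{-0.9}}^{L_n^{0.9}} \min\!\big((s/s')^{2\delta_0}, (s'/s)^{2\delta_0}\big)\, \frac{ds}{s} \right) \frac{ds'}{s'}, $$
and one splits the $s'$-integral at $s' \in [1/L_n', L_n']$ versus its complement. On the first range the inner bracket is $\lesssim \delta_0^{-1} = O(1)$, so this contributes $\lesssim \int_{1/L_n'}^{L_n'} g(s')^2\frac{ds'}{s'} = o_{n\to\infty}(1)$. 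On the complement, for $s \in [L_n^{-0.9},L_n^{0.9}]$ one has $|\log(s'/s)| \geq 0.1\log L_n - O(1)$, so $\min(\cdots) \lesssim L_n^{-0.2\delta_0}$, the inner bracket is $\lesssim L_n^{-0.2\delta_0}\log L_n$, and since $\int_0^\infty g(s')^2\frac{ds'}{s'} \lesssim \E(\phi[t_n]) = O(1)$ (by Proposition \ref{corbound}), this contribution is $\lesssim L_n^{-0.2\delta_0}\log L_n = o_{n\to\infty}(1)$. Adding the two gives \eqref{css}, completing the lemma.

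The main obstacle is the middle step — transferring the delocalization gap for $\ESD$ into a gap for the full derivative density $g^2$ entering the envelope — since $g$ involves up to ten spatial derivatives of $\nabla_{t,x}\psi_s$ together with a time derivative, and one must run the parabolic-regularity/Gronwall argument so that no multiplicative loss in the scale $s$ occurs (only absolute constants $C_0$), which is precisely why the window for $g$ must be taken slightly inside the window $[1/L_n,L_n]$ and survives because $L_n\to\infty$. This is the technical heart and mirrors the proof of Lemma \ref{gapstable}; the remaining steps are bookkeeping with Bernstein's inequality and the $\dot H^m_k$ calculus.
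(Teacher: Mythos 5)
Your overall architecture matches the paper's: take $c$ to be the $\delta_0$-power smoothing of the raw density $g$ built from $\|\nabla_x^j\nabla_{t,x}\psi_s(s,t_n)\|_{\dot H^0_{k(s)}}$ (this is exactly the envelope the paper uses, and \eqref{l2s-fixed-special-psitn}--\eqref{jojoe} do follow from the parabolic regularity theory for that envelope), and your final Fubini/kernel-splitting step -- converting integrated smallness of $g$ on $[1/L_n,L_n]$ into \eqref{css} on the smaller window $[L_n^{-0.9},L_n^{0.9}]$ via the total energy bound and the power decay of the smoothing weight -- is precisely what "inserting into the definition of $c$" amounts to.

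The gap is in your middle step, which is the only step carrying real content. You assert that $g(s)^2$ is pointwise dominated by a local average of $\ESD(\phi[t_n])$ over scales $s'\sim s$, citing \eqref{psi-odd}, \eqref{psi-even}, the heat flow equations, Proposition \ref{corbound} and Lemmas \ref{gron-lem}, \ref{gron-lem-2}; none of these deliver that claim. The obstruction is that \eqref{esd-def} controls only the divergence $\psi_s=D_i\psi_i$, the covariant derivative $D_x\psi_t$, and the wedge $\psi_t\wedge\psi_x$, whereas $g$ involves up to ten non-covariant spatial derivatives of $\nabla_{t,x}\psi_s$ measured in $\dot H^0_{k(s)}$ (a norm that penalizes low frequencies and is not majorized by $L^2$); Proposition \ref{corbound} gives only $O(1)$ bounds with no smallness, and the Gronwall lemmas require envelope-type kernels that the localized raw quantities do not satisfy, so "mirroring Lemma \ref{gapstable}" does not produce the claimed domination. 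What is actually needed (and what the paper does) is an integrated rather than pointwise transfer: from \eqref{phocus} and the Bochner-Weitzenb\"ock inequality one obtains $\int_{1/L_n}^{L_n}\|D_x\psi_{t,x}(s)\|_{L^2_x}^2\,ds=o_{n\to\infty}(1)$ -- this is the step that upgrades control of the divergence to control of the full covariant gradient -- then the diamagnetic inequality and Gagliardo-Nirenberg interpolation with \eqref{corheat} give smallness of $\int s^{-2/p}\|\psi_{t,x}(s)\|_{L^p_x}^2\,ds$, and interpolating these small integrated quantities against the $O_j(1)$ integrated bounds with many derivatives from Proposition \ref{corbound} (together with the schematic expansion of $\nabla_{t,x}\psi_s$ in terms of $\nabla_x D_x\psi_{t,x}$, connection terms, and cubic terms) yields $\int_{1/L_n}^{L_n} g(s)^2\,\frac{ds}{s}=o_{n\to\infty}(1)$. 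With that in hand your last step closes the proof; without identifying Bochner-Weitzenb\"ock, the diamagnetic inequality, and the interpolation against Proposition \ref{corbound}, the transfer remains an assertion, and the pointwise form you propose is both stronger than necessary and problematic for the cubic and connection terms, whose natural bounds involve the globally smoothed envelope rather than the local spectral density.
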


\begin{proof} We omit the time index $t_n$ for brevity.  Applying \cite[Theorem 7.2]{tao:heatwave3} and\footnote{Lemma 7.6 of \cite{tao:heatwave3} only yields \eqref{jojoe} for $j \leq 10$ and $i=0$, but the $j>10$ cases and $i=0$ can then be deduced from the $j \leq 10$ case and the estimates in \eqref{l2s-fixed-special-psitn}, and then the $i>0$ case then follows by using the equations of motion to replace $\partial_s$ with spatial derivatives and using \eqref{l2s-fixed-special-psitn} again.} \cite[Lemma 7.6]{tao:heatwave3}, one can find an envelope $c$ obeying the bounds \eqref{jojoe}; indeed, from the construction in the proof of that theorem, one can take
\begin{equation}\label{cs-mash}
 c(s) := \sum_{j=0}^{10} \sup_{s' > 0} \min( (s'/s)^{\delta_0}, (s/s')^{\delta_0} )
(s')^{(j+2)/2} \| \nabla_{t,x} \nabla_x^j \psi_s(s') \|_{\dot H^0_{k(s')}}.
\end{equation}
It remains to establish \eqref{css}.  We first observe from \eqref{phocus} and \eqref{esd-def}, and the Bochner-Weitzenb\"ock inequality
\begin{align*}
\int_{\R^2} D_j \psi_i(s,x)|^2 + \frac{1}{2} |\psi_i \wedge \psi_j(s,x)|^2\ dx &\leq \int_{\R^2} |D_i \psi_i(s,x)|^2\ dx \\
&= \|\psi_s(s)\|_{L^2_x(\R^2)}
\end{align*}
we have
\begin{equation}\label{joo}
\int_{1/L_n}^{L_n} \| D_x \psi_{t,x}(s)\|_{L^2_x(\R^2)}^2\ ds = o_{n \to \infty}(1).
\end{equation}
From \eqref{joo} and the diamagnetic inequality one has
$$ \int_{1/L_n}^{L_n} \| \nabla_x |\psi_{t,x}(s)| \|_{L^2_x(\R^2)}^2\ ds = o_{n \to \infty}(1);$$
meanwhile, from \eqref{corheat} one has
$$ \int_{1/L_n}^{L_n} s^{-2/p} \| \psi_{t,x}(s) \|_{L^p_x(\R^2)}^2\ ds \lesssim_p 1$$
for any $2 < p < \infty$.  Interpolating these facts using the Gagliardo-Nirenberg and Cauchy-Schwarz inequalities, we conclude that
$$ \int_{1/L_n}^{L_n} s^{-2/p} \| \psi_{t,x}(s) \|_{L^p_x(\R^2)}^2\ ds = o_{n \to \infty;p}(1)$$
for any $2 < p < \infty$.  Also, from \eqref{l2s-fixed-special-psitn} one has
$$ \int_0^\infty s^j \| \nabla_x^j D_x \psi_{t,x}(s)\|_{L^2_x(\R^2)}^2\ ds \lesssim_j 1$$
for any $j \geq 0$,
so by interpolation with \eqref{joo}
$$ \int_0^\infty s^j \| \nabla_x^j D_x \psi_{t,x}(s)\|_{L^2_x(\R^2)}^2\ ds = o_{n \to \infty;j}(1)$$
for any $j \geq 0$.  Meanwhile, from
From \eqref{psit-eq} one has
\begin{align*}
\nabla_{t,x} \psi_s &= D_{t,x} \psi_s - A_{t,x} \psi_s \\
&= \partial_s \psi_{t,x} + \bigO( A_{t,x} \psi_s ) \\
&= \bigO( \nabla_x D_x \psi_{t,x} ) + \bigO( A_x D_x \psi_{t,x} ) + \bigO( \psi_x \psi_x \psi_{t,x} ) + \bigO( A_{t,x} \psi_s )
\end{align*}
and from Proposition \ref{corbound} one has $\|A_{t,x}(s)\|_{L^\infty_x(\R^2)} = O(s^{-1/2})$, $\| \psi_x(s)\|_{L^p(\R^2)} = O(s^{1/p-1/2})$.  Putting all this together we see that
$$ \int_{1/L_n}^{L_n} s\| \partial_t \psi_s(s)\|_{L^2_x(\R^2)}^2\ ds = o_{n \to \infty}(1)$$
while from \eqref{l2s-fixed-special-psitn} one has
$$ \int_{1/L_n}^{L_n} s^{j+1} \| \nabla_{t,x} \nabla_x^j \psi_s(s)\|_{L^2_x(\R^2)}^2\ ds \lesssim_j 1.$$
Interpolating this, one obtains
$$ \int_{1/L_n}^{L_n} \| \nabla_t \nabla_x^j \psi_s(s)\|_{\dot H^1_{k(s)}(\R^2)}^2\ ds = o_{n \to \infty;j}(1)$$
for each $j$.  Inserting this into \eqref{cs-mash} one obtains \eqref{css} as required.
\end{proof}

This has the following consequence:

\begin{lemma}[Some energy at low frequencies]\label{some-energy}
$$ \frac{1}{2} \int_{\R^2} |\psi_x(L_n^{0.8},t_n,x)|^2 + |\psi_t(L_n^{0.8},t_n,x)|^2 \ dx \geq \eps/8 - o_{n \to \infty}(1).$$
\end{lemma}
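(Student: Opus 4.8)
The plan is to compare the energy of $\phi[t_n]$ split into low and high frequencies with the energy of the low-frequency component $\phi^{lo}[t_n]$, which by construction has already absorbed the high frequencies. We know from \eqref{energy-lo} (applied at $t_n$ via conservation of energy under the wave map, together with the good-energy hypothesis which guarantees $\phi^{lo}$ is a global wave map, so its energy is constant in time) that $\E(\phi^{lo}[t_n]) \geq \eps/4$, and from \eqref{phocus-2} that $\phi(L_n^{1/4})[t_n]$ is within $o_{n\to\infty}(1)$ of $\phi^{lo}[t_n]$ in the energy metric; hence $\E(\phi(L_n^{1/4})[t_n]) \geq \eps/4 - o_{n\to\infty}(1)$. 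The energy identity \eqref{energy-ident} then gives
\begin{equation*}
\int_0^\infty \ESD(\phi(L_n^{1/4})[t_n])(s)\ ds = \int_0^\infty \ESD(\phi[t_n])(s+L_n^{1/4})\ ds = \int_{L_n^{1/4}}^\infty \ESD(\phi[t_n])(s)\ ds \geq \eps/4 - o_{n\to\infty}(1).
\end{equation*}

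Next I would use the hypothesis \eqref{phocus}, which asserts $\int_{1/L_n \leq s \leq L_n} \ESD(\phi[t_n])(s)\ ds = o_{n\to\infty}(1)$, to conclude that almost all of the energy sitting at scales $s \geq L_n^{1/4}$ actually sits at scales $s \geq L_n$; more precisely, since $L_n^{1/4} \le L_n$, we have $\int_{s \geq L_n^{1/4}} \ESD(\phi[t_n]) = \int_{L_n^{1/4} \leq s \leq L_n} \ESD(\phi[t_n]) + \int_{s \geq L_n} \ESD(\phi[t_n])$, and the first integral is $o_{n\to\infty}(1)$ by \eqref{phocus}. Hence $\int_{s \geq L_n} \ESD(\phi[t_n])(s)\ ds \geq \eps/4 - o_{n\to\infty}(1)$, i.e. there is a definite amount of energy at frequencies lower than $2^{-k(L_n)} \sim L_n^{-1/2}$ (recalling $2^{-2k(s)} \sim s$).

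The final step is to transfer this ``low-frequency energy'' statement from the $\ESD$ (which is built from the heat-flow resolution at heat-times $s \geq L_n$) to the statement about $\psi_x, \psi_t$ evaluated at the single heat-time $s_0 := L_n^{0.8}$. I would argue as follows: by \eqref{esd-def} and the energy identity, $\int_{s \geq L_n} \ESD(\phi[t_n])(s)\ ds$ is controlled by (a multiple of) $\frac12\int_{\R^2}|\psi_x(L_n,t_n,x)|^2 + |\psi_t(L_n,t_n,x)|^2\,dx$ plus harmless error — indeed, the energy stored at heat-times past $L_n$ is, by the monotonicity of energy along the heat flow and the reconstruction formulas \eqref{psi-odd}, \eqref{psi-even}, essentially $\E(\phi(L_n)[t_n])$, which equals $\frac12\int |\psi_x(L_n)|^2 + |\psi_t(L_n)|^2\,dx$ up to the contributions of $\psi_s$ on $(L_n,\infty)$, which are $o_{n\to\infty}(1)$ by \eqref{al7} / Proposition~\ref{corbound}. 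Then I would replace $L_n$ by $L_n^{0.8}$: by the fundamental theorem of calculus in the heat-time variable and the bound $\|\partial_s \psi_{t,x}(s,t_n)\|_{L^2_x} \lesssim s^{-3/2}$ from Proposition~\ref{corbound} (together with $\psi_s = D_i\psi_i$ and \eqref{al7}), the quantity $\frac12\int |\psi_x(s,t_n)|^2 + |\psi_t(s,t_n)|^2\,dx$ changes by at most $O(L_n^{-0.4}) = o_{n\to\infty}(1)$ between $s = L_n^{0.8}$ and $s = L_n$; this is exactly the kind of computation underlying Lemma~\ref{gapstable}, and in fact since \eqref{css} says $c(s)^2$ has small $\frac{ds}{s}$-integral on $[L_n^{-0.9}, L_n^{0.9}]$, the precise interval $[L_n^{0.8}, L_n]$ lies comfortably inside a range where all the error integrals are $o_{n\to\infty}(1)$. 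Combining the three steps yields
\begin{equation*}
\frac12 \int_{\R^2} |\psi_x(L_n^{0.8},t_n,x)|^2 + |\psi_t(L_n^{0.8},t_n,x)|^2\ dx \geq \eps/4 - o_{n\to\infty}(1) \geq \eps/8 - o_{n\to\infty}(1).
\end{equation*}

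The main obstacle I anticipate is the bookkeeping in the final step: making precise the claim that ``the energy past heat-time $L_n$ in the $\ESD$ sense equals the pointwise energy density at heat-time $L_n^{0.8}$ up to $o_{n\to\infty}(1)$.'' One must carefully account for (i) the difference between $\int_{s\geq L_n}\ESD$ and $\E(\phi(L_n)[t_n])$ — these differ by the $L^2_x$-mass of $D_x\psi_t$ and $\psi_t\wedge\psi_x$ integrated past $L_n$, which are small because those quantities are subsumed in the energy density at scale $s$ and the relevant integral over $[L_n,\infty)$ of the corresponding densities goes to zero (this is where \eqref{joo}-type bounds, or more directly \eqref{css} and Proposition~\ref{corbound}, enter); (ii) the heat-flow drift from $L_n^{0.8}$ to $L_n$; and (iii) the fact that $\phi^{lo}$'s energy is constant along the global wave map evolution, which requires invoking Lemma~\ref{quant} (or conservation \eqref{energy-def}) to move \eqref{energy-lo} from time $0$ to time $t_n$. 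None of these is deep, but the chain of $o_{n\to\infty}(1)$ estimates must be assembled carefully, relying throughout on \eqref{css} to guarantee the frequency envelope $c$ is genuinely small in the window where we are working.
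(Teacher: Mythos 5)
Your overall strategy is the paper's: start from \eqref{energy-lo}, energy conservation of the global wave map $\phi^{lo}$, and \eqref{phocus-2}, note that the left-hand side of the lemma is just $\E(\phi(L_n^{0.8})[t_n])$, and reduce everything to showing that passing between heat-times $\sim L_n^{1/4}$ and $L_n^{0.8}$ costs only $o_{n\to\infty}(1)$ of energy. Where you diverge is in how that last transfer is done: the paper controls the full energy-metric distance $\dist_\Energy(\phi(L_n^{1/4})[t_n],\phi(L_n^{0.8})[t_n])$ via Lemma \ref{gapstable} (fed by Lemma \ref{psitn-lem} and \eqref{phocus}), whereas you compare only the scalar energies through the ESD. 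Your route is legitimate and is actually simpler than you present it: by \eqref{energy-ident} and the translation property of the ESD under the heat flow (used by the paper for $\phi^{lo}[0]$), one has the \emph{exact} identity $\E(\phi(s_0)[t_n]) = \int_{s_0}^\infty \ESD(\phi[t_n])(s)\,ds$, so the ``harmless error'' bookkeeping you worry about in comparing $\int_{s\geq L_n}\ESD$ with $\E(\phi(L_n)[t_n])$ is vacuous, and the detour through $s_0=L_n$ is unnecessary: monotonicity of the tail integral in $s_0$ together with \eqref{phocus} gives $\E(\phi(L_n^{0.8})[t_n]) \geq \E(\phi(L_n^{1/4})[t_n]) - o_{n\to\infty}(1)$ directly, which finishes the proof after the triangle inequality $|\E^{1/2}-(\E')^{1/2}|\le \dist_\Energy$.

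One detail in your step (ii) is wrong as stated: Proposition \ref{corbound} gives $\|\partial_s \psi_{t,x}(s,t_n)\|_{L^2_x}\lesssim s^{-1}$, not $s^{-3/2}$ (the $s^{-3/2}$ decay quoted in the proof of Lemma \ref{gapstable} is for $\partial_s\psi_s$, which carries an extra $s^{-1/2}$ of scaling relative to $\psi_{t,x}$). Consequently the fundamental-theorem-of-calculus drift estimate over $[L_n^{0.8},L_n]$ yields $O(\log L_n)$, not $O(L_n^{-0.4})$, and that crude mechanism cannot close the step. Your fallback --- smallness of the envelope/ESD in the window guaranteed by \eqref{css} and \eqref{phocus} --- is the correct mechanism, and is precisely what the paper packages as Lemma \ref{gapstable}; with the exact ESD identity above you do not even need field-level closeness, only the scalar energy inequality.
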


\begin{proof} 
In view of \eqref{phocus-2}, \eqref{energy-lo}, energy conservation of $\phi^{lo}$, and the triangle inequality, it suffices to show that
$$
\dist_\Energy( \phi(L_n)[t_n], \phi(L_n^{0.8})[t_n] ) = o_{n \to \infty}(1).$$
But this follows from Lemma \ref{psitn-lem} and Lemma \ref{gapstable}.  
\end{proof}

\subsection{Control of the low frequency component}

We continue the proof of Proposition \ref{sweden}.   The plan is to run the scheme in Section \ref{delocal-sec}, thus decomposing $\psi_s[t_n]$ into ``low frequency'' and ``high frequency'' components $\tilde \psi^{lo}_s[t_n], \tilde \psi^{hi}_s[t_n]$, which one then repairs to instantaneous heat flows $\psi^{lo}_s[0], \psi^{hi}_s[t_n]$, then to dynamic field heat flows $\psi^{lo}_s$, $\psi^{hi}_s$, leading to a superposition $\tilde \psi_s$, which we then compare to $\psi_s$.

The low frequency component has already been constructed\footnote{The reason why we can define $\phi^{lo}$ globally (as opposed to $\phi^{hi}$, which has to be constructed anew for each interval $J$ in ${\mathcal I}_n$, is that the nonlinear effects of the wave map equation (even in the caloric gauge, and even with large frequency separation) have a non-trivial ``$\operatorname{low} \times \operatorname{high} \to \operatorname{high}$'' component that will continually force us to update the high frequency component, whereas the dual ``$\operatorname{high} \times \operatorname{high} \to \operatorname{low}$'' component is much smaller and thus has a negligible impact on the low frequency component.  This feature of the wave map dynamics is not present in other equations to which the minimal blowup solution strategy has been applied; for instance, for the energy-critical NLS, both types of frequency dynamics are negligible.}: we can take $\tilde \psi^{lo}_s[t_n] = \psi^{lo}_s[t_n]$ be the instantaneous data of $\psi^{lo}_s$ at time $t_n$.  

We can now control the low frequency component:

\begin{lemma}[$\phi^{lo}$ is low frequency]  Let the hypotheses be as in Proposition \ref{sweden}.  Then there exists a frequency envelope $c^{lo}$ of energy $O(1)$ such that
\begin{equation}\label{slice-dice-lo}
\| \nabla_x^j \psi^{lo}_s(s) \|_{S_{\mu',k(s)}(J \times \R^2)} \lesssim_j c^{lo}(s) s^{-(j+2)/2}
\end{equation}
and
\begin{equation}\label{slice-dice-lo-2}
\| \nabla_x^j \psi^{lo}_x(s) \|_{S_{\mu',k(s)}(J \times \R^2)} \lesssim_j c^{lo}(s) s^{-(j+1)/2}
\end{equation}
for all $j \geq 0$.  Furthermore, one has
\begin{equation}\label{clo}
\int_{s \geq L_n^{-0.7}} c^{lo}(s)^2 \frac{ds}{s} = o_{n \to \infty}(1).
\end{equation}
\end{lemma}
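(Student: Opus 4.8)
The plan is to establish the three claims about $\phi^{lo}$ in reverse order of difficulty, using the frequency envelope stability machinery (Lemma \ref{freqstable2}) together with the quantitative divisibility bound already obtained for $\phi^{lo}$.

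\textbf{Step 1: The frequency envelope at the initial time.} Recall that $\psi^{lo}_s(s) = \psi_s(s + K_n^{1/2})$, so $\psi^{lo}_s[0] = \psi_s(K_n^{1/2})[0]$ is a heat flow generated by $\phi^{lo}[0]$, which has energy at most $E_0 - \eps/4 \leq E_0$ by \eqref{energy-lo}. Applying Proposition \ref{corbound-freq} (or Lemma \ref{psitn-lem} translated in $s$) to this instantaneous heat flow, we obtain a frequency envelope $c^{lo}_0$ of energy $O(1)$ with
$$ \| \nabla_{t,x} \nabla_x^j \psi^{lo}_s(s,0) \|_{\dot H^0_{k(s)}(\R^2)} \lesssim_j c^{lo}_0(s) s^{-(j+2)/2}$$
for all $0 \leq j \leq 10$, $s>0$. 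The key point is to verify the smallness \eqref{clo}: since $\psi^{lo}_s(s,0) = \psi_s(s+K_n^{1/2},0)$, the defining sum \eqref{cs-mash} for $c^{lo}_0(s)$ at a parameter $s \geq L_n^{-0.7}$ samples $\psi_s$ at heat-times $\gtrsim K_n^{1/2} \gg L_n^{0.9}$ (as $L_n \leq K_n^{1/4}$), which lies inside the window $[L_n^{-0.9},L_n^{0.9}]$ only in a controlled overlap; more precisely, using the envelope property \eqref{sm} and \eqref{css} (with $t_n$ replaced by $0$, which applies since $\phi[0]$ is itself one of the maps to which Lemma \ref{psitn-lem} may be applied — or directly from \eqref{fdl-1}, \eqref{fdl-2} via Lemma \ref{gapstable} and the structure of the ESD of $\phi^{lo}[0]$), together with the fact that $\ESD(\phi^{lo}[0])(s) = \ESD(\phi[0])(s+K_n^{1/2})$ is supported away from the middle band, one concludes that $\int_{s \geq L_n^{-0.7}} c^{lo}_0(s)^2 \frac{ds}{s} = o_{n \to \infty}(1)$. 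This is essentially the observation that after flowing by $K_n^{1/2}$, the surviving low-frequency energy lives at heat-times $s \gtrsim K_n^{1/2}$, i.e., at $s \gg L_n^{0.9}$, so its envelope has decayed to $o(1)$ by the time one reaches $s = L_n^{-0.7}$.

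\textbf{Step 2: Propagation along $J$.} We have already shown (in the paragraph preceding Proposition \ref{sweden}) that $\phi^{lo}$ is a $(O(1),\mu')$-wave map on each $J \in {\mathcal I}_n$ — equivalently, it has $(O(1),\mu')$-entropy $1$ on $J$. We now invoke Lemma \ref{freqstable2} (with $A, K = O(1)$, and $E = O(1)$), whose hypotheses are exactly the frequency envelope bound from Step 1 at the time $0 \in J$ (note $0 \in I_n$ and, after relabeling, $0$ can be taken to lie in the relevant $J$; for the general $J$ in the iteration one uses instead the endpoint of the previous interval and the continuity of the envelope under time evolution, which is itself part of Lemma \ref{freqstable2}). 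This yields
$$ \| \nabla_x^j \psi^{lo}_s(s) \|_{S_{k(s)}(J \times \R^2)} \lesssim_j c^{lo}(s) s^{-(j+2)/2}, \quad \| \nabla_x^j \psi^{lo}_x(s) \|_{S_{k(s)}(J \times \R^2)} \lesssim_j c^{lo}(s) s^{-(j+1)/2}$$
with $c^{lo}$ comparable to (a bounded enlargement of) $c^{lo}_0$, hence still obeying \eqref{clo}. Finally, to pass from $S_{k(s)} = S_k^\strong$ to $S_{\mu',k(s)}$ as required in \eqref{slice-dice-lo}, \eqref{slice-dice-lo-2}, we use the comparability \eqref{sksk-star}, $\| \cdot \|_{S_{\mu',k}} \lesssim (\mu')^{-1} \| \cdot \|_{S_k^\strong}$; the loss of $(\mu')^{-1}$ is harmless since it is absorbed into the implied constant (which is allowed to depend on $\mu'$), and the envelope $c^{lo}$ remains of energy $O(1)$.

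\textbf{Main obstacle.} The genuinely substantive point is Step 1 — verifying \eqref{clo}, i.e., that heat-flowing by $K_n^{1/2}$ pushes all the energy of $\phi^{lo}[0]$ to heat-times so large that its frequency envelope is $o_{n \to \infty}(1)$ on the entire range $s \geq L_n^{-0.7}$. This requires care because the frequency envelope smuggles in energy across the whole $s$-axis via the slowly-varying bound \eqref{sm}, so one must quantitatively exploit that $\ESD(\phi[0])$ has mass at most $o_{n\to\infty}(1)$ in the band $(1/K_n, K_n)$ and then translate; one converts the integral of $c^{lo}_0(s)^2/s$ over $s \geq L_n^{-0.7}$ into a sum of a contribution controlled directly by $\int \ESD(\phi^{lo}[0])$ over an appropriate sub-band (small by \eqref{fdl-1}) and a tail contribution controlled by the polynomial decay of the envelope, which is $o(1)$ because $L_n^{0.9}/K_n^{1/2} \to 0$. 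Steps 2 is then routine bookkeeping with the already-established stability lemmas.
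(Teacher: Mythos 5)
Your skeleton (an envelope for $\psi_s(\cdot,0)$ at time $0$ via Lemma \ref{psitn-lem}/Proposition \ref{corbound-freq}, translation by the heat-time shift $K_n^{1/2}$, then propagation along $J$ with Lemma \ref{freqstable2}) matches the first half of the paper's proof, but you omit the device the paper uses to finish, and without it the lemma as stated is not obtained. Your route yields bounds whose implied constants depend on $\mu'$: you lose $(\mu')^{-1}$ when converting $S^\strong_{k(s)}$ to $S_{\mu',k(s)}$ via \eqref{sksk-star}, and you lose a further $O_{\mu'}(1)$ by carrying the envelope across the $O_{\mu'}(1)$ intervals of ${\mathcal I}_n$ (equivalently, by applying Lemma \ref{freqstable2} with entropy $O_{\mu'}(1)$). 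You declare this harmless because "the implied constant is allowed to depend on $\mu'$" — but the conclusion is $\lesssim_j$, and that uniformity is load-bearing: $c^{lo}$ of energy $O(1)$ with $\mu'$-free constants feeds into Proposition \ref{slice-dice} and then Theorem \ref{hyp-repair}, where $\mu$ must be chosen small depending on the envelope energy; if the energy or constants degrade to $O_{\mu'}(1)$, the parameter hierarchy becomes circular, since $\mu'$ is chosen after, and much smaller than, $\mu$. The paper resolves this by extracting a \emph{second} envelope $\tilde c$ of energy $O(1)$ directly from the fact that $\psi^{lo}_s$ is an $(O(1),\mu')$-wave map on $J$ (via \eqref{ds2-eq} and Lemma \ref{freqstable}), whose $S_{\mu',k(s)}$ bounds carry absolute constants, and then setting $c^{lo} := \min(\tilde c, O_{\mu'}(c'_0))$: the bounds \eqref{slice-dice-lo}, \eqref{slice-dice-lo-2} and the energy $O(1)$ come from $\tilde c$, while the smallness comes from the translated envelope $c'_0$, with only the $o_{n\to\infty}$ rate (which is permitted to depend on $\mu'$) affected. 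This min trick is the essential missing idea in your proposal.

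Your Step 1 treatment of \eqref{clo} is also flawed. The band on which the time-$0$ envelope $c_0$ is small is $[K_n^{-0.9},K_n^{0.9}]$ (coming from \eqref{fdl-1} at scale $K_n$), not $[L_n^{-0.9},L_n^{0.9}]$ as you write, and you conflate pointwise decay of the envelope near $s=L_n^{-0.7}$ with smallness of the integral over all $s \geq L_n^{-0.7}$: that region contains the heat-times $s \gtrsim K_n$ carrying the $\geq \eps/4$ energy of $\phi^{lo}$ from \eqref{energy-lo}, so no envelope controlling $\psi^{lo}_s$ can have $o_{n\to\infty}(1)$ mass there, and the region printed in \eqref{clo} is evidently garbled in this draft. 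What the paper's computation actually delivers — via $c'_0(s) = (s/(s+K_n^{1/2}))^{\delta_0} c_0(s+K_n^{1/2})$, using the prefactor for $s \ll K_n^{1/2}$ and the band smallness of $c_0$ for moderate shifted times — and what Proposition \ref{slice-dice} actually uses (paired with the large-$s$ smallness \eqref{css-hi} of the high-frequency envelope) is smallness of $c^{lo}$ over the complementary range of small and moderate $s$ (all $s \lesssim K_n^{0.4}$, hence all $s \leq L_n^{O(1)}$). Your sketch neither identifies the correct region nor carries out the translation estimate that establishes smallness on it, so the substantive part of your Step 1 does not go through as written.
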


\begin{proof}  From the analogue of Lemma \ref{psitn-lem} for time $t=0$, one can find a frequency envelope $c_0$ of energy $O(1)$ such that
$$
\| \nabla_x^j \nabla_{t,x} \psi_s(s,0) \|_{\dot H^0_{k(s)}} \lesssim_{j} c_0(s) s^{-(j+2)/2}$$
for all $j \geq 0$ and $s>0$, and such that
$$
\int_{K_n^{-0.9} \leq s \leq K_n^{0.9}} c_0(s)^2 \frac{ds}{s} = o_{n \to \infty}(1).$$
Since $\psi_s^{lo}(s,0) = \psi_s(s+K_n^{1/4},0)$ by construction, we have
$$
\| \nabla_x^j \nabla_{t,x} \psi^{lo}_s(s,0) \|_{\dot H^0_{k(s)}} \lesssim_{j} c'_0(s) s^{-(j+2)/2}$$
for all $j \geq 0$ and $s>0$, where $c'_0(s) := (\frac{s}{s+K_n^{1/4}})^{\delta_0} c_0(s+K_n^{1/4})$.  Note that $c'_0$ is essentially (up to multiplicative constants) a frequency envelope of energy $O(1)$.

Applying Lemma \ref{freqstable2}, one thus has
$$
\| \nabla_x^j \psi^{lo}_s(s) \|_{S_{\mu',k(s)}(J \times \R^2)} \lesssim_{\mu',j} c'_0(s) s^{-(j+2)/2}
$$
and
$$
\| \nabla_x^j \psi^{lo}_x(s) \|_{S_{\mu',k(s)}(J \times \R^2)} \lesssim_{\mu',j} c'_0(s) s^{-(j+1)/2}.
$$
Meanwhile, since $\psi^{lo}_s$ is a $(O(1),\mu')$-wave map, we see (from \eqref{ds2-eq} and Lemma \ref{freqstable}) that there exists another frequency envelope $\tilde c$ of energy $O(1)$ such that
$$
\| \nabla_x^j \psi^{lo}_s(s) \|_{S_{\mu',k(s)}(J \times \R^2)} \lesssim_{j} \tilde c(s) s^{-(j+2)/2}
$$
and
$$
\| \nabla_x^j \psi^{lo}_x(s) \|_{S_{\mu',k(s)}(J \times \R^2)} \lesssim_{j} \tilde c(s) s^{-(j+1)/2}.
$$
Taking $c^{lo}$ to be the infimum of $\tilde c$ and $O_{\mu'}(c')$, we obtain the claim.
\end{proof}

\subsection{Construction of the high frequency component}

Now we turn to the construction of the high-frequency component.  We first define the preliminary dynamic field $\tilde \psi^{hi}_s[t_n]$ by setting
\begin{equation}\label{psihi}
\tilde \psi^{hi}_s(s,t_n,x) := \eta(s/L_n^{0.7}) \psi_s(s,t_n,x),
\end{equation}
and
\begin{equation}\label{psit-hi}
\partial_t \tilde \psi^{hi}_s(t_n,x) := \eta(s/L_n^{0.7}) \partial_t \psi_s(s,t_n,x)
\end{equation}
where $\eta: \R^+ \to \R$ is a smooth cutoff that equals $1$ on $[0,1]$ and is supported on $[0,2]$; thus $\tilde \psi^{hi}_s$ is a smooth truncation of $\psi_s$ to small values of $s$ (which correspond to high frequency scales). 

We now place some estimates on $\tilde \psi^{hi}$ at time $t_n$:

\begin{lemma}[Estimates on $\tilde \psi^{hi}$] \label{Dels-lem}  Write $\delta \Psi_{t,x}(s,t_n) := \tilde \Psi^{hi}_{t,x}(s,t_n)  - \Psi_{t,x}(s,t_n) + \Psi_{t,x}(L_n^{0.8},t_n)$; then we have
\begin{equation}\label{dels}
 \| \delta \Psi_{t,x}(s,t_n) \|_{A^j_{k(s)}(\R^2)} 
\lesssim_j \frac{c(s)}{s^{1/2}} [ 1_{s \geq L_n^{0.6}} + o_{n \to \infty}(1) ] 
\end{equation}
for all $j \geq 0$ and $0 \leq s \leq L_n^{0.8}$.  In particular we have
\begin{equation}\label{flow}
 \| \delta \Psi_{t,x}(s,t_n) \|_{L^2_x(\R^2)} \lesssim o_{n \to \infty}(1)
 \end{equation}
for all $0 \leq s \leq L_n^{0.8}$.
\end{lemma}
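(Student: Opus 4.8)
The plan is to unpack the definition of $\delta\Psi_{t,x}$ into two pieces — one coming from the heat-flow decay of $\psi_s$ past scale $L_n^{0.7}$, and one coming from the parabolic smoothing between scales $s$ and $L_n^{0.8}$ — and to estimate each using the frequency envelope $c$ from Lemma \ref{psitn-lem} together with the smallness \eqref{css}. First I would recall the fundamental expansion \eqref{psi-odd}, \eqref{psi-even}, which expresses $\Psi_{t,x}(s,t_n)$ and $\tilde\Psi^{hi}_{t,x}(s,t_n)$ as the same multilinear series in $\psi_s$ and $\tilde\psi^{hi}_s$ respectively, integrated over $s_1<\dots<s_j$ with $s_1>s$. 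Since $\tilde\psi^{hi}_s(\sigma) = \eta(\sigma/L_n^{0.7})\psi_s(\sigma)$ agrees with $\psi_s(\sigma)$ for $\sigma\le L_n^{0.7}$, the difference $\tilde\Psi^{hi}_{t,x}(s,t_n)-\Psi_{t,x}(s,t_n)$ is a sum of multilinear terms in which at least one $\psi_s$-factor is replaced by $(\eta(\cdot/L_n^{0.7})-1)\psi_s$, hence is supported in the region where some $s_i\gtrsim L_n^{0.7}$. Separately, $\Psi_{t,x}(L_n^{0.8},t_n)$ is the tail of the same series from $L_n^{0.8}$ to $\infty$, so adding it back means that after cancellation one is left with: (a) terms supported in $s_i\gtrsim L_n^{0.7}$ (from the cutoff), and (b) a genuine remaining piece of the integral over $s<s_1<\dots$ with all $s_i$ ranging only up to $O(L_n^{0.8})$ but with at least one factor forced to live in $[L_n^{0.7},L_n^{0.8}]$ or beyond, plus the honest "low-scale" part $\int_{s}^{L_n^{0.8}}$ of $X_{1}$.

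The key estimate is then to bound these multilinear integrals in the $A^j_{k(s)}(\R^2)$ norm. I would use the product estimates of Lemma \ref{ek-prod} (which give a gain $\chi_{k_1\ge k_2}^{\delta_1}$ in the $\dot H^0$ factor, i.e. an $(s_i/s_1)^{-\delta_1}$ weight once one converts $k$ to $k(s)$), together with \eqref{jojoe} from Lemma \ref{psitn-lem} to control each $\|\nabla_{t,x}\psi_s(s_i)\|_{\dot H^0_{k(s_i)}}\lesssim c(s_i)s_i^{-1}$ and $\|\psi_s(s_i)\|_{\dot H^1_{k(s_i)}}\lesssim c(s_i)s_i^{-1}$, and then the $j$-fold iterated integral bound \eqref{sss} (which is where the $O(1)^j j^{-j/2}$ factors come from to sum the series). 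The combinatorics here are exactly those already carried out in the proof of Proposition \ref{corbound-freq} for \eqref{al6}, \eqref{al7}, and in the proof of Proposition \ref{compati}(ii); I would follow those verbatim. The new feature is simply that whenever a factor is constrained to a dyadic window $s_i \sim 2^{-2k}$ with $2^{-2k}\gtrsim L_n^{0.6}$ (say), the integral $\int_{L_n^{0.6}}^{\infty} c(s_i)^2\,\frac{ds_i}{s_i}$ that appears — after using Cauchy–Schwarz in that variable — is either $O(1)$ (giving the $1_{s\ge L_n^{0.6}}$ term, when $s$ itself is that large) or, when $s< L_n^{0.6}$ and the constrained factor genuinely sits in the window $[L_n^{0.6}, L_n^{0.9}]$ on which $c$ is small by \eqref{css}, it is $o_{n\to\infty}(1)$. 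The weight $(s_i/s)^{-\delta_1+\delta_0}$ coming from Lemma \ref{ek-prod} and the envelope property \eqref{sm} of $c$ handles the summation over which factor is the large one and keeps everything convergent.

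The main obstacle — and it is more bookkeeping than conceptual — is the case split near $s\sim L_n^{0.6}$ versus $s\gg L_n^{0.6}$ versus $s \ll L_n^{0.6}$, and making sure that when $s$ is comparable to or larger than $L_n^{0.6}$ the "trivial" bound $\|\delta\Psi_{t,x}(s)\|_{A^j_{k(s)}}\lesssim_j c(s)s^{-1/2}$ is available directly from \eqref{l2s-fixed-special-psitn} and \eqref{spak} of Lemma \ref{psitn-lem} applied to each of $\tilde\Psi^{hi}_{t,x}$, $\Psi_{t,x}(s)$, $\Psi_{t,x}(L_n^{0.8})$ (for the truncated field $\tilde\psi^{hi}_s$ this needs the observation that its resolution series is dominated termwise by that of $\psi_s$, since $0\le\eta\le 1$, so the same envelope $c$ works up to constants), which gives the $1_{s\ge L_n^{0.6}}$ alternative in \eqref{dels}; whereas for $s\le L_n^{0.6}$ one genuinely exploits that the difference only sees scales $\gtrsim L_n^{0.6}$ and invokes \eqref{css}. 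Finally, \eqref{flow} follows from \eqref{dels} by summing the dyadic pieces: $\|\delta\Psi_{t,x}(s)\|_{L^2_x}^2 \lesssim \sum_k \|\delta\Psi_{t,x}(s)\|_{\dot H^0_k}^2$, with $\|\delta\Psi_{t,x}(s)\|_{\dot H^0_{k}} \lesssim \chi_{k=k(s)}^{\delta_1}\|\delta\Psi_{t,x}(s)\|_{\dot H^0_{k(s)}} \le \chi_{k=k(s)}^{\delta_1}\|\delta\Psi_{t,x}(s)\|_{A^0_{k(s)}}$, so that the $\ell^2_k$ sum of the geometric weights is $O(1)$ and one is left with $\|\delta\Psi_{t,x}(s)\|_{A^0_{k(s)}}^2 \lesssim c(s)^2 s^{-1}[1_{s\ge L_n^{0.6}} + o_{n\to\infty}(1)]$; integrating out, or rather just noting $c(s)\lesssim 1$ and that either $s\ge L_n^{0.6}$ forces us into the large-scale regime where the left side is itself $o_{n\to\infty}(1)$ by \eqref{css} applied at scales $\ge L_n^{0.6}$... — in any case the $L^2_x$ norm is $o_{n\to\infty}(1)$ uniformly in $0\le s\le L_n^{0.8}$, since for $s\le L_n^{0.6}$ the bracket is $o_{n\to\infty}(1)$ directly, and for $L_n^{0.6}\le s\le L_n^{0.8}$ one has $c(s) = o_{n\to\infty}(1)$ pointwise by \eqref{css} and the envelope property. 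This completes the proof.
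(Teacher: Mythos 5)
Your route is genuinely different from the paper's, and as written it does not close. The paper never expands $\Psi_{t,x}$ and $\tilde\Psi^{hi}_{t,x}$ into the series \eqref{psi-odd}, \eqref{psi-even} at all: it observes that, by the very definition of $\delta\Psi_{t,x}$ (this is why the term $\Psi_{t,x}(L_n^{0.8})$ is included), one has $\delta\Psi_{t,x}(S)\equiv 0$ at $S=L_n^{0.8}$, derives from \eqref{zerotor}, \eqref{constcurv} the first-order equation \eqref{spider}, $\partial_s\delta\Psi_{t,x}=\bigO(\nabla_{t,x}\delta\psi_s)+\bigO(\delta\Psi_{t,x}\,\psi^*_s)+\bigO(\Psi_{t,x}\,\delta\psi_s)+\bigO(\Psi_{t,x}(S)\,\psi^*_s)$ with $\delta\psi_s=(1-\eta(s/L_n^{0.7}))\psi_s$, and integrates backwards from $S$ by Gronwall: the self-interaction is absorbed by $\exp(O(\log^{1/2}(s'/s)))$ via \eqref{spak} and Cauchy--Schwarz, and the forcing is $\lesssim c(s')(s')^{-3/2}\bigl[1_{s'\ge L_n^{0.7}}+c(S)\bigr]$, so that the $s'$-integration from $s$ up to $S$ yields the decisive gain of roughly $(s/L_n^{0.7})^{1/2-\delta_0}$ relative to the target $c(s)s^{-1/2}$, which for $s\le L_n^{0.6}$ beats all envelope and sub-polynomial losses; the gaps $0.6<0.7<0.8$ exist precisely for this bookkeeping.

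Two concrete gaps in your version. First, the cancellation against $\Psi_{t,x}(L_n^{0.8})$ is only partial beyond the linear term: for $j\ge2$ the corresponding term of $\Psi_{t,x}(L_n^{0.8})$ constrains \emph{all} variables to $s_i>L_n^{0.8}$ and so only cancels the piece of the difference in which every factor carries the cutoff; mixed terms survive in which some factors sit at heat times $\le 2L_n^{0.7}$ while others range over all of $[L_n^{0.7},\infty)$, in particular above $L_n^{0.9}$ where \eqref{css} gives no smallness, and you never say where smallness comes from there (it must be extracted from the high--low frequency gain in Lemma \ref{ek-prod}, which you do not invoke for this purpose). Second, and more seriously, your quantitative accounting has the frequency weight with the wrong sign in the critical regime: a leftover term whose heat-time variables all lie in $[L_n^{0.7},\infty)$ (already the linear leftover $\int_{L_n^{0.7}}^{L_n^{0.8}}(1-\eta(s_1/L_n^{0.7}))\nabla_{t,x}\psi_s(s_1)\,ds_1$) has its frequency content near $2^{k(s_1)}$ with $s_1\ge L_n^{0.7}\gg s$, and by \eqref{ek-def} measuring it in $\dot H^0_{k(s)}$ or $A^j_{k(s)}$ costs a factor $2^{\delta_1(k(s)-k(s_1))}\sim (s_1/s)^{\delta_1/2}$, a \emph{loss} growing like a positive power of $L_n$ for bounded $s$, not the gain $(s_i/s)^{-\delta_1+\delta_0}$ you claim; an unquantified $o_{n\to\infty}(1)$ from \eqref{css} cannot be assumed to dominate a power of $L_n$. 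What closes the estimate is the additional $(s/s_1)^{1/2-}$ gain coming from the $s^{-1/2}$ weight in \eqref{dels} against the $(s')^{-3/2}$ size of the forcing, combined with the exponent gaps and the envelope property --- exactly the mechanism the backwards Gronwall argument isolates and your term-by-term estimate never exhibits. (Your deduction of \eqref{flow} from \eqref{dels} is fine.)
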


\begin{proof}  For brevity, we omit the $t_n$ time coordinate here.  Write $S := L_n^{0.8}$.  By construction, $\delta \Psi_{t,x}(S) = 0$.  The strategy is then to propagate $\delta \Psi_{t,x}$ backwards in the heat-temporal variable $s$ from $S$ to $0$.

We first need an evolution equation for $\delta \Psi_{t,x}$.  We split $\delta \Psi_{t,x}$ into its two components $\delta \psi_{t,x}$, $\delta A_{t,x}$ in the obvious manner.

From the zero-torsion property \eqref{zerotor} (and the caloric gauge) we have
$$ \partial_s \psi_{t,x} = \nabla_{t,x} \psi_s + A_{t,x} \psi_s$$
and similarly
$$ \partial_s \tilde \psi^{hi}_{t,x} = \nabla_{t,x} \tilde \psi^{hi}_s + \tilde A^{hi}_{t,x} \tilde \psi^{hi}_s;$$
subtracting the two statements, we obtain
$$ \partial_s \delta \psi_{t,x} = \nabla_{t,x} \delta \psi_s + \bigO( \delta A_{t,x} \psi^*_s ) + \bigO( A_x \delta \psi_s ) + \bigO( A_{t,x}(S) \psi^*_s )$$
where $\psi^*_s := (\psi_s, \tilde \psi^{hi}_s)$, $A^*_x := (A_x, \tilde A^{hi}_x)$, and
\begin{equation}\label{deltap}
\delta \psi_s := \psi_s - \tilde \psi^{hi}_s = (1-\eta(s/L_n^{0.7})) \psi_s
\end{equation}
(thanks to \eqref{psihi}).
A similar argument using the constant curvature property \eqref{constcurv} in place of \eqref{zerotor} gives
$$ \partial_s \delta A_{t,x} = \bigO( \delta \psi_{t,x} \psi^*_s ) + \bigO( \psi_{t,x} \delta \psi_s ) + \bigO( \psi_{t,x}(S) \psi^*_s )$$
where $\psi^*_s := (\psi_s, \tilde \psi^{hi}_s)$.  Writing $\delta \Psi_{t,x} := (\delta \psi_{t,x}, \delta A_{t,x})$, etc., we can unify these two equations as
\begin{equation}\label{spider}
 \partial_s \delta \Psi_{t,x} = \bigO( \nabla_{t,x} \delta \psi_s ) + \bigO( \delta \Psi_{t,x} \psi^*_s ) + \bigO( \Psi_{t,x} \delta \psi_s ) + \bigO( \Psi_{t,x}(S) \psi^*_s ).
\end{equation}

Fix $j \geq 0$, and allow implied constants to depend on $j$.  To prove \eqref{dels}, we introduce the quantity
$$ h(s) := \| \delta \Psi_{t,x}(s) \|_{A^j_{k(s)}(\R^2)},$$

Since $h(S)=0$, we see from \eqref{spider}, the fundamental theorem of calculus, Minkowski's inequality, the Leibniz rule, and Lemma \ref{ek-prod} that
$$ h(s) \lesssim \int_s^{S} f(s') + h(s') g(s')\ ds'$$
for all $0 \leq s \leq S$, where
$$ f(s) := \| \nabla_{t,x} \delta \psi_s(s) \|_{A^j_{k(s)}(\R^2)} + \| \Psi_{t,x}(s)\|_{A^j_{k(s)}(\R^2)} \| \delta \psi_s(s) \|_{A^j_{k(s)}(\R^2)} + \| \Psi_{t,x}(S) \|_{A^j_{k(S)}(\R^2)} \| \psi^*_s(s) \|_{A^j_{k(s)}(\R^2)}$$
and
$$ g(s) := \| \psi^*_s(s) \|_{A^j_{k(s)}(\R^2)}.$$
By Gronwall's inequality, we conclude
$$ h(s) \lesssim \int_s^S f(s') \exp( \int_{s}^{s'} g(s'')\ ds'' )\ ds'.$$
From \eqref{spak} and Cauchy-Schwarz we have
$$ \int_{s}^{s'} g(s'')\ ds'' \lesssim \log^{1/2}(s'/s)$$
and thus
\begin{equation}\label{hass}
 h(s) \lesssim \int_s^{L_n} f(s') \exp( O( \log^{1/2}(s'/s)) )\ ds'.
\end{equation}
From Lemma \ref{psitn-lem} we have
$$ f(s) \lesssim \frac{c(s)}{s^{3/2}} [ 1_{s \geq L_n^{0.7}} + c(S) ].$$
From \eqref{css} and the envelope property, $c(S) = o_{n \to \infty}(1)$.  Applying \eqref{hass} and the envelope property we conclude that
$$ h(s) \lesssim \frac{c(s)}{s^{1/2}} [ 1_{s \geq L_n^{0.6}} + o_{n \to \infty}(1) ],
$$
which is \eqref{dels}.
\end{proof}

Let $\phi^{hi}[t_n] = \tilde \phi^{hi}[t_n]$ be a reconstruction of $\tilde \psi_s[t_n]$.  We now make the key observation that $\phi^{hi}$ has energy strictly less than $E_0$:

\begin{lemma}[High frequency component has less energy]\label{highless} We have
$$ \E( \phi^{hi}[t_n] ) \leq E_0 - \eps/8 + o_{n \to \infty}(1).$$
\end{lemma}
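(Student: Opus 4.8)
The plan is to compute $\E(\phi^{hi}[t_n])$ from the differentiated fields of the truncated dynamic field $\tilde\psi^{hi}_s[t_n]$ at heat-time zero, use Lemma~\ref{Dels-lem} to identify those fields with a difference of true fields, and then combine energy conservation with Lemma~\ref{some-energy}.

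First, for any orthonormal frame the energy of a reconstructed instantaneous map equals $\tfrac12(\|\psi_x(0)\|_{L^2_x}^2+\|\psi_t(0)\|_{L^2_x}^2)$, so $\E(\phi^{hi}[t_n])=\tfrac12\|\tilde\psi^{hi}_{t,x}(0,t_n)\|_{L^2_x}^2$ (with $\|v\|_{L^2_x}^2:=\sum_{\alpha=0,1,2}\|v_\alpha\|_{L^2_x(\R^2)}^2$), since $\phi^{hi}[t_n]$ is reconstructed from $\tilde\psi^{hi}_s[t_n]$. Setting $s=0$ in \eqref{flow} gives $\tilde\psi^{hi}_{t,x}(0,t_n)=\psi_{t,x}(0,t_n)-\psi_{t,x}(L_n^{0.8},t_n)+o_{n\to\infty}(1)$ in $L^2_x$; writing $a:=\psi_{t,x}(0,t_n)$ and $b:=\psi_{t,x}(L_n^{0.8},t_n)$ (both $O(1)$ in $L^2_x$), this gives $\E(\phi^{hi}[t_n])=\tfrac12\|a-b\|_{L^2_x}^2+o_{n\to\infty}(1)$. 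On the other hand, energy conservation of the classical wave map $\phi$ together with \eqref{eo-1} and \eqref{energy-ident} gives $\tfrac12\|a\|_{L^2_x}^2=\E(\phi[t_n])=\E(\phi[0])=E_0+o_{n\to\infty}(1)$, while Lemma~\ref{some-energy} gives $\tfrac12\|b\|_{L^2_x}^2\geq \eps/8-o_{n\to\infty}(1)$.

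It therefore remains to prove the near-orthogonality estimate $\langle a-b,b\rangle_{L^2_x}\geq -o_{n\to\infty}(1)$, for then $\tfrac12\|a-b\|_{L^2_x}^2=\tfrac12\|a\|_{L^2_x}^2-\tfrac12\|b\|_{L^2_x}^2-\langle a-b,b\rangle_{L^2_x}\leq E_0-\eps/8+o_{n\to\infty}(1)$, which proves the lemma. (Incidentally $\tfrac12\|a\|_{L^2_x}^2-\tfrac12\|b\|_{L^2_x}^2=\int_0^{L_n^{0.8}}\ESD(\phi[t_n])(s)\,ds$ by the energy identity, which makes the geometry transparent.) This is the main obstacle. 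The mechanism is that $b=\psi_{t,x}(L_n^{0.8},t_n)$, being the result of the heat flow run for time $L_n^{0.8}$, is essentially localised to spatial frequencies $|\xi|\lesssim L_n^{-0.4}$ with super-polynomially decaying tails above (by the parabolic and frequency-envelope bounds of Proposition~\ref{corbound-freq}), whereas $a-b=-\int_0^{L_n^{0.8}}(\nabla_{t,x}\psi_s+\bigO(A_{t,x}\psi_s))(s,t_n)\,ds$ is essentially localised to $|\xi|\gtrsim L_n^{-0.4}$ (using the frequency-localised estimates of Proposition~\ref{corbound-freq} and \cite[Lemma~7.7]{tao:heatwave3}); hence the overlap $\langle a-b,b\rangle_{L^2_x}=\sum_{k'}\langle P_{k'}(a-b),P_{k'}b\rangle_{L^2_x}$ is concentrated at the dyadic frequencies near $L_n^{-0.4}$, i.e.\ near heat-time $L_n^{0.8}$. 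But \eqref{css} says the frequency envelope $c$ of $\psi_s[t_n]$ is $L^2(ds/s)$-small on $[L_n^{-0.9},L_n^{0.9}]$, which by \eqref{sm} upgrades to $c(L_n^{0.8})=o_{n\to\infty}(1)$, so those frequencies carry only $o_{n\to\infty}(1)$ of the energy. Quantitatively I would bound $\|P_{k'}(a-b)\|_{L^2_x}$ and $\|P_{k'}b\|_{L^2_x}$ by $\lesssim_N c(2^{-2k'})\min(1,2^{-N|k'-k(L_n^{0.8})|})$ with the appropriate one-sided rapid decay for each factor, and then sum the resulting geometric series using the envelope property \eqref{sm} to obtain $|\langle a-b,b\rangle_{L^2_x}|\lesssim c(L_n^{0.8})=o_{n\to\infty}(1)$. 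The only genuinely delicate point is carrying out this last estimate with uniformity in $n$; everything else is a rearrangement of \eqref{energy-ident}, \eqref{flow} and Lemma~\ref{some-energy}.
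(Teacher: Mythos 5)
Your reduction is essentially the paper's: you use Lemma~\ref{Dels-lem}/\eqref{flow} to write $\tilde\psi^{hi}_{t,x}(0,t_n)=a-b+o_{n\to\infty}(1)$ in $L^2_x$ with $a:=\psi_{t,x}(0,t_n)$, $b:=\psi_{t,x}(L_n^{0.8},t_n)$, you invoke Lemma~\ref{some-energy} for $\tfrac12\|b\|_{L^2_x}^2\geq \eps/8-o_{n\to\infty}(1)$, and you reduce via the cosine rule to the asymptotic orthogonality $\langle a-b,b\rangle_{L^2_x}=o_{n\to\infty}(1)$, with \eqref{css} supplying the ultimate smallness --- exactly the skeleton of the paper's proof. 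Where you differ is in how the orthogonality is established. The paper works in physical space: it writes $a-b=-\int_0^{L_n^{0.8}}\partial_s\psi_{t,x}(s)\,ds$, substitutes the heat equation \eqref{psit-eq}, integrates by parts once in $x$, and applies H\"older with the Lemma~\ref{psitn-lem} bounds (placing the $s=L_n^{0.8}$ factors in $L^\infty_x$ whenever possible), obtaining $|\langle a-b,b\rangle|\lesssim c(L_n^{0.8})=o_{n\to\infty}(1)$ with no frequency decomposition at all. Your Littlewood--Paley route can be made to work, but the dyadic bounds you assert are stronger than what is available: on the low-frequency side $k'<k(L_n^{0.8})$, $\|P_{k'}(a-b)\|_{L^2_x}$ does \emph{not} decay like $2^{-N|k'-k(L_n^{0.8})|}$ for arbitrary $N$ --- even in the linear model the factor $1-e^{-L_n^{0.8}|\xi|^2}$ gives only quadratic decay in $2^{k'}L_n^{0.4}$, and in the nonlinear setting the $\dot H^0_{k(s)}$ structure \eqref{ek-def}, \eqref{jojoe} yields only $\chi_{k'=k(s)}^{\delta_1}$-type decay (the paper itself remarks that heat flow damps poorly near the frequency origin); likewise $\|P_{k'}b\|_{L^2_x}\lesssim c(2^{-2k'})$ at frequencies below the heat scale is plausible but requires a short argument via the expansions \eqref{psi-odd}, \eqref{psi-even}. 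These weaker decays still suffice: split the frequency sum into a window of slowly growing width around $k(L_n^{0.8})$, where \eqref{css} together with \eqref{sm} makes the envelope $o_{n\to\infty}(1)$, and two tail regions, where the (weak exponential, respectively arbitrary polynomial) decay is summed against Cauchy--Schwarz in the bounded factor. So your argument closes after correcting the decay claims; the paper's integration-by-parts computation reaches the same conclusion more directly and avoids any frequency-tail analysis.
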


\begin{proof}  We omit time variable $t_n$.  From Lemma \ref{some-energy} it suffices to show that
$$ \int_{\R^2} |\tilde \psi^{hi}_\alpha(0,x)|^2 + |\psi_\alpha(L_n^{0.8},x)|^2\ dx = \int_{\R^2} |\psi_\alpha(0,x)|^2\ dx + o_{n \to \infty}(1)$$
for $\alpha=0,1,2$.  From Lemma \ref{Dels-lem} and \eqref{psit-hi} we have
$$
\| \psi_\alpha(0) - \psi_\alpha(L_n^{0.8}) - \tilde \psi^{hi}_\alpha(0) \|_{L^2_x(\R^2)} = o_{n \to \infty}(1)
$$
so by the triangle inequality and the cosine rule, it suffices to show the asymptotic orthogonality property
$$
\int_{\R^2} (\psi_\alpha(L_n^{0.8},x)) (\psi_\alpha(0,x) - \psi_\alpha(L_n^{0.8},x))\ dx = o_{n \to \infty}(1)
$$
for $\alpha=0,1,2$.

By the fundamental theorem of calculus, it suffices to show that
$$ \int_0^{L_n^{0.8}} \int_{\R^2} \psi_\alpha(L_n^{0.8},x) \partial_s \psi_\alpha(s,x)\ dx ds = o_{n \to \infty}(1).$$
From \eqref{psit-eq} and an integration by parts, we can bound the left-hand side by
\begin{equation}\label{poky}
 \lesssim \int_0^{L_n^{0.8}} \int_{\R^2} |D_x \psi_{t,x}(L_n^{0.8},x)| |D_x \psi_{t,x}(s,x)| + |\psi_{t,x}(L_n^{0.8},x)| |\psi_{t,x}(s,x)|^3\ dx ds
\end{equation}
where it is understood that the covariant derivative $D_x$ always uses the connection $A_x(s)$ rather than $A_x(L_n^{0.8})$.  By H\"older's inequality and Lemma \ref{psitn-lem} (placing $\psi_{t,x}(L_n^{0.8})$ in $L^\infty_x$ rather than $L^2_x$ whenever possible), we have
\begin{align*}
\| D_x \psi_{t,x}(L_n^{0.8}) \|_{L^2_x(\R^2)} &\lesssim \frac{c(L_n^{0.8})}{(L_n^{0.8})^{1/2}} \\
\| D_x \psi_{t,x}(s) \|_{L^2_x(\R^2)} &\lesssim \frac{c(s)}{s^{1/2}} \\
\| |\psi_{t,x}(L_n^{0.8})| |\psi_{t,x}(s)|^3 \|_{L^1_x(\R^2)} &\lesssim \frac{c(s)^3 c(L_n^{0.8})}{s^{1/2}} (L_n^{0.8})^{1/2} 
\end{align*}
so we can bound \eqref{poky} by
$$ \lesssim c(L_n^{0.8}) \int_0^{L_n^{0.8}} \frac{c(s)}{s^{1/2} (L_n^{0.8})^{1/2}}\ ds.$$
By Cauchy-Schwarz and the fact that $c$ has energy $O(1)$, this is $O( c(L_n^{0.8}) )$, and the claim now follows from \eqref{css}.
\end{proof}

The field $\tilde \psi^{hi}_s[t_n]$ is not actually a heat flow. However, it is close to one:

\begin{lemma}[${\tilde \psi_s^{hi}[t_n]}$ is an approximate heat flow]  We have
\begin{equation}\label{stosh}
\| \nabla_{t,x} \nabla_x^j \tilde \psi^{hi}_s(s,t_n) \|_{\dot H^0_{k(s)}(t_n)} \lesssim c^{hi}(s) 
s^{-(j+2)/2}
\end{equation}
and
\begin{equation}\label{stosh-2}
\| \nabla_{t,x} \nabla_x^j (\tilde \psi^{hi}_s - \tilde D^{hi}_i \tilde \psi^{hi}_i) (s,t_n) \|_{\dot H^0_{k(s)}(t_n)} \lesssim c^{hi}(s) s^{-(j+2)/2} (1_{L^{0.6} \leq s \leq L^{0.8}} + o_{n \to \infty}(1))
\end{equation}
for all $0 \leq j \leq 20$ and $s \geq 0$, where the $\dot H^m_k$ norms are defined in \eqref{ek-def}, and $c^{hi}$ is the frequency envelope $c^{hi}(s) := \min( c(s), \chi_{k(s) \geq k(L_n^{0.7})}^{\delta_0} )$.
\end{lemma}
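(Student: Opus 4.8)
The plan is to verify the two estimates \eqref{stosh} and \eqref{stosh-2} essentially by transporting the bounds already available for $\psi_s$ at time $t_n$ (Lemma \ref{psitn-lem}) through the cutoff \eqref{psihi}, \eqref{psit-hi}, keeping careful track of where the cutoff $\eta(s/L_n^{0.7})$ and its derivative $\partial_s\eta(s/L_n^{0.7})$ are supported. For \eqref{stosh}: since $\tilde\psi^{hi}_s = \eta(s/L_n^{0.7})\psi_s$, the bound $\| \nabla_{t,x}\nabla_x^j\tilde\psi^{hi}_s(s,t_n)\|_{\dot H^0_{k(s)}} \lesssim c^{hi}(s)s^{-(j+2)/2}$ follows from \eqref{jojoe} of Lemma \ref{psitn-lem} together with the observation that $\eta(s/L_n^{0.7})$ is bounded and supported on $s \leq 2L_n^{0.7}$, so for $s$ in that range we can replace $c(s)$ by $\min(c(s),\chi_{k(s)\geq k(L_n^{0.7})}^{\delta_0}) = c^{hi}(s)$ (using the envelope property \eqref{sm} and $k(s) \geq k(2L_n^{0.7}) = k(L_n^{0.7})+O(1)$ there), while for $s > 2L_n^{0.7}$ both sides vanish. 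One should also note that $c^{hi}$ is genuinely (up to constants) a frequency envelope of energy $O(1)$: the estimate \eqref{sm} is inherited from that of $c$ and of $\chi_{k(s)\geq k(L_n^{0.7})}^{\delta_0}$, and \eqref{cse} follows since $c^{hi}\leq c$.

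For \eqref{stosh-2}, the point is that $\tilde\psi^{hi}_s - \tilde D^{hi}_i\tilde\psi^{hi}_i$ measures the failure of the heat flow equation \eqref{heat-eq-instant}, and since $\psi_s$ \emph{is} a heat flow, $\psi_s - D_i\psi_i = 0$. So I would expand the difference: $\tilde\psi^{hi}_s - \tilde D^{hi}_i\tilde\psi^{hi}_i = \eta(s/L_n^{0.7})(\psi_s - D_i\psi_i) + (\text{commutator/cutoff terms})$, where the first term vanishes identically and the remaining terms involve either the derivative $\partial_s\eta(s/L_n^{0.7})$ — supported in the dyadic window $s \sim L_n^{0.7}$, more precisely $L_n^{0.7} \leq s \leq 2L_n^{0.7}$, which sits inside $L_n^{0.6}\leq s \leq L_n^{0.8}$ for large $n$ — or the difference between $\tilde A^{hi}_x$ (built from $\tilde\psi^{hi}_x$ via the expansion \eqref{psi-odd}, \eqref{psi-even}) and $A_x$ (built from $\psi_x$). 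One must be careful here: $\tilde A^{hi}_x$ and $A_x$ are \emph{both} computed by the nonlinear reconstruction \eqref{psi-even} but from different dynamic fields, and since $\tilde\psi^{hi}_s$ agrees with $\psi_s$ for $s \leq L_n^{0.7}$ and differs only in a frequency-separated ($s \gtrsim L_n^{0.7}$) region, the difference $\tilde A^{hi}_x - A_x$ (and hence $\tilde D^{hi}_i\tilde\psi^{hi}_i - D_i\psi_i$ minus the trivially-cut term) is controlled by contributions from the transition region, which by Lemma \ref{psitn-lem} and \eqref{css} carry a factor $1_{L_n^{0.6}\leq s \leq L_n^{0.8}} + o_{n\to\infty}(1)$. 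The product estimates from Lemma \ref{ek-prod} and the multilinear bounds \eqref{sss} on iterated $c$-integrals handle the resulting series; the algebra property of $A^m_k$ (last inequality of Lemma \ref{ek-prod}) lets one sum the expansion \eqref{psi-even} with exponentially-controlled coefficients.

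More concretely, I would organize \eqref{stosh-2} as follows. Write $\tilde\psi^{hi}_s - \tilde D^{hi}_i\tilde\psi^{hi}_i = (\tilde\psi^{hi}_s - \eta \psi_s) + \eta(\psi_s - D_i\psi_i) + \eta D_i\psi_i - \tilde D^{hi}_i\tilde\psi^{hi}_i$; the first term is zero by \eqref{psihi}, the second is zero since $\psi_s$ is a heat flow, so it remains to estimate $\tilde D^{hi}_i\tilde\psi^{hi}_i - \eta D_i\psi_i = (\partial_i\tilde\psi^{hi}_i + \tilde A^{hi}_i\tilde\psi^{hi}_i) - \eta(\partial_i\psi_i + A_i\psi_i)$. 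Schematically this splits into $\partial_i(\tilde\psi^{hi}_i - \eta\psi_i)$ plus $(\partial_i\eta)\psi_i$-type terms plus $\tilde A^{hi}_i\tilde\psi^{hi}_i - \eta A_i\psi_i$. For the first group, note $\tilde\psi^{hi}_i$ is reconstructed from $\tilde\psi^{hi}_s$ via \eqref{psi-odd}, and $\tilde\psi^{hi}_s - \eta\psi_s = 0$, but the $s$-integration in \eqref{xjdef} couples scales, so $\tilde\psi^{hi}_i(s) - \psi_i(s) = \int_s^\infty (\eta(s_1/L_n^{0.7})-1)(\ldots)ds_1 + \ldots$ which is supported (in its integrand) at $s_1 \gtrsim L_n^{0.7}$; applying Lemma \ref{psitn-lem}, \eqref{sss}, and the envelope bounds gives the factor $1_{s\gtrsim L_n^{0.6}} + o_{n\to\infty}(1)$ via \eqref{css} exactly as in the proof of Lemma \ref{Dels-lem}. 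The terms with $\partial_s\eta$ are localized to $s\sim L_n^{0.7}$ and handled directly. The nonlinear term $\tilde A^{hi}_i\tilde\psi^{hi}_i - \eta A_i\psi_i$ is treated by the discretised Leibniz rule \eqref{disc-leib-eq} together with the bounds just obtained for $\tilde A^{hi}_i - A_i$ and $\tilde\psi^{hi}_i - \psi_i$.

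The main obstacle I anticipate is the bookkeeping around $\tilde A^{hi}_x - A_x$: one needs to show that the nonlinear reconstruction map $\psi_s \mapsto A_x$ given by \eqref{psi-even} is Lipschitz in the relevant frequency-localized $\dot H^0_{k}$/$A^m_k$ norms with constants that do not destroy the smallness, and that the difference inherits the support restriction $1_{L_n^{0.6}\leq s\leq L_n^{0.8}} + o_{n\to\infty}(1)$ rather than just $o_{n\to\infty}(1)$ or worse. This is essentially a repetition of the Gronwall/iteration argument already carried out for $\delta\Psi_{t,x}$ in Lemma \ref{Dels-lem} (indeed \eqref{dels} with $j$ large already gives much of what is needed), combined with the observation that outside the transition window the two fields literally coincide, so I would cite Lemma \ref{Dels-lem} and \eqref{jojoe} and indicate that the remaining estimates follow by the same multilinear expansion and \eqref{sss}, leaving the fully routine verifications to the reader.
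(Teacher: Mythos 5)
Your treatment of \eqref{stosh} is fine and matches the paper's: it is immediate from \eqref{psihi} and \eqref{jojoe}, noting that the expression vanishes for $s > 2L_n^{0.7}$ and that $c^{hi}\sim c$ on the support of the cutoff. The genuine gap is in \eqref{stosh-2} in the region $s < L_n^{0.6}$, at the step where you claim that $\tilde \psi^{hi}_x - \psi_x$ and $\tilde A^{hi}_x - A_x$ are ``controlled by contributions from the transition region'' and ``inherit the support restriction $1_{L_n^{0.6}\leq s\leq L_n^{0.8}} + o_{n\to\infty}(1)$'' via \eqref{css}. This is false: since the reconstructions \eqref{psi-odd}, \eqref{psi-even} integrate the dynamic field over all $s_1 > s$, truncating $\psi_s$ above $L_n^{0.7}$ deletes the entire low-frequency half of the map, so for $s < L_n^{0.6}$ one has $\tilde \Psi^{hi}_{t,x}(s) - \Psi_{t,x}(s) \approx -\Psi_{t,x}(L_n^{0.8})$, an object carrying energy $\gtrsim \eps$ (Lemma \ref{some-energy}), not an $o_{n\to\infty}(1)$ error. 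The smallness \eqref{css} only covers $s \leq L_n^{0.9}$ and says nothing about the contribution of $s_1 > L_n^{0.9}$, and your ``Lipschitz reconstruction'' strategy cannot close as stated because the input difference $(1-\eta(s_1/L_n^{0.7}))\psi_s(s_1)$ is not small where it is supported. Note also that \eqref{dels}, which you invoke, bounds the \emph{corrected} difference $\delta\Psi_{t,x} = \tilde\Psi^{hi}_{t,x} - \Psi_{t,x} + \Psi_{t,x}(L_n^{0.8})$, not the raw one; the correction term is precisely the point of Lemma \ref{Dels-lem}.

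What is missing is the mechanism that makes the $O(1)$ low-frequency remnant harmless. The paper's proof decomposes $D_i\psi_i - \tilde D^{hi}_i\tilde\psi^{hi}_i$ into terms involving $\delta\Psi_x$ (small by \eqref{dels}) together with explicit terms containing $\psi_x(L_n^{0.8})$ and $A_x(L_n^{0.8})$; the latter are acceptable not because they are small in $L^2$, but because they are slowly varying: placing them in $L^\infty_x$ (equivalently, via Lemma \ref{ek-prod}, measuring them with a derivative at their own frequency $k(L_n^{0.8})$) gains a factor $\sim (s/L_n^{0.8})^{1/2} = o_{n\to\infty}(1)$ for $s \leq L_n^{0.6}$ against the weight $s^{-(j+2)/2}$, and the purely linear term $\bigO(\nabla_x \psi_x(L_n^{0.8}))$ is handled by the extra $(L_n^{0.8})^{-1/2}$ decay per derivative from \eqref{l2s-fixed-special-psitn}. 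Your outline for the window $L_n^{0.6}\leq s\leq L_n^{0.8}$ (where the indicator absorbs everything and only the crude bound is needed) is consistent with the paper, and slips such as introducing $\partial_i\eta$ or $\partial_s\eta$ terms (the cutoff depends only on $s$, and no $\partial_s$ acts in the heat-tension expression) are harmless; but without the low-frequency subtraction and the $(s/L_n^{0.8})^{1/2}$ gain, the argument for $s < L_n^{0.6}$ does not close.
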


\begin{proof}  We again omit the $t_n$ time coordinate for brevity.  We may restrict attention to $0 \leq s \leq L_n^{0.8}$ since the expression inside the norms vanishes for $s > L_n^{0.8}$, in which case $c^{hi} \sim c$.  

The claim \eqref{stosh} follows from \eqref{psihi}, \eqref{jojoe}, so it suffices to prove \eqref{stosh-2}.

We first suppose that $s \geq L^{0.6}$.  From \eqref{stosh} and the triangle inequality, it suffices to show in this case that
$$
\| \nabla_{t,x} \nabla_x^j (\tilde D^{hi}_i \tilde \psi^{hi}_i) (s,t_n) \|_{\dot H^0_{k(s)}(t_n)} \lesssim c(s) s^{-(j+2)/2} $$
for all $0 \leq j \leq 20$.  For this, we express
\begin{align*}
\nabla_{t,x} \tilde D^{hi}_i \tilde \psi^{hi}_i &= \tilde D^{hi}_{t,x} \tilde D^{hi}_i \tilde \psi^{hi}_i + \bigO( \tilde A^{hi}_{t,x} \tilde D^{hi}_x \tilde \psi^{hi}_x ) \\
&= \tilde D^{hi}_i \tilde D^{hi}_i \tilde \psi^{hi}_{t,x} + \bigO( \tilde \psi^{hi}_{t,x} \tilde \psi^{hi}_x \tilde \psi^{hi}_x ) + \bigO( \tilde A^{hi}_{t,x} \tilde D^{hi}_x \tilde \psi^{hi}_x ).
\end{align*}
Using \eqref{dels}, Lemma \ref{ek-prod}, the Leibniz rule, and the triangle inequality, we obtain the claim.

Now suppose that $s < L^{0.6}$.  Then $\tilde \psi^{hi}_s = \psi_s$; writing $\psi_s = D_i \psi_i$, it thus suffices to show that
$$ \|\nabla_x^j \nabla_{t,x} (D_i \psi_i - \tilde D^{hi}_i \tilde \psi^{hi}_i)(s) \|_{\dot H^0_{k(s)}(\R^2)} \lesssim o_{n \to \infty}(c(s) s^{-(j+2)/2})$$
for $0 \leq j \leq 20$.

Fix $j$.  We can decompose $D_i \psi_i - \tilde D^{hi}_i \tilde \psi^{hi}_i$ as
\begin{align*}
&\bigO( \nabla_x \delta \psi_x(s) ) + \bigO( \nabla_x \psi_x(L_n^{0.8}) ) \\
&\quad + \bigO( A^*_x(s) \delta \psi_x(s) ) + \bigO( A^*_x(s) \psi_x(L_n^{0.8}) ) \\
&\quad + \bigO( \delta A_x \psi^*_x(s) ) + \bigO( A_x(L_n^{0.8}) \psi^*_x(s) ) 
\end{align*}
where $A^*_x(s) = (A_x(s), \tilde A_x(s)) = \bigO( A_x(s), \delta A_x(s), A_x(L_n^{0.8}) )$, and similarly for $\psi^*_x$.  Applying the Leibniz rule followed by H\"older's inequality (placing $\psi_x(L_n^{0.8})$ and $A_x(L_n^{0.8})$ in $L^\infty$ whenever possible, in order to gain at least one factor of $(s/L_n^{0.8})^{1/2} = o_{n \to \infty}(1)$) and applying Lemma \ref{psitn-lem}, Lemma \ref{Dels-lem}, and Lemma \ref{ek-prod} we obtain the claim.
\end{proof}

From \eqref{css} and the definition of $c^{hi}$, we note that
\begin{equation}\label{css-hi}
 \int_{s \geq L_n^{-0.9}} c^{hi}(s)^2 \frac{ds}{s} = o_{n \to \infty}(1).
\end{equation}

We can now invoke Theorem \ref{parab-thm-instant} (enlarging $c$ on the interval $L^{0.5} \leq s \leq L^{0.9}$ somewhat so that \eqref{psisjk-star-abstract} holds for some $\kappa = o_{n \to \infty}(1)$), to obtain an instantaneous dynamic field heat flow $\psi^{hi}_s[t_n]$ with the same resolution as $\tilde \psi^{hi}_s[t_n]$ obeying the estimates
\begin{align}
\| \nabla_{t,x} \nabla_x^j \Psi^{hi}_x(s) \|_{\dot H^0_{k(s)}(t_n)} &\lesssim \tilde c^{hi}(s) s^{-(j+1)/2} \label{parab-1h}\\
\| \nabla_{t,x} \nabla_x^j A^{hi}_x(s) \|_{\dot H^0_{k(s)}(t_n)} &\lesssim \tilde c^{hi}(s)^2 s^{-(j+1)/2}\label{parab-2h}\\
\| \nabla_{t,x} \nabla_x^j \psi^{hi}_s(s) \|_{\dot H^0_{k(s)}(t_n)} &\lesssim \tilde c^{hi}(s) s^{-(j+2)/2}\label{parab-3h}\\
\| \nabla_{t,x} \nabla_x^j (\tilde \Psi^{hi}_x - \Psi^{hi}_x)(s) \|_{\dot H^0_{k(s)}(t_n)} &\lesssim \tilde c^{hi}(s) s^{-(j+1)/2}
(1_{L^{0.5} \leq s \leq L^{0.9}} + o_{n \to \infty}(1))
\label{parab-4h}\\
\| \nabla_{t,x} \nabla_x^j (\tilde A^{hi}_x - A^{hi}_x)(s) \|_{\dot H^0_{k(s)}(t_n)} &\lesssim o_{n \to \infty}( \tilde c^{hi}(s)^2 s^{-(j+1)/2} )
 \label{parab-5h}\\
\| \nabla_{t,x} \nabla_x^j (\tilde \psi^{hi}_s - \psi^{hi}_s)(s) \|_{\dot H^0_{k(s)}(t_n)} &\lesssim o_{n \to \infty}( \tilde c^{hi}(s) s^{-(j+2)/2} ) \label{parab-6h}
\end{align}
for all $0 \leq j \leq 15$ and $s > 0$, where $\tilde c^{hi}$ is a frequency envelope slightly larger than $c^{hi}$, but still of energy $O(1)$ and still obeying the bound \eqref{css-hi}, and $\Psi^{hi}_{s,t,x}(t_n)$ are the instantaneous differentiated fields associated to $\psi_s[t_n]$.

By Lemma \ref{highless} and the induction hypothesis, the energy of $\phi^{hi}[t_n]$ is good.  From this and Lemma \ref{quant}, we may extend $\phi^{hi}$ to a global classical wave map, and in particular to the time interval $J$, and thus also extend the differentiated fields $\Psi^{hi}_{s,t,x}$ to a wave map heat flow on $J$.  Furthermore, this wave map has an $(O(1),1)$-entropy of $O(1)$, and in particular by Proposition \ref{symscat}(iii), has a $(O(1),\mu)$-entropy of $O_\mu(1)$.   In other words, one can partition $J$ into a collection ${\mathcal K}$ of $O_\mu(1)$ intervals $K$ such that $\phi^{hi}$ is a $(O(1),\mu)$-wave map on each such interval $K$.

\subsection{Control of the high frequency component}

\begin{lemma}[$\phi^{hi}$ is high frequency]  Let the hypotheses be as in Proposition \ref{sweden}.  For each $K \in {\mathcal K}$, one has a frequency envelope $c^{hi}_K$ of energy $O(1)$ such that
\begin{equation}\label{slice-dice-hi}
\| \nabla_x^j \psi^{hi}_s(s) \|_{S_{\mu,k(s)}(K \times \R^2)} \lesssim_j c^{hi}_K(s) s^{-(j+2)/2}
\end{equation}
and
\begin{equation}\label{slice-dice-hi-2}
\| \nabla_x^j \psi^{hi}_x(s) \|_{S_{\mu,k(s)}(K \times \R^2)} \lesssim_j c^{hi}_K(s) s^{-(j+1)/2}
\end{equation}
for all $j \geq 0$ and $s>0$.  Furthermore, one has
\begin{equation}\label{chi}
\int_{s \leq L_n^{0.5}} c^{hi}(s)^2 \frac{ds}{s} = o_{n \to \infty}(1).
\end{equation}
\end{lemma}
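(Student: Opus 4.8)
The plan is to follow the proof of the preceding lemma (``$\phi^{lo}$ is low frequency'') almost verbatim, interchanging the roles of the low and high frequency ranges and replacing $\mu'$ by $\mu$. Recall that at time $t_n$ we already have, from the instantaneous parabolic repair estimate \eqref{parab-3h} (and its $\nabla_{t,x}$ version), the control
\begin{equation*}
\| \nabla_{t,x} \nabla_x^j \psi^{hi}_s(s,t_n) \|_{\dot H^0_{k(s)}(\R^2)} \lesssim_j \tilde c^{hi}(s)\, s^{-(j+2)/2}
\end{equation*}
for all $0\le j\le 10$ and $s>0$, where $\tilde c^{hi}$ is a frequency envelope of energy $O(1)$ obeying the concentration bound \eqref{css-hi}. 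Recall also that $\phi^{hi}$ has been extended to a global classical wave map of $(O(1),1)$-entropy $O(1)$, and that ${\mathcal K}$ is a partition of $J$ into $O_\mu(1)$ intervals on each of which $\phi^{hi}$ is a $(O(1),\mu)$-wave map.

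First I would propagate the time-$t_n$ envelope to all of $J$. Fixing a small auxiliary parameter $\mu_0$ (depending only on $E_0,\eps$) and using Proposition \ref{symscat}(iii) to give $\phi^{hi}$ an $(O(1),\mu_0)$-entropy of $O(1)$, I would apply Lemma \ref{freqstable2} on $J$ with initial time $t_n$ and envelope $c_0 = \tilde c^{hi}$ --- equivalently, iterate Lemma \ref{freqstable} a bounded number of times across the pieces of the entropy partition, re-using the fixed-time bounds furnished by Lemma \ref{freqstable} at the common endpoints --- to obtain, on each $K\in{\mathcal K}$,
\begin{equation*}
\| \nabla_x^j \psi^{hi}_s(s) \|_{S_{\mu,k(s)}(K\times\R^2)} \lesssim_j \tilde c^{hi}(s)\, s^{-(j+2)/2},\qquad
\| \nabla_x^j \psi^{hi}_x(s) \|_{S_{\mu,k(s)}(K\times\R^2)} \lesssim_j \tilde c^{hi}(s)\, s^{-(j+1)/2}
\end{equation*}
for all $j\ge 0$ and $s>0$ (using \eqref{ds2-eq}, the $N^\strong$-bounds of Lemma \ref{freqstable}, and \eqref{sksk-star} to pass from $S_{k(s)}$ to $S_{\mu,k(s)}$ at the cost of an $O_\mu(1)$ constant; one may need to refine ${\mathcal K}$ by the $\mu_0$-entropy partition first). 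Separately, since $\phi^{hi}$ is itself a $(O(1),\mu)$-wave map on each $K$, Lemma \ref{freqstable} (together with \eqref{ds2-eq}) furnishes for each such $K$ a frequency envelope $\hat c_K$ of energy $O(1)$ satisfying the same two bounds, but with $\tilde c^{hi}$ replaced by $\hat c_K$.

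Finally I would take $c^{hi}_K := \min\bigl(\hat c_K,\,O_\mu(\tilde c^{hi})\bigr)$, which is again a frequency envelope of energy $O(1)$. By construction it satisfies \eqref{slice-dice-hi} and \eqref{slice-dice-hi-2} on each $K$, and since $c^{hi}_K \lesssim_\mu \tilde c^{hi}$ it inherits \eqref{chi} from the concentration bound \eqref{css-hi} for $\tilde c^{hi}$ (which in turn follows, exactly as in the proof of \eqref{clo}, from the defining cutoff in $c^{hi}$ --- giving a gain of a small power of $(s/L_n^{0.7})$ away from $s\sim L_n^{0.7}$ --- combined with \eqref{css}). The one step that requires genuine care is the first: because $t_n$ need not lie in every interval $K\in{\mathcal K}$, the time-$t_n$ envelope must be transported along the whole of $J$, so one cannot simply invoke Lemma \ref{freqstable} on a single interval but must iterate it $O_\mu(1)$ times (or invoke Lemma \ref{freqstable2} with a fixed small $\mu_0$), checking that each step loses only a multiplicative constant so that both the envelope bound and its concentration at high frequencies survive.
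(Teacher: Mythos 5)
Your proposal is correct and follows essentially the same route as the paper: the paper likewise combines the time-$t_n$ envelope bound \eqref{parab-3h} with Lemma \ref{freqstable2} to get the $S_{\mu,k(s)}$ bounds with constants $O_\mu(1)$ times the (tight) envelope $\tilde c^{hi}$, separately invokes the $(O(1),\mu)$-wave map property on each $K$ together with \eqref{ds2-eq} and Lemma \ref{freqstable} to get $\mu$-independent bounds with a $K$-dependent envelope, and then sets $c^{hi}_K$ to be the infimum of the two, deducing \eqref{chi} from \eqref{css-hi}. Your added discussion of iterating Lemma \ref{freqstable} across the entropy partition (with an auxiliary $\mu_0$) is just an unpacking of the paper's direct appeal to Lemma \ref{freqstable2}, not a different argument.
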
 

\begin{proof}
From \eqref{parab-3h} and Lemma \ref{freqstable2} one has
$$
\| \nabla_x^j \psi^{hi}_s(s) \|_{S_{\mu,k(s)}(K \times \R^2)} \lesssim_{j,\mu} c^{hi}(s) s^{-(j+2)/2}
$$
and
$$
\| \nabla_x^j \psi^{hi}_x(s) \|_{S_{\mu,k(s)}(K \times \R^2)} \lesssim_{j,\mu} c^{hi}(s) s^{-(j+1)/2}
$$
for all $j \geq 0$ and $s>0$.  Furthermore, since $\psi^{hi}_s$ is a $(O(1),\mu)$-map on $K$, we see from \eqref{ds2-eq} and Lemma \ref{freqstable} that
$$
\| \nabla_x^j \psi^{hi}_s(s) \|_{S_{\mu,k(s)}(K \times \R^2)} \lesssim_{j} \tilde c_K^{hi}(s) s^{-(j+2)/2}
$$
and
$$
\| \nabla_x^j \psi^{hi}_x(s) \|_{S_{\mu,k(s)}(K \times \R^2)} \lesssim_{j} \tilde c_K^{hi}(s) s^{-(j+1)/2}
$$
for all $j \geq 0$ and $s>0$, and some frequency envelope $\tilde c_K^{hi}$ of energy $O(1)$.  Setting $c_K^{hi}$ to be the infimum of $\tilde c_K^{hi}$ and $O_{\mu}(c^{hi})$, the claim now follows from \eqref{css-hi}.
\end{proof}

\subsection{Superimposing the low and high frequency components}

Having constructed the low-frequency fields $\Psi^{lo}_{s,t,x}$ and high-frequency fields $\Psi^{hi}_{s,t,x}$ on all of $I_n \cap J_n$, we now superimpose them to create an approximant $\tilde \Psi_{s,t,x}$ to the fields $\Psi_{s,t,x}$ of the original wave map $\phi$.  Following Section \ref{delocal-sec}, we define the dynamic field
\begin{equation}\label{tpsis-def}
 \tilde \psi_s := \psi^{lo}_s + \psi^{hi}_s.
\end{equation}
We let $\tilde \Psi_{s,t,x}$ be the associated differentiated fields, and $\tilde \phi$ be an associated reconstructed map.

Now we establish various estimates on this approximate field $\tilde \Psi_{s,t,x}$.

\begin{proposition}[Envelope and approximate heat flow bounds]\label{slice-dice}  For each $K \in {\mathcal K}$, one has a frequency envelope $c_K$ of energy $O(1)$ such that
\begin{equation}\label{slice-dice-1}
\| \nabla_x^j \tilde \psi_s(s) \|_{S_{\mu,k(s)}(K \times \R^2)} \lesssim_j c_K(s) s^{-(j+2)/2}
\end{equation}
and
\begin{equation}\label{slice-dice-2}
\| \nabla_x^j \tilde h(s) \|_{S_{\mu,k(s)}(K \times \R^2)} \lesssim_j o_{n \to \infty}( c_K(s) s^{-(j+2)/2} )
\end{equation}
for all $j \geq 0$ and $s > 0$,
where $\tilde h := \tilde \psi_s - \tilde D_i \tilde \psi_i$ is the heat-tension field of $\tilde \psi$.
In particular (by \eqref{ds2-eq}) we have
$$ \| \tilde \psi_s \|_{S^1_\mu(K)} \lesssim 1.$$
\end{proposition}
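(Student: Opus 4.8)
The plan is to treat the two claimed bounds separately, the second being the substance. The envelope bound \eqref{slice-dice-1} is the easy half: by \eqref{tpsis-def} and the triangle inequality,
$$ \| \nabla_x^j \tilde\psi_s(s) \|_{S_{\mu,k(s)}(K\times\R^2)} \le \| \nabla_x^j \psi^{lo}_s(s) \|_{S_{\mu,k(s)}(K\times\R^2)} + \| \nabla_x^j \psi^{hi}_s(s) \|_{S_{\mu,k(s)}(K\times\R^2)}, $$
and since $\mu' \le \mu$ makes every $S_{\mu',k}$-atom (small or null-dispersed) a bounded multiple of an $S_{\mu,k}$-atom, so that $\|\cdot\|_{S_{\mu,k}} \lesssim \|\cdot\|_{S_{\mu',k}}$, and since $K \subset J$, one may invoke \eqref{slice-dice-lo} and \eqref{slice-dice-hi} to bound the right-hand side by $\lesssim_j (c^{lo}(s) + c^{hi}_K(s)) s^{-(j+2)/2}$. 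One then takes $c_K$ to be a suitable constant multiple of $c^{lo}+c^{hi}_K$; this is a frequency envelope of energy $O(1)$, because a sum of two envelopes of energy $O(1)$ again obeys \eqref{cse} and \eqref{sm}. Granting \eqref{slice-dice-2} as well, the final assertion $\|\tilde\psi_s\|_{S^1_\mu(K)} \lesssim 1$ is then immediate from \eqref{slice-dice-1}, the definition \eqref{ds2-eq}, and \eqref{cse} for $c_K$.

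For \eqref{slice-dice-2}, the key point is that $\tilde h$ depends only on the ``interaction'' between $\psi^{lo}_s$ and $\psi^{hi}_s$. Since $\psi^{lo}_s$ and $\psi^{hi}_s$ are genuine heat flows, $\psi^{lo}_s = D^{lo}_i\psi^{lo}_i$ and $\psi^{hi}_s = D^{hi}_i\psi^{hi}_i$, so from \eqref{tpsis-def} and \eqref{heat-tension},
\begin{align*}
\tilde h &= \tilde\psi_s - \partial_i\tilde\psi_i - \tilde A_i\tilde\psi_i \\
&= \partial_i\bigl( \psi^{lo}_i + \psi^{hi}_i - \tilde\psi_i \bigr) + \bigl( A^{lo}_i\psi^{lo}_i + A^{hi}_i\psi^{hi}_i - \tilde A_i\tilde\psi_i \bigr).
\end{align*}
I would then insert the multilinear expansions \eqref{psi-odd}, \eqref{psi-even} for $\tilde\psi_i$ and $\tilde A_i$ in terms of $\tilde\psi_s = \psi^{lo}_s + \psi^{hi}_s$, and expand each block $X_{j,i}$ multilinearly into monomials in $\psi^{lo}_s$ and $\psi^{hi}_s$. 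The monomials all of whose factors are $\psi^{lo}_s$ (respectively $\psi^{hi}_s$) reassemble — via \eqref{psi-odd}, \eqref{psi-even} applied to the heat flows $\psi^{lo}_s$ and $\psi^{hi}_s$ individually — into $\psi^{lo}_i,A^{lo}_i$ (respectively $\psi^{hi}_i,A^{hi}_i$), and so cancel in the two differences above. Hence $\tilde h$ is a weakly convergent sum of iterated integrals of schematic shape $\int_{s<s_1<\dots<s_r}\bigO_r(\psi^{\sharp_1}_s(s_1)\cdots\partial_i\psi^{\sharp_p}_s(s_r)\cdots)\,ds_1\cdots ds_r$ (the $A_i\psi_i$ term producing a product of two such blocks), where each $\sharp_a\in\{lo,hi\}$ and both labels occur at least once; these are the ``mixed'' terms.

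The last and hardest step is to estimate the mixed terms in $S_{\mu,k(s)}(K\times\R^2)$, along the lines of the estimates already carried out in this section and in \cite[Section 9]{tao:heatwave3}: bound each factor $\nabla_x^{j'}\psi^{lo}_s(s_a)$ by $c^{lo}(s_a)s_a^{-(j'+2)/2}$ via \eqref{slice-dice-lo}, each factor $\nabla_x^{j'}\psi^{hi}_s(s_b)$ by $c^{hi}_K(s_b)s_b^{-(j'+2)/2}$ via \eqref{slice-dice-hi}, combine via the product estimate \eqref{prod1} and Minkowski's inequality, distribute the outer $\nabla_x^j$ by the Leibniz rule, and carry out the $s$-integrals using \eqref{sss} (whose factor $O(1)^r r^{-r/2}$ guarantees absolute convergence of the sum over $r$ and over the number of blocks). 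The genuinely new ingredient — and the main obstacle — is the extraction of the gain $o_{n\to\infty}(1)$: because $c^{lo}$ is small (integrably by \eqref{clo}, hence, via the envelope inequality \eqref{sm} and the polynomial decay built into its construction, essentially pointwise) on $s\gtrsim L_n^{-0.7}$, while $c^{hi}_K\lesssim_\mu c^{hi}$ is small on $s\lesssim L_n^{0.5}$ by \eqref{chi}, the two envelopes are essentially supported — up to tails decaying with exponent $\delta_0$ — in $s$-ranges separated by a factor growing in $L_n$. Since each mixed term necessarily pairs a $c^{lo}$-factor against a $c^{hi}_K$-factor, tracking this separation through \eqref{sm} and \eqref{sss} produces, uniformly in $r$ and in the number of blocks, a multiplicative factor $o_{n\to\infty}(1)$, yielding $\|\nabla_x^j\tilde h(s)\|_{S_{\mu,k(s)}(K\times\R^2)}\lesssim_j o_{n\to\infty}(c_K(s)s^{-(j+2)/2})$ as in \eqref{slice-dice-2}. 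The routine part that remains is the bookkeeping of the frequency-comparison factors $\chi_{k(\cdot)=k(\cdot)}$ intrinsic to the $S_{\mu,k}$ norms, handled exactly as in \eqref{cs1}, \eqref{cs2} above.
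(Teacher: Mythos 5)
Your treatment of \eqref{slice-dice-1}, the algebraic identity for $\tilde h$, and the cancellation of the pure-lo and pure-hi monomials is sound and essentially parallels the paper (which phrases the same cancellation through $\delta \Psi_x := \tilde \Psi_x - \Psi^{lo}_x - \Psi^{hi}_x$ and the identity \eqref{htl}). The gap is in the step where you extract the factor $o_{n\to\infty}(1)$ for the mixed terms. You assert that since every mixed monomial contains at least one $c^{lo}$-factor and one $c^{hi}_K$-factor, and the two envelopes are essentially supported in widely separated $s$-ranges, the gain follows by tracking this through \eqref{sm} and \eqref{sss}. But in your full multilinear expansion the lo and hi factors sit at \emph{different} heat times $s_a \neq s_b$ in the ordered simplex, and the ordering does not exclude the configuration in which $s_a$ lies in the bulk of $c^{lo}$ while $s_b$ lies in the bulk of $c^{hi}_K$; there both envelope values are $O(1)$ and the product $c^{lo}(s_a) c^{hi}_K(s_b)$ carries no smallness at all. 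Any gain in that regime has to be dug out of the weights of the simplex integration (the $s^{-3/2}$-type decay of the top factor together with the frequency-conversion losses $\chi^{-\delta_1}$ relating $k(s_a)$ to the output frequency $k(s)$), i.e. from a genuine low$\times$high analysis which you do not carry out -- and this is exactly the kind of interaction that, in the companion estimate for the wave-tension field (Proposition \ref{twavemap} and the Remark following it), is \emph{not} $o_{n\to\infty}(1)$ but only $o_{\mu'\to 0}(1)$. So ``pairing a lo factor with a hi factor'' is not by itself a proof of \eqref{slice-dice-2}.

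The paper's proof avoids this issue by a different organization, and this is the idea missing from your proposal. First, it does not take $c_K \sim c^{lo}+c^{hi}_K$, but includes the product: $\tilde c_K := c^{lo}+c^{hi}_K + w\, c^{lo} c^{hi}_K$ with $w = w_n \to \infty$ slowly, which is legitimate because $\int_0^\infty (c^{lo} c^{hi}_K)^2\, ds/s = o_{n\to\infty}(1)$ by \eqref{clo}, \eqref{chi}; this yields the \emph{equal-time} pointwise bound \eqref{jungo}, $c^{lo}(s) c^{hi}_K(s) \lesssim o_{n\to\infty}(c_K(s))$. Second, instead of a full monomial expansion it derives the $\partial_s$-equation \eqref{hypos} for $\delta\Psi_{t,x}$, whose forcing term $F_{t,x} = \bigO(\psi^{lo}_s \Psi^{hi}_{t,x}) + \bigO(\psi^{hi}_s \Psi^{lo}_{t,x})$ is a \emph{same-time} lo$\times$hi product; after iterating as in \eqref{shoofly}, every mixed contribution to $\tilde h$ -- including the explicit cross term $\Psi^{lo}_x \Psi^{hi}_x$ in \eqref{htl} -- contains exactly one such same-time product, to which \eqref{jungo} applies directly, and the remaining simplex integration only needs \eqref{prod1}, \eqref{sss} with a uniform $o_{n\to\infty}(1)$ prefactor. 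To salvage your scheme you would either have to supply the missing estimate for the separated-times low$\times$high regime, or re-sum your monomials so that the lo/hi pairing occurs at a single heat time -- which is precisely what the paper's $\delta\Psi$ device accomplishes.
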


\begin{proof}  From \eqref{clo}, \eqref{chi} one verifies that
$$ \int_0^\infty c^{lo}(s)^2 c^{hi}_K(s)^2\ ds = o_{n \to \infty}(1).$$
Thus, if we set $\tilde c_K := c^{lo} + c^{hi}_K + w c^{lo} c^{hi}_K$ for some $w = w_n$ growing to infinity sufficiently slowly, and then set $c_K(s) := \sup_{s'} \min((s/s')^{\delta_0}, (s'/s)^{\delta_0}) \tilde c_K(s)$, then $c_K$ is a frequency envelope of energy $O(1)$, and
one has the pointwise estimates
$$ c^{lo}(s), c^{hi}_K(s) \lesssim c_K(s)$$
and
\begin{equation}\label{jungo}
 c^{lo}(s) c^{hi}_K(s) \lesssim o_{n \to \infty}(c_K(s)).
 \end{equation}

The bound \eqref{slice-dice-1} follows from \eqref{tpsis-def}, \eqref{slice-dice-lo}, \eqref{slice-dice-hi}, and the triangle inequality.  

The remaining estimate \eqref{slice-dice-2} in this proposition is not quite so simple, because $\tilde \Psi_{t,x}$ is not a linear superposition of $\Psi_{t,x}^{lo}$ and $\Psi_{t,x}^{hi}$.  However, we can show that it is an \emph{approximate} linear superposition by arguing as follows.  From \eqref{PSA} one has
\begin{align*}
\partial_s \Psi_{t,x}^{lo} &= \bigO( \nabla_{t,x} \psi_s^{lo} ) + \bigO( \psi_s^{lo} \Psi_{t,x}^{lo} ) \\
\partial_s \Psi_{t,x}^{hi} &= \bigO( \nabla_{t,x} \psi_s^{hi} ) + \bigO( \psi_s^{hi} \Psi_{t,x}^{hi} ) \\
\partial_s \tilde \Psi_{t,x} &= \bigO( \nabla_{t,x} \tilde \psi_s ) + \bigO( \tilde \psi_s \tilde \Psi_{t,x} ).
\end{align*}
Subtracting the first two equations from the second and using \eqref{tpsis-def}, we conclude that
\begin{equation}\label{hypos}
 \partial_s \delta \Psi_{t,x} = F_{t,x} + \bigO( \tilde \psi_s \delta \Psi_{t,x} )
\end{equation}
where $\delta \Psi_{t,x} := \tilde \Psi_{t,x} - \Psi^{lo}_{t,x} - \Psi^{hi}_{t,x}$, and the forcing term $F_{t,x}$ takes the form
$$ F_{t,x} = \bigO( \psi_s^{lo} \Psi_{t,x}^{hi} ) + \bigO( \psi_s^{hi} \Psi_{t,x}^{lo} ).$$

Since $\psi^{lo}_s, \psi^{hi}_s$ are heat flows, one has  
\begin{align*}
\tilde h &= \tilde \psi_s - \tilde D_i \tilde \psi_i \\
0 &= \psi^{lo}_s - D^{lo}_i \psi^{lo}_i \\
0 &= \psi^{hi}_s - D^{hi}_i \psi^{hi}_i;
\end{align*}
subtracting the latter two equations from the former, one obtains the formula
\begin{equation}\label{htl}
 \tilde h = \bigO( \partial_x \delta \Psi_x + \Psi^{lo}_x \Psi^{hi}_x + \Psi^*_x \delta \Psi_x + \delta \Psi_x \delta \Psi_x)
\end{equation}
where $\Psi^*_x := (\Psi^{lo}_x, \Psi^{hi}_x)$.

From \eqref{slice-dice-lo-2}, \eqref{slice-dice-hi-2}, \eqref{prod1} one has
$$ \| \bigO( \Psi^{lo}_x \Psi^{hi}_x(s) ) \|_{S_{\mu,k}(K \times \R^2)} \lesssim s^{-1} c^{lo}(s) c^{hi}_K(s);$$
the right-hand side is $o_{n \to \infty}(c_K(s) / s )$ by construction of $c_K$, and so the contribution of this term to \eqref{slice-dice-2} is acceptable.  Since we have
$$ \| \Psi^*_x(s) \|_{S_{\mu,k}(K \times \R^2)} \lesssim c_K(s) s^{-1/2},$$
we see that to control the remaining terms in \eqref{htl} it suffices by \eqref{prod1} to show that
\begin{equation}\label{fief}
\| \nabla_x^m \delta \Psi_x(s) \|_{S_{\mu,k}(K \times \R^2)} \lesssim_m o_{n \to \infty}(c_K(s) s^{-(m+1)/2}),
\end{equation}
for all $m \geq 0$.  

We can rescale $s=1$.  Iterating (the spatial version of) \eqref{hypos} we have
\begin{equation}\label{shoofly}
\delta \Psi_{x}(1) = \sum_{j=1}^\infty \int_1^\infty X_j(s_1)\ ds_1
\end{equation}
where
\begin{align*}
 X_j(s_1) &:= \bigO_j( \int_{s_1 < s_2 < \ldots < s_j} \tilde \psi_s(s_1) \ldots \tilde \psi_s(s_{j-1}) F_{x}(s_j)\ ds_2 \ldots ds_j),\\
F_{x} &:= \bigO( \psi_s^{lo} \Psi_{x}^{hi} ) + \bigO( \psi_s^{hi} \Psi_{x}^{lo} ),
\end{align*}
and the implied constants are $O(1)^j$, so it suffices by the triangle inequality to show that
\begin{equation}\label{nabab}
 \| \nabla_x^m \int_{1 \leq s_1 < s_2 < \ldots < s_j} \tilde \psi_s(s_1) \ldots \tilde \psi_s(s_{j-1}) F_{x}(s_j)\ ds_1 \ldots ds_j \|_{S_{\mu,0}(K \times \R^2)} \lesssim_m o_{n \to \infty}( O(1)^j j^{-j/2} c_K(1) )
 \end{equation}
for all $j \geq 1$.  From \eqref{slice-dice-lo}, \eqref{slice-dice-hi} we have
$$ \| \nabla_x^m \tilde \psi_s(s) \|_{S_{\mu,k(s)}(K \times \R^2)} \lesssim s^{-(m+2)/2} c_K(s),$$
while from \eqref{slice-dice-lo}, \eqref{slice-dice-lo-2}, \eqref{slice-dice-hi}, \eqref{slice-dice-hi-2}, \eqref{prod1}, and the Leibniz rule one has
$$ \| \nabla_x^m F_x(s) \|_{S_{\mu,k(s)}(K \times \R^2)} \lesssim_m s^{-(m+3)/2} c^{lo}_K(s) c^{hi}(s) \lesssim o_{n \to \infty}( s^{-(m+2)/2} c_K(s) ).$$
From \eqref{prod1}, the Leibniz rule, Minkowski's inequality, and \eqref{physical-star}, one can thus bound the left-hand side of \eqref{nabab} by
$$ 
\lesssim_m O(1)^j o_{n \to \infty}( \int_{1 \leq s_1 < s_2 < \ldots < s_j} 
\frac{c_K(s_1)}{s_1} \ldots \frac{c_K(s_{j-1})}{s_{j-1}} \frac{c_K(s_j)}{s_j^{3/2-\delta_0}}\ ds_1 \ldots ds_j ).$$
 Applying \eqref{sss}, we can bound this by
$$ 
\lesssim_m O(1)^j j^{-j/2} o_{n \to \infty}( \int_{1 < s_j} \frac{c_K(s_j)}{s_j^{3/2-2\delta_0}}\ ds_j )$$
 which is acceptable by the envelope property.
\end{proof}

\begin{proposition}[Good approximation at initial time]\label{tinit} We have
$$ \tilde \dist_\Energy( \psi[t_n], \tilde \psi[t_n] ) = o_{n \to \infty}(1).$$
\end{proposition}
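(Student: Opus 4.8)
The plan is to unwind the definition \eqref{energy-dist-def} of $\tilde \dist_\Energy$ and estimate, for each $0 \leq j \leq 10$ and each frequency shell $2^{-2k-2} \leq s \leq 2^{-2k}$, the quantity $s^{1+j/2} \| \nabla_x^j \nabla_{t,x}(\psi_s - \tilde\psi_s)(s,t_n) \|_{\dot H^0_k(\R^2)}$, and show that the $\ell^2_k$-sum over these is $o_{n \to \infty}(1)$. Since $\tilde\psi_s = \psi^{lo}_s + \psi^{hi}_s$ by \eqref{tpsis-def}, and $\psi^{lo}_s[t_n] = \psi_s(K_n^{1/2}+\cdot)[t_n]$ is the heat-translate of $\psi_s$ by $K_n^{1/2}$ while $\psi^{hi}_s[t_n]$ was obtained (Theorem \ref{parab-thm-instant}) by parabolically repairing $\tilde\psi^{hi}_s[t_n]$, it suffices by the triangle inequality to compare $\psi_s[t_n]$ against $\psi^{hi}_s[t_n] + \psi^{lo}_s[t_n]$ through the chain of intermediate fields: (a) $\psi_s$ versus the truncation $\tilde\psi^{hi}_s = \eta(s/L_n^{0.7})\psi_s$ together with a low-frequency remainder; (b) the repair error $\tilde\psi^{hi}_s - \psi^{hi}_s$; and (c) the identification of the low-frequency piece with $\psi^{lo}_s$.

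First I would split the $s$-range into $s \leq L_n^{0.5}$, the transition region $L_n^{0.5} \leq s \leq L_n^{0.9}$, and $s \geq L_n^{0.9}$. On $s \geq L_n^{0.9}$: here $\tilde\psi^{hi}_s$ vanishes, $\psi^{hi}_s$ is controlled by the envelope $\tilde c^{hi}$ which obeys \eqref{css-hi}, so its contribution is $o_{n\to\infty}(1)$; meanwhile $\psi^{lo}_s(s) = \psi_s(s+K_n^{1/2})$ and one must compare $\psi_s(s)$ against $\psi_s(s+K_n^{1/2})$, which is handled by Lemma \ref{psitn-lem} (the bound \eqref{jojoe}) together with the smallness \eqref{css} of the envelope $c$ on $L_n^{-0.9} \leq s \leq L_n^{0.9}$ (here one uses that $s \geq L_n^{0.9}$ puts $s$ at the edge of this window while $K_n^{1/2} \gg L_n^{0.8}$, so by the envelope inequality \eqref{sm} and \eqref{css} the difference is $o_{n\to\infty}(1)$ — this is essentially the content of Lemma \ref{some-energy} and the surrounding estimates). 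On $s \leq L_n^{0.5}$: here $\psi^{hi}_s(s) = \tilde\psi^{hi}_s(s) = \psi_s(s)$ up to the parabolic-repair error, which by \eqref{parab-6h} is $o_{n\to\infty}(\tilde c^{hi}(s)s^{-(j+2)/2})$; and $\psi^{lo}_s(s) = \psi_s(s+K_n^{1/2})$ is negligible since $c^{lo}$ obeys \eqref{clo}, i.e. $c^{lo}$ is concentrated at $s \gtrsim L_n^{-0.7}$, whose $\ell^2_k$ mass in the region $s \leq L_n^{0.5}$ is — wait, rather, one uses \eqref{css-hi}/\eqref{chi}: the relevant envelope bound is that $c^{hi}$ carries all the mass on $s \leq L_n^{0.5}$ and $c^{lo} c^{hi}_K = o_{n\to\infty}(1)$ in $L^2(ds)$, so the cross terms vanish. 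The transition region $L_n^{0.5} \leq s \leq L_n^{0.9}$ contributes $o_{n\to\infty}(1)$ in $\ell^2_k$ directly from \eqref{css} and the definition of $c^{hi}$, since this window has vanishing envelope mass.

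The main obstacle is the bookkeeping on $s \geq L_n^{0.9}$, where one is comparing $\psi_s(s)$ with its heat-shift $\psi_s(s+K_n^{1/2})$: the difference $\partial_s$-derivative is $O_{\psi_s}(\langle s\rangle^{-3/2})$ qualitatively, but one needs the quantitative $o_{n\to\infty}(1)$, which must come from Lemma \ref{gapstable} applied with $s_1 = L_n^{0.9}$ (or similar) and $s_2 = K_n^{1/2}$, using \eqref{css} to verify the energy-gap hypothesis \eqref{psisep}. I would therefore reduce, via Lemma \ref{gapstable} and Lemma \ref{compati}(iii), the entire proposition to the three envelope smallness statements \eqref{css}, \eqref{css-hi} (equivalently \eqref{clo}), and \eqref{chi}, each of which has already been established; the triangle inequality in $\tilde \dist_\Energy$ (Corollary \ref{heato} guarantees heat flow does not expand this metric, which helps control intermediate steps) then assembles the pieces. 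No new estimate is needed beyond careful interpolation; the only delicate point is that $\psi_s$ itself is not a priori controlled in $\tilde\dist_\Energy$ by a single envelope on $I_n$, but Lemma \ref{psitn-lem} supplies exactly the fixed-time envelope $c$ at $t_n$ that we need.
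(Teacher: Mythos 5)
There is a genuine gap, and it sits exactly at the point your argument leans on hardest: you assert that $\psi^{lo}_s[t_n] = \psi_s(K_n^{1/2}+\cdot)[t_n]$, i.e.\ that at time $t_n$ the low-frequency field is still the heat-translate of $\psi_s$ by $K_n^{1/2}$. That identity holds only at $t=0$, where $\phi^{lo}$ was constructed; for $t>0$, $\phi^{lo}$ is propagated by the \emph{wave map} flow (via Lemma \ref{quant}), which does not commute with the harmonic map heat flow, so at $t=t_n$ there is no exact relation between $\psi^{lo}_s[t_n]$ and heat-shifts of $\psi_s[t_n]$. The only available substitute is the standing hypothesis \eqref{phocus-2} of Proposition \ref{sweden}, $\dist_\Energy(\phi^{lo}[t_n],\phi(L_n^{1/4})[t_n]) = o_{n\to\infty}(1)$ (converted to $\tilde\dist_\Energy$ via Lemma \ref{compati}), which is precisely the approximate version of your identity, with $K_n^{1/2}$ replaced by $L_n^{1/4}$, propagated inductively through the conclusion \eqref{simmer2}. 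Your proposal never invokes \eqref{phocus-2} for this purpose, and without it the comparison of $\tilde\psi_s$ with $\psi_s$ at time $t_n$ cannot get started.

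The same misidentification infects your treatment of the regime $s\geq L_n^{0.9}$: you propose to compare $\psi_s(s,t_n)$ with $\psi_s(s+K_n^{1/2},t_n)$ via Lemma \ref{gapstable} with $s_1=L_n^{0.9}$, $s_2=K_n^{1/2}$, citing \eqref{css} for the gap hypothesis \eqref{psisep}. But \eqref{css} (coming from \eqref{phocus}) gives envelope/ESD smallness at time $t_n$ only on $L_n^{-0.9}\leq s\leq L_n^{0.9}$, whereas \eqref{psisep} for that choice of $s_1,s_2$ would require smallness on roughly $[\eps L_n^{0.9}, K_n^{1/2}/\eps]$; since $L_n\leq K_n^{1/4}$ and $L_n$ shrinks at each iteration of Proposition \ref{sweden}, this window is far beyond what \eqref{phocus} controls, so that step fails. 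The paper's route avoids both problems: after replacing $\psi^{lo}_s(s,t_n)$ by $\psi_s(s+L_n^{1/4},t_n)$ using \eqref{phocus-2}, one notes from \eqref{tpsis-def}, \eqref{psihi} that it suffices to bound $(1-\eta(s/L_n^{0.7}))\psi_s(s,t_n)-\psi_s(s+L_n^{1/4},t_n)$, and this is done purely with the fixed-time envelope bounds \eqref{jojoe} and the smallness \eqref{css}, splitting into $s\leq L_n^{-0.8}$, $L_n^{-0.8}\leq s\leq L_n^{0.8}$, and $s\geq L_n^{0.8}$ (in the last regime the fundamental theorem of calculus and $\|\nabla_x^j\partial_s\nabla_{t,x}\psi_s(s)\|_{\dot H^0_{k(s)}}\lesssim s^{-(j+4)/2}$ give a gain $L_n^{1/4}s^{-1}$, with no appeal to Lemma \ref{gapstable} at scale $K_n^{1/2}$). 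To repair your write-up, replace every use of ``$\psi^{lo}_s[t_n]=\psi_s(\cdot+K_n^{1/2})[t_n]$'' by an application of \eqref{phocus-2} at scale $L_n^{1/4}$, and redo the large-$s$ regime by the direct shift comparison above rather than through Lemma \ref{gapstable}.
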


\begin{proof}
We suppress the $t_n$ parameter.  By \eqref{energy-dist-def} it suffices to show that
$$
\sum_k ( \sup_{2^{-2k-2} \leq s \leq 2^{-2k}} s^{1+j/2} 
\| \nabla_x^j \nabla_{t,x} (\psi_s - \tilde \psi_s)(s) \|_{\dot H^0_k(\R^2)}^2 )^{1/2} \lesssim o_{n \to \infty}(1)
$$
for all $0 \leq j \leq 10$.  

Fix $j$.  From \eqref{tpsis-def}, \eqref{psihi} one has
$$\psi_s(s) - \tilde \psi_s(s) = (1-\eta(s/L_n^{0.7}) \psi_s(s) - \psi_s^{lo}(s).$$
From \eqref{phocus-2}, \eqref{energy-dist-def} one has
$$
\sum_k ( \sup_{2^{-2k-2} \leq s \leq 2^{-2k}} s^{1+j/2} 
\| \nabla_x^j \nabla_{t,x} (\psi^{lo}_s(s) - \psi_s(s+L_n^{1/4})) \|_{\dot H^0_k(\R^2)}^2 )^{1/2} \lesssim o_{n \to \infty}(1)
$$

so by the triangle inequality, it suffices to show that
\begin{equation}\label{sordid}
\sum_k ( \sup_{2^{-2k-2} \leq s \leq 2^{-2k}} s^{1+j/2} 
\| \nabla_x^j \nabla_{t,x} ((1-\eta(s/L_n^{0.7})) \psi_s(s) - \psi_s(s+L_n^{1/4})) \|_{\dot H^0_k(\R^2)}^2 )^{1/2} \lesssim o_{n \to \infty}(1).
\end{equation}
Suppose first that $L_n^{-0.8} \leq s \leq L_n^{0.8}$.  Then by \eqref{jojoe} one has
$$
s^{1+j/2} \| \nabla_x^j \nabla_{t,x} ((1-\eta(s/L_n^{0.7})) \psi_s(s) - \psi_s(s+L_n^{1/4})) \|_{\dot H^0_{k(s)}(\R^2)}
\lesssim c(s) $$
and so this contribution is acceptable by \eqref{css}.  Similarly, if $s \leq L_n^{-0.8}$, then the $1-\eta(s/L_n^{0.7})$ term vanishes, and by \eqref{jojoe} one has
$$
s^{1+j/2} \| \nabla_x^j \nabla_{t,x} ((1-\eta(s/L_n^{0.7})) \psi_s(s) - \psi_s(s+L_n^{1/4})) \|_{\dot H^0_{k(s)}(\R^2)}
\lesssim (s/L_n^{0.7})^{1+j/2}$$
which is also acceptable.  Finally, if $s_n \geq L_n^{0.8}$, then from \eqref{jojoe} one has
$$
\| \nabla_x^j \partial_s \nabla_{t,x} \psi_s(s) \|_{\dot H^0_{k(s)}(\R^2)} \lesssim s^{-(j+4)/2}$$
and hence by the fundamental theorem of calculus and Minkowski's inequality
$$
s^{1+j/2} \| \nabla_x^j \nabla_{t,x} ((1-\eta(s/L_n^{0.7})) \psi_s(s) - \psi_s(s+L_n^{1/4})) \|_{\dot H^0_{k(s)}(\R^2)}
\lesssim L_n^{1/4} s^{-(j+4)/2}$$
which is also acceptable.
\end{proof}

\begin{proposition}[Approximate wave map property]\label{twavemap} We have 
$$ \| P_k \tilde D^\alpha \tilde \psi_\alpha(0) \|_{N_{k}(K \times \R^2)} \lesssim o_{\mu' \to 0}( c(2^{-2k}) )$$
for all $k \in \Z$ and some frequency envelope $c$ of energy $O(1)$, assuming $n$ is large enough depending on $\mu'$.
\end{proposition}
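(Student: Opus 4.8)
The plan is to estimate the wave-tension field $\tilde w(0) := \tilde D^\alpha \tilde \psi_\alpha(0)$ of the superposition $\tilde \psi_s = \psi^{lo}_s + \psi^{hi}_s$ by decomposing it into a ``low'' contribution, a ``high'' contribution, and cross terms. The key point is that $\phi^{lo}$ and $\phi^{hi}$ are both genuine wave maps, so their own wave-tension fields vanish at $s=0$; the only obstruction to $\tilde w(0)$ vanishing is (a) the nonlinearity mixing the low and high components, and (b) the discrepancy between $\tilde \psi_\alpha$ and $\psi^{lo}_\alpha + \psi^{hi}_\alpha$ (the differentiated fields do not superpose linearly, exactly as in Proposition \ref{slice-dice}). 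First I would use the expansions \eqref{psi-odd}, \eqref{psi-even} to write $\tilde \psi_\alpha$ and $\tilde A_\alpha$ as infinite sums $\sum_j X_{j,\alpha}$ of multilinear integrals in $\tilde \psi_s$, and similarly for the individual low and high fields; subtracting, the terms which involve only $\psi^{lo}_s$-factors or only $\psi^{hi}_s$-factors cancel against the corresponding pieces of $w^{lo}(0) = w^{hi}(0) = 0$, leaving only ``mixed'' multilinear expressions in which at least one factor is $\psi^{lo}_s$ and at least one is $\psi^{hi}_s$.

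Next I would estimate each such mixed term in $N_k^\strong(K \times \R^2)$ using the multilinear machinery of Theorem \ref{func}: the product estimate \eqref{prod1}, the second product estimate \eqref{second-prod}, and crucially the trilinear estimates \eqref{trilinear-improv}, \eqref{trilinear-improv2} to exploit the null structure present in $\partial^\alpha \psi_\alpha$ and $\psi_\alpha \wedge \psi^\alpha$. The decisive input is the frequency separation: by \eqref{slice-dice-lo}, \eqref{slice-dice-lo-2} the field $\psi^{lo}_s(s)$ is controlled by $c^{lo}(s) s^{-(j+2)/2}$ with $\int_{s \geq L_n^{-0.7}} (c^{lo})^2 \frac{ds}{s} = o_{n\to\infty}(1)$ (from \eqref{clo}), while by \eqref{slice-dice-hi}, \eqref{slice-dice-hi-2} the field $\psi^{hi}_s(s)$ is controlled by $c^{hi}_K(s) s^{-(j+2)/2}$ with $\int_{s \leq L_n^{0.5}} (c^{hi}_K)^2 \frac{ds}{s} = o_{n\to\infty}(1)$ (from \eqref{chi}). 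Hence any product of a low factor and a high factor at a common heat-time $s$ carries a factor $c^{lo}(s) c^{hi}_K(s)$, whose $L^2(\frac{ds}{s})$ mass is $o_{n\to\infty}(1)$; combined with the exponential-in-$j$ decay coming from \eqref{sss} and the Stirling bound (exactly as in the proof of \eqref{pop}/\eqref{pod} in the hyperbolic repair section), summing the series gives the bound $o_{n\to\infty}(c(2^{-2k}))$ for a frequency envelope $c$ of energy $O(1)$. The $\mu'$ in the statement enters because the low-frequency estimates \eqref{slice-dice-lo} are only available after partitioning into the intervals $J \in {\mathcal I}_n$ on which $\phi^{lo}$ is a $(O(1),\mu')$-wave map, so one takes $\mu' \to 0$ (equivalently $n$ large depending on $\mu'$) to absorb the implied constants; this is the mechanism behind the $o_{\mu' \to 0}(\cdot)$ gain.

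One must also account for the non-superposition error $\delta \Psi_x := \tilde \Psi_x - \Psi^{lo}_x - \Psi^{hi}_x$, which by \eqref{fief} (proven inside Proposition \ref{slice-dice}) satisfies $\|\nabla_x^m \delta \Psi_x(s)\|_{S_{\mu,k(s)}(K \times \R^2)} \lesssim_m o_{n\to\infty}(c_K(s) s^{-(m+1)/2})$; feeding this into the expansion for $\tilde w(0)$ via \eqref{prod1} and \eqref{second-prod} produces only acceptable $o_{n\to\infty}$ contributions. Likewise, when the derivative $\partial^\alpha$ in $\tilde D^\alpha \tilde \psi_\alpha$ or a connection factor $\tilde A^\alpha$ is distributed by the Leibniz rule, one either lands on a mixed expression (handled above) or on a pure low/high expression that has already been cancelled.

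The main obstacle is the bookkeeping of the null structure in the mixed trilinear and higher-order terms: to get the crucial power of $\mu$ (or rather, to stay within $N_k$ at all) one needs to keep track of which factors carry the derivatives $\partial_\alpha$, and show that after the cancellation of the pure terms the surviving mixed terms still appear in the null form $\partial_\alpha \cdot \partial^\alpha$ (or $\wedge$) configuration so that \eqref{trilinear-improv} applies — not merely the cruder \eqref{second-prod}. This is essentially the same difficulty already surmounted in establishing \eqref{pop}, \eqref{pod}, and \eqref{force-eq}, so I would follow that template closely, with the role of ``$\delta$'' there played here by ``pick out a high factor among low factors'' (or vice versa), and the small gain $\mu^2\kappa$ there replaced by the small gain $o_{n\to\infty}(1)$ (or $o_{\mu'\to 0}(1)$) coming from $\int c^{lo} c^{hi}_K \,\frac{ds}{s} = o(1)$.
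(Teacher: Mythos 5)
Your skeleton — cancel the pure low and pure high wave-tension fields, expand the non-superposition error $\delta\Psi_\alpha = \tilde\Psi_\alpha - \Psi^{lo}_\alpha - \Psi^{hi}_\alpha$ via \eqref{psi-odd}, \eqref{psi-even} into multilinear integrals with at least one low and one high factor, and estimate these with \eqref{prod1}, \eqref{second-prod}, \eqref{trilinear-improv} together with \eqref{sss} — is indeed the route the paper takes. The gap is in the quantitative heart of your second paragraph: you claim that every mixed term ``carries a factor $c^{lo}(s)c^{hi}_K(s)$'' at a common heat time, so that \eqref{jungo} makes \emph{all} mixed contributions $o_{n\to\infty}(c(2^{-2k}))$, and that the $\mu'$ in the statement merely ``absorbs implied constants'' from the partition. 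That is false. In the expansion the heat times are nested, $s_1 < s_2 < \cdots < s_{j+l}$, and in the terms $Z^{hi,lo}_{1,l,\alpha}$ with a \emph{single} high factor $\psi^{hi}_s(s_1)$ at the smallest heat time followed by low factors $\psi^{lo}_s(s_2),\ldots,\partial_\alpha\psi^{lo}_s(s_{1+l})$ at larger heat times, the envelopes give no smallness at all: $c^{hi}_K(s_1)\sim 1$ for small $s_1$ and $c^{lo}(s_i)\sim 1$ for large $s_i$, so the envelope computation only yields $O(c(2^{-2k}))$. This is exactly the ``$\operatorname{low}\times\operatorname{high}\to\operatorname{high}$'' interaction that the footnote and the remark after the proposition warn is \emph{not} negligible, and it is why the conclusion is only $o_{\mu'\to 0}(c(2^{-2k}))$ rather than the $o_{n\to\infty}(c(2^{-2k}))$ your argument would (incorrectly) deliver. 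The genuine source of smallness for these terms is that at least two of the three factors fed into the trilinear estimate are low-frequency fields controlled in the $S_{\mu',k}$ norms by \eqref{slice-dice-lo}, \eqref{slice-dice-lo-2}, so \eqref{trilinear-improv} (with two of the $\mu_i$ equal to $\mu'$) supplies an extra factor $(\mu')^{1/2}$; without invoking this $\mu'$-gain your proof of \eqref{hili} fails in the $j=1$, $(A,B)=(hi,lo)$ case.

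A secondary omission: you defer ``bookkeeping of the null structure'' without giving the mechanism that makes it work. The paper's key structural observation is a parity count — the $\delta\psi_\alpha$ part of $\delta\Psi_\alpha$ consists only of terms with an odd total number of factors, hence $j+l\geq 3$ — which is what excludes purely quadratic null-form terms to which \eqref{trilinear-improv} could not be applied; and when the derivative $\partial^\alpha$ lands on the final factor $\partial_\alpha\psi^{B}_s(s_{j+l})$, producing $\Box\psi^{B}_s$, one must switch to \eqref{trilinear-improv3} rather than the null-form trilinear estimate. Both of these are genuine cases your outline does not cover, not mere tedium, so the argument as proposed does not yet close.
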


\begin{remark} Note the bound here is only $o_{\mu' \to 0}( c(2^{-2k}) )$ instead of $o_{n \to \infty}( c(2^{-2k}) )$; this is because the interactions between low and high frequencies is not entirely negligible.  This will cause some technical difficulties in the rest of the proof of Proposition \ref{sweden}.  Fortunately, the high-high interactions remain negligible, which will mean that we will be able to control the low frequency component of the solution with an accuracy of $o_{n \to \infty}( c(2^{-2k}) $ rather than $o_{\mu' \to 0}( c(2^{-2k}) )$.
\end{remark}

\begin{proof}
We may rescale $k=0$, and abbreviate $N_0(K \times \R^2)$ as $N_0$ and $o_{n \to \infty}()$ as $o()$.  As $\psi^{lo}_s$, $\psi^{hi}_s$ obey the wave map equation, one has
\begin{align*}
(D^{lo})^\alpha \psi^{lo}_\alpha(0) &= 0\\
(D^{hi})^\alpha \psi^{hi}_\alpha(0) &= 0.
\end{align*}
Subtracting these equations, we obtain the representation
\begin{align}
\tilde D^\alpha \tilde \psi_\alpha(0) &= \bigO( \partial^\alpha \delta \Psi_\alpha(0) ) \label{wave1}\\
&\quad + \bigO( (\Psi^{hi})^\alpha(0) \Psi^{lo}_\alpha(0) ) \label{wave2} \\
&\quad + \bigO( (\delta \Psi^\alpha(0)) \tilde \Psi_\alpha(0) )\label{wave3}
\end{align}
where $\delta \Psi_\alpha := \tilde \Psi_\alpha - \Psi^{lo}_\alpha - \Psi^{hi}_\alpha$.

To estimate the contribution of each component \eqref{wave1}, \eqref{wave2}, \eqref{wave3}, we need to expand out the $\Psi_\alpha$ terms to bring the null structure into focus.  We begin with \eqref{wave1}.  By repeating the derivation of \eqref{shoofly}, one has
$$
\delta \Psi_\alpha(0) = \sum_{j=1}^\infty Y_{j,\alpha}^{lo,hi} + Y_{j,\alpha}^{hi,lo}
$$
where
$$
 Y_{j,\alpha}^{A,B} := 
 \bigO_j( \int_{s_1 < s_2 < \ldots < s_j} \tilde \psi_s(s_1) \ldots \tilde \psi_s(s_{j-1}) \psi_s^A(s_j) \Psi_\alpha^B(s_j) \ ds_1 ds_2 \ldots ds_j )
$$
and the implied constants are $O(1)^j$ (and, crucially, do not depend on $\alpha$).  
Using \eqref{psi-odd}, \eqref{psi-even}, one can expand this further as
\begin{equation}\label{dorito}
\delta \tilde \Psi_\alpha(0) = \sum_{j=1}^\infty \sum_{l=1}^\infty Z_{j,l,\alpha}^{lo,hi} + Z_{j,l,\alpha}^{hi,lo}
\end{equation}
where
$$
 Z_{j,l,\alpha}^{A,B} := 
 \bigO_{j,l}( \int_{s_1 < s_2 < \ldots < s_{j+l}} \tilde \psi_s(s_1) \ldots \tilde \psi_s(s_{j-1}) \psi_s^A(s_j) 
 \psi_s^B(s_{j+1}) \ldots \psi_s^B(s_{j+l-1}) \partial_\alpha \psi_s^B(s_{j+l}) \ ds_1 ds_2 \ldots ds_{j+l} )$$
and the implied constants are $O(1)^{j+l}$ and are independent of $k$.

A closer inspection of the derivation of \eqref{dorito} (cf.\footnote{Alternatively, one can take advantage of the reflection symmetry $\psi_{s,t,x} \mapsto -\psi_{s,t,x}$, $A_{s,t,x} \mapsto A_{s,t,x}$, which preserves the heat flow and wave map equations, to see this.} \eqref{psi-odd}, \eqref{psi-even}) shows that the $\delta \psi_\alpha$ component of $\delta \Psi_\alpha$ consists purely of terms with an odd number of terms (i.e. with $j+l$ odd), while the $\delta A_\alpha$ component consists of terms with an even number of terms (i.e. with $j+l$ even).  This observation will be important to us in order to eliminate some quadratic null form terms for which the trilinear estimate \eqref{trilinear-improv} would not be applicable.

To control the contribution of \eqref{wave1}, it thus suffices by the triangle inequality to show that
\begin{equation}\label{hili}
\| P_0( \partial^\alpha Z_{j,l,\alpha}^{A,B} ) \|_{N_0} \lesssim O(1)^{j+l} j^{-j/2} l^{-l/2} o_{\mu' \to 0}( c(1) )
\end{equation}
for all $j,l \geq 1$ with $j+l$ odd (in particular, $j+l \geq 3$), with $(A,B) = (hi,lo)$ or $(A,B)=(lo,hi)$.

We distribute the $\partial^\alpha$ derivative using the Leibniz rule.  We first consider the terms in which the $\partial^\alpha$ derivative does not fall on the $\partial_\alpha \psi_s^B(s_{j+l})$ term.  Applying \eqref{trilinear-improv}, \eqref{prod1}, \eqref{slice-dice-1}, \eqref{slice-dice-lo}, \eqref{slice-dice-hi}, and Minkowski's inequality, we can estimate this contribution to \eqref{hili} by
\begin{align*}
&\lesssim O(1)^{j+l} \int_{s_1 < s_2 < \ldots < s_{j+l}} \frac{c_K(s_1)}{s_1} \ldots \frac{c_K(s_{j-1})}{s_{j-1}} \frac{c_K^A(s_j)}{s_j}
\frac{c_K^B(s_{j+1})}{s_{j+1}} \ldots \frac{c_K^B(s_{j+l})}{s_{j+l}} \\
&\quad \chi_{k(s_1)=0}^{\delta_1/2} \chi_{k(s_2) \leq k(s_{j+l})}^{\delta_1/2}\ ds_1 \ldots ds_{j+l}
\end{align*}
where we write $c_K^{lo}$ for $c^{lo}$.  If $B=lo$ and $l \geq 2$, we observe from \eqref{trilinear-improv} that we can also insert an additional factor of $(\mu')^{1/2}$ in this bound.

Suppose first that $j > 2$.  Evaluating the $s_3,\ldots,s_{j-1}$ and $s_{j+1},\ldots,s_{j+l}$ integrals using \eqref{sss}, one is left with
\begin{equation}\label{bowl}
\begin{split}
&\lesssim O(1)^{j+l} j^{-j/2} l^{-l/2} \int_{s_1 < s_2 < s_j < s_{j+l}} \frac{c_K(s_1)}{s_1} \frac{c_K(s_2)}{s_2} \frac{c_K^A(s_j)}{s_j} \frac{c_K^B(s_{j+1})}{s_{j+1}}\\
&\quad \chi_{k(s_1)=0}^{\delta_1} \chi_{k(s_2) \leq k(s_{j+l})}^{\delta_1+O(\delta_0)}\ ds_1 ds_2 ds_j ds_{j+l}.
\end{split}
\end{equation}
Using \eqref{jungo} and the envelope property to evaluate the $s_j, s_{j+l}$ integrals, we obtain the claim (in fact we obtain the superior bound of $o_{n \to \infty}(c(1))$ in this case).  A similar argument works for $j=2$ (with slight notational changes).  Now suppose instead that $j=1$, so that $k \geq 2$.  Evaluating the $s_3,\ldots,s_l$ integrals using \eqref{sss}, one is left with
$$ \lesssim O(1)^l l^{-l/2} \int_{s_1 < s_2 < s_{l+1}} \frac{c_K^A(s_1)}{s_1} \frac{c_K^B(s_2)}{s_2} \frac{c_K^B(s_{j+1})}{s_{j+1}}
\chi_{k(s_1)=0}^{\delta_1} \chi_{k(s_2) \leq k(s_{l+l})}^{\delta_1+O(\delta_0)}\ ds_1 ds_2 ds_{l+1}.$$
If $(A,B) = (lo,hi)$, then from \eqref{clo}, \eqref{chi} and the envelope property we see that this contribution is $o_{n \to \infty}(c(1))$.  If instead $(A,B) = (hi,lo)$, then we only obtain a bound of $O(c(1))$ this way; however, in this case the additional factor of $(\mu')^{1/2}$ is available, thus obtaining the required bound $o_{\mu' \to \infty}(c(1))$.

Finally, we consider the contribution to \eqref{hili} when the $\partial^\alpha$ derivative falls on the $\partial_\alpha \psi_s^B(s_{j+l})$ term.  We can bound this contribution by
\begin{align*}
&O(1)^{j+l} \int_{s_1 < s_2 < \ldots < s_{j+l}} \| P_0( \tilde \psi_s(s_1) \ldots \tilde \psi_s(s_{j-1}) \psi_s^A(s_j) 
 \psi_s^B(s_{j+1}) \ldots \\
 & \quad \psi_s^B(s_{j+l-1}) \Box \psi_s^B(s_{j+l})\|_{N_0}\ ds_1 ds_2 \ldots ds_{j+l}.
 \end{align*}
Applying \eqref{trilinear-improv3}, \eqref{prod1} we can bound the integrand 
$$
\lesssim O(1)^{j+l} \frac{c_K(s_1)}{s_1} \ldots \frac{c_K(s_{j-1})}{s_{j-1}} \frac{c_K^A(s_j)}{s_j}
\frac{c_K^B(s_{j+1})}{s_{j+1}} \ldots \frac{c_K^B(s_{j+l})}{s_{j+l}} \chi_{k(s_1)=0}^{\delta_1} \chi_{k(s_2) = k(s_{j+l})}^{\delta_1}
$$
Performing all integrals except for $s_1, s_2, s_j, s_{j+l}$ using \eqref{sss}, we gain a factor of $O(1)^{j+l} j^{-j/2} l^{-l/2}$ while degrading the $\delta_1$ exponents by $O(\delta_0)$, leave one with essentially the same integral as by \eqref{bowl} for $j>2$ (with minor adjustments for $j=1,2$).  The claim now follows by similar calculations to the previous arguments.  
\end{proof}

\subsection{Repairing the superposition}

We now establish the entropy component of Theorem \ref{sweden}:

\begin{proposition}[Entropy bound]\label{kslice}  Let $K$ be one of the intervals in Proposition \ref{slice-dice}.  Then for all $t \in K$, one has
\begin{equation}\label{kslice-dist}
\tilde \dist_\Energy( \phi(s)[t], \tilde \phi(s)[t] ) = o_{\mu' \to \infty;\mu}(1)
\end{equation}
for all $s \geq 0$ and $t \in K$, and that
\begin{equation}\label{kslice-0}
\dist_{S^1_\mu(K)}( \Psi_{s,t,x}, \tilde \Psi_{s,t,x}) = o_{\mu' \to \infty;\mu}(1).
\end{equation}
\end{proposition}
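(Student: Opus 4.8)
The plan is to verify the hypotheses of the hyperbolic repair theorem (Theorem~\ref{hyp-repair}) with $\tilde\psi_s$ playing the role of the approximate dynamic field and $\psi_s$ the wave map we wish to reconstruct, working on each interval $K\in{\mathcal K}$ separately, and then to chain the resulting estimates together across the $O_\mu(1)$ intervals of ${\mathcal K}$. First I would recall that Proposition~\ref{slice-dice} gives the frequency envelope bound \eqref{slice-dice-1} (which is \eqref{psisjk-hyp}) and the approximate heat flow bound \eqref{slice-dice-2} (which supplies \eqref{psisjk-star-hyp} with a constant $\kappa = o_{n\to\infty}(1)$, hence $\lesssim \mu^2\kappa$ once $n$ is large depending on $\mu$). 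The wave-tension bound \eqref{wave-off} is exactly Proposition~\ref{twavemap}, with $\kappa = o_{\mu'\to 0}(1)$ (the weaker decay rate, which is the source of the technical annoyance noted in the remark after Proposition~\ref{twavemap}): one takes $\mu' $ small enough, then $n$ large enough depending on $\mu'$, so that this is $\leq\mu^2\kappa$ for a $\kappa$ we are free to choose as $o_{\mu'\to0;\mu}(1)$. The energy-closeness hypothesis \eqref{energy-close} at the left endpoint of $K$ is the induction input: at the first interval $K$ it follows from Proposition~\ref{tinit} (which gives $\tilde\dist_\Energy(\psi[t_n],\tilde\psi[t_n]) = o_{n\to\infty}(1)$, with $t_n$ in that interval, plus Corollary~\ref{heato} or the monotonicity of $\tilde\dist_\Energy$ under heat flow to transfer this to an arbitrary point), and at subsequent intervals from \eqref{toast} applied on the previous interval.

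The core step is then to invoke Theorem~\ref{hyp-repair} on each $K$. Its conclusions \eqref{darcy} and \eqref{toast} immediately yield, for each $K$,
$$\dist_{S^1_\mu(K)}(\psi_s,\tilde\psi_s) \lesssim_{E_0,\mu} \kappa = o_{\mu'\to\infty;\mu}(1)$$
and
$$\tilde\dist_\Energy(\psi_s(s)[t],\tilde\psi_s(s)[t]) \lesssim_{E_0}\kappa = o_{\mu'\to\infty;\mu}(1)$$
for all $t\in K$ and $s\geq0$, which (using $S^1$ versus $S^1_\mu$ comparability and the definition \eqref{ds1-eq}, and the dictionary $\tilde\dist_\Energy(\psi_s(s)[t],\tilde\psi_s(s)[t]) \sim \tilde\dist_\Energy(\phi(s)[t],\tilde\phi(s)[t])$ modulo rotations, with $\tilde\phi$ the reconstruction of $\tilde\psi_s$) are precisely \eqref{kslice-dist} and \eqref{kslice-0} restricted to $K$. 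The induction over ${\mathcal K}$ is organized so that at each interval the energy closeness \eqref{energy-close} needed at the left endpoint is produced by \eqref{toast} from the preceding interval; since $|{\mathcal K}| = O_\mu(1)$, the accumulated error after traversing all of ${\mathcal K}$ is still $O_\mu(1)$ times $o_{\mu'\to\infty;\mu}(1) = o_{\mu'\to\infty;\mu}(1)$. One must check that $\psi_s$ itself (as opposed to some abstract perturbation) is the object produced by Theorem~\ref{hyp-repair}: this is guaranteed because $\psi_s$ is a genuine wave map heat flow with the prescribed initial data close to $\tilde\psi_s[t_0]$, so it is the $\psi_s$ appearing in the hypothesis \eqref{energy-close}, and Theorem~\ref{hyp-repair} compares $\tilde\psi_s$ directly to it.

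The main obstacle is bookkeeping the two different rates of smallness. The heat-flow defect is $o_{n\to\infty}(1)$ (genuinely tiny once $n$ is large, uniformly in $\mu'$), but the wave-map defect \eqref{wave-off} is only $o_{\mu'\to0}(1)$ because of the non-negligible $\operatorname{low}\times\operatorname{high}\to\operatorname{high}$ interaction; the remark after Proposition~\ref{twavemap} flags that the high-high interaction remains $o_{n\to\infty}(1)$. So the conclusion here is necessarily the weaker $o_{\mu'\to\infty;\mu}(1)$, which is still enough: we will choose $\mu$ small (depending on $\eps$), then $\mu'$ small depending on $\mu$, then $n$ large depending on $\mu,\mu'$, in that order, as already set up at the start of this section, so that all the $o(1)$ quantities are as small as needed. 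The second obstacle, minor, is verifying that the frequency envelopes $c_K$ of Proposition~\ref{slice-dice} can be replaced by a single frequency envelope $c$ of energy $O(1)$ valid uniformly across ${\mathcal K}$, which is done by taking the pointwise supremum over the $O_\mu(1)$ envelopes and re-smoothing via \eqref{sm}, exactly as in the construction of $c_K$ itself. Finally, one must confirm that $\tilde\dist_\Energy$ on dynamic fields descends to $\tilde\dist_\Energy$ on instantaneous maps up to the rotation quotient, which is the content of the discussion in Section~\ref{esd-sec}; this is what lets us phrase \eqref{kslice-dist} in terms of $\phi(s)[t]$ and $\tilde\phi(s)[t]$ rather than the underlying fields.
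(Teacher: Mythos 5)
Your proposal is correct and follows essentially the same route as the paper: an induction over the $O_\mu(1)$ intervals of ${\mathcal K}$, with the base case supplied by Proposition \ref{tinit} on the interval containing $t_n$, and each inductive step carried out by invoking Theorem \ref{hyp-repair} whose hypotheses are checked via Proposition \ref{slice-dice} (envelope and approximate heat flow bounds) and Proposition \ref{twavemap} (approximate wave map property), with \eqref{toast} propagating the energy-closeness to the next interval. The extra bookkeeping you do on the $o_{n\to\infty}$ versus $o_{\mu'\to 0}$ rates is consistent with the paper's remark and does not change the argument.
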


\begin{proof} It suffices to verify the proposition under the assumption that
$$ \tilde \dist_\Energy( \phi(s)[t_0], \tilde \phi(s)[t_0] ) = o_{n \to \infty}(1)$$
for some $t_0 \in K$, as the claim then follows by an induction on $K$ (using the interval $K$ containing $t_n$ and Proposition \ref{tinit} as the base case).  But this follows from Theorem \ref{hyp-repair}, Proposition \ref{slice-dice}, and Proposition \ref{twavemap}.
\end{proof}

From Proposition \ref{kslice}, from Proposition \ref{slice-dice} and the triangle inequality, $\phi$ is a $(O(1),\mu)$-wave map on each $K$, and so $(\phi,J)$ has a $(O(1),\mu)$-entropy of $O_{\mu}(1)$.

To finish the proof of Theorem \ref{sweden}, we need to show the bounds \eqref{summer}, \eqref{simmer2} for some $L'_n \leq L_n$ with $L'_n \to \infty$.  We first show some frequency delocalisation:

\begin{proposition}[Frequency delocalisation]\label{slice-dice-freq}  For each $K \in {\mathcal K}$, one has a frequency envelope $c'_K$ of energy $O(1)$ such that
\begin{equation}\label{slice-dice-soo}
\| \nabla_x^j \psi_s(s) \|_{S_{\mu,k(s)}(K \times \R^2)} \lesssim_j c'_K(s) s^{-(j+2)/2}
\end{equation}
and
\begin{equation}\label{slice-dice-soo2}
\| \nabla_x^j \psi_x(s) \|_{S_{\mu,k(s)}(K \times \R^2)} \lesssim_j c'_K(s) s^{-(j+1)/2}
\end{equation}
and
\begin{equation}\label{odeo}
\int_{L_n^{-0.5} \leq s \leq L_n^{0.5}} c'_K(s)^2 \frac{ds}{s} = o_{n \to \infty}(1).
\end{equation}
\end{proposition}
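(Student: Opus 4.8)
The plan is to first extract the a priori envelope control \eqref{slice-dice-soo}, \eqref{slice-dice-soo2} from the stability theory already established, and then to upgrade the \emph{middle-frequency} smallness of that envelope using the decomposition $\psi_s\approx\psi^{lo}_s+\psi^{hi}_s$ underlying $\tilde\psi_s$.

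For \eqref{slice-dice-soo}, \eqref{slice-dice-soo2}: by Proposition \ref{kslice} together with Proposition \ref{slice-dice} (and the triangle inequality), $\phi$ is an $(O(1),\mu)$-wave map on each $K\in{\mathcal K}$, and ${\mathcal K}$ consists of $O_\mu(1)$ intervals covering $J$. Starting from the frequency envelope $c=c^{(n)}$ at time $t_n$ furnished by Lemma \ref{psitn-lem}, I would propagate it along the chain ${\mathcal K}$ one interval at a time via Lemma \ref{freqstable}: on the interval $K_0\ni t_n$ this controls $\|\nabla_x^j\psi_s(s)\|_{S_{k(s)}(K_0\times\R^2)}$ and $\|\nabla_x^j\psi_x(s)\|_{S_{k(s)}(K_0\times\R^2)}$ by $c$, hence in particular controls $\|\nabla_{t,x}\nabla_x^j\psi_s(s,t)\|_{\dot H^0_{k(s)}}$ at the endpoint of $K_0$ shared with the next interval; feeding this back into Lemma \ref{freqstable} and iterating gives the bounds on every $K$, the envelope growing by only an $O(1)$ factor per step. (One avoids quoting Lemma \ref{freqstable2} directly, since its hypothesis ``$\mu$ small depending on the entropy'' would be circular here, the entropy being $O_\mu(1)$; each application of Lemma \ref{freqstable} needs $\mu$ small only in terms of the $O(1)$ energy and $S^1_\mu$-size, so iterating over $O_\mu(1)$ intervals is harmless.) Converting $S_{k(s)}$ to $S_{\mu,k(s)}$ by \eqref{sksk-star} at the cost of an $O_\mu(1)$ factor, we obtain \eqref{slice-dice-soo}, \eqref{slice-dice-soo2} with $c'_K$ of energy $O_\mu(1)=O(1)$.

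The real content is \eqref{odeo}. Write $\psi_s=\tilde\psi_s+\delta\psi_s$ with $\tilde\psi_s=\psi^{lo}_s+\psi^{hi}_s$. By Proposition \ref{slice-dice}, $\tilde\psi_s$ is controlled on $K$ by an envelope $c_K$ built from $c^{lo}$ and $c^{hi}_K$; since every $s\in[L_n^{-0.5},L_n^{0.5}]$ satisfies both $s\ge L_n^{-0.7}$ and $s\le L_n^{0.5}$, the bounds \eqref{clo} and \eqref{chi} give $\int_{L_n^{-0.5}\le s\le L_n^{0.5}}c_K(s)^2\frac{ds}{s}=o_{n\to\infty}(1)$. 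It then remains to show the correction $\delta\psi_s$ is $o_{n\to\infty}(1)$-small — not merely $o_{\mu'\to0}(1)$-small — on this range, in the sense that $\int_{L_n^{-0.5}\le s\le L_n^{0.5}}s^{2}\|\nabla_{t,x}\delta\psi_s(s,t)\|_{\dot H^0_{k(s)}}^2\,\frac{ds}{s}=o_{n\to\infty}(1)$ for all $t\in K$. This is exactly the difficulty flagged after Proposition \ref{twavemap}: the $o_{\mu'\to0}(1)$ loss there comes solely from the $(A,B)=(hi,lo)$ low--high interaction terms, whose output lives at the high frequencies carried by $\psi^{hi}_s$ (heat-times $s$ well below $L_n^{-0.5}$), while the high--high interactions that could reach the middle range are genuinely $o_{n\to\infty}(1)$ because $\int c^{lo}(s)^2c^{hi}_K(s)^2\,ds=o_{n\to\infty}(1)$. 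Concretely, I would re-run the frequency-localised Duhamel/Gronwall iteration from the proof of Theorem \ref{hyp-repair} (the argument deriving \eqref{psis-iter-diff} from \eqref{pkw-1-diff} via Lemma \ref{gron-lem-2}), but with the Littlewood--Paley index $k$ kept explicit and the quantities $f_j$, $\delta f_j$ replaced by versions restricted to $k\le k(L_n^{-0.5})$ (i.e.\ $2^k\lesssim L_n^{1/4}$, which sits strictly between the low-frequency support of $\psi^{lo}_s$ and the high-frequency support of $\psi^{hi}_s$). The iteration closes on this band because low$\times$low products stay in the band and the heat propagators $P_ke^{(s-s')\Delta}$ do not raise frequency, while the error forcing — the low-frequency part of $\delta w$, controlled by the refinement of Proposition \ref{twavemap}, and the initial-data difference at $t_n$, controlled by Proposition \ref{tinit} — is $o_{n\to\infty}(1)$. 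Running this successively over the $O_\mu(1)$ intervals of ${\mathcal K}$ (base case $t_n$, exactly as in the proof of Proposition \ref{kslice}) yields the displayed smallness; combined with the $c_K$ bound and the construction of $c'_K$, this gives \eqref{odeo}.

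The main obstacle is precisely this refinement: showing that the $o_{\mu'\to0}(1)$ error in the approximate wave-map property of $\tilde\psi_s$ is supported at frequencies $\gtrsim L_n^{1/2}$ and hence invisible on $[L_n^{-0.5},L_n^{0.5}]$, so that the low-frequency part of $\psi_s$ inherits the strictly better $o_{n\to\infty}(1)$ accuracy. This cannot be read off from Theorem \ref{hyp-repair} as a black box; it requires re-examining the null-form trilinear estimates of Theorem \ref{func} with the frequency separation of $\psi^{lo}_s$ from $\psi^{hi}_s$ kept in view. Everything else — the envelope propagation, the combination with \eqref{clo}, \eqref{chi}, and the eventual deduction that the ESD on $[L_n^{-0.5},L_n^{0.5}]$ is $o_{n\to\infty}(1)$ (which then feeds \eqref{summer}) — is routine bookkeeping of the kind already performed repeatedly in this section.
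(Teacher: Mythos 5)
Your treatment of \eqref{slice-dice-soo}, \eqref{slice-dice-soo2} is essentially the paper's argument: combine the time-$t_n$ envelope $c$ of Lemma \ref{psitn-lem} with the (iterated) envelope-stability Lemma \ref{freqstable} (this iteration is exactly Lemma \ref{freqstable2}, and your worry about circularity in its $\mu$-smallness hypothesis is harmless, since the hypothesis and conclusion are homogeneous in the envelope, so the threshold on $\mu$ really only involves the $O(1)$ quantities $M,E$ per interval). The gap is in your handling of \eqref{odeo}: you declare it to be ``the real content'' and propose to upgrade the $o_{\mu'\to 0}(1)$ accuracy of Propositions \ref{twavemap}/\ref{kslice} to $o_{n\to\infty}(1)$ on a middle frequency band by re-running the hyperbolic repair iteration with the Littlewood--Paley index kept explicit; but you yourself flag the key step (that the $o_{\mu'\to0}$ errors are confined to frequencies $\gtrsim L_n^{1/2}$, with the wave interactions not leaking into the band) as an obstacle that ``cannot be read off \ldots as a black box,'' and you do not carry it out. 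As written, \eqref{odeo} is therefore not proved.

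Moreover, this hard route is unnecessary, and you already hold the ingredients for the intended one-line argument. By the stage of Proposition \ref{slice-dice-freq}, $\phi$ is known to be an $(O(1),\mu)$-wave map on each $K$, so Lemma \ref{freqstable}/\ref{freqstable2} propagates the \emph{same} envelope $c$ of Lemma \ref{psitn-lem} to each $K$ up to a multiplicative $O_\mu(1)$ factor (no additive error is introduced at any frequency); since $[L_n^{-0.5},L_n^{0.5}]\subset[L_n^{-0.9},L_n^{0.9}]$, the middle-frequency smallness \eqref{css} of $c$ transfers verbatim, and the decay rates in $o_{n\to\infty}(\cdot)$ are explicitly allowed to depend on $\mu$. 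Taking $c'_K$ to be the infimum of $O_\mu(c)$ with the per-interval envelope $\tilde c_K$ of energy $O(1)$ supplied by Lemma \ref{freqstable} (the infimum of two envelopes is an envelope, and taking an infimum only helps \eqref{odeo}) simultaneously gives the energy bound $O(1)$ — which your ``$O_\mu(1)=O(1)$'' elides — together with \eqref{slice-dice-soo}, \eqref{slice-dice-soo2} and \eqref{odeo}. In short: keep your first paragraph, replace the entire $\delta\psi_s$ analysis by this observation, and the proof is complete.
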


\begin{proof}  From Lemma \ref{psitn-lem} and Lemma \ref{freqstable2} one has
$$
\| \nabla_x^j \psi_s(s) \|_{S_{\mu,k(s)}(K \times \R^2)} \lesssim_{\mu,j} c_0(s) s^{-(j+2)/2}
$$
and
$$
\| \nabla_x^j \psi_x(s) \|_{S_{\mu,k(s)}(K \times \R^2)} \lesssim_{\mu,j} c_0(s) s^{-(j+1)/2}
$$
for all $j \geq 0$ and $s > 0$; meanwhile, as $(\phi,K)$ is a $(O(1),\mu)$ map one also has (from Lemma \ref{freqstable})
$$
\| \nabla_x^j \psi_s(s) \|_{S_{\mu,k(s)}(K \times \R^2)} \lesssim_{j} \tilde c_K(s) s^{-(j+2)/2}
$$
and
$$
\| \nabla_x^j \psi_x(s) \|_{S_{\mu,k(s)}(K \times \R^2)} \lesssim_{j} \tilde c_K(s) s^{-(j+1)/2}
$$
for all $j \geq 0$ and $s>0$, and some frequency envelope $\tilde c_K$ of energy $O(1)$.  Setting $c'_K$ to be the infimum of $\tilde c_K$ and $O_\mu(c_0)$ one obtains the claim.
\end{proof}

Set $L'_n := L_n^{0.1}$ (say). We can now establish \eqref{summer}:

\begin{proposition}  One has
$$
\int_{L_n^{-0.4} \leq s \leq L_n^{0.4}} \ESD(\phi[t])(s)\ ds = o_{n \to \infty}(1)
$$
for all $t \in K$.
\end{proposition}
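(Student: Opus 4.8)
The plan is to bound the energy spectral distribution $\ESD(\phi[t])(s)$ pointwise by $O(c'_K(s)^2/s)$ for every $t\in K$ and every $s>0$, and then integrate over the range $L_n^{-0.4}\le s\le L_n^{0.4}$, which is contained in $L_n^{-0.5}\le s\le L_n^{0.5}$, so that the smallness follows from \eqref{odeo}. Recall from \eqref{esd-def} that, at the fixed time $t$,
\[ \ESD(\phi[t])(s) = \|\psi_s(s,t)\|_{L^2_x(\R^2)}^2 + \|D_x\psi_t(s,t)\|_{L^2_x(\R^2)}^2 + \tfrac12\|\psi_t\wedge\psi_x(s,t)\|_{L^2_x(\R^2)}^2 . \]
Since $(\phi,J)$ has a $(O(1),\mu)$-entropy of $O_\mu(1)$, I would invoke Lemma \ref{freqstable2} (and with it the fixed-time estimates \eqref{l2s-fixed-special-psitn-freq}, \eqref{l2s-fixed-special-psitn-freq2} produced in its proof), exactly as in the proof of Proposition \ref{slice-dice-freq}, with $c$ replaced by $c'_K$: this gives, uniformly for $t\in J\supseteq K$ and all $s>0$,
\[ \|\psi_s(s,t)\|_{L^2_x(\R^2)} + \|\nabla_x\Psi_{t,x}(s,t)\|_{L^2_x(\R^2)} + \|\Psi_{t,x}(s,t)\|_{L^\infty_x(\R^2)} \lesssim c'_K(s)\, s^{-1/2}. \]
In addition $\E(\phi[t])=O(1)$ uniformly in $t\in K$ (by Theorem \ref{apriori-thm2}(iv) and Proposition \ref{kslice}), so Proposition \ref{corbound} applied to the instantaneous heat flow $\psi_s[t]$ gives $\|A_x(s,t)\|_{L^2_x(\R^2)}=O(1)$, and similarly $\|\psi_x(s,t)\|_{L^2_x(\R^2)}\lesssim (\int_s^\infty c'_K(s')^2\,\frac{ds'}{s'})^{1/2}=O(1)$ by \eqref{al7}.

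Using these inputs I would estimate the three pieces of the ESD separately. The first piece is immediate: $\|\psi_s(s,t)\|_{L^2_x(\R^2)}^2\lesssim c'_K(s)^2 s^{-1}$. For the wedge term I would use $\|\psi_t\wedge\psi_x(s,t)\|_{L^2_x(\R^2)}\lesssim \|\psi_t(s,t)\|_{L^\infty_x(\R^2)}\|\psi_x(s,t)\|_{L^2_x(\R^2)}\lesssim c'_K(s)\,s^{-1/2}$, where only the $L^\infty_x$ factor carries smallness and the $L^2_x$ factor is merely $O(1)$. For the middle term I would split $D_x\psi_t=\nabla_x\psi_t+A_x\psi_t$, bound $\|\nabla_x\psi_t(s,t)\|_{L^2_x(\R^2)}\lesssim c'_K(s)\,s^{-1/2}$ from the second displayed estimate above, and bound $\|A_x\psi_t(s,t)\|_{L^2_x(\R^2)}\le \|A_x(s,t)\|_{L^2_x(\R^2)}\|\psi_t(s,t)\|_{L^\infty_x(\R^2)}\lesssim c'_K(s)\,s^{-1/2}$. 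Hence $\ESD(\phi[t])(s)\lesssim c'_K(s)^2 s^{-1}$ for all $s>0$ and all $t\in K$.

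Finally, integrating, $\int_{L_n^{-0.4}\le s\le L_n^{0.4}}\ESD(\phi[t])(s)\,ds \lesssim \int_{L_n^{-0.5}\le s\le L_n^{0.5}} c'_K(s)^2\,\frac{ds}{s} = o_{n\to\infty}(1)$ by \eqref{odeo}, for every $t\in K$; since the $O_\mu(1)$ intervals of ${\mathcal K}$ partition $J$, this gives the claim for all $t\in J$. There is no substantive obstacle here: the only point requiring care is the mechanical transfer of the frequency-envelope bounds of Proposition \ref{slice-dice-freq} into uniform-in-$t$ fixed-time $L^2_x$ and $L^\infty_x$ estimates with the envelope $c'_K$, which is exactly the bookkeeping already carried out in the proofs of Lemmas \ref{freqstable}, \ref{freqstable2}.
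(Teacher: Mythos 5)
Your proof is correct and follows essentially the same route as the paper: both derive the smallness from the gap \eqref{odeo} of the envelope $c'_K$ in Proposition \ref{slice-dice-freq}, transferred to each fixed time $t \in K$ via the frequency-envelope parabolic regularity machinery (Lemma \ref{freqstable} and its fixed-time estimates). The only cosmetic difference is in absorbing the nonlinear parts of $\ESD$: you make every term pointwise small in $s$ using the fixed-time bound $\|\Psi_{t,x}(s,t)\|_{L^\infty_x}\lesssim c'_K(s)s^{-1/2}$, whereas the paper needs envelope smallness only for $\int\|\nabla_x\Psi_{t,x}\|_{L^2_x}^2\,ds$ and controls the quartic term by Gagliardo--Nirenberg and H\"older against the $O(1)$ bounds of Proposition \ref{corbound}.
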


\begin{proof}  We fix $t$, and omit mention of this parameter.
By \eqref{esd-def} and writing $\psi_s = \bigO( \nabla_x \Psi_x + \Psi_x \Psi_x )$, it suffices to show that
$$
\int_{L_n^{-0.4} \leq s \leq L_n^{0.4}} \| \nabla_x \Psi_{t,x}(s)\|_{L^2_x(\R^2)}^2 + \| \Psi_{t,x}(s) \|_{L^4_x(\R^2)}^4\ ds = o_{n \to \infty}(1).
$$
The contribution of $\| \nabla_x \Psi_{t,x}(s)\|_{L^2_x(\R^2)}$ can be handled by the analogue of Lemma \ref{psitn-lem} for this time $t$.  To control $\| \Psi_{t,x}(s) \|_{L^4_x(\R^2)}^4$, one first uses the bound $\| \Psi_{t,x}(s) \|_{L^4_x(\R^2)} \lesssim s^{-1/4}$ and the Gagliardo-Nirenberg inequality to bound
$$ \| \Psi_{t,x}(s) \|_{L^4_x(\R^2)}^4 \lesssim s^{-1/2} \| \nabla_x \Psi_{t,x}(s)\|_{L^2_x(\R^2)}^{1/2} \| \Psi_{t,x}(s)\|_{L^3_x(\R^2)}^{3/2}$$
so by H\"older's inequality it suffices to show that
$$
\int_{L_n^{-0.4} \leq s \leq L_n^{0.4}} s^{-2/3} \| \Psi_{t,x}(s) \|_{L^3_x(\R^2)}^2\ ds \lesssim 1.$$
But this follows from Proposition \ref{corbound}.
\end{proof}

Finally, we have to establish \eqref{simmer2}.   From \eqref{phocus-2}, Lemma \ref{gapstable}, and the triangle inequality one has
$$
\dist_\Energy( \phi^{lo}[t_n], \phi((L'_n)^{1/4})[t_n] ) = o_{n \to \infty}(1).$$
Meanwhile, $\psi^{lo}$ and $\psi(\cdot + (L'_n)^{1/4})$ both have energy $O(1)$, are both heat flows, and both have an $S_\mu(K)$ norm of $O(1)$ for each $K \in {\mathcal K}$.  To establish \eqref{simmer2}, it thus suffices by Proposition \ref{hyp-repair} (and Proposition \ref{compati}, and an induction on $K$) to show that
$$
\sum_k \| P_k w((L'_n)^{1/4}) \|_{N_{k}(K \times \R^2)}^2 \lesssim o_{n \to \infty}(1)
$$
for each $K \in {\mathcal K}$.  But by Lemma \ref{freqstable} (and Proposition \ref{slice-dice-freq}), one has
$$ \| P_k w((L'_n)^{1/4}) \|_{N_{k}(K \times \R^2)} \lesssim c'_K( (L'_n)^{1/4} ) \chi_{k = k((L'_n)^{1/4})}^{\delta_1/10}$$
and the claim follows from \eqref{odeo}.The proof of Proposition \ref{sweden} is (finally!) complete.

\section{Spatial dispersion implies spacetime bound}\label{spacbound-sec}

We now begin the proof of Theorem \ref{spacbound}.  Let $E_0, \phi^{(n)}[0]$ be as in the theorem; we allow implied constants to depend on $E_0$, and on the localisation bounds in \eqref{spacloc}. We now omit the $(n)$ subscripts, and assume throughout that $n$ is sufficiently large.

Let $A$ be a sufficiently large quantity depending only on $E_0$ to be chosen later, let $\mu > 0$ be a sufficiently small quantity depending on $A, E_0$ to be chosen later.  Let $I$ be a time interval containing $0$.  We consider the set $\Omega = \Omega_n \subset (0,+\infty)$ of all times $T > 0$ such that $\phi[0]$ can be extended to a $(A,\mu)$-wave map on $[-T,0]$.    From Theorem \ref{symscat}(vi), $\Omega$ contains a neighbourhood of the origin if $A$ is large enough; from Theorem \ref{apriori-thm2}(ii) and Definition \ref{aes} (and the fact that the space of resolutions $\psi_s$ of a map $\phi$ is compact), $\Omega$ is closed in $(0,+\infty)$.  If $\Omega$ is all of $(0,+\infty)$, then we can extend $\phi$ to $I_-$ and the claim follows; thus we may assume that the supremum $T_* = T_{*,n}$ of $\Omega$ is such that $0 < T_* < \infty$.  From Theorem \ref{symscat}(ii) and Definition \ref{aes}, we conclude that there exists an extension of $\phi[0]$ to a wave map $\phi$ on $[-T_*,0]$ with
$$\| \psi_s \|_{S^1_\mu([-T_*,0])} = A.$$

Thus $\psi_s$ has a substantial presence to the past of $0$.  We now claim the same is true for the linear solution:

\begin{lemma}\label{sodo}  Let $\psi_{s,lin}$ be the solution to the free wave equation $\Box \psi_{s,lin}=0$ with initial data $\psi_{s,lin}[0] = \psi_s[0]$.  Then
$$ \| \psi_{s,lin} \|_{S^1_\mu([-T_*,0])} \geq A/2.$$
\end{lemma}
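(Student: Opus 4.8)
\textbf{Proof proposal for Lemma \ref{sodo}.} The idea is that the full dynamic field heat flow $\psi_s$ differs from its free linear evolution $\psi_{s,\mathrm{lin}}$ only by terms controlled by the $N_k^\strong$-norm of $\Box \psi_s = P_k \Box \psi_s$ (summed dyadically), which is in turn controlled by $\mu^{2-\eps}$ times the frequency envelope of $\psi_s$ via \eqref{wave-ten2} in Lemma \ref{freqstable}. Since the $S^1_\mu$ norm of $\psi_s$ is exactly $A$, and the nonlinear correction is of size $O_{\mu}(\mu^{2-\eps} A) = o_\mu(A)$, the linear part must carry almost all of the $S^1_\mu$ mass, hence is at least $A/2$ once $\mu$ is small enough.

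The plan is as follows. First I would observe that, since $\phi$ is a $(A,\mu)$-wave map on $[-T_*,0]$, Definition \ref{aes} gives $\|\psi_s\|_{S^1_\mu([-T_*,0])} = A$, and then Lemma \ref{freqstable} applies with $M = A$ (taking $\mu$ small enough depending on $A$ and $E_0$): there is a frequency envelope $c$ of energy $O(1)$ controlling $\psi_s$ in the $S_{k(s)}$ norms at every slice, and crucially \eqref{wave-ten2} gives
$$ \| \partial_x^j P_k \Box \psi_s(s)\|_{N_{k(s)}^\strong([-T_*,0] \times \R^2)} \lesssim_{A,E_0,j,\eps} \mu^{2-\eps} c(s) \chi_{k(s)=k}^{\delta_1/10} s^{-(j+2)/2}. $$
Next, I would fix a heat-temporal slice $s$ (in fact one works with the frequency-localised pieces $P_k \psi_s(s)$ for $2^{-2k-2}\le s\le 2^{-2k}$, as in \eqref{ds2-eq}) and write $P_k(\psi_s - \psi_{s,\mathrm{lin}})(s) = -\int$ of the Duhamel term for $\Box$ applied to $P_k \Box \psi_s(s)$; the energy estimate \eqref{energy-est} (together with \eqref{physical} to pass between adjacent frequencies) then bounds $\|P_k(\psi_s - \psi_{s,\mathrm{lin}})(s)\|_{S_k^\strong([-T_*,0]\times\R^2)}$ by $\|P_k \Box \psi_s(s)\|_{N_k^\strong([-T_*,0]\times\R^2)}$, since the two have the same data at $t=0$. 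Using \eqref{sksk-star} to convert $S_k^\strong$ to $S_{\mu,k}$ (losing a factor $\mu^{-1}$), one gets
$$ \| \nabla_x^j (\psi_s - \psi_{s,\mathrm{lin}})(s) \|_{S_{\mu,k(s)}([-T_*,0]\times\R^2)} \lesssim_{A,E_0,\eps} \mu^{1-\eps} c(s) s^{-(j+2)/2} $$
for $0\le j\le 10$. Plugging this into the definition \eqref{ds2-eq} of the $S^1_\mu$ norm and using that $c$ has energy $O(1)$, one obtains $\dist_{S^1_\mu([-T_*,0])}(\psi_s,\psi_{s,\mathrm{lin}}) \lesssim_{A,E_0,\eps} \mu^{1-\eps}$. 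Then by the triangle inequality $\|\psi_{s,\mathrm{lin}}\|_{S^1_\mu([-T_*,0])} \ge \|\psi_s\|_{S^1_\mu([-T_*,0])} - O_{A,E_0,\eps}(\mu^{1-\eps}) = A - o_\mu(A) \ge A/2$ provided $\mu$ is chosen small enough depending on $A$ and $E_0$.

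The main obstacle, and the point requiring care, is verifying that $\psi_{s,\mathrm{lin}}$ — whose initial data $\psi_s[0]$ need not have compactly supported Fourier transform at each slice — actually lies in the $S^1_\mu$ class with the required quantitative control, and that the Duhamel comparison is justified slice-by-slice with the decay in $s$ uniform enough to reassemble into the $\ell^2_k$ sum in \eqref{ds2-eq}; this is where one needs the full strength of the frequency-envelope bookkeeping in Lemma \ref{freqstable} (in particular \eqref{l2s-fixed-special-psitn-freq} for the data bounds and \eqref{wave-ten2} for the forcing) together with the fixed-frequency energy estimate \eqref{energy-est} and the parabolic regularity estimate \eqref{parreg}. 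One also needs a mild qualitative decay-at-spatial-infinity statement to make sense of $\psi_{s,\mathrm{lin}}$ as an honest Schwartz-class linear wave, which follows from the qualitative estimates \eqref{quali-2} on $\psi_s[0]$ and standard decay for the free wave equation; this last point is routine and I would only remark on it.
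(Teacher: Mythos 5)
Your proposal is correct and follows essentially the same route as the paper's proof: apply Lemma \ref{freqstable} (via \eqref{wave-ten2}) to bound $\Box\psi_s$ in $N_k^\strong$, use the energy estimate \eqref{energy-est} to control $\psi_s-\psi_{s,\mathrm{lin}}$ frequency-by-frequency, convert to $S_{\mu,k}$ (losing a factor $\mu^{-1}$, which the paper does implicitly in passing from $\mu^{1.9}$ to $\mu^{0.9}$), sum via \eqref{ds1-eq}, and finish with the triangle inequality. Your additional remarks on slice-by-slice reassembly and qualitative decay are harmless elaborations of steps the paper treats as routine.
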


\begin{proof}
From Lemma \ref{freqstable} we see that
$$\| \nabla_x^j P_k \Box \psi_s(s) \|_{N_k^\strong([-T_*,0] \times \R^2)} \lesssim_A \mu^{1.9} s^{-(j+2)/2} c(s) \chi_{k=k(s)}^{\delta_1/10}.$$
(say) for all $k \in \R$, $s>0$, and $0 \leq j \leq 10$, and thus by \eqref{energy-est} one has
$$ \| \nabla_x^j P_k(\psi_s- \psi_{s,lin})(s) \|_{S_{\mu,k}([-T_*,0] \times \R^2)} \lesssim_A \mu^{0.9} c(s) s^{-(j+2)/2} \chi_{k=k(s)}^{\delta_1/10}.$$
By \eqref{ds1-eq} we conclude that
$$ d_{S^1_\mu([-T_*,0])}( \psi_s, \psi_{s,lin} ) \lesssim_A \mu^{0.9}$$
and the claim then follows from the triangle inequality, if $\mu$ is small enough.
\end{proof}

Henceforth we allow implied constants to depend on $A$.

The above lemma leads to concentration on a light ray\footnote{Actually, since the only significant frequency interactions here are between components of comparable frequency, one could obtain concentration at a point rather than along a light ray with more effort, but we will not need to do so here.} in the past:

\begin{lemma}\label{snork}  There exists $k,k'=O_\mu(1)$, $s_0 \sim_\mu 1$, a cap $\kappa \in K_l$ of length $2^{-l} \gtrsim_\mu 1$, and a tube $T_{x_0,\omega,k'}$ with $\dist(\omega, \kappa), \dist(\omega, -\kappa) \gtrsim_\mu 1$ such that
$$
\| P_{k',\kappa} \nabla_{t,x} \psi_{s,lin}(s_0) \|_{L^2_t L^\infty_x(T_{x_0,\omega,k'} \cap ([-T_*,0] \times \R^2))} \gtrsim_\mu 1.$$
\end{lemma}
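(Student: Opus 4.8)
The plan is to unwind the quantitative statement of Lemma \ref{sodo} into a concrete concentration statement for the free wave $\psi_{s,lin}$, using the definition of the $S^1_\mu$ norm together with the structure of the $S_{\mu,k}$ atoms. First I would observe that since $\| \psi_{s,lin} \|_{S^1_\mu([-T_*,0])} \geq A/2$ and (by Lemma \ref{freqstable}, \eqref{energy-est}, or simply the energy estimate \eqref{energy-est} applied to the free evolution of energy-class data) $\psi_{s,lin}$ has a frequency envelope $c$ of energy $O(1)$ controlling $\| \nabla_x^j \psi_{s,lin}(s) \|_{S_{\mu,k(s)}}$ by $c(s) s^{-(j+2)/2}$ up to the $A/2$ loss, the sum over $k$ and the $j$-sum in \eqref{ds2-eq} can be truncated: the contribution of $s$ with $k(s)$ outside a range $|k(s)| \lesssim_\mu 1$ is $o_\mu(1) \cdot A$ by the envelope property, so there must exist a single dyadic scale $s_0 \sim_\mu 1$ (equivalently $k = k(s_0) = O_\mu(1)$) and a single $j \in \{0,\dots,10\}$ for which $s_0^{1+j/2} \| \nabla_x^j \psi_{s,lin}(s_0) \|_{S_{\mu,k}([-T_*,0]\times\R^2)} \gtrsim_\mu 1$; absorbing the $\nabla_x^j$ and rescaling the frequency, we get $\| \phi \|_{S_{\mu,k}([-T_*,0]\times\R^2)} \gtrsim_\mu 1$ for $\phi := \nabla_x^j \psi_{s,lin}(s_0)$, while simultaneously $\| \phi \|_{S_k^\strong([-T_*,0]\times\R^2)} \lesssim 1$ by \eqref{sksk-star} (the a priori bound gives $S_k^\strong = O(1)$ after the envelope truncation).

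\textbf{Extracting the tube.} Next I would feed this into the atomic structure of $\tilde S_{\mu,k}$ (equivalently $S_{\mu,k}$, by \eqref{smuk}) from Definition \ref{smuk-def}. Since $\| \phi \|_{S_k^\strong} \lesssim 1$ but $\| \phi \|_{S_{\mu,k}} \gtrsim_\mu 1$, the function $\phi$ cannot be (a bounded multiple of) a small atom, nor can it be a null-dispersed atom unless $\gtrsim_\mu 1$, which forces $\| \phi \|_{S'_{\mu,k}([-T_*,0]\times\R^2)} \gtrsim_\mu 1$ OR \eqref{mondo} fails at some frequency; in either case one gets a quantitative lower bound on a localized norm. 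More carefully: if neither $\| \phi\|_{S'_{\mu,k}} \gtrsim_\mu 1$ nor the $L^5_t L^\infty_x$ bound \eqref{mondo} failed quantitatively, then $\phi / O(1)$ would itself be (a sum of $O_\mu(1)$) null-dispersed atoms, forcing $\| \phi\|_{\tilde S_{\mu,k}} \lesssim_\mu 1$, contradiction. The $L^5_t L^\infty_x$ alternative can be ruled out using the free Strichartz estimate \eqref{lstrich-4} (which gives $\| \nabla_{t,x} \phi\|_{L^5_t L^\infty_x} \lesssim 2^{4k/5}\mu \|\phi\|_{S_{\mu,k}}$ — but one must be careful since $\|\phi\|_{S_{\mu,k}}$ is large; instead one uses that for the \emph{free} solution \eqref{lstrich} applies with $S_k^\strong$, which is $O(1)$, giving $\| \nabla_{t,x}\phi\|_{L^5_t L^\infty_x} \lesssim 2^{4k/5}$, whence after inserting the $\mu$ in \eqref{mondo} the $L^5$ condition is satisfied up to factors, so it must be the $S'_{\mu,k}$ condition that is violated). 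Thus $\| \phi \|_{S'_{\mu,k}([-T_*,0]\times\R^2)} \gtrsim_\mu 1$, and by the supremum definition \eqref{spunk} of $S'_{\mu,k}$ there exist $k' = k + O_\mu(1)$, a cap $\kappa \in K_l$ of length $\gtrsim_\mu 1$, a position $x_0 \in \R^2$, and a direction $\omega$ with $\dist(\omega,\kappa), \dist(\omega,-\kappa) \gtrsim_\mu 1$ such that $2^{-k'/2} \| P_{k',\kappa}\nabla_{t,x}\phi \|_{L^2_t L^\infty_x(T_{x_0,\omega,k'}\cap([-T_*,0]\times\R^2))} \gtrsim_\mu 1$. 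Since $k,k' = O_\mu(1)$ and $\phi = \nabla_x^j\psi_{s,lin}(s_0)$ differs from $\psi_{s,lin}(s_0)$ only by $\nabla_x^j$ (which is a Fourier multiplier bounded on the frequency-localized piece, costing $2^{jk'} = O_\mu(1)$), this yields exactly the claimed bound on $\| P_{k',\kappa}\nabla_{t,x}\psi_{s,lin}(s_0)\|_{L^2_t L^\infty_x(T_{x_0,\omega,k'}\cap([-T_*,0]\times\R^2))}$.

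\textbf{Main obstacle.} The delicate point is the bookkeeping in the pigeonholing: one is trying to extract a \emph{single} scale $s_0$, a single derivative order $j$, and then a single atom-violating configuration, all from an $\ell^2_k$-sum and an inner $j$-sum, while keeping everything quantitative in $\mu$. The envelope property \eqref{sm} and \eqref{cse-sup} give the decay needed to truncate the $s$-range to $|k(s)| \lesssim_\mu 1$, and then finitely many scales and $j$-values remain, so the pigeonhole is legitimate — but one has to make sure the ``$\gtrsim_\mu 1$'' lower bound survives each pigeonholing step (it does, since at each step one only loses a factor $O_\mu(1)$ in the number of terms). A secondary subtlety is confirming that the $L^5_t L^\infty_x$ condition \eqref{mondo} really is automatically satisfied for the free solution so that the failure must be located in the $S'_{\mu,k}$ term; this uses \eqref{lstrich} applied to $\psi_{s,lin}$, whose $S^\strong_k$ norm is $O(1)$ by \eqref{energy-est} and the a priori envelope bound, combined with the observation that the frequency-localized pieces of a free solution with $O(1)$ energy obey the needed $L^5_t L^\infty_x$ bound with the $2^{4k'/5}$ weight. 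Once these two points are in hand the rest is routine unpacking of definitions.
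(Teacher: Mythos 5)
There is a genuine gap at the truncation step. You claim that the contribution to the $S^1_\mu$ norm from scales $s$ with $|k(s)| \gg_\mu 1$ is small ``by the envelope property'' (\eqref{sm}, \eqref{cse-sup}). This is not true: a frequency envelope of energy $O(1)$ only has controlled \emph{variation} and a uniform pointwise bound; its mass $c(s)^2\,ds/s$ can sit entirely at scales $s \sim 2^{\pm 2n}$ going off to infinity with $n$, in which case the whole $S^1_\mu$ norm of $\psi_{s,lin}$ lives at those scales and no $s_0 \sim_\mu 1$, $k = O_\mu(1)$ exists (note the conclusion of Lemma \ref{snork} is a unit-scale statement, so it cannot follow from scale-invariant inputs alone). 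The paper gets the truncation from the \emph{tightness} of the envelope, uniformly in $n$, which is obtained by re-running the proof of Lemma \ref{psitn-lem} with the uniform frequency localisation hypothesis \eqref{spacloc} in place of \eqref{phocus}; this is precisely where the hypothesis \eqref{spacloc} of Theorem \ref{spacbound} enters, and your argument never invokes it. Once tightness is in hand, your pigeonholing over the remaining $O_{A,\mu}(1)$ scales and derivative orders, the reduction to $j=0$, the appeal to the atomic structure of $S_{\mu,k}$, and the extraction of $k',\kappa,x_0,\omega$ from \eqref{spunk} follow the paper's proof.

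A secondary point: your dismissal of the \eqref{mondo} alternative is too quick as stated. The free Strichartz bound \eqref{lstrich} gives $\| P_{k'}\nabla_{t,x}\phi\|_{L^5_t L^\infty_x} \lesssim 2^{4k'/5}$ with no gain, whereas \eqref{mondo} demands an extra factor of $\mu$; so the free wave satisfies the null-dispersed $L^5$ condition only after division by $\mu^{-1}$, not ``up to factors''. In the contradiction argument this $\mu^{-1}$ loss must be tracked against the size of the lower bound produced by the pigeonholing (the paper is itself terse here, saying only ``an argument by contradiction''), so you should either quantify the pigeonhole constants carefully or argue separately that largeness of the $L^5_t L^\infty_x$ piece also forces the tube concentration; as written, the step ``so it must be the $S'_{\mu,k}$ condition that is violated'' does not follow.
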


\begin{proof} 
By repeating the proof of Lemma \ref{psitn-lem}, using \eqref{spacloc} instead of \eqref{phocus} we can find a frequency envelope $c = c^{(n)}$ of energy $O(1)$ (depending on $n$ and $t_n$) such that the sequence of measures $c(s)^2\frac{ds}{s}$ are \emph{tight} in $n$, in the sense that for every $\eps > 0$ there exists $C > 0$ such that
\begin{equation}\label{css-tight}
\int_{s < 1/C} c(s)^2 \frac{ds}{s},
\int_{s > C} c(s)^2 \frac{ds}{s} \leq \eps + o_{n \to \infty; \eps}(1)
\end{equation}
and such that 
\begin{equation}\label{l2s-fixed-special-psitn-aa}
\begin{split}
\| \nabla_x^{j+1} \Psi_{t,x}(s,0) \|_{L^2_x(\R^2)} 
+ \| \nabla_x^{j} \Psi_{t,x}(s,0) \|_{L^\infty_x(\R^2)} \quad &\\
+ \| \nabla_x^j \nabla_{t,x} \Psi_x(s,0) \|_{L^2_x(\R^2)} +
\| \nabla_x^j \psi_s(s,0) \|_{L^2_x(\R^2)} \quad &\\
+ \| \nabla_x^{j-1} \nabla_{t,x} \psi_s(s,0) \|_{L^2_x(\R^2)}
&\lesssim_{j} c(s) s^{-(j+1)/2} 
\end{split}
\end{equation}
for all $s > 0$ and $j \geq 0$, with the convention that we drop all terms involving $\nabla_x^{-1}$. Applying \eqref{energy-est}, we conclude that
$$ \| \nabla_x^j P_k(\psi_{s,lin})(s) \|_{S_{\mu,k}([-T_*,0] \times \R^2)} \lesssim c(s) s^{-(j+2)/2} \chi_{k=k(s)}^{\delta_1/10}$$
for all $0 \leq j \leq 10$, $s>0$, and $k \in \R$.  From this and the tightness, we see that there exists a constant $C > 0$ such that the contribution of those $s$ larger than $C$, or less than $1/C$, or those $k$ with $2^k > C$ or $2^k < C$, to \eqref{sodo} is at most $A/100$ (say).  From the triangle inequality and the pigeonhole principle, we thus conclude that there exists $s_0 \sim 1$, $k = O(1)$, and $0 \leq j \leq 10$ such that
$$ \| \nabla_x^j P_k(\psi_{s,lin})(s) \|_{S_{\mu,k}([-T_*,0] \times \R^2)} \sim 1.$$
By Littlewood-Paley theory we may assume that $j=0$. From Definition \ref{smuk-def} (and an argument by contradiction), we conclude that
$$\| \phi \|_{S'_{\mu,k}(I \times \R^2)} \gtrsim_{\mu} 1;$$
by \eqref{spunk} we conclude that the claim.
\end{proof}

Let $k,k',s_0,\kappa,\omega$ be as above.  Now we use the dispersion hypothesis \eqref{disp} to push the concentration further into the past:

\begin{lemma}[Concentration on a distant light ray]\label{concdist}  There exists $T_n \to \infty$ such that
$$
\| P_{k',\kappa} \nabla_{t,x} \psi_{s,lin}(s_0) \|_{L^2_t L^\infty_x(T_{x_0,\omega,k'} \cap ([-T_*,-T_n] \times \R^2))} \gtrsim_\mu 1.$$
\end{lemma}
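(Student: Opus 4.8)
The plan is to show that the contribution of the recent past $[-T_n,0]$ to the tube norm appearing in Lemma \ref{snork} is negligible as $n\to\infty$, so that the contribution of the distant past $[-T_*,-T_n]$ still carries the lower bound $\gtrsim_\mu 1$. Concretely, since $J\mapsto \| f \|_{L^2_t L^\infty_x(T_{x_0,\omega,k'}\cap(J\times\R^2))}^2$ is additive over disjoint time intervals $J$, and Lemma \ref{snork} provides a lower bound $\gtrsim_\mu 1$ over $J=[-T_*,0]$, it suffices by the triangle inequality to exhibit a sequence $T_n\to\infty$ (which we are free to let grow as slowly as we wish) for which
\[
 \| P_{k',\kappa} \nabla_{t,x} \psi_{s,lin}(s_0) \|_{L^2_t L^\infty_x(T_{x_0,\omega,k'} \cap ([-T_n,0] \times \R^2))} = o_{n\to\infty}(1).
\]
I will not need the transversality of $\omega$ to $\pm\kappa$ here; only finite speed of propagation, the frequency localisation $2^{k'}\sim_\mu 1$, and the dispersion hypothesis \eqref{disp} will be used.

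First I would establish that the \emph{data} of the relevant linear wave is spatially dispersed at heat time $s_0$. Writing $B_r(y):=\{x\in\R^2:|x-y|\le r\}$, the hypothesis \eqref{disp} together with the identity $\T_{00}(\phi)(0,x)\sim|\psi_{t,x}(0,0,x)|^2$ (the caloric frame being an isometry) gives $\sup_{y}\|\psi_{t,x}(0,0,\cdot)\|_{L^2_x(B_1(y))}^2=o_{n\to\infty}(1)$. Propagating this along the harmonic map heat flow from heat time $0$ up to heat time $s_0\sim_\mu 1$, and using that the heat semigroup $e^{s\Delta}$ has kernel concentrated at spatial scale $\sqrt{s}\sim_\mu 1$ with rapidly decaying tails while the nonlinear Duhamel corrections in \eqref{psit-eq}, \eqref{psis-eq} are controlled by the parabolic regularity bounds of Proposition \ref{corbound}, I would obtain
\[
 \eps_n := \sup_{y}\big(\|\nabla_x\psi_s(s_0,0,\cdot)\|_{L^2_x(B_1(y))}^2+\|\partial_t\psi_s(s_0,0,\cdot)\|_{L^2_x(B_1(y))}^2\big)=o_{n\to\infty}(1),
\]
while Proposition \ref{corbound} also gives the global bound $\|\nabla_x\psi_s(s_0,0,\cdot)\|_{L^2_x(\R^2)}^2+\|\partial_t\psi_s(s_0,0,\cdot)\|_{L^2_x(\R^2)}^2\lesssim_\mu 1$. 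This propagation of spatial localisation under the parabolic flow, including the careful treatment of the nonlinear corrections, is the one point requiring real work; the rest is routine.

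With this in hand the remaining steps are quick. For each $t\in[-T_n,0]$ the time-$t$ slice of $T_{x_0,\omega,k'}$ is $B_{2^{-k'}}(x_0+\omega t)$; since $\psi_{s,lin}(s_0)$ solves $\Box\psi_{s,lin}(s_0)=0$ with the above data at $t=0$, finite speed of propagation bounds the energy of the solution on this ball by the energy of the data on the concentrically enlarged ball of radius $|t|+O_\mu(1)$, which (every unit ball carrying energy $\le\eps_n$, and the total energy being $\lesssim_\mu 1$) is $\lesssim_\mu\min((|t|+1)^2\eps_n,1)$. Inserting the $O_\mu(1)$-scale kernel of $P_{k',\kappa}$ (costing only rapidly decaying tails in the spatial separation) and applying Bernstein's inequality at frequency $2^{k'}\sim_\mu 1$ to pass from $L^2_x$ to $L^\infty_x$ yields
\[
 \| P_{k',\kappa}\nabla_{t,x}\psi_{s,lin}(s_0)(t)\|_{L^\infty_x(B_{2^{-k'}}(x_0+\omega t))}\lesssim_\mu\min\big((|t|+1)\,\eps_n^{1/2},1\big)
\]
for all $t\in[-T_n,0]$.

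Integrating in $t$ then gives $\| P_{k',\kappa}\nabla_{t,x}\psi_{s,lin}(s_0)\|_{L^2_tL^\infty_x(T_{x_0,\omega,k'}\cap([-T_n,0]\times\R^2))}^2\lesssim_\mu\int_0^{T_n}\min((t+1)^2\eps_n,1)\,dt$, and choosing $T_n:=\eps_n^{-1/4}$ (any $T_n\to\infty$ with $T_n^3\eps_n\to0$ works; if $\eps_n=0$ the lemma is immediate) makes $(t+1)^2\eps_n\le1$ on $[0,T_n]$ for large $n$, so the right-hand side is $\lesssim_\mu T_n^3\eps_n=\eps_n^{1/4}=o_{n\to\infty}(1)$ while $T_n\to\infty$. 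Combining with Lemma \ref{snork} and the triangle inequality completes the argument. The main obstacle is the heat-flow dispersion transfer carried out in the second step.
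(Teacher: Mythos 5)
Your proposal is correct and follows essentially the same route as the paper: \eqref{disp} gives spatial dispersion of the caloric data at heat time $0$, this is transferred along the harmonic map heat flow to heat time $s_0\sim_\mu 1$, and then finite speed of propagation together with the frequency-localised fundamental solution of the free wave equation shows that the recent past $[-T_n,0]$ contributes only $o_{n\to\infty}(1)$ to the tube norm, whence Lemma \ref{snork} forces the concentration onto $[-T_*,-T_n]$. The one place your implementation differs is the heat-flow transfer, which you rightly flag as the main work: instead of a Duhamel expansion with nonlinear corrections, the paper observes that $|\psi_{t,x}|^2$ is a subsolution of the linear heat equation (Bochner--Weitzenb\"ock applied to \eqref{psit-eq}), so testing against a cutoff $\eta((x-x_0)/R)$ and using $\|\psi_{t,x}(s,0)\|_{L^2_x}=O(1)$ gives $\int_{|x-x_0|\le R}|\psi_{t,x}(s_0,0,x)|^2\,dx\lesssim o_{n\to\infty;R}(1)+O(s_0/R^2)$, after which $R=R_n$ is sent to infinity slowly; the extra derivatives needed for the wave data $\psi_s(s_0)[0]$ are then recovered by local parabolic smoothing on $[s_0/2,s_0]$ (a step the paper compresses into ``one then easily computes''). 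If you do run your Duhamel/Gronwall route, the required integrability near $s=0$ is supplied by Proposition \ref{corbound}, e.g.\ $\int_0^\infty\|\psi_{t,x}(s)\|_{L^\infty_x}^2\,ds\lesssim_E 1$, so the Gronwall factor is $O_E(1)$; with that in place the rest of your argument (local energy inequality, $O_\mu(1)$-scale kernel tails of $P_{k',\kappa}$, Bernstein, and the choice $T_n=\eps_n^{-1/4}$) goes through as written.
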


\begin{proof}  Let $R \geq 1$ be a quantity to be chosen later (it will grow slowly with $n$).  From \eqref{disp} we that
$$ \int_{|x-x_0| \leq R} |\psi_{t,x}(0,0,x)|^2\ dx = o_{n \to \infty;R}(1).$$
Now we propagate this in the $s$ direction.  Let $\eta((x-x_0)/R)$ be a bump function supported on the ball $\{ |x-x_0| \leq R \}$ which equals one on the ball $\{ |x-x_0| \leq R/2\}$.  From the heat equation \eqref{psit-eq}, one has the Bochner-Weitzenb\"ock identity
$$ \partial_s |\psi_{t,x}|^2 = \Delta |\psi_{t,x}|^2 - 2 |D_x \psi_{t,x}|^2 - |\psi_x \wedge \psi_{t,x}|^2$$
and thus by integrating by parts
$$ \partial_s \int_{|x-x_0| \leq R} |\psi_{t,x}(s,0,x)|^2 \eta((x-x_0)/R) \ dx \leq R^{-2} \int_{|x-x_0| \leq R} |\psi_{t,x}(s,0,x)|^2 (\Delta \eta)((x-x_0)/R)\ dx;$$
since $\|\psi_{t,x}(s,0)\|_{L^2_x(\R^2)} = O(1)$ by Proposition \ref{corbound}, we conclude that
$$ \int_{|x-x_0| \leq R} |\psi_{t,x}(s_0,0,x)|^2 \lesssim o_{n \to \infty;R}(1) + O( s_0 / R^2 ).$$
In particular, by choosing $R = R_n$ growing to infinity sufficiently slowly, one has
$$ \int_{|x-x_0| \leq R} |\psi_{t,x}(s_0,0,x)|^2 \lesssim o_{n \to \infty;s_0}(1).$$
By the fundamental solution of the free wave equation, one then easily computes that
$$
\| P_{k',\kappa} \nabla_{t,x} \psi_{s,lin}(s_0) \|_{L^2_t L^\infty_x(T_{x_0,\omega,k'} \cap ([-T_*,-T_n] \times \R^2))} = o_{n \to \infty;\mu,k,k',s_0}(1)$$
and the claim follows from Lemma \ref{snork} with the stated properties.
\end{proof}

This leads to a decoupling of the initial data $\psi_s[0]$:

\begin{lemma}[Initial data splitting]\label{ids}  There exists a splitting
$$ \psi_s[0] = \tilde \psi'_s[0] + \tilde \psi''_s[0]$$
with the following properties:
\begin{itemize}
\item (Energy decrease of $\psi'$) For $n$ sufficiently large, we have
\begin{equation}\label{eo-decline}
\E( \psi'_s[0] ) \leq E_0 - c
\end{equation}
 for some $c \gtrsim_\mu 1$.
\item (Future dispersal of $\psi''$) If $\psi''_{s,lin}$ is the linear evolution of $\psi''_s[0]$, then
\begin{equation}\label{sourgum}
 \| \psi''_{s,lin} \|_{S^1_{\mu'}(I_+)} \lesssim 1
\end{equation}
for some $\mu' = o_{n \to \infty}(1)$.
\item ($\tilde \psi'_s$, $\tilde \psi''_s$ are approximate heat flows) There exists a frequency envelope $c$ of energy $O(1)$ such that
\begin{equation}\label{haha}
 \| \nabla_x^j \nabla_{t,x} \tilde h'(s) \|_{\dot H^0_{k(s)}(I \times \R^2)} \lesssim o_{n \to \infty}(c(s) s^{-(j+2)/2})
\end{equation}
and
\begin{equation}\label{haha-2}
 \| \nabla_x^j \nabla_{t,x} \tilde h''(s) \|_{\dot H^0_{k(s)}(I \times \R^2)} \lesssim o_{n \to \infty}(c(s) s^{-(j+2)/2})
 \end{equation}
for all $0 \leq j \leq 10$ and $s>0$, where $\tilde h' := \psi'_s - D'_i \psi'_i$, $\tilde h'' := \psi''_{s,lin} - D''_i \psi''_i$ are the heat-tension fields of $\psi'_s$, $\psi''_{s,lin}$.

\item We have
\begin{equation}\label{jollyo}
\| (\nabla_x^j \psi''_{s,lin}(s)) \phi \|_{S_{\max(k,k(s)}^\strong(I_+)} \lesssim o_{n \to \infty; j,\eps}( \chi_{k=k(s)}^{-\eps} (1+s)^{-100} \| \phi \|_{S_k^\strong(I_+)} ) 
\end{equation}
for all $s,\eps>0$, $j \geq 0$, $k \in \R$ and all $\phi$.
\end{itemize}
\end{lemma}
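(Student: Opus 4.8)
The plan is to isolate inside $\psi_s[0]$ the single ``bubble'' of energy responsible for the far-past concentration produced by Lemma~\ref{concdist}, and to show that this bubble is simultaneously large enough to force an energy drop, and spread-out and receding enough (as $n\to\infty$) that its forward linear evolution is dispersed on $I_+$. First I would pass to the incoming/outgoing half-wave components $f_\pm$ of the instantaneous data, so that $\psi_{s,lin}(t)$ is a superposition of $e^{\pm it|\nabla|}f_\pm$, and trace the concentrating light ray $\{(t,x_0+\omega t): t\le -T_n\}$ of Lemma~\ref{concdist} back to time $0$ along the relevant group velocities $\mp\kappa$; this identifies a ball $B_n=B(y_n,\rho_n)$ with $\rho_n\to\infty$ and $\dist(B_n,x_0)\gtrsim_\mu T_n\to\infty$ in which (for one choice of sign, and some $n$-independent $k=O_\mu(1)$) the frequency-$\{|\xi|\sim 2^k,\ \angle(\xi,\pm\kappa)\lesssim_\mu 1\}$-localised part of $f_\pm$ must carry $\gtrsim_\mu 1$ of the $\dot H^1$-energy. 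Making this extraction rigorous is an instance of an inverse theorem for the null-frame norm $L^2_t L^\infty_x$ on a transverse tube, which is exactly the kind of quantitative bilinear inverse statement established in \cite{tao:inverse} (cf.\ Theorem~\ref{tao-thm3} and the inverse-theory step in the proof of \eqref{trilinear-improv}). Reassembling the localised $(f_+,f_-)$ into instantaneous dynamic-field data, I set $\tilde\psi''_s[0]$ to be this bubble, $\tilde\psi'_s[0]:=\psi_s[0]-\tilde\psi''_s[0]$, and $\psi'_s[0]:=\tilde\psi'_s[0]$, $\psi''_s[0]:=\tilde\psi''_s[0]$ (or their parabolic repairs via Theorem~\ref{parab-thm-instant}, which preserve the energy).

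The energy decrease \eqref{eo-decline} is then immediate: the phase-space cutoff defining the bubble is, up to $O(2^{-cN})$ smoothing errors, an almost-orthogonal projection on the energy space, so $\E(\psi'_s[0])=\E(\psi_s[0])-\E(\psi''_s[0])+o_{n\to\infty}(1)$, and combining $\E(\psi_s[0])=\E(\phi^{(n)}[0])\le E_0+o_{n\to\infty}(1)$ from \eqref{enbound} with $\E(\psi''_s[0])\gtrsim_\mu 1$ gives $\E(\psi'_s[0])\le E_0-c$ with $c\gtrsim_\mu 1$ once $n$ is large; the qualitative bounds \eqref{quali-2} for both pieces are automatic, the cutoffs being bounded Fourier multipliers composed with a Schwartz spatial cutoff. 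For the future dispersal \eqref{sourgum}, extend $\psi''_s[0]$ to $I$ by the free wave equation to obtain $\psi''_{s,lin}$; since at $t=0$ it is localised to frequencies $2^k\sim_\mu 1$ and to the ball $B_n$ of radius $\rho_n\to\infty$ while carrying $O(1)$ energy, the standard $TT^*$/dispersive estimate for a finite-energy wave spread over a ball of radius $\rho_n$ — together with finite speed of propagation and the recession $\dist(B_n,x_0)\to\infty$, which forbid any refocusing for $t\ge 0$ inside $I$ — yields $\|\nabla_{t,x}\psi''_{s,lin}\|_{L^5_t L^\infty_x(I_+\times\R^2)}=o_{n\to\infty}(1)$; by \eqref{energy-est}, \eqref{lstrich-4}, \eqref{mondo} and \eqref{ds2-eq} this makes $\psi''_{s,lin}(s)$ a small atom in each $S_{\mu',k}(I_+)$ for a suitable $\mu'=o_{n\to\infty}(1)$, giving \eqref{sourgum}. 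The multiplier bound \eqref{jollyo} then follows from \eqref{sourgum}, the product estimate \eqref{prod1} and the comparison \eqref{sksk-star}, using in addition that $\psi''_{s,lin}(s)$ is frequency-localised near the single dyadic scale $2^k$ (so that, by \eqref{physical} and Bernstein, its $S^\strong_{k(s)}(I_+)$-norm decays faster than any power of $1+s$ away from $k(s)=k$) and the $L^\infty_{t,x}(I_+)$-smallness just established.

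It remains to check that the two pieces are approximate heat flows, \eqref{haha}--\eqref{haha-2}. Here I would argue as in the proof of Lemma~\ref{Dels-lem}: at $t=0$ the data $\psi_s[0]=\tilde\psi'_s[0]+\tilde\psi''_s[0]$ is an exact heat flow, so the heat-tension fields $\tilde h'(0)$, $\tilde h''(0)$ of the two pieces are sums of (i) a ``cross term'' of schematic type $\bigO(\Psi'_x\Psi''_x)$, which is $o_{n\to\infty}(1)$ in the $\dot H^0_{k(s)}$-norms by exactly the frequency-and-space separation of $\psi'$, $\psi''$ exploited in Proposition~\ref{slice-dice}, and (ii) the heat defect produced by applying a single-dyadic-scale phase-space cutoff to a heat flow, which is likewise $o_{n\to\infty}(1)$ (and $0$ if one has repaired the pieces); one then propagates in $t$ using the evolution equations \eqref{PSA} — noting that the free wave flow preserves the heat-flow equation up to terms governed by the wave-tension fields of $\psi'_s$, $\psi''_{s,lin}$, which are $o_{n\to\infty}(1)$-small because $\psi_s$ is a genuine wave map and the two pieces are phase-space-separated linear superpositions — together with the Gronwall argument of Lemma~\ref{Dels-lem}.

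The main obstacle is Step~1: turning the tube-concentration of Lemma~\ref{concdist} into a single receding bubble with the correct geometry, i.e.\ verifying that the bubble's centre recedes (at rate $\gtrsim_\mu T_n$ from $x_0$) and its width $\rho_n$ grows at rates compatible both with (a) still loading the far-past light ray, so that one genuinely subtracts $\gtrsim_\mu 1$ of energy, and with (b) dispersing on $I_+$. This is precisely where the quantitative inverse-Strichartz machinery of \cite{tao:inverse} and Theorem~\ref{tao-thm3} is needed, and where the three competing inputs — no unit-scale concentration anywhere (\eqref{disp}), the smallness of the energy near $x_0$ obtained by propagating \eqref{disp} through the heat flow, and the lower bound from Lemma~\ref{concdist} — must be balanced against one another.
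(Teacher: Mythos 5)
There is a genuine gap at the core extraction step, and the energy-decrease mechanism you build on it is not available under the hypotheses of Theorem \ref{spacbound}. From Lemma \ref{concdist} you only know a lower bound $\gtrsim_\mu 1$ on the null-frame norm $\| P_{k',\kappa}\nabla_{t,x}\psi_{s,lin}(s_0)\|_{L^2_t L^\infty_x(T_{x_0,\omega,k'}\cap([-T_*,-T_n]\times\R^2))}$, where the time interval has length up to $T_*$, which is not controlled in terms of $n$ or $\mu$. Such a bound does not localise energy: the mass feeding the tube norm can be smeared, at time $0$, along a region of diameter comparable to $T_*$, with every unit cell carrying only $o_{n\to\infty}(1)$ energy by \eqref{disp}, so there is no ball $B(y_n,\rho_n)$ with usable geometry in which the angularly localised part of the half-wave data ``must carry $\gtrsim_\mu 1$ of the $\dot H^1$-energy''. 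Theorem \ref{tao-thm3} and \cite{tao:inverse} do not supply such an inverse statement either — they locate where a \emph{bilinear} expression concentrates, not where the energy of a single wave sits — so your Step~1, and with it the claim $\E(\psi''_s[0])\gtrsim_\mu 1$ and the almost-orthogonality route to \eqref{eo-decline}, does not get off the ground. The paper's mechanism is different and this difference is essential: one \emph{dualises} the tube lower bound to produce a fixed profile $f=\Box^{-1}P_{k',\kappa}F(0)$ with $\|f\|_{L^2_x}\lesssim 1$, $\|f\|_{L^\infty_x}=o_{n\to\infty}(1)$ and $|\langle \partial_t^i\psi_s(s_0,0),f\rangle|\gtrsim 1$ (see \eqref{genie}, \eqref{fo}), and then subtracts a \emph{small} multiple $\eps$ of an explicit heat-regularised profile built from $f$ (\eqref{soy}, \eqref{sauce}). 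The removed piece has energy only $O(\eps^2)$; the drop \eqref{eo-decline} comes from the cross term in the cosine rule (the correlation estimates \eqref{jim}, \eqref{deltat}, propagated from heat-time $s_0$ down to $s=0$), not from the removed piece being large. Your proposal never engages this correlation mechanism, and without it the hypotheses give you no way to force $\E(\psi'_s[0])\le E_0-c$.

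Secondarily, the dispersal step as written is incorrect: a finite-energy wave whose data is merely ``spread over a ball of radius $\rho_n\to\infty$'' and recedes from $x_0$ need not have small $\|\nabla_{t,x}\psi''_{s,lin}\|_{L^5_tL^\infty_x(I_+\times\R^2)}$ — a single unit-scale packet inside the big ball is a counterexample — so the conclusion requires either the non-concentration input \eqref{disp} (which you only invoke at the very end, not in this step) or, as in the paper, the fact that $f$ is the free evolution of a source supported on a far-past tube propagating transversally, which is what yields both the pointwise smallness and the smallness of the null-frame norms \eqref{spunk} via stationary phase and Schur's test. Relatedly, a piece of energy $\gtrsim_\mu 1$ can never be a small atom in $S_{\mu',k}$; one must verify the null-dispersed atom conditions, including $\|\cdot\|_{S'_{\mu',k}}\le(\mu')^{C_0}$ as well as \eqref{mondo}, and your argument does not do this. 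Finally, even granting your bubble, the heat-flow compatibility \eqref{haha}--\eqref{haha-2} and the product bound \eqref{jollyo} for a sharply phase-space-truncated piece are not ``automatic''; in the paper these are obtained only because $\tilde\psi''_s[0]$ is given by the explicit formulas \eqref{soy}, \eqref{sauce}, whose heat-semigroup structure and $L^\infty$ smallness drive the expansions.
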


\begin{proof}  We allow implied constants to depend on $\mu$, thus $k,k',l = O(1)$.

By Lemma \ref{concdist}, and the fact that $\nabla_x F$ can be expressed as a convolution of $F$ with a Schwartz function when $F$ is localised in frequency, we see (possibly after shifting $x_0$ slightly) that
$$
\| P_{k',\kappa} \partial_{t}^i \psi_{s,lin}(s_0) \|_{L^2_t L^\infty_x(T_{x_0,\omega,k'} \cap ([-T_*,-T_n] \times \R^2))} \gtrsim 1$$
for some $i=0,1$.  By  duality (taking advantage of frequency localisation), there exists a function $F$ supported on $T_{x_0,\omega,k'} \cap ([-T_*,-T_n] \times \R^2)$ with
$$ \|F\|_{L^2_t L^1_x(T_{x_0,\omega,k'} \cap ([-T_*,-T_n] \times \R^2))} = 1$$
such that
\begin{equation}\label{genie}
 |\langle \partial_t^i \psi_{s}(s_0,0), f \rangle| \gtrsim 1,
 \end{equation}
where
$$ f := \Box^{-1} P_{k',\kappa} F(0).$$
From null energy estimates we see that 
$$ \|f\|_{L^2_x(\R^2)} \lesssim 1;$$
also, from the frequency-localised (to $P_{k',\kappa}$) fundamental solution to the wave equation, which propagates transversely to $T_{x_0,\omega,k'}$, and the fact that $T_n \to \infty$, we obtain the dispersion estimate
\begin{equation}\label{fo}
\|f\|_{L^\infty_x(\R^2)} \lesssim o_{n \to \infty}(1).
\end{equation}
Also, $f$ is localised to frequencies $\{ \xi: |\xi| \sim 1 \}$.

The dispersion property \eqref{fo} will ensure that the harmonic map heat flow behaves like the linear equation as far as components related to $f$ are concerned (thus we expect $\psi_{t,x}(s) \approx e^{s\Delta} \nabla_{t,x} \Phi$ and $\psi_s(s) \approx \Delta e^{s\Delta} \Phi$ for some function $\Phi$).  This heuristic will underlie the unusual seeming definitions which now follow.

Let $0 < \eps < 1$ be a small parameter to be chosen later. We now define $\tilde \psi''_s[0]$ as follows.  If $i=0$, we set 
\begin{equation}\label{soy}
\tilde \psi''_s(s,0) := \eps \Delta^2 e^{(s+s_0)\Delta} f; \quad \partial_t \tilde \psi''_s(s,0) := 0
\end{equation}
while if $i=1$, we set
\begin{equation}\label{sauce}
\tilde \psi''_s(s,0) := 0; \quad \partial_t \psi''_s(s,0) := \eps \Delta e^{(s+s_0)\Delta} f.
\end{equation}
We then set $\tilde\psi'_s[0] := \psi_s[0] - \tilde \psi''_s[0]$.

We first establish \eqref{sourgum}.  Since $\psi''_{s,lin}(s)$ is supported on frequencies $\sim 1$, it suffices by \eqref{ds2-eq} to show that
\begin{equation}\label{gem}
\| \psi''_{s,lin}(s) \|_{S_{\mu',1}(I_+ \times \R^2)} \lesssim (1+s)^{-100}
\end{equation}
for all $s>0$.

We prove the claim just for $s = O(1)$, as the case $s \gg 1$ is similar but exploits the rapid decrease of the fundamental solution of $e^{(s+s_0)\Delta}$ in this regime.

Fix $s$.  From \eqref{energy-est} one already has
$$
\| \psi''_{s,lin}(s) \|_{S^\strong_1(I_+ \times \R^2)} \lesssim 1,
$$ 
so by Definition \ref{smuk-def} it suffices to show that
\begin{equation}\label{figaro}
\| \nabla_{t,x} \psi''_{s,lin}(s) \|_{L^5_t L^\infty_x(I_+ \times \R^2)} = o_{n \to \infty}(1)
\end{equation} 
and
\begin{equation}\label{sordid2}
\| P_{\tilde k, \tilde \kappa} \nabla_{t,x} \psi''_{s,lin}(s) \|_{L^2_t L^\infty_x(T_{\tilde x_0,\tilde \omega,\tilde k} \cap (I \times \R^2))}  = o_{n \to \infty}(1)
\end{equation}
for all $\tilde k = O(1)$, all caps $\tilde \kappa \in K_{\tilde l}$ with $\tilde l = O(1)$, all $\tilde x_0 \in \R^2$, and all $\tilde \omega$ making an angle of $O(1)$ with $\tilde \kappa$.

From Strichartz estimates, one already has
$$
\| \nabla_{t,x} \psi''_{s,lin}(s) \|_{L^4_t L^\infty_x(I_+ \times \R^2)} \lesssim 1
$$ 
while from the fundamental solution of the wave equation localised to $P_{k,\kappa}$, and the support of $F$, one has
$$
\| \nabla_{t,x} \psi''_{s,lin}(s) \|_{L^\infty_t L^\infty_x(I_+ \times \R^2)} = o_{n \to \infty}(1)
$$ 
which gives \eqref{figaro} by interpolation.  To prove \eqref{sordid2}, we observe from linearisation that it suffices to show that the operator $P_{\tilde k,\tilde\kappa} \nabla_{t,x} \Box^{-1} P_{k',\kappa}$ is bounded from $L^2_t$ of any curve $\{ (t,x(t)) \}$ in $T_{x_0,\omega,k'}$ to 
$L^2_t$ of any curve $\{(t,\tilde x(t)) \}$ in $T_{\tilde x_0,\tilde \omega,\tilde k}$.  But from stationary phase one checks that the kernel of this operator obeys the requirements for Schur's test, and the claim follows.

Now we establish \eqref{haha}, \eqref{haha-2}.  To unify the two cases $i=0$ and $i=1$, we write in both cases
$$ \psi''_s(s,0) = \Delta e^{s\Delta} F_0; \quad \partial_t \psi''_s(s,0) = \Delta e^{s\Delta} F_1$$
where $F_0,F_1$ have frequency support in the region $\{ \xi: |\xi| \sim 1 \}$, have $L^2_x(\R^2)$ norm $O(1)$, and $L^\infty_x(\R^2)$ norm $o_{n \to \infty}(1)$.

From \eqref{psi-odd} we can write
$$ \psi''_x(s) = - \int_s^\infty \nabla_x \psi''_s(s')\ ds' + \sum_{j=3}^\infty X_j(s)$$
where
$$ X_j(s) := \int_{s < s_1 < \ldots < s_j} \bigO_j( \psi''_s(s_1) \ldots \psi''_s(s_{j-1}) \nabla_x \psi''_s(s_j) )\ ds_1 \ldots ds_j,$$
where the implied coefficients are $O(1)^j$.  

We can evaluate $- \int_s^\infty \nabla_x \psi''_s(s')\ ds'$ as $\nabla_x e^{s\Delta} F_0$.  Meanwhile, by construction, we have $\| \nabla_x^m \psi''_s(s) \|_{L^2_x(\R^2)} \lesssim_m (1+s)^{-100}$ and $\|\nabla_x^m \psi''_s(s)\|_{L^\infty_x(\R^2)} \lesssim_m o_{n \to \infty}((1+s)^{-100})$.  Using this, one easily shows that 
$$ \|\nabla_x^m X_j(s)\|_{L^p_x(\R^2)} \lesssim \frac{O(1)^j}{j!} o_{n \to \infty}((1+s)^{-10})$$
(say) for $1 \leq p \leq \infty$, $j \geq 3$ and $0 \leq m \leq 20$.  Thus we have
$$ \| \nabla_x^m (\psi''_x(s) - \nabla_x e^{s\Delta} F_0) \|_{L^p_x(\R^2)} \lesssim o_{n \to \infty}((1+s)^{-10})$$
for $1 \leq p \leq \infty$ and $0 \leq m \leq 20$.  A similar argument also gives
$$ \| \nabla_x^m A''_x(s) \|_{L^p_x(\R^2)} \lesssim o_{n \to \infty}((1+s)^{-10}).$$
This already yields the spatial component of \eqref{haha} (in which $\nabla_{t,x}$ is replaced by $\nabla_x$).  By taking time derivatives of the above arguments, we also see that
$$ \| \nabla_x^m (\partial_t \psi''_x(s) - \nabla_x e^{s\Delta} F_1) \|_{L^p_x(\R^2)} \lesssim o_{n \to \infty}((1+s)^{-10})$$
and
$$ \| \nabla_x^m \partial_t A''_x(s) \|_{L^p_x(\R^2)} \lesssim o_{n \to \infty}((1+s)^{-10}).$$
for $1 \leq p \leq \infty$ and $0 \leq m \leq 19$, and this gives the time component of \eqref{haha} also.

To show \eqref{haha-2}, it suffices by \eqref{haha} and \eqref{heat-eq} to establish the claim with $\tilde h''$ replaced by the quantity
$$ (\psi_s - D_i \psi_i) - (\tilde \psi'_s - \tilde D'_i \tilde \psi'_i) - (\tilde \psi''_s - \tilde D''_i \tilde \psi''_i)$$
which can be re-expressed as
$$ \bigO( \nabla_x \delta \Psi_x ) + \bigO( \delta \Psi_x \Psi^*_x ) + \bigO( \tilde \Psi'_x \tilde \Psi''_x )$$
where $\delta \Psi_x := \Psi_x - \tilde \Psi'_x - \tilde \Psi''_x$ and $\Psi^*_x := ( \Psi_x, \tilde \Psi'_x, \tilde \Psi''_x )$.

From \eqref{psi-odd}, \eqref{psi-even} one has
$$ \delta \Psi_x(s) = \sum_{j=2}^\infty Y_j(s)$$
where $Y_j(s)$ is defined as with $X_j(s)$, except that some (but not all) of the $\psi''_s$ terms may be replaced by $\psi'_s$ instead.  The quantity $\nabla_x^m \psi'_s(s)$ enjoys less favourable estimates than $\psi'_s(s)$; by Proposition \ref{corbound-freq}, it has an $L^2_x(\R^2)$ norm of $O(c(s) s^{-(m+1)/2})$ and an $L^\infty_x(\R^2)$ norm of $O(c(s) s^{-(m+2)/2})$.  A computation using \eqref{sss} and the H\"older and Minkowski inequalities (placing at least one $\psi''_s$ factor in an $L^p$ space higher than $2$ to gain the $o()$ factor) then shows that
$$ \|\nabla_x^m Y_j(s)\|_{L^p_x(\R^2)} \lesssim_{p,m} O(1)^j j^{-j/2} o_{n \to \infty}(s^{1/p - m/2} (1+s)^{-10})$$
for $1 < p \leq \infty$ and $0 \leq m \leq 20$; repeating the arguments used to prove \eqref{haha} one then establishes \eqref{haha-2}.

The same arguments used to estimate the $X_j$ then show that
$$ \| \nabla_x^m \partial_t^k \delta \Psi_x(s) \|_{L^p_x(\R^2)} \lesssim o_{n \to \infty}((1+s)^{-10}).$$
for $1 \leq p \leq \infty$, $k=0,1$, and $0 \leq m \leq 19$; ; meanwhile we have
$$ \| \nabla_x^m \partial_t^k \tilde \Psi'_x(s) \|_{L^2_x(\R^2)} \lesssim (1+s)^{-10}$$
and
$$ \| \nabla_x^m \partial_t^k \tilde \Psi'_x(s) \|_{L^\infty_x(\R^2)} \lesssim o_{n \to \infty}((1+s)^{-10}).$$
From these bounds, Proposition \ref{corbound}, and the triangle inequality, we conclude that
$$ \| \nabla_x^m \partial_t^k \tilde \Psi''_x(s) \|_{L^2_x(\R^2)} \lesssim s^{-(m+2k)/2}$$
for the same range of $m,k$. From these estimates we easily obtain \eqref{haha-2}.

Now, we establish \eqref{eo-decline}.  We first consider the case when $i=1$.  Then $\psi_s(0) = \tilde \psi'_s(0)$, and thus $\psi_{s,x}(0) = \tilde \psi_{s,x}(0)$.  To get the energy decrease, it thus suffices (by taking $\eps$ small enough) to show that
$$ \int_\R^2 |\psi_t(0,x)|^2 - |\tilde \psi'_t(0,x)|^2\ dx \gtrsim \eps - O(\eps^2).$$
Writing $\delta \psi_t := \psi_t - \tilde \psi'_t$, it thus suffices to show that
\begin{equation}\label{jim}
 \langle \psi_t(0,0), \delta \psi_t(0,0) \rangle \gtrsim \eps - O(\eps^2)
\end{equation}
and
\begin{equation}\label{deltat} \| \delta \psi_t(0,0) \|_{L^2_x(\R^2)} = O(\eps).
\end{equation}
We first show \eqref{deltat}.  In fact, we will show the more precise estimate
\begin{equation}\label{deltat-2} \| \delta \psi_t(0,0) - \eps \Delta e^{s_0 \Delta} f \|_{L^2_x(\R^2)} = o_{n \to \infty}(1).
\end{equation}
From \eqref{psi-odd}, we can expand
$$ \delta \psi_t(0) = -\int_0^\infty \partial_t \delta \psi_s(s)\ ds + \sum_{j=3}^\infty X_j$$
where
$$ X_j = \bigO_j( \int_{s_1 < \ldots < s_j} \psi_s(s_1) \ldots \psi_s(s_{j-1}) \partial_t \delta \psi_s(s_j)\ ds_1 \ldots ds_j )$$
with the implied constants being of size $O(1)^j$.  From \eqref{sauce} one has
$$ \partial_t \delta \psi_s(s) = \tilde \psi''_s(s,0) := \eps \Delta^2 e^{(s+s_0)\Delta} f$$
so on integrating this in $s$, it will suffice to show that
$$
\| \int_{s_1 < \ldots < s_j} \psi_s(s_1) \ldots \psi_s(s_{j-1}) \Delta^2 e^{(s_j+s_0)\Delta} f\ ds_1 \ldots ds_j \|_{L^2_x(\R^2)} = o_{n \to\infty}(1).
$$
Estimating $\psi_s(s_1)$ in $L^2_x$ and the other terms in $L^\infty_x$ using Proposition \ref{corbound-freq} and \eqref{fo}, we can bound the left-hand side by
$$
\int_{s_1 < \ldots < s_j} s_1^{-1/2} c(s_1) \ldots c(s_{j-1}) o_{n \to \infty}( (1+s_j)^{-100} )\ ds_1 \ldots ds_j$$
for some frequency envelope $c$ of energy $O(1)$; using \eqref{sss} we obtain the claim.

It remains to show \eqref{jim}.  By Proposition \ref{corbound}, $\|\psi_t\|_{L^2_x(\R^2)} = O(1)$.  By \eqref{deltat-2} and Cauchy-Schwarz, it suffices to show that
$$ \langle \psi_t(0,0), \Delta e^{s_0 \Delta} f \rangle \gtrsim 1 - O(\eps).$$
By the fundamental theorem of calculus, it suffices to show that
\begin{equation}\label{emmanuel}
\langle \psi_t(s_0,0), \Delta f \rangle \gtrsim 1-O(\eps)
\end{equation}
and
\begin{equation}\label{samuel}
 \int_0^{s_0} |\partial_s \langle \psi_t(s,0), \Delta e^{(s_0-s)\Delta} f \rangle|\ ds = o_{n \to \infty}(1).
\end{equation}
to show \eqref{samuel}, we use \eqref{psit-eq} and integration by parts to write
$$
\partial_s \langle \psi_t(s,0), \Delta e^{(s_0-s)\Delta} f \rangle = 
\langle \bigO( \Psi_x \partial_x \psi_t + (\partial_x \Psi_x) \psi_t + \Psi_x \Psi_x \psi_t )(s,0), \Delta e^{(s_0-s)\Delta} f \rangle.$$
Using Proposition \ref{corbound} and \eqref{fo}, one can bound this inner product by $o_{n \to \infty}(s^{-1/2})$, and the claim follows.

To show \eqref{emmanuel}, we use \eqref{zerotor} followed by \eqref{psit-eq}, \eqref{heat-eq} to write
\begin{align*}
\partial_t \psi_s &= \partial_s \psi_t + \bigO( A_t \psi_s ) \\
&= \Delta \psi_t + \bigO( \Psi_{t,x} \nabla_x \Psi_{t,x} ) + \bigO( \Psi_{t,x} \Psi_{t,x} \Psi_{t,x} )
\end{align*}
so by \eqref{genie}, integration by parts, and the triangle ienquality, it suffices to show that
$$
\langle \bigO( \Psi_{t,x} \nabla_x \Psi_{t,x} )(s_0), f \rangle = o_{n \to \infty}(1)$$
and
$$
\langle \bigO( \Psi_{t,x} \Psi_{t,x} \Psi_{t,x} )(s_0), f \rangle = o_{n \to \infty}(1).$$
But this follows from Proposition \ref{corbound} and \eqref{fo}.

Now we establish \eqref{eo-decline} when $i=0$.  It will suffice to show the following variants of \eqref{jim}, \eqref{deltat}:
\begin{equation}\label{jim-2}
 \langle \psi_x(0,0), \delta \psi_x(0,0) \rangle \gtrsim \eps - O(\eps^2),
\end{equation}
\begin{equation}\label{deltax-2} \| \delta \psi_x(0,0) \|_{L^2_x(\R^2)} = O(\eps)
\end{equation}
and
\begin{equation}\label{deltax-3} \| \delta \psi_t(0,0) \|_{L^2_x(\R^2)} = o_{n \to \infty}(1).
\end{equation}
Repeating the argument used to establish \eqref{deltat-2} gives \eqref{deltax-3} and
$$
\| \delta \psi_x(0,0) - \eps e^{s_0 \Delta} \nabla_x f \|_{L^2_x(\R^2)} = o_{n \to \infty}(1)
$$
which gives \eqref{deltax-2}.  As before, to show \eqref{jim-2}, it will suffice to show that
$$ \langle \psi_x(0,0), e^{s_0 \Delta} \nabla_x f \rangle \gtrsim 1 - O(\eps).$$
By the fundamental theorem of calculus, it suffices to show that
\begin{equation}\label{emmanuel-2}
\langle \psi_x(s_0,0), \nabla_x f \rangle \gtrsim 1-O(\eps)
\end{equation}
and
\begin{equation}\label{samuel-2}
 \int_0^{s_0} |\partial_s \langle \psi_x(s,0), e^{(s_0-s)\Delta} \nabla_x f \rangle|\ ds = o_{n \to \infty}(1).
\end{equation}
The bound \eqref{samuel-2} is proven in exactly the same way as \eqref{samuel}.  To show \eqref{emmanuel-2}, we repeat the arguments used to prove \eqref{emmanuel} to replace $\Delta \psi_x$ by $\nabla_x \psi_s$, leaving one with
$$
\langle \Delta^{-1} \nabla_x \psi_s(s_0,0), \nabla_x f \rangle 
$$
and the claim follows from integration by parts and \eqref{genie}.

Finally, we show \eqref{jollyo}.  We give only a sketch of the proof here.  We may normalise $\| \phi \|_{S_k^\strong(I_+)}=1$.  As $\psi''_{s,lin}$ has frequency $\sim 1$ it is essentially enough to take $j=0$.
From \eqref{prod1} and \eqref{gem} one already has an upper bound of
$O( (1+s)^{-100})$, so by interpolation it suffices to establish an upper bound of $o_{n \to \infty}( \chi_{k=k(s)}^{-O(1)} (1+s)^{-100} )$.  By \eqref{physical} we may then take $k=k(s)$.  Finally, we shallcontent ourselves with the $s = O(1)$ case, as the $s \gg 1$ case is similar but is aided by the exponential decay of the heat kernel; our task is now to show that
$$
\| \psi''_{s,lin}(s) \phi \|_{S_0^\strong(I_+)} \lesssim o_{n \to \infty}( 1 ).$$
From \eqref{fo} it is not difficult to show that
$$ \| \psi''_{s,lin}(s) \phi[0] \|_{\dot H^1_0(I_+)} \lesssim o_{n \to \infty}( 1 )$$
so by  \eqref{energy-est} it suffices to show that
$$
\| P_k( \Box( \psi''_{s,lin}(s) \phi ) ) \|_{N_k^\strong(I_+)} \lesssim o_{n \to \infty}( 1 )
$$
for $k=O(1)$ (the bounds for $k \neq O(1)$ are similar but are omitted here).

As $\psi''_{s,lin}$ solves the free wave equation, one can split
$$ \Box( \psi''_{s,lin}(s) \phi ) = \psi''_{s,lin}(s) \Box \phi + 2 \partial^\alpha \psi''_{s,lin}(s) \partial_\alpha \phi.$$
The contribution of the null form is acceptable by using Theorem \ref{tao-thm3} as in the proof of \eqref{trilinear-improv2}, so it suffices to show that
$$
\| P_k( \psi''_{s,lin}(s) \Box \phi ) \|_{N_k^\strong(I_+)} \lesssim o_{n \to \infty}( 1 )
$$
In view of \cite[Lemma 12]{tao:wavemap2}, the only components of $\phi$ which cause difficulty are those with modulation $\gg 1$.  It is then not hard to show the main contribution needs to come from the case when $\psi''_{s,lin} \Box \phi$ has Fourier support at distance within $O(1)$ of the light cone, and at an angle of $\sim 1$ from the Fourier support of $\psi''_{s,lin}$.  

Decomposing $F$ into unit balls, one can split $\psi''_{s,lin}$ into an $l^2$ superposition of waves that are concentrated on $O(1)$-neighbourhoods of light cones.  Splitting these light cones into $1 \times 2^m \times 2^{2m}$ plates for $m \gg 1$ (we do not need to consider the contribution of bounded $m$, because we are working in $I_+$ and the source term $F$ is supported far in the past), one can compute that the contribution of each such plate in terms of null frame atoms (with $l = O(1)$) to show that the $N_k^\strong$ norm of these contributions is acceptable.  We omit the details. 
\end{proof}

Applying Theorem \ref{parab-thm-abstract}, we now obtain an instantaneous dynamic field heat flow $\psi'_s[0]$ with the same resolution as $\tilde \psi'_s[0]$ obeying the estimates
\begin{align}
\| \nabla_{t,x} \nabla_x^j \Psi'_x(s,0) \|_{\dot H^0_{k(s)}(\R^2)} &\lesssim c(s) s^{-(j+1)/2} \label{parab-1-instant-disp}\\
\| \nabla_{t,x} \nabla_x^j A'_x(s,0) \|_{\dot H^0_{k(s)}(\R^2)} &\lesssim c(s)^2 s^{-(j+1)/2}\label{parab-2-instant-disp}\\
\| \nabla_{t,x} \nabla_x^j \psi'_s(s,0) \|_{\dot H^0_{k(s)}(\R^2)} &\lesssim c(s) s^{-(j+2)/2}\label{parab-3-instant-disp}\\
\| \nabla_{t,x} \nabla_x^j (\tilde \Psi'_x - \Psi'_x)(s,0) \|_{\dot H^0_{k(s)}(\R^2)} &\lesssim o_{n \to \infty}( c(s) s^{-(j+1)/2} )\label{parab-4-instant-disp}\\
\| \nabla_{t,x} \nabla_x^j (\tilde A'_x - A'_x)(s,0) \|_{\dot H^0_{k(s)}(\R^2)} &\lesssim o_{n \to \infty}( c(s)^2 s^{-(j+1)/2} )\label{parab-5-instant-disp}\\
\| \nabla_{t,x} \nabla_x^j (\tilde \psi'_s - \psi'_s)(s,t_0) \|_{\dot H^0_{k(s)}(\R^2)} &\lesssim o_{n \to \infty}( c(s) s^{-(j+2)/2} )\label{parab-6-instant-disp}
\end{align}
for all $0 \leq j \leq 15$ and $s > 0$, and some frequency envelope $c$ of energy $O(1)$.  One sees from \eqref{spacloc} that $c$ is tight in the sense of \eqref{css}.

One could also repair $\tilde \psi''_s[0]$ to become a heat flow in a similar manner, but we will not need to do so here, as the linear evolution $\tilde \psi''_{s,lin}$ will already suffice for an extension of this component to $I_+$.

As $E_0 - c$ is good, we may apply Lemma \ref{quant} and extend $\psi'_s$ to a dynamic field heat flow on $I_+$ that obeys the wave map equation and has $(O(1),\mu)$-entropy $O_\mu(1)$.  Thus we may obtain a partition ${\mathcal J}$ of $I_+$ into $O_\mu(1)$ intervals $J$, such that $\psi'_s$ determines a $(O(1),\mu)$-wave map on each such interval $J$.  Applying Lemma \ref{freqstable2}, we see that
\begin{equation}\label{nabsa}
 \| \nabla_x^j \psi'_s(s) \|_{S_{k(s)}(J \times \R^2)} \lesssim_{\mu,j} s^{-(j+2)/2} c(s)
\end{equation}
and
\begin{equation}\label{nabsax}
 \| \nabla_x^j \psi'_x(s) \|_{S_{k(s)}(J \times \R^2)} \lesssim_{\mu,j} s^{-(j+1)/2} c(s)
 \end{equation}
for all $j \geq 0$, all $J$, and all $s>0$. 

We then recombine $\psi'_s$ and $\tilde \psi''_{s,lin}$ by defining the dynamic field
$$ \tilde \psi_s := \psi'_s + \tilde \psi''_{s,lin}$$
on $I_+$.

We also have that $\tilde \psi_s$ is an approximate wave map and approximate heat flow, which approximates $\psi_s$ at time zero:

\begin{lemma}\label{apaprox}  Let $J$ be an interval in ${\mathcal J}$.  Then there exists a frequency envelope $c_J$ of energy $O(1)$ such that
\begin{equation}\label{psisjk-hyp-sap}
\| \nabla_x^j \tilde \psi_s(s) \|_{S_{\mu,k(s)}(J \times \R^2)} \lesssim_j c_J(s) s^{-(j+2)/2}
\end{equation}
and
\begin{equation}\label{psisjk-star-hyp-sap}
\| \nabla_x^j \tilde h (s) \|_{S_{\mu,k(s)}(J \times \R^2)} = o_{n \to \infty; j,\mu}(c_J(s) s^{-(j+2)/2})
\end{equation}
for all $j \geq 0$ and $s > 0$ (where $\tilde h$ is the heat-tension field \eqref{heat-tension} of $\tilde \psi_s$), and also
\begin{equation}\label{wave-off-sap}
\| P_k \tilde w(0) \|_{N_{k}(I \times \R^2)} = o_{n \to \infty;j,\mu}(c_J(2^{-2k}))
\end{equation}
for every $k \in \R$ (where $\tilde w$ is the wave-tension \eqref{wave-tension} field of $\tilde \psi_s$).  Furthermore, one has
\begin{equation}\label{energy-close-sap}
\dist_\Energy( \tilde \psi_s[0], \psi_s[0] ) = o_{n \to \infty;j,\mu}(1)
\end{equation}
\end{lemma}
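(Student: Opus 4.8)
The plan is to follow, essentially verbatim, the machinery of Section~\ref{freqbound-sec}: the field $\tilde\psi_s = \psi'_s + \tilde\psi''_{s,lin}$ plays here the role that $\tilde\psi_s = \psi^{lo}_s + \psi^{hi}_s$ plays there, with $\psi'_s$ in place of the low-frequency piece $\psi^{lo}_s$ (it is an exact heat flow and exact wave map of $(O(1),\mu)$-entropy $O_\mu(1)$, satisfying \eqref{nabsa}, \eqref{nabsax}) and $\tilde\psi''_{s,lin}$ in place of the high-frequency piece $\psi^{hi}_s$. The crucial difference from the frequency-delocalisation setting is that the ``separation'' here --- the dispersion of the linear wave $\tilde\psi''_{s,lin}$ --- genuinely tends to zero, so all error terms are $o_{n\to\infty}$ rather than merely $o_{\mu'\to0}$; the mechanism replacing the frequency-gap decay is the dispersive smallness estimate \eqref{jollyo} of Lemma~\ref{ids}, together with the pointwise bounds on $\Psi''_{t,x}$ and $A''_x$ (of sizes $o_{n\to\infty}((1+s)^{-10})$ in $L^\infty_x$ and $L^2_x$ respectively) established in the course of the proof of Lemma~\ref{ids}.

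First, for \eqref{psisjk-hyp-sap} I would construct $c_J$ exactly as in the proof of Proposition~\ref{slice-dice}: take the frequency envelope $c$ from \eqref{parab-1-instant-disp}--\eqref{parab-6-instant-disp} (which controls $\psi'_s$ on $J$ via \eqref{nabsa}, \eqref{nabsax}), the frequency envelope of $\tilde\psi''_{s,lin}$, which is $\lesssim (1+s)^{-100}$ and supported at frequency $\sim1$ by \eqref{gem}, and a product term weighted by a slowly growing $w_n\to\infty$; then $c_J$ dominates both pieces while the product of the two pieces is $o_{n\to\infty}(c_J)$. The bound \eqref{psisjk-hyp-sap} is then immediate from \eqref{nabsa}, \eqref{gem}, and the triangle inequality, and the final assertion $\|\tilde\psi_s\|_{S^1_\mu(J)}\lesssim1$ follows from \eqref{ds2-eq}. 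For \eqref{psisjk-star-hyp-sap} I would proceed as in the derivation of \eqref{htl}--\eqref{nabab}: since $\psi'_s$ is an exact heat flow and $\tilde\psi''_{s,lin}$ an approximate one (its heat-tension $\tilde h''$ is $o_{n\to\infty}$ by \eqref{haha-2}), subtracting the three heat-flow identities gives, schematically,
$$\tilde h = \tilde h'' + \bigO(\partial_x\delta\Psi_x) + \bigO(\Psi'_x\Psi''_x) + \bigO(\Psi^*_x\,\delta\Psi_x) + \bigO(\delta\Psi_x\,\delta\Psi_x),$$
where $\delta\Psi_x := \tilde\Psi_x - \Psi'_x - \Psi''_x$ and $\Psi^*_x := (\Psi'_x,\Psi''_x)$. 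The term $\tilde h''$ is acceptable by \eqref{haha-2}; the cross term $\bigO(\Psi'_x\Psi''_x)$ is $o_{n\to\infty}(c_J(s)s^{-1})$ by \eqref{prod1} and the dispersive smallness of $\Psi''_x$ (via \eqref{jollyo}); and $\delta\Psi_x$ is estimated by iterating the evolution equation $\partial_s\delta\Psi_x = F_x + \bigO(\tilde\psi_s\,\delta\Psi_x)$ with forcing $F_x = \bigO(\psi'_s\Psi''_x) + \bigO(\psi''_{s,lin}\Psi'_x)$, exactly as in \eqref{hypos}--\eqref{shoofly}; each term of the resulting series carries a $\psi''$-factor and hence a gain of $o_{n\to\infty}$ from \eqref{jollyo}, so $\|\nabla_x^m\delta\Psi_x(s)\|_{S_{\mu,k(s)}(J\times\R^2)} \lesssim_m o_{n\to\infty}(c_J(s)s^{-(m+1)/2})$ after summing with \eqref{sss}.

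For the wave-tension bound \eqref{wave-off-sap} I would mimic the proof of Proposition~\ref{twavemap}. Here $\psi'_s$ is an \emph{exact} wave map, so $(D')^\alpha\psi'_\alpha(0)=0$, while $\psi''_{s,lin}$ is an approximate wave map: since $\Box\psi''_{s,lin}(s)=0$ for all $s$, \eqref{psis-box} gives $\partial_s w'' = D''^\alpha D''_\alpha\psi''_s + (\psi''_\alpha\wedge\psi''_s)\psi''^\alpha$, whose right side is $o_{n\to\infty}$ (being $\Box\psi''_{s,lin}=0$ plus terms carrying an $A''$ or an extra $\psi''$ factor, all dispersively small); combined with decay as $s\to\infty$ this yields $\|P_k w''(0)\|_{N_k(J\times\R^2)} = o_{n\to\infty}(c_J(2^{-2k}))$. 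Subtracting the wave-map identities then expresses $\tilde D^\alpha\tilde\psi_\alpha(0)$ as $w''(0)$ plus $\bigO(\partial^\alpha\delta\Psi_\alpha(0))$, $\bigO((\Psi'')^\alpha(0)\Psi'_\alpha(0))$, and $\bigO(\delta\Psi^\alpha(0)\tilde\Psi_\alpha(0))$; expanding the $\Psi_\alpha$ via \eqref{psi-odd}, \eqref{psi-even}, using the parity observation (as in Proposition~\ref{twavemap}) to eliminate the dangerous quadratic null-form terms, and applying \eqref{trilinear-improv}, \eqref{trilinear-improv3}, \eqref{second-prod}, \eqref{prod1} with a factor of $o_{n\to\infty}$ extracted from each $\psi''$-factor via \eqref{jollyo}, gives \eqref{wave-off-sap}. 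Finally, \eqref{energy-close-sap} is the analogue of Proposition~\ref{tinit} and is easier: writing $\tilde\psi_s[0]-\psi_s[0] = (\psi'_s[0]-\tilde\psi'_s[0]) + (\tilde\psi'_s[0]+\tilde\psi''_s[0]-\psi_s[0])$, the second bracket vanishes by the definition $\tilde\psi'_s[0]:=\psi_s[0]-\tilde\psi''_s[0]$ together with $\tilde\psi''_{s,lin}[0]=\tilde\psi''_s[0]$, and the first bracket is $o_{n\to\infty}$ in $\dist_\Energy$ by \eqref{parab-4-instant-disp}--\eqref{parab-6-instant-disp} and \eqref{energy-dist-def}. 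The main obstacle is the wave-tension estimate \eqref{wave-off-sap}: one must show the interaction between the surviving nonlinear evolution $\psi'_s$ and the dispersing linear part $\psi''_{s,lin}$ is genuinely $o_{n\to\infty}$, which requires both the dispersive product estimate \eqref{jollyo} and the verification that $\psi''_{s,lin}$ is itself an approximate wave map in the $N_k$ norm --- a point with no analogue in Proposition~\ref{twavemap}, where the high-frequency component was an exact wave map.
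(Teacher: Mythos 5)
Your overall strategy is the paper's: you run the Section \ref{freqbound-sec} machinery with $\psi'_s$ and $\tilde\psi''_{s,lin}$ playing the roles of $\psi^{lo}_s$ and $\psi^{hi}_s$, you source all cross-term smallness from dispersion (\eqref{jollyo}, \eqref{sourgum}, the pointwise decay of $\Psi''_x$) rather than from a frequency gap, and you prove \eqref{energy-close-sap} from the identity $\tilde\psi_s[0]-\psi_s[0]=\psi'_s[0]-\tilde\psi'_s[0]$ together with \eqref{parab-6-instant-disp}, exactly as the paper does. However, one step fails as written: you cannot build $c_J$ ``exactly as in Proposition \ref{slice-dice}'', i.e. as $c'+c''+w_n\,c'c''$ with $w_n\to\infty$. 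That construction requires $\int_0^\infty c'(s)^2c''(s)^2\,\frac{ds}{s}=o_{n\to\infty}(1)$, which in Proposition \ref{slice-dice} came from the frequency separation \eqref{clo}, \eqref{chi}; here there is no such separation — $\tilde\psi''_{s,lin}$ lives at frequency $\sim 1$ with energy bounded below uniformly in $n$, and $\psi'_s$ generically also carries energy there — so $c'(1)c''(1)$ is bounded below independently of $n$, the weighted product term gives $c_J$ energy $\gtrsim w_n^2\to\infty$, and the asserted property ``$c'c''=o_{n\to\infty}(c_J)$'' is incompatible with $c_J$ having energy $O(1)$. The paper simply takes $c_J$ comparable to $c'+c''$ (tracking tightness) and obtains every cross-term gain from dispersion; since your later steps do in fact draw all smallness from \eqref{jollyo} rather than from the envelope product, the repair is to delete that term, but as stated your $c_J$ violates the lemma's ``energy $O(1)$'' requirement.

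Two further points on \eqref{wave-off-sap}. First, \eqref{psis-box} is an identity for heat flows, and $\tilde\psi''_{s,lin}$ is not a heat flow, so it cannot be invoked to control $w''$; instead one expands $w''(0)=\partial^\alpha\psi''_\alpha(0)+\bigO(A''^\alpha\psi''_\alpha)(0)$ via \eqref{psi-odd}, \eqref{psi-even}, kills the linear term using $\Box\tilde\psi''_{s,lin}=0$, and treats the remaining (at least cubic in $\psi''$) terms together with the mixed terms — this is what the paper's reduction to the \eqref{hili}-type expressions with $(A,B)=(1,2),(2,1)$ accomplishes. Second, \eqref{jollyo} by itself does not handle those trilinear terms in which the dispersing factor is one of the two factors carrying $\partial_\alpha$ in the null form: absorbing it into a neighbour as a plain product destroys the null structure you need. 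The paper's device is to bound each such term twice — once by \eqref{trilinear-improv} (retaining the $\chi$ gains needed to sum the $s$-integrals) and once via the $S_{\mu'}$ smallness \eqref{sourgum} with $\mu'=o_{n\to\infty}(1)$ — and take a geometric mean; some such interpolation is needed at the corresponding point of your argument as well.
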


\begin{proof}  From Lemma \ref{freqstable} and Proposition \ref{corbound-freq} we see that the analogue of \eqref{psisjk-hyp-sap} for $\tilde \psi'_s$ holds for some frequency envelope $c'_J$ of energy $O(1)$.  From \eqref{nabsa} and the tightness of $c$, one has tightness of $c'_J$, in the sense that for every $\eps > 0$ there exists $C > 0$ depending on $\eps$ and $\mu$ such that 
$$ \int_{s>C, s<1/C} c'_J(s)^2/s\ ds \leq \eps$$
for sufficiently large $n$.  From \eqref{sourgum} we see that a similar claim holds for $\tilde \psi''_{s,lin}$ (because of the frequency localisation of $\psi''_{s,lin}$, one can take the relevant frequency envelope $c''_J$ here to also be tight). Adding the two estimates gives \eqref{psisjk-hyp-sap} (for a suitable envelope $c_J$).

Now we turn to \eqref{psisjk-star-hyp-sap}.  From \eqref{haha-2} it suffices (after enlarging $c_J$ slightly) to show that
$$
\| \nabla_x^j (\tilde h - h' - \tilde h'') (s) \|_{S_{\mu,k(s)}(J \times \R^2)} = o_{n \to \infty; j,\mu}(c_J(s) s^{-(j+2)/2})
$$
for all $j \geq 0$ and $s > 0$, where $\tilde h''$ is the heat-tension field associated to $\psi''_{s,lin}$.  But we can expand
\begin{equation}\label{hahaha}
 \tilde h - h' - \tilde h'' = \bigO( \partial_x \delta \psi_x ) + \bigO( \Psi^*_x \delta \psi_x ) + \bigO( \Psi'_x \tilde \Psi''_x )
 \end{equation}
where $\delta \psi_x := \tilde \psi_x - \psi'_x - \tilde \psi''_x$ and $\Psi^*_x := (\tilde \psi_x,\psi'_x, \tilde\psi''_x)$.
From the proof of Lemma \ref{ids}, we already have
$$ \| \nabla_x^j \tilde \Psi''_{s,lin}(s) \|_{S_{\mu',k(s)}(J \times \R^2)} \lesssim_j (1+s)^{-100}$$
for any $j \geq 0$.  From this and \eqref{psi-odd}, \eqref{psi-even}, \eqref{prod1}, one easily obtains that
\begin{equation}\label{sanitary}
 \| \nabla_x^j \tilde \Psi''_x(s) \|_{S_{\mu',k(s)}(J \times \R^2)} \lesssim_j (1+s)^{-100}.
 \end{equation}
Meanwhile, observe from \eqref{nabsax} and Lemma \ref{freqstable} that
\begin{equation}\label{sanitary-2}
\| \nabla_x^j \tilde \psi_x(s) \|_{S_{\mu,k(s)}(J \times \R^2)} \lesssim_j c_J(s) s^{-(j+1)/2}
\end{equation}
for all $j \geq 0$ and $s > 0$.  

From \eqref{sanitary}, \eqref{sanitary-2}, \eqref{jollyo} one has
\begin{equation}\label{psior}
 \| \nabla_x^j (\Psi'_x(s)\tilde \Psi''_x(s)) \|_{S_{k(s)}^\strong(J \times \R^2)} \lesssim_j o_{n \to \infty}((1+s)^{-100})
\end{equation}
and thus
$$ \| \nabla_x^j (\Psi'_x(s)\tilde \Psi''_x(s)) \|_{S_{\mu,k(s)}(J \times \R^2)} \lesssim_j o_{n \to \infty;\mu}((1+s)^{-100}).$$
This shows that the contribution of the last term of \eqref{hahaha} is acceptable.  To handle the other two terms, we see from \eqref{sanitary}, \eqref{sanitary-2}, \eqref{prod1} that it suffices to show that
$$ \| \nabla_x^j \delta \psi_x(s) \|_{S_{\mu,k(s)}(J \times \R^2)} \lesssim_j o_{n \to \infty;\mu}(c_J(s) s^{-(j+1)/2})$$
for all $j \geq 0$ and $s > 0$.

From \eqref{psi-odd} one has
$$ \delta \psi_x(s) = \sum_{j \geq 3} X_j(s)$$
where $X_j$ is the sum of $O(1)^j$ terms of the form
$$  \int_{s < s_1 < \ldots < s_j} \bigO( \psi_s^{(1)}(s_1) \ldots \psi_s^{(j-1)}(s_{j-1}) \partial_x \psi_s^{(j)}(s_j) )\ ds_1 \ldots ds_j$$
with one of the $\psi_s^{(i)}$ being equal to $\tilde \psi''_{s,lin}$, and the other terms being $\psi^*_s$.  Using \eqref{jollyo}, one has
\begin{align*}
\| \nabla_x^m X_j(s) \|_{S_{\mu,k(s)}(J \times \R^2)} &\lesssim_m o_{n \to \infty}( 
O(1)^j s^{-m/2} \int_{s < s_1 < \ldots < s_j} (s_j/s_1)^{0.1} \\
\frac{c_J(s_1)}{s} \ldots &\frac{c_J(s_{j-1})}{s_{j-1}} \frac{c_J(s_{j})}{s_{j}^{3/2}}\ ds_1 \ldots ds_j )
\end{align*}
(with one of the $c_J$ possibly replaced by $(1+s)^{-100}$), which by \eqref{sss} simplifies to
$$ \| \nabla_x^m X_j(s) \|_{S_{\mu,k(s)}(J \times \R^2)} \lesssim_m o_{n \to \infty}( O(1)^j j^{-j/2} s^{-(m+1)/2} c(s) )$$
and the claim follows.

Next, we show \eqref{wave-off-sap}.  By arguing as in the proof of Proposition \ref{twavemap}, it suffices to show  \eqref{hili} for all $j,l \geq 1$ with $j+l$ odd (in particular, $j+l \geq 3$), with  $(A,B)=(1,2)$ and $(A,B)=(2,1)$, where $\psi^1_s := \psi'_s$ and $\psi^2_s := \tilde \psi''_{s,lin}$.

Again, we first consider the terms in which the $\partial^\alpha$ derivative does not fall on the $\partial_\alpha \psi_s^B(s_{j+l})$ term.  Using \eqref{trilinear-improv}, the integrand can be bounded by
$$
\lesssim O(1)^{j+l} \frac{c_J(s_1)}{s_1} \ldots \frac{c_J(s_{j-1})}{s_{j-1}} \frac{c_J(s_j)}{s_j}
\frac{c_J(s_{j+1})}{s_{j+1}} \ldots \frac{c_J(s_{j+l})}{s_{j+l}} \chi_{k(s_1)=0}^{\delta_1/2} \chi_{k(s_2) \leq k(s_{j+l})}^{\delta_1/2};$$
using instead the dispersion bound \eqref{sourgum}, one obtains the bound
$$
\lesssim o_{n \to \infty}( O(1)^{j+l} \frac{c_J(s_1)}{s_1} \ldots \frac{c_J(s_{j-1})}{s_{j-1}} \frac{c_J(s_j)}{s_j}
\frac{c_J(s_{j+1})}{s_{j+1}} \ldots \frac{c_J(s_{j+l})}{s_{j+l}}).$$
Taking the geometric mean of these bounds and using the arguments from Proposition \ref{twavemap}, we see that the contribution of this term is acceptable.  The contribution of the term where the derivative falls on the $\partial_\alpha \psi_s^B(s_{j+l})$ term is similar.

Finally, we show \eqref{energy-close-sap}.  Write $\delta \psi_s(0) := \tilde \psi_s(0) - \psi_s(0) = \psi'_s(0) - \tilde \psi'_s(0)$.  From \eqref{parab-6-instant-disp} one has
$$
\| \nabla_{t,x} \nabla_x^j \delta \psi_s(s,0) \|_{\dot H^0_{k(s)}(\R^2)} \lesssim o_{n \to \infty}(c(s) s^{-(j+2)/2})$$
for all $0 \leq j \leq 10$ and $s>0$; meanwhile one has
$$
\| \nabla_{t,x} \nabla_x^j \psi^*_s(s,0) \|_{\dot H^0_{k(s)}(\R^2)} \lesssim o_{n \to \infty}(c(s) s^{-(j+2)/2})$$
where $\psi^*_s = (\tilde \psi_s, \psi_s)$ by Proposition \ref{corbound-freq}.  On the other hand, from \eqref{psi-odd}, \eqref{psi-even}, one can write
$$ \delta \Psi_{t,x}(s) = \sum_{j=1}^\infty \int_{s < s_1 < \ldots < s_j} \delta \bigO( \psi_s(s_1) \ldots \psi_s(s_{j-1}) \nabla_{t,x} \psi_s(s_j) )\ ds_1 \ldots ds_j,$$
where the implied constants are $O(1)^j$; using Lemma \ref{ek-prod} and \eqref{sss} as in previous arguments, we conclude that
$$
\| \nabla_x^j \delta \Psi_{t,x}(s,0) \|_{\dot H^0_{k(s)}(\R^2)} \lesssim o_{n \to \infty}(c(s) s^{-j/2})$$
for any $1 \leq j \leq 9$, which in turn implies (from \eqref{heat-eq}) that
$$ \| \delta \psi_s(s,0)\|_{L^2_x(\R^2)} \lesssim o_{n \to \infty}(c(s) s^{-1/2});$$
the same arguments also give
$$ \| \delta \psi_{t,x}(0)\|_{L^2_x(\R^2)} \lesssim o_{n \to \infty}(1)$$
and the claim now follows from \eqref{energy-dist-def}.
\end{proof}

Applying Theorem \ref{hyp-repair} iteratively for each interval $J$ in turn as in the proof of Proposition \ref{kslice}, we conclude for each such $K$ that
$$
\dist_\Energy( \phi(s)[t], \tilde \phi(s)[t] ) = o_{n \to \infty;j,\mu}(1)
$$
for all $s \geq 0$ and $t \in J$, and that
$$
\dist_{S^1_\mu(K)}( \Psi_{s,t,x}, \tilde \Psi_{s,t,x}) = o_{n \to \infty;j,\mu}(1).
$$
In particular, from the triangle inequality one has
$$ \| \Psi_{s,t,x} \|_{S^1_\mu(J)} \lesssim 1$$
for all $J \in {\mathcal J}$, and thus $(\phi,I_+)$ has $(O(1),\mu)$-entropy of $O_\mu(1)$, as required.  The proof of Theorem \ref{spacbound} is complete.

\section{Spatial delocalisation implies spacetime bound}\label{spacdeloc-sec}

We now begin the proof of Theorem \ref{spacdeloc}.  Let $E_0, \phi^{(n)}[0], \eps, R_n, I$ be as in the theorem; we allow all implied constants to depend on $E_0, \eps$, and on the localisation bounds in \eqref{spacloc}.  We now drop the $(n)$ superscript, and assume $n$ sufficiently large.  Our task is to show that $\phi$ can be extended to $I$ with an $(O(1),1)$-entropy of $O(1)$.  By time reversal symmetry, it suffices to do this for $I_+ := I \cap [0,+\infty)$.

Let $F: \R^+ \to \R^+$ be a (rapidly growing) function with $F(x) \geq x$ for all $x$ to be chosen later.  For each $k$, let $F^{(k)} = F \circ \ldots \circ F$ be the $k$-fold iterate of $F$.  Since $R_n \to \infty$, one can find $R_{(0),n} \to \infty$ and $K_n \to \infty$ such that $F^{(K_n)}( R_{(0),n} ) \leq R_n$.  Applying the pigeonhole principle, one can thus find (for sufficiently large $n$) a $k=k_n$ between $10$ and $K_n$ such that $R_{(k)} \leq R_n$ and
$$ \int_{F^{(k-10)}(R_{(0),n}) \leq |x| \leq F^{(k)}(R_{(0),n})} \T_{00}(\phi)(0,x)\ dx \leq o_{n \to \infty}(1).$$
If we set $r = r_n := F^{(k-10)}(R_{(0),n})$, we thus have $r \to \infty$ as $n \to \infty$, and
\begin{align}
\int_{|x| < r} \T_{00}(\phi)(0,x)\ dx &\leq E_0 - \eps\label{hot}\\ 
\int_{r \leq |x| \leq F^{(10)}(r)} \T_{00}(\phi)(0,x)\ dx \leq o_{n \to \infty}(1)\label{hot-2}\\
\int_{|x| > F^{(10)}(r)} \T_{00}(\phi)(0,x)\ dx \leq E_0 - \eps\label{hot-3}
\end{align} 
by the hypotheses on $\phi[0]$.

Let $\psi_s[0]$ be a heat flow resolution of $\phi[0]$.  We let $\eta \in C^\infty_0(\R^2)$ be a bump function supported on the ball $\{ x: |x| \leq 2 \}$ which equals one on the ball $\{ x: |x| \leq 1\}$, and define
\begin{align*}
\tilde \psi'_s[0](s,x) &:= \eta( \frac{x}{F(r)} ) \eta( s / r ) \psi_s[0](s,x) \\
\tilde \psi''_s[0](s,x) &:= (1 - \eta(s/F^{(8)}(r)) \eta(\frac{x}{F^{(9)}(r)})) \psi_s[0](s,x).
\end{align*}
for the ``local'' and ``global'' components of this resolution.

As in the previous section, we can find a tight frequency envelope $c = c^{(n)}$ of energy $O(1)$ such that
\begin{equation}\label{ss1}
\| \nabla_{t,x} \nabla_x^j \psi_s(s,0) \|_{\dot H^0_{k(s)}(\R^2)} \lesssim_j c(s) s^{-(j+2)/2}
\end{equation}
for all $j \geq 0$ and $s>0$, and similarly
\begin{equation}\label{ss2}
\| \nabla_x^j \Psi_{t,x}(s,0) \|_{\dot H^0_{k(s)}(\R^2)} \lesssim_j c(s) s^{-j/2}.
\end{equation}
It is easy to see that these bounds are inherited by $\tilde \psi'_s, \tilde \psi''_s$ and hence (by \eqref{psi-odd}, \eqref{psi-even}) by $\tilde \psi'_x, \tilde \psi''_x$.

\begin{lemma}[${\tilde \psi'_s[0]}$, ${\tilde \psi''_s[0]}$ are approximate heat flows]  Modifying $c$ if necessary, we have
\begin{equation}\label{claim1}
\| \nabla_{t,x} \nabla_x^j (\tilde \psi'_s - \tilde D'_i \tilde \psi'_i) (s,0) \|_{\dot H^0_{k(s)}(\R^2)} \lesssim_j o_{n \to \infty}(c(s) s^{-(j+2)/2})
\end{equation}
and
\begin{equation}\label{claim2}
\| \nabla_{t,x} \nabla_x^j (\tilde \psi''_s - \tilde D''_i \tilde \psi''_i) (s,0) \|_{\dot H^0_{k(s)}(\R^2)} \lesssim_j o_{n \to \infty}(c(s) s^{-(j+2)/2})
\end{equation}
for all $0 \leq j \leq 20$ and $s>0$.
\end{lemma}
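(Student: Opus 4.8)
I will prove the two estimates \eqref{claim1}, \eqref{claim2} together; I focus on \eqref{claim1}, as \eqref{claim2} is identical after replacing the length scales $F(r),r$ in the cutoff defining $\tilde\psi'_s$ by $F^{(9)}(r),F^{(8)}(r)$. Write $\chi'=\chi'(s,x):=\eta(x/F(r))\eta(s/r)$, so that $\tilde\psi'_s=\chi'\psi_s$ (with the same identity for the time‑derivative data, since $\chi'$ is $t$‑independent). The plan is to exploit that $\psi_s$ is an \emph{exact} heat flow, so $\psi_s=D_i\psi_i$ and the heat‑tension field $h=\psi_s-D_i\psi_i$ vanishes identically; the heat‑tension field $\tilde h'=\tilde\psi'_s-\tilde D'_i\tilde\psi'_i$ of the truncation will then be written as a sum of commutator terms measuring the failure of $\chi'$ to commute with the nonlinear reconstruction \eqref{psi-odd}, \eqref{psi-even}, \eqref{heat-tension}. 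First I dispose of the region $s\ge 2r$: there $\tilde\psi'_s(s)=0$, hence (since $\tilde\psi'_x(s_0)$ and $\tilde A'_x(s_0)$ are, by \eqref{psi-odd}, \eqref{psi-even}, nested integrals over $s_0<s_1<\dots<s_j$ of products of $\tilde\psi'_s$ and $\partial_x\tilde\psi'_s$, both of which vanish for heat‑times $\ge 2r$) one gets $\tilde\psi'_x(s_0)=\tilde A'_x(s_0)=0$ for $s_0\ge 2r$, so $\tilde h'(s)=0$ there and \eqref{claim1} is trivial.

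For $s<2r$ I substitute $\tilde\psi'_s=\chi'\psi_s$ into the expansions of $\tilde\psi'_x,\tilde A'_x$ and into $\tilde h'=\tilde\psi'_s-\partial_i\tilde\psi'_i-\tilde A'_i\tilde\psi'_i$, and compare term by term with the corresponding expansions for the untruncated $\psi_s$, which assemble to $h=0$. After the substitution each factor carries its own cutoff $\chi'(s_i)$; pulling these cutoffs through the spatial derivatives and through the nested integrals (replacing $\chi'(s_i)$ by $\chi'(s_0)$) exhibits $\tilde h'$ as a sum of $O(1)^j$‑many nested‑integral expressions in $\psi_s$ of exactly two kinds:
\begin{itemize}
\item[(i)] terms in which some $\partial_i$ falls on a cutoff, producing $\partial_i\chi'=F(r)^{-1}(\partial_i\eta)(x/F(r))\eta(s/r)$, or in which two factors at heat‑times $s_0<s_i$ occur with the difference $\chi'(s_i)-\chi'(s_0)=\eta(x/F(r))(\eta(s_i/r)-\eta(s_0/r))$, which is nonzero only when $s_i\gtrsim r$;
\item[(ii)] terms carrying an overall factor of the form $\chi'(\chi'^{m}-1)$ for some $m\ge 1$ (coming from the mismatch between $\chi'^{\,p}$ and $\chi'$ after pulling out cutoffs), which is supported in the transition region $\{F(r)\le|x|\le 2F(r)\}\cup\{r\le s\le 2r\}$.
\end{itemize}
For type (i), a spatial derivative of $\chi'$ gains the factor $F(r)^{-1}=o_{n\to\infty}(1)$, while an $s$‑difference forces an integration variable of size $\gtrsim r$, where the tightness of $c$ (established just before the lemma) together with \eqref{sss} yields an $o_{n\to\infty}(1)$ gain. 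For type (ii), the heat‑time transition $\{r\le s\le 2r\}$ again consists of heat‑times tending to infinity and is handled by tightness; for the spatial transition annulus $\{F(r)\le|x|\le 2F(r)\}$ I use the freedom to take $F$ growing fast with $F(x)\ge 2x$, so that $r\le F(r)$ and $2F(r)\le F(F(r))\le F^{(10)}(r)$, placing this annulus inside the low‑energy region of \eqref{hot-2}.

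To convert that localized energy smallness at $s=0$ into control at heat‑times $s\le 2r$, I would run the Bochner--Weitzenb\"ock monotonicity $\partial_s|\psi_{t,x}|^2=\Delta|\psi_{t,x}|^2-2|D_x\psi_{t,x}|^2-|\psi_x\wedge\psi_{t,x}|^2$ exactly as in the proof of Lemma \ref{concdist}: the diffusion length $\sqrt{s}\lesssim\sqrt r$ is negligible compared to $F(r)$ once $F$ grows fast enough, so $\sup_{x_0\in \text{ann.}}\int_{|x-x_0|\le\sqrt r}|\psi_{t,x}(s,x)|^2\,dx=o_{n\to\infty}(1)$ for all $s\le 2r$, and similarly for the reconstructed quantities built from $\psi_s$. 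Combining this $L^2$‑localized smallness with Bernstein's inequality at the dyadic scale $2^{k(s)}$ and the inherited fixed‑time bounds \eqref{ss1}, \eqref{ss2}, one gets the transition‑region contributions small in $\dot H^0_{k(s)}$ with a gain over $c(s)s^{-(j+2)/2}$. Finally, for every error term one applies the product estimates of Lemma \ref{ek-prod} (using the $\chi_{k\ge k'}^{\delta_1}$ gains there to absorb the frequency mismatch between a factor at heat‑time $s_j\gg s_0$ and the scale $2^{k(s_0)}$) together with \eqref{sss} to sum over the number $j$ of factors, carrying along the $o_{n\to\infty}(1)$ factor extracted above; this yields $\|\nabla_{t,x}\nabla_x^j\tilde h'(s)\|_{\dot H^0_{k(s)}}\lesssim_j o_{n\to\infty}(c'(s)s^{-(j+2)/2})$ for $0\le j\le 20$, where $c'$ is a slight enlargement of $c$, still of energy $O(1)$ and still tight.

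The main obstacle is the type (ii) spatial‑transition estimate: obtaining the localized energy smallness in a form that beats $c(s)s^{-(j+2)/2}$ \emph{uniformly} in $s>0$, in particular in the high‑frequency regime $s\to 0$, which forces one to upgrade the purely $L^2$‑level Bochner--Weitzenb\"ock bound to an estimate at the dyadic scale $2^{k(s)}$ via Bernstein and Gagliardo--Nirenberg rather than working only with the energy. Everything else is the routine bookkeeping of Leibniz expansions, nested heat‑time integrals, and the frequency‑envelope machinery already in force.
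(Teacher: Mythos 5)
Your toolkit here is essentially the paper's: errors coming from derivatives of the cutoff (gaining $F(r)^{-1}$ or forcing heat-times $\gtrsim r$), transition-region terms controlled by the energy smallness \eqref{hot-2} propagated in the heat-time variable via the Bochner--Weitzenb\"ock/energy argument of Lemma \ref{concdist}, and tightness of $c$. But there is a genuine gap at exactly the step you yourself flag as ``the main obstacle'': converting the localized $L^2$ energy smallness into an estimate in the weighted norm $\dot H^0_{k(s)}$ that beats $c(s)s^{-(j+2)/2}$ \emph{uniformly} in $s$, in particular as $s \to 0$. This is not routine bookkeeping: by \eqref{ek-def} the $\dot H^0_{k(s)}$ norm carries weights $2^{\delta_1 |k(s)-k'|}$, so a plain $L^2$ bound on an annulus-localized product does not control it, and the Bernstein/Gagliardo--Nirenberg upgrade you allude to is precisely the part you do not carry out; as written, the proof does not close.

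What you are missing is that the uniformity in $s$ is not needed at all, and this is the paper's key simplifying device. The trivial bound $\lesssim_j c(s) s^{-(j+2)/2}$ already follows from \eqref{ss1}, \eqref{ss2} inherited by the truncated fields; since $c$ is tight and the lemma permits ``modifying $c$ if necessary,'' one may enlarge $c$ slightly at very small and very large $s$, so that it suffices to prove the $o_{n\to\infty}(1)$ bound on each fixed dyadic range $s \sim 2^m$ with constants depending on $m$. After this reduction the paper sets $\delta \Psi_x := \tilde \Psi'_x - \Psi_x \tilde \eta$, uses \eqref{PSA} to derive the evolution equation $\partial_s \delta\Psi_x = F + \bigO(\tilde\eta\, \psi_s\, \delta\Psi_x)$, whose forcing $F = \bigO((\partial_x\tilde\eta)\psi_s) + \bigO((\partial_s\tilde\eta)\Psi_x) + \bigO(\tilde\eta(1-\tilde\eta)\psi_s\Psi_x)$ lives only on the transition regions, estimates $F$ by interpolating the $o_{n\to\infty}(1)$ localized energy bound with Proposition \ref{corbound} (this interpolation, harmless once $s\sim 2^m$ is fixed, is how the envelope factor and the frequency weights are recovered), and then integrates in $s$ by a Gronwall argument. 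Your term-by-term expansion of \eqref{psi-odd}, \eqref{psi-even} with Lemma \ref{ek-prod} and \eqref{sss} could in principle replace the Gronwall step, but without the dyadic-range reduction (or an executed interpolation argument) the central estimate is only asserted, not proved. A minor further point: \eqref{claim2} is not ``identical with the length scales swapped'' --- the second cutoff is $1$ minus a product of bumps, so the trivial region and the transition geometry are different; the argument is indeed similar (the paper also treats only one case in detail), but your reduction of one claim to the other as stated is inaccurate.
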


\begin{proof}  We will just prove the second claim \eqref{claim2}, as the first one is similar (and slightly simpler).

Set $\tilde h'' := \tilde \psi''_s - \tilde D''_i \tilde \psi''_i$ 
From \eqref{ss1}, \eqref{ss2} for $\tilde \psi''_s, \tilde \Psi''_x$, one already establishes the bound 
$$
\| \nabla_{t,x} \nabla_x^j \tilde h'' (s,0) \|_{\dot H^0_{k(s)}(\R^2)} \lesssim_j c(s) s^{-(j+2)/2}$$
for all $j$.  Given the tightness of $c$, it will then suffice to establish the claim \eqref{claim1} for each fixed dyadic range $2^m \leq s \leq ^{m+1}$ (with the decay rate in $o_{n \to \infty}$ depending on $m$), as the general case can then be established by increasing the envelope $c$ slightly when $s$ is very large or very small.

Fix $m$; we now allow all implied constants to depend on $m$, thus $s \sim 1$, and our task is now to show that
$$ \| \nabla_{t,x} \nabla_x^j \tilde h' (s,0) \|_{\dot H^0_{k(s)}(\R^2)} \lesssim_j o_{n \to \infty}(1)$$
for all such $s$ and $0 \leq j \leq 20$.  By \eqref{heat-eq} we may replace $\tilde h'$ by $\tilde h' - h \tilde \eta$, where $\tilde \eta(s,x) := 1 - \eta(x/F(r)) \eta(s/r)$.  But one can expand $\tilde h'-h \tilde \eta$ as
$$\tilde h'-h \tilde \eta = \bigO( \partial_x \delta\psi_x ) + \bigO( \tilde \eta \Psi_x \delta \Psi_x ) + \bigO( \delta \Psi_x \delta \Psi_x )$$
where $\delta \Psi_x := \tilde \Psi'_x - \Psi_x \tilde \eta$.  So it would suffice to establish that
\begin{equation}\label{sood}
\| \nabla_{t,x} \nabla_x^j \delta \Psi_x (s,0) \|_{\dot H^0_{k(s)}(\R^2)} \lesssim_j o_{n \to \infty}(1)
\end{equation}
for $s \sim 1$ and $0 \leq j \leq 30$.

Using \eqref{PSA}, one sees that
\begin{equation}\label{january}
 \partial_s \delta \Psi_x = F + \bigO( \tilde \eta \psi_s \delta \Psi_x ) 
\end{equation}
where
$$ F  = \bigO( (\partial_x \tilde \eta) \psi_s ) + \bigO( (\partial_s \tilde \eta) \Psi_x ) + \bigO( \tilde \eta (1-\tilde \eta) \psi_s \Psi_x ).$$
Meanwhile, from \eqref{hot-2} and the energy estimates used in the proof of Lemma \ref{concdist}, one sees that
$$ 
\int_{2r \leq |x| \leq F^{(10)}(r)/2} |\psi_{t,x}(s,0,x)|^2\ dx = o_{n \to \infty}(1)$$
for all $s \sim 1$ (say).  From this and interpolation with Proposition \ref{corbound} we see that
$$ \| \nabla_{t,x} \nabla_x^j F (s,0) \|_{\dot H^0_{k(s)}(\R^2)} \lesssim_j o_{n \to \infty}(c(s) s^{-(j+3)/2})$$
for $s \leq 2r$, while $F$ vanishes for $s > 2r$.    The claim then follows from integrating \eqref{january} as in previous arguments.
\end{proof}

Applying Theorem \ref{parab-thm-instant}, we now obtain instantaneous dynamic field heat flows $\psi'_s[0], \psi''_s[0]$ with the same resolutions as $\tilde \psi'_s[0], \psi''_s[0]$ respectively obeying the estimates \eqref{parab-1-instant}-\eqref{parab-6-instant}.

\begin{lemma}[Compactness]\label{cpt}  $\psi'_s[0]$ lies in a compact subset $K_r$ of the energy space $\Energy$ which depends only on $r$ (and on the localisation bounds in \eqref{spacloc}).
\end{lemma}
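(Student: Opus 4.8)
The plan is to show that the instantaneous dynamic field heat flow $\psi'_s[0]$, and hence the associated instantaneous map $\phi'[0]$, lies in a set of classical data whose closure in $\Energy$ is compact, where this set depends only on $r$ (and the ambient localisation constants). The natural approach is to combine three kinds of uniform control: uniform energy bounds (already available from Lemma \ref{highless}-type reasoning, or directly from $\E(\tilde\psi'_s[0]) \leq E_0 - c \leq E_0$ and Proposition \ref{corbound-freq}), uniform frequency localisation (a tightness statement on the frequency envelope $c$ coming from \eqref{spacloc}), and uniform spatial localisation (coming from the cutoff $\eta(x/F(r))$ built into the definition of $\tilde\psi'_s[0]$, which confines the support essentially to $\{|x| \lesssim F(r)\}$).

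First I would record that, by \eqref{parab-1-instant}--\eqref{parab-6-instant} applied to $\tilde\psi'_s[0]$, the repaired field $\psi'_s[0]$ obeys the same frequency-envelope bounds as $\tilde\psi'_s[0]$ up to $o_{n\to\infty}(1)$ errors, for a frequency envelope $c$ of energy $O(1)$. Since $c$ is tight (uniformly in $n$) by the argument in the proof of Lemma \ref{concdist}/Lemma \ref{ids} using \eqref{spacloc}, we get uniform control on the high- and low-frequency tails: for every $\eps' > 0$ there is $C = C(\eps', r)$ with $\int_{s > C} c(s)^2 \frac{ds}{s}, \int_{s < 1/C} c(s)^2\frac{ds}{s} \leq \eps'$, uniformly in $n$. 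Translating through \eqref{energy-dist-def} and \eqref{energy-ident}, this says $\dist_\Energy(\psi'_s(s_1)[0], \psi'_s(s_2)[0])$ is uniformly small when $s_1, s_2$ both lie outside $[1/C, C]$ --- i.e. the energy is not escaping to frequency zero or infinity. Next I would extract the spatial localisation: the cutoff $\eta(x/F(r))$ forces $\tilde\psi'_s(s,0,x)$ to vanish for $|x| \geq 2F(r)$, and because the reconstruction map \eqref{psi-odd}, \eqref{psi-even} is local (each $X_{j,\alpha}$ involves only integrals in $s$ of pointwise products of $\psi_s$'s), the associated map $\phi'[0]$ equals $\phi(\infty)$ outside $\{|x| \lesssim F(r)\}$, with all differentiated fields rapidly decaying beyond that radius; one should check that the parabolic repair in Theorem \ref{parab-thm-instant} (which is built from heat propagation and a gauge transform solving an ODE in $s$) preserves spatial decay, so that $\psi'_s[0]$ and its derivatives decay rapidly in $x$ at a rate controlled by $r$.

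Finally I would assemble these into compactness. Having uniform bounds on $\|\nabla_x^j \psi'_s(s,0)\|_{L^2_x}$ (with the right weights in $s$, from the envelope), uniform spatial decay, and the tightness of the frequency envelope, one gets that $\{\psi'_s[0]\}_n$ is uniformly bounded in a weighted Sobolev-type space which embeds compactly into the energy metric $\tilde\dist_\Energy$ --- concretely, the set of instantaneous dynamic fields satisfying $\|s^{1+j/2}\nabla_x^j\nabla_{t,x}\psi_s(s)\|_{\dot H^0_{k(s)}} \leq g(s)$ for a fixed summable profile $g$ (with both the small-$s$ and large-$s$ tails of $\sum_k$ controlled by tightness) and $\psi_s(s,x) = 0$ for $|x| \geq 2F(r)$ is precompact in $\tilde\dist_\Energy$, by Rellich-type compactness (the weight and spatial cutoff defeat the two usual failures of compactness, escape to high frequency and escape to spatial infinity, while the envelope defeats escape to zero frequency). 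By Proposition \ref{compati} this precompactness transfers to $\dist_\Energy$ on the heat-flow data. Taking $K_r$ to be the closure of this set gives the claim. The main obstacle I anticipate is verifying carefully that Theorem \ref{parab-thm-instant}'s repair procedure does not destroy the spatial localisation inherited from the cutoff --- the gauge transform $U$ in \eqref{gauge} is defined by an $s$-ODE whose forcing $A_s$ is a nonlinear expression in $\psi_s$, so one must check (as in the proof of Lemma \ref{dec} and the estimates on $U_S$) that $U - I$ and its spatial derivatives decay rapidly outside the support radius; this is routine given the $\langle s\rangle^{-3/2}$ integrability but needs to be stated.
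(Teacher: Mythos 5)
Your argument is essentially correct, but it takes a more self-contained route than the paper. The paper's proof is a two-line reduction: by \cite[Proposition 3.10]{tao:heatwave4}, precompactness in $\Energy$ follows once the measures $\ESD(\psi'_s[0])$ and $\T_{00}(\psi'_s[0])$ are tight uniformly in $n$; frequency tightness is then read off from \eqref{ss1}, \eqref{ss2} and the tightness of the envelope $c$ (coming from \eqref{spacloc}), and spatial tightness from the fact that the data is supported in $s=O(r)$, $x=O(F(r))$ together with the localised energy estimates of Lemma \ref{concdist}. You use exactly the same three inputs (uniform energy, envelope tightness, spatial localisation from the cutoff $\eta(x/F(r))$), but instead of invoking the ready-made tightness-implies-compactness criterion you re-derive the compactness by hand: Rellich plus equicontinuity in $s$ inside each dyadic block of the modified metric \eqref{energy-dist-def}, with the frequency and spatial tails killed by tightness, and then a transfer to $\dist_\Energy$ via Proposition \ref{compati}. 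That works, but it obliges you to carry extra bookkeeping the paper outsources to \cite{tao:heatwave4} (in particular $\partial_s$-derivative bounds to run Arzel\`a--Ascoli in the sup over $s$, and one extra spatial derivative for Rellich), and your stated ``main obstacle'' is not really there: by the uniqueness of the caloric gauge (Theorem \ref{exist2}), the repaired field $\psi'_s[0]$ is simply the harmonic map heat flow resolution of the reconstructed map $\phi'[0]$, which is constant with zero velocity outside $\{|x| \leq 2F(r)\}$; hence $\T_{00}$ is exactly supported in that ball, and the rapid spatial decay of $\psi'_s(s)$ for $s>0$ follows directly from heat flow estimates such as the pointwise bound $|\psi'_{t,x}(s)| \leq e^{s\Delta}|\psi'_{t,x}(0)|$ used in Proposition \ref{sapporo}, with no need to track decay through the gauge transform in the parabolic repair. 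In short: the paper's route is shorter and modular, yours is longer but avoids citing the abstract compactness criterion; both rest on the same tightness facts.
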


\begin{proof}  By \cite[Proposition 3.10]{tao:heatwave4}, it suffices to show that the measures $\ESD(\psi'_s[0])$ and $\T_{00}(\psi'_s[0])$ are tight (viewed as a sequence in $n$).  The first claim follows from \eqref{ss1}, \eqref{ss2} for $\psi'_s$, $\Psi'_x$ and the tightness of $c$.  The second claim follows from the fact that $\psi'_s[0]$ is supported in the region $s = O(r), x = O(F(r))$ and energy estimates as in the proof of Lemma \ref{concdist}.
\end{proof}

\begin{lemma}[Energy decrease]\label{energy-dec} One has $\E( \psi'_s[0] ), \E( \psi''_s[0] ) \leq E_0-\eps/2$.
\end{lemma}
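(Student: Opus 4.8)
The plan is to deduce the lemma from an \emph{asymptotic additivity} of energy across the splitting $\psi_s[0] = \tilde\psi'_s[0] + \tilde\psi''_s[0]$, combined with the two hypotheses of Theorem \ref{spacdeloc} that guarantee $\phi^{(n)}[0]$ carries a definite amount of energy both near the origin and far from it. Since the instantaneous parabolic repair of Theorem \ref{parab-thm-instant} produces $\psi'_s[0], \psi''_s[0]$ with the same reconstructions as $\tilde\psi'_s[0], \tilde\psi''_s[0]$, we have $\E(\psi'_s[0]) = \E(\tilde\psi'_s[0])$ and $\E(\psi''_s[0]) = \E(\tilde\psi''_s[0])$, so it suffices to bound the energies of the two truncated fields. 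For this I would work at $s=0$: for any instantaneous dynamic field $\chi_s[0]$ with reconstruction $\Phi[0]$ and frame $e(0)$, orthonormality of $e(0)$ together with \eqref{energy-def} gives $\E(\chi_s[0]) = \E(\Phi[0]) = \frac12\sum_{\alpha=0}^2\|\chi_\alpha(0,0,\cdot)\|_{L^2_x(\R^2)}^2$, where $\chi_\alpha(0,0,\cdot)$ are the derivative fields recovered from $\chi_s$ via \eqref{psi-odd}, \eqref{psi-even}; this is applied to $\psi_s[0]$ (whose energy is $E_0 + o_{n\to\infty}(1)$ by \eqref{enbound}), to $\tilde\psi'_s[0]$, and to $\tilde\psi''_s[0]$. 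It is also convenient here to fix the still-free function $F$ to grow at least linearly, say $F(x) \ge 2x$; then $2F^{(j)}(r) \le F^{(j+1)}(r) \le F^{(10)}(r) \le R_n$ for $0 \le j \le 9$, so each annulus $\{F^{(j)}(r) \le |x| \le 2F^{(j)}(r)\}$, $1 \le j \le 9$, lies inside the region $\{r \le |x| \le F^{(10)}(r)\}$ where the energy density is $o_{n\to\infty}(1)$ by \eqref{hot-2}.

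Next I would establish, by expanding $\tilde\psi'_\alpha(0,0,\cdot)$ and $\tilde\psi''_\alpha(0,0,\cdot)$ via \eqref{psi-odd}, \eqref{psi-even} and substituting the cutoffs, the following three facts, in which all error terms are $o_{n\to\infty}(1)$ in $L^2_x(\R^2)$: (a) $\tilde\psi'_\alpha(0,0,\cdot)$ is supported in $\{|x|\le 2F(r)\}$, has $L^2_x(\R^2)$-norm $O(1)$, and equals $\psi_\alpha(0,0,\cdot)$ up to $o_{n\to\infty}(1)$ on $\{|x|<F(r)\}$ --- there the spatial cutoff $\eta(\cdot/F(r))$ is identically $1$ with vanishing derivatives, so only the heat-variable truncation $\eta(\cdot/r)$ contributes an error, which involves $\psi_s$ only at heat-scales $\ge r$ and is negligible by tightness of the frequency envelope $c$ (inherited from \eqref{spacloc} exactly as in the proof of Lemma \ref{concdist}) together with a square-function estimate as in the proof of \eqref{al7}; (b) $\tilde\psi''_\alpha(0,0,\cdot)$ equals $\psi_\alpha(0,0,\cdot)$ up to $o_{n\to\infty}(1)$ on $\{|x|>2F^{(9)}(r)\}$ and is $o_{n\to\infty}(1)$ in $L^2_x$ on $\{|x|<F^{(9)}(r)\}$, since there $\tilde\psi''_s$ retains only the part of $\psi_s$ at heat-scales $\ge F^{(8)}(r)$; (c) $\tilde\psi'_\alpha(0,0,\cdot) + \tilde\psi''_\alpha(0,0,\cdot) = \psi_\alpha(0,0,\cdot) + o_{n\to\infty}(1)$, because the discrepancy between $\tilde\psi'_s + \tilde\psi''_s$ and $\psi_s$ splits into $\psi_s$ at heat-scales $\ge F^{(8)}(r)$ (negligible by tightness), $\psi_s$ at spatial scales in $(2F(r), F^{(9)}(r))$ and heat-scales $\le 2F^{(8)}(r)$ (negligible by \eqref{hot-2} together with a local energy inequality for the harmonic map heat flow, valid since $2F^{(8)}(r) \ll F^{(9)}(r)^2$, transferring smallness of $\T_{00}(\phi)(0,\cdot)$ on an annulus into smallness of $\int_0^s \|\psi_s(s')\|_{L^2_x}^2\,ds'$ on a slightly shrunken annulus), and terms where a spatial derivative lands on one of the cutoffs $\eta(\cdot/F(r))$ or $\eta(\cdot/F^{(9)}(r))$ (each carrying a factor $1/F(r)$ or $1/F^{(9)}(r)$ against an $s$-integral of size $O(\sqrt r)$ or $O(\sqrt{F^{(8)}(r)})$ by Cauchy--Schwarz and Proposition \ref{corbound}, hence $o_{n\to\infty}(1)$ since $F$ grows at least linearly). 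For all of these I would reuse the now-standard iterated-integral bookkeeping built on \eqref{sss}, placing one factor in $L^2_x$ and the rest in $L^\infty_x$ via Proposition \ref{corbound-freq}.

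To conclude: by (a) and (b), $\tilde\psi'_\alpha(0,0,\cdot)$ (supported in $\{|x|\le 2F(r)\}$) and $\tilde\psi''_\alpha(0,0,\cdot)$ (which is $o_{n\to\infty}(1)$ in $L^2_x$ on that set) are asymptotically orthogonal in $L^2_x$, so by (c) and expanding the square, $\|\psi_\alpha(0,0,\cdot)\|_{L^2_x(\R^2)}^2 = \|\tilde\psi'_\alpha(0,0,\cdot)\|_{L^2_x(\R^2)}^2 + \|\tilde\psi''_\alpha(0,0,\cdot)\|_{L^2_x(\R^2)}^2 + o_{n\to\infty}(1)$; summing over $\alpha$ gives $\E(\tilde\psi'_s[0]) + \E(\tilde\psi''_s[0]) = \E(\phi[0]) + o_{n\to\infty}(1) \le E_0 + o_{n\to\infty}(1)$. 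Separately, (a) and the hypothesis $\int_{|x|\le 1}\T_{00}(\phi)(0,x)\,dx \ge \eps$ give $\E(\tilde\psi'_s[0]) \ge \int_{|x|\le 1}\T_{00}(\phi)(0,x)\,dx + o_{n\to\infty}(1) \ge \eps + o_{n\to\infty}(1)$, while (b) with \eqref{hot-2} and the hypothesis $\int_{|x|\ge R_n}\T_{00}(\phi)(0,x)\,dx \ge \eps$ (using $2F^{(9)}(r) \le R_n$) give $\E(\tilde\psi''_s[0]) \ge \int_{|x|>2F^{(9)}(r)}\T_{00}(\phi)(0,x)\,dx + o_{n\to\infty}(1) \ge \eps + o_{n\to\infty}(1)$. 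Feeding the two lower bounds back into the additivity relation gives $\E(\tilde\psi'_s[0]), \E(\tilde\psi''_s[0]) \le E_0 - \eps + o_{n\to\infty}(1) \le E_0 - \eps/2$ for $n$ sufficiently large. The main obstacle is the verification of (a)--(c), and within it the claim that the spatial truncations perturb the $s=0$ derivative fields only by $o_{n\to\infty}(1)$: the heat-scale truncations are handled cleanly by tightness of $c$ plus almost-orthogonality, but transferring the smallness of $\T_{00}(\phi)(0,\cdot)$ on the enormous annulus $[r, F^{(10)}(r)]$ (from \eqref{hot-2}) into smallness of the relevant space-time heat-flow integrals requires the local energy inequality in the regime where the heat-time lies far below the square of the spatial scale --- this is the technical heart of the argument, though it is entirely analogous to the localisation estimates already used in the proof of Lemma \ref{concdist}.
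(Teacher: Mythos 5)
Your argument is essentially correct in outline, but it takes a genuinely different route from the paper's. The paper never proves (nor needs) approximate additivity of energy across the splitting: it bounds each piece directly, by identifying $\E(\tilde \psi'_s[0])$ with the $s$-integrated local energy densities $\int_0^\infty \int \frac12|\tilde \psi'_{t,x}\wedge\tilde\psi'_x|^2+\frac12|\tilde D'_x\tilde\psi'_x|^2$ of the truncated flow (legitimate because \eqref{claim1} makes $\tilde\psi'_s[0]$ an approximate heat flow), and then dominating that spacetime integral by the corresponding integral of $\psi_s$ over $\{|x|\lesssim F^{(10)}(r),\ 0\le s\le S\}$, which local heat-flow energy estimates (as in Lemma \ref{concdist}) bound by $\int_{|x|\le F^{(10)}(r)}\T_{00}(\phi)(0)\le E_0-\eps+o_{n\to\infty}(1)$ via \eqref{hot}--\eqref{hot-3}; the far piece is treated symmetrically. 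Thus each upper bound consumes only one of the two concentration hypotheses and requires no orthogonality. Your route instead mirrors the paper's treatment of the frequency-delocalised case (Lemma \ref{highless}): asymptotic additivity plus the two lower bounds $\gtrsim\eps$. This is viable -- your observations that $\tilde\psi'_\alpha(0)$ is supported in $\{|x|\le 2F(r)\}$ and that $\tilde\psi''_\alpha(0)$ agrees exactly with $\psi_\alpha(0)$ on $\{|x|>2F^{(9)}(r)\}$ are correct, since \eqref{psi-odd}, \eqref{psi-even} are pointwise in $x$, and the lower bounds then follow from the two concentration hypotheses -- but it shifts all the technical weight onto your claim (c), the global $L^2$ statement $\tilde\psi'_\alpha(0)+\tilde\psi''_\alpha(0)=\psi_\alpha(0)+o_{n\to\infty}(1)$, which requires propagating the annulus smallness \eqref{hot-2} through the heat flow over the full range of heat-times $s\lesssim F^{(8)}(r)$ and all dyadic scales; as you note, this is done by tightness of the envelope at extreme $s$ together with the cutoff local energy inequality in the regime where the spatial scale vastly exceeds $\sqrt{s}$, exactly as in the proofs of \eqref{claim2} and \eqref{sood}. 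So both proofs rest on the same localisation technology; the paper's version is leaner because the localized-ESD identity replaces the additivity step, while yours buys the slightly stronger and more symmetric conclusion that the energy asymptotically splits between the two pieces, at the cost of the extra estimate (c).
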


\begin{proof}  From Proposition \ref{corbound-freq} and the tightness of $c$, we have
$$ \int_{\R^2} |\psi_{t,x}(S,x)|^2\ dx = o_{n \to \infty}(1)$$
whenever $S = S_n$ goes to infinity as $n \to \infty$. Meanwhile, from \eqref{hot-2}-\eqref{hot-3}, one has
$$ \int_{|x| \leq F^{(10)}(r)} |\psi_{t,x}(0,x)|^2\ dx \leq E_0 - \eps + o_{n \to \infty}(1),$$
so from energy estimates we see that
$$ \int_0^S \int_{|x| \leq F^{(10)}(r)/2} \frac{1}{2} |\psi_{t,x} \wedge \psi_x|^2 + \frac{1}{2} |D_x \psi_x|^2\ dx ds \leq E_0 - \eps + o_{n \to \infty}(1)$$
if $S$ grows to infinity slowly enough.  From the tightness of $\ESD(\psi_s[0])$ we conclude that
\begin{equation}\label{iss}
 \int_{1/S}^S \int_{|x| \leq F^{(10)}(r)/2} \frac{1}{2} |\psi_{t,x} \wedge \psi_x|^2 + \frac{1}{2} |D_x \psi_x|^2\ dx ds \leq E_0 - \eps + o_{n \to \infty}(1).
\end{equation}
By repeating the proof of \eqref{sood}, one has
$$
\| \nabla_x^{j+1} [\Psi'_{t,x}(s,0,x) - \eta(x/F(r)) \eta(s/r) \psi_{t,x}(s,0,x)] \|_{\dot H^0_{k(s)}(\R^2)} \lesssim_j o_{n \to \infty}(1)
$$
for $j \geq 0$ and $s \sim 1$; from this (choosing $S$ sufficiently slowly growing) we conclude that we may replace $\psi_x$ by $\tilde \psi'_x$, etc.  From the tightness of $\ESD(\tilde \psi'_s[0])$ and energy estimates, we then conclude that
$$
 \int_0^\infty \int_{\R^2} \frac{1}{2} |\tilde \psi'_{t,x} \wedge \tilde \psi'_x|^2 + \frac{1}{2} |\tilde D'_x \tilde \psi'_x|^2\ dx ds \leq E_0 - \eps + o_{n \to \infty}(1).$$
 But by energy estimates and \eqref{claim1}, the left-hand side is just $\E(\tilde \psi'_s[0]) + o_{n \to \infty}(1)$, giving the first claim.  The proof of the second claim is similar.
\end{proof}

Let $\mu > 0$ be a small quantity depending on $E_0, \eps$ to be chosen later.
Since $E_0-\eps/2$ is good, we may now apply Lemma \ref{quant} and extend $\psi'_s, \psi''_s$ to dynamic field heat flows on $I_+$ obeying the wave map equation, such that $\psi'_s$, $\psi''_s$ have a $(O(1),\mu)$-entropy of $O_\mu(1)$.

Now we show that $\psi'_s, \psi''_s$ are separated in spacetime.  We decompose $I_+$ into the near future $I' := I_+ \cap [0,F^{(7)}(r)]$ and the far future $I'' := I_+ \backslash [0,F^{(6)}(r)]$.  In the near future we can separate $\psi'_s$ and $\psi''_s$:

\begin{proposition}[Spatial separation in the near future]\label{sapporo}  For any $s_0 > 0$ and $j \geq 0$, one has
\begin{equation}\label{samo}
\| \nabla_x^j(\eta(x / F^{(7)}(r)) \psi'_s(s_0)) \|_{S_{\mu,k(s_0)}(I' \times \R^2)} = o_{n \to \infty; \mu,s_0,j}(1)
\end{equation}
and
\begin{equation}\label{slimo}
\| \nabla_x^j (1- \eta(x / 10 F^{(7)}(r))) \psi''_s(s_0) \|_{S_{\mu,k(s_0)}(I' \times \R^2)} = o_{n \to \infty; \mu,s_0,j}(1)
\end{equation}
\end{proposition}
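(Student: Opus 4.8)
The plan is to prove the two estimates of Proposition~\ref{sapporo} --- which together assert the spatial separation on $I'$ of the local field $\psi'_s$ (concentrated near the origin) from the global field $\psi''_s$ (concentrated far away) --- by \emph{finite speed of propagation}. Since $E_0-\eps/2$ is good, $\psi'_s$ and $\psi''_s$ are genuine wave map heat flows on $I_+$ with $(O(1),\mu)$-entropy $O_\mu(1)$; write $\phi',\phi''$ for the reconstructed classical wave maps. The heart of the matter is that $\phi'[0]$ is concentrated (in $x$) in $\{|x|\le 2F(r)\}$ and $\phi''[0]$ is concentrated far outside $\{|x|\le F^{(9)}(r)\}$, up to $o_{n\to\infty}(1)$ errors; propagating this forward over $I'=I_+\cap[0,F^{(7)}(r)]$ loses only a distance $F^{(7)}(r)$, which is harmless because $F$ may be chosen growing fast enough that the radii $F(r),F^{(7)}(r),F^{(8)}(r),F^{(9)}(r)$ are widely separated. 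One then recovers the stated $S_{\mu,k(s_0)}$ bounds from this spacetime localisation using the resolution formulas \eqref{psi-odd}, \eqref{psi-even}, the energy estimate \eqref{energy-est}, and the a priori control of Lemma~\ref{freqstable2}.

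First I would record the localisation of the initial data. The field $\tilde\psi'_s[0]$ is supported in $\{|x|\le 2F(r)\}$ by construction, and the repair estimates \eqref{parab-4-instant}--\eqref{parab-6-instant} bound $\psi'_s-\tilde\psi'_s$ and its derivatives by $o_{n\to\infty}(c(s)s^{-(j+2)/2})$; hence by \eqref{psi-odd}, \eqref{psi-even} and the pointwise decay \eqref{quali-2} in $s$, the derivative fields $\Psi'_{t,x}(0)$ of $\psi'_s[0]$ are concentrated in $\{|x|\le 2F(r)\}$ up to an $o_{n\to\infty}(1)$ error in energy, so $\T_{00}(\phi')(0,\cdot)$ is $o_{n\to\infty}(1)$ outside that ball. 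Dually, $\tilde\psi''_s[0]$ agrees with $\psi_s[0]$ except on $\{|x|\le 2F^{(9)}(r)\}\cap\{s\le 2F^{(8)}(r)\}$; writing $\tilde\psi''_s[0]=\tilde\psi''_{s,\mathrm{far}}[0]+\tilde\psi''_{s,\mathrm{lo}}[0]$, the first summand spatially supported in $\{|x|\ge F^{(9)}(r)\}$ and the second supported in $\{s\ge F^{(8)}(r)\}$, the tightness of $c$ (which gives $\int_{s\ge F^{(8)}(r)}c(s)^2\,ds/s=o_{n\to\infty}(1)$) shows that $\tilde\psi''_{s,\mathrm{lo}}[0]$ --- and hence, after repair, $\psi''_s[0]$ minus a field spatially supported in $\{|x|\ge F^{(9)}(r)\}$ --- has energy $o_{n\to\infty}(1)$.

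The core step is the propagation estimate. For $\psi'_s$: the local energy inequality and finite speed of propagation for wave maps (the spatial analogue of the local energy arguments used in the proofs of Lemmas~\ref{concdist} and \ref{energy-dec}) give that the energy of $\phi'$ outside $\{|x|\ge 2F^{(7)}(r)\}$ at any $t\in I'$ is controlled by the energy of $\phi'[0]$ outside $\{|x|\ge 2F^{(7)}(r)-|t|\}\supseteq\{|x|\ge 2F(r)\}$, which is $o_{n\to\infty}(1)$; running the same monotonicity in the heat variable via the Bochner--Weitzenb\"ock identity as in Lemma~\ref{concdist} gives the corresponding bound for $\T_{00}(\phi'(s_0))$, the heat flow for the fixed time $s_0$ spreading energy only on scale $O(\sqrt{s_0})$. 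For $\psi''_s$: the same argument applied to the far-supported part shows its energy on $\{|x|\le 20F^{(7)}(r)\}$ on $I'$ is $o_{n\to\infty}(1)$, since it starts supported beyond $\{|x|\ge F^{(9)}(r)\}$; the remaining $o_{n\to\infty}(1)$-energy piece is disposed of using the small-data and stability theory (Theorem~\ref{symscat}(v) and Theorem~\ref{apriori-thm2}(v)), whose forward evolution has $S^1_\mu(I_+)$ norm $o_{n\to\infty}(1)$ and so contributes negligibly to every relevant $S_{\mu,k(s_0)}$ norm.

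Finally, to convert these spacetime energy-localisation statements into the asserted $S_{\mu,k(s_0)}(I'\times\R^2)$ bounds, I would work on each subinterval $J$ of the $(O(1),\mu)$-entropy decomposition of $I'$, use Lemma~\ref{freqstable2} to obtain an a priori frequency-envelope bound on $\nabla_x^j\psi'_s(s)$ (resp. $\psi''_s(s)$) in $S_{k(s)}(J\times\R^2)$, and then commute the spatial cutoff through the reconstruction \eqref{psi-odd}, \eqref{psi-even} and the energy estimate \eqref{energy-est}; the resulting commutators are schematically (gradient of cutoff)$\times$(field), hence controlled by the local energy of the solution in the annulus $\{|x|\sim F^{(7)}(r)\}$ (which is $o_{n\to\infty}(1)$ by the propagation step) together with the spatially-cut-off data at $t=0$ (again $o_{n\to\infty}(1)$ by the localisation of the initial data). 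The main obstacle will be exactly this last transfer: finite speed of propagation is naturally an $L^2$/energy statement, whereas the $S_{\mu,k}$ norms encode $L^5_tL^\infty_x$ and null-frame information, so one must upgrade the localisation to these stronger norms --- which is where the wide separation of the radii $F^{(k)}(r)$ and the a priori $S^1_\mu$ control from Lemma~\ref{freqstable2} are indispensable. A secondary technical nuisance is the large-$s$ (low-frequency) tail of $\psi''$, which is not spatially localised at all and must be split off and propagated by the stability theory as above.
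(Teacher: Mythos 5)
Your first two steps track the paper's own proof: the data of $\psi'_s$ vanish identically outside $\{|x|\le 2F(r)\}$ (because the parabolic repair reconstructs the same map), finite speed of propagation keeps $\psi'_{t,x}(0)[t]$ supported in a fixed ball for $t\in I'$, and the localisation is then propagated in the heat variable by the Bochner--Weitzenb\"ock/diamagnetic monotonicity, exactly as in the paper; likewise your reading of \eqref{samo}--\eqref{slimo} as ``local field negligible far out, global field negligible near the origin'' is the intended one (the printed cutoffs are swapped relative to what the proof uses later).

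The genuine gap is at the step you yourself label ``the main obstacle'': converting $L^2$-energy localisation into the $S_{\mu,k(s_0)}(I'\times\R^2)$ bound. Your plan --- commute the spatial cutoff through \eqref{psi-odd}, \eqref{psi-even} and \eqref{energy-est}, treating the errors as $(\nabla\eta)\times(\mathrm{field})$ commutators controlled by local energy --- does not close, for two reasons. First, \eqref{energy-est} demands control of $\Box\bigl(\eta\,\psi'_s(s_0)\bigr)$ in an $N_k$-type norm, and the dominant term $\eta\,\Box\psi'_s(s_0)$ is not a commutator and is not controlled by local energy: since $\psi'_s$ is a wave map rather than a free wave, the paper must invoke the wave-tension field, writing $\Box\psi'_s=\partial_s w'-(\psi'_\alpha\wedge\psi'_s)\psi'^{\alpha}$ as in \eqref{psis-box} and proving exterior decay of $w'$, $\nabla_x^m w'$ and $\partial_s w'$ from the parabolic inequality $\partial_s|w'|\le\Delta|w'|+O(|\psi'_x|\,|\psi'_{t,x}|\,|D'_x\psi'_{t,x}|)$ with spatially localised forcing; this ingredient is absent from your argument, and Lemma \ref{freqstable2} by itself only yields $O(1)$ bounds for the cut-off field, not smallness. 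Second, the quantitative strength of the localisation matters: the $\Box$ term has to be placed in $L^1_tL^p_x$ (via \eqref{fl1l2}) over $I'$, whose length is $F^{(7)}(r)$, so fixed-time smallness of size merely $o_{n\to\infty}(1)$ is insufficient; the paper extracts decay $O_m(F^{(6)}(r)^{-m})$ for every $m$ from the pointwise heat-kernel domination $|\psi'_{t,x}(s)|\le e^{s\Delta}|\psi'_{t,x}(0)|$ together with the \emph{exact} vanishing of $\psi'_{t,x}(0)[t]$ outside a ball --- your weaker ``$o(1)$-energy concentration'' of the data would not feed this mechanism --- precisely so that integration over the long time interval is affordable. (A smaller issue: the unlocalised low-frequency piece of $\psi''_s$ is handled in the paper by the rapid growth of $F$ and tightness, not by evolving it separately through the small-data/stability theory; pieces of the data of a single wave map cannot simply be evolved independently and superposed.) Without the wave-tension-field control of $\Box\psi'_s$ and the arbitrary-power spatial decay, the passage from energy localisation to the $S_{\mu,k}$ statement does not go through.
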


\begin{proof}  Fix $s,j$; we allow implied constants to depend on these quantities and on $\mu$. 

We begin with the proof of \eqref{samo}.
Observe that $\tilde \psi'_s[0]$ is supported on the region $|x| \leq 2 F(r)$.  By \eqref{psi-odd}, \eqref{psi-even}, the same is true for $\tilde \psi'_{t,x}(0)[0]$, and thus so is $\psi'_{t,x}(0)[0]$.  By finite speed of propagation for the wave map equation, we conclude that $\psi'_{t,x}(0)[t]$ is supported on the region $|x| \leq 2 F^{(6)}(r)$ (say).

Now we propagate this information to other values of $s$.  We introduce the second energy density
$$ e_2(s,x) := |D'_x \psi'_{t,x}(s,t,x)|^2$$
and observe (from \eqref{heat-eq}) the energy identity
$$ \partial_s e_2 = \Delta e_2 - 2 |D'_x D'_x \psi'_{t,x}|^2 + \bigO( |\psi'_{t,x}|^2 e_2 ).$$
By Proposition \ref{corbound-freq} we have $|\psi'_{t,x}|^2 = O( c(s)^2/s )$.  Thus, for any test function $\varphi$, if we define $E_\varphi(s) := \int_{\R^2} e_2 \varphi^4$, then
$$ \partial_s E_\varphi(s) \leq \int_{\R^2} e_2 \Delta(\varphi^4) + O( c(s)^2 E_\varphi(s) / s ).$$
Using Cauchy-Schwarz and the bounds $\int_{\R^2} e_2 = O( c(s)^2 / s )$ and $\Delta(\varphi^4) = O( A \varphi^2 )$ where $A := \|\nabla \varphi\|_{L^\infty_x}^2 + \| \nabla^2 \varphi\|_{L^\infty_x} \| \varphi\|_{L^\infty_x} )$, one has
$$ |\int_{\R^2} e_2 \Delta(\varphi^4)| \lesssim A E_\varphi(s)^{1/2} c(s) s^{-1/2},$$
and thus
$$ \partial_s E_\varphi(s)^{1/2} \lesssim A c(s) s^{-1/2} + c(s)^2 E_\varphi(s)^{1/2} / s.$$
If $\varphi$ is supported outside of the region $|x| \leq 2F(r)+|t|$, then $E_\varphi(0)=0$, and thus by Lemma \ref{gron-lem-2} one has
$$ E_\varphi(s)^{1/2} \lesssim A c(s) s^{1/2}.$$
Choosing $\varphi$ appropriately, we conclude that
\begin{equation}\label{shiba}
 \| D'_x \psi'_{t,x}(s,t) \|_{L^2_x(\{ |x| \geq 3 F^{(6)}(r) \})} \lesssim o_{n \to \infty}(c(s) s^{1/2})
\end{equation}
for all $s > 0$.  In particular
\begin{equation}\label{paas}
 \| \psi'_s(s,t) \|_{L^2_x(\{ |x| \geq 3F^{(6)}(r) \})} \lesssim o_{n \to \infty}(c(s) s^{1/2}).
\end{equation}

Next, from the pointwise identity
$$ |\psi'_{t,x}(s,t)| \leq e^{s\Delta} |\psi'_{t,x}(0,t)|$$
(see \cite[Lemma 4.8]{tao:heatwave2}), the support hypothesis, and the decay of the fundamental solution, we obtain the bounds
\begin{equation}\label{shiba-2}
 \| \psi'_{t,x}(s,t) \|_{L^\infty_x(\{ |x| \geq 3F^{(6)}(r) \})} \lesssim_m F^{(6)}(r)^{-m} s^m
\end{equation}
for any $m>0$.

Interpolating these bounds at $t=0$ and $s=s_0$ with Proposition \ref{corbound} (writing $\psi'_s := \bigO( D_x \psi'_x )$ and $\nabla_{t,x} \psi'_s := \bigO( A_{t,x} \psi'_s + \psi_{t,x} \psi_x \psi_x + D_x D_x \psi_{t,x} )$), one easily obtains that
$$
\| \nabla_x^j \nabla_{t,x} (\eta(x / F^{(7)}(r)) \psi'_s(s_0)) \|_{\dot H^0_{k(s_0)}} = o_{n \to \infty}(1)
$$
and so by \eqref{energy-est}, \eqref{fl1l2}, and Bernstein's inequality it suffices to show that
$$
\| \nabla_x^j \Box (\eta(x / F^{(7)}(r)) \psi'_s(s_0)) \|_{L^1_t L^p_x(I' \times \R^2)} = o_{n \to \infty;p}(1)
$$
for all $p$ close to $2$ (e.g. $1.9 \leq p \leq 2.1$).

Now we study the wave-tension field $w'$. Let $t\in I'$ be arbitrary.  From \eqref{w-eq} and the diamagnetic inequality (cf. \cite[Lemma 4.8]{tao:heatwave2}) one has
$$ \partial_s |w'| \leq \Delta |w'| + O( |\psi'_x| |\psi'_{t,x}| |D'_x \psi'_{t,x}| ).$$
On the one hand, from Proposition \ref{corbound} one has
$$ \||\psi'_x| |\psi'_{t,x}| |D'_x \psi'_{t,x}| (s,t) \|_{L^1_x(\R^2)} \lesssim 1 / s$$
(cf. \cite[Lemma 7.4]{tao:heatwave2}).  On the other hand, from \eqref{shiba}, \eqref{shiba-2}, and Proposition \ref{corbound-freq} one has
$$ \||\psi'_x| |\psi'_{t,x}| |D'_x \psi'_{t,x}| (s,t) \|_{L^1_x(|x| \geq 4F^{(6)}(r))} \lesssim_m F^{(6)}(r)^{-m} s^m$$
for any $m>0$.
From this and the decay of the fundamental solution of the heat equation, we see that
$$ \| w'(s_0,t) \|_{L^p_x( |x| \geq 5F^{(6)} r )} \lesssim_m F^{(6)}(r)^{-m}.$$
for all $p$ close to $2$.  Interpolating this with \cite[Lemma 7.5]{tao:heatwave2} one concludes that
$$ \| \nabla_x^m w'(s_0,t) \|_{L^p_x( |x| \geq 6 F^{(6)}( r )} \lesssim_{m,m'} F^{(6)}(r)^{-m'}.$$
for all $m,m' \geq 0$ also.  A final application of \eqref{w-eq} (and Proposition \ref{corbound}) then gives
$$ \| \nabla_x^m \partial_s w'(s_0,t) \|_{L^p_x( |x| \geq 7 F^{(6)} r )} \lesssim_{m,m'} F^{(6)}(r)^{-m'}.$$
Applying \eqref{psis-box}, \eqref{paas}, \eqref{shiba-2}, Proposition \ref{corbound}, and interpolation one then concludes that
$$ \| \nabla_x^m \Box (\eta(x / F^{(7)}(r)) \psi'_s(s_0,t)) \|_{L^p_x( \R^2 )} \lesssim_m F^{(6)}(r)^{-m}.$$
Integrating in time we obtain the claim.

The proof of \eqref{slimo} is similar.  The one new difficulty is that whereas $\tilde \psi'_s[0]$ vanished entirely for $|x| \geq 3F^{(6)}(r)$, $\tilde \psi''_s[0]$ does not quite vanish in the region $|x| \leq 100 F^{(7)}(r)$ (say), because of the case when $s > F^{(8)}(r)$.  However, the contribution of that case goes to zero as $F^{(8)}(r) \to \infty$, and so ends up being negligible compared to all other terms here if $F$ is chosen sufficiently rapidly growing.  We omit the details.
\end{proof}

Now we assert that $\psi'_s$ decays for large time $t$. 

\begin{proposition}[$\psi'_s$ decays for large time $t$]\label{psiton}  $\psi'_s$ is a $(O(1),1/r)$-wave map on $I''$.
We also have the analogue of \eqref{jollyo},
\begin{equation}\label{jollyo-2}
\| (\nabla_x^j \psi'_{s}(s)) \phi \|_{S_{\max(k,k(s)}^\strong(I'')} \lesssim o_{n \to \infty; j,\eps}( \chi_{k=k(s)}^{-\eps} s^{-(j+2)/2} c(s) \| \phi \|_{S_k^\strong(I'')} ) 
\end{equation}
for all $s,\eps>0$, $j \geq 0$, $k \in \R$ and all $\phi$.
\end{proposition}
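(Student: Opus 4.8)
The plan is to prove Proposition \ref{psiton} as a temporal analogue of Proposition \ref{sapporo}: whereas there the local component $\psi'_s$ was shown to be absent from a spatial ball in the near future, here we use that a wave map of energy strictly below $E_0$ disperses, together with the fact that the far‑future interval $I''$ begins at the enormous time $F^{(6)}(r)$ --- which, since $F$ is rapidly growing, dwarfs every fixed power of the data scale $F(r)$ and of $r$ --- to show that on $I''$ the field $\psi'_s$ is invisible to the dispersive norms entering $S^1_{1/r}(I'')$.

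First I would record uniform a priori control. Since $\E(\psi'_s[0])\le E_0-\eps/2$ by Lemma \ref{energy-dec} and this energy is good, Lemma \ref{quant} extends $\psi'_s$ to a global wave map of bounded entropy; by Theorem \ref{symscat}(iii) it is a $(O(1),\mu_0)$-wave map on each of $O_{\mu_0}(1)$ intervals partitioning $I_+$, so Lemma \ref{freqstable2} (together with \eqref{wave-ten2}) yields a frequency envelope $c$ of energy $O(1)$, tight uniformly in $n$ as in the proof of Lemma \ref{energy-dec}, such that $\|\nabla_x^j\psi'_s(s)\|_{S^\strong_{k(s)}(I''\times\R^2)}\lesssim_j c(s)s^{-(j+2)/2}$, with the analogous bounds for $\psi'_x$, $A'_x$ on $I''$ (assembling the $O(1)$ intervals of the entropy decomposition and using that the $S^\strong_k$ norms are subadditive over a bounded partition, up to constants). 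Next I would establish the spatial confinement of $\psi'_s$. Since $\tilde\psi'_s[0]$, and hence $\tilde\psi'_{t,x}[0]$ and --- up to Schwartz tails produced by the parabolic repair --- $\psi'_{t,x}[0]$, is supported in $\{|x|\le 2F(r)\}$ at $s=0$, finite speed of propagation for the wave map equation confines $\phi'(t)$ to differ from a constant only on $\{|x|\le 2F(r)+|t|\}$, and the pointwise heat domination $|\psi'_{t,x}(s,t,\cdot)|\le e^{s\Delta}|\psi'_{t,x}(0,t,\cdot)|$ from \cite[Lemma 4.8]{tao:heatwave2}, combined with the Bochner--Weitzenb\"ock monotonicity arguments already used to derive \eqref{shiba}, \eqref{shiba-2}, \eqref{paas}, confines $\psi'_s$ and all the differentiated fields to a comparable region with rapidly decaying tails.

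I would then combine these ingredients to obtain dispersive smallness on $I''$: feeding the spatial confinement into the wave-map Strichartz estimates \eqref{lstrich}, \eqref{lstrich-4} in their frequency-localised caloric-gauge form (Lemma \ref{freqstable}), and running the covariant-wave-equation estimate for $\psi_s$ through \eqref{psis-box}, \eqref{paas}, \eqref{shiba-2} as in the proof of Proposition \ref{sapporo}, one shows that on $I''$ --- where $t\ge F^{(6)}(r)$ --- for every $s_0>0$ the $L^5_tL^\infty_x(I''\times\R^2)$ norm of $\nabla_{t,x}P_{k'}\psi'_{t,x}(s_0)$ and each of the tube-localised $L^2_tL^\infty_x$ norms in $\|\psi'_s(s_0)\|_{S'_{1/r,k(s_0)}(I''\times\R^2)}$ (see \eqref{spunk}, \eqref{mondo}) is bounded by $(F^{(6)}(r))^{-1/2}$ times a fixed power of $F(r)$ and of $\chi_{k'=k(s_0)}^{-\delta_1}$, times the natural size $c(s_0)s_0^{-1}$ (respectively $2^{4k'/5}c(s_0)s_0^{-1}$). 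Because $F$ is rapidly growing, $(F^{(6)}(r))^{-1/2}$ beats every fixed power of $r$, so a suitable fixed multiple of $\nabla_x^j\psi'_s(s)$, normalised to have $S^\strong_{k(s)}(I''\times\R^2)$-norm at most $1$, meets the requirements of a null-dispersed $\tilde S_{1/r,k(s)}$-atom in Definition \ref{smuk-def}; hence $\|\nabla_x^j\psi'_s(s)\|_{S_{1/r,k(s)}(I''\times\R^2)}\lesssim_j c(s)s^{-(j+2)/2}$, and substituting into \eqref{ds2-eq} and summing the envelope in $\ell^2_k$ gives $\|\psi'_s\|_{S^1_{1/r}(I'')}\lesssim\E(\psi'_s[0])^{1/2}=O(1)$, i.e.\ $\psi'_s$ is a $(O(1),1/r)$-wave map on $I''$.

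Finally, I would prove \eqref{jollyo-2} by repeating the proof of \eqref{jollyo} in Lemma \ref{ids}: normalising $\|\phi\|_{S^\strong_k(I'')}=1$, reducing to $s=O(1)$ and $k=k(s)=O(1)$ (the large-$s$ case being helped by the heat kernel and the general $k$ by \eqref{physical}), and using the dispersive smallness of $\psi'_{t,x}(s)$ from the previous step to get $\|\psi'_s(s)\phi[0]\|_{\dot H^1_0}=o_{n\to\infty}(1)$, one reduces by \eqref{energy-est} to bounding $\|P_0\Box(\psi'_s(s)\phi)\|_{N^\strong_0(I'')}$; the splitting $\Box(\psi'_s(s)\phi)=\psi'_s(s)\Box\phi+2\partial^\alpha\psi'_s(s)\partial_\alpha\phi$ then handles the null-form term by the bilinear inverse estimate (Theorem \ref{tao-thm3}) and the dispersion exactly as in \eqref{jollyo}, and the remaining term by the modulation analysis and light-cone plate decomposition of \cite{tao:wavemap2}, again extracting the factor $o_{n\to\infty}(1)$ from the dispersion of $\psi'_s$ on $I''$. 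The main obstacle is the third step: the wave map is only approximately free, so the two-dimensional linear dispersive estimate is not directly applicable, and one must chain finite speed of propagation, the pointwise heat domination, the Bochner--Weitzenb\"ock identities, and the global-in-time Strichartz bounds (legitimate because $\psi'_s$ has bounded entropy) --- as was done for the \emph{spatial} confinement in Proposition \ref{sapporo} --- while verifying that the resulting bound is uniform over all the frequency, cap, direction, and tube parameters in \eqref{spunk} and over all $s_0>0$, with a rate which, after choosing $F$ sufficiently rapidly growing, exceeds the required power of $r$.
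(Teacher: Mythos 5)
The first half of your argument has a genuine gap at its central step. From finite speed of propagation, the pointwise heat domination, and the Bochner--Weitzenb\"ock identities you obtain \emph{support} information --- $\psi'_{t,x}$ lives in $\{|x| \lesssim F(r)+|t|\}$ up to rapidly decaying tails --- but none of these tools, nor the bounded-entropy Strichartz bounds, yields the \emph{amplitude} decay you then assert, namely that on $I''$ the $L^5_tL^\infty_x$ norm and the tube-localised $L^2_tL^\infty_x$ norms of $\nabla_{t,x}\psi'_s$ are of size $(F^{(6)}(r))^{-1/2}$ times fixed powers of $F(r)$. Energy-bounded data does not give $t^{-1/2}$ pointwise decay even for the \emph{free} wave equation (that requires weighted or $L^1$-type norms of the data, which are not controlled uniformly in $n$), and for the nonlinear flow no such decay estimate is available in this framework; the global bounds coming from bounded entropy only say the relevant norms are finite on each of $O_r(1)$ intervals, with no information on whether any of that norm sits in the far future --- which is precisely the point at issue. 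Consequently the verification that (a fixed multiple of) $\nabla_x^j\psi'_s(s)$ is a null-dispersed atom of $\tilde S_{1/r,k(s)}(I''\times\R^2)$ does not go through, and the first claim of the proposition is not established by your route.

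The paper proves the first claim by a mechanism your proposal is missing. Since $\E(\psi'_s[0])\le E_0-\eps/2$ is good, the $(O(1),1/2r)$-entropy of $\psi'_s$ on $[0,T]$ is $O_r(1)$ uniformly in $T$; letting $T\to\infty$ and passing to a subsequence of the partitions, one finds a finite time $t_i$ beyond which $\psi'_s$ is a single $(O(1),1/2r)$-wave map on $[2t_i,T]$ for every $T$. A priori $t_i$ could depend on $n$, and this is where Lemma \ref{cpt} together with the stability theory of Theorem \ref{apriori-thm2} enters: $\psi'_s[0]$ ranges in a compact set $K_r$ of the energy space depending only on $r$, so a finite cover by stability neighbourhoods allows one to take $t_i=O_r(1)$ uniformly in $n$, and then the rapidly growing $F$, chosen in advance, guarantees $I''\subset[2t_i,\infty)$. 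Your proposal never confronts this uniformity-in-$n$ issue, yet without it the threshold time could exceed $F^{(6)}(r)$. For \eqref{jollyo-2} the same problem recurs: the paper does not derive dispersion of the nonlinear solution directly, but first uses the just-established $(O(1),1/2r)$-wave map property and \cite[Lemma 9.10]{tao:heatwave3} to approximate $\psi'_s$ on $I''$ by a free wave with Schwartz data, and only then runs the \eqref{jollyo}-type argument, again invoking compactness so that $t_i$, and hence the smallness, depends only on $r$.
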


\begin{proof}  By the induction hypothesis, $\psi'_s$ has a $(O(1),1/2r)$-entropy of $O_r(1)$ on every interval $[0,T]$, thus one can partition $[-T,T]$ into intervals $[t_{i,T},t_{i+1,T}]$ for $1 \leq i \leq m = O_r(1)$ and $0 = t_{0,T} < \ldots < t_{m,T} = T$.  Sending $T \to \infty$ and passing to subsequences as necessary, we may then find an $1 \leq i < m$ such that $t_{i,T} \to t_i < \infty$ and $t_{i+1,T} \to \infty$ for some sequence of $T \to \infty$.  In particular, we see that $\psi'_s$ is a $(O(1),1/2r)$-wave map on $[2t_i,T]$ (say) for all $T$.

To conclude the proposition, we need to show that $t_i$ is bounded by a quantity depending only on $r$.  Here we can appeal to compactness.  From Theorem \ref{apriori-thm2} we see that there is a small neigbourhood of $\psi'_s[0]$ in the energy space such that all data in this neighbourhood extend to a wave map on $[0,T]$ which is a $(O(1),1/r)$-wave map on $[2t_i,T]$ for any $T$.  By Lemma \ref{cpt}, we may make this $t_i$ depend only on $r$, and the claim follows.

Now we prove \eqref{jollyo-2}; we give only a sketch here.  From the tightness of $c(s)$, it suffices to establish the claim for $s \sim 1$, increasing $c$ slightly away from $s \sim 1$ if necessary.  As in the proof of \eqref{jollyo}, we may set $k=k(s)$ and $\|\phi\|_{S_k^\strong([2t_i,T])}=1$.  By \cite[Lemma 9.10]{tao:heatwave3}, one can approximate $\psi'_s$ in $S_k^\strong(I'')$ to error $o_{n \to \infty} ( c(1) )$ by a free solution, which one can approximate to have Schwartz initial data.  The claim now follows by arguments similar to those used to prove \eqref{jollyo} by making $t_i$ large enough, and then using compactness to bound $t_i$ by some quantity depending only on $r$.
\end{proof}

Now let $\tilde \psi_s$ be the dynamic field on $I_+$ defined by the formula
$$ \tilde \psi_s := \psi'_s + \psi''_{s}$$
This is an approximate wave map and approximate heat flow that is close to $\psi_s$ at time zero (cf. Lemma \ref{apaprox}):

\begin{lemma}  There exists a partition ${\mathcal J}$ of $I_+$ into $O_\mu(1)$ intervals $J$, such that on each such interval $J$, there exists a frequency envelope $c_J$ of energy $O(1)$ such that
\begin{equation}\label{psisjk-hyp-sap-2}
\| \nabla_x^j \tilde \psi_s(s) \|_{S_{\mu,k(s)}(J \times \R^2)} \lesssim_j c_J(s) s^{-(j+2)/2}
\end{equation}
and
\begin{equation}\label{psisjk-star-hyp-sap-2}
\| \nabla_x^j \tilde h (s) \|_{S_{\mu,k(s)}(J \times \R^2)} = o_{n \to \infty; j,\mu}(c_J(s) s^{-(j+2)/2})
\end{equation}
for all $j \geq 0$ and $s > 0$ (where $\tilde h$ is the heat-tension field \eqref{heat-tension} of $\tilde \psi_s$), and also
\begin{equation}\label{wave-off-sap-2}
\| P_k \tilde w(0) \|_{N_{k}(I \times \R^2)} = o_{n \to \infty;j,\mu}(c_J(2^{-2k}))
\end{equation}
for every $k \in \R$ (where $\tilde w$ is the wave-tension \eqref{wave-tension} field of $\tilde \psi_s$).  Furthermore, one has
\begin{equation}\label{energy-close-sap-2}
\dist_\Energy( \tilde \Psi_{s,t,x}(0), \Psi_{s,t,x}(0) ) = o_{n \to \infty;\mu}(1)
\end{equation}
\end{lemma}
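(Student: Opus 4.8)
The plan is to establish the four conclusions \eqref{psisjk-hyp-sap-2}, \eqref{psisjk-star-hyp-sap-2}, \eqref{wave-off-sap-2}, \eqref{energy-close-sap-2} in close analogy with the proof of Lemma \ref{apaprox} in the previous section, but using the spatial separation of $\psi'_s$ and $\psi''_s$ (Proposition \ref{sapporo}, Proposition \ref{psiton}) in place of the temporal-dispersion bound \eqref{jollyo}. First I would construct the partition ${\mathcal J}$: by Lemma \ref{energy-dec} both $\psi'_s$ and $\psi''_s$ have been extended (via Lemma \ref{quant}) to global wave map heat flows with $(O(1),\mu)$-entropy $O_\mu(1)$, so we may take ${\mathcal J}$ to be the common refinement of the two associated partitions of $I_+$, which still has $O_\mu(1)$ pieces, and on which both $\psi'_s$ and $\psi''_s$ are $(O(1),\mu)$-wave maps. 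On each such $J$, applying Lemma \ref{freqstable2} to each of $\psi'_s$, $\psi''_s$ (using the envelope bounds \eqref{ss1}, \eqref{ss2} inherited at time $0$ and the parabolic repair bounds \eqref{parab-1-instant}--\eqref{parab-6-instant}) yields frequency envelopes $c'_J$, $c''_J$ of energy $O(1)$, and since $c$ is tight one also gets tightness of $c'_J$, $c''_J$. The bound \eqref{psisjk-hyp-sap-2} then follows by setting $c_J := c'_J + c''_J$ and using the triangle inequality together with the identity $\tilde\psi_s = \psi'_s + \psi''_s$ and \eqref{prod1}.

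For \eqref{psisjk-star-hyp-sap-2} I would expand the heat-tension field exactly as in \eqref{hahaha}: since $\psi'_s$, $\psi''_s$ are genuine heat flows, $\tilde h = \tilde\psi_s - \tilde D_i\tilde\psi_i$ differs from $h' + h'' = 0$ by the cross terms $\bigO(\partial_x\delta\psi_x) + \bigO(\Psi^*_x\delta\psi_x) + \bigO(\Psi'_x\Psi''_x)$, where $\delta\psi_x := \tilde\psi_x - \psi'_x - \psi''_x$ and $\Psi^*_x := (\tilde\psi_x,\psi'_x,\psi''_x)$. The term $\bigO(\Psi'_x\Psi''_x)$ is controlled using the spatial separation: Proposition \ref{sapporo} on $I'$ and Proposition \ref{psiton} (i.e.\ \eqref{jollyo-2}) on $I''$ together show that $\psi'_s$ is negligible in $S_{\mu,k(s)}$ wherever $\psi''_s$ is not, so $\| \nabla_x^j(\Psi'_x\Psi''_x)(s)\|_{S_{\mu,k(s)}(J\times\R^2)} = o_{n\to\infty}(c_J(s)s^{-(j+1)/2})$ via \eqref{prod1} (splitting $I_+ = I' \cup I''$ and using the overlap region $F^{(6)}(r) \le t \le F^{(7)}(r)$ to glue the two cutoff descriptions). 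The remaining two terms reduce, via \eqref{prod1} and the already-established \eqref{psisjk-hyp-sap-2}, to showing $\|\nabla_x^j\delta\psi_x(s)\|_{S_{\mu,k(s)}(J\times\R^2)} = o_{n\to\infty}(c_J(s)s^{-(j+1)/2})$; and this follows by iterating \eqref{psi-odd} (as in the $X_j$ expansion in the proof of Lemma \ref{apaprox}), observing that each resulting term contains a factor that is forced to be small because a $\psi'_s$ factor and a $\psi''_s$ factor (which are spatially separated) appear, or because one appeals to $\delta\psi_x$ at an earlier heat time; summing with \eqref{sss} closes this.

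For \eqref{wave-off-sap-2} I would argue exactly as in Proposition \ref{twavemap} and in the corresponding part of Lemma \ref{apaprox}: since $(D')^\alpha\psi'_\alpha(0) = (D'')^\alpha\psi''_\alpha(0) = 0$, subtracting gives $\tilde D^\alpha\tilde\psi_\alpha(0) = \bigO(\partial^\alpha\delta\Psi_\alpha(0)) + \bigO((\Psi'')^\alpha(0)\Psi'_\alpha(0)) + \bigO(\delta\Psi^\alpha(0)\tilde\Psi_\alpha(0))$, and one expands $\delta\Psi_\alpha(0)$ into the $Z^{A,B}_{j,l,\alpha}$-type series with the parity observation ($j+l$ odd for the $\delta\psi_\alpha$ component) that licenses the trilinear null-form estimate \eqref{trilinear-improv}. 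Each term carries either a spatially separated $\psi'_s \times \psi''_s$ pair (negligible by Propositions \ref{sapporo}, \ref{psiton} and \eqref{prod1}) or lands in a configuration already estimated in Proposition \ref{twavemap}; the bound \eqref{trilinear-improv3} handles the term where $\partial^\alpha$ hits the last factor. Finally \eqref{energy-close-sap-2} is identical to the last paragraph of the proof of Lemma \ref{apaprox}: write $\delta\psi_s(0) = \tilde\psi_s(0) - \psi_s(0)$, note $\tilde\psi_s[0]$ and $\psi_s[0]$ differ only through the parabolic repair errors \eqref{parab-6-instant} plus the cutoff errors in the definitions of $\tilde\psi'_s[0], \tilde\psi''_s[0]$ (which are $o_{n\to\infty}$ by the tightness of $c$ and the support properties), then iterate \eqref{psi-odd}, \eqref{psi-even} and use Lemma \ref{ek-prod} and \eqref{sss} to propagate this to $\delta\psi_{t,x}(0)$, and conclude via \eqref{energy-dist-def}. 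The main obstacle I anticipate is handling the $\bigO(\Psi'_x\Psi''_x)$ cross term cleanly across the near-future/far-future split: the spatial separation of $\psi'_s$ and $\psi''_s$ is uniform only on $I'$ (Proposition \ref{sapporo}) and is replaced on $I''$ by the smallness of $\psi'_s$ itself (Proposition \ref{psiton}), so one must carefully choose $F$ rapidly growing enough that the overlap region $[F^{(6)}(r), F^{(7)}(r)]$ and the contribution of the $s > F^{(8)}(r)$ tail of $\tilde\psi''_s[0]$ are both negligible, and verify that the two descriptions patch together in the $S_{\mu,k}$ norms rather than merely pointwise.
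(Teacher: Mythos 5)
Your proposal follows essentially the same route as the paper: split $I_+$ into the near future $I'$ and the far future $I''$, run the argument of Lemma \ref{apaprox} on each piece, with the spatial separation of Proposition \ref{sapporo} (plus disjointness of supports after inserting cutoffs $\eta(x/F^{(7)}(r))$, $\eta(x/10F^{(7)}(r))$) playing the role of \eqref{jollyo} on $I'$, and the decay of $\psi'_s$ from Proposition \ref{psiton} together with the bilinear bound \eqref{jollyo-2} playing that role on $I''$; the expansions of $\tilde h$ and of $\tilde D^\alpha \tilde\psi_\alpha(0)$, the parity observation licensing \eqref{trilinear-improv}, and the construction of the $O_\mu(1)$ partition all match the paper's argument.

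The one place where your justification is insufficient as written is \eqref{energy-close-sap-2}. Writing $\delta\psi_s(0) = (\psi'_s(0)-\tilde\psi'_s(0)) + (\psi''_s(0)-\tilde\psi''_s(0)) - \bigl(\eta(s/F^{(8)}(r))\eta(x/F^{(9)}(r)) - \eta(x/F(r))\eta(s/r)\bigr)\psi_s(0)$, the first two terms are indeed controlled by the parabolic repair bound \eqref{parab-6-instant}, but the third (cutoff-mismatch) term is \emph{not} small merely ``by the tightness of $c$ and the support properties.'' Tightness of the envelope only disposes of the heat times $s \gtrsim r$ (there one has $O(c_J(s)s^{-(j+2)/2})$ from Proposition \ref{corbound-freq} and upgrades to $o_{n\to\infty}$ by enlarging $c_J$ at large $s$). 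For $s < r$ the mismatch reduces to $\bigl(\eta(x/F^{(9)}(r)) - \eta(x/F(r))\bigr)\psi_s(s,0)$, i.e.\ $\psi_s$ restricted to the spatial annulus $F(r)\lesssim|x|\lesssim F^{(9)}(r)$, and nothing about supports or the envelope forces this to be small: the needed input is the annular energy gap \eqref{hot-2} coming from the pigeonhole choice of $r$, propagated to heat times $s<r$ by the local energy estimates for the heat flow used in the proof of Lemma \ref{energy-dec}. This is exactly how the paper closes this term, and your argument should invoke it explicitly; without it, that sub-step does not go through.
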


\begin{proof}  We begin with \eqref{energy-close-sap-2}.  We write
\begin{align*}
\delta \psi_s(0) &:= \tilde \psi_s(0) - \psi_s(0) \\
&= (\psi'_s(0) - \tilde \psi'_s(0)) \\
&\quad + (\psi''_s(0) - \tilde \psi''_s(0))\\
&\quad - (\eta(s/F^{(8)}(r)) \eta(\frac{x}{F^{(9)}(r)}) - \eta( \frac{x}{F(r)} ) \eta( s / r ) ) \psi_s(0).
\end{align*}
By repeating the arguments used to prove \eqref{energy-close-sap}, it suffices to show that
$$
\| \nabla_{t,x} \nabla_x^j \delta \psi_s(s,0) \|_{\dot H^0_{k(s)}(\R^2)} \lesssim o_{n \to \infty}(c_J(s) s^{-(j+2)/2})$$
for all $0 \leq j \leq 10$.  The contributions of the first two terms in the above expansion of $\delta \psi_s$ to this estimate are acceptable by 
\eqref{parab-6-instant}.  As for the final term, one can divide into two cases, $s<r$ and $s\geq r$.  When $s \geq r$ one already has a bound of $O( c_J(s) s^{-(j+2)/2} )$ from Proposition \ref{corbound-freq}, and one can upgrade this to $o_{n \to \infty}(c_J(s) s^{-(j+2)/2})$ by increasing $c_J$ slightly for large $s$, using the tightness of $c_J$.  For $s<r$, one can instead use energy estimates as in the proof of Lemma \ref{energy-dec} to obtain an acceptable bound.

Now we prove the remaining claims.  It suffices to do this for $I'$ and $I''$ separately.  We begin with the contribution of the distant future $I''$, which is somewhat easier.  As $\psi'_s$ has a $(O(1),\mu)$-entropy of $O_\mu(1)$, we can partition $I''$ into $O_\mu(1)$ intervals $J$ such that $\psi'_s$ is a $(O(1),\mu)$ wave map on $J$, while $\psi''_s$ is a $(O(1),o_{n \to \infty}(1))$-wave map on $J$ by Proposition \ref{psiton}.  The claims now follow by repeating the arguments used to prove Lemma \ref{apaprox}.

Now we turn to the claim for $I'$.  One can repeat the arguments used to prove Lemma \ref{apaprox}; the main new difficulty is to establish the required analogues of \eqref{psior} and \eqref{hili}.  But these can be established by decomposing $\psi'_s = \eta(x/F^{(7)}(r)) \psi'_s + (1-\eta(x/F^{(7)}(r))) \psi'_s$ and $\psi''_s = \eta(x/10 F^{(7)}(r)) \psi''_s + (1-\eta(x/10 F^{(7)}(r)) \psi''_s$.  The interactions between these components is either small by Lemma \ref{sapporo}, or is negligible by the disjoint support of the components (note that the contributions of very large or very small $s$ are acceptable by increasing the tight envelope $c_J$ at very low or very high frequencies, so we may focus on values of $s$ comparable to $1$, and allow constants to depend on $s$).  We omit the details.
\end{proof}

Applying Theorem \ref{hyp-repair} iteratively for each interval $J$ in turn as in the proof of Proposition \ref{kslice}, we conclude for each such $K$ that
$$
\dist_\Energy( \phi(s)[t], \tilde \phi(s)[t] ) = o_{n \to \infty;j,\mu}(1)
$$
for all $s \geq 0$ and $t \in J$, and that
$$
\dist_{S^1_\mu(K)}( \Psi_{s,t,x}, \tilde \Psi_{s,t,x}) = o_{n \to \infty;j,\mu}(1).
$$
In particular, from the triangle inequality one has
$$ \| \Psi_{s,t,x} \|_{S^1_\mu(J)} \lesssim 1$$
for all $J \in {\mathcal J}$, and thus $(\phi,I_+)$ has $(O(1),\mu)$-entropy of $O_\mu(1)$, as required.  The proof of Theorem \ref{spacdeloc} is complete.

\section{Scattering}\label{scattering-sec}

We now use the above results to establish some results of scattering type for classical wave maps.

The arguments in \cite{tao:heatwave}-\cite{tao:heatwave4} and the current paper establish that every energy $E$ is \emph{good} in the sense of \cite{tao:heatwave4}.  In particular, given any energy $E > 0$ and $\mu>0$, and any classical initial data $\phi[0]$ with energy at most $E$, there exists a global classical wave map $\phi$ with this data, which has a $(O_E(1),\mu)$-entropy of $O_{E,\mu}(1)$.  One can view this assertion as a sort of ``uniform spacetime bound'' on $\phi$, though technically the bounds apply to the dynamic map heat flow resolution $\psi_s$ of $\phi$ rather than to $\phi$ itself.  (In \cite{sterbenz}, \cite{sterbenz2}, a non-uniform variant of this statement was obtained in the case where the target manifold is compact rather than hyperbolic; roughly speaking, this variant would assert in our language that the entropy of every global solution was finite, but without a uniform bound on that entropy in terms of the energy.)

This implies some scattering results in the caloric gauge.  We give some representative results of this type here:

\begin{proposition}[Scattering in the caloric gauge]  Let $\phi$ be a global classical wave map, and let $\psi_s$ be a dynamic field heat flow resolution of $\phi$.
\begin{itemize}
\item (Scattering of $\psi_s$) For each $s > 0$, there exists a solution $\psi_s^+(s): \R \times \R^2 \to \R^m$ to the free wave equation such that $\| \psi_s(s) - \psi^+_s(s) \|_{\dot S_{k(s)}( [T,+\infty) \times \R^2 )} \to 0$ as $T \to +\infty$.  In particular, $\| \nabla_{t,x} (\psi_s(s,t) - \psi^+(s,t) ) \|_{L^2_x(\R^2)} \to 0$ as $t \to +\infty$.
\item (Uniform Sobolev control) One has the regularity bound $\| \phi[t] \|_{{\mathcal H}^{1+\delta_0/2}_\loc} \lesssim_{E, \| \phi[0] \|_{{\mathcal H}^{100}_\loc}} 1$ for all $t \in \R$, where the Sobolev-type norms ${\mathcal H}^s_\loc$ are defined in \cite[Section 3]{tao:heatwave3}.
\item (Decay of $\phi$)  We have $\sup_x \dist_{\H}(\phi(t,x),\phi(\infty)) \to 0$ as $t \to \pm \infty$.
\end{itemize}
\end{proposition}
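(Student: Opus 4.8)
The common input is the global spacetime bound established in this series: every energy is good, so, writing $E := \E(\phi)$ and fixing a small $\mu$, the wave map $\phi$ has an $(O_E(1),\mu)$-entropy of $O_{E,\mu}(1)$; thus $\R$ may be partitioned into $O_{E,\mu}(1)$ intervals $J$ on each of which $\psi_s$ restricts to an $(O_E(1),\mu)$-wave map, so that $\|\psi_s\|_{S^1_\mu(J)}\lesssim_E 1$. Let $c$ be the frequency envelope of energy $O_E(1)$ adapted to $\psi_s[0]$, constructed as in Proposition \ref{corbound-freq}; since $\phi[0]$ is Schwartz, this envelope hull satisfies $c(s)\lesssim_{\phi[0]} s^{\delta_0}$ for $s\le 1$ and $c(s)\lesssim_{\phi[0]} s^{-\delta_0}$ for $s\ge 1$, i.e. the fastest decay permitted by \eqref{sm}. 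Applying Lemma \ref{freqstable2} successively on the $O_E(1)$ intervals $J$, the bounds $\|\nabla_x^j\psi_s(s)\|_{S_{k(s)}(J\times\R^2)}\lesssim_{E,j}s^{-(j+2)/2}c(s)$ (and the analogous bounds for $\psi_x$) hold on all of $\R$ with constants depending only on $E$ and $j$; together with \eqref{algebra-2} and \eqref{outgo} this yields, uniformly in $t\in\R$, the fixed-time estimates $\|\psi_s(s,t)\|_{L^\infty_x(\R^2)}\lesssim_E c(s)s^{-1}$ and $\|\Psi_{t,x}(s,t)\|_{L^2_x(\R^2)}\lesssim_E (\int_s^\infty c(s')^2\frac{ds'}{s'})^{1/2}$. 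I will use these throughout.

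For the first claim, Lemma \ref{freqstable} (in particular \eqref{wave-ten2}) shows that after applying $\nabla_x^jP_k$ the field $\Box\psi_s(s)$ has $N_{k(s)}^\strong$-norm $\lesssim_E\mu^{2-\eps}c(s)\chi_{k=k(s)}^{\delta_1/10}s^{-(j+2)/2}$ on each of the $O_E(1)$ intervals; hence $\Box\psi_s(s)$ has finite $N_{k(s)}^\strong$-norm on any half-line, and by the continuity and divisibility of the $S_{\mu,k}$ and $N_k$ norms (Theorem \ref{func}) its norm over $[T,\infty)\times\R^2$ tends to $0$ as $T\to\infty$. A standard Duhamel/limiting argument then produces a free solution $\psi_s^+(s)$ of $\Box\psi_s^+(s)=0$ with $\|\psi_s(s)-\psi_s^+(s)\|_{\dot S_{k(s)}([T,\infty)\times\R^2)}\to 0$, via the energy estimate \eqref{energy-est}; the $L^2_x$ statement is then \eqref{outgo}, and time-reflection symmetry gives the same as $t\to-\infty$.

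For the second claim, I would observe that $\|\phi[0]\|_{{\mathcal H}^{100}_\loc}\le M$ implies $\|\phi[0]\|_{{\mathcal H}^{1+\delta_0/2}_\loc}\lesssim M$, and that by the $\psi_s$–$\phi$ correspondence of \cite[Section 3]{tao:heatwave3} together with \eqref{psi-odd}, \eqref{psi-even} this ${\mathcal H}^{1+\delta_0/2}_\loc$ bound is equivalent to an envelope bound $\|\nabla_{t,x}\nabla_x^j\psi_s(s,0)\|_{\dot H^0_{k(s)}(\R^2)}\lesssim_M s^{-(j+2)/2}c_0(s)$ for a frequency envelope $c_0$ of energy $O_E(1)$ carrying the extra decay $c_0(s)\lesssim_M s^{\delta_0/4}$ as $s\to0$ — a decay compatible with \eqref{sm}. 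Feeding $c_0$ into Lemma \ref{freqstable2} propagates the same pointwise bounds (with an $O_{E,M}(1)$ loss) to all $t\in\R$, and converting back via the same correspondence gives $\|\phi[t]\|_{{\mathcal H}^{1+\delta_0/2}_\loc}\lesssim_{E,M}1$.

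For the third claim, which I expect to be the main obstacle, I would use that for each fixed $(t,x)$ the curve $s\mapsto\phi(s,t,x)$ joins $\phi(0,t,x)=\phi(t,x)$ to $\phi(\infty)$ in $\H$ and has length $\int_0^\infty|\psi_s(s,t,x)|_h\,ds$ (since $e$ is an orthonormal frame), so $\dist_\H(\phi(t,x),\phi(\infty))\le\int_0^\infty|\psi_s(s,t,x)|\,ds$, and it suffices to show $\sup_x\int_0^\infty|\psi_s(s,t,x)|\,ds\to0$ as $t\to\pm\infty$. By the uniform bound $\|\psi_s(s,t)\|_{L^\infty_x}\lesssim_E c(s)s^{-1}$ and the envelope decay above, the contributions of $s<\eps$ and of $s>1/\eps$ are $O_E(\eps^{\delta_0})$ uniformly in $t$. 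For $s$ in the compact range $[\eps,1/\eps]$, the first claim together with the fact that frequency-localised finite-energy free waves in $\R^{1+2}$ decay in $L^\infty_x$ as $t\to\pm\infty$ (approximate the $\dot H^1$ data by Schwartz data, use the $|t|^{-1/2}$ dispersive estimate on the Schwartz part and Bernstein uniformly in time on the small remainder), and \eqref{algebra} to pass from $\dot S_{k(s)}$-scattering to $L^\infty_{t,x}$, gives $\sup_{\eps\le s\le1/\eps}\|\psi_s(s,t)\|_{L^\infty_x}\to0$ as $t\to\pm\infty$, so the middle contribution tends to $0$ for each fixed $\eps$. Taking $t\to\pm\infty$ and then $\eps\to0$ completes the proof. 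The delicate point is precisely this uniformity trade-off: the two heat-time tails must be controlled uniformly in $t$ — which is what forces the careful use of the $\delta_0$-room in \eqref{sm} — while the bounded range of $s$ is handled by genuine dispersive decay of the scattered linear fields.
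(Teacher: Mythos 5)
Your proposal is correct and follows essentially the same route as the paper's proof: the global entropy bound plus the frequency-envelope persistence lemmas and the $N^\strong$-smallness of $\Box\psi_s$ give the scattering free wave (the paper launches free waves from data at late times $T_\mu$ and takes a Cauchy limit, which is the same Duhamel/limiting argument you describe); the Sobolev bound is obtained exactly as you say, by converting the ${\mathcal H}^{100}_\loc$ data control to an envelope with extra decay, propagating it by Lemma \ref{freqstable}/\ref{freqstable2}, and converting back; and the decay of $\phi$ uses the same length estimate $\dist_\H(\phi(t,x),\phi(\infty))\le\int_0^\infty\|\psi_s(s,t)\|_{L^\infty_x}\,ds$ with pointwise decay from scattering plus an integrable, $t$-uniform envelope domination (your $\eps$-splitting is just dominated convergence done by hand). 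The one point to double-check is the second bullet: the conversion between ${\mathcal H}^s_\loc$ control and envelope decay is precisely the content of the cited results in \cite{tao:heatwave3} (Lemma 9.15 and Section 9.13), and the paper states the resulting envelope decay at large $s$ (namely $c(s)=O(s^{-\delta_0/2})$ for $s\ge1$) rather than your small-$s$ decay $c_0(s)\lesssim s^{\delta_0/4}$, so your asserted ``equivalence'' should be verified against the actual definition of ${\mathcal H}^s_\loc$ there rather than taken as given.
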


\begin{proof}  We begin with the scattering of $\psi_s$.  We allow implied constants to depend on $E$.  For any $\mu$, we see from the previous discussion that we can find $T_\mu > 0$ such that $\phi$ is a $(O(1),\mu)$-wave map on $[T_\mu,\infty)$.  

Let $\psi_s^{+,\mu}(s)$ be the solution to the free wave equation with initial data $\psi_s(s)[T_\mu]$.  By \cite[Lemma 9.10]{tao:heatwave3} we see that
$$ \| P_k (\psi_s(s) - \psi_s^{+,\mu}(s) ) \|_{S_k([T_\mu,+\infty) \times \R^2)} \lesssim_{j,\eps} \mu^{2-\eps} c(s) s^{-1} \chi_{k \geq k(s)}^{\delta_2/10} \chi_{k \leq k(s)}^j$$
for all $j,\eps > 0$ and some frequency envelope $c$ of energy $O(1)$.  This implies that
$$\| \psi_s(s) - \psi^{+,\mu}_s(s) \|_{\dot S_{k(s)}( [T_\mu,+\infty) \times \R^2 )} \lesssim \mu$$
(say).  This implies in particular that $\psi^{+,\mu}_s(s)[0]$ is a Cauchy sequence in $\dot H^1_{k(s)}$ as $\mu \to 0$, and thus converges to a limit $\psi^+_s(s)[0]$.  Letting $\psi^+_s(s)$ be the associated solution to the free wave equation, we obtain the claim.

Now we obtain the uniform Sobolev control.  We allow implied constants to depend on $E$ and $\|\phi[0]\|_{{\mathcal H}^{100}_\loc}$.  From \cite[Lemma 9.15]{tao:heatwave3} we see that $\phi[0]$ is controlled (in the sense of Proposition \ref{corbound-freq}) by a frequency envelope $c$ of energy $O(1)$ and obeying the decay estimate $c(s) = O(s^{-\delta_0/2})$ for all $s \geq 1$.  Applying Lemma \ref{freqstable}, we conclude that this control persists for all time $t$ (up to constant factors of $O(1)$).  The claim now follows by the arguments in \cite[Section 9.13]{tao:heatwave3}.

Finally, to prove the decay estimate, we observe from the scattering estimate (and approximating $\psi^+_s(s)$ by Schwartz class solutions to the free wave equation, which clearly disperse to zero at infinity) that $\| \psi_s(s,t)\|_{L^\infty_x(\R^2)} \to 0$ as $t \to +\infty$.  On the other hand, from the arguments in \cite[Section 9.13]{tao:heatwave} one also has the bounds of $\| \psi_s(s,t)\|_{L^\infty_x(\R^2)} \lesssim \min(1, s^{-\delta_0/2})$.  From the (non-abelian) triangle inequality, one has $\sup_x \dist_{\H}(\phi(t,x),\phi(\infty)) \leq \int_0^\infty \| \psi_s(s,t)\|_{L^\infty_x(\R^2)}$, and the claim now follows from the dominated convergence theorem.
\end{proof}

\begin{remark} One can also embed $\H$ in $\R^{1+m}$ and show that $\phi$ approximates a solution to the free wave equation by pushing the arguments further; we omit the details (see also \cite{tataru:wave3}, \cite{sterbenz} for related arguments).
\end{remark}

\appendix

\section{A covering lemma}

\begin{lemma}[Besicovitch-type covering lemma]\label{bcl}  Let $(Y,\nu)$ be a measure space with $\nu(Y) \leq A$ for some $A>0$.  Let $I$ be an interval, let $M \geq 1$ be an integer, and for each $y \in Y$, let ${\mathcal I}_y$ be a collection of at most $M$ intervals that partition $I$, in a manner which is measurable\footnote{In other words, one can write ${\mathcal I}_y=\{ I_1,\ldots,I_{M(y)}\}$, where $M(y) \in \{1,\ldots,M\}$ is measurable, and $I_i$ are intervals such that the set $\{ (y,t): y \in Y, t \in I_i \}$ is a measurable subset of $Y \times I$.} with respect to $y$.  Then there exists a partition ${\mathcal I}$ of $I$ into at most $M$ intervals, such that for each $J$ in ${\mathcal I}$, we have
$$ \int_Y \# \{ K \in {\mathcal I}_y: J \cap K \neq \emptyset \}\ d\nu(y) \leq 3A.$$
\end{lemma}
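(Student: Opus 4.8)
\textbf{Proof plan for Lemma \ref{bcl}.}

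The plan is to think of each endpoint appearing in one of the partitions ${\mathcal I}_y$ as a ``cut point'', weighted by how many of the $Y$-fibers want a cut there, and then to choose at most $M-1$ such cut points that control the total overlap. First I would reduce to the case where $I = [a,b]$ is compact; the half-infinite or infinite cases follow by exhaustion (or by a direct modification). For each $y\in Y$, write ${\mathcal I}_y = \{K_1(y),\ldots,K_{M(y)}(y)\}$ in increasing order, and let $E(y) \subset (a,b)$ be the set of $M(y)-1 \le M-1$ interior endpoints of these intervals. The key object is the function $N\colon (a,b)\times(a,b) \to \R^+$ defined by
$$ N(s,t) := \int_Y \#\{ K \in {\mathcal I}_y : K \cap [s,t] \neq \emptyset \}\ d\nu(y), $$
for $s \le t$, which is non-decreasing in $t$, non-increasing in $s$, and satisfies $N(s,s) \le 2A$ for every $s$ (each $s$ meets at most two intervals of ${\mathcal I}_y$, namely the one ending at $s$ and the one starting at $s$), while $N(a,b) \le MA$. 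More precisely, along an interval $[s,t]$ in which $[s,t]$ meets exactly $j(y)$ intervals of ${\mathcal I}_y$, the count $N$ increases by roughly the total $\nu$-mass of fibers that have at least one endpoint strictly inside $(s,t)$; the jump discontinuities of $t \mapsto N(s,t)$ sit exactly at the points of $\bigcup_y E(y)$.

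The construction of ${\mathcal I}$ is then greedy: set $t_0 := a$, and having chosen $t_0 < t_1 < \cdots < t_i$ with $t_i < b$, let
$$ t_{i+1} := \sup\{ t \in (t_i, b] : N(t_i, t') \le 3A \text{ for all } t' \in (t_i,t) \} $$
if this supremum is $< b$, and $t_{i+1} := b$ (terminating) otherwise. Each interval $J = [t_i, t_{i+1}]$ of the resulting partition satisfies $N(t_i,t) \le 3A$ for $t < t_{i+1}$, hence $\int_Y \#\{K \in {\mathcal I}_y : J \cap K \neq \emptyset\}\,d\nu(y) \le 3A$ by letting $t \uparrow t_{i+1}$ and using that adjoining the endpoint $t_{i+1}$ adds at most the mass of fibers with an endpoint \emph{at} $t_{i+1}$ — and here one has to be slightly careful, using left-continuity: a cleaner way is to define $J$ as the half-open interval $[t_i,t_{i+1})$ (except the last one which is closed) so that $N$ over $J$ is exactly $\sup_{t<t_{i+1}} N(t_i,t) \le 3A$; since the statement only needs intervals partitioning $I$, half-open intervals are admissible. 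The remaining point is the cardinality bound: the greedy step must terminate after at most $M$ intervals. This is where the main work lies. The idea is that the defining property of $t_{i+1}$ (for $i \ge 1$, when $t_{i+1} < b$) forces $N(t_{i-1}, t_{i+1}) > 3A$, because $[t_{i-1},t_{i+1}] \supset [t_{i-1},t_i] \cup [t_i, t_{i+1})$ and the greedy choice at stage $i$ already exhausted the budget on $[t_{i-1},t_i]$ up to within $O(A)$; combined with $N(s,s)\le 2A$ this should show that each pair of consecutive gaps absorbs a definite amount of the ``endpoint mass'' $\int_Y (M(y)-1)\,d\nu(y) \le (M-1)A$, giving at most $M$ intervals after adjusting constants.

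The step I expect to be the main obstacle is precisely this termination/cardinality count: making the bookkeeping work out to \emph{exactly} $M$ intervals with the constant exactly $3A$ on the right-hand side, rather than some $C(M) A$ or $M' = O(M)$. The honest approach is to track the ``excess mass'' $\int_Y \#\{K \in {\mathcal I}_y : K \text{ meets two distinct } J\text{'s}\}\,d\nu(y)$: each interval $K$ of ${\mathcal I}_y$ that straddles a breakpoint $t_i$ contributes, and summing over $y$ this straddling mass at each $t_i$ is at most $2A$, while the non-straddling contribution of each fiber to $\sum_{J} \#\{K : J \cap K \neq \emptyset\}$ is exactly $M(y) \le M$. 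If the greedy algorithm produced $M+1$ or more intervals, one of the breakpoints would be ``wasteful'' — i.e. $N(t_i, t_{i+1})$ stays well below $3A$ — contradicting the maximality in the definition of $t_{i+1}$, or forcing $\int_Y (M(y)-1)\,d\nu(y) > (M-1)A$, which is impossible. I would push this contradiction carefully; once the constants line up, the lemma follows. If the clean constant $3A$ proves too tight, a safe fallback is to prove the result with a larger universal constant in place of $3$ (or with $O(M)$ intervals), since the applications in the body (Lemma \ref{boxf}, Corollary \ref{bcl-2}, etc.) only use the covering qualitatively.
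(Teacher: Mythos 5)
Your greedy construction has a genuine gap at exactly the point you flag: the bound on the number of intervals is never actually proved, and it is the crux of the lemma rather than bookkeeping. The heuristic you give — each breakpoint forces $N(t_{i-1},t_{i+1})>3A$, so each step "consumes" more than $A$ of endpoint/interval mass, and the total mass is at most $(M-1)A$ or $MA$ — does not close as stated, because the sets of intervals consumed by consecutive greedy steps are not disjoint: an interval $K\in{\mathcal I}_y$ contained in $(t_i,t')$ for $t'$ slightly beyond $t_{i+1}$ can also be counted at the next step, and as $t'\downarrow t_{i+1}$ you run into the same endpoint/limit issues you mention for the $3A$ bound. Your proposed fallback (a larger constant, or $O(M)$ intervals) would indeed suffice for the applications in the paper, but it is not a proof of the stated lemma, so as it stands the argument is incomplete.

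The paper avoids the greedy scheme entirely and gets the cardinality bound for free from a level-set decomposition: define the single monotone function $f(t):=\int_Y \#\{K\in{\mathcal I}_y: K\subset [a,t)\}\,d\nu(y)$, which is non-decreasing and takes values in $[0,MA)$, and let ${\mathcal I}$ consist of the non-empty sets $f^{-1}([iA,(i+1)A))$ for $i=0,\ldots,M-1$; these are at most $M$ intervals partitioning $I$ by monotonicity. For each such $J$, the intervals $K\in{\mathcal I}_y$ meeting $J$ either lie inside $J$ — and their total mass is at most $A$ since $f$ increases by at most $A$ across $J$ — or contain one of the two endpoints of $J$, and since for each $y$ at most one interval of the partition ${\mathcal I}_y$ contains a given point, each endpoint contributes at most $\nu(Y)\le A$; summing gives $3A$. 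If you want to salvage your approach, the cleanest fix is to replace the greedy stopping rule by cutting at the levels $iA$ of this same counting function, i.e.\ to adopt the quantile argument; otherwise you must carry out the disjointness bookkeeping for the consumed mass carefully, handling the straddling intervals and one-sided limits at each breakpoint, which is precisely the part currently missing.
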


\begin{proof}  For sake of notation we shall write $I$ as a closed interval $I=[a,b]$, though the argument works for any other interval.  Let $f: I \to \R^+$ be the counting function
$$ f(t) := \int_Y \# \{ K \in {\mathcal I}_y: K \subset [a,t) \}\ d\nu(y).$$
Then $f$ is a non-decreasing function taking values in the interval $[0, AM)$.  If we then let ${\mathcal I}$ be the the set of all (non-empty) intervals of the form $f^{-1}( [iA, (i+1)A) )$ for $i=0,\ldots,M-1$, then we see that ${\mathcal I}$ is a collection of at most $M$ intervals partitioning ${\mathcal I}$.  Furthermore, if $J$ is an interval in ${\mathcal I}$, then $f$ increases at most $A$ within $J$, so we have
$$ \int_Y \# \{ K \in {\mathcal I}_y: K \subset J \} \leq A.$$
Also, if $x$ is one of the two endpoints of $J$, we have
$$ \int_Y \# \{ K \in {\mathcal I}_y: x \in J_y \}\ d\nu(y) \leq \int_Y \ d\nu \leq A.$$
Summing, we obtain the claim.
\end{proof}

This has the following corollary:

\begin{corollary}\label{bcl-2}  Let $I$ be an interval.  For each sub-interval $J \subset I$, let $X(J)$ be some Banach space norm on functions $\phi: J \times \R^2 \to \C$ obeying the monotonicity property
$$ \| \phi|_{J' \times \R^2} \|_{X(J')} \leq \|\phi \|_{X(J)}$$
for all $\phi: J \times \R^2 \to \C$ and $J' \subset J$, as well as the sub-additivity property
$$ \|\phi \|_{X(J)} \leq \| \phi|_{J_1 \times \R^2} \|_{X(J_1)} + \| \phi|_{J_2 \times \R^2} \|_{X(J_2)}.$$
Let $(Y,\nu)$ be a finite measure space, and let $(\phi_y)_{y \in Y}$ be a collection of functions indexed by $Y$ such that $\| \phi_y|_{J \times \R^2} \|_{X(J)}$ is jointly measurable in $y$ and $X(J)$.  Let $M \geq 1$, and suppose for each $y \in Y$ one can find a partition ${\mathcal I}_y$ of $I$ into at most $M$ intervals such that
$$ \| \phi_y|_{J \times \R^2} \|_{X(J)} \leq F(y)$$
for all $j \in {\mathcal I}_y$, where $F: Y \to \R^+$ is an absolutely integrable function.  Then one can find a partition ${\mathcal I}$ of $I$ into at most $M$ intervals such that
$$ \| \int_Y \phi_y\ d \nu(y)|_{J \times \R^2} \|_{X(J)} \leq 3 \int_Y F(y)\ d\nu(y).$$
\end{corollary}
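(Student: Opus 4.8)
The plan is to deduce the corollary from Lemma~\ref{bcl} by reweighting the measure on $Y$. Set $A := \int_Y F(y)\,d\nu(y)$, which is finite by hypothesis; the case $A = 0$ is trivial, since then $F$ vanishes $\nu$-a.e., so for $\nu$-a.e.\ $y$ every $K \in {\mathcal I}_y$ has $\|\phi_y|_{K \times \R^2}\|_{X(K)} = 0$, iterating sub-additivity over the partition ${\mathcal I}_y$ of $I$ and then using monotonicity forces $\|\phi_y|_{J \times \R^2}\|_{X(J)} = 0$ for all $J$, and one takes ${\mathcal I} = \{I\}$. So assume $A > 0$ and introduce the finite measure $\tilde\nu$ on $Y$ given by $d\tilde\nu(y) := F(y)\,d\nu(y)$, with $\tilde\nu(Y) = A$.

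First I would apply Lemma~\ref{bcl} to the measure space $(Y,\tilde\nu)$, using the same family of partitions ${\mathcal I}_y$ (measurable in $y$ by the standing hypotheses). This produces a partition ${\mathcal I}$ of $I$ into at most $M$ intervals such that
\begin{equation*}
\int_Y \#\{ K \in {\mathcal I}_y : J \cap K \neq \emptyset \}\, F(y)\, d\nu(y) \leq 3A
\end{equation*}
for every $J \in {\mathcal I}$. Next, fixing such a $J$ and a point $y \in Y$, observe that the nonempty intervals $J \cap K$ with $K \in {\mathcal I}_y$ partition $J$ into exactly $\#\{ K \in {\mathcal I}_y : J \cap K \neq \emptyset\}$ pieces; iterating the sub-additivity of $X$ over this partition, then using monotonicity (as $J \cap K \subset K$) together with $\|\phi_y|_{K \times \R^2}\|_{X(K)} \leq F(y)$, yields the pointwise-in-$y$ bound $\|\phi_y|_{J \times \R^2}\|_{X(J)} \leq \#\{ K \in {\mathcal I}_y : J \cap K \neq \emptyset\}\, F(y)$.

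Finally I would invoke Minkowski's integral inequality for the norm $X(J)$ to move the integration over $Y$ outside the norm, giving
\begin{equation*}
\Big\| \int_Y \phi_y\, d\nu(y)\Big|_{J \times \R^2} \Big\|_{X(J)} \leq \int_Y \|\phi_y|_{J \times \R^2}\|_{X(J)}\, d\nu(y) \leq \int_Y \#\{ K \in {\mathcal I}_y : J \cap K \neq \emptyset\}\, F(y)\, d\nu(y) \leq 3A,
\end{equation*}
which is exactly the claimed estimate since $A = \int_Y F(y)\,d\nu(y)$. The only genuinely delicate point is the bookkeeping of measurability: one must check that $y \mapsto \#\{ K \in {\mathcal I}_y : J \cap K \neq \emptyset\}$ is $\nu$-measurable for each $J$ (so that Lemma~\ref{bcl} applies and the middle integral above is meaningful), and that $\int_Y \phi_y\,d\nu(y)$ is a well-defined (Bochner) integral to which $X(J)$ may be applied. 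Both follow routinely from the assumed joint measurability of $y \mapsto \|\phi_y|_{J \times \R^2}\|_{X(J)}$ and of the families ${\mathcal I}_y$, and I would only sketch this verification rather than write it out in full.
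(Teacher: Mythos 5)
Your proposal is correct and is essentially the paper's own argument: the paper likewise reweights $\nu$ by $F$ to reduce to $F\equiv 1$, applies Lemma \ref{bcl} with $A=\int_Y F\,d\nu$, and then concludes via Minkowski's integral inequality together with sub-additivity and monotonicity of the $X(J)$ norms. Your write-up simply spells out the pointwise bound $\|\phi_y|_{J\times\R^2}\|_{X(J)}\leq \#\{K\in{\mathcal I}_y: J\cap K\neq\emptyset\}\,F(y)$ and the measurability bookkeeping that the paper leaves implicit.
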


\begin{proof} By multiplying $\nu$ with $F$ if necessary we may assume $F \equiv 1$.  From monotonicity in $J$ one can easily ensure that the partition ${\mathcal I}_y$ is measurable in the sense of Lemma \ref{bcl}.  The claim then follows from that lemma, Minkowski's integral inequality, and subadditivity.
\end{proof}

\begin{remark} The arguments here are closely related to the theory of variational norms for spacetime functions, see e.g. \cite{hadac} for a discussion.
\end{remark}

\section{Refinement of the $N_k$ space}\label{nk-refine}

This appendix is best read with \cite{tao:wavemap2} at hand, as it will refer very frequently to this paper. We will of course specialise to the case of two dimensions (thus, in the notation of \cite{tao:wavemap2}, we have $n=2$).

In the paper \cite{tao:wavemap2}, a number of spaces $S(c), S_k, N_k$ were constructed, obeying a large number of estimates and other properties; see \cite[Theorem 3]{tao:wavemap2} for a precise statement.  The spaces $N_k$ were constructed as follows:

\begin{definition}[Nonlinearity space]\label{atom-def}\cite[Definitions 7,8]{tao:wavemap2} Let $k$ be an integer, and let $F$ be a Schwartz function with Fourier support in the region $2^{k-4} \leq D_0 \leq 2^{k+4}$.  We say that $F$ is an \emph{$L^1_t L^2_x$-atom at frequency $2^k$} if
$$ \| F \|_{L^1_t L^2_x} \leq 1.$$
If $j \in \Z$, we say that $F$ is a \emph{$\dot X^{0,-1/2,1}$-atom with frequency $2^k$ and modulation $2^j$} if $F$ has Fourier support in the region $2^{k-4} \leq D_0 \leq 2^{k+4}$, $2^{j-5} \leq D_- \leq 2^{j+5}$ and
$$ \| F \|_{L^2_t L^2_x} \leq 2^{j/2}.$$
Finally, if $l > 10$ is a real number and $\pm$ is a sign, we say that $F$ is a \emph{$\pm$-null frame atom with frequency $2^k$ and angle $2^{-l}$} if there exists a decomposition $F = \sum_{\kappa \in K_l} F_\kappa$ such that each $F_\kappa$ has Fourier support in the region
\begin{equation}\label{suka}
\{ (\tau, \xi): \pm \tau > 0; D_- \leq 2^{k-2l-50}; 2^{k-4} \leq D_0 \leq 2^{k+4}; \Theta \in \frac{1}{2} \kappa \}
\end{equation}
and
\begin{equation}\label{sukf}
 (\sum_{\kappa \in K_l} \| F_\kappa \|_{NFA[\kappa]}^2)^{1/2} \leq 1.
\end{equation}
If $F$ is an atom of one of the above three types, then we say that $F$ is an $N[k]$ atom.  We let $N[k]$ be the atomic Banach space generated by the $N[k]$ atoms.

We say that $F$ is an \emph{$N_k$ atom} if there exists a $k' \in \Z$ such that $2^{200|k-k'|} F$ is a $N[k']$ atom.  We define $N_k(\R^{1+2})$ to be the atomic Banach space generated by the $N_k$ atoms.  Finally, for any time interval $I$, we define $N_k := N_k(I \times \R^2)$ to be the restriction of $N_k(\R^{1+2})$ to the slab $I \times \R^2$.
\end{definition}

(For the definitions of terms such as $D_0, D_-, \Theta, K_l, NFA[\kappa]$, see \cite[Section 7]{tao:wavemap2}.)

For technical reasons, this space is a little bit too large for our applications, specifically because the $\ell^2$ summability in \eqref{sukf} for null frame atoms is only barely sufficient to close the estimates in \cite{tao:wavemap2}, with no room left over to extract a good inverse theorem for these estimates.  However, it turns out that there is enough ``slack'' in other estimates to tighten this $\ell^2$ summability to $\ell^{2-}$ summability, which will assist in our inverse theorem.  More precisely, we make the following definitions:

\begin{definition}[Strong nonlinearity space]\label{atom-def2} Let $k$ be an integer, and let $F$ be a Schwartz function with Fourier support in the region $2^{k-4} \leq D_0 \leq 2^{k+4}$.  If $l > 10$ is a real number and $\pm$ is a sign, we say that $F$ is a \emph{strong $\pm$-null frame atom with frequency $2^k$ and angle $2^{-l}$} if there exists a decomposition $F = \sum_{\kappa \in K_l} F_\kappa$ such that each $F_\kappa$ has Fourier support in the region \eqref{suka} and
\begin{equation}\label{sukf-2}
 (\sum_{\kappa \in K_l} \| F_\kappa \|_{NFA[\kappa]}^{2-\delta_{-1}})^{1/(2-\delta_{-1})} \leq 1.
\end{equation}
(Thus, a strong null frame atom is the same concept as a null frame atom, but with the $\ell^2$ summability in \eqref{sukf} upgraded slightly.)
If $F$ is a $L^1_t L^2_x$-atom at frequency $2^k$, a $\dot X^{0,-1/2,1}$-atom with frequency $2^k$ and modulation $2^j$ for some $j \in \Z$, or a strong $\pm$-null frame atom with frequency $2^k$ and angle $2^{-l}$ for some $l>10$, we say that $F$ is a \emph{strong $N[k]$ atom}.  We let $N[k]^\strong$ be the atomic Banach space generated by the strong $N[k]$ atoms.  We then define $N_k(\R^{1+2})^\strong$ and $N_k(I \times \R^2)^\strong$ from $N[k]^\strong$ in exactly the same fashion as in Definition \ref{atom-def2}.
\end{definition}

Clearly $N_k^\strong$ is a stronger space than $N_k$, thus $\|\phi\|_{N_k(I \times \R^2)} \leq \|\phi\|_{N_k(I \times \R^2)^\strong}$.

The main claim of this appendix is then

\begin{theorem}\label{enw}  All the results of \cite[Theorem 3]{tao:wavemap2} continue to hold if $N_k$ is replaced by $N_k^\strong$ throughout.
\end{theorem}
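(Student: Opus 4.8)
The proof will be organized as a careful audit of the proof of \cite[Theorem 3]{tao:wavemap2}, checking that every place where the $N_k$ atoms enter can tolerate the upgrade from $\ell^2$ to $\ell^{2-\delta_{-1}}$ summability over caps. The plan is as follows. First I would observe that $N_k^\strong$ differs from $N_k$ only in the definition of the null frame atoms, where the $\ell^2_\kappa$ norm in \eqref{sukf} is replaced by the $\ell^{2-\delta_{-1}}_\kappa$ norm in \eqref{sukf-2}; since $\ell^{2-\delta_{-1}} \subset \ell^2$ with norm comparison constant $1$ (on a fixed-size index set the loss is a power of the cardinality, but here we simply note $\|a\|_{\ell^2} \le \|a\|_{\ell^{2-\delta_{-1}}}$), every strong $N[k]$ atom is in particular an ordinary $N[k]$ atom, so $\|F\|_{N_k} \le \|F\|_{N_k^\strong}$ automatically, and any estimate in \cite{tao:wavemap2} of the form $\|F\|_{N_k} \lesssim (\text{stuff})$ is \emph{unchanged}. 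Thus the only estimates that need genuine re-examination are those where $N_k$ (or $N[k]$) appears on the \emph{left}-hand side as the space being bounded into --- i.e. the product and null-form estimates \cite[Eqs.~(27)--(30), (72), (87)]{tao:wavemap2} and the energy estimate pairing $N_k$ against $S_k$ --- since for these we must now produce a decomposition meeting the stronger \eqref{sukf-2} bound.

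Second, for each such estimate I would return to its proof in \cite{tao:wavemap2} and locate the step where a null frame atom is manufactured. In every instance (this is the content of \cite[Sections 13--18]{tao:wavemap2}) the relevant frequency-localised, angularly-localised piece $F_\kappa$ is controlled not merely in $\ell^2_\kappa$ but with an additional exponential gain in the angular parameter $l$, or with an additional gain in a frequency-separation parameter, coming from the transversality/null structure; schematically one has a bound of the shape $\|F_\kappa\|_{NFA[\kappa]} \lesssim 2^{-cl} (\cdots)_\kappa$ or a bound with a summable-in-$l$ prefactor, where the $\ell^2_\kappa$ sum of $(\cdots)_\kappa$ is already controlled. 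Since $K_l$ has cardinality $O(2^l)$, the $\ell^{2-\delta_{-1}}_\kappa$ norm is bounded by $2^{l\delta_{-1}/(2(2-\delta_{-1}))}$ times the $\ell^2_\kappa$ norm by Hölder; provided $\delta_{-1}$ is taken small enough compared to the exponential gain $c$ (which is an absolute constant coming from the geometry of the cone, independent of all the $\delta_i$), this loss is absorbed and the stronger atom bound follows. This is exactly the ``slack'' alluded to before Definition \ref{atom-def2}: the estimates in \cite{tao:wavemap2} are not sharp in $l$, so there is room to spend $O(l\delta_{-1})$.

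Third, I would check the duality/interpolation steps: whenever \cite{tao:wavemap2} uses the dual characterization of $N[k]$ or pairs an $N[k]$ function against an $S[k]$ or $\dot X^{0,1/2,\infty}$ function, the relevant inequality is $|\langle F, G\rangle| \lesssim \|F\|_{N[k]} \|G\|_{S[k]}$ (and similar), and here $F$ being now only required to be a \emph{strong} atom makes the class of $F$'s \emph{smaller}, so these estimates are inherited for free; the only care needed is that the atomic decomposition used to \emph{define} $N_k^\strong$ still has the property that a general element is an absolutely convergent sum of strong atoms, which is immediate from the definition. I would also note that the $L^1_tL^2_x$-atoms and $\dot X^{0,-1/2,1}$-atoms are literally unchanged, so \eqref{fl1l2} and the Strichartz/$\dot X$-based parts of the theory carry over verbatim.

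The main obstacle will be step two applied to the \emph{trilinear} and high-high interaction estimates, in particular the analogue of \cite[Eq.~(87)]{tao:wavemap2} and the ``Step 17.2'' region, where the output null frame atom is built from a genuinely two-parameter sum (over caps \emph{and} over dyadic modulation or radial scales) and the available gain is spread across those parameters; one must verify that after summing out the auxiliary parameter the residual cap-sum still carries enough angular decay to pay the $\ell^{2-\delta_{-1}}_\kappa$ Hölder cost. I expect this to work because in those estimates the cap decomposition is ultimately controlled by a \emph{square function} estimate (orthogonality of the $\kappa$-pieces in $L^2_{t,x}$) together with an $L^\infty$-type bound on each piece, and interpolating between $\ell^2$ (from orthogonality) and $\ell^\infty$ (from the pointwise bound, which has its own $2^{-cl}$ gain) yields $\ell^{2-\delta_{-1}}$ with a favorable power of $2^l$; I would carry out this interpolation explicitly for the one or two genuinely new cases and then assert that all remaining cases in \cite[Theorem 3]{tao:wavemap2} are either unaffected (by the first observation) or handled by the same mechanism. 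Since no estimate in \cite{tao:wavemap2} is claimed to be sharp in the angular parameter, and $\delta_{-1}$ is chosen last and smallest, the bookkeeping closes.
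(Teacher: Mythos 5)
Your audit plan is the right outline and, for most cases, coincides with what the paper actually does: only left\nobreakdash-hand\nobreakdash-side occurrences of $N_k$ need attention, and wherever the output null frame atom comes with an off-diagonal gain one pays the H\"older cost $(\# K_l)^{\delta_{-1}/(2(2-\delta_{-1}))} \sim 2^{O(\delta_{-1}) l}$ out of that gain (this is exactly how Cases 1(c) and 3(d).3 of the core product estimate \cite[Lemma 12]{tao:wavemap2}, the Cases 4(b), 4(e).3(c) of the trilinear estimate, and Case 3(c).2 of \cite[Section 15]{tao:wavemap2} are treated, the last by changing the exponent $2$ to $2-\delta_{-1}$ on both sides). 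But your blanket claim that \emph{every} place where a null frame atom is manufactured carries a $2^{-cl}$ gain with $c$ an absolute constant is false in the one case that is genuinely delicate, namely Case 2(c) of \cite[Lemma 12]{tao:wavemap2}, where $F$ has the dominant modulation: there (after rescaling) $k=0$, $k_2=O(1)$, $k_1\le O(1)$, $j\le k_1+O(1)$, and the angle is forced to be $l=-(k_1+j)/2+O(1)$, which is unboundedly large even when the available factors $\chi^{\delta}_{k=\max(k_1,k_2)}\chi^{\delta}_{j=\min(k_1,k_2)}$ are $\sim 1$ (take $j=k_1\to-\infty$). So there is nothing to borrow, and a new idea is needed. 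The paper's resolution is to exploit extra orthogonality in the $F$ factor itself: replace $F$ by its spacetime Fourier projections $F_{\kappa'}$ to the $2^{k_1}\times 2^{(k_1+j)/2}\times 2^{k_1+j}$ slabs $S_{\kappa'}$ oriented Minkowski-orthogonally to the null direction of each cap $\kappa'$, observe that these slabs have bounded overlap so that by Plancherel $(\sum_{\kappa'}\|F_{\kappa'}\|_{L^2_{t,x}}^2)^{1/2}\lesssim \|F\|_{L^2_{t,x}}$, hence $\ell^q_{\kappa'}$ control for every $2\le q\le\infty$, and then close the $\ell^{2-\delta_{-1}}$ sum by H\"older with a large but finite $q$ against the $\ell^2_{\kappa'}$ square-function component of $\|\psi\|_{S[k_2]}$. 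This step is the entire content of Lemma \ref{core-improve} beyond bookkeeping, and it is absent from your proposal.

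Your fallback mechanism for the ``hard cases'' also cannot work as stated: you propose to interpolate between an $\ell^2_\kappa$ bound (orthogonality) and an $\ell^\infty_\kappa$ bound carrying a $2^{-cl}$ gain to reach \eqref{sukf-2}. But $\ell^{2-\delta_{-1}}$ is \emph{stronger} than both $\ell^2$ and $\ell^\infty$; interpolation between those spaces only yields $\ell^q$ bounds for $q\ge 2$ (a constant sequence on the $\sim 2^l$ caps of $K_l$ shows no bound below $\ell^2$ can hold without a cardinality factor). The only route below $\ell^2$ is H\"older against $\#K_l$, which brings you back to needing a genuine compensating gain --- precisely what is missing in Case 2(c), where the gain must instead come from the bounded-overlap/Plancherel observation above. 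Finally, you locate the difficulty in the wrong place: the trilinear estimate \cite[Eq.~(31)]{tao:wavemap2} and the null form estimate \cite[Eq.~(30)]{tao:wavemap2} go through by the simple borrowing argument (the latter never manufactures null frame atoms directly, only through Lemma \ref{core}); the real obstruction sits inside the core bilinear lemma when $F$ is at dominant modulation. So the proposal, as written, has a genuine gap at the heart of the proof.
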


\begin{proof}  This is easy for many components of \cite[Theorem 3]{tao:wavemap2}:
\begin{itemize}
\item All the properties that do not involve $N_k$ at all (the quasicontinuity property, scaling properties of $S(c), S_k$, and the estimates (18)-(24) and (32) from \cite{tao:wavemap2}) of course continue to hold.
\item The scale invariance of $N_k$, the compatibility condition (26), and the containment (25) of $L^1_t L^2_x$  from \cite{tao:wavemap2} were already easy to establish for the original $N_k$, and the proofs carry over without difficulty to $N_k^\strong$.
\item The energy estimate (27) from \cite{tao:wavemap2} for strong $N_k$ is clearly implied by that estimate for the original $N_k$.
\end{itemize}
The only remaining components are the product estimates (28), (29), the null form estimate (30), and the trilinear estimate (31) from \cite{tao:wavemap2}.

These estimates were all proven using a core product estimate, which we reproduce here:

\begin{lemma}\label{core}  \cite[Lemma 12]{tao:wavemap2}
Let $j, k, k_1, k_2$ be integers such that $j \leq \min(k_1,k_2)+O(1)$.  Then we have
\begin{equation}\label{core-est}
\| P_k (F \psi) \|_{N[k]} \lesssim \chi^{\delta}_{k=\max(k_1,k_2)} \chi^{\delta}_{j=\min(k_1,k_2)}
\| F \|_{\dot X^{0,-1/2,\infty}_{k_1}} \| \psi \|_{S[k_2]}
\end{equation}
for all Schwartz functions $F$ on $\R^{1+2}$ with Fourier support in $2^{k_1-5} \leq D_0 \leq 2^{k_1+5}$, $D_- \sim 2^j$ and Schwartz functions $\psi \in S[k_2]$, and some absolute constant $\delta > 0$.
\end{lemma}

We now verify 

\begin{lemma}\label{core-improve} In Lemma \ref{core}, one can strengthen the $N[k]$ norm to an $N[k]^\strong$ norm on the left-hand side (after replacing $\delta$ with, say, $\delta/2$).
\end{lemma}

\begin{proof} The first thing to do is search through the proof of Lemma \ref{core} in \cite{tao:wavemap2} for all occurrences of null frame atoms (or more specifically, all applications of \cite[Equation (93)]{tao:wavemap2}), and works out how to strengthen them to strong null frame atoms.  One easily finds that Case 1(c), Case 2(c), and Case 3(d).3 (basically, all the cases where $F$ has the dominant modulation) are the only places in that proof where such atoms appear.

In Case 1(c), we have (after rescaling) that $k_1 =  0$, $k_2=O(1)$, $k \leq O(1)$, $j \leq k+O(1)$, and the $l$ parameter is taken to be $(k-j)/2+O(1)$.  We can then borrow a small power from the $\chi^{\delta}_{k=\max(k_1,k_2)} \chi^{\delta}_{j=\min(k_1,k_2)}$ factors to gain a small power of $2^{-l}$, which one can use to upgrade \eqref{sukf} to \eqref{sukf-2}.

Similarly, in Case 3(d).3, we have (after rescaling) that $k=0$, $k_1=O(1)$, $k_2 \leq O(1)$, $j \leq k_2+O(1)$, and the $l$ parameter is taken to be $(k_2-j)/2+O(1)$.  Again, we can borrow a small power from the $\chi^{\delta}_{k=\max(k_1,k_2)} \chi^{\delta}_{j=\min(k_1,k_2)}$ factors to gain a small power of $2^{-l}$, which one can use to upgrade \eqref{sukf} to \eqref{sukf-2}.

Finally, we turn to Case 2(c), which is more interesting, because $l$ is now significantly larger.  After rescaling, we have that $k=0$, $k_2 = O(1)$, $k_1 \leq O(1)$, and $j \leq k_1+O(1)$, and the $l$ parameter is taken to be $-(k_1+j)/2 + O(1)$.  The arguments on \cite[p. 506]{tao:wavemap2} estimate the contribution here by
$$
\lesssim (\sum_{\kappa, \kappa' \in K_l: \dist(\kappa,\kappa') \sim 2^{-l+C}} 
\| P_0 P_{0,\kappa} Q^+_{<k_1+j-2C} (F P_{k_2,\kappa'} Q^+_{<k_1+j-C} \psi) \|_{NFA[\kappa]}^2)^{1/2}$$
for some large constant $C$.

Now we make a key new observation.  The frequency localisation operators allow us to replace $F$ by the spacetime Fourier projection to the $2^{k_1} \times 2^{(k_1+j)/2} \times 2^{k_1+j}$ slab
$$
S_{\kappa'} := \{ (\tau,\xi): |\xi| \sim 2^{k_1}; \tau = \xi \cdot \omega_{\kappa'} + O( 2^{k_1+j} ); |\pi_{\omega_{\kappa'}^\perp}(\xi)| \sim 2^{-(k_1+j)/2} \}
$$
oriented in the plane Minkowski-orthogonal to the null direction $(\omega_{\kappa'}, 1)$, where $\omega_{\kappa'}$ is the centre of the cap $\kappa'$, and $\pi_{\omega_{\kappa'}^\perp}(\xi)$ is the projection of $\xi$ to the orthogonal complement of $\omega_{\kappa'}$.  Let us denote this projection by $F_{\kappa'}$.  If one makes this replacement and then continues the argument in \cite[p. 506]{tao:wavemap2} (but using strong null frame atoms instead of null frame atoms), one ends up with
$$
\lesssim 2^{l/2}
(\sum_{\kappa' \in K_l}
\| F_{\kappa'} \|_{L^2_t L^2_x}^{2-\delta_{-1}}
\| P_{k_2,\kappa'} Q^+_{<k_1 + j-C} \psi \|_{S[k_2,\kappa']}^{2-\delta_{-1}})^{1/(2-\delta_{-1})}.$$
Now one makes the crucial geometric observation that the $S_{\kappa'}$ have a bounded overlap as $\kappa'$ varies, and so by Plancherel's theorem we have
$$ (\sum_{\kappa' \in K_l} \| F_{\kappa'} \|_{L^2_t L^2_x}^2)^{1/2} \lesssim \|F\|_{L^2_t L^2_x}$$
and in particular
$$ (\sum_{\kappa' \in K_l} \| F_{\kappa'} \|_{L^2_t L^2_x}^q)^{1/q} \lesssim \|F\|_{L^2_t L^2_x}$$
for all $2 \leq q \leq \infty$.  Applying H\"older's inequality with a large but finite value of $q$, we can then close the argument as in \cite[p. 506]{tao:wavemap2}.
\end{proof}

Now one can obtain the analogue of the product estimates in \cite[Equations (28), (29)]{tao:wavemap2} for the strong $N_k$ spaces.  These estimates are proven in \cite[Section 15]{tao:wavemap2} via a unified estimate \cite[Equations (119)]{tao:wavemap2}, which contains $N_k$ norms on both the left-hand side and right-hand side.  Strengthening the $N_k$ norms on the right-hand side of course makes the task of proving these estimates easier; the difficulty comes from strengthening the $N_k$ norms on the left-hand side.  Accordingly, one needs to search the proof for when the left-hand side is estimated using null frame atoms (other than via invocation of Lemma \ref{core}, as we have already checked the strong version of this estimate).  The only place where this occurs is in Case 3(c).2 of \cite[Section 15]{tao:wavemap2}, where null frame atoms appear on both the left and the right-hand side.  But one easily verifies that changing the exponent of summation from $2$ to $2-\delta_{-1}$ on both sides does not impact the argument, and the proof can be modified without difficulty.

An inspection of the proof of the null form estimate \cite[Equation (30)]{tao:wavemap2} in \cite[Section 17]{tao:wavemap2} reveals that null frame atoms are not mentioned explicitly in the argument (although they implicitly appear through the use of Lemma \ref{core}), so one can replace $N_k$ by strong $N_k$ here without difficulty.

Finally, we turn to the strong version of the (difficult) trilinear estimate \cite[Equation (31)]{tao:wavemap2}, proven in \cite[Section 18]{tao:wavemap2}, i.e. that
\begin{equation}\label{o-lemma}
\| P_k L(\phi^{(1)}, \phi_{,\alpha}^{(2)}, \phi^{(3),\alpha}) \|_{N_k^\strong}
\lesssim  
\chi^{-\delta}_{k = \max(k_1,k_2,k_3)}
\chi^{\delta}_{k_1 \leq \min(k_2,k_3)}
\prod_{i=1}^3
\| \phi^{(i)} \|_{S_{k_i}} 
\end{equation}
for some $\delta > 0$, whenever $\phi^{(i)} \in S_{k_i}$ for $i=1,2,3$.

As before, we search the proof of this estimate for all explicit mention of null frame atoms; these occur at Case 4(b) and Case 4(e).3(c) in \cite[Section 18]{tao:wavemap2}.  In both of these cases we have normalised $k_3=0$, $k=k_1+O(1)$, and $-\delta' k_1 \leq k_2 \leq 0$.

In Case 4(b), one sees that one can borrow a small power from $\chi^{\delta}_{k_1 \leq \min(k_2,k_3)}$ to gain a small power of $2^{-l}$, where $l$ is defined as $l = k_1/2 + O(1)$, and this allows us to upgrade the null frame atom to a strong null frame atom, as before.  The case 4(c) is similar, except $l$ is now defnied as $k_1 + 3 \delta' k_1$.  The claim follows.
\end{proof}

\section{Proof of Lemma \ref{boxf}}\label{fungi-sec}

We now prove Lemma \ref{boxf}.  We shall just prove the second claim; it will be clear from the proof that the same argument also gives the first claim.

By shrinking $\eps$ appropriately, we may allow all implied constants to depend on $\mu, C_0$.  By a limiting argument we may take $\phi$ to be Schwartz. 

In view of Definition \ref{smuk-def}, Lemma \ref{bcl}, and the triangle inequality, it suffices to show that for any $I$, any Schwartz $\phi$ and any $k,k'$, that one can partition $I$ into $O_\eps(1)$ intervals $J$ such that
$$ 2^{|k-k'|/2C_0} 2^{-k'/2} \| P_{k',\kappa} \nabla_{t,x} \phi \|_{L^2_t L^\infty_x(T_{x_0,\omega,k} \cap (J \times \R^2))} 
\lesssim \varepsilon \|\phi\|_{S^\strong_k(I \times \R^2)}$$
for each $J$, and all $\kappa \in K_l$, $x_0$, $\omega$ as in Definition \ref{smuk-def}.
From the definition of $S_k$ (in \cite[Section 10]{tao:wavemap2}) and $N_k^\strong$ (in Definition \ref{atom-def2}) it thus suffices to show that for all $I$ and all Schwartz $\phi: I \times \R^2 \to \C$ with Fourier support in the region $\{ D_0 \sim 2^k \}$, one can partition $I$ into $O_\eps(1)$ intervals $J$ such that
$$ 2^{-k/2} \| P_{k,\kappa} \nabla_{t,x} \phi \|_{L^2_t L^\infty_x(T_{x_0,\omega,k} \cap (I \times \R^2))} 
\lesssim \varepsilon ( \|\phi\|_{S[k](I \times \R^2)} + \| \Box \phi \|_{N[k]^\strong(I \times \R^2)} )$$
for all $\kappa, x_0, \omega$.

We can rescale $k=0$.  The number of caps $\kappa$ is $O(1)$, so we can also fix $\kappa$.  Our task is now to partition $I$ into $O_\eps(1)$ intervals $J$ such that
$$ \sup_{x_0,\omega} \| P_{0,\kappa} \nabla_{t,x} \phi \|_{L^2_t L^\infty_x(T_{x_0,\omega,0} \cap (J \times \R^2))} 
\lesssim \varepsilon ( \|\phi\|_{S[0](I \times \R^2)} + \| \Box \phi \|_{N[0]^\strong(I \times \R^2)} )$$
where it is always understood that one has the transversality condition $\dist(\omega,\kappa), \dist(\omega,-\kappa) \sim 1$.

Let us first establish this claim for free solutions, when $\Box \phi = 0$.

\begin{lemma}[Divisibility for free solutions]\label{lemfree}  Let $\phi$ be a Schwartz solution to the free wave equation $\Box \phi = 0$ supported on the frequency annulus $\{ D_0 \sim 1 \}$ and energy $\E(\phi)$.  Then one can partition $\R$ into $O_\eps(1)$ intervals $J$ such that
$$ \sup_{x_0,\omega} \| P_{0,\kappa} \nabla_{t,x} \phi \|_{L^2_t L^\infty_x(T_{x_0,\omega,0} \cap (J \times \R^2))} 
\lesssim \varepsilon \E(\phi)^{1/2}$$
for each $J$.
\end{lemma}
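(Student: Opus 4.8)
The plan is to prove Lemma~\ref{lemfree} by reducing the $L^2_t L^\infty_x$ bound along an arbitrary transverse tube to a fixed-time quantity that is governed by the energy, and then using a one-dimensional ``stopping time'' partition of $\R$ into pieces where that quantity is small. First I would exploit the frequency localisation: since $\phi$ has Fourier support in $\{D_0 \sim 1\}$ and $P_{0,\kappa}$ further restricts the spatial Fourier support to the cap $\kappa$, the function $P_{0,\kappa}\nabla_{t,x}\phi$ is a free wave whose Fourier support lies in a fixed bounded region of the (truncated) light cone, essentially a $1\times 1 \times 1$ box after the rescaling $k=0$. In particular $P_{0,\kappa}\nabla_{t,x}\phi(t)$ is, for each fixed $t$, a Schwartz function whose spatial profile is a fixed mollification of the data; by Bernstein's inequality $\|P_{0,\kappa}\nabla_{t,x}\phi(t)\|_{L^\infty_x} \lesssim \|P_{0,\kappa}\nabla_{t,x}\phi(t)\|_{L^2_x}$, and the right side is bounded (uniformly in $t$) by $\E(\phi)^{1/2}$ via energy conservation for the free equation.

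The key point is to convert the $L^\infty_x$ norm restricted to a transverse tube into something better than the crude bound above, using the transversality $\dist(\omega,\kappa),\dist(\omega,-\kappa)\sim 1$. Here I would use the standard fact (e.g. \cite[Proposition 2]{tao:wavemap2}, or a direct stationary phase computation) that a free wave with Fourier support in a cap $\kappa$ propagates in a direction essentially Minkowski-orthogonal to the null direction of $\kappa$; restricting to a tube $T_{x_0,\omega,0}$ whose spatial direction $\omega$ makes an angle $\sim 1$ with $\pm\kappa$ means one is cutting transversally to the direction of propagation, so the $L^2_t L^\infty_x$ norm on such a tube is controlled by $\E(\phi)^{1/2}$ with \emph{no} loss — this is precisely the null-frame/transversality estimate underlying the $NFA[\kappa]$ spaces. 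More usefully, decomposing the cone into plates (angular sectors) one sees that this $L^2_t L^\infty_x(T_{x_0,\omega,0})$ quantity, as a function of the time interval, defines a nonnegative measure $d\nu$ on $\R$ of total mass $O(\E(\phi))$, uniformly in $x_0,\omega$. One can make this uniform in the tube by taking a supremum, or — cleaner — by first fixing a maximal $O_\eps(1)$-sized net of directions $\omega$ and positions $x_0$ modulo the natural scaling, since only boundedly many inequivalent tubes matter after we exploit translation invariance in $x_0$ along $\omega$ and the boundedness of the set of admissible $\omega$.

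Given such a finite family of measures $\nu_i$ each of mass $O(\E(\phi))$, I would then invoke Corollary~\ref{bcl-2} (or directly the counting-function argument of Lemma~\ref{bcl}): partition $\R$ into $O_\eps(1)$ intervals $J$ on each of which every $\nu_i(J) \leq \eps^2 \E(\phi)$, which gives exactly $\|P_{0,\kappa}\nabla_{t,x}\phi\|_{L^2_t L^\infty_x(T_{x_0,\omega,0}\cap(J\times\R^2))}\lesssim \eps\,\E(\phi)^{1/2}$ for all admissible tubes on that $J$. The main obstacle I anticipate is making the family of tubes ``effectively finite'': a priori the supremum over all $x_0\in\R^2$ and all transverse $\omega\in S^1$ is an uncountable supremum, and one needs to see that the associated spacetime densities can be dominated by a bounded number of absolutely continuous measures on the time axis, with the partition then chosen simultaneously for all of them. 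This is where one uses that $\kappa$ is fixed (so $\omega$ ranges over a fixed compact arc, coverable by $O_\eps(1)$ subcaps), that translating $x_0$ along $\omega$ does not change the tube, and that translating $x_0$ transverse to $\omega$ by a bounded amount only perturbs the tube by a bounded overlap — so after a harmless enlargement one reduces to $O_\eps(1)$ tubes whose union still has bounded overlap, with the total density $\sum_i d\nu_i$ having mass $O_\eps(\E(\phi))$, which is all Corollary~\ref{bcl-2} needs. The passage from this free-solution statement back to the general Schwartz $\phi$ with $\Box\phi\in N[0]^\strong$ is then handled by Duhamel: write $\phi = \phi_{\mathrm{free}} + \Box^{-1}\Box\phi$, control the free part by Lemma~\ref{lemfree}, and control the inhomogeneous part by the defining estimates of the $N[0]^\strong$ norm together with a further $O_\eps(1)$ subdivision of the time axis obtained from Corollary~\ref{bcl-2} applied to the atomic decomposition of $\Box\phi$ (each atom's spacetime profile again giving a finite-mass density on $\R$), which is routine.
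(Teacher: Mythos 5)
Your reduction to a finite family of tubes is where the argument breaks down, and it is exactly the point the lemma is really about. For a \emph{fixed} tube $T_{x_0,\omega,0}$ your observation is fine: $t \mapsto \| P_{0,\kappa}\nabla_{t,x}\phi(t)\|_{L^\infty_x(\{x:(t,x)\in T_{x_0,\omega,0}\})}^2$ is an integrable density of total mass $O(\E(\phi))$ by transversality, so a single tube can be handled by a stopping-time partition. But the supremum is over a genuinely non-compact family: translating $x_0$ transversally to $\omega$ by \emph{unbounded} amounts produces infinitely many essentially disjoint tubes (a unit-separated grid for each direction), and tubes with distinct directions, while close near $t=0$, separate completely for $|t|\gtrsim$ (angle)$^{-1}$, so neither a bounded-overlap enlargement nor an $O_\eps(1)$-net of directions dominates the full supremum over an infinite time axis. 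There is no bound, uniform in $\phi$, reducing the family to $O_\eps(1)$ tubes; indeed the enemy is precisely a solution built from $\sim \eps^{-2}$ wave packets of energy $\sim\eps^2$ whose bad tubes on $\sim\eps^{-2}$ disjoint time intervals are pairwise far apart and pairwise non-parallel. Moreover, Lemma \ref{bcl} and Corollary \ref{bcl-2} control the norm of an \emph{average} $\int_Y \phi_y\,d\nu(y)$ (or an averaged count) over a finite-measure family of partitions; they do not control a supremum over an infinite family of tubes, so they cannot substitute for the missing finiteness. (One could imagine rescuing your scheme with an $\ell^2$-summed estimate over the grid of parallel tubes in a fixed direction, which would bound the number of ``heavy'' tubes per direction by $O(\eps^{-2})$, but you neither state nor prove such an estimate, and it still would not by itself handle the continuum of directions over infinite time.)

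The paper's proof avoids any finiteness reduction and instead runs an almost-orthogonality argument: assuming no partition into $M$ intervals works, a greedy selection produces $M$ \emph{disjoint} time intervals $J_i$ (each of length $\gtrsim \eps^2$ by Bernstein) and, for each, a transverse tube on which the norm is $\gtrsim\eps$; linearising and dualising ($TT^*$) reduces matters to bounding $\sum_{i,j}\int_{J_i}\int_{J_j} |\langle U(t'-t)P_{0,\kappa}^2\delta_{x_i(t)},\delta_{x_j(t')}\rangle| f_i f_j$, and a stationary-phase kernel bound together with the transversality of each $\omega_i$ to $\pm\kappa$ and Schur's test gives $O_\eps(M^{3/2})$, contradicting the lower bound $\gtrsim_\eps M^2$ for $M$ large. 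That dispersive decay across well-separated intervals is the substitute for your (unavailable) reduction to boundedly many tubes, and it is the essential ingredient your proposal is missing.
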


\begin{proof} We can normalise $\E(\phi)=1$.   Let $M$ be a large integer depending on $\eps$ to be chosen later.  Suppose the claim failed; then by the greedy algorithm, one could find disjoint time intervals $J_1,\ldots,J_M$ (which we may order from left to right), and tubes $T_{x_i,\omega_i,0}$ for $i=1,\ldots,M$, such that $\dist(\omega_i,\kappa), \dist(\omega_i,-\kappa) \sim 1$ and
$$ \| P_{0,\kappa} \nabla_{t,x} \phi \|_{L^2_t L^\infty_x(T_{x_i,\omega_i,0} \cap (J_i \times \R^2))} \gtrsim \eps$$
for all $1 \leq i \leq M$.  From Bernstein's inequality we see that $|J_i| \gtrsim \eps^2$ for all $i$.

Now we use the $TT^*$ method.  Linearising the above inequality, we may then find curves $x_i: J_i \to \R^2$ with $x_i(t) = x_i + \omega_i t + O(1)$ for each $1 \leq i \leq M$, and a function $f_i: J_i \to \R^3$ with $\int_{J_i} |f_i(t)|^2\ dt = 1$, such that
$$ \int_{J_i} P_{0,\kappa} \nabla_{t,x} \phi(t, x_i(t)) \cdot f_i(t)\ dt \gtrsim \eps.$$
We sum in $i$ and dualise to conclude that
$$ \| \sum_{i=1}^M \int_{J_i} U(-t) P_{0,\kappa} \delta_{x_i(t)} f_i(t)\ dt \|_{L^2_x(\R^2)} \gtrsim \eps M,$$
where $U(-t)$ is the wave propagator, whose (matrix-valued) symbol is a linear combination of $e^{2\pi it|\xi|}$ and $e^{-2\pi it|\xi|}$.  Squaring this, we see that
\begin{equation}\label{jij}
 \sum_{i=1}^M \sum_{j=1}^M \int_{J_i} \int_{J_j} |\langle U(t'-t) P_{0,\kappa}^2 \delta_{x_i(t)}, \delta_{x_j(t')} \rangle| f_i(t) f_j(t')\ dt dt' \gtrsim_\eps M^2.
\end{equation}
A routine stationary phase calculation shows that
$$ |\langle U(t'-t) P_{0,\kappa}^2 \delta_{x}, \delta_{x'} \rangle| \lesssim (1+|t-t'|)^{-1/2} (1+\dist(((t-t'),(x-x')), \Sigma))^{-100}$$
where $\Sigma$ consists of the portion of the light cone $\{ (t,x): |t| = |x| \}$ where $x/|x|$ lies in $\kappa$ or $-\kappa$.  Because the $\omega_i$ are transverse to $\kappa$, this implies in particular that
$$ |\langle U(t'-t) P_{0,\kappa}^2 \delta_{x_i(t)}, \delta_{x_i(t')} \rangle| \lesssim (1+|t-t'|)^{-100}$$
for any $1 \leq i \leq M$ and $t,t' \in J_i$.  Another application of transversality reveals that
$$ \int_{t'} |\langle U(t'-t) P_{0,\kappa}^2 \delta_{x_i(t)}, \delta_{x_j(t')} \rangle|\ dt' \lesssim (1+\dist(J_i,J_j))^{-1/2}$$
and
$$ \int_{t} |\langle U(t'-t) P_{0,\kappa}^2 \delta_{x_i(t)}, \delta_{x_j(t')} \rangle|\ dt \lesssim (1+\dist(J_i,J_j))^{-1/2}$$
for all distinct $1 \leq i,j \leq M$ and $t \in J_i, t' \in J_j$.  Applying Schur's test we conclude that
$$ \int_{J_i} \int_{J_j} |\langle U(t'-t) P_{0,\kappa}^2 \delta_{x_i(t)}, \delta_{x_j(t')} \rangle| f_i(t) f_j(t')\ dt dt' \lesssim (1+\dist(J_i,J_j))^{-1/2}$$
for all $i,j$.  Since $|J_i| \gtrsim \eps^2$, we have $\dist(J_i,J_j) \gtrsim \eps^2 |j-i|$; summing, we conclude that
$$ \sum_{i=1}^M \sum_{j=1}^M \int_{J_i} \int_{J_j} |\langle U(t'-t) P_{0,\kappa}^2 \delta_{x_i(t)}, \delta_{x_j(t')} \rangle| f_i(t) f_j(t')\ dt dt' \lesssim_\eps M^{3/2}$$
which contradicts \eqref{jij} if $M$ is large enough.
\end{proof}

Now we return to the inhomogeneous solutions $\Box \phi = F$.  We may assume that $I = [-T,T]$ for some $I$. Using Duhamel's formula and Lemma \ref{lemfree} we may assume that $\phi[0]=0$.
Applying Corollary \ref{bcl-2}, it suffices to show that for every strong $N[0]$ atom $F$, that we can partition $I$ into $O_\eps(1)$ intervals $J$ such that
$$ \sup_{x_0,\omega} \| P_{0,\kappa} \nabla_{t,x} \phi \|_{L^2_t L^\infty_x(T_{x_0,\omega,0} \cap (J \times \R^2))} 
\lesssim \varepsilon.$$

There are three cases.  Suppose first that $F$ is a $\dot X^{0,-1/2,1}$ atom.  Then $\nabla_{t,x} \tilde \phi$ is bounded in $\dot X^{1,1/2,1}_0$, plus a free solution of frequency $1$ that is bounded in energy (cf. \cite[Section 11]{tao:wavemap2}), and can be written as a modulated average of free solutions in the usual manner (cf. \cite[Lemma 2.9]{tao:cbms}), in which case the claim follows from Lemma \ref{lemfree} and Corollary \ref{bcl-2}.

In a similar fashion, if $F$ is an $L^1_t L^2_x$ atom, then by Duhamel's formula $\phi$ can be expressed as an average of free solutions of frequency $1$ and bounded energy, localised to half-spaces $\{ (t,x): t \geq t_0 \}$ in spacetime, and the claim again follows from Lemma \ref{lemfree} and Corollary \ref{bcl-2} (note that localisation in spacetime cannot increase the $L^2_t L^\infty_x$ norm).

Now we turn to the most difficult case, when $F$ is a strong $\pm$-null frame atom with frequency $1$ and angle $2^{-l}$ for some $l>10$.  It is here that we will crucially rely on the fact that we have a \emph{strong} null frame atom.

Let us first deal with the case when the angle $l$ is very large depending on $\eps$.  We express $F = \sum_{\kappa \in K_l} F_\kappa$ as in Definition \ref{atom-def2}, which induces a corresponding decomposition $\phi = \sum_{\kappa \in K_l} \phi_\kappa$.

By (null) energy estimates and Duhamel's formula, we know that $\phi_\kappa$ is equal to $\|F_\kappa\|_{NFA[\kappa]})$ times an average of free solutions of frequency $1$ and bounded energy, and also localised in frequency angle to a neighbourhood of $\pm \kappa$, localised to null half-spaces.  A standard application of the $TT^*$ method\footnote{Intuitively, the frequency localisation to $\kappa$ will cause a spreading in physical space at scale $2^l$ in the direction orthogonal to $\kappa$, which is the cause of the $2^{-l/2}$ gain.  Alternatively, one can foliate the sector of the light cone associated to $\kappa$ into a superposition of light rays (with the net measure of this superposition being $O(2^{-l})$), obtain an $L^2_t L^\infty_x$ estimate for the contribution of each light ray, and then average using Cauchy-Schwarz (cf. \cite{tataru:wave2} for a similar argument).} (similar to that used to prove Lemma \ref{lemfree}) then shows that such free solutions are bounded in $L^2_t L^\infty_x(T_{x_0,\omega,0})$ with a norm of $O(2^{-l/2})$ uniformly over all choices of $x_0,\omega$ with $\dist(\omega, \pm \kappa) \sim 1$.  By Minkowski's inequality, we thus have
$$ \sup_{x_0,\omega} \| P_{0,\kappa} \nabla_{t,x} \phi_\kappa \|_{L^2_t L^\infty_x(T_{x_0,\omega,0} \cap (I \times \R^2))} 
\lesssim 2^{-l/2} \|F_\kappa\|_{NFA[\kappa]}.$$
Taking $\ell^2$ norms of both sides and using H\"older's inequality and \eqref{sukf-2}, we conclude that
$$ \sup_{x_0,\omega} \| P_{0,\kappa} \nabla_{t,x} \phi \|_{L^2_t L^\infty_x(T_{x_0,\omega,0} \cap (I \times \R^2))} 
\lesssim 2^{-cl}$$
for some absolute constant $c > 0$.  This establishes the claim except when $l = O_\eps(1)$.

Finally, we consider the case when $l = O_\eps(1)$.  Let $\eps' > 0$ be chosen later, then by expressing each $\phi_\kappa$ as an average of truncated free solutions as before, and applying Lemma \ref{lemfree} and Corollary \ref{bcl-2}, we know that for each $\kappa$ we can divide $I$ into $O_{\eps',l}(1)$ intervals $J$ such that
$$ \sup_{x_0,\omega} \| P_{0,\kappa} \nabla_{t,x} \phi_\kappa \|_{L^2_t L^\infty_x(T_{x_0,\omega,0} \cap (J \times \R^2))} \lesssim \eps'$$
on each $J$.  By applying Corollary \ref{bcl-2} again we conclude that we can divide $I$ into $O_{\eps',l}(1)$ intervals $J$ such that
$$ \sup_{x_0,\omega} \| P_{0,\kappa} \nabla_{t,x} \phi \|_{L^2_t L^\infty_x(T_{x_0,\omega,0} \cap (J \times \R^2))} \lesssim_l \eps'$$
on each $J$.  Taking $\eps'$ sufficiently small depending on $\eps,l$ we obtain the claim.

\section{$L^p$ and exceptional sets}

Minkowski's integral inequality implies that if a family $(f_y)_{y \in Y}$ of functions on a measure space $X$ is uniformly bounded in $L^p(X)$, then (assuming some mild measurability conditions) any average $\int_Y f_y\ d\nu(y)$ of the $f_y$, where $\nu$ is a probability measure on $Y$, is also bounded in $L^p(X)$.  

Now suppose that we additionally know that each $f_y$ is small in $L^p$ outside of an exceptional set $F_y$.  If the exceptional set $F_y$ was independent of $y$, then another appeal to Minkowski's inequality would tell us that the average $\int_Y f_y\ d\nu(y)$ is also small in $L^p$ outside of the exceptional set.  For our applications, however, the exceptional set $F_y$ will depend on $y$.  Fortunately, one still has a substitute for this fact in this setting, as long as one stays away from the endpoints $p=1,p=\infty$:

\begin{lemma}[Minkowski's inequality with exceptional sets]\label{minkex}  Let $(X,\mu)$ be a measure space, let $1 < p < \infty$, let $0 < \eps \leq 1$, let $(Y,\nu)$ be a probability space, and for each $y \in Y$ let $f_y \in L^p(X)$ and $F_y \subset X$ be a measurable set such that
$$ \| f_y \|_{L^p(X)} \leq A$$
and
$$ \| f_y \|_{L^p(X \backslash F_y)} \leq \eps A.$$
Assume also that the function $(x,y) \mapsto f_y(x)$ is measurable on $X \times Y$, and the set $\{ (x,y): x \in F_y \}$ is similarly measurable in $X \times Y$. Then for any $k \geq 1$, there exists $y_1,\ldots,y_k \in Y$ such that
$$ \| \int_Y f_y\ d\nu(y) \|_{L^p(X \backslash (F_{y_1} \cup \ldots \cup F_{y_k}))} \leq \eps A + (p-1)^{1/p'} k^{-1/p'} A$$
where $p' = p/(p-1)$ is the dual exponent of $p$.   In particular, if $k$ is sufficiently large depending on $\eps,p$, one has
$$ \| \int_Y f_y\ d\nu(y) \|_{L^p(X \backslash (F_{y_1} \cup \ldots \cup F_{y_k}))} \leq 2\eps A$$
\end{lemma}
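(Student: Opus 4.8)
\textbf{Proof proposal for Lemma \ref{minkex}.} The plan is to peel off the ``good'' part of each $f_y$, dominate the remaining ``bad'' part pointwise by a single auxiliary function whose size at a point $x$ is controlled by how frequently $x$ lies in an exceptional set, and then to choose $y_1,\dots,y_k$ at random so that the union $E:=F_{y_1}\cup\dots\cup F_{y_k}$ swallows most of that bad part. First I would make the reduction. Set $g:=\int_Y f_y\,d\nu(y)$ and fix an arbitrary measurable $E\subset X$. Splitting $1_{X\setminus E} f_y = 1_{(X\setminus E)\cap F_y} f_y + 1_{(X\setminus E)\setminus F_y} f_y$ and integrating in $y$, Minkowski's integral inequality and the hypothesis $\|f_y\|_{L^p(X\setminus F_y)}\le \eps A$ (together with $\nu(Y)=1$) bound the $L^p$-norm of the contribution of the second term by $\eps A$, while the first term is dominated pointwise by $1_{X\setminus E}(x)\,v(x)$ with
$$ v(x) := \int_Y 1_{F_y}(x)\,|f_y(x)|\,d\nu(y). $$
Hence $\|g\|_{L^p(X\setminus E)} \le \eps A + \|v\|_{L^p(X\setminus E)}$, and it suffices to find $y_1,\dots,y_k$ such that, with $E = F_{y_1}\cup\dots\cup F_{y_k}$, one has $\|v\|_{L^p(X\setminus E)}\le (p-1)^{1/p'} k^{-1/p'} A$.

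Next I would record the pointwise estimate coming from H\"older's inequality in the $y$ variable. Put $q(x):=\nu(\{y: x\in F_y\})$ and $h(x):=\int_Y |f_y(x)|^p\,d\nu(y)$; both are measurable by Tonelli's theorem, which applies precisely because of the joint measurability hypotheses on $(x,y)\mapsto f_y(x)$ and on $\{(x,y): x\in F_y\}$. Then $v(x) \le q(x)^{1/p'}\big(\int_Y |f_y(x)|^p\,d\nu(y)\big)^{1/p} = q(x)^{1/p'} h(x)^{1/p}$, i.e. $v(x)^p \le q(x)^{p-1} h(x)$, while $\int_X h\,d\mu = \int_Y \|f_y\|_{L^p(X)}^p\,d\nu(y) \le A^p$.

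Then comes the probabilistic selection, which is the heart of the argument. Choosing $y_1,\dots,y_k$ independently according to $\nu$, one has $\mathbb{P}(x\notin E)=(1-q(x))^k$ for each fixed $x$, so by Tonelli
$$ \mathbb{E}\int_{X\setminus E} v(x)^p\,d\mu(x) = \int_X v(x)^p (1-q(x))^k\,d\mu(x) \le \Big(\sup_{0\le t\le 1} t^{p-1}(1-t)^k\Big)\int_X h\,d\mu. $$
Using $1-t\le e^{-t}$ on $[0,1]$ and elementary calculus, $\sup_{0\le t\le 1} t^{p-1}(1-t)^k \le \sup_{t\ge 0} t^{p-1}e^{-kt} = (p-1)^{p-1}k^{-(p-1)}e^{-(p-1)} \le (p-1)^{p-1}k^{-(p-1)}$. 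Therefore some realization $y_1,\dots,y_k$ satisfies $\int_{X\setminus E} v^p\,d\mu \le (p-1)^{p-1}k^{-(p-1)}A^p$; taking $p$-th roots and recalling $(p-1)/p = 1/p'$ gives $\|v\|_{L^p(X\setminus E)}\le (p-1)^{1/p'}k^{-1/p'}A$, hence by the first paragraph $\|g\|_{L^p(X\setminus E)}\le \eps A + (p-1)^{1/p'}k^{-1/p'}A$ as claimed.

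Finally, the ``in particular'' statement follows immediately: once $k\ge (p-1)\eps^{-p'}$ we have $(p-1)^{1/p'}k^{-1/p'}\le\eps$, so the bound becomes $2\eps A$. I do not anticipate any genuine obstacle here; the only point demanding attention is the measurability bookkeeping needed to invoke Tonelli both to define $q$ and $h$ and to compute the expectation over $Y^k$, and this is exactly what the joint-measurability hypotheses are designed to supply.
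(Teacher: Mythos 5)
Your proof is correct and follows essentially the same route as the paper: split off the part of $f_y$ supported outside $F_y$ via Minkowski, then select $y_1,\dots,y_k$ i.i.d.\ according to $\nu$ and bound the expected $L^p$ norm of the remaining part on $X\setminus(F_{y_1}\cup\dots\cup F_{y_k})$ by combining Fubini--Tonelli, H\"older in $y$, and the elementary maximization of $t^{p-1}(1-t)^k$. The only cosmetic differences are your explicit auxiliary functions $v,q,h$ and the use of $1-t\le e^{-t}$ in place of the paper's exact calculus computation, which yield the same constant.
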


\begin{proof}  We can normalise $A := 1$.
We split
$$ \int_Y f_y\ d\nu(y) = \int_Y f_y 1_{F_y}\ d\nu(y) + \int_Y f_y 1_{X \backslash F_y}\ d\nu(y).$$
From Minkowski's inequality we have
$$ \| \int_Y f_y 1_{X \backslash F_y}\ d\nu(y) \|_{L^p(X)} \leq \eps$$
so it suffices to show that
$$ \| \int_Y f_y 1_{F_y} \ d\nu(y) \|_{L^p(X \backslash (F_{y_1} \cup \ldots \cup F_{y_k}))} \leq (p-1)^{(p-1)/p} k^{-(p-1)/p}.$$

We use the probabilistic method.  Select $y_1,\ldots,y_k \in Y$ independently and uniformly at random using the probability measure $\nu$.  It suffices to show that
$$ \E \| \int_Y f_y 1_{F_y}\ d\nu(y) \|_{L^p(X \backslash (F_{y_1} \cup \ldots \cup F_{y_k}))}^p \leq (p-1)^{p-1} k^{-(p-1)}.$$
By the Fubini-Tonelli theorem, we can rewrite the left-hand side as
$$\int_X (\E 1_{X \backslash (F_{y_1} \cup \ldots \cup F_{y_k})}(x)) |\int_Y f_y 1_{F_y}(x)\ d\nu(y)|^p d\mu(x).$$
For each $x$, let $E_x \subset Y$ be the set $E_x := \{ y \in Y: x \in F_y\}$.  Then by independence, we see that
$$ \E 1_{X \backslash (F_{y_1} \cup \ldots \cup F_{y_k})}(x) = (1 - \nu(E_x))^k;$$
meanwhile, by H\"older's inequality one has
\begin{align*}
|\int_Y f_y 1_{F_y}(x)\ d\nu(y)|^p &= |\int_{E_x} f_y\ d\nu(y)|^p
&\leq \nu(E_x)^{p-1} \int_Y |f_y|^p\ d\nu(y).
\end{align*}
Also, from calculus we see that the function $t \mapsto (1-t)^k t^{p-1}$ for $0 \leq t \leq 1$ is maximised when $t = \frac{p-1}{p-1+k}$, where it takes the value of $\frac{k^k (p-1)^{p-1}}{(p-1+k)^{p-1+k}} \leq (\frac{p-1}{k})^{p-1}$.  Combining all these estimates together we obtain
$$\int_X \int_Y (\frac{p-1}{k})^{p-1} |f_y|^p \ d\nu(y) d\mu(x)$$
and the claim follows from another application of the Fubini-Tonelli theorem.
\end{proof}

\section{Dispersion of the free wave equation}

Let $\phi$ be a solution to the free wave equation on $\R^{1+2}$.  Observe that the energy
$$ \E(\phi) = \E(\phi[t]) := \frac{1}{2} \int_{\R^2} |\nabla_{t,x} \phi(t,x)|^2\ dx$$
is independent of time.

The purpose of this section is to establish the following local energy dispersion estimate away from a few light rays.

\begin{theorem}[Energy dispersion away from light rays]\label{edisp}  Let $\phi$ be a solution to the free wave equation on $\R^{1+2}$.  Let $0 < \mu < 1$.  Then there exists a collection $(T_\beta)_{\beta \in B}$ of (infinite) tubes of frequency parameter $1$ (i.e. thickness $1$) and cardinality at most $O( \mu^{-O(1)} )$ such that
$$ \int_{|x-x_0| \leq 1} |\nabla_{t,x} \phi(t_0,x)|^2 1_{\R^{1+2} \backslash \bigcup_{\beta \in B} T_\beta}(t_0,x)\ dx \lesssim \mu \E(\phi).$$
for all $(t_0,x_0) \in \R^{1+2}$.
\end{theorem}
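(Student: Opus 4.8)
The plan is to decouple the problem into frequency scales and handle the frequency-$1$ piece separately, while noting that the low-frequency and high-frequency pieces are already dispersed everywhere (not just away from light rays). First I would perform a Littlewood--Paley decomposition $\phi = \sum_k P_k \phi$, and observe that by the usual dispersive (stationary phase) estimate for the free wave equation, each piece $P_k\phi$ with $|k|$ large satisfies a good pointwise decay estimate: for $k$ very negative, $\|\nabla_{t,x} P_k\phi(t)\|_{L^\infty_x} \lesssim 2^{k} \E(P_k\phi)^{1/2}$ by Bernstein, so integrating over a unit ball gives a contribution that is geometrically small in $|k|$; for $k$ very positive, the dispersive estimate $\|\nabla_{t,x} P_k\phi(t)\|_{L^\infty_x}\lesssim |t|^{-1/2} 2^{3k/2}\|\cdot\|_{L^1}$ combined with the fact that a unit ball only sees $O(2^{-k})$ of the ``width'' of the frequency-$2^k$ wave means the local energy in a unit ball is $O(2^{-k}\E(P_k\phi))$. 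Summing the tails in $k$ over $|k| \geq C\log(1/\mu)$ contributes at most $O(\mu)\E(\phi)$, with no exceptional tubes needed. So it remains to treat the $O(\log(1/\mu))$ frequency-localised pieces with $|k| \leq C\log(1/\mu)$, and since a constant number of such pieces is harmless (we may rescale each to frequency $1$ and union the resulting tube collections, losing only a factor of $O(\log(1/\mu))$ in $|B|$), we reduce to the case where $\phi$ has Fourier support in the annulus $\{|\xi|\sim 1\}$.

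For the frequency-$1$ piece, the key observation is that the local energy $\int_{|x-x_0|\leq 1}|\nabla_{t,x}\phi(t_0,x)|^2\,dx$, as a function of the spacetime point $(t_0,x_0)$, is essentially the square of a bilinear expression that one can control by a suitable $L^2_tL^\infty_x$-type norm along tubes — precisely the kind of quantity appearing in $\|\phi\|_{S'_{\mu,k}}$ in Definition~\ref{smuk-def}, and more to the point, the kind of quantity controlled by the inverse theorem machinery referenced in Section~\ref{inverse-sec} (Theorem~\ref{tao-thm3}). The strategy is: if the local energy exceeds $\mu\E(\phi)$ at some point $(t_0,x_0)$, then by duality and finite propagation speed one extracts a normalised ``bush'' of energy concentrated near the light cone emanating from a unit ball around $(t_0,x_0)$; a $TT^*$ / stationary-phase argument (as in the proof of Lemma~\ref{lemfree} above, or in \cite{tao:inverse}) shows that two such concentration points that are not aligned along a common light ray interact negligibly in the relevant bilinear pairing. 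Hence a greedy selection algorithm: repeatedly pick a spacetime unit ball where the local energy is $\gtrsim\mu\E(\phi)$, record the light ray (a frequency-$1$ tube) through it, subtract off that contribution; almost-orthogonality forces the process to terminate after $O(\mu^{-O(1)})$ steps, and the recorded tubes $(T_\beta)_{\beta\in B}$ capture all the remaining concentration.

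To make the almost-orthogonality precise one writes, for each candidate concentration point, the local energy as $\langle \nabla_{t,x}\phi, K*\nabla_{t,x}\phi\rangle$ on a unit ball, where $K$ is the kernel arising from restriction to a ball, and estimates the off-diagonal inner products $\langle U(t-t') P_{0,\kappa}\delta_{x}, \delta_{x'}\rangle$ by the transversality-based stationary phase bounds already used in the proof of Lemma~\ref{lemfree} (decay of the form $(1+|t-t'|)^{-1/2}(1+\dist(\cdot,\Sigma))^{-100}$ off the relevant piece of the light cone); Schur's test then bounds the total off-diagonal sum by $o(N^{3/2})$ for $N$ balls, contradicting a lower bound of $\gtrsim \mu N$ once $N \gg \mu^{-O(1)}$. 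The main obstacle is the bookkeeping in this greedy/almost-orthogonality step: one must set up the bilinear form so that ``concentration along a common light ray'' is exactly the obstruction to orthogonality, and one must verify that the tubes one extracts are genuinely of bounded number — this is where the precise form of the inverse theorem \cite{tao:inverse} (rather than a soft compactness argument) is needed to get the polynomial-in-$\mu$ bound on $|B|$ rather than a merely finite bound. The low/high frequency truncation and the rescaling of the $O(\log(1/\mu))$ middle pieces are routine by comparison.
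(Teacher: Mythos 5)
Your proposal has two genuine gaps. First, the opening Littlewood--Paley reduction is not correct: the high-frequency pieces cannot be discarded. A free wave of frequency $2^k$ with $k$ large can hold essentially \emph{all} of its energy inside a single unit ball at a given time (take data supported in a unit disk and oscillating at frequency $2^k$), so the claimed bound ``local energy in a unit ball is $O(2^{-k}\E(P_k\phi))$'' is false; the $|t|^{-1/2}2^{3k/2}\|\cdot\|_{L^1}$ dispersive estimate involves an $L^1$ norm that the energy does not control, and in any case gives nothing near the concentration time. High-frequency components concentrate along light rays exactly as the frequency-one component does and must be captured by the tubes, not summed away; indeed the paper makes no frequency decomposition at all. (The very low frequencies are fine by Bernstein, but that alone does not rescue the reduction.)

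Second, the heart of the matter --- producing $O(\mu^{-O(1)})$ tubes that capture \emph{all} spacetime concentration points --- is precisely the step you leave as ``the main obstacle,'' and the sketched greedy/$TT^*$ scheme does not obviously close: two concentration points at the same time but different positions have no time separation for the Schur-type decay you invoke from the proof of Lemma \ref{lemfree}, and a concentration point of a general energy-class wave does not by itself select a light-ray direction to record. The paper's route is different and softer at the first stage: wherever the local energy exceeds $\mu$, subtract the free evolution of the data restricted to that unit ball; energy conservation alone gives $\E(\phi'-\psi)\le\E(\phi')-\mu$, so after $O(1/\mu)$ steps one is left with a remainder that is dispersed at \emph{every} point (no tubes needed) plus $O(1/\mu)$ free waves each with data supported in a unit disk. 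Only for these bumps are tubes extracted: finite speed of propagation disposes of bounded times, and for $t_0$ large a stationary-phase/propagation-of-singularities argument shows that concentration at $(t_0,x_0)$ forces $\gtrsim\mu^{O(1)}$ of the (fixed, $O(1)$-energy) spatial Fourier mass into a sector of angular width $O(\mu^{O(1)}/t_0)$ about $\pm x_0$; a pigeonhole among essentially disjoint sectors then bounds the number of concentration directions by $O(\mu^{-O(1)})$ and covers the concentration set by that many tubes. In particular no bilinear inverse theorem from \cite{tao:inverse} is needed for Theorem \ref{edisp}; that input enters elsewhere (Theorem \ref{tao-thm3}), not here. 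To repair your argument you would need both to drop the high-frequency truncation and to supply the concentration-implies-Fourier-sector step (or some substitute) that makes the tube count quantitative.
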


We now prove this theorem.  We may normalise $\E(\phi)$ to equal $1$, and we may assume $\mu$ to be small (e.g. $\mu \leq 1/100$).  We now perform the following greedy algorithm to remove pieces of concentrated energy from $\phi$:

\begin{itemize}
\item Step 0.  Initialise $\phi' := \phi$.
\item Step 1.  If there are no points $(t_0,x_0)$ in spacetime where $\int_{|x-x_0| \leq 1} |\nabla_{t,x} \phi'(t_0,x)|^2\ dx \leq \mu$ then {\tt STOP}.
\item Step 2.  Otherwise, let $\psi$ be the solution to the free wave equation with $\psi[t_0] := 1_{|x-x_0| \leq 1} \phi'[t_0]$; observe from computing the energy at $t_0$ that $\E(\phi'-\psi) \leq \E(\phi') - \mu$.  Now replace $\phi'$ by $\phi'-\psi$ and return to step 1.
\end{itemize}

As the energy decreases by $\mu$ at each stage of the iteration, this procedure must halt after at most $O(1/\mu)$ steps, and produces a decomposition
$$ \phi = \phi_0 + \sum_{j=1}^J \psi_j$$
where $J = O(1/\mu)$, $\phi_0, \psi_j$ solve the free wave equation, $\phi_0$ is dispersed in the sense that
$$ \int_{|x-x_0| \leq 1} |\nabla_{t,x} \phi_0(t_0,x)|^2 1_{\R^{1+2} \backslash \bigcup_{\beta \in B} T_\beta}(t_0,x)\ dx \leq \mu $$
for all $t_0,x_0$, and for each $1 \leq j \leq J$ there exists $(t_j,x_j)$ such that $\psi_j[t_j]$ is supported in the disk $\{ x \in \R^2: |x-x_0| \leq 1 \}$.

By the triangle inequality, it thus suffices to find, for each $j$, a collection $(T_\beta)_{\beta \in B}$ of (infinite) tubes of frequency parameter $1$ and cardinality $O(\mu^{-O(1)})$ such that
$$ \int_{|x-x_0| \leq 1} |\nabla_{t,x} \psi_j(t_0,x)|^2 1_{\R^{1+2} \backslash \bigcup_{\beta \in B} T_\beta}(t_0,x)\ dx \leq \mu^2$$
for all $t_0,x_0$.

Fix $j$.  By spacetime translation invariance we may take $t_j=x_j=0$.  

When $|t_0| \leq \mu^{-100}$, $\psi_j$ is supported in a region of diameter $O(\mu^{-100})$ by finite speed of propagation, which can be easily covered by $O(\mu^{-O(1)})$ tubes, so by time reversal symmetry we only need to consider the region $t_0 > \mu^{-100}$.  From the fundamental solution we see that we only need to consider $x_0$ in the range $||x_0| - t_0| \leq \mu^{-10}$ (say).

Call $(t_0,x_0)$ a \emph{concentration point} if $t_0 \geq \mu^{-100}$, $||x_0| - t_0| \leq \mu^{-10}$, and
$$  \int_{|x-x_0| \leq 1} |\nabla_{t,x} \psi_j(t_0,x)|^2\ dx > \mu^2.$$
Using standard Fourier integral operator propagation of singularities theory (or $TT^*$ and stationary phase), we see that if $(t_0,x_0)$ is a concentration point, then there is energy concentration in frequency space along a sector parallel to $x_0$, in the sense that
\begin{equation}\label{mox}
 \int_{\Gamma_{t_0,x_0}} |\hat{\nabla_{t,x} \phi}(0,\xi)|^2\ d\xi \gtrsim \mu^{O(1)}.
 \end{equation}
where
$$ \Gamma_{t_0,x_0} := \{ \xi \in \R^2: |\xi| \gtrsim \mu^{O(1)} t_0; \angle \xi, \pm x_0 \lesssim \mu^{O(1)} / t_0 \}.$$
Applying the greedy algorithm (starting with the maximal value of $t_0$ and working one's way down to locate a maximal disjoint collection of $\Gamma_{t_0,x_0}$), we see that the set of all $(t_0,x_0)$ which obey \eqref{mox} can be covered by $O(\mu^{-O(1)})$ tubes of thickness $O(\mu^{-O(1)})$, and the claim follows.

\section{A refined bilinear $L^2$ estimate}\label{inverse-sec}

In \cite{tao:inverse}, the following refined bilinear $L^2$ estimate was established:

\begin{theorem}\label{tao-thm}\cite{tao:inverse}  Let $0 < \mu < 1$ and let $\phi$ a solution to the free wave equation on $\R^{1+2}$ with spacetime Fourier transform supported on the region $\{ (|\xi|, \xi): |\xi| \sim 1; \angle \xi, e_1 \leq \pi/8 \}$.  Then there exists a collection $(T_\beta)_{\beta \in B}$ of (infinite) tubes of frequency parameter $1$ (i.e. thickness $1$) and cardinality at most $O( \mu^{-O(1)} )$ such that
$$
\| \phi \psi \|_{L^2_{t,x}(\R^{1+2} \backslash \bigcup_{\beta \in B} T_\beta)} \lesssim \mu 2^{-k} \| \phi[0]\|_{\dot H^1 \times L^2(\R^2)} \| \psi[0]\|_{\dot H^1 \times L^2(\R^2)}$$
whenever $k \geq 0$ and $\psi$ is a solution to the free wave equation with spacetime Fourier transform supported on the region $\{ (|\xi|, \xi): |\xi| \sim 2^k; \angle \xi, -e_1 \leq \pi/8 \}$.
\end{theorem}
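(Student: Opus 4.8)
The plan is to refine the classical bilinear $L^2$ estimate
$$ \| \phi \psi \|_{L^2_{t,x}(\R^{1+2})} \lesssim 2^{-k} \| \phi[0]\|_{\dot H^1 \times L^2(\R^2)} \| \psi[0]\|_{\dot H^1 \times L^2(\R^2)} $$
(which follows from writing $\widehat{\phi\psi}$ as a convolution of the two measures carried on the light cones near $e_1$ and near $-e_1$, and estimating the density of the convolved measure by the transversality of the two cones — the gradient of $|\xi|$ points along $\pm e_1$, so the two null directions make an angle $\sim\pi$, producing a Jacobian lower bound that yields the $2^{-k}$ gain). Since equality in this estimate is approached only by configurations whose inputs concentrate along light rays, the refinement should follow from an inverse/concentration-compactness argument: either one already has the extra factor $\mu$, or a bounded (polynomial in $\mu^{-1}$) number of thickness-$1$ tubes capture the obstruction.

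\textbf{Main steps.} First I would decompose each input into wave packets: tile $\phi$ (frequency $\sim 1$) into packets $\phi_T$ adapted to infinite tubes $T=\{(t,x):|x-x_T-t\omega_T|\lesssim 1\}$ with $\omega_T$ near $e_1$, and tile $\psi$ (frequency $\sim 2^k$) into the finer packets adapted to $2^{-k}$-tubes, which I then group into the thickness-$1$ tubes $T'$ with direction near $-e_1$. The key geometric input is that for a transverse pair $(T,T')$ the intersection $T\cap T'$ is an $O(1)$-ball, so the bilinear packet estimate localizes $\phi_T\psi_{T'}$ near that ball and gives a contribution that is individually tiny; the only danger is many packets piling up over a common region. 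Second, I would run a greedy extraction exactly in the spirit of the proof of Theorem \ref{edisp}: repeatedly remove from $\phi$ the thickness-$1$ tube carrying the most $\dot H^1\times L^2$ energy, stopping once no tube carries more than $\mu^{O(1)}\|\phi[0]\|^2$; by energy subtraction this halts after $O(\mu^{-O(1)})$ steps, producing $\phi=\phi_0+\sum_{\beta} \phi_\beta$ with $\phi_0$ \emph{tube-dispersed} and each $\phi_\beta$ supported (up to tails) in a tube $T_\beta$. Do the same for $\psi$. Third, for the dispersed remainders I would return to the Fourier side: restricting $\widehat{\phi_0}$ to an angular cap of aperture $\theta$ and $\widehat{\psi_0}$ to a cap of aperture $\theta'$, each resulting piece is (after an affine rescaling, or equivalently after cutting the output cone-neighbourhood into $1\times 2^{k/2}\times 2^k$ plates and reducing to the $k=0$ scale) essentially a single rescaled wave, the output pieces have bounded overlap by transversality, and one obtains an $\ell^2$ — not merely $\ell^1$ — summation over the caps with the classical $2^{-k}$ weight; the tube-dispersion hypothesis then forces each cap's physical-space contribution to be small, and the $\ell^2$ sum produces the gain $\| \phi_0\psi_0\|_{L^2} \lesssim \mu 2^{-k}\|\phi[0]\|\|\psi[0]\|$. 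Finally, the cross terms and the heavy-tube$\times$heavy-tube terms in $\phi\psi=\phi_0\psi_0+(\text{terms with at least one }\phi_\beta\text{ or }\psi_\beta)$ are all supported (modulo rapidly decaying tails, which one sweeps up by enlarging the tubes slightly) inside $\bigcup_\beta T_\beta\cup\bigcup_{\beta'} T'_{\beta'}$, so they do not contribute to the norm over the complement; collecting the $O(\mu^{-O(1)})$ tubes from both extractions gives the stated family $(T_\beta)_{\beta\in B}$.

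\textbf{Main obstacle.} The hard part is the third step: upgrading ``no fat tube carries much energy'' to ``the bilinear $L^2$ interaction is genuinely dispersed'', uniformly as $k\to\infty$. At the frequency level the classical estimate is already sharp, so one cannot gain merely from spreading the Fourier mass; one needs a true \emph{bilinear} combinatorial argument — an induction on scales of Tao–Vargas–Vega/Wolff type, with bush/hairbrush counting for the wave packets of $\psi$ at the fine scale $2^{-k}$ — to convert physical-space dispersion into the factor $\mu$ with only a polynomial loss in $\mu^{-1}$. This is precisely the technical content of \cite{tao:inverse}, and the bookkeeping needed to keep all tube counts polynomial in $\mu^{-1}$ while handling the Schwartz tails of the wave-packet decomposition is where almost all of the work lies; the rest of the argument (the two greedy extractions and the cross-term accounting) is routine given that core estimate.
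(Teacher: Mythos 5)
The first thing to note is that this paper contains no proof of Theorem \ref{tao-thm} to compare against: the theorem is imported verbatim from \cite{tao:inverse}, and the citation is the entire argument given here. So the question is whether your sketch stands on its own, and it does not. You concede this yourself in your final paragraph: the whole content of the theorem --- converting physical-space tube dispersion into a genuine factor of $\mu$ in the bilinear $L^2$ estimate, uniformly in $k$ and with only polynomial losses in $\mu^{-1}$ --- is exactly the step you label the main obstacle and then defer back to \cite{tao:inverse}. A proof whose key step is the theorem being proven is a gap, not a proof. Your step for the dispersed remainders does not close it as written: restricting $\widehat{\phi_0}$ to an angular cap does not make the resulting piece ``essentially a single rescaled wave'', and the claim that tube-dispersion forces each cap's physical contribution to be small is the inverse theorem restated, not derived; the $\ell^2$-over-caps bookkeeping cannot by itself produce the gain, because the classical $2^{-k}$ estimate is already $\ell^2$-efficient over caps and is sharp, so spreading Fourier mass buys nothing --- this is where the induction-on-scales/Wolff-type combinatorics of \cite{tao:inverse} is genuinely needed.

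There is also a concrete problem with your greedy extraction. In the proof of Theorem \ref{edisp} the removed pieces are free waves whose data at a \emph{single time} is supported in a unit ball, so the energy decrement is immediate from restricting the data at that time; by contrast, ``the thickness-$1$ tube carrying the most $\dot H^1\times L^2$ energy'' does not define a free-wave component. A free wave cannot be even approximately supported in one infinite tube unless it is a single wave packet, and splitting off ``the part of $\phi$ in $T$'' as a free wave forces a choice of time slice, after which the subtracted piece does not remain in $T$ for all time and the claimed energy subtraction and cross-term support properties are no longer automatic (Schwartz tails aside). More fundamentally, even granting a clean decomposition, linear tube-dispersion of each factor separately at level $\mu^{O(1)}$ is not known to imply bilinear smallness by soft arguments: ruling out configurations in which many individually light, pairwise transverse packet families overlap efficiently is precisely the bilinear combinatorial content of \cite{tao:inverse}. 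If you want a self-contained treatment you would have to reproduce that argument; as a reading of this paper, the accurate statement is simply that Theorem \ref{tao-thm} is an external input, used here only through its consequences Theorems \ref{tao-thm2} and \ref{tao-thm3}.
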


The point here is the gain of $\mu$, as otherwise this estimate follows from standard bilinear $L^2$ estimates on $\R^{1+2}$.

Theorem \ref{tao-thm} implies a null form analogue:

\begin{theorem}\label{tao-thm2}  Let $0 < \mu < 1$ and let $\phi$ a solution to the free wave equation on $\R^{1+2}$ with spatial Fourier transform supported on the region $\{ |\xi| \sim 1 \}$.  Then there exists a collection $(T_\beta)_{\beta \in B}$ of tubes of frequency parameter $1$ and cardinality at most $O( \mu^{-O(1)} )$ such that
$$
\| \partial^\alpha \phi \partial_\alpha \psi \|_{L^2_{t,x}(\R^{1+2} \backslash \bigcup_{\beta \in B} T_\beta)} \lesssim \mu \| \phi[0]\|_{\dot H^1 \times L^2(\R^2)} \| \psi[0]\|_{\dot H^1 \times L^2(\R^2)}$$
whenever $k \geq 0$ and $\psi$ is a solution to the free wave equation with spatial Fourier transform supported on the region $\{ \xi: |\xi| \sim 2^k \}$.
\end{theorem}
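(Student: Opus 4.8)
The plan is to deduce Theorem \ref{tao-thm2} from Theorem \ref{tao-thm} by combining the null structure of the form $\partial^\alpha\phi\,\partial_\alpha\psi$ with a Whitney decomposition of the two light cones, so that the only genuinely dangerous interactions are precisely those controlled by Theorem \ref{tao-thm}. Since $\Box\phi=\Box\psi=0$, we have the identity $\partial^\alpha\phi\,\partial_\alpha\psi=\tfrac12\Box(\phi\psi)$, so it suffices to bound $\Box(\phi\psi)$ in $L^2_{t,x}$ away from a small collection of unit tubes. First decompose $\phi=\phi_++\phi_-$ and $\psi=\psi_++\psi_-$ into the portions whose spacetime Fourier support lies on the forward and backward light cones; replacing a function by its complex conjugate and, if necessary, reversing time preserves all norms in play, the class of free waves and the class of tubes, so it is enough to treat the two model products $\partial^\alpha\phi_+\,\partial_\alpha\psi_+$, whose bilinear symbol degenerates exactly when the two spatial frequencies are parallel, and $\partial^\alpha\phi_+\,\partial_\alpha\psi_-$, whose symbol degenerates when they are antiparallel. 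In either case, decompose $\phi_\pm,\psi_\pm$ into angular caps and perform a Whitney decomposition adapted to the relevant (anti-)diagonal, writing $\phi_\pm\psi_\pm=\sum_{j\geq0}\sum_{(\kappa,\kappa')\in W_j}\phi_\kappa\psi_{\kappa'}$, where $\kappa,\kappa'$ are caps of angular width $\sim2^{-j}$ with $\angle(\kappa,\pm\kappa')\sim2^{-j}$, $|W_j|\sim2^j$, and each $\kappa'$ has only $O(1)$ partners $\kappa$. The key elementary fact is that on the spacetime Fourier support of $\phi_\kappa\psi_{\kappa'}$ the symbol of $\Box$ obeys
\[
|\zeta_0^2-|\zeta_x|^2|=2|\xi||\eta|\,\bigl|\cos\angle(\xi,\eta)\mp1\bigr|\lesssim2^{k}2^{-2j},
\]
and moreover this Fourier support lies within distance $\sim2^{k}2^{-2j}$ of the cone; writing $\Box$ restricted to this support as $2^{k}2^{-2j}$ times convolution with an $L^1$-bounded kernel of diameter polynomial in $2^{j}$ gives
\[
\|\Box(\phi_\kappa\psi_{\kappa'})\|_{L^2(\R^{1+2}\backslash\bigcup_\beta T_\beta)}\lesssim 2^{k}2^{-2j}\,\|\phi_\kappa\psi_{\kappa'}\|_{L^2(\R^{1+2}\backslash\bigcup_\beta T'_\beta)},
\]
with $T'_\beta$ a dilate of $T_\beta$ by that diameter.

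It remains to bound each Whitney block, and here we split into two ranges of $j$. For $j>J_0:=C\log\tfrac1\mu$ we take no tubes and invoke the standard transverse bilinear $L^2$ estimate for two free waves with $2^{-j}$-separated frequency supports, which gives $\|\phi_\kappa\psi_{\kappa'}\|_{L^2_{t,x}(\R^{1+2})}\lesssim2^{O(j)}2^{-k}\|\phi[0]\|_{\dot H^1\times L^2(\R^2)}\|\psi[0]\|_{\dot H^1\times L^2(\R^2)}$; combining with the almost orthogonality of the output Fourier supports at a fixed scale, summing over the $\sim2^{j}$ blocks and then over $j>J_0$ produces, for $C$ large, a total contribution $\lesssim2^{-cJ_0}\lesssim\mu$. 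For the remaining $j\leq J_0$ — only $O(\log\tfrac1\mu)$ scales — we reduce each block to Theorem \ref{tao-thm}: a Lorentz transformation carries a $2^{-j}$-angular sector of the light cone to an $O(1)$-sector (the Lorentz group acts transitively on the cone, and the subgroup of dilations along a null ray rescales the angular width), and after a bounded rotation the pair $(\kappa,\kappa')$ is placed in the configuration of Theorem \ref{tao-thm} with caps near $e_1$ and $-e_1$. Since $L^2_{t,x}$ is Lorentz invariant, Theorem \ref{tao-thm} applied with parameter $\nu=\nu(j):=\mu\,2^{-Nj}$ (with $N$ a fixed constant large enough to absorb the bounded powers of $2^{j}$ coming from the change of the frequency parameter and of the data norms under the transformation, and from the overlap factor) provides $O(\nu(j)^{-O(1)})$ tubes, in the rescaled frame, outside of which $\|\phi_\kappa\psi_{\kappa'}\|_{L^2}\lesssim\mu\,2^{-k}\|\phi[0]\|_{\dot H^1\times L^2(\R^2)}\|\psi[0]\|_{\dot H^1\times L^2(\R^2)}$. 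Pulling these back and adjoining the dilated tubes $T'_\beta$, each becomes a tube of thickness $2^{O(j)}$, hence a union of $2^{O(j)}$ unit tubes; since $j\leq J_0$ we have $\nu(j)\geq\mu^{O(1)}$ and $2^{O(j)}\leq\mu^{-O(1)}$, so over all $j\leq J_0$ and all $O(2^{j})=O(\mu^{-O(1)})$ blocks the tube count is $O(\mu^{-O(1)})$, while the contribution to the sum is $\lesssim\sum_{j\leq J_0}2^{k}2^{-2j}\cdot2^{j/2}\cdot\mu\,2^{-k}\lesssim\sum_{j\geq0}\mu\,2^{-3j/2}\lesssim\mu$. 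Combining the $O(1)$ model cases and renormalising the data gives the theorem.

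The step I expect to be the main obstacle is the reduction of a Whitney block to Theorem \ref{tao-thm}: one must make precise how a Lorentz transformation (composed with a spacetime dilation) carries a $2^{-j}$-angular sector of the cone onto an $O(1)$-sector in the configuration demanded by Theorem \ref{tao-thm}, and carefully track how the frequency parameter, the data norms $\|\cdot\|_{\dot H^1\times L^2(\R^2)}$, the tube thickness, and the cardinality of the exceptional set transform. The argument should nonetheless be robust to the precise bookkeeping: the null symbol genuinely supplies the decay $2^{-2j}$, which makes the sum over scales convergent and forces only $O(\log\tfrac1\mu)$ scales to require tubes, and within that range any fixed power of $2^{j}$ lost in the reduction is compensated by shrinking the Theorem \ref{tao-thm} parameter to $\mu\,2^{-Nj}$ without spoiling the bound $O(\mu^{-O(1)})$ on the number of tubes. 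The remaining points — the transverse bilinear $L^2$ estimate at angular scale $2^{-j}$ for the high-$j$ tail, the almost orthogonality of the output Fourier supports of the Whitney blocks at a fixed scale, and the localisation of $\Box$ used above — are routine.
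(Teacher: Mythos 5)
Your proposal is correct and follows essentially the same route as the paper: split into the $\pm$ cone components, perform an angular Whitney decomposition, dispose of the small-angle blocks using the angular gain of the null form, and handle the remaining $O(\mu^{-O(1)})$ blocks at angle $\gtrsim \mu^{O(1)}$ by a Lorentz-transformed application of Theorem \ref{tao-thm}. The only differences are mechanical: the paper cites the standard null form estimates (Klainerman--Selberg) for the small-angle gain and, in the main range, simply discards the null structure by applying Theorem \ref{tao-thm} to each component $\partial^\alpha\phi_\kappa\,\partial_\alpha\psi_{\kappa'}$ separately (absorbing all losses into replacing $\mu$ by $\mu^{C'}$), whereas you re-derive the gain from $\partial^\alpha\phi\,\partial_\alpha\psi=\tfrac12\Box(\phi\psi)$ and retain it in the main range via the symbol of $\Box$, at the cost of the tube-dilation bookkeeping (where, note, the enlarged tubes $T'_\beta$ should be the ones excised in the output norm, as your final tube count in fact assumes).
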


\begin{proof} The spacetime Fourier transforms of $\phi,\psi$ are supported on the double light cone $\{ (\tau,\xi): |\tau|=|\xi| \}$.  Decomposing the double light cone into upper and lower components, and using symmetry, we may assume that $\phi$ has Fourier transform on the cone $\{ (|\xi|, \xi): |\xi| \sim 1 \}$ and $\psi$ has Fourier transform on the cone $\{ (\pm |\xi|, \xi): |\xi| \sim 2^k \}$ for some sign $\pm$.

Let us suppose first that $\pm = +$; we will treat the case $\pm=-$ later.  We then use an angular Whitney decomposition to partition the null form $\partial^\alpha \phi \partial_\alpha \psi$ into components $\sum_{\kappa, \kappa'} \partial^\alpha \phi_\kappa \partial_\alpha \psi_{\kappa'}$ where the spatial Fourier transforms of $\phi_\kappa, \psi_{\kappa'}$ are supported on sectors $\Gamma_\kappa$, $\Gamma_{\kappa'}$ respectively of angular width and separation $\sim 2^{-l}$ for some $l \geq 0$.  Standard null form estimates (e.g. \cite{kman.selberg}) then give
\begin{equation}\label{paka}
\| \partial^\alpha \phi_\kappa \partial_\alpha \psi_{\kappa'} \|_{L^2_{t,x}(\R^{1+2})} \lesssim 2^{-cl} \| \phi_\kappa[0]\|_{\dot H^1 \times L^2(\R^2)} \| \psi_{\kappa'}[0]\|_{\dot H^1 \times L^2(\R^2)}
\end{equation}
for some absolute constant $c>0$ (in fact one can take $c=1/4$); summing this using Cauchy-Schwarz, we see that the contribution of all the sectors with $2^{-l} \ll \mu^C$ for some large absolute constant $C$ are acceptable, so we only need to consider the $O(\mu^{-O(1)})$ contributions in which $2^{-l} = \mu^{O(1)}$.  But for each of these contributions one can use a Lorentz transformed version of Theorem \ref{tao-thm} (discarding the null structure by treating each component $\alpha$ of the null form separately) to obtain the claim for some collection of tubes $(T_\beta)_{\beta \in B}$ depending on $\kappa,\kappa'$ (and with $\mu$ replaced by $\mu^{C'}$ for some suitably large $C'$); taking the union of all these tubes and using the triangle inequality we obtain the claim.

The claim for $\pm=-$ is similar, except that one works with $-\Gamma_{\kappa'}$ instead of $\Gamma_{\kappa'}$, and uses the trivial fact that $\Phi \Psi$ and $\Phi \overline{\Psi}$ have the same $L^2_{t,x}$ norm for any $\Phi,\Psi$.
\end{proof}

The main purpose of this appendix is to extend Theorem \ref{tao-thm2} further, to the case where $\phi, \psi$ solve an inhomogeneous wave equation.  More precisely, we will show

\begin{theorem}\label{tao-thm3}  Let $0 < \mu < 1$ and let $I$ be an interval containing $0$.  Let $\phi, \psi: I \times \R^2 \to \C$ be a Schwartz function with spatial Fourier transforms supported on the regions $\{ |\xi| \sim 1\}$, $\{ |\xi| \sim 2^k\}$ respectively for some $k \geq 0$.
Then there exists a collection $(T_\beta)_{\beta \in B}$ of tubes of frequency parameter $1$ and cardinality at most $O( \mu^{-O(1)}  )$ such that
\begin{equation}\label{joy}
\| \partial^\alpha \phi \partial_\alpha \psi \|_{L^2_{t,x}(I \times \R^2 \backslash \bigcup_{\beta \in B} T_\beta)} \lesssim \mu \| \phi\|_{S^\strong_0(I \times \R^2)} \| \psi \|_{S^\strong_k(I \times \R^2)},
\end{equation}
where the implied constants can depend on $\delta_{-1}$.
\end{theorem}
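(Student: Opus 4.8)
The plan is to reduce \eqref{joy} to the free-wave bilinear estimate of Theorem~\ref{tao-thm2} via the parametrix representation of solutions in the $S^\strong_k$ spaces, following the usual paradigm (as in \cite[Section 2.9]{tao:cbms} or the derivations of the null-form estimates in \cite{tao:wavemap2}) that expresses an $S^\strong_k$ function as a superposition of modulated, truncated free solutions plus a Duhamel term controlled by the $N^\strong_k$ norm of $\Box$. First I would normalise $\|\phi\|_{S^\strong_0(I\times\R^2)}=\|\psi\|_{S^\strong_k(I\times\R^2)}=1$ and extend $\phi,\psi$ to $\R\times\R^2$ with comparable $S^\strong$ norms. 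Using the definition of $S^\strong_k$ (in particular \eqref{phis-plus}, \eqref{energy-est}), write $\phi=\phi_{\mathrm{free}}+\phi_{\mathrm{Duh}}$, where $\Box\phi_{\mathrm{free}}=0$ with $\phi_{\mathrm{free}}[0]=\phi[0]$ (bounded in the energy norm by \eqref{outgo}), and $\phi_{\mathrm{Duh}}=-\Box^{-1}\Box\phi$ with $\Box\phi\in N^\strong_0$; similarly for $\psi$. Standard Duhamel-type manipulations (cf.\ \cite[Lemma 2.9]{tao:cbms}) express the Duhamel term as an average of free solutions localised to spacetime half-spaces $\{t\ge t_0\}$, which does not increase $L^2_tL^\infty_x$ (hence also $L^2_{t,x}$) norms; the $\dot X^{0,-1/2,1}$ and $L^1_tL^2_x$ atoms that compose $N^\strong_k$ can be handled the same way, while the null-frame atoms carry an exponentially-small-in-angle gain that one sums crudely. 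This decomposes the bilinear product $\partial^\alpha\phi\,\partial_\alpha\psi$ into a sum of four bilinear terms, each of which is (an average of) products $\partial^\alpha\Phi\,\partial_\alpha\Psi$ of truncated free solutions of frequencies $\sim 1$ and $\sim 2^k$, bounded in the energy norm.

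Next I would apply Theorem~\ref{tao-thm2} to each such bilinear free-solution product. For a fixed pair of half-space-truncated free solutions $(\Phi,\Psi)$, Theorem~\ref{tao-thm2} supplies a collection of $O(\mu^{-O(1)})$ frequency-$1$ tubes outside of which the $L^2_{t,x}$ norm of $\partial^\alpha\Phi\,\partial_\alpha\Psi$ is $\le\mu$; truncating to a half-space only shrinks this norm, so the same tube collection works. The subtlety is that the various free components arising from the parametrix decomposition — indexed by frequency-envelope dyadic pieces, by $\dot X$ modulation, and by the angular Whitney sectors appearing inside the proof of Theorem~\ref{tao-thm2} — produce different tube families. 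To keep the total number of tubes polynomial in $\mu^{-1}$ I would argue as in Lemma~\ref{minkex}: combine the tube families from the $O(\mu^{-O(1)})$ ``large'' pieces (those not already negligible by summability), take their union, and absorb the contribution of all remaining pieces into the $\mu$ error by Cauchy--Schwarz in the envelope/modulation/angle parameters, using the exponential-in-angle decay \eqref{paka} and the $\chi^{\delta_1}$-type gains inherent in the $S^\strong_k$, $N^\strong_k$ structure (this is where the dependence of implied constants on $\delta_{-1}$, inherited through the strong null-frame atoms of Definition~\ref{atom-def2}, enters). Minkowski's integral inequality then lets one pass from averages of free solutions back to $\phi,\psi$ themselves.

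The main obstacle is the bookkeeping of tube families under averaging: Theorem~\ref{tao-thm2} produces, for each pair of \emph{individual} free components, its own exceptional tube set, and a naive union over all components would be infinite. The resolution is exactly the kind of ``$L^2$ with moving exceptional sets'' argument formalised in Lemma~\ref{minkex} (and the Besicovitch-type covering Corollary~\ref{bcl-2}): one exploits that only $O(\mu^{-O(1)})$ of the components carry more than a $\mu$-fraction of the energy, keeps tubes only for those, and shows the tail sum is controlled in $L^2_{t,x}$ globally by $\mu$ times the product of the $S^\strong$ norms. A secondary technical point is verifying that the parametrix decomposition of an $S^\strong_k$ function into averages of free solutions plus a benign Duhamel piece is valid with the \emph{strong} norms; but this follows from \eqref{energy-est}, \eqref{fl1l2}, and the atomic structure of $N^\strong_k$ in Definition~\ref{atom-def2}, since each atom type (an $L^1_tL^2_x$ atom, a $\dot X^{0,-1/2,1}$ atom, or a strong null-frame atom) evolves under $\Box^{-1}$ into an average of energy-bounded free solutions, possibly truncated in spacetime, exactly as in the proof of Lemma~\ref{boxf}.
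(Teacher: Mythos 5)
There is a genuine gap, and it sits exactly at the step where you merge the exceptional tube families coming from the decomposition into free waves. Your plan treats every piece of $\phi$ and $\psi$ symmetrically: each pair of free components gets its own tube family from Theorem \ref{tao-thm2}, and you propose to keep only the families attached to the $O(\mu^{-O(1)})$ ``large'' pieces and absorb the rest. The problem is that a $\dot X^{0,-1/2,1}$ atom $F$ at modulation $2^j$ cannot be written as a bounded-mass average of energy-bounded truncated free waves (its $L^1_t L^2_x$ norm is unbounded on a long interval), so the correct reduction is to chop time into intervals $J$ of length $2^{-j}$, on which the Duhamel piece is an average of free waves with weights $c_J$ that are only \emph{square}-summable. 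With merely $\ell^2$ control there need be no dominant pieces at all (think of $N$ intervals each carrying weight $N^{-1/2}$), so the ``keep the large pieces'' device of Lemma \ref{minkex} produces nothing: outside a finite union of \emph{other} intervals' tubes each interval still contributes $O(c_J)$ rather than $O(\mu c_J)$, and square-summing gives $O(1)$, not $O(\mu)$. This is fatal in the case where the frequency-$1$ factor $\phi$ is the Duhamel term of a $\dot X^{0,-1/2,1}$ atom and $\psi$ is free, because then the per-interval tube families genuinely depend on the piece of $\phi$ living on that interval and cannot be unified.

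The paper's proof uses two ingredients your proposal does not supply. First, the tubes in Theorems \ref{tao-thm} and \ref{tao-thm2} depend only on the frequency-$1$ free factor, uniformly over the high-frequency free wave $\psi$; this uniformity is what lets one handle the case ``$\phi$ free, $\psi$ inhomogeneous'' by fixing one tube family and square-summing over the $2^{-j}$-intervals. Second, in the reversed case ``$\phi$ inhomogeneous, $\psi$ free'' no such uniformity is available, and the paper instead chooses a single global tube family from $\psi$ via the energy-dispersion-away-from-light-rays result (Theorem \ref{edisp}), and then disposes of the remaining interactions by a transversality analysis on each tube (null energy estimates, Bernstein, and standard parallel-interaction null form bounds); nothing in your outline replaces this step. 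A smaller but real issue is your treatment of strong null frame atoms: the $NFA[\kappa]$ pieces are only $\ell^{2-\delta_{-1}}$-summable, so they cannot be ``summed crudely''; the paper interpolates to split such an atom into a $\mu^{C}$-small ordinary null frame atom (handled with the null form estimate and a modulation splitting, with no tubes) plus an $\ell^1$-controlled part of size $\mu^{-O_C(1)}$ to which Lemma \ref{minkex} applies — this is where $\delta_{-1}$ actually enters, and your sketch would need to be rebuilt along these lines.
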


We now prove this theorem.  We may normalise
$$
\| \phi[0]\|_{\dot H^1 \times L^2(\R^2)} + \|\Box \phi \|_{N_0(I \times \R^2)} = \| \psi[0]\|_{\dot H^1 \times L^2(\R^2)} + \|\Box \psi \|_{N_k(I \times \R^2)} = 1,
$$
and then by \eqref{phis-plus} the right-hand side of \eqref{joy} is $1$.

By Duhamel's formula, on $I \times \R^2$, $\phi$ is an average of a solution to the free wave equation with $\dot H^1 \times L^2(\R^2)$ norm (or energy) $O(1)$, and the fundamental solution $\Box^{-1}$ applied to very strong $N[O(1)]$ atoms as defined in Definition \ref{atom-def2}.  Using Lemma \ref{minkex}, we may assume that $\phi$ is just one of these expressions in the average, i.e. either a solution to the free wave equation of energy $O(1)$, or $\Box^{-1}$ of a very strong $N[O(1)]$ atom.  Similarly, we may assume that $\psi$ is either a solution to the free wave equation of energy $O(1)$, or $\Box^{-1}$ of a strong $N[k+O(1)]$ atom.  Our task is then to find at most $O(\mu^{-O(1)})$ tubes of frequency parameter $1$ such that
\begin{equation}\label{mold}
\| \partial^\alpha \phi \partial_\alpha \psi \|_{L^2_{t,x}(I \times \R^2 \backslash \bigcup_{\beta \in B} T_\beta)} \lesssim \mu.
\end{equation}
If $\phi$ is $\Box^{-1}$ of an $L^1_t L^2_x$ atom, then it can be expressed (by Duhamel's formula) as an average of solutions to the free wave equation of energy $O(1)$, localised to half-spaces in spacetime; furthermore, first derivatives of $\phi$ can be similarly expressed.  By applying Lemma \ref{minkex} again we see that the $L^1_t L^2_x$ atom case can be reduced to the free wave equation case, and similarly for $\psi$.  

Next, we eliminate the case when $\phi$ is $\Box^{-1}$ of a strong null frame atom, reducing it to the free case.  Suppose that $\phi$ is $\Box^{-1}$ of a strong null frame atom $F$ with frequency $\sim 1$ and angle $2^{-l}$ for some $l \geq 10$.  Let $C>0$ be a large constant to be chosen later.  Interpolating the condition \eqref{sukf-2}, one can split the strong null frame atom into $O(\mu^C)$ times an ordinary null frame atom $F$ of the same frequency and angle, plus an expression $\sum_{\kappa \in K_l} \Box^{-1} F_\kappa$ where $F_\kappa$ is as in Definition \ref{atom-def2} but with
$$ \sum_{\kappa \in K_l} \| F_\kappa \|_{NFA[\kappa]} \lesssim \mu^{-O_C(1)}$$
instead of \eqref{sukf-2} or \eqref{sukf} (note that we allow implied constants to depend on $\delta_{-1}$).  

To deal with the contribution of $\sum_{\kappa \in K_l} \Box^{-1} F_\kappa$, it suffices by Lemma \ref{minkex} to find, for each $\kappa$, a collection of tubes $B_\kappa$ of cardinality $O(\mu^{-O_C(1)})$ such that
$$
\| \partial^\alpha (\Box^{-1} F_\kappa) \partial_\alpha \psi \|_{L^2_{t,x}(I \times \R^2 \backslash \bigcup_{\beta \in B} T_\beta)} \lesssim \mu^{C'} \| F_\kappa \|_{NFA[\kappa]}.$$
for some large $C'$ (depending on $C$).  But one can express $\Box^{-1} F_\kappa$ as $\|F_\kappa\|_{NFA[\kappa]}$ times an average of free waves of energy $O(1)$, localised to null half-spaces (see \cite[Section 11]{tao:wavemap2}), so by yet another application of Lemma \ref{minkex}, we see that the contribution of this case reduces to that of a free wave.

Now we look at the contribution of $\Box^{-1} F$.  From the energy estimate in \cite[Equation (27)]{tao:wavemap2}, we have
$$ \| \Box^{-1} F \|_{S_0(\R^{1+2})} \lesssim \mu^C; \quad \| \psi \|_{S_k(\R^{1+2})} \lesssim 1$$
and hence by the null form estimate in \cite[Equation (30)]{tao:wavemap2}, one has
$$ \| P_{k'}( \partial^\alpha (\Box^{-1} F) \partial_\alpha \psi ) \|_{N_k(\R^{1+2})} \lesssim \chi_{k=k'}^\delta \mu^C$$
for all $k'$ and some $\delta>0$.  Since $N_k$ controls the $\dot X^{0,-1/2,\infty}$ norm, we conclude that 
$$ \| Q_{<j}( \partial^\alpha (\Box^{-1} F) \partial_\alpha \psi ) \|_{N_k(\R^{1+2})} \lesssim \mu$$
for some $j \sim C \log \frac{1}{\mu}$.  To deal with the remaining $Q_{\geq j}$ contribution, we see from the geometry of the cone that 
$$ Q_{\geq j}( \partial^\alpha Q_{<j-10} (\Box^{-1} F) \partial_\alpha Q_{<j-10} \psi ) = 0$$
so it suffices to estimate the terms involving either $Q_{\geq j-10} \Box^{-1} F$ or $Q_{\geq j-10} \psi$.  But from the energy estimate in  \cite[Equation (27)]{tao:wavemap2} followed by \cite[Equation (84)]{tao:wavemap2}, one has
$$ \| \nabla_{t,x} Q_{\geq j-10} \Box^{-1} F \|_{L^2_t L^\infty_x(\R^{1+2})}, \| \nabla_{t,x} \Box^{-1} F \|_{L^\infty_t L^\infty_x(\R^{1+2})}, \| \nabla_{t,x} Q_{< j-10} \Box^{-1} F \|_{L^\infty_t L^\infty_x(\R^{1+2})} \lesssim \mu$$
and
$$ \| \nabla_{t,x} Q_{\geq j-10} \psi \|_{L^2_t L^2_x(\R^{1+2})}, \| \nabla_{t,x} \psi \|_{L^\infty_t L^\infty_x(\R^{1+2})}, \| \nabla_{t,x} Q_{< j-10} \psi \|_{L^\infty_t L^2_x(\R^{1+2})} \lesssim 1$$
if $C$ is large enough, so these contributions are acceptable by H\"older's inequality.

The same arguments also allow us to reduce the case when $\psi$ is $\Box^{-1}$ of a strong null frame atom to that where it is a free wave of energy $O(1)$.

To summarise so far, we have reduced to the case where $\phi$ is a free wave of energy $O(1)$ or $\Box^{-1}$ of a $\dot X^{0,-1/2,1}$ atom, and $\psi$ is either a free wave of energy $O(1)$, or $\Box^{-1}$ of a $\dot X^{0,-1/2,1}$ atom.  

If $\phi,\psi$ are both free waves, the claim follows from Theorem \ref{tao-thm2}, so suppose now that $\phi$ is a free wave and $\psi$ is $\Box^{-1}$ of a $\dot X^{0,-1/2,1}$ atom at some modulation $j$, thus $\psi$ has spacetime Fourier support on the region $\{ |\xi| \sim 2^k; ||\tau|-|\xi|| \sim 2^j \}$ and $\nabla_{t,x} \psi$ has an $L^2_{t,x}$ norm of $O( 2^{-j/2} )$.

We apply Theorem \ref{tao-thm2} to create a family of tubes $(T_\beta)_{\beta \in B}$ with the stated properties.  We then partition the time axis $\R$ into intervals $J$ of length $2^{-j}$.  By a Fourier expansion in time (using the Poisson summation formula and Plancherel's theorem), we see that 
$$ (\sum_J (2^{-j/2} \| \nabla_{t,x} \psi \|_{L^2_t L^2_x(J \times \R^2)} + 2^{j/2} \| \Box \psi \|_{L^2_t L^2_x(J \times \R^2)} )^2)^{1/2} \lesssim 1.$$
and thus by H\"older's inequality
$$ (\sum_J (\| \nabla_{t,x} \psi \|_{L^\infty_t L^2_x(J \times \R^2)} + \| \Box \psi \|_{L^1_t L^2_x(J \times \R^2)} )^2)^{1/2} \lesssim 1.$$
On the other hand, from Theorem \ref{tao-thm2}, Duhamel's formula, and Minkowski's inequality, one has
$$ 
\| \partial^\alpha \phi \partial_\alpha \psi \|_{L^2_{t,x}(J \times \R^2 \backslash \bigcup_{\beta \in B} T_\beta)} \lesssim 
\mu (\| \nabla_{t,x} \psi \|_{L^\infty_t L^2_x(J \times \R^2)} + \| \Box \psi \|_{L^1_t L^2_x(J \times \R^2)} ).$$
Square summing in $J$, we obtain the claim.

Now suppose that $\phi$ is $\Box^{-1}$ of a $\dot X^{0,-1/2,1}$ atom at some modulation $j'$, and that $\psi$ is a free wave.  We partition the time axis into intervals $J'$ of length $2^{-j'}$.  As before, the quantities
$$ c'_{J'} := \| \nabla_{t,x} \phi \|_{L^\infty_t L^2_x(J' \times \R^2)} + \| \Box \phi \|_{L^1_t L^2_x(J' \times \R^2)}$$
have an $\ell^2$ norm of $O(1)$.  On each $J'$, one can use the preceding arguments to find a collection $B'_{J'}$ of tubes $T_{\beta'}$ with cardinality $O(\mu^{-O(1)})$ such that
$$ 
\| \partial^\alpha \phi \partial_\alpha \psi \|_{L^2_{t,x}(J' \times \R^2 \backslash \bigcup_{\beta' \in B'_{J'}} T_{\beta'})} \lesssim \mu c'_{J'}.$$
Unfortunately, we cannot square sum in $J'$ to conclude because the $B'_{J'}$ vary in $J'$.  To remedy this, we take advantage of the free nature of $\psi$, and use Theorem \ref{edisp} to find another collection $B$ of tubes $T_\beta$ of cardinality $O(\mu^{-O_C(1)})$, where $C$ is a large constant to be chosen later, such that
$$ \int_{|x-x_0| \leq 1} |\nabla_{t,x} \psi(t_0,x)|^2 1_{\R^{1+2} \backslash \bigcup_{\beta \in B} T_\beta}(t_0,x)\ dx \lesssim \mu^C.$$
for all $(t_0,x_0) \in \R^{1+2}$.

Let us now consider the quantity
\begin{equation}\label{conquat}
\| \partial^\alpha \phi \partial_\alpha \psi \|_{L^2_{t,x}((J' \times \R^2 \cap T_{\beta'}) \backslash \bigcup_{\beta \in B} T_\beta)}
\end{equation}
for some $J'$ and some $\beta' \in B'_{J'}$.  By construction, $\nabla_{t,x} \psi$ has a $L^\infty_t L^2_x$ norm of $O(\mu^C)$ on this domain.  If, on the other hand, $\phi$ which is transverse to $T_{\beta'}$ in the sense that the spatial Fourier transform is supported on a sector making an angle of at least $\mu^{cC}$ with the direction of $T_{\beta'}$ is of size $O( \mu^{cC} )$ if $c > 0$ is a small enough absolute constant, null energy estimates and Bernstein's inequality show that the $L^2_t L^\infty_x$ norm on $T_{\beta'}$ is $O(\mu^{-O(cC)})$, so the net contribution to \eqref{conquat} is $O( \mu^{cC} )$ if $c$ is small enough and $C$ large enough.  Summing over $\beta'$ and then square-summing over $J'$ we see that the contribution to \eqref{mold} here is acceptable for $C$ large enough.

It remains to consider the contribution of
$$
\| \partial^\alpha (P_{T_{\beta'}}\phi) \partial_\alpha \psi \|_{L^2_{t,x}((J' \times \R^2 \cap T_{\beta'}) \backslash \bigcup_{\beta \in B} T_\beta)}$$
where $P_{T_{\beta'}}$ is the spatial Fourier projection to a sector of angular width $O( \mu^{cC} )$ centred in the direction of $T_{\beta'}$.

From Bernstein's inequality we now see that $\nabla_{t,x} P_{T_{\beta'}}\phi$ has an $L^\infty_{t,x}(T_{\beta'})$ norm of $O(\mu^{cC/2} c_{J'})$.  Now suppose that $\psi$ is transverse to $T_{\beta'}$ in the sense that the spatial Fourier transform is supported on a sector making an angle of at least $\mu^{c'C}$ with the direction of $T_{\beta'}$ is of size $O( \mu^{c'C} )$ if $c' > 0$ is an absolute constant much smaller than $c$.  Then null energy estimates show that $\nabla_{t,x} \psi$ has an $L^2_{t,x}(T_{\beta'})$ norm of $O( \mu^{-O(c' C)} )$, so by arguing as before, the net contribution to \eqref{conquat} and hence to \eqref{mold} is acceptable if $c'$ is small enough and $C$ large enough.  Thus we may replace $\psi$ by $P'_{T_{\beta'}} \psi$, where $P'_{T_{\beta'}}$ is defined just as $P_{T_{\beta'}}$ but with $c$ replaced by $c'$.  But now we have a parallel interaction\footnote{Strictly speaking, due to the double nature of the light cone, there will also be some interactions here that are transverse to $T_{\beta'}$, but these can be dealt with exactly as with the previous transverse interactions encountered in this case.}, and standard null form estimates (cf. \eqref{paka}) allow one to again extract a bound of the form $O( \mu^{c'' C} c_{J'} )$ for this contribution to \eqref{conquat}.  All contributions to \eqref{mold} are now acceptable, which concludes the treatment of this case.

Finally, suppose that $\phi$ is $\Box^{-1}$ of a $\dot X^{0,-1/2,1}$ atom at some modulation $j'$, and $\psi$ is $\Box^{-1}$ of a $\dot X^{0,-1/2,1}$ atom at some modulation $j$.  We assume initially that $j' \geq j$.  We then partition the time axis into intervals $J$ of length $2^{-j}$, and then partition further into intervals $J'$ of length $2^{-j'}$ (assuming for sake of notation that $j, j'$ differ by an integer amount, though it is easy to remove this requirement).  As before, the quantities
$$ c'_{J'} := \| \nabla_{t,x} \phi \|_{L^\infty_t L^2_x(J' \times \R^2)} + \| \Box \phi \|_{L^1_t L^2_x(J' \times \R^2)}$$
and
$$ c_{J} := \| \nabla_{t,x} \psi \|_{L^\infty_t L^2_x(J \times \R^2)} + \| \Box \psi \|_{L^1_t L^2_x(J \times \R^2)}$$
both have $\ell^2$ norms of $O(1)$.  Using standard null form estimates, we conclude that
$$ \| \partial^\alpha \phi \partial_\alpha \psi \|_{L^2_{t,x}(J \times \R^2)}^2 \lesssim c_J^2 (\sum_{J' \subset J} (c'_{J'})^2).$$
Thus, the net contribution to \eqref{mold} of all the $J$ for which either $c_J^2 \lesssim \mu$ or $\sum_{J' \subset J} (c'_{J'})^2 \lesssim \mu$ will be acceptable.  By Markov's inequality, there are only $O(1/\mu)$ intervals $J$ remaining.  On each of these intervals, $\psi$ behaves essentially like a free wave and one can repeat the previous arguments (with $\mu$ replaced by $\mu^2$); summing, we obtain the claim.  The proof of Theorem \ref{tao-thm3} is now complete.


\begin{thebibliography}{10}

\bibitem{borg:cone}
J. Bourgain, \emph{Estimates for cone multipliers},
Operator Theory: Advances and Applications, \textbf{77} (1995), 41--60.

\bibitem{borg:scatter}
J. Bourgain, \emph{Global well-posedness of defocusing 3D critical 
NLS in the radial case}, JAMS \textbf{12} (1999), 145-171. 

\bibitem{borg:book}
J. Bourgain, \emph{New global well-posedness results for non-linear 
Schr\"odinger equations}, AMS Publications, 1999.

\bibitem{gopher}
J. Colliander, M. Keel, G. Staffilani, H. Takaoka, T. Tao, \emph{Global well-posedness and 
scattering in the energy space for the critical nonlinear 
Schr\"odinger equation in $\R^3$}, Annals Math. \textbf{167} (2007), 767--865.

\bibitem{eells}
J. Eells, H. Sampson, \emph{Harmonic mappings of Riemannian manifolds}, Amer. J. Math. \textbf{86} (1964), 109--160.

\bibitem{hadac}
M. Hadac, S. Herr, H. Koch, \emph{Well-posedness and scattering for the KP-II equation in a critical space}, preprint.

\bibitem{merlekenig}
C. Kenig and F. Merle, \emph{Global well-posedness, scattering, and blowup for the energy-critical, focusing, non-linear Schr\"odinger equation in the radial case,} Invent. Math. \textbf{166} (2006), 645--675.

\bibitem{klainerman:unified}
S. Klainerman, \emph{PDE as a unified subject}, Special Volume GAFA 2000, 279-315. 

\bibitem{kman.selberg}
S. Klainerman, S. Selberg, \emph{ Remark on the optimal regularity for
equations of wave maps type}, C.P.D.E., \textbf{22} (1997), 901--918. 

\bibitem{krieger:2d}
J. Krieger, \emph{Global regularity of wave maps from $\R^{2+1}$ to $\H^2$}, Comm. Math. Phys. \textbf{250} (2004), 507--580. 

\bibitem{krieger:survey} J. Krieger, \emph{Global regularity and singularity development for wave maps.}, preprint.

\bibitem{sterbenz}
J. Sterbenz, D. Tataru, \emph{Energy dispersed large data wave maps in $2+1$ dimensions}, preprint.

\bibitem{sterbenz2}
J. Sterbenz, D. Tataru, \emph{Regularity of Wave-Maps in dimension 2+1}, preprint.

\bibitem{tao:wavemap}
T. Tao, \emph{Global regularity of wave maps I.  Small critical Sobolev norm in high dimension}, IMRN \textbf{7} (2001), 299-328. 

\bibitem{tao:cone}
T. Tao, \emph{Endpoint bilinear restriction theorems for the cone, and some sharp null
 form estimates}, Math Z \textbf{238} (2001), 215--268.

\bibitem{tao:wavemap2} T. Tao, \emph{Global regularity of wave maps II.  Small energy in two dimensions}, Comm. Math. Phys. \textbf{224} (2001), 443--544.

\bibitem{tao:forges} T. Tao, \emph{Geometric renormalization of large energy wave maps}, Journees ``Equations aux derives partielles'', Forges les Eaux, 7-11 June 2004, XI 1-32.

\bibitem{tao:cbms} T. Tao, \emph{Nonlinear dispersive equations. Local and global analysis.} CBMS Regional Conference Series in Mathematics, \textbf{106}. American Mathematical Society, Providence, RI, 2006.

\bibitem{tao:heatwave}
T. Tao, \emph{Global regularity of wave maps III.  Large energy from $\R^{1+2}$ to hyperbolic spaces}, submitted.  

\bibitem{tao:heatwave2}
T. Tao, \emph{Global regularity of wave maps IV.  Absence of stationary or self-similar solutions in the energy class}, submitted.  

\bibitem{tao:heatwave3}
T. Tao, \emph{Global regularity of wave maps V.  Large data local wellposedness in the energy class}, submitted.  

\bibitem{tao:heatwave4}
T. Tao, \emph{Global regularity of wave maps VI.  Abstract theory of minimal-energy blowup solutions}, submitted.  

\bibitem{tao:inverse}
T. Tao, \emph{An inverse theorem for the bilinear $L^2$ Strichartz estimate for the wave equation}, submitted.

\bibitem{tv:cone1}
T. Tao, A. Vargas, \emph{A bilinear approach to cone multipliers
I.  Restriction Estimates}, GAFA \textbf{10} (2000), 185--215.

\bibitem{tv:cone2}
T. Tao, A. Vargas, \emph{A bilinear approach to cone multipliers
II. Applications}, GAFA \textbf{10} (2000), 216--258.

\bibitem{tataru:wave2}
D. Tataru, \emph{On global existence and scattering for the wave maps equation},
Amer. J. Math. \textbf{123} (2001), no. 1, 37--77.

\bibitem{tataru:wave3} D. Tataru, \emph{Rough solutions for the wave maps equation}, Amer. J. Math. 127 (2005), no. 2, 293--377. 

\bibitem{tataru:survey}
D. Tataru, \emph{The wave maps equation}, Bull. Amer. Math. Soc. (N.S.) 41 (2004), no. 2, 185--204.

\bibitem{wolff:cone}
T. Wolff, \emph{A sharp bilinear cone restriction estimate},  Ann. of Math. (2)  \textbf{153}  (2001),  no. 3, 661--698

\end{thebibliography}
\end{document}